\numberwithin{equation}{section}
\setlist[enumerate]{font=\upshape}
\setlist[enumerate,1]{label=(\arabic*),ref=(\arabic*)}
\setlist[enumerate,2]{label={(\arabic{enumi}.\alph*)},
  ref=(\arabic{enumi}.\alph*)}
\setlist[enumerate,3]{label={(\arabic{enumi}.\alph{enumii}.\roman*)},
  ref=(\arabic{enumi}.\alph{enumii}.\roman*)}
\providecommand*{\transp}{\intercal}
\newcommand{\hull}[1]{\ensuremath{\operatorname{hull}\left(#1\right)}}
\newcommand{\vect}{\ensuremath\operatorname{Vect}}
\mathchardef\mhyphen="2D
\DeclareMathOperator{\alg}{alg}
\DeclareMathOperator{\coeff}{\bf Coeff}
\DeclareMathOperator{\cop}{cop}
\DeclareMathOperator{\Endo}{End}
\DeclareMathOperator{\Hom}{Hom}
\DeclareMathOperator{\id}{\iota}
\DeclareMathOperator{\image}{Image}
\DeclareMathOperator{\inv}{inv}
\DeclareMathOperator{\irr}{Irr}
\DeclareMathOperator{\op}{op}
\DeclareMathOperator{\pol}{Pol}
\DeclareMathOperator{\res}{Res}
\DeclareMathOperator{\supp}{supp}
\DeclarePairedDelimiter{\norm}{\|}{\|}
\DeclarePairedDelimiter{\abs}{\lvert}{\rvert}
\DeclarePairedDelimiterX{\pairing}[2]{\langle}{\rangle}%
{\mskip1mu#1\mskip2mu,\mskip3mu#2\,}
\DeclarePairedDelimiterX\set[1]\lbrace\rbrace{\def\given{\;\delimsize\vert\;}#1}
\DeclarePairedDelimiterX\prosca[1]\lparen\rparen{#1}
\newtheoremstyle{mystyle}
{}
{}
{\itshape}
{}
{\bfseries}
{.}
{ }
{\thmname{#1}\thmnumber{ #2}\thmnote{ (#3)}}%
\theoremstyle{definition}
\newtheorem{defi}{Definition}[section]
\theoremstyle{mystyle}
\newtheorem{lemm}[defi]{Lemma}
\newtheorem{prop}[defi]{Proposition}
\newtheorem{coro}[defi]{Corollary}
\newtheorem{theo}[defi]{Theorem}
\newtheorem{nota}[defi]{Notation}
\newtheorem{rema}[defi]{Remark}
  \def\MR#1{}
\newcommand{\C}{\ensuremath{\mathbb{C}}}
\newcommand{\N}{\ensuremath{\mathbb{N}}}
\newcommand{\R}{\ensuremath{\mathbb{R}}}
\mathchardef\myh="2D
\begin{document}

\title{A theory of locally convex Hopf algebras}

\author{Hua Wang}

\date{}

\maketitle

\abstract{Using the completed inductive, projective and injective tensor
  products of Grothendieck for locally convex topological vector spaces, we
  develop a systematic theory of locally convex Hopf algebras with an emphasis
  on Pontryagin-type dualities. We describe how classical Hopf algebras, real
  and complex Lie groups, as well as compact and discrete quantum groups, can
  all give rise to natural examples of this theory in a variety of different
  ways. We also show that the space of all continuous functions on a topological
  group \( G \) whose topological structures are compactly generated has an
  \( \varepsilon \)-Hopf algebra structure, and we can recover \( G \) fully as
  a topological group from this locally convex Hopf algebra. The latter is done
  via a generalization of Gelfand duality, which is of its own interest.
  Certain projective and inductive limits are also considered in this framework,
  and it is shown that how this can lead to examples seemingly outside of the
  framework of locally compact quantum groups in the sense of
  Kustermans-Vaes. As an illustration, we propose a version of the infinite
  quantum permutation group \( S^{+}_{\infty} \), the free orthogonal group
  \( O^{+}_{\infty} \), and the free unitary group \( U^{+}_{\infty} \) as
  certain strict inductive limits, all of which still retain a nice
  duality. Combined with our duality theory, this may be seen as an alternative
  tentative approach to the Kac program of developing a Pontryagin-type duality
  to a wider class, while at the same time, we include many more interesting
  examples of classical and quantum groups.}

\section*{Introduction}
\label{sec:a8b281cedaa9569c}

The aim of this paper is to develop a theory of locally convex Hopf algebras
that formally enjoys many nice properties of classical Hopf algebras, that are
closely related to at least a large portion of classical and quantum groups that
frequently appear in (commutative or not) analysis, and that are flexible
enough to accommodate many new examples.

The main motivation for this work is two-fold. On the one hand, our current
framework of locally compact quantum groups (\cites{MR1951446,MR1832993} and
\cite{MR2020804}) heavily utilizes the machinery in operator algebras. While
this culminates in an amazingly nice theory that concludes the Kac program of
generalizing the Pontryagin dual to all locally compact groups and later certain
quantum groups, it seems that one gets further and further away from Hopf
algebras. For example, the antipode and the counit are often no longer
everywhere defined (even for non-Kac type compact quantum groups); the
multiplication in general won't factor through the tensor product in use (this
seems a general issue even for Kac algebras as in \cite{MR1215933}); and in the
\( C^{\ast} \)\nobreakdash-algebra version, one has to consider multiplier
algebras which often makes the theory even more technically demanding. Thus it
seems that it is a worthwhile goal to try to make connections between locally
compact quantum groups and some sort of topological versions of Hopf algebras
that are already alluded in Drinfeld's famous ICM address \cite{MR934283}. On
the other hand, there are certainly many interesting topological groups that are
not locally compact, so one naturally wonders what are other interesting
topological versions of quantum groups that are not locally compact. As it is
the consensus of the community that the operator algebra approach, at least the
\( C^{\ast} \)\nobreakdash-algebraic version, should describe only the locally
compact quantum groups, this naturally begs the question of how can one develop
a theory of topological Hopf algebras that include at least many interesting
examples of locally compact quantum groups while also include many interesting
non locally compact classical topological groups.

We now elaborate a bit on our motivation and describe briefly some background
and related works. On the Kac program side, there is a long history starting
from the Pontryagin duality of locally compact abelian groups, for which we
refer to the excellent introduction in \cite{MR1832993}, and only mentions the
independent works of Kac and Vainerman \cites{MR0348038,MR0338259} in the Soviet
union, as well as Enock and Schwartz \cites{MR1215933,MR1207551,MR0442710},
resulting in the notion of Kac algebras. Formally, Kac algebras are certain von
Neumann algebras that closely resemble a Hopf algebra (together with invariant
weights playing the roles of the left and right Haar measures on locally compact
groups), and later there are also \( C^{\ast} \)\nobreakdash-algebra
version. They form a category that enjoys the Pontryagin duality, include all
locally compact groups and restricts to classical Pontryagin duality for locally
compact abelian groups, thus provides a very satisfying answer to the Kac
program. However, it turns out later that (e.g.\ \cite{MR901157}) that the
framework of Kac algebras is too restricted to include some then newly
discovered quantum groups. After a lot of efforts, among which we mention
\cites{MR1658585,MR1220906} in which van Daele makes systematic use of the
multiplier algebras to treat certain locally compact non compact cases,
Kustermans \& Vaes first proposed a satisfactory theory \cite{MR1832993} of
locally compact quantum groups. The powerful machinery of operator algebras are
fully employed in this theory, enabling one to establish many beautiful results
which are too extensive to mention here. However, if one can say that Kac
algebras are still somehow related to Hopf algebras, as the theories progress,
one gets further and further away from a Hopf algebra structure, as mentioned
above. On the non locally compact topological quantum group side, one first
natural step of course is to find a good formulation of certain non locally
compact classical groups so that one might recast it successfully and
meaningfully in the non-commutative paradigm. It seems that due to the fact that
we know preciously little on locally convex algebras when compared to our
knowledge of \( C^{\ast} \)\nobreakdash-algebras and von Neumann algebras, there
are relatively few attempts in this direction. We here mention Akbarov's work
based on the notion of stereotype spaces and envelops, see
\cites{MR1346445,MR1794594,MR1965073,MR2143915}, as well as the recent book
\cite{MR4544403}. However, it seems that Akbarov's work focuses more on the
formally nice properties of the category of stereotype spaces, and less on the
various examples that we shall consider in this paper. We also mention that
still using multiplier algebras, Voigt \cite{MR2379773} develops an interesting
theory of bornological (instead of topological) Hopf algebras. Also,
\cite{MR1266072} considers topological Hopf algebras much in our spirit, but
only for nuclear ones that are either of class \( (\mathcal{F}) \) (Fréchet
spaces) or of class \( (\mathcal{DF}) \) (see \S~\ref{sec:2ffdde5596825e91}),
which they termed ``well-behaved''. While the scope of \cite{MR1266072} is
limited for our purposes, it can already be seen there that by introducing the
topological tensor products, one can deal with certain quantum deformation that
the usual algebraic tensor product can not handle.

With our motivation and some background being said, we now briefly describe our
approach. Just as the theory of operator algebras play an important technical
background for the development of locally compact quantum groups, for our
purposes, we naturally considers various algebras and coalgebras structures
based on the highly developed, albeit currently much less active, theory of
locally convex spaces, especially various topological tensor products and the
related approximation properties introduced by Grothendieck in his thesis
\cite{MR0075539}. As we want to use topological tensor products to add more
elements and still be able to use arguments by continuity and density, it is
natural for us to work with locally convex spaces that are complete. We will
easily see (\S~\ref{sec:d078c89fa6b6cafa}) that equipped with some commonly
considered topological tensor products, the category
\( \widehat{\mathsf{LCS}} \) of all complete locally convex spaces admits a
symmetric monoidal category structure, hence provides the natural background for
us to consider locally convex algebras, coalgebra, bialgebras and of course,
Hopf algebras. The latter part of the paper, on the technical side, can be
summarized by saying that it is an application of the powerful general theory of
locally convex spaces, as well as certain well-known explicit function spaces.

We now describe the organization of this paper and summarize certain main
results along the way. As the theory of locally convex spaces as we are going to
use seems not in the toolbox of our typical reader, an unusually lengthy
preliminary section (\S~\ref{sec:63dd23720298d074}) is devoted to nail down the
relevant results, while simultaneously fix some notation. The goal here is to
give a quick summary of the results that play an essential role later, with
explicit references on where these results can be found. At the same time, some
effort has been made to make the account concise and coherent, and hopefully,
more readable. The main work starts with \S~\ref{sec:060ad0d40e57b25a}, in which
we introduce the basic notions and establish some easy results. We point out
that we considered two kinds of duality, the strong one as well as a polar one
(see \S~\ref{sec:9b15e9ed66625847}), while the former is what one usually
expects, the latter sometimes is still applicable while the former fails (see
\S~\ref{sec:7848cae39b1cc5b0}). In \S~\ref{sec:7848cae39b1cc5b0}, we develop
some deeper structural results on duality of locally convex Hopf algebras, and
these results shall be illustrated by examples described in
\S~\ref{sec:2a8ab6bf60a9b3ca}. We highlight that in
\S~\ref{sec:b6aa227ba43c21a5}, it is shown that all classical Hopf algebras over
the field of real or complex numbers fit into our framework, and unlike the
purely algebraic case, also enjoys a nice topological version of duality. We
think that this new point of view might be of interest to some experts working
only with classical Hopf algebras. We also show that all compact and discrete
quantum groups fit nicely into our framework in
\S~\ref{sec:4d1fce794188b9cb}. The attempt to include certain non locally
compact topological groups starts with \S~\ref{sec:7b59ee62a46ef144}, the class
of topological groups we have in mind is the ones whose topology is compactly
generated, and \S~\ref{sec:7b59ee62a46ef144} develops the basics on certain
function spaces on the spaces of compactly generated spaces, to the point that
one can associate a locally convex Hopf algebra with any topological group with
compactly generated topology. In particular, we can now describe not only
locally compact groups, but also all metrizable groups, topological groups with
a CW complex structure etc, using locally convex Hopf algebras. In
\S~\ref{sec:7b59ee62a46ef144}, various types of results are established,
allowing one to recover the underlying topological groups from a locally convex
Hopf algebra under certain mild conditions. It is worth mentioning that we
establish a generalization of Gelfand duality in \S~\ref{sec:e2f8637d41ae033e},
which enables us to recover many topological groups as the character group of
the locally convex Hopf algebra equipped with the topology of compact
convergence. We also point out in \S~\ref{sec:3e2f92039fbda631} that all the
information on the Pontryagin dual of a locally compact abelian group can also
be retrieved merely from the corresponding locally convex Hopf algebra. Hence at
this point, our theory can already be seen as a seemingly interesting alternate
approach to the Kac program. Certain projective and inductive limit
constructions are considered next. The original goal is develop enough of the
theory to be able to describe certain topological quantum groups that seemingly
go beyond the framework of locally compact quantum groups, which is achieved in
the last section \S~\ref{sec:4c8d08ba8bcefe46}. The needed theory is developed
in \S~\ref{sec:7a5cda2c4078b89b}, and it is worth pointing out that after the
theory is developed, one can use a variant of Bruhat's regular functions to give
an alternative description of second countable locally compact groups in our
framework (\S~\ref{sec:de1592686b0dd205}), based on some beautiful structural
results related to Hilbert's fifth problem.

We end this introduction with some convention and notation.

In this paper, unless stated otherwise, we make the following convention about
our terminologies. The scalar field \( \mathbb{K} \) always denotes either
\( \R \) or \( \C \). All compact and locally compact spaces, manifolds, as well
as topological groups, are assumed to be Hausdorff. Smooth manifolds need not be
second countable, but are assumed to be finite dimensional, with each component
of the same dimension, and paracompact. Lie groups, be it over \( \R \) or
\( \C \), are \emph{not} assumed to be second countable.  Similarly, locally
compact groups are \emph{not} assumed to be second-countable or
\( \sigma \)-compact. Locally convex spaces are always assumed to be Hausdorff,
unless it is explicitly mentioned that we are talking about a not necessarily
Hausdorff locally convex space. For basic categorical notions such as projective
and inductive limits, adjoint functors et cetera, we refer to
\cite{MR0354798}. By a projective (resp.\ inductive) system in a category
\( \mathcal{C} \), we mean a system \( (x_{i}, f_{i,j}) \) given by a family of
objects \( (x_{i}) \) in \( \mathcal{C} \) (often denoted by
\( x_{i} \in \mathcal{C} \) later) indexed by a partially ordered set \( I \)
that is directed above in the sense that for all \( i, j \in I \), there exists
\( k \in I \), such that \( i \leq k \) and \( j \leq k \); as well as a family
\( (f_{i,j}: x_{j} \to x_{i}) \) (resp.\ \( f_{i,j}: x_{i} \to x_{j} \)) of
morphisms, one for each \( i, j \in I \) with \( i \leq j \), such that
\( f_{i,i} = \id_{x_{i}} \), and \( f_{i,k} = f_{i,j}f_{j,k} \) (resp.\
\( f_{i,k} = f_{j,k}f_{i,j} \)) whenever \( i \leq j \leq k \). By projective
(resp. inductive) limits, we always mean the limit (resp.\ colimit) with respect
to a projective (resp.\ inductive) system.

\begin{nota}
  \label{nota:b18fef8a2ee12975}
  Unless otherwise stated, the following notation convention are valid
  throughout the paper. For topological vector spaces \( E \), \( F \), the
  symbol \( L(E, F) \) denotes the space of all linear maps from \( E \) to
  \( F \), \( \mathcal{L}(E, F) \) the space of all \emph{continuous} linear
  maps. The symbol \( \odot \) denotes the algebraic tensor product, with
  \( \otimes \) reserved for tensor products of elements or linear
  maps. Topological tensor products on locally convex spaces is always suffixed,
  such as \( \otimes_{\tau} \), with \( \tau \) being a compatible topology. We
  denote the category of locally convex spaces with continuous linear maps as
  morphisms by \( \mathsf{LCS} \).
\end{nota}

\tableofcontents
\addtocontents{toc}{\protect{\pdfbookmark[1]{\contentsname}{toc}}}

\section{Preliminaries on locally convex spaces}
\label{sec:63dd23720298d074}

Since we shall make significant use of the uniform structures in several places,
it seems natural to begin this preliminary \S~\ref{sec:63dd23720298d074} by
briefly summarizing some part of the theory of uniform spaces in
\S~\ref{sec:bb19dab8a74cccc2} and \S~\ref{sec:3a8d43b32f489a0c}, which is
slightly more general than the implicit uniform structure of a locally convex
space when one talks about pre-compactness (equivalent to total boundedness),
completeness, et cetera. The treatment here follows closely
\cite{bourbaki_topologie_2006}*{Chapitres~II \& III} and
\cite{bourbaki_topologie_2006-1}*{Chapitre~X}. In the rest of
\S~\ref{sec:63dd23720298d074}, we give a very cursory account of some aspects of
the theory of locally convex spaces that are seemingly less well-known nowadays,
aiming to briefly lay out the analytic foundations for our subsequent work, so
that the main flow of the presentation in later sections is not disturbed. A
systematic treatment can be found in some excellent books such as
\cite{MR633754}, \cite{MR0077884}, \cite{MR0205028},
\cites{MR0248498,MR0551623}, \cite{MR0342978} and \cite{MR0225131}, and of
course the systematic treatment of topological tensor product by Grothendieck
\cite{MR0075539} as well as the summary of this monumental work
\cite{MR0061754}. Deeper or more specialized results concerning locally convex
spaces that are necessary for our treatment shall be stated later as they come
along. All results will be either given a proof, a sketch of a proof if it is
indeed easy, or an explicit reference in the literature. Due to his own
ignorance, the author does not keep proper attributions, nor does he claim
originality when a proof is given or sketched in the paper. For the purpose of
avoiding unnecessary interrupting of the flow of the text, some special results
that are needed later will not be in this preliminary part, but shall be only
recalled as the situation arises.

To avoid making this already lengthy preliminary subsection overly long, we make
free use of some basic notions and facts about topological and locally convex
spaces that can be found in e.g.\ \cite{MR0342978}*{Chapter~I \& II}, and only
treat the needed topics that are more or less beyond the basic level. In
particular, we assume basic knowledge on Fréchet spaces, or \( (F) \)-spaces, as
in e.g.\ \cite{MR4182424}*{Ch.~7}.

To fix the terminologies, in a vector space \( V \), we say that a subset
\( A \subseteq V \) is \textbf{absolutely convex} if it is convex and circled
(aka. balanced). Each subset \( S \subseteq V \) has an absolutely convex hull,
denoted by \( \Gamma(S) \) of \( \Gamma(S) \), which is the smallest absolutely
convex set containing \( S \) and consists of absolutely convex combinations of
elements in \( S \), i.e.\ elements of the form
\( \sum_{i=1}^{n} \lambda_{i}x_{i} \), with
\( \sum_{i=1}^{n} \abs*{\lambda_{i}} \leq 1 \), where
\( \lambda_{i} \in \mathbb{K} \), \( x_{i} \in S \) for each \( i \).

\begin{nota}
  \label{nota:ee765bb793c0ed9f}
  Let \( E \) be a vector space equipped with a locally convex topology. We use
  \( \mathcal{N}(x) \) to denote the filter of neighborhoods of \( x \in E \),
  and \( \mathcal{N}_{\Gamma}(0) \) the filter basis for \( \mathcal{N}(0) \)
  consisting of all absolutely convex neighborhoods of \( 0 \). We use \( E' \)
  to denote the topological dual of \( E \), and \( E^{\sharp} \) the algebraic
  linear dual.

  We denote the class of all \( (F) \)-spaces by \( (\mathcal{F}) \), and
  similarly for notation such as \( (\mathcal{DF}) \) (\( (DF) \)-spaces),
  \( (\mathcal{M}) \) (\( (M) \)-spaces, or Montel spaces) etc.

  We are going to make a lot of references to Köthe's monographs
  \cites{MR0248498,MR0551623}, and thus caution our reader that we will follow
  him by using \( E'_{c} \), or more generally \( \mathcal{L}_{c}(E, F) \), to
  denote the topology of \emph{precompact} convergence; whereas our other main
  references such as \cite{MR633754} or \cite{MR0342978}, the same symbol stands
  for the topology of \emph{compact} convergence, with the topology of
  precompact convergence denoted by \( E'_{pc} \), or more generally
  \( \mathcal{L}_{pc}(E, F) \).
\end{nota}

\subsection{Uniform spaces}
\label{sec:bb19dab8a74cccc2}

\begin{defi}
  \label{defi:99f3d2bd6f3c8540}
  A \textbf{uniform space} is a couple \( (X, \mathcal{U}) \), where \( X \) is
  a non-empty set, \( \mathcal{U} \) a collection of subsets of
  \( X \times X \), whose elements are called \textbf{entourages}, such that
  \begin{enumerate}
  \item \label{item:7e6e88fd1cb2d444} \( \mathcal{U} \) has the filter property
    in the sense that
    \begin{enumerate}
    \item \label{item:aad432e8a9b783fe} \( U \in \mathcal{U} \),
      \( U \subseteq W \subseteq X \times X \) implies \( W \in \mathcal{U} \);
    \item \label{item:cbe9726bd06f51fb} \( U, W \in \mathcal{U} \) implies
      \( U \cap W \in \mathcal{U} \);
    \end{enumerate}
  \item \label{item:3b955de6d0375b06} every entourage contains the diagonal \( \Delta_{X} \);
  \item \label{item:e413d7c2654f8fb1} \( U \in \mathcal{U} \) implies
    \( U^{-1} \in \mathcal{U} \), where
    \( U^{-1} = \set*{(y,x) \given (x,y) \in U} \);
  \item \label{item:28711a6e3cd2b2bf} for every \( U \in \mathcal{U} \), there
    exists \( V, W \in \mathcal{U} \), such that \( V \circ W \subseteq U \),
    where
    \begin{equation}
      \label{eq:94ed800a5ddc8f66}
      V \circ W = \set*{(x, z) \given (x,y) \in V, (y,z) \in W}.
    \end{equation}
  \end{enumerate}
  We often simply say that \( X \) is a uniform space if the collection
  \( \mathcal{U} \) of entourages are clear from context, and say that
  \( \mathcal{U} \) is the uniform structure on \( X \).
\end{defi}

It is clear that on a given set \( X \), there is a \textbf{finest} uniform
structure, where the entourage is any subset of \( X \times X \) containing the
diagonal, and a \textbf{coarsest} one, with the diagonal being the only
entourage.

\begin{rema}
  \label{rema:e4a073c717300aab}
  When \( X \) is non-empty, one often specifies a uniform structure
  \( \mathcal{U} \) on it by giving a filter basis for \( \mathcal{U} \), called
  \textbf{a fundamental system of entourages} for \( \mathcal{U} \). For
  example, given a metric space \( (X, d) \), collection of sets of the form
  \( B_{r}(d) = \set*{(x,y) \given d(x, y) < r} \subseteq X \times X \),
  \( r > 0 \), forms a filter basis for a uniform structure on \( X \), called
  \textbf{the uniform structure of the metric} \( d \). We say a uniform
  structure \( \mathcal{U} \) on \( X \) is \textbf{metrizable} if it is the
  uniform structure of some metric \( d \) on \( X \).
\end{rema}

\begin{defi}
  \label{defi:2a27e093cab51057}
  Let \( U \) be an entourage of a uniform space \( X \). For each
  \( x \in X \), define
  \begin{equation}
    \label{eq:4d8cc04d7143f1e0}
    U(x) :=
    \set*{y \in X \given ((x, y) \in U)}.
  \end{equation}
  Then there is a unique topology on
  \( X \), such that \( \mathcal{U}(x) = \set*{U(x) \given U \in \mathcal{U}} \)
  is the filter of neighborhoods of \( x \) for each \( x \in X \), namely, a
  set \( O \subseteq X \) is open if and only if for each \( x \in O \), there
  exists \( U \in \mathcal{U} \), such that \( U(x) \in O \). This topology on
  \( X \) is called \textbf{the topology of the uniform structure}
  \( \mathcal{U} \). Conversely, given a topology \( \tau \) on a set \( X \),
  we say \( \tau \) \textbf{is compatible with a uniform structure}
  \( \mathcal{U} \) on \( X \) if the topology of \( \mathcal{U} \) is
  \( \tau \). When speaking of certain topological properties of a uniform space
  such as being Hausdorff, compact, et cetera, we are referring the topology of
  the given uniform structure.
\end{defi}

\begin{theo}[\cite{bourbaki_topologie_2006}*{Chapitre~II, p5, Proposition~3} and
  \cite{bourbaki_topologie_2006-1}*{\S~IX.2.4}]
  \label{theo:1396f3f298bc2cb3}
  Let \( X \) be a uniform space with the uniform structure \( \mathcal{U} \). Then
  \begin{enumerate}
  \item \label{item:db992ae78b701244} \( X \) is Hausdorff if and only if the
    intersection of all entourages is the diagonal of \( X \);
  \item \label{item:a08aad6b01df60c7} \( (X, \mathcal{U}) \) is metrizable if
    and only if \( X \) is Hausdorff, and \( \mathcal{U} \) admits a countable
    filter basis.
  \end{enumerate}
\end{theo}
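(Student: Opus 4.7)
The plan is to establish each part separately, essentially reproducing the standard arguments for uniform spaces.

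For part \ref{item:db992ae78b701244}, I would work directly from the definition of the topology of a uniform structure. For the forward implication, assume $X$ is Hausdorff and let $(x,y)$ lie in every entourage. If $x \neq y$, Hausdorffness gives disjoint neighborhoods of $x$ and $y$, hence an entourage $U$ with $y \notin U(x)$, i.e.\ $(x,y) \notin U$, a contradiction. Conversely, assume $\bigcap_{U \in \mathcal{U}} U = \Delta_X$, and let $x \neq y$. Pick an entourage $U$ with $(x,y) \notin U$, then use axiom \ref{item:28711a6e3cd2b2bf} together with \ref{item:e413d7c2654f8fb1} to obtain a symmetric entourage $V$ with $V \circ V \subseteq U$. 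I would check that $V(x) \cap V(y) = \emptyset$: if some $z$ lay in both, then $(x,z) \in V$ and $(z,y) \in V$, so $(x,y) \in V \circ V \subseteq U$, contradicting the choice of $U$.

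For part \ref{item:a08aad6b01df60c7}, the forward direction is immediate from the description of the metric uniformity in Remark~\ref{rema:e4a073c717300aab}: the balls $B_{1/n}(d)$ form a countable filter basis of entourages, and a metric space is always Hausdorff. The nontrivial content is the converse, which is Weil's classical metrization theorem for uniformities. Given a countable filter basis $(W_n)_{n \geq 1}$ of $\mathcal{U}$, I would first replace it by a decreasing sequence of symmetric entourages $(U_n)_{n \geq 0}$ with $U_0 = X \times X$ and $U_{n+1} \circ U_{n+1} \circ U_{n+1} \subseteq U_n$ for every $n$; such a refinement is obtained inductively using the filter property together with axioms \ref{item:e413d7c2654f8fb1} and \ref{item:28711a6e3cd2b2bf}, and it still generates $\mathcal{U}$.

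Then I would define an auxiliary function $f : X \times X \to [0, \infty)$ by $f(x,y) = 2^{-n}$ if $(x,y) \in U_n \setminus U_{n+1}$ and $f(x,y) = 0$ if $(x,y) \in \bigcap_n U_n$, and set
\[
 d(x,y) = \inf\Bigl\{ \sum_{i=0}^{k-1} f(x_i, x_{i+1}) \;\Big|\; k \geq 1,\ x_0 = x,\ x_k = y \Bigr\}.
\]
The main technical step, which I expect to be the hardest part of the proof, is the chain inequality $f(x,y) \leq 2\, d(x,y)$; the reverse $d \leq f$ is obvious from the singleton chain. This is proved by induction on the length $k$ of a chain, cutting it at the first index where the accumulated $f$-sum exceeds half the total, and using the three-fold triangle-like property $U_{n+1} \circ U_{n+1} \circ U_{n+1} \subseteq U_n$ to control the two halves plus the middle jump. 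These inequalities sandwich $d$ between $f$ and $2f$, so the balls of $d$ and the entourages $U_n$ generate the same uniform structure, which is $\mathcal{U}$. Symmetry and the triangle inequality of $d$ are built into its definition, while $d(x,y) = 0 \iff x = y$ follows from part \ref{item:db992ae78b701244} combined with the Hausdorff hypothesis, since $d(x,y) = 0$ implies $(x,y) \in U_n$ for all $n$ and hence $(x,y)$ lies in every entourage of a fundamental system, thus in every entourage. This gives a metric compatible with $\mathcal{U}$ and finishes the proof.
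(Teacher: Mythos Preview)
Your proof is correct and follows the standard Weil metrization argument. However, the paper does not actually give a proof of this theorem: it is stated with explicit references to Bourbaki (\cite{bourbaki_topologie_2006}*{Chapitre~II, p5, Proposition~3} and \cite{bourbaki_topologie_2006-1}*{\S~IX.2.4}) and left without proof, as it belongs to the preliminary background material. So there is nothing to compare against beyond noting that your argument is precisely the classical one found in those references.
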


\begin{defi}
  \label{defi:d78530a7993833e1}
  Given two uniform spaces \( (X, \mathcal{U}) \) and \( (Y, \mathcal{V}) \), a
  map \( f : X \to Y \) is called \textbf{uniformly continuous}, if
  \( (f \times f)^{-1}(V) \in \mathcal{U} \) for any \( V \in \mathcal{V}
  \).
  The collection of all uniform spaces together with uniformly continuous
  maps among them forms \textbf{the category of uniform spaces} \( \mathsf{Unif} \).
\end{defi}

It is clear that uniformly continuous maps are always continuous with respect to
the topologies of the corresponding uniform structures.

\begin{defi}
  \label{defi:d52915ac6c7610ba}
  A filter \( \mathcal{F} \) on a uniform space \( (X, \mathcal{U}) \) is called
  a \textbf{Cauchy-filter}, if for each entourage \( V \), there exists
  \( M \in \mathcal{F} \) that is \( V \)-\textbf{small} in the sense that
  \( M \times M \subseteq V \). A uniform space is called \textbf{complete} if
  each Cauchy filter on it converges.
\end{defi}

It is well-known that this notion of completeness coincides with the one for
metric spaces, see e.g.\ \cite{bourbaki_topologie_2006-1}*{\S~IX.2}.

\begin{defi}[\cite{bourbaki_topologie_2006}*{\S~II.2.3}]
  \label{defi:dd6ea09f8ac64745}
  \textbf{The initial uniform structure} on a set \( X \) with respect to a
  family \( (f_{i})_{i \in I} \) of maps \( f_{i}: X \to X_{i} \) into uniform
  spaces \( X_{i} \) is the unique uniform structure on \( X \), which always
  exists, that is the coarsest one making all \( f_{i} \) uniformly
  continuous. In particular, unless stated otherwise, the \textbf{subspace}
  \( A \) of a given uniform space \( Y \) is equipped with the initial uniform
  structure with respect to the inclusion \( i : A \hookrightarrow Y \), and the
  \textbf{projective limit} \( \varprojlim Y_{i} \) of a projective system
  \( (Y_{i}, f_{i,j})_{i \in I} \) in \( \mathsf{Unif} \) is the set-theoretic
  projective limit \( L:= \varprojlim Y_{i} \) equipped with the initial uniform
  structure with respect to all the canonical projections
  \( p_{i} : L \to Y_{i} \). The (Cartesian) product of a family of uniform
  spaces, as a uniform space, is defined similarly.
\end{defi}

\begin{prop}[\cite{bourbaki_topologie_2006}*{\S~II.3, Proposition~8,
    Proposition~10 \& Corollaire}]
  \label{prop:903bfccf4a781ceb}
  The following hold:
  \begin{enumerate}
  \item \label{item:d564b3a44ec3bcf0} Closed subspace of a complete uniform
    space is complete. A complete subspace of a Hausdorff uniform space is
    closed.
  \item \label{item:a20beac8bb890c25} The product of a family of non-empty
    uniform space is complete if and only if each of its factor is complete.
  \item \label{item:69c05ffb5eeff953} The projective limit of any projective
    system of complete Hausdorff uniform spaces in \( \mathsf{Unif} \) remains
    complete.
  \end{enumerate}
\end{prop}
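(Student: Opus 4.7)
The plan is to deduce \ref{item:69c05ffb5eeff953} from \ref{item:d564b3a44ec3bcf0} and \ref{item:a20beac8bb890c25}, and to prove the three claims in the stated order.

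For \ref{item:d564b3a44ec3bcf0}, I would use that a subspace $A \subseteq X$ has entourages of the form $V \cap (A \times A)$ with $V$ an entourage of $X$. If $A$ is closed and $\mathcal{F}$ is a Cauchy filter on $A$, then viewing $\mathcal{F}$ as a filter basis on $X$ produces a Cauchy filter $\mathcal{F}'$ on $X$, which converges to some $x \in X$; since every element of $\mathcal{F}'$ meets the closed set $A$, one gets $x \in A$, and hence $\mathcal{F} \to x$ in $A$. For the converse direction, let $A$ be complete inside a Hausdorff uniform space $Y$ and let $x \in \overline{A}$: since $\mathcal{N}(x)$ is Cauchy in $Y$ and $x \in \overline{A}$, its trace on $A$ is a Cauchy filter on $A$ (any $V$-small member of $\mathcal{N}(x)$ intersected with $A$ is $(V \cap (A\times A))$-small), hence converges to some $a \in A$. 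The same filter also converges to $x$ in $Y$, and Hausdorffness forces $a = x$, showing $x \in A$.

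For \ref{item:a20beac8bb890c25}, a fundamental system of entourages of $X := \prod_{i \in I} X_i$ is given by finite intersections of sets of the form $(p_i \times p_i)^{-1}(V_i)$; a filter $\mathcal{F}$ on $X$ is therefore Cauchy precisely when each projection $p_i(\mathcal{F})$ is Cauchy on $X_i$, and convergence is componentwise. The ``if'' direction is then immediate. For the converse, pick some $(a_i) \in X$ and, for each $j$, form the section $s_j : X_j \to X$ placing $y$ in the $j$-th coordinate and $a_i$ elsewhere; $s_j$ is a uniform embedding satisfying $p_j \circ s_j = \id_{X_j}$. Given a Cauchy filter $\mathcal{G}$ on $X_j$, its pushforward $s_j(\mathcal{G})$ is Cauchy on $X$ (the $i$-th projection for $i \neq j$ is the principal filter at $a_i$, which is trivially Cauchy), hence converges to some $x \in X$, and applying $p_j$ yields $\mathcal{G} \to p_j(x)$ in $X_j$.

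For \ref{item:69c05ffb5eeff953}, the projective limit $L := \varprojlim Y_i$ is by construction the subset
\[
  L = \set*{(y_i) \in \prod_{i \in I} Y_i \given f_{i,j}(y_j) = y_i \text{ for all } i \leq j}
\]
equipped with the induced uniform structure. For each pair $i \leq j$, the two continuous maps $z \mapsto p_i(z)$ and $z \mapsto f_{i,j}(p_j(z))$ from $\prod_i Y_i$ into the Hausdorff space $Y_i$ have closed equalizer, so $L$ is an intersection of closed subsets, hence closed in the product. By \ref{item:a20beac8bb890c25} the product is complete, and then \ref{item:d564b3a44ec3bcf0} yields that $L$ is complete.

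The main obstacle is almost entirely bookkeeping rather than conceptual: the only delicate points are the converse direction of \ref{item:a20beac8bb890c25}, where the factors need not be Hausdorff so one cannot appeal to closedness of the image of $s_j$ but must instead use the retraction $p_j$ directly, and the closedness argument in \ref{item:69c05ffb5eeff953}, which makes essential use of the Hausdorffness of each $Y_i$ to realize $L$ as an equalizer of continuous maps.
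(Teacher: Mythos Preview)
The paper does not supply its own proof of this proposition; it simply records the statement and cites Bourbaki, \emph{Topologie g\'en\'erale}, Chapitre~II, \S3, Propositions~8 and~10 and their Corollaire. Your argument is correct and is essentially the standard one found there: \ref{item:d564b3a44ec3bcf0} via traces of Cauchy filters and uniqueness of limits in Hausdorff spaces, \ref{item:a20beac8bb890c25} via componentwise Cauchyness together with the section/retraction trick for the converse, and \ref{item:69c05ffb5eeff953} by exhibiting the projective limit as a closed (equalizer) subspace of the product. Nothing is missing.
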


\begin{defi}
  \label{defi:da4d0e8821475034}
  A \textbf{separated completion} of \( (X, \mathcal{U}) \) is a pair
  \( (\widehat{X}, i) \), with \( \widehat{X} \) being a \emph{Hausdorff}
  uniform space, and \( i : X \to \widehat{X} \) a map, called \textbf{canonical}, such that
  \begin{enumerate}
  \item \label{item:cdeddbbe6f8034ee} \( i(X) \) is dense in \( \widehat{X} \);
  \item \label{item:14637679aba6429b} the uniform structure on \( X \) is the
    initial one with respect to the map \( i \);
  \item \label{item:36fcb718dcb7a3c4} any uniformly continuous map from \( X \)
    to a complete Hausdorff uniform space \( Y \) factors uniquely through \( i \).
  \end{enumerate}
\end{defi}

\begin{theo}[\cite{bourbaki_topologie_2006}*{\S~II.3.7}]
  \label{theo:5c48561db339a0d3}
  The following hold:
  \begin{enumerate}
  \item \label{item:fe2f257052001631} The separated completion
    \( (\widehat{X}, i) \) of a uniform space \( X \) always exists and is
    unique up to isomorphism in \( \mathsf{Unif} \).
  \item \label{item:197ce1367e450831} If \( X \) is Hausdorff,
    then \( i: X \to i(X) \) is an isomorphism of uniform spaces.
  \item \label{item:d2e7dcd0233a36b2} If \( f : X \to Y \) is a morphism in
    \( \mathsf{Unif} \), and let \( (\widehat{X}, i) \), \( (\widehat{Y}, j) \)
    be the separated completion of \( X \) and \( Y \) respectively, then there
    exists a unique continuous \( \widehat{f}: \widehat{X} \to \widehat{Y} \),
    such that \( j f = \widehat{f} i \). Moreover, this \( \widehat{f} \) is
    uniformly continuous.
  \end{enumerate}
\end{theo}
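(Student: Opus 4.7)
The plan is to follow the classical Bourbaki construction of the completion via minimal Cauchy filters. For existence in~\ref{item:fe2f257052001631}, I would first reduce to the Hausdorff case by forming the quotient $X_{0} := X/R$, where $xRy$ iff $(x,y)$ lies in every entourage of $\mathcal{U}$; equipped with the quotient uniform structure, $X_{0}$ is Hausdorff by~\ref{item:db992ae78b701244}, and the canonical projection $p : X \to X_{0}$ will be uniformly continuous with $\mathcal{U}$ initial relative to $p$. I would then define $\widehat{X}$ to be the set of minimal Cauchy filters on $X_{0}$ (Cauchy filters admitting no strictly coarser Cauchy refinement), with uniform structure generated by the fundamental system
\[
\tilde{V} := \set*{(\mathcal{F}, \mathcal{G}) \in \widehat{X} \times \widehat{X} \given \exists A \in \mathcal{F} \cap \mathcal{G}, \; A \times A \subseteq V}
\]
as $V$ ranges over entourages of $X_{0}$, and take $i : X \to \widehat{X}$ to send $x$ to the minimal Cauchy filter generated by the neighborhood filter of $p(x)$.

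The required verifications proceed as follows. Hausdorffness of $\widehat{X}$ follows from minimality: distinct minimal Cauchy filters are $\tilde{V}$-separated for $V$ sufficiently small. Density of $i(X)$ is immediate, since for any $\mathcal{F}$ and $\tilde{V}$, any $x \in p^{-1}(A)$ for a small $A \in \mathcal{F}$ yields $i(x) \in \tilde{V}(\mathcal{F})$. That the uniform structure on $X$ is initial relative to $i$ follows because $(i \times i)^{-1}(\tilde{V})$ recovers precisely the $R$-saturation of $V$, and such saturations are cofinal in $\mathcal{U}$. For the universal property, a uniformly continuous $g : X \to Y$ into a complete Hausdorff $Y$ will factor through $p$ (since $R$-equivalent points have the same image in the Hausdorff $Y$), giving $\bar{g} : X_{0} \to Y$, and I would define the extension by $\widehat{g}(\mathcal{F}) := \lim \bar{g}_{\ast}(\mathcal{F})$, which is well-defined because $\bar{g}_{\ast}(\mathcal{F})$ is Cauchy in the complete Hausdorff $Y$. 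Uniqueness of $\widehat{X}$ up to isomorphism then follows from the universal property by the usual diagram chase between two candidates.

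For~\ref{item:197ce1367e450831}, when $X$ is already Hausdorff, $R$ reduces to the diagonal so $p$ is a bijection and $i$ is injective; its co-restriction to $i(X) \subseteq \widehat{X}$ is a uniform isomorphism by comparing condition~\ref{item:14637679aba6429b} with the fact that the subspace uniform structure on $i(X)$ is, by definition, initial relative to the inclusion $i(X) \hookrightarrow \widehat{X}$. For~\ref{item:d2e7dcd0233a36b2}, I would simply apply the universal property~\ref{item:36fcb718dcb7a3c4} to the uniformly continuous composition $jf : X \to \widehat{Y}$, taking values in the complete Hausdorff space $\widehat{Y}$, to obtain the unique uniformly continuous $\widehat{f} : \widehat{X} \to \widehat{Y}$ with $\widehat{f} i = jf$.

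The principal obstacle will be showing that the filter-limit extension $\widehat{g}$ is genuinely uniformly continuous, rather than merely well-defined pointwise. The argument I have in mind runs as follows: given an entourage $W$ of $Y$, choose $W'$ symmetric with $W' \circ W' \circ W' \subseteq W$ using axiom~\ref{item:28711a6e3cd2b2bf}, then an entourage $V$ of $X_{0}$ with $(\bar{g} \times \bar{g})(V) \subseteq W'$. For $(\mathcal{F}, \mathcal{G}) \in \tilde{V}$ witnessed by some $A$, one picks $y_{\mathcal{F}} \in \bar{g}(A)$ within $W'$ of $\widehat{g}(\mathcal{F})$ (using that $\bar{g}(A) \in \bar{g}_{\ast}(\mathcal{F})$, which converges to $\widehat{g}(\mathcal{F})$), and similarly $y_{\mathcal{G}}$; since $y_{\mathcal{F}}, y_{\mathcal{G}} \in \bar{g}(A)$ are themselves $W'$-close, the three-fold composition yields $(\widehat{g}(\mathcal{F}), \widehat{g}(\mathcal{G})) \in W$. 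Every other step amounts to mechanical unwinding of definitions.
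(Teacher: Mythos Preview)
Your proposal is correct and follows precisely the classical Bourbaki construction via minimal Cauchy filters that the paper defers to by citing \cite{bourbaki_topologie_2006}*{\S~II.3.7}; the paper does not supply its own proof of this preliminary result, so there is nothing further to compare.
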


We now treat the compactness of uniform spaces.

\begin{theo}[\cite{bourbaki_topologie_2006}*{\S~II.4.1, Théorème~1}]
  \label{theo:af81aff906fa7aa4}
  Let \( X \) be a compact space (recall that we've assumed that compact spaces
  are Hausdorff), then there exists a unique uniform structure \( \mathcal{U} \)
  on \( X \) that is compatible with the topology of \( X \). Moreover,
  \( U \in \mathcal{U} \) if and only if \( U \) is a neighborhood of the
  diagonal in \( X \times X \).
\end{theo}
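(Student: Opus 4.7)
The plan is to take the candidate uniform structure to be precisely the one described at the end of the statement, namely
\[
\mathcal{U} \;:=\; \set*{U \subseteq X \times X \given U \text{ is a neighborhood of } \Delta_{X}},
\]
and then prove in turn that \textbf{(i)} $\mathcal{U}$ is a uniform structure, \textbf{(ii)} the induced topology of $\mathcal{U}$ is the given one, and \textbf{(iii)} any uniform structure on $X$ compatible with the topology coincides with $\mathcal{U}$.

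For \textbf{(i)}, the filter, diagonal, and symmetry axioms---the last because the coordinate swap $(x,y) \mapsto (y,x)$ is a self-homeomorphism of $X \times X$ fixing $\Delta_{X}$---are immediate. The composition axiom is the technical crux. Given an open $U \supseteq \Delta_{X}$, for each $x \in X$ I choose an open $C_{x} \ni x$ with $C_{x} \times C_{x} \subseteq U$; by compactness I extract a finite subcover $C_{x_{1}}, \dots, C_{x_{n}}$; by normality of compact Hausdorff spaces (the shrinking lemma) I refine this to an open cover $(D_{i})_{i=1}^{n}$ with $\overline{D_{i}} \subseteq C_{x_{i}}$. Then the set
\[
V \;:=\; \bigcap_{i=1}^{n} \bigl(((X \setminus \overline{D_{i}}) \times X) \cup (X \times C_{x_{i}})\bigr)
\]
is open and contains $\Delta_{X}$ (since $\overline{D_{i}} \subseteq C_{x_{i}}$), and its symmetrization $V' := V \cap V^{-1}$ satisfies $V' \circ V' \subseteq U$: if $(x,y), (y,z) \in V'$ and $i$ is chosen so that $y \in D_{i} \subseteq \overline{D_{i}}$, the $V^{-1}$ constraint at index $i$ forces $x \in C_{x_{i}}$ and the $V$ constraint at index $i$ forces $z \in C_{x_{i}}$, whence $(x,z) \in C_{x_{i}} \times C_{x_{i}} \subseteq U$.

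For \textbf{(ii)}, I check that for each $x \in X$, $\set*{U(x) \given U \in \mathcal{U}}$ equals the original neighborhood filter of $x$. One inclusion is trivial: any open product $A \times B \subseteq U$ with $(x,x) \in A \times B$ yields $B \subseteq U(x)$ as an open neighborhood of $x$. For the other, given an open neighborhood $O$ of $x$, the set $U_{O} := (X \times X) \setminus (\{x\} \times (X \setminus O))$ is open (complement of a closed set, by Hausdorffness), contains $\Delta_{X}$ (since $(y,y) \in \{x\} \times (X \setminus O)$ would force $x = y \notin O$, contradicting $x \in O$), and satisfies $U_{O}(x) = O$. For \textbf{(iii)}, let $\mathcal{U}'$ be another compatible uniform structure. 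Open entourages form a fundamental system in any uniform space (the standard argument using a symmetric $W$ with $W \circ W \circ W$ inside a given entourage), so every $U' \in \mathcal{U}'$ contains an open entourage $V'$, which a fortiori is a neighborhood of $\Delta_{X}$ and therefore lies in $\mathcal{U}$, giving $U' \in \mathcal{U}$; the reverse inclusion follows by applying the classical theorem that a continuous map from a compact uniform space to any uniform space is uniformly continuous (provable independently by the same finite-cover technique) to the identity $(X, \mathcal{U}) \to (X, \mathcal{U}')$.

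The principal obstacle is the composition axiom in \textbf{(i)}: the gadget $V'$ simultaneously leverages both the finite cover from compactness and the shrinking afforded by normality, both of which are indispensable, in keeping with the fact that on a general Tychonoff space compatible uniform structures need not be unique.
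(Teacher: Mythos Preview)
The paper does not supply a proof of this theorem; it is stated with a citation to Bourbaki \cite{bourbaki_topologie_2006}*{\S~II.4.1, Th\'eor\`eme~1} and nothing more. So there is no ``paper's own proof'' to compare against, and your attempt should be assessed on its own merits. Your argument is essentially the standard one and is sound in parts \textbf{(i)} and \textbf{(ii)}; the shrinking-lemma construction of $V'$ for the composition axiom is correct and nicely done.

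There is, however, a genuine slip in part \textbf{(iii)}. You have already shown $\mathcal{U}' \subseteq \mathcal{U}$ via open entourages; the ``reverse inclusion'' you need is $\mathcal{U} \subseteq \mathcal{U}'$. But uniform continuity of $\id \colon (X, \mathcal{U}) \to (X, \mathcal{U}')$ says precisely that every $U' \in \mathcal{U}'$ pulls back to an element of $\mathcal{U}$, i.e.\ $\mathcal{U}' \subseteq \mathcal{U}$ again---the same inclusion, not its reverse. To obtain $\mathcal{U} \subseteq \mathcal{U}'$ you must instead apply the compact-implies-uniformly-continuous theorem to the identity in the \emph{other} direction, $\id \colon (X, \mathcal{U}') \to (X, \mathcal{U})$. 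This is legitimate because $(X, \mathcal{U}')$ has compact underlying topology by hypothesis (compatibility of $\mathcal{U}'$), and the conclusion then reads: for every $U \in \mathcal{U}$, $U \in \mathcal{U}'$, which is exactly what you want. With that one arrow reversed, the proof is complete.
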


Recall that a set \( A \) in a topological space \( X \) is called
\textbf{relatively compact} if its closure is compact.

\begin{defi}
  \label{defi:8b4e3fc5c7fd7dcf}
  Let \( X \) be a uniform space, \( (\widehat{X}, i) \) its separated
  completion. We say that a set \( A \subseteq X \) is \textbf{precompact}, if
  \( i(A) \) is relatively compact in \( \widehat{X} \). We say
  \( A \subseteq X \) is \textbf{totally bounded}, if for each entourage \( U \)
  of \( X \), there exists finite many points \( a_{1}, \ldots, a_{n} \) in
  \( A \), such that \( A \subseteq \cup_{i=1}^{n}U(a_{i}) \) (see
  \eqref{eq:4d8cc04d7143f1e0} for the notation).
\end{defi}

\begin{prop}[\cite{bourbaki_topologie_2006}*{II.29, Théorème~3}]
  \label{prop:cdb6e313569d3c6e}
  Using the above notation, \( A \) is precompact if and only if it is totally
  bounded.
\end{prop}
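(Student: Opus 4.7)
The plan is to prove both directions by transferring the total-boundedness condition between $A \subseteq X$ and $i(A) \subseteq \widehat{X}$, and then to invoke the classical fact that a Hausdorff uniform space is compact if and only if it is complete and totally bounded.

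For the forward implication, assume $\overline{i(A)} \subseteq \widehat{X}$ is compact. By Theorem~\ref{theo:af81aff906fa7aa4}, $\overline{i(A)}$ carries a unique compatible uniform structure, with respect to which it is trivially totally bounded: for any entourage $\widehat{V}$ of $\widehat{X}$, the open cover of $\overline{i(A)}$ by the neighborhoods $\widehat{V}(y)$, $y \in \overline{i(A)}$, admits a finite subcover. Given an entourage $U$ of $X$, the definition of the initial uniform structure on $X$ inherited from $i$ furnishes an entourage $\widehat{U}$ of $\widehat{X}$ with $(i \times i)^{-1}(\widehat{U}) \subseteq U$; I then choose a symmetric $\widehat{V}$ with $\widehat{V} \circ \widehat{V} \subseteq \widehat{U}$ and a finite cover $\overline{i(A)} \subseteq \bigcup_{j=1}^{n} \widehat{V}(\hat{a}_{j})$. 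Since each $\widehat{V}(\hat{a}_{j})$ is a neighborhood of $\hat{a}_{j} \in \overline{i(A)}$, it meets the dense subset $i(A)$, producing $a_{j} \in A$ with $(\hat{a}_{j}, i(a_{j})) \in \widehat{V}$. A routine chase through $\widehat{V} \circ \widehat{V} \subseteq \widehat{U}$ shows $A \subseteq \bigcup_{j=1}^{n} U(a_{j})$.

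For the converse, assume $A$ is totally bounded. First, I would push the condition forward along $i$: for any entourage $\widehat{V}$ of $\widehat{X}$, the set $U := (i \times i)^{-1}(\widehat{V})$ is an entourage of $X$, and a finite cover $A \subseteq \bigcup_{j} U(a_{j})$ immediately yields $i(A) \subseteq \bigcup_{j} \widehat{V}(i(a_{j}))$, so $i(A)$ is totally bounded in $\widehat{X}$. Next, closure preserves total boundedness: given $\widehat{V}$, pick a symmetric $\widehat{W}$ with $\widehat{W} \circ \widehat{W} \subseteq \widehat{V}$ and cover $i(A)$ by finitely many sets $\widehat{W}(y_{j})$; for any $y \in \overline{i(A)}$ the neighborhood $\widehat{W}(y)$ meets $i(A)$, hence meets some $\widehat{W}(y_{j})$, and symmetry plus composition yield $(y_{j}, y) \in \widehat{W} \circ \widehat{W} \subseteq \widehat{V}$, i.e.\ $y \in \widehat{V}(y_{j})$. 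Finally, $\overline{i(A)}$ is closed in the complete Hausdorff space $\widehat{X}$, hence complete by Proposition~\ref{prop:903bfccf4a781ceb}\ref{item:d564b3a44ec3bcf0}.

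To conclude, I invoke the classical criterion that a Hausdorff uniform space is compact if and only if it is complete and totally bounded---in such a space every ultrafilter is automatically Cauchy, and completeness then forces convergence. Applied to $\overline{i(A)}$, this gives compactness, so $i(A)$ is relatively compact and $A$ is precompact. The only ingredients beyond formal manipulation of entourages are this compactness criterion and the preservation of total boundedness under closure, both standard results in Bourbaki's treatment of uniform spaces; I expect no real obstacle beyond carefully keeping track of the interaction between the uniform structure on $X$ and the one on $\widehat{X}$ that pulls back to it.
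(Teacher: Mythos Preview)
The paper does not supply its own proof of this proposition; it merely records the statement with a citation to Bourbaki's \emph{Topologie g\'en\'erale}, Chapitre~II, Th\'eor\`eme~3. Your argument is correct and is essentially the standard proof one finds there: transfer total boundedness between \(A\) and \(i(A)\) via the initial uniform structure, note that closure preserves total boundedness, and conclude via the criterion that a Hausdorff uniform space is compact precisely when it is complete and totally bounded. There is nothing to compare against in the paper itself, and no gap in what you wrote.
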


We end our general discussion of uniform spaces with a very useful notion.

\begin{defi}
  \label{defi:94460281f200f584}
  Let \( Y \) be a uniform space, \( X \) a topological space (resp.\ uniform
  space), \( H \) a family of maps from \( X \) to \( Y \). We say that \( H \)
  is \textbf{equicontinuous} (resp.\ \textbf{uniformly equicontinuous}), if for
  each open set \( V \) in \( Y \) (resp.\ an entourage \( \mathcal{V} \) of
  \( Y \)), the set \( \cap_{f \in H}f^{-1}(V) \) (resp.\
  \( \cap_{f \in H}(f \times f)^{-1}(\mathcal{V}) \)) is an open set (resp.\ an
  entourage) of \( X \).
\end{defi}

\begin{rema}
  \label{rema:ad31e8284450a2ef}
  It is clear that uniform equicontinuity implies equicontinuity, and a map
  \( f : X \to Y \) is continuous (resp.\ uniformly continuous) if and only if
  \( \set*{f} \) is equicontinuous (resp.\ uniformly equicontinuous).
\end{rema}
\subsection{The topology of
  \texorpdfstring{\( \mathfrak{S} \)}{S}-convergence}
\label{sec:3a8d43b32f489a0c}

\begin{defi}[\cite{bourbaki_topologie_2006-1}*{\S~X.1.1}]
  \label{defi:a4863de2d22b6ea1}
  Let be \( X \) a set, \( (Y, \mathcal{V)} \) a uniform space,
  \( \mathcal{F}(X, Y) \) a collection of maps from \( X \) to \( Y \). The
  \textbf{uniform structure of uniform convergence} on \( \mathcal{F}(X, Y) \)
  is the one where a filter basis for the collection of entourages is given by
  all sets of the form
  \begin{displaymath}
    W(V) : = \set[\big]{(u, v) \in \mathcal{F}(X, Y) \times \mathcal{F}(X, Y) \given \forall x \in X, \, (u(x), v(x)) \in V},
    \; V \in \mathcal{V}.
  \end{displaymath}
  We denote the resulting uniform space by \( \mathcal{F}_{u}(X, Y) \).
\end{defi}

\begin{defi}[\cite{bourbaki_topologie_2006-1}*{\S~X.1.2}]
  \label{defi:acbf0c33176b6907}
  Let be \( X \) a set, \( (Y, \mathcal{V)} \) a uniform space,
  \( \mathcal{F}(X, Y) \) a collection of maps from \( X \) to \( Y \), and
  \( \mathfrak{S} \) a collection of subsets of \( X \). The \textbf{uniform
    structure of \( \mathfrak{S} \)-convergence}, or more precisely, the
  \textbf{uniform structure of uniform convergence on sets of
    \( \mathfrak{S} \)}, on \( \mathcal{F}(X, Y) \), is the initial uniform
  structure with respect to the family all restriction map
  \( r_{A} : \mathcal{F}(X, Y) \to \mathcal{F}_{u}(X, Y) \),
  \( A \in \mathfrak{S} \). The resulting uniform space is denoted by
  \( \mathcal{F}_{\mathfrak{S}}(X, Y) \), and its topology is called \textbf{the
    topology of uniform convergence on sets of \( \mathfrak{S} \)}, or simply
  \textbf{the topology of \( \mathfrak{S} \)-convergence}.
\end{defi}

We will make extensive use of Bourbaki's version of Ascoli theorem.
\begin{theo}[\cite{bourbaki_topologie_2006-1}*{X.17, Théorème~2}]
  \label{theo:957848d2ddc8435e}
  Let \( X \) be a topological (resp.\ uniform) space, \( Y \) a uniform space,
  \( \mathfrak{S} \) a family of subsets of \( X \) that covers \( X \), and
  \( H \) a family of maps from \( X \) to \( Y \). Suppose the restriction of
  any map in \( H \) onto any \( A \in \mathfrak{S} \) is continuous (resp.\
  uniformly continuous). For \( H \) to be precompact with respect to the
  uniform structure of \( \mathfrak{S} \)-convergence, it is necessary in all
  cases, and also sufficient in the case when all sets in \( \mathfrak{S} \) are
  compact (resp.\ precompact), that the following hold:
  \begin{enumerate}
  \item \label{item:2aa19cbd96eb8ada} the restriction
    \( H\vert_{A}:= \set*{f\vert_{A}\given f \in H} \) is equicontinuous;
  \item \label{item:bd4632330be01ea0} the set
    \( H(x) := \set*{f(x) \given x \in X} \) is precompact in \( Y \).
  \end{enumerate}
\end{theo}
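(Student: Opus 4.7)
The plan is to combine Proposition~\ref{prop:cdb6e313569d3c6e} (precompact equals totally bounded) with the fact from Definition~\ref{defi:acbf0c33176b6907} that the uniform structure on \( \mathcal{F}_{\mathfrak{S}}(X, Y) \) is initial with respect to the restrictions \( r_{A} \) for \( A \in \mathfrak{S} \). Since each \( r_{A} \) is uniformly continuous, a subset \( H \) is precompact in \( \mathcal{F}_{\mathfrak{S}}(X, Y) \) if and only if each restriction \( H\vert_{A} \) is precompact in \( \mathcal{F}_{u}(A, Y) \). This reduces both directions to the single-set case \( \mathfrak{S} = \set*{X} \), with \( X \) itself compact (resp.\ precompact) in the sufficiency direction.

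For necessity, assume \( H \) is precompact. Fixing \( A \in \mathfrak{S} \), the set \( H\vert_{A} \) is precompact in \( \mathcal{F}_{u}(A, Y) \). Each evaluation \( \mathrm{ev}_{x} : \mathcal{F}_{u}(A, Y) \to Y \) is uniformly continuous, so \( H(x) \) is precompact in \( Y \), proving (2). For (1), fix an entourage \( V \) of \( Y \) and a symmetric entourage \( W \) with \( W \circ W \circ W \subseteq V \); by total boundedness there exist \( f_{1}, \ldots, f_{n} \in H \) such that every \( f \in H \) satisfies \( (f(x), f_{i}(x)) \in W \) for all \( x \in A \) and some \( i \). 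Each \( f_{i} \) is continuous (resp.\ uniformly continuous) on \( A \), so around any \( x_{0} \in A \) there is a neighborhood (resp.\ an entourage of \( A \)) on which \( (f_{i}(\cdot), f_{i}(x_{0})) \in W \) simultaneously for all \( i \). A three-step triangle argument through \( W \circ W \circ W \subseteq V \) then yields \( (f(x), f(x_{0})) \in V \) for all \( f \in H \), as required.

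For sufficiency, reduced to \( \mathfrak{S} = \set*{X} \) with \( X \) compact (resp.\ precompact), pick a symmetric entourage \( W \) of \( Y \) with \( W \circ W \circ W \subseteq V \). Equicontinuity combined with compactness of \( X \) produces finitely many points \( x_{1}, \ldots, x_{k} \) whose \( W \)-neighborhoods cover \( X \) and on which every \( f \in H \) varies by at most \( W \); in the precompact variant one first extends \( H \) by uniform continuity to the completion \( \widehat{X} \) via Theorem~\ref{theo:5c48561db339a0d3} and uses uniform equicontinuity to get the analogous finite cover by a single entourage. Because \( H \) is pointwise precompact, the image of the evaluation map \( f \mapsto (f(x_{1}), \ldots, f(x_{k})) \) sits inside the precompact set \( \prod_{j=1}^{k} \overline{H(x_{j})} \) in the completion of \( Y^{k} \), which is totally bounded; finitely many \( W \)-small sets cover it, and choosing a representative \( f_{i} \in H \) in each nonempty preimage gives a finite list such that every \( f \in H \) is within \( V \) of some \( f_{i} \) uniformly over \( X \). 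Hence \( H \) is totally bounded, hence precompact.

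The main obstacle is this sufficiency step, specifically the extraction of a single finite set \( \set*{x_{1}, \ldots, x_{k}} \) of test points whose values simultaneously control all elements of \( H \) on all of \( X \); this is exactly where the hypothesis that the members of \( \mathfrak{S} \) are compact or precompact enters decisively, since without it no finite sub-cover (resp.\ no finite entourage-cover) of \( X \) by equicontinuity neighborhoods is available. In the precompact/uniform case the additional subtlety is passing through the completion \( \widehat{X} \) in a way that simultaneously preserves the uniform equicontinuity of \( H \) and the precompactness of the pointwise images, which is handled by the universal property in Theorem~\ref{theo:5c48561db339a0d3} together with the elementary fact that a finite product of precompacts is precompact.
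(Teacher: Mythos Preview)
The paper does not give a proof of this theorem; it merely cites Bourbaki, \emph{Topologie g\'en\'erale}, X.17, Th\'eor\`eme~2. So there is no in-paper argument to compare against. Your sketch is the standard proof and is essentially correct: the reduction to a single $A\in\mathfrak{S}$ via the initial uniform structure is valid (for the nontrivial direction one refines successively over a finite list $A_{1},\ldots,A_{n}$), and both the necessity and sufficiency arguments in the compact case are the classical Arzel\`a--Ascoli computations.

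One small point in the precompact/uniform variant of the sufficiency step: passing to the completion $\widehat{X}$ via Theorem~\ref{theo:5c48561db339a0d3} is legitimate, but you should check explicitly that the extended family $\widehat{H}$ is still pointwise precompact at points of $\widehat{X}\setminus X$. This follows from uniform equicontinuity (for any entourage $W$ of $Y$ there is an entourage $U$ of $\widehat{X}$ with $(\widehat{f}(x),\widehat{f}(x'))\in W$ for all $f$ whenever $(x,x')\in U$; pick $x'\in X$ with $(x,x')\in U$ and use precompactness of $H(x')$), but it deserves a sentence. Alternatively, and more directly, you can avoid the completion entirely: uniform equicontinuity already furnishes a single entourage $U$ of $X$ controlling all $f\in H$ simultaneously, and total boundedness of $X$ then gives the finite set $\{x_{1},\ldots,x_{k}\}$ without any extension step.
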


Given a topological vector space
\( E \) (locally convex or not, Hausdorff or not), when we speak of the uniform
structure property of \( E \), unless stated otherwise, we are always referring
to the uniform structure on \( E \) where a fundamental system of entourages is
given by sets of the form
\( \widetilde{V} := \set*{(x, y) \in E \given x - y \in V} \), \( V \) being a
neighborhood of \( 0 \in E \). This uniform structure is compatible with the
topology on \( E \).

One often give a locally convex topology on a vector space \( E \) by specifying
a fundamental system of absolutely convex neighborhoods of \( 0 \). Here's a
criterion for when this can be done.

\begin{prop}
  \label{prop:fd1b582617d3215e}
  Let \( E \) be a vector space, and \( \mathcal{F} \) a filter basis consisting
  of absolutely convex subsets of \( E \). Suppose the following hold:
  \begin{enumerate}
  \item \label{item:119aac6c1751a765} each \( M \in \mathcal{F} \) is absorbing;
  \item \label{item:07cb8057a6172eb7} \( M \in \mathcal{F} \) implies that
    \( \lambda M \in \mathcal{F} \) for all nonzero
    \( \lambda \in \mathbb{K} \).
  \end{enumerate}
  Then, there exists a unique locally convex topology on \( E \) such that
  \( \mathcal{F} \) is a fundamental system of neighborhoods of \( 0 \in E \).
  This topology is Hausdorff if and only if the intersection of all sets in
  \( \mathcal{F} \) is \( \set*{0} \).
\end{prop}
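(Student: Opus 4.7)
The plan is to construct the topology explicitly and then verify, in turn, that it is a topology, that it is a vector space topology, that it is locally convex with $\mathcal{F}$ as a fundamental system of neighborhoods of $0$, and finally the uniqueness and Hausdorff criterion. Concretely, I would declare $O \subseteq E$ to be open precisely when for every $x \in O$ there exists $M \in \mathcal{F}$ with $x + M \subseteq O$. Closure under arbitrary unions is automatic, while closure under binary intersections uses that $\mathcal{F}$ is a filter basis (given $M_1, M_2 \in \mathcal{F}$, pick some $M_3 \in \mathcal{F}$ with $M_3 \subseteq M_1 \cap M_2$). From here a set is a neighborhood of $0$ iff it contains some $M \in \mathcal{F}$, which gives both local convexity and the fact that $\mathcal{F}$ is a fundamental system at the origin.

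Next I would establish compatibility with the linear structure. Continuity of addition reduces by translation to continuity at $(0,0)$: given $M \in \mathcal{F}$, condition \ref{item:07cb8057a6172eb7} applied with $\lambda = 1/2$ forces $\tfrac{1}{2}M \in \mathcal{F}$, and absolute convexity of $M$ gives $\tfrac{1}{2}M + \tfrac{1}{2}M \subseteq M$. Continuity of scalar multiplication at a point $(\lambda_0, x_0)$ is handled via the standard three-term decomposition
\[
\lambda x - \lambda_0 x_0 = (\lambda - \lambda_0) x_0 + \lambda_0 (x - x_0) + (\lambda - \lambda_0)(x - x_0),
\]
with each summand pushed into $\tfrac{1}{3}M \in \mathcal{F}$: the first by invoking condition \ref{item:119aac6c1751a765}, so that $\tfrac{1}{3}M$ absorbs $x_0$ and yields a bound of the form $|\lambda - \lambda_0| \leq t$; the second (trivial when $\lambda_0 = 0$, otherwise) by taking $x - x_0 \in (3\lambda_0)^{-1} M \in \mathcal{F}$ via condition \ref{item:07cb8057a6172eb7}; the third by also imposing $|\lambda - \lambda_0| \leq 1$ and exploiting that $M$ is balanced. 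Uniqueness of the topology is forced: any vector space topology on $E$ admitting $\mathcal{F}$ as a fundamental system of neighborhoods of $0$ must, by the translation invariance built into any such topology, assign the same neighborhood filter to every point, and hence coincide with the one constructed.

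For the Hausdorff dichotomy, I would use the standard fact that a topological vector space is Hausdorff if and only if $\bigcap_{V \in \mathcal{N}(0)} V = \{0\}$; since $\mathcal{F}$ is a fundamental system at $0$, this intersection coincides with $\bigcap \mathcal{F}$. If $\bigcap \mathcal{F} = \{0\}$, given distinct $x, y$ pick $M \in \mathcal{F}$ with $x - y \notin M$, and then the neighborhoods $x + \tfrac{1}{2}M$ and $y + \tfrac{1}{2}M$ are disjoint since $\tfrac{1}{2}M - \tfrac{1}{2}M \subseteq M$.

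The only step that is not pure bookkeeping is the continuity of scalar multiplication: it is the unique place where all three hypotheses (absorbing, scalar-stable, absolutely convex) must be coordinated simultaneously, via the three-term estimate above. Everything else follows from routine verifications, so this coordination is the main (and quite modest) obstacle.
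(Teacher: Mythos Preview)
Your proposal is correct and follows the standard route. The paper itself does not give an argument at all: its entire proof is the single line ``This follows easily from \cite{MR3154940}*{p52, Theorem~3.2}'', deferring to Osborne's textbook. What you have written is essentially the content of that cited theorem spelled out in full, so there is no substantive difference in approach---you have simply unpacked the reference rather than invoking it.
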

\begin{proof}
  This follows easily from \cite{MR3154940}*{p52, Theorem~3.2}.
\end{proof}

\begin{prop}
  \label{prop:eb240fbacebaf554}
  Let \( E \) be a locally convex space, \( T \) a set, \( \mathfrak{S} \) a
  collection of subsets of \( T \), \( G \) a linear subspace of the space of
  maps from \( T \) to \( E \). Denote by \( G_{\mathfrak{S}} \) the space
  \( G \) equipped with the topology of \( \mathfrak{S} \)-convergence.
  \begin{enumerate}
  \item \label{item:945ea785188d4872} If \( f(A) \) is bounded in \( E \) for all
    \( f \in G \), \( A \in \mathfrak{S} \), then the topology of \( G_{\mathfrak{S}} \) is locally convex.
  \item \label{item:86e45c617c23d6ec} If \ref{item:945ea785188d4872} holds, and
    for any \( f \in G \), \( f(A) = 0 \) for all \( A \in \mathfrak{S} \)
    implies \( f = 0 \), then \( G_{\mathfrak{S}} \) is a locally convex space.
  \end{enumerate}
\end{prop}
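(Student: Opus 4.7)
The plan is to work directly with the neighborhood filter of $0$ induced by the uniform structure of $\mathfrak{S}$-convergence, show it has a filter basis of absolutely convex sets meeting the hypotheses of Proposition~\ref{prop:fd1b582617d3215e}, and then read off both conclusions from that proposition. Because the defining entourages $W_A(V)$ are translation-invariant (they depend only on $u-v$), the associated topology on $G$ is translation-invariant; thus it is entirely determined by the filter of neighborhoods of $0$, and a basis of that filter is given by finite intersections of the sets
\[
  M_A(V) := \set*{f \in G \given f(A) \subseteq V},
  \qquad A \in \mathfrak{S},\ V \in \mathcal{N}_{\Gamma}(0) \text{ in } E,
\]
where we have used the local convexity of $E$ to restrict attention to absolutely convex $V$.

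For \ref{item:945ea785188d4872}, I would check that each $M_A(V)$ is absolutely convex and absorbing. Absolute convexity is inherited pointwise from $V$. For the absorbing property, fix $f \in G$; by hypothesis $f(A)$ is bounded in $E$, and since $V$ is an absolutely convex neighborhood of $0$ it absorbs bounded sets, so there exists $\lambda > 0$ with $f(A) \subseteq \lambda V$, i.e.\ $f \in \lambda M_A(V)$. Moreover $\lambda M_A(V) = M_A(\lambda V)$ for any nonzero $\lambda \in \mathbb{K}$, so the family of all finite intersections of sets $M_A(V)$ is a filter basis stable under nonzero scalar multiplication and consisting of absolutely convex absorbing sets. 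Proposition~\ref{prop:fd1b582617d3215e} then produces a locally convex topology $\tau$ on $G$ admitting this family as a basis of neighborhoods of $0$. Since both $\tau$ and the $\mathfrak{S}$-convergence topology are translation-invariant with the same basis at $0$, they coincide; hence $G_{\mathfrak{S}}$ is locally convex (in the not-necessarily-Hausdorff sense).

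For \ref{item:86e45c617c23d6ec}, by the last sentence of Proposition~\ref{prop:fd1b582617d3215e} the topology is Hausdorff if and only if
\[
  \bigcap_{A \in \mathfrak{S}} \bigcap_{V \in \mathcal{N}_{\Gamma}(0)} M_A(V) = \set*{0}.
\]
If $f$ lies in this intersection, then $f(A) \subseteq V$ for every $A \in \mathfrak{S}$ and every absolutely convex neighborhood $V$ of $0$ in $E$, which forces $f(A) \subseteq \bigcap_{V} V = \set*{0}$ since $E$ is Hausdorff. Hence $f(A) = 0$ for every $A \in \mathfrak{S}$, and the separating hypothesis of \ref{item:86e45c617c23d6ec} yields $f = 0$.

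There is no genuine obstacle here; the only point demanding a little care is the verification that the topology produced by Proposition~\ref{prop:fd1b582617d3215e} really coincides with the topology of $\mathfrak{S}$-convergence, which I would resolve by the translation-invariance argument above rather than by re-examining the uniform structure.
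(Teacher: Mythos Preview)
Your proof is correct and follows essentially the same approach as the paper: both introduce the basic neighborhoods $M_A(V)=\set*{f\in G \given f(A)\subseteq V}$, verify they are absolutely convex and absorbing under hypothesis~\ref{item:945ea785188d4872}, apply Proposition~\ref{prop:fd1b582617d3215e}, and then compute the intersection to obtain Hausdorffness under~\ref{item:86e45c617c23d6ec}. The only difference is that you spell out the translation-invariance argument identifying the resulting topology with the $\mathfrak{S}$-convergence topology, which the paper simply subsumes under ``unwinding the definitions''.
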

\begin{proof}
  Unwinding the definitions, a fundamental system of neighborhoods of
  \( 0 \in G \), which we denote by \( \mathcal{F} \), can be given by
  \emph{finite intersections of} absolutely convex sets of the form
  \begin{equation}
    \label{eq:53722af48bb64390}
    M(S, V):= \set*{f \in G \given f(S) \subseteq V}, \qquad S \in \mathfrak{S}, \, V \in \mathcal{N}_{\Gamma}(0).
  \end{equation}
  The condition in \ref{item:945ea785188d4872} implies the filter basis each set
  in \( \mathcal{F} \) is absorbing, hence the topology of
  \( \mathfrak{S} \)-convergence on \( G \) is locally convex by
  Proposition~\ref{prop:fd1b582617d3215e}. Since \( E \) is Hausdorff,
  \( \cap_{V \in \mathcal{N}_{\Gamma}(0)} V = \set*{0} \), the condition in
  \ref{item:86e45c617c23d6ec} implies that the intersection of all sets in
  \( \mathcal{F} \) is
  \begin{displaymath}
    \bigcap_{S \in \mathfrak{S}, V \in \mathcal{N}_{\Gamma}(0)}M(S, V) = \bigcap_{S \in \mathfrak{S}}\bigl(\bigcap_{V \in \mathcal{N}_{\Gamma}(0)}M(S, V)\bigr)
    = \bigcap_{S \in \mathfrak{S}} \left(\set*{f \in G \given f(S) \subseteq \bigcap_{V \in \mathcal{N}_{\Gamma}(0)}V = \set*{0}}\right)
    = \set*{0},
  \end{displaymath}
  thus \( G_{\mathfrak{S}} \) is a locally convex space (it is Hausdorff) by the
  same proposition.
\end{proof}

We will use Proposition~\ref{prop:eb240fbacebaf554} to define locally convex
topologies on certain spaces of bilinear forms when treating topological tensor
products. For now, we focus on the space of linear maps, in which case the
standard way is to introduce the notion of a bornology, following \cite{MR633754}*{\S~III.1}

\begin{defi}
  \label{defi:8503d41d4ff22cde}
  Let \( E \) be a vector space. A \textbf{bornology} \( \mathfrak{B} \) on
  \( E \) is a collection of subsets of \( E \) if it is
  \begin{enumerate*}
  \item filtered below in the sense that \( B \in \mathfrak{B} \) and
    \( A \subseteq B \) implies \( A \in \mathfrak{B} \);
  \item closed under taking finite unions.
  \end{enumerate*}
  We say the bornology \( \mathfrak{B} \) is \textbf{convex}, if in addition, we have
  \begin{enumerate}[resume]
  \item \( B \in \mathfrak{B} \) and \( \lambda \in \mathsf{K} \) implies \( \lambda B \in \mathfrak{B} \);
  \item \( B \in \mathfrak{B} \) implies \( \Gamma(B) \in \mathfrak{B} \), where
    \( \Gamma(B) \) denotes the absolutely convex hull of \( B \).
  \end{enumerate}
  If \( E \) is equipped with a linear topology \( \mathfrak{T} \) (locally
  convex or not), we say that a bornology \( \mathfrak{B} \) on \( E \) is
  \textbf{bounded} if it consists of only bounded sets; we say
  \( \mathfrak{B} \) is \textbf{adapted} to \( \mathfrak{T} \), or simply
  \( \mathfrak{B} \) is an \textbf{adapted bornology} when the topology
  \( \mathfrak{T} \) is clear from context, if \( \mathfrak{B} \) is convex,
  bounded and is stable under taking closure. The smallest adapted bornology of
  a bornology \( \mathfrak{B} \), which always exist, is called the
  \textbf{saturation} of \( \mathfrak{B} \). We say \( \mathfrak{B} \) is
  \textbf{total} if \( \cup_{B \in \mathfrak{B}}B \) is total in \( E \), i.e.\
  it spans a dense linear subspace; it is \textbf{covering} if
  \( \mathfrak{B} \) covers \( E \).
\end{defi}

\begin{prop}[\cite{MR0551623}*{\S~39.1(1) \& (2)}]
  \label{prop:2ac8943873ea3985}
  Let \( E \), \( F \) be locally convex spaces, \( \mathfrak{S} \) a total
  bornology on \( E \), then \( \mathcal{L}_{\mathfrak{B}}(E, F) \), i.e.\ the
  space of all continuous linear maps from \( E \) to \( F \) equipped with the
  \( \mathfrak{S} \)-topology, is a locally convex space. Let
  \( \mathfrak{S}' \) be another such bornology, then on
  \( \mathcal{L}(E, F) \), the \( \mathfrak{S} \)-topology coincides with the
  \( \mathfrak{S}' \)-topology if and only if \( \mathfrak{S} \) and
  \( \mathfrak{S}' \) have the same saturation.
\end{prop}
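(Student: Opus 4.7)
The plan is to split the statement into (i) the claim that $\mathcal{L}_{\mathfrak{S}}(E, F)$ is a locally convex space, and (ii) the characterization of coincidence of $\mathfrak{S}$-topologies via saturations. For (i) I would apply Proposition~\ref{prop:eb240fbacebaf554} to $G = \mathcal{L}(E, F)$: since $\mathfrak{S}$ is a bornology of bounded sets and continuous linear maps preserve boundedness, $f(A)$ is bounded in $F$ for every $f \in G$ and $A \in \mathfrak{S}$, giving \ref{item:945ea785188d4872}; and if $f(A) = \{0\}$ for every $A \in \mathfrak{S}$, then $f$ vanishes on the linear span of $\bigcup_{A \in \mathfrak{S}} A$, which is dense in $E$ by totality, so continuity forces $f = 0$, giving \ref{item:86e45c617c23d6ec}.

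For (ii), I would first prove the ``if'' direction by showing that any bornology $\mathfrak{S}$ and its saturation $\operatorname{sat}(\mathfrak{S})$ define the same topology on $\mathcal{L}(E, F)$. This reduces to four elementary identities about the sub-basic neighborhoods \eqref{eq:53722af48bb64390}:
\[
M(A \cup A', V) = M(A, V) \cap M(A', V), \qquad M(\lambda A, V) = M(A, \lambda^{-1} V),
\]
\[
M(\Gamma(A), V) = M(A, V) \text{ for $V$ absolutely convex}, \qquad M(\overline{A}, V) = M(A, V) \text{ for $V$ closed},
\]
combined with the fact that closed absolutely convex neighborhoods form a base at $0 \in F$. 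Applying this to both $\mathfrak{S}$ and $\mathfrak{S}'$ shows that equal saturations force equal topologies.

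The ``only if'' direction is the main obstacle. By symmetry it suffices to prove $\operatorname{sat}(\mathfrak{S}) \subseteq \operatorname{sat}(\mathfrak{S}')$. Fix $A \in \operatorname{sat}(\mathfrak{S})$ and a closed absolutely convex neighborhood $V$ of $0 \in F$ with $V \neq F$ (trivially attainable unless $F = \{0\}$). By the previous paragraph $M(A, V)$ is a $\mathfrak{S}$-neighborhood, hence a $\mathfrak{S}'$-neighborhood, so there exist $B_1, \ldots, B_n \in \mathfrak{S}'$ and a closed absolutely convex neighborhood $V'$ of $0$ with $\bigcap_{i=1}^n M(B_i, V') \subseteq M(A, V)$. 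Setting $B = \overline{\Gamma}(B_1 \cup \cdots \cup B_n) \in \operatorname{sat}(\mathfrak{S}')$, this becomes $M(B, V') \subseteq M(A, V)$.

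The crucial step is to turn this single inclusion into a containment $A \subseteq \lambda B$ for some $\lambda > 0$, which I would accomplish via a Hahn--Banach / bipolar argument. Let $q$ denote the Minkowski gauge of $V$ and pick any $y \in V'$ with $q(y) > 0$ (possible since $F$ is Hausdorff and $V \neq F$). For each $\ell \in B^{\circ} \subseteq E'$, the rank-one operator $f = \ell(\cdot)\, y$ satisfies $f(B) \subseteq V'$ because $V'$ is balanced and $y \in V'$, so $f \in M(B, V') \subseteq M(A, V)$; reading off the latter yields $|\ell(a)|\, q(y) \leq 1$ uniformly for $a \in A$ and $\ell \in B^{\circ}$. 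The bipolar theorem (noting $B$ is closed absolutely convex and contains $0$) then gives $A \subseteq q(y)^{-1} B^{\circ\circ} = q(y)^{-1} B$, so $A \in \operatorname{sat}(\mathfrak{S}')$ as required.
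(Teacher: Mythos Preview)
The paper does not supply its own proof of this proposition; it is stated with a direct citation to K\"othe \cite{MR0551623}*{\S~39.1(1) \& (2)} and left unproved. Your argument is correct and is essentially the standard one found in that reference: part~(i) via Proposition~\ref{prop:eb240fbacebaf554}, the ``if'' direction by checking that the sub-basic sets $M(A,V)$ are unchanged under the saturation operations, and the ``only if'' direction by probing $M(B,V') \subseteq M(A,V)$ with rank-one operators $\ell(\cdot)\,y$ and invoking the bipolar theorem. One small remark: in step~3 the basic $\mathfrak{S}'$-neighborhoods are a priori of the form $\bigcap_i M(B_i, V_i)$ with possibly different $V_i$, but replacing each $V_i$ by the common $V' = \bigcap_i V_i$ only shrinks the intersection, so your reduction to a single $V'$ is justified.
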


\begin{nota}
  \label{nota:6a66d03bb5d7af70}
  Let \( E \) be a not necessarily Hausdorff locally convex space. We use
  \( \mathfrak{F}(E) \), \( \mathfrak{C}(E) \) and \( \mathfrak{B}(E) \), or
  simply \( \mathfrak{F} \), \( \mathfrak{C} \) and \( \mathfrak{B}\) if \( E \)
  is clear form context, to denote respectively the collection of all finite
  sets, all absolutely convex weakly compact sets, all precompact sets and all
  bounded sets in \( E \).
\end{nota}

\begin{prop}
  \label{prop:6c0374a8634e2b65}
  Let \( E \) be a not necessarily Hausdorff locally convex space, then
  \( \mathfrak{F} \), \( \mathfrak{C} \) and \( \mathfrak{B}\) are all bounded
  covering bornologies on \( E \), among which \( \mathfrak{C} \) and
  \( \mathfrak{B} \) are adapted.
\end{prop}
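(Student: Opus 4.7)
The plan is to verify, for each of the three collections, the axioms in Definition~\ref{defi:8503d41d4ff22cde} one at a time. First I would check that $\mathfrak{F}$, $\mathfrak{C}$, $\mathfrak{B}$ are bornologies, i.e.\ filtered below and stable under finite unions. For $\mathfrak{F}$ this is trivial; for $\mathfrak{B}$ it follows from the basic observation that a subset of a bounded set is bounded and that a finite union of bounded sets is absorbed by any $0$-neighborhood; for $\mathfrak{C}$ one uses the equivalence of precompactness with total boundedness (Proposition~\ref{prop:cdb6e313569d3c6e}), together with the fact that finitely many $V$-nets for finitely many sets can be juxtaposed into a single finite $V$-net for their union. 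That each collection covers $E$ is immediate from $\{x\}\in\mathfrak{F}\subseteq\mathfrak{C}\subseteq\mathfrak{B}$, and boundedness of every member set in each collection follows from the same chain of inclusions once one recalls that finite sets are bounded in any topological vector space and precompact sets are bounded in a locally convex space (a finite $V$-net for $A$ shows $A$ is absorbed by any scalar dilate of $V$).

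Turning to the ``adapted'' claim, I would verify for $\mathfrak{B}$ the three remaining conditions---closure under scalar multiplication, under absolutely convex hulls, and under topological closure---using standard facts about boundedness in locally convex spaces; the convex-hull step here is the classical statement that $\Gamma(B)$ is bounded whenever $B$ is, which does use local convexity. For $\mathfrak{C}$, closure-stability of total boundedness is a general fact about uniform spaces, and closure under $\lambda B$ is clear (e.g.\ from uniform continuity of $x\mapsto \lambda x$ on $A$). The only substantive point is therefore that the absolutely convex hull of a precompact set is precompact.

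For this main step, I would argue as follows. Let $A\subseteq E$ be precompact and let $U$ be an absolutely convex $0$-neighborhood. Pick $V\in\mathcal{N}_{\Gamma}(0)$ with $V+V\subseteq U$, and a finite $F\subseteq E$ with $A\subseteq F+V$. Since $V$ is absolutely convex, one has $\Gamma(A)\subseteq\Gamma(F+V)\subseteq\Gamma(F)+V$. Now $\Gamma(F)$ is a bounded subset of the finite-dimensional subspace it generates, hence precompact by the usual Heine--Borel argument in $\mathbb{K}^{n}$, so $\Gamma(F)\subseteq F'+V$ for some finite $F'$. Combining these inclusions gives $\Gamma(A)\subseteq F'+V+V\subseteq F'+U$, proving precompactness of $\Gamma(A)$. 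I expect this convex-hull estimate to be the only non-routine piece of the argument; everything else is a direct unwinding of the definitions of bornology, boundedness, and precompactness together with Definition~\ref{defi:8503d41d4ff22cde}.
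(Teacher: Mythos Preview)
Your proposal is correct and follows essentially the same approach as the paper's own proof. The paper identifies the stability of \(\mathfrak{C}\) under taking absolutely convex hulls as ``the only nontrivial part'' and cites \cite{MR0342978}*{\S~II.4.3} for it; the explicit \(V+V\subseteq U\) argument you give is precisely the standard proof appearing there, so you have written out what the paper elects to reference.
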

\begin{proof}
  We sketch the proof of \( \mathfrak{C} \) is stable under taking the closed
  absolutely convex hull, which is the only nontrivial part. By
  Proposition~\ref{prop:cdb6e313569d3c6e}, it coincides with the collection of
  totally bounded subsets. By \cite{MR0342978}*{\S~II.4.3}, \( \mathfrak{C} \)
  is stable under taking the absolutely convex hull, and it's stable under
  taking closure follows directly from Definition~\ref{defi:8b4e3fc5c7fd7dcf}
  and the fact that any closed subspace of compact spaces remains compact.
\end{proof}

\begin{defi}
  \label{defi:3852fa5b63ee88dd}
  Let \( F \) be another locally convex space.  On \( \mathcal{L}(E, F) \), the
  \( \mathfrak{F} \), \( \mathfrak{C} \) and \( \mathfrak{B} \)-topologies are
  called respectively the \textbf{topology of simple, precompact and bounded
    convergence}, and is denoted respectively by \( \mathcal{L}_{s}(E, F) \),
  \( \mathcal{L}_{c}(E, F) \) (see the caution in
  Notation~\ref{nota:ee765bb793c0ed9f}) and \( \mathcal{L}_{b}(E, F) \). When
  \( F = \mathbb{K} \), the resulting locally convex space is denoted by
  respectively \( E'_{s} \), \( E'_{c} \), \( E'_{b} \), and is respectively
  called the \textbf{weak dual}, the \textbf{polar dual} and the \textbf{strong
    dual} of \( E \).
\end{defi}

We shall have the occasion to use the Banach-Dieudonné theorem.
\begin{theo}[\cite{MR633754}*{IV.24, Théorème~1}]
  \label{theo:4b05f97fb7cc0705}
  Let \( E \) be a metrizable locally convex space. Then on \( E' \), the
  following topologies are identical:
  \begin{enumerate}    
  \item \label{item:de72e2299e543a79} the \( \mathfrak{N} \)-topology, where
    \( \mathfrak{N} \) is the collection of the image of all null sequences in \( E \);
  \item \label{item:3cbc087d2a4bb383} the topology of precompact convergence;
  \item \label{item:06cf100592514f1e} the topology of compact convergence;
  \item \label{item:f9243b61245c753d} the finest topology that coincides with
    \( \sigma(E', E) \) (the topology of simple convergence) on all
    equicontinuous parts of \( E' \).
  \end{enumerate}
\end{theo}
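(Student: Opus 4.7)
The plan is to run the cycle $(1) \leq (3) \leq (2) \leq (1)$ to collapse the first three topologies to a single one, then prove $(2) \leq (4) \leq (1)$ to bring in the fourth. Here, for bornologies $\mathfrak{S}_1 \subseteq \mathfrak{S}_2$ I use the standard convention that the $\mathfrak{S}_2$-topology is finer than the $\mathfrak{S}_1$-topology.

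\textbf{Easy chain \boldmath$(1) \leq (3) \leq (2)$.} For any null sequence $(x_n)$ in $E$, the set $\{0\} \cup \{x_n : n \geq 1\}$ is compact, and every compact set is precompact by Proposition~\ref{prop:cdb6e313569d3c6e}. These bornology inclusions immediately give the stated order among the three $\mathfrak{S}$-topologies.

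\textbf{The step \boldmath$(2) \leq (1)$ via Grothendieck's precompactness lemma.} The key lemma to establish is: in a metrizable locally convex space, every precompact subset $K$ is contained in $\overline{\Gamma(\{y_n\})}$ for some null sequence $(y_n)$. I would prove this by fixing a decreasing countable absolutely convex base $(V_n)$ of $0$-neighborhoods with $V_{n+1} \subseteq 2^{-n-1}V_n$, and inductively using total boundedness (Proposition~\ref{prop:cdb6e313569d3c6e}) to produce a sequence $(y_n)$ with $y_n \in V_n$ whose absolutely convex combinations approximate every point of $K$ in each $V_m$. Since taking the polar reverses inclusion and sends a set and its closed absolutely convex hull to the same polar, we get $K^\circ \supseteq \{y_n\}^\circ$, so the $\mathfrak{N}$-seminorm $\phi \mapsto \sup_n |\phi(y_n)|$ dominates the precompact seminorm $\phi \mapsto \sup_K |\phi|$. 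Thus $(1) = (2) = (3)$.

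\textbf{The direction \boldmath$(2) \leq (4)$.} This uses Ascoli (Theorem~\ref{theo:957848d2ddc8435e}): on any equicontinuous set $H \subseteq E'$, the weak-$\ast$ topology $\sigma(E', E)\vert_H$ agrees with the topology of precompact convergence on $H$, because equicontinuity provides the uniform-continuity hypothesis and pointwise boundedness on any $x \in E$ follows from $H$ being contained in the polar of a neighborhood of $0$. Hence $(2)$ is a topology that coincides with $\sigma(E', E)$ on every equicontinuous part, and the maximality in the definition of $(4)$ yields $(2) \leq (4)$.

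\textbf{Main obstacle: \boldmath$(4) \leq (1)$.} Given an absolutely convex $\tau$-neighborhood $U \ni 0$ in $E'$, the goal is to construct a null sequence $(x_k) \subseteq E$ whose polar sits in $U$. Using the countable base $(V_n)$ of $E$, every equicontinuous subset of $E'$ is contained in some $V_n^\circ$, so the defining property of $\tau$ forces $U \cap V_n^\circ$ to be a $\sigma(E', E)$-neighborhood of $0$ in $V_n^\circ$ for each $n$. Unwinding this gives, for each $n$, a finite set $F_n \subseteq E$ and $\varepsilon_n > 0$ with $(\varepsilon_n^{-1}F_n)^\circ \cap V_n^\circ \subseteq U$. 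The delicate heart of the proof is then to choose the scalings $\varepsilon_n$ and arrange the enumeration so that $\bigcup_n \varepsilon_n^{-1}F_n$ constitutes a null sequence $(x_k)$ in $E$, while simultaneously ensuring that $\{x_k\}^\circ \subseteq U$; this requires an inductive bookkeeping exploiting the absolute convexity of $U$, the nesting $V_n^\circ \subseteq V_{n+1}^\circ$, and the basic fact that every $\phi \in E'$ belongs to some $V_n^\circ$ by continuity, so the polar condition at stage $n$ controls $\phi$ whenever it lands in $V_n^\circ$. Once the construction succeeds, the resulting null sequence witnesses $\{x_k\}^\circ \subseteq U$, closing the cycle and proving all four topologies coincide.
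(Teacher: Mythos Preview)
The paper does not supply its own proof of this theorem: it is stated with a direct citation to Bourbaki, \emph{Espaces vectoriels topologiques}, IV.24, Th\'eor\`eme~1, and no argument is given in the paper itself. Your outline is the classical strategy one finds in Bourbaki (and in K\"othe, \S21.10): collapse (1)--(3) via the lemma that every precompact set in a metrizable locally convex space lies in the closed absolutely convex hull of a null sequence, then pin down (4) by the Ascoli argument on equicontinuous sets together with the inductive construction of a null sequence whose polar sits inside a given $(4)$-neighborhood. So there is nothing to contrast against the paper beyond the fact that you are reproducing the textbook proof it cites.

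One technical point worth tightening in your sketch: in the step $(4)\leq(1)$ you take $U$ to be an \emph{absolutely convex} $\tau$-neighborhood of $0$. But (4) is defined as the finest \emph{topology} (not the finest locally convex topology) agreeing with $\sigma(E',E)$ on equicontinuous parts, so a priori a basic $\tau$-neighborhood need not be convex; that convexity is part of what the theorem concludes. The standard inductive construction (e.g.\ Bourbaki or K\"othe) works for an arbitrary $\tau$-open $U\ni 0$: at stage $n$ one already has a finite set $A_{n-1}\subseteq E$ with $A_{n-1}^{\circ}\cap V_{n-1}^{\circ}\subseteq U$, and one uses the weak-$*$ compactness of $V_{n-1}^{\circ}$ (Alaoglu) together with $U\cap V_n^{\circ}$ being relatively weak-$*$ open in $V_n^{\circ}$ to extract a finite $B_n\subseteq V_{n-1}$ with $(A_{n-1}\cup B_n)^{\circ}\cap V_n^{\circ}\subseteq U$. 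The union $\bigcup_n B_n$ is then the desired null sequence, and no convexity of $U$ is invoked. Your ``delicate heart'' paragraph gestures at this, but the dependence on absolute convexity of $U$ should be removed.
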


\subsection{Dual pairs of vector spaces and polar topologies}
\label{sec:17562a18b9fdb5d3}

\begin{defi}
  \label{defi:12b42590a911540b}
  A \textbf{dual pair (or pairing, or duality pairing), of vector spaces} is a
  bilinear form \( \pairing*{\cdot}{\cdot}: V \times W \to \mathbb{K} \) that is
  separated on both variables, and shall of be denoted by \( \pairing*{V}{W} \).
  We say that a locally convex topology \( \mathfrak{T} \) on \( V \) is
  \textbf{compatible with the pairing}, if
  \( w \in W \mapsto \pairing*{\cdot}{w} \in V^{\sharp} \) maps \( W \)
  bijectively onto \( \bigl(V[\mathfrak{T}]\bigr)' \). In this case, we shall
  often write \( W = \bigl(V[\mathfrak{T}]\bigr)' \) to mean we've identified
  \( W \) with the topological dual of \( V[\mathfrak{T}] \) in this
  way. Compatibility of a locally convex topology on \( W \) is defined and
  notated similarly. If \( E \), \( F \) are locally convex spaces, a dual pair
  \( \pairing*{E}{F} \) is called \textbf{compatible} if it is compatible with
  both the topology on \( E \) and the topology on \( F \).
\end{defi}

\begin{defi}
  \label{defi:058d62b099daf673}
  Given a dual pair \( \pairing*{V}{W} \). The \textbf{(absolute) polar} of a
  subset \( A \subseteq V \) is the absolutely convex set
  \begin{equation}
    \label{eq:329b590c57db4dff}
    A^{\circ}:= \set[\big]{w \in W \given \forall a \in A, \, \abs*{\pairing*{a}{w}} \leq 1}.
  \end{equation}
  The polar \( B^{\circ} \) of a set \( B \subseteq W \) is defined
  similarly. We say a set \( A \subseteq V \) is \textbf{simply bounded} (with
  respect to the pairing) if
  \( \set*{\pairing*{a}{w} \given a \in A} \subseteq \mathbb{K} \) is bounded
  for each \( w \in W \). Simply bounded sets in \( W \) (with respect to the
  pairing) is defined similarly.
\end{defi}

\begin{prop}
  \label{prop:7dcbda305a2f1ba2}
  Let \( \pairing*{E}{F} \) be a dual pair of vector spaces, and
  \( \mathfrak{B} \) a collection of simply bounded sets on \( F \), and let
  \( \widetilde{\mathfrak{B}} \) be the smallest collection of simply bounded
  subsets of \( F \) that contains \( \mathfrak{B} \) and is stable under taking
  dilations and finite unions. Then
  \( \set*{B^{\circ} \given B \in \widetilde{\mathfrak{B}}} \) specifies a fundamental
  system of neighborhoods of \( 0 \) for a unique locally convex topology on
  \( E \), and this topology is Hausdorff if \( \mathfrak{B} \) is covering.
\end{prop}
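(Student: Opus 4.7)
The plan is to reduce the statement to Proposition~\ref{prop:fd1b582617d3215e}, which produces a unique locally convex topology from any filter basis of absolutely convex subsets that are absorbing and stable under nonzero scalar multiplication, and which moreover characterises the Hausdorff case. I therefore verify that the family $\mathcal{F} := \set*{B^{\circ} \given B \in \widetilde{\mathfrak{B}}}$ satisfies the three hypotheses of that proposition, and then separately handle the Hausdorff claim.

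Absolute convexity of each polar is immediate from \eqref{eq:329b590c57db4dff}. For the filter basis property, stability under finite intersections reduces to the general identity $(B_{1} \cup B_{2})^{\circ} = B_{1}^{\circ} \cap B_{2}^{\circ}$ combined with the assumed closure of $\widetilde{\mathfrak{B}}$ under finite unions. Stability under nonzero dilations follows from the identity $(\lambda B)^{\circ} = \lambda^{-1} B^{\circ}$, valid for $\lambda \neq 0$: for $\mu \neq 0$ one has $\mu B^{\circ} = (\mu^{-1} B)^{\circ}$, which lies in $\mathcal{F}$ because $\widetilde{\mathfrak{B}}$ is closed under dilations. The absorbing property is the place where simple boundedness is actually invoked: for each $e \in E$, the scalar set $\set*{\pairing*{e}{b} \given b \in B}$ is bounded, so $c_{e} := \sup_{b \in B} \abs*{\pairing*{e}{b}}$ is finite, and any $\lambda$ with $0 < \abs*{\lambda} \leq 1/c_{e}$ (or arbitrary $\lambda$ if $c_{e} = 0$) yields $\lambda e \in B^{\circ}$.

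Feeding these three verifications into Proposition~\ref{prop:fd1b582617d3215e} yields both existence and uniqueness of the asserted topology, and tells us that Hausdorffness is equivalent to $\bigcap_{B \in \widetilde{\mathfrak{B}}} B^{\circ} = \set*{0}$. Assuming now that $\mathfrak{B}$ covers $F$ and fixing a nonzero $e \in E$, the separatedness of the pairing in its first variable supplies some $w \in F$ with $\pairing*{e}{w} \neq 0$, and by covering there is some $B \in \mathfrak{B}$ containing $w$. Were $e$ to belong to every polar in $\mathcal{F}$, then in particular $e \in (nB)^{\circ} = n^{-1} B^{\circ}$ for every positive integer $n$, forcing $\abs*{\pairing*{e}{w}} \leq 1/n$ for all $n$, a contradiction. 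The whole argument is essentially a mechanical translation of the union/dilation closure of $\widetilde{\mathfrak{B}}$ through the polarity operation; the only detail worth double-checking is to invoke dilation stability in precisely the form that delivers $\mu B^{\circ}$ itself, rather than merely $\lambda^{-1} B^{\circ}$, as a member of $\mathcal{F}$.
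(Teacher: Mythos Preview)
Your proof is correct and follows exactly the route the paper takes: the paper's own proof is the single line ``This follows from Proposition~\ref{prop:fd1b582617d3215e} and Definition~\ref{defi:8503d41d4ff22cde}'', and you have simply spelled out the verification that the hypotheses of Proposition~\ref{prop:fd1b582617d3215e} are met. The polar identities you invoke for unions and dilations, together with simple boundedness for the absorbing property and the covering hypothesis for Hausdorffness, are precisely the checks that justify that citation.
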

\begin{proof}
  This follows from Proposition~\ref{prop:fd1b582617d3215e} and
  Definition~\ref{defi:8503d41d4ff22cde}.
\end{proof}

\begin{defi}
  \label{defi:5913407834094598}
  Recall Notation~\ref{nota:6a66d03bb5d7af70}. The topology in
  Proposition~\ref{prop:7dcbda305a2f1ba2} is called the \textbf{polar topology}
  of \( \mathfrak{B} \). The polar topology on \( E \) of \( \mathfrak{F}(F) \)
  is called \textbf{the weak topology with respect to the dual pair}
  \( \pairing*{E}{F} \), and is often denoted by \( \sigma(E, F) \); and that of
  \( \mathfrak{B}(F) \) the \textbf{strong topology with respect to}
  \( \pairing*{E}{F} \), and denoted by \( b(E, F) \).

  Equipped with \( \sigma(E, F) \) (resp.\ \( b(E, F) \)), the locally convex
  space \( E \) is called the \textbf{weak dual} (resp.\ \textbf{strong dual})
  of \( F \) (with respect to the pairing \( \pairing*{E}{F} \)).

  When \( F = E' \) and the pairing \( \pairing*{E}{E'} \) is given by
  evaluation, the topology \( \sigma(E, E') \) (resp.\ \( b(E, E') \),
  \( c(E, E') \)) is simply called the \textbf{weak} ((resp.\ \textbf{strong}),
  \textbf{topology} on \( E \).
\end{defi}

It is easy to check that \( \sigma(E, F) \) is the initial topology on \( E \)
with respect to
\( \set*{\pairing*{\cdot}{y} \given y \in F}: E \to \mathbb{K} \), and every
polar \( A^{\circ} \) of some \( A \subseteq E \) is a weakly (with respect to
\( \pairing*{E}{F} \)) closed absolutely convex set in \( F \). The converse
also holds by the bipolar theorem.

\begin{theo}[Bipolar theorem, \cite{MR0248498}*{\S~20.8(5)}]
  \label{theo:45bc53b6ce4e0dc3}
  Let \( E \) be a locally convex space, and consider the canonical dual pair
  \( \pairing*{E}{E'} \). For any \( A \subseteq E \), the (absolute) bipolar
  \( A^{\circ\circ} \) of \( A \) is exactly the weakly closed absolutely convex
  hull of \( A \) in \( E \).
\end{theo}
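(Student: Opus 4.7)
The plan is to prove the equality by double inclusion. Let \( B \) denote the weakly closed absolutely convex hull of \( A \) in \( E \); the claim is then \( A^{\circ\circ} = B \).

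The inclusion \( B \subseteq A^{\circ\circ} \) is the easy direction. Directly from \eqref{eq:329b590c57db4dff}, any \( a \in A \) satisfies \( \abs*{\pairing*{a}{w}} \leq 1 \) for every \( w \in A^{\circ} \), so \( A \subseteq A^{\circ\circ} \). Moreover \( A^{\circ\circ} \) is absolutely convex (every polar is, by construction), and it is weakly closed, being the intersection \( \bigcap_{w \in A^{\circ}}\set*{x \in E \given \abs*{\pairing*{x}{w}} \leq 1} \) of sets that are weakly closed because each \( \pairing*{\cdot}{w} \) is \( \sigma(E, E') \)-continuous. Since \( B \) is by definition the smallest weakly closed absolutely convex set containing \( A \), this forces \( B \subseteq A^{\circ\circ} \).

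For the reverse inclusion \( A^{\circ\circ} \subseteq B \) I would rely on Hahn--Banach separation. Suppose \( x_{0} \notin B \); since \( B \) is \( \sigma(E, E') \)-closed and convex while \( \set*{x_{0}} \) is compact convex, the geometric Hahn--Banach theorem applied in the weak topology produces a continuous real-linear functional \( \ell \) on \( (E, \sigma(E, E')) \) together with some \( \alpha \in \R \) satisfying \( \Re \ell(y) < \alpha < \Re \ell(x_{0}) \) for every \( y \in B \). By Definition~\ref{defi:5913407834094598} and the observation immediately following it, \( \sigma(E, E') \) is the initial topology for the evaluations \( \pairing*{\cdot}{w} \), \( w \in E' \), so every \( \sigma(E, E') \)-continuous linear form on \( E \) is pairing with a unique \( w_{0} \in E' \) (in the complex case, first complexify \( \ell \) in the standard way). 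Because \( B \) is balanced, \( \lambda y \in B \) whenever \( y \in B \) and \( \abs*{\lambda} \leq 1 \), so rotating the scalar upgrades the real separation to the absolute-value bound \( \abs*{\pairing*{y}{w_{0}}} \leq \alpha \) on \( B \), while \( \abs*{\pairing*{x_{0}}{w_{0}}} > \alpha \). Since \( 0 \in B \) forces \( \alpha > 0 \), the element \( w_{0}/\alpha \) lies in \( A^{\circ} \) (as \( A \subseteq B \)), yet \( x_{0} \) violates the defining inequality of \( A^{\circ\circ} \) against it, contradicting \( x_{0} \in A^{\circ\circ} \).

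The only technical point worth flagging is the identification of \( \sigma(E, E') \)-continuous linear forms with \( E' \) itself, which is immediate from the initial-topology description of the weak topology; a secondary bookkeeping issue is the real-to-complex passage, handled uniformly by the balanced-hull rotation. No input beyond the geometric Hahn--Banach separation theorem is required, and the argument works equally in the real and complex cases.
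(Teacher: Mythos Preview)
Your argument is correct and is precisely the standard Hahn--Banach separation proof of the bipolar theorem. The paper does not actually supply a proof of this statement; it is quoted as a classical result with a reference to K\"othe \cite{MR0248498}*{\S~20.8(5)}, whose treatment follows the same separation-based route you outline.
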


Recall the notion of equicontinuity (Definition~\ref{defi:94460281f200f584}). It
is clear that for a locally convex space \( E \), a set
\( H \subseteq E' = \mathcal{L}(E, \mathbb{K}) \) if and only if
\( H \subseteq U^{\circ} \) for some \( U \in \mathcal{N}(0) \). We also have
the following version of Alaoglu's theorem.

\begin{theo}[\cite{MR0342978}*{\S~III.4.3, Corollary}]
  \label{theo:bf2abf9c9ce8aa7a}
  Any equicontinuous set in \( E' \) is relatively compact for
  \( \sigma(E', E) \).
\end{theo}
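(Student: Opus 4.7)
The plan is to reduce the theorem to a classical Tychonoff-plus-closedness argument for the polar of a single neighborhood.

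First I would invoke the observation made in the paragraph preceding the theorem statement: a set $H \subseteq E'$ is equicontinuous if and only if $H \subseteq U^{\circ}$ for some $U \in \mathcal{N}(0)$, which one may take absolutely convex in view of Notation~\ref{nota:ee765bb793c0ed9f}. Since any subset of a (Hausdorff) compact space is relatively compact, it suffices to prove that $U^{\circ}$ itself is $\sigma(E',E)$-compact.

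Next I would realize $E'$ concretely inside the product space $\mathbb{K}^{E}$ via the evaluation map $\Phi : f \mapsto (f(x))_{x \in E}$. By Definition~\ref{defi:5913407834094598}, the topology $\sigma(E',E)$ is the initial topology on $E'$ with respect to the evaluations $f \mapsto f(x)$, $x \in E$, so $\Phi$ is a homeomorphism onto its image when $\mathbb{K}^{E}$ carries the product topology. Because $U$ is absorbing, for each $x \in E$ there exists $\lambda_{x} > 0$ with $x \in \lambda_{x} U$, and the absolute convexity of $U$ together with the definition of the polar in \eqref{eq:329b590c57db4dff} yields $\abs*{f(x)} \leq \lambda_{x}$ for every $f \in U^{\circ}$. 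Hence $\Phi(U^{\circ})$ sits inside the product $K := \prod_{x \in E}\set*{z \in \mathbb{K} \given \abs*{z} \leq \lambda_{x}}$, which is compact by Tychonoff's theorem.

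It then remains to show $\Phi(U^{\circ})$ is closed in $\mathbb{K}^{E}$: the linearity conditions $z_{x+y} = z_{x} + z_{y}$ and $z_{\lambda x} = \lambda z_{x}$ are each closed, being equalities of continuous coordinate projections, and the inequalities $\abs*{z_{x}} \leq 1$ indexed by $x \in U$ are likewise closed. Intersecting these closed sets with the compact set $K$ produces a compact set, which, pulled back through the homeomorphism $\Phi$, is $U^{\circ}$. The only delicate step is this last verification that the defining conditions really are closed and that the topology induced by $\Phi$ coincides with $\sigma(E',E)$; everything else is either a standard Tychonoff-type argument or an unwinding of Definition~\ref{defi:058d62b099daf673} and Definition~\ref{defi:5913407834094598}.
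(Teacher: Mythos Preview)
Your proof is correct and is precisely the standard Alaoglu--Bourbaki argument. The paper does not supply its own proof of this statement; it merely records the result with a citation to Schaefer, where exactly this Tychonoff-plus-closedness reasoning appears, so there is nothing further to compare.
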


Moreover, every locally convex Hausdorff topology can be seen as a polar
topology.

\begin{prop}
  \label{prop:f9256f38e904319e}
  Let \( E \) be a locally convex space. Then closed absolutely convex
  neighborhoods of \( 0 \in E \) are exactly the polars of equicontinuous sets
  in \( E' \).
\end{prop}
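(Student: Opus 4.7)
The plan is to prove the two inclusions by combining the bipolar theorem (Theorem~\ref{theo:45bc53b6ce4e0dc3}) with the characterization of equicontinuous subsets of $E'$ recalled just before the statement, namely that $H \subseteq E'$ is equicontinuous if and only if $H \subseteq U^{\circ}$ for some $U \in \mathcal{N}(0)$.

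For the forward direction, I would start with a closed absolutely convex neighborhood $U$ of $0 \in E$. Set $H := U^{\circ} \subseteq E'$. Since $U$ is itself a neighborhood of $0$, the inclusion $H = U^{\circ} \subseteq U^{\circ}$ shows, via the quoted characterization, that $H$ is equicontinuous. The bipolar theorem then gives $H^{\circ} = U^{\circ\circ} =$ the $\sigma(E, E')$-closed absolutely convex hull of $U$ in $E$; because $U$ is already absolutely convex and, being convex, its original closure and its weak closure coincide (a standard Hahn–Banach consequence), this is exactly $U$. So $U = H^{\circ}$ with $H$ equicontinuous.

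For the converse, let $H \subseteq E'$ be equicontinuous, so that $H \subseteq U^{\circ}$ for some $U \in \mathcal{N}(0)$. Taking polars reverses inclusions, so $H^{\circ} \supseteq U^{\circ\circ} \supseteq U$, and therefore $H^{\circ}$ contains a neighborhood of $0$, hence is itself a neighborhood of $0$. By the very definition~\eqref{eq:329b590c57db4dff} of the polar, $H^{\circ}$ is absolutely convex and is an intersection of $\sigma(E, E')$-closed half-disks, hence $\sigma(E, E')$-closed, and in particular closed in the original topology. This exhibits $H^{\circ}$ as a closed absolutely convex neighborhood of $0$.

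I do not expect any genuine obstacle here: both directions are essentially a bookkeeping exercise once the bipolar theorem and the polar characterization of equicontinuity are in hand. The one subtlety worth being explicit about is the identification in the forward direction of the weakly closed absolutely convex hull of $U$ with $U$ itself, which relies on the fact that for convex subsets of a locally convex space the original closure coincides with the $\sigma(E, E')$-closure; this is the only place where one uses more than the formal manipulation of polars.
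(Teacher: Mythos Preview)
Your proposal is correct and follows essentially the same route as the paper: both directions hinge on the characterization of equicontinuity via $H \subseteq U^{\circ}$ together with the bipolar theorem, and your closedness argument for $H^{\circ}$ matches the paper's intersection-of-preimages description. You are in fact slightly more explicit than the paper in justifying $U^{\circ\circ} = U$ via the coincidence of weak and original closure for convex sets, which the paper leaves implicit.
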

\begin{proof}
  If \( A \supseteq U^{\circ} \) for some \( U \in \mathcal{N}(0) \), then
  \( A^{\circ} \supseteq U^{\circ\circ} \supseteq U \), hence
  \( A^{\circ} \in \mathcal{N}(0) \) and it is absolutely continuous and closed
  since
  \begin{displaymath}
    A = \cap_{f \in A}f^{-1}\left(\set[\big]{\lambda \in \mathbb{K} \given
        \abs*{\lambda} \leq 1}\right).
  \end{displaymath}
  Conversely, every neighborhood of \( 0 \) contains some neighborhood \( U \)
  of \( 0 \) that is closed and absolutely convex. By
  Theorem~\ref{theo:45bc53b6ce4e0dc3}, \( U \) is the polar of the
  equicontinuous set \( U^{\circ} \).
\end{proof}

Proposition~\ref{prop:f9256f38e904319e} motivates us to describe compatible
topologies on the algebraic tensor product using some suitable polar topology,
see \S~\ref{sec:696b043dbe6fbd93}, Proposition~\ref{prop:5084d766fecfec2d}.

\subsection{The Mackey topology, boundedness}
\label{sec:4d080905b6153a35}

\begin{nota}
  \label{nota:abe825c6b234ab51}
  Let \( \pairing*{E}{F} \) be a dual pair of vector space. The collection of
  all \( \sigma(F, E) \)-compact absolutely convex is denoted by
  \( \mathfrak{K}(F, E) \). If \( E \) is a locally convex space, and the
  pairing \( \pairing*{E'}{E} \) is evaluation, then \( \mathfrak{K}(E, E') \)
  is abbreviated \( \mathfrak{K}(E) \), or simply \( \mathfrak{K} \) if \( E \)
  is clear from context. If \( E \), \( F \) are locally convex spaces,
  \( \mathcal{L}_{k}(E, F) \) denotes \( \mathcal{L}(E, F) \) equipped with the
  topology of \( \mathfrak{K}(E) \)-convergence, and \( E'_{k} \) denotes
  \( \mathcal{L}_{k}(E, \mathbb{K}) \).
\end{nota}

\begin{defi}
  \label{defi:78058b27cd954f7a}
  Let \( \pairing*{E}{F} \) be a duality pairing. The polar topology on \( E \)
  of \( \mathfrak{K}(F, E) \) is called the \textbf{Mackey topology}, and is
  denoted by \( \tau(E, F) \). If \( E \) is a locally convex space, then
  \( E' \), equipped with \( \tau(E', E) \) for the evaluation pairing
  \( \pairing*{E'}{E} \) is also denoted by \( E'_{k} \); whereas \( E \), now
  equipped with \( \tau(E, E') \), is denoted by \( E_{\tau} \). We say that the
  locally convex space \( E \) is a \textbf{Mackey space}, if
  \( E = E_{\tau} \).
\end{defi}

We have the following consequences of a result of Mackey \cite{MR0020214}.

\begin{theo}[Mackey, \cite{MR633754}*{IV.2, Théorème~1}]
  \label{theo:52188c82b45e07e3}
  Let \( \pairing*{E}{F} \) be a duality pairing. Then a locally convex topology
  on \( E \) is compatible with this pairing if and only if it is finer than
  \( \sigma(E, F) \) and coarser than \( \tau(E, F) \).
\end{theo}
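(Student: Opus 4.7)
The plan is to prove the two inclusions separately, and in both directions the main tool is the Bipolar theorem (Theorem~\ref{theo:45bc53b6ce4e0dc3}) together with the characterization of neighborhoods of \( 0 \) as polars of equicontinuous sets (Proposition~\ref{prop:f9256f38e904319e}).

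For the ``only if'' direction, assume a locally convex topology \( \mathfrak{T} \) on \( E \) is compatible with \( \pairing*{E}{F} \). The inclusion \( \sigma(E, F) \subseteq \mathfrak{T} \) is immediate: \( \sigma(E, F) \) is the initial topology on \( E \) with respect to the family \( \set*{\pairing*{\cdot}{y} \given y \in F} \), and each \( \pairing*{\cdot}{y} \) is \( \mathfrak{T} \)-continuous by compatibility. To show \( \mathfrak{T} \subseteq \tau(E, F) \), take any \( \mathfrak{T} \)-neighborhood of \( 0 \), which by Proposition~\ref{prop:f9256f38e904319e} contains a set of the form \( H^{\circ} \) for some equicontinuous \( H \subseteq \bigl(E[\mathfrak{T}]\bigr)' \cong F \). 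By Alaoglu (Theorem~\ref{theo:bf2abf9c9ce8aa7a}), \( H \) is \( \sigma(F, E) \)-relatively compact; replacing \( H \) by its \( \sigma(F,E) \)-closed absolutely convex hull (which is again contained in an equicontinuous set, and whose polar is the same as \( H^\circ \)), we get an element of \( \mathfrak{K}(F, E) \), so \( H^{\circ} \) is a \( \tau(E, F) \)-neighborhood of \( 0 \).

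For the ``if'' direction, assume \( \sigma(E, F) \subseteq \mathfrak{T} \subseteq \tau(E, F) \), and show \( \bigl(E[\mathfrak{T}]\bigr)' = F \) under the pairing. The inclusion \( F \subseteq \bigl(E[\mathfrak{T}]\bigr)' \) follows since each \( \pairing*{\cdot}{y} \) is already \( \sigma(E, F) \)-continuous. Conversely, let \( f \in \bigl(E[\mathfrak{T}]\bigr)' \); since \( \mathfrak{T} \subseteq \tau(E, F) \), \( f \) is \( \tau(E, F) \)-continuous, so there exists \( K \in \mathfrak{K}(F, E) \) with \( \abs*{f(x)} \leq 1 \) for all \( x \in K^{\circ} \). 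Viewing \( F \hookrightarrow E^{\sharp} \) via the pairing, observe that the topology \( \sigma(F, E) \) on \( F \) is exactly the restriction of the topology of pointwise convergence \( \sigma(E^{\sharp}, E) \) on \( E^{\sharp} \); hence the \( \sigma(F, E) \)-compact set \( K \) is also \( \sigma(E^{\sharp}, E) \)-compact, in particular \( \sigma(E^{\sharp}, E) \)-closed. Applying the Bipolar theorem to the dual pair \( \pairing*{E^{\sharp}}{E} \), we get \( K = K^{\circ\circ} \) (polars computed in \( \pairing*{E^{\sharp}}{E} \)), and the inequality \( \abs*{f(x)} \leq 1 \) on \( K^{\circ} \) then yields \( f \in K \subseteq F \), as required.

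The main subtle point is in the ``if'' direction, specifically the identification \( f \in F \). One must resist the temptation to apply the Bipolar theorem within \( \pairing*{E}{F} \) (where it yields a statement only about subsets of \( F \)), and instead enlarge to the pairing \( \pairing*{E^{\sharp}}{E} \), where \( f \) \emph{a priori} lives. The crucial observation that glues the two pairings together is that \( \sigma(F, E) = \sigma(E^{\sharp}, E)\vert_{F} \), so that \( \sigma(F, E) \)-compactness transfers to \( \sigma(E^{\sharp}, E) \)-closedness inside \( E^{\sharp} \). Everything else is a routine verification from the definitions of polar topologies and the properties of \( \mathfrak{K}(F, E) \).
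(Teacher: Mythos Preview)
The paper does not supply its own proof of this theorem; it is stated with a reference to Bourbaki \cite{MR633754}*{IV.2, Th\'eor\`eme~1} and used as a black box. Your argument is the standard proof of the Mackey--Arens theorem and is correct, including the key step in the ``if'' direction of passing from the pairing \( \pairing*{E}{F} \) to \( \pairing*{E}{E^{\sharp}} \) so that the bipolar theorem forces \( f \in K \subseteq F \).
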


\begin{coro}
  \label{coro:6ce070ab8838d3f8}
  Let \( E \) be a locally convex space, \( A \subseteq E \). Then the following
  are equivalent:
  \begin{enumerate}
  \item \label{item:50eeb88c1bc06852} \( A \) is bounded;
  \item \label{item:03473de24d3f4c00} \( A \) is bounded for one of the locally
    convex topology that is coarser than \( \tau(E, E') \) and finer than
    \( \sigma(E, E') \);
  \item \label{item:6e48edca069218c9} \( A \) is bounded for all of the locally
    convex topology that is coarser than \( \tau(E, E') \) and finer than
    \( \sigma(E, E') \);
  \item \label{item:fe19658c3bae1266} \( u(A) \subseteq \mathbb{K} \) is bounded
    for every \( u \in E' \).
  \end{enumerate}
\end{coro}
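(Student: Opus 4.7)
The plan is to invoke Mackey's Theorem~\ref{theo:52188c82b45e07e3} to reduce the four-way equivalence to the single non-trivial assertion that \(\sigma(E,E')\)-bounded sets in \(E\) are already \(\tau(E,E')\)-bounded. The original topology of \(E\) is compatible with the pairing \(\pairing*{E}{E'}\), hence lies between \(\sigma(E,E')\) and \(\tau(E,E')\), and the same is true of every topology appearing in (2) and (3). Since a finer topology has strictly more \(0\)-neighborhoods that a bounded set must be absorbed by, one gets for free that \(\tau(E,E')\)-bounded \(\Rightarrow\) \(\mathfrak{T}\)-bounded \(\Rightarrow\) \(\sigma(E,E')\)-bounded for every compatible \(\mathfrak{T}\). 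Moreover, (4) is merely a rephrasing of \(\sigma(E,E')\)-boundedness: a \(0\)-neighborhood basis for \(\sigma(E,E')\) consists of polars \(\set*{u_{1}, \ldots, u_{n}}^{\circ}\) of finite subsets of \(E'\) (Definition~\ref{defi:5913407834094598}), and absorption of \(A\) by each such polar is equivalent to boundedness of each \(u_{i}(A) \subseteq \mathbb{K}\).

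The heart of the proof is therefore the Banach--Mackey theorem: if \(u(A)\) is bounded for every \(u \in E'\), then \(A\) is absorbed by the polar of every \(K \in \mathfrak{K}(E',E)\) (Notation~\ref{nota:abe825c6b234ab51}). I would argue as follows. The polar \(A^{\circ} \subseteq E'\) is absolutely convex, \(\sigma(E',E)\)-closed by the bipolar Theorem~\ref{theo:45bc53b6ce4e0dc3}, and absorbing by hypothesis, so it is a barrel for the weak topology on \(E'\). Fixing an absolutely convex \(\sigma(E',E)\)-compact disk \(K \subseteq E'\), consider its linear span \(E'_{K} := \bigcup_{n \geq 1} nK\) endowed with the Minkowski gauge \(p_{K}\) of \(K\). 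The Banach disk theorem provides, using precisely the \(\sigma\)-compactness (and hence completeness) of \(K\), that \((E'_{K}, p_{K})\) is a Banach space whose closed unit ball is exactly \(K\). Now \(A^{\circ} \cap E'_{K}\) is absolutely convex and absorbing in \(E'_{K}\), and it is \(p_{K}\)-closed because the \(p_{K}\)-topology on \(E'_{K}\) is finer than the restriction of \(\sigma(E',E)\); it is therefore a barrel in the Banach space \((E'_{K}, p_{K})\), hence by Baire category a \(p_{K}\)-neighborhood of \(0\), so that \(\varepsilon K \subseteq A^{\circ}\) for some \(\varepsilon > 0\). Taking polars yields \(A \subseteq \varepsilon^{-1} K^{\circ}\), which is the desired absorption by an arbitrary \(\tau(E,E')\)-basic neighborhood.

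The main obstacle is the Banach disk theorem itself, namely that the Minkowski normed space attached to an absolutely convex \(\sigma(E',E)\)-compact disk is already a Banach space: this is where the full force of compactness (rather than mere boundedness) of \(K\) is exploited, compensating for the fact that \(E'_{\sigma}\) is in general not barrelled, so that Baire category cannot be applied to it directly. Once Banach--Mackey is in hand, the four conditions tie together cleanly: (1) \(\Leftrightarrow\) (3) from the common bounded sets across all compatible topologies, (3) \(\Rightarrow\) (2) is trivial, (2) \(\Rightarrow\) (4) because elements of \(E'\) are continuous for any compatible topology, and (4) \(\Rightarrow\) (1) by Banach--Mackey combined with \(\tau(E,E')\)-bounded \(\Rightarrow\) bounded for the original topology.
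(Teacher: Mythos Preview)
Your proof is correct and follows the same logical skeleton as the paper: use Mackey's theorem (Theorem~\ref{theo:52188c82b45e07e3}) to place the original topology among the compatible ones, and reduce everything to the Banach--Mackey statement that \(\sigma(E,E')\)-boundedness already forces \(\tau(E,E')\)-boundedness. The only difference is that the paper simply cites \cite{MR633754}*{\S~III.4, Th\'eor\`eme~2} for the equivalence \ref{item:50eeb88c1bc06852}\,\(\Leftrightarrow\)\,\ref{item:fe19658c3bae1266}, whereas you unpack the Banach-disk argument in full; your version is more self-contained but not a genuinely different route.
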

\begin{proof}
  Equivalence between \ref{item:50eeb88c1bc06852} and
  \ref{item:fe19658c3bae1266} follows from \cite{MR633754}*{\S~III.4,
    Théorème~2}, the rest of the equivalences now follows from Mackey's theorem.
\end{proof}

\subsection{Projective and inductive locally convex topologies}
\label{sec:e39c2dc88f4df7b1}

For the following results in \S~\ref{sec:e39c2dc88f4df7b1}, we refer our readers
to \cite{MR0342978}*{\S~II.5 \& \S~II.6}, as well as \cite{MR0248498}*{\S~19}.

\begin{defi}
  \label{defi:6205fbfabe1845eb}
  Let \( E \) be a vector space, \( f_{i}: E \to E_{i} \), \( i \in I \) a
  collection of linear maps into locally convex spaces \( E_{i} \). The
  \textbf{projective topology} on \( E \) with respect to the family
  \( (f_{i}) \) is the initial topology with respect to this family. It is a
  locally convex topology on \( E \), and enjoys the universal property of
  initial topologies.
\end{defi}

\begin{rema}
  \label{rema:0361c678857d395c}
  Being an initial topology, projective topology on \( E \) enjoys the universal
  property of initial topologies with respect to the family
  \( (f_{i})_{i \in I} \) as topological spaces.
\end{rema}

\begin{prop}
  \label{prop:438d10ce1f2149df}
  For a projective system of locally convex spaces
  \( (E_{i}, p_{ij})_{i, j \in I} \), \textbf{the projective limit}
  \( E:= \varprojlim E_{i} \) in \( \mathsf{LCS} \), and can be realized by
  equipping the algebraic projective limit \( \varprojlim E_{i} \) with the
  projective topologies with respect to the canonical projections
  \( p_{i}: E \to E_{i} \), \( i \in I \). Moreover,
  \begin{displaymath}
    \varprojlim E_{i} = \set[\big]{(x_{i}) \in \prod_{i} E_{i}
      \given \forall i \leq j,\, p_{ij}(x_{j}) = x_{i}}
  \end{displaymath}
  is a closed subspace of \( \prod_{i \in I}E_{i} \), and
  \( \varprojlim E_{i} \) is complete if every \( E_{i} \), \( i \in I \) is complete.
\end{prop}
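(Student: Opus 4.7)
The plan is to construct the projective limit explicitly as a subspace of the product, verify the universal property, and then deduce closedness and completeness from general results on uniform spaces already recorded in the preliminaries.

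First I would set $L := \{(x_i) \in \prod_{i \in I} E_i \mid p_{ij}(x_j) = x_i \text{ for all } i \leq j\}$, and equip it with the projective topology with respect to the restrictions to $L$ of the canonical projections $p_i : \prod E_j \to E_i$. This is a locally convex topology by Definition~\ref{defi:6205fbfabe1845eb}. To verify that $(L, (p_i))$ realizes the categorical projective limit in $\mathsf{LCS}$, I would take a locally convex space $F$ together with a compatible cone $(f_i : F \to E_i)$, i.e.\ with $p_{ij} \circ f_j = f_i$ for $i \leq j$. The map $f : F \to \prod E_i$, $x \mapsto (f_i(x))$, factors set-theoretically through $L$ by compatibility, and its continuity as a map into $L$ follows from the universal property of the projective (initial) topology, applied to the family $(p_i\vert_L \circ f = f_i)_{i \in I}$. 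Uniqueness is immediate, since a map into $L$ is determined by the family of its compositions with the $p_i\vert_L$.

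Next I would establish that $L$ is closed in $\prod E_i$. For each pair $i \leq j$, the map $\varphi_{ij} : \prod_k E_k \to E_i \times E_i$ sending $(x_k) \mapsto (p_{ij}(x_j), x_i)$ is continuous, and since each $E_i$ is Hausdorff, the diagonal $\Delta_{E_i} \subseteq E_i \times E_i$ is closed. Hence $\varphi_{ij}^{-1}(\Delta_{E_i})$ is closed in $\prod E_k$, and
\[
  L = \bigcap_{i \leq j} \varphi_{ij}^{-1}(\Delta_{E_i})
\]
is an intersection of closed sets, hence closed.

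Finally, for completeness, I would invoke Proposition~\ref{prop:903bfccf4a781ceb} directly: if every $E_i$ is complete (as a Hausdorff uniform space for its canonical translation-invariant uniformity), then $\prod E_i$ is complete by item~\ref{item:a20beac8bb890c25}, and $L$, being a closed subspace of a complete Hausdorff uniform space, is complete by item~\ref{item:d564b3a44ec3bcf0}. Since the uniform structure underlying the projective topology on $L$ coincides with the subspace uniform structure inherited from $\prod E_i$ (both being initial with respect to the family $(p_i\vert_L)$), this is exactly the completeness statement required. The whole argument is essentially bookkeeping; the only mildly delicate point is keeping straight that the projective topology on $L$ as a subspace of the product agrees with the subspace topology, so that the general closed-subspace-of-complete results from \S\ref{sec:bb19dab8a74cccc2} apply verbatim, and no new analytic input beyond Proposition~\ref{prop:903bfccf4a781ceb} is needed.
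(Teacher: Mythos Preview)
Your proof is correct and follows essentially the same approach as the paper. The paper's proof is terser---it cites Remark~\ref{rema:0361c678857d395c} for the universal property, notes Hausdorffness via the separating family $\{\alpha_i p_i : \alpha_i \in E_i',\ i \in I\}$, and invokes Proposition~\ref{prop:903bfccf4a781ceb} (presumably item~\ref{item:69c05ffb5eeff953} directly) for completeness---whereas you spell out the closedness argument via diagonals and deduce completeness from items~\ref{item:a20beac8bb890c25} and~\ref{item:d564b3a44ec3bcf0} instead; these are equivalent routes through the same material.
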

\begin{proof}
  The part on the projective limit follows from
  Remark~\ref{rema:0361c678857d395c} (the universal property is straightforward,
  then it follows that
  \( \set*{\alpha_{i}p_{i} \in E' \given \alpha_{i} \in E'_{i}, \, i \in I} \)
  is a separating family of linear maps on \( E \), hence \( E \) is
  Hausdorff). The completeness follows from
  Proposition~\ref{prop:903bfccf4a781ceb}.
\end{proof}

\begin{defi}
  \label{defi:d7ad55a6124fe9a4}
  In Proposition~\ref{prop:438d10ce1f2149df}, the projective limit is called
  \textbf{reduced}, if \( p_{i}(E) \) is dense in each \( E_{i} \),
  \( i \in I \).
\end{defi}

Reduced projective limits behaves well with the projective and injective tensor
products, see \S~\ref{sec:33ea3663864561d2} \& \S~\ref{sec:ac51753bf78a1da5}.

The dual notion of inductive locally convex topologies is more subtle, e.g.\ the
Hausdorff condition might not be preserved, see
Remark~\ref{rema:67bc53cfb5df5400}.

\begin{prop}
  \label{prop:8a4b66d6bb64e8de}
  Let \( f_{i}: F_{i} \to F \), \( i \in I \) be a collection of linear maps
  from locally convex space \( F_{i} \) to the same linear space \( F \). There
  exists a finest locally convex topology \( \tau_{0} \) on \( F \) making each
  \( f_{i} \) continuous.
\end{prop}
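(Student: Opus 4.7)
The plan is to construct the desired topology $\tau_{0}$ explicitly by specifying a fundamental system of absolutely convex neighborhoods of $0$, and then to verify both that each $f_{i}$ is continuous for $\tau_{0}$ and that no strictly finer locally convex topology on $F$ has this property. Note that by Remark~\ref{rema:67bc53cfb5df5400} we should not expect $\tau_{0}$ to be Hausdorff in general, so throughout we work with not necessarily Hausdorff locally convex topologies.

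Concretely, I would set
\begin{displaymath}
  \mathcal{F} := \set[\big]{U \subseteq F \given U \text{ is absolutely convex and absorbing, and } f_{i}^{-1}(U) \in \mathcal{N}_{F_{i}}(0) \text{ for every } i \in I}.
\end{displaymath}
First I would check that $\mathcal{F}$ meets the hypotheses of Proposition~\ref{prop:fd1b582617d3215e}: it is non-empty since $F \in \mathcal{F}$; it is a filter basis, because if $U, V \in \mathcal{F}$ then $U \cap V$ is again absolutely convex and absorbing, and $f_{i}^{-1}(U \cap V) = f_{i}^{-1}(U) \cap f_{i}^{-1}(V)$ is a finite intersection of neighborhoods of $0$ in $F_{i}$; finally, for nonzero $\lambda \in \mathbb{K}$ and $U \in \mathcal{F}$, the set $\lambda U$ is absolutely convex, absorbing, and $f_{i}^{-1}(\lambda U) = \lambda f_{i}^{-1}(U)$ is still a neighborhood of $0$ in $F_{i}$. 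Proposition~\ref{prop:fd1b582617d3215e} then yields a unique (not necessarily Hausdorff) locally convex topology $\tau_{0}$ on $F$ whose fundamental system of neighborhoods of $0$ is $\mathcal{F}$.

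Next I would verify continuity of each $f_{i}$: every neighborhood of $0$ for $\tau_{0}$ contains some $U \in \mathcal{F}$, and by definition $f_{i}^{-1}(U) \in \mathcal{N}_{F_{i}}(0)$, so $f_{i}^{-1}$ sends $\tau_{0}$-neighborhoods of $0$ to neighborhoods of $0$ in $F_{i}$, giving continuity at $0$ and hence (by linearity) everywhere. For the maximality, suppose $\tau$ is any locally convex topology on $F$ making each $f_{i}$ continuous. Since $\tau$ is locally convex, $0$ has a fundamental system of absolutely convex $\tau$-neighborhoods; any such neighborhood $V$ is automatically absorbing, and continuity of $f_{i}$ forces $f_{i}^{-1}(V) \in \mathcal{N}_{F_{i}}(0)$ for each $i \in I$, hence $V \in \mathcal{F}$. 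Thus $V$ is also a $\tau_{0}$-neighborhood of $0$, which shows $\tau \subseteq \tau_{0}$.

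The argument is essentially routine once one writes down the right collection $\mathcal{F}$; the only subtlety, and the place I would be most careful, is to note that the construction genuinely lives in the category of not necessarily Hausdorff locally convex spaces, so one must not silently invoke the standing Hausdorff convention when applying Proposition~\ref{prop:fd1b582617d3215e}. The question of when $\tau_{0}$ is actually Hausdorff is left to subsequent results (compare Remark~\ref{rema:67bc53cfb5df5400}), and does not interfere with the existence statement being proved here.
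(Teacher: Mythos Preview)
Your proof is correct, but it follows a different route from the paper's. The paper argues abstractly: it lets $\mathfrak{T}$ be the set of all locally convex topologies on $F$ making each $f_{i}$ continuous (non-empty since the indiscrete topology works), and takes $\tau_{0}$ to be the \emph{projective} topology on $F$ with respect to the family of identities $\{i_{\tau}: F \to (F,\tau)\}_{\tau \in \mathfrak{T}}$. The universal property of initial topologies then shows $\tau_{0} \in \mathfrak{T}$, and by construction $\tau_{0}$ is finer than every $\tau \in \mathfrak{T}$.

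Your approach is more concrete: you write down the neighborhood filter $\mathcal{F}$ directly and invoke Proposition~\ref{prop:fd1b582617d3215e}. This has the advantage of actually exhibiting a fundamental system of neighborhoods of $0$---indeed, what you prove here is essentially the first half of Proposition~\ref{prop:8c1bccabeb0c2d99}, which the paper establishes separately afterwards. The paper's argument is slicker and highlights the pleasant duality between projective and inductive constructions (the inductive topology is obtained as a projective limit over the admissible topologies), but your version is self-contained and arguably more informative for later use.
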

\begin{proof}
  Let \( \mathfrak{T} \) denotes the collection of all locally convex topologies
  on \( F \) making each \( f_{i} \) continuous. First of all,
  \( \mathfrak{T} \ne \emptyset \) since it contains the indiscrete
  topology. For each \( \tau \in \mathfrak{T} \), let \( F_{\tau} \) denote
  \( F \) equipped with topology and \( i_{\tau}: F \to F_{\tau} \) the identity
  map. Let \( \tau_{0} \) be the projective topology on \( F \) with respect to
  the family \( (i_{\tau}: F \to F_{\tau})_{\tau \in \mathfrak{T}} \), which is
  convex. From the universal property of the initial topology, it follows that
  \( \tau_{0} \in \mathfrak{T} \), and from construction,
  \( i_{\tau}: F_{\tau_{0}} \to F_{\tau} \) is continuous for all
  \( \tau \in \mathfrak{T} \), meaning \( \tau_{0} \) is the finest locally
  convex topology in \( \mathfrak{T} \).
\end{proof}

\begin{defi}
  \label{defi:0c468afc634e0bc1}
  Using the notation in Proposition~\ref{prop:8a4b66d6bb64e8de}, the topology
  \( \tau_{0} \) on \( F \) is called \textbf{the inductive topology} with
  respect to the family \( (f_{i}) \). We call \( (F, \tau_{0}) \) the
  \textbf{locally convex hull} with respect to \( (f_{i}) \), if the union
  \( \cup_{i \in I}f_{i}(F_{i}) \) spans \( F \) linearly, and we denote it by
  \( \hull{(f_{i})} \) inductive system of locally convex spaces, and
  \( v_{i}: E_{i} \to \varinjlim^{\alg}E_{i} \) the canonical insertions into
  the inductive limit in the category of vector spaces, then
  \( \hull{(v_{i})} \) is called the \textbf{locally convex hull} of the system
  \( (E_{i}, u_{ij}) \), and we denote it by
  \( \hull{(E_{i}, u_{i,j})_{i,j \in I}} \).
\end{defi}

\begin{prop}[Universal property of the inductive topology]
  \label{prop:c90d867cb6afb94d}
  Using the notation in Proposition~\ref{prop:8a4b66d6bb64e8de}, let \( G \) be
  a topological vector space that is locally convex. Then a linear map
  \( f: F \to G \) is continuous if and only if each \( ff_{i} : E \to G \) is
  continuous.
\end{prop}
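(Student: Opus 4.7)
The forward direction (necessity) is immediate: if $f$ is continuous on $(F, \tau_{0})$, then each composite $f f_{i} : F_{i} \to G$ is the composition of two continuous linear maps, hence continuous.

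For the nontrivial reverse direction, my plan is to first obtain a concrete description of $\tau_{0}$ in terms of its absolutely convex neighborhoods of $0$. Specifically, let $\mathcal{F}$ denote the collection of absolutely convex subsets $U \subseteq F$ with the property that $f_{i}^{-1}(U)$ is a neighborhood of $0 \in F_{i}$ for every $i \in I$. I will show that $\mathcal{F}$ is a fundamental system of neighborhoods of $0$ for a locally convex topology on $F$, and that this topology is precisely $\tau_{0}$. Checking that $\mathcal{F}$ is a filter basis of absolutely convex absorbing sets stable under multiplication by nonzero scalars is routine (linearity of the $f_{i}$ makes the preimage operation commute with these operations, and each $f_{i}^{-1}(U)$ is absorbing in $F_{i}$ because every neighborhood of $0$ in a locally convex space is absorbing); Proposition~\ref{prop:fd1b582617d3215e} then yields a unique locally convex topology $\tau'$ on $F$ with $\mathcal{F}$ as a fundamental system of absolutely convex neighborhoods of $0$.

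Next I will verify that $\tau' = \tau_{0}$. By construction $\tau'$ makes each $f_{i}$ continuous, so $\tau' \leq \tau_{0}$ by maximality of $\tau_{0}$. Conversely, given any absolutely convex neighborhood $V$ of $0$ in $(F, \tau_{0})$, each $f_{i}^{-1}(V)$ is a neighborhood of $0$ in $F_{i}$ by continuity of $f_{i}$, so $V \in \mathcal{F}$; hence every absolutely convex $\tau_{0}$-neighborhood of $0$ is also a $\tau'$-neighborhood of $0$, and since any locally convex space admits a fundamental system of absolutely convex neighborhoods of $0$, this gives $\tau_{0} \leq \tau'$.

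With this characterization in hand, the universal property is almost immediate. Suppose each $f f_{i} : F_{i} \to G$ is continuous, and let $V$ be an absolutely convex neighborhood of $0 \in G$ (it suffices to test such $V$ since $G$ is locally convex). Then $f^{-1}(V) \subseteq F$ is absolutely convex by linearity of $f$, and for each $i$ we have
\begin{equation*}
  f_{i}^{-1}\bigl(f^{-1}(V)\bigr) = (f f_{i})^{-1}(V),
\end{equation*}
which is a neighborhood of $0 \in F_{i}$ by continuity of $f f_{i}$. Hence $f^{-1}(V) \in \mathcal{F}$, so $f^{-1}(V)$ is a neighborhood of $0$ in $(F, \tau_{0}) = (F, \tau')$, proving $f$ is continuous. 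The only real obstacle is the correct formulation and verification of the description of $\tau_{0}$ via $\mathcal{F}$; everything else reduces to the linearity of the $f_{i}$ and $f$ and the elementary properties of preimages of absolutely convex sets.
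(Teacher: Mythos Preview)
Your argument has a gap in the verification that members of \( \mathcal{F} \) are absorbing in \( F \). You justify this by noting that ``each \( f_{i}^{-1}(U) \) is absorbing in \( F_{i} \),'' but that is not what Proposition~\ref{prop:fd1b582617d3215e} requires: you need \( U \) itself to be absorbing in \( F \), and this can fail. For instance, with \( F = \mathbb{K}^{2} \), a single map \( f_{1} : \mathbb{K} \to F \), \( x \mapsto (x,0) \), the set \( U = \{(x,0) : |x| \le 1\} \) lies in your \( \mathcal{F} \) but absorbs nothing outside the first axis. The fix is easy: restrict \( \mathcal{F} \) to its absorbing members, observe that this subcollection still contains every absolutely convex \( \tau_{0} \)-neighbourhood of \( 0 \), and note that \( f^{-1}(V) \) is always absorbing (since \( V \) is absorbing in \( G \) and \( f \) is linear), so the rest of your argument goes through unchanged.

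The paper's proof is shorter and avoids this issue by not attempting to characterise \( \tau_{0} \) at all. It introduces the auxiliary locally convex topology \( \tau \) on \( F \) having \( \{f^{-1}(V) : V \in \mathcal{N}^{G}_{\Gamma}(0)\} \) as a base of neighbourhoods of \( 0 \); this is just the projective topology with respect to the single map \( f \), so \( f : (F,\tau) \to G \) is continuous by construction. The hypothesis that each \( f f_{i} \) is continuous says exactly that each \( f_{i} : F_{i} \to (F,\tau) \) is continuous, whence \( \tau \subseteq \tau_{0} \) by maximality, and continuity of \( f \) on \( (F,\tau_{0}) \) follows. Your route, once repaired, effectively proves (the first half of) Proposition~\ref{prop:8c1bccabeb0c2d99} as an intermediate step and then reads off the universal property; the paper's route uses only the defining maximality of \( \tau_{0} \) and is more direct.
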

\begin{proof}
  Let \( \tau \) be the unique locally convex topology on \( F \) with
  \( f^{-1}(V) \), \( V \in \mathcal{N}^{G}_{\Gamma}(0) \) being a fundamental
  system of neighborhoods of \( 0 \). Clearly, \( ff_{i} \) is continuous if
  \( f \) is so. Conversely, each \( ff_{i} \) being continuous, we see that
  \( f_{i}^{-1}(f^{-1}(V)) = (ff_{i})^{-1}(V) \in
  \mathcal{N}^{F_{i}}_{\Gamma}(0) \), meaning \( f_{i}: F_{i} \to (F, \tau) \)
  is continuous for each \( i \). Hence \( \tau \subseteq \tau_{0} \) and
  \( f \) is continuous.
\end{proof}

\begin{prop}
  \label{prop:8c1bccabeb0c2d99}
  Let \( f_{i}: F_{i} \to F \), \( i \in I \) be a collection of linear maps
  from locally convex space \( F_{i} \) to the same linear space \( F \), and we
  equip \( F \) with the corresponding inductive topology. Then for each
  \( V \subseteq F \), we have \( V \in \mathcal{N}^{F}_{\Gamma}(0) \) if and
  only if \( f^{-1}_{i}(V) \in \mathcal{N}^{F_{i}}_{\Gamma}(0) \). Moreover, if
  \( F \) is the locally convex hull of \( \set*{f_{i}} \), then a fundamental
  system of neighborhoods of \( 0 \) for \( F \) is given by absolutely convex
  sets of the form \( \Gamma\left(\cup_{i \in I}V_{i}\right) \), with
  \( V_{i} \in \mathcal{N}^{F_{i}}_{\Gamma}(0) \) for each \( i \in I \).
\end{prop}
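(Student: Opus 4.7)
The plan is to characterize the absolutely convex neighborhoods of $0$ in the inductive topology $\tau_{0}$ by constructing an auxiliary topology from the stated condition and then identifying it with $\tau_{0}$ via the maximality in Proposition~\ref{prop:8a4b66d6bb64e8de}. The ``only if'' direction of the first equivalence is immediate: $\tau_{0}$ makes every $f_{i}$ continuous by construction, and preimages of absolutely convex sets are absolutely convex, so any $V \in \mathcal{N}^{F}_{\Gamma}(0)$ yields $f_{i}^{-1}(V) \in \mathcal{N}^{F_{i}}_{\Gamma}(0)$ for all $i$.

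For the converse, I would let $\mathcal{F}$ denote the collection of all absolutely convex subsets $V$ of $F$ with $f_{i}^{-1}(V) \in \mathcal{N}^{F_{i}}_{\Gamma}(0)$ for every $i \in I$. It is straightforward to verify that $\mathcal{F}$ is a filter basis stable under multiplication by nonzero scalars (preimages commute with intersections and with scalar multiplication). The crucial point is that each $V \in \mathcal{F}$ is absorbing in $F$: writing an arbitrary $x \in F$ as a finite linear combination $x = \sum_{k} c_{k} f_{i_{k}}(x_{k})$ (which uses that $F$ is a locally convex hull), and choosing $\epsilon_{k} > 0$ with $\lambda f_{i_{k}}(x_{k}) \in V$ for all $|\lambda| \leq \epsilon_{k}$ (possible since $f_{i_{k}}^{-1}(V)$ is a neighborhood of $0$), absolute convexity of $V$ yields $\mu x \in V$ for every sufficiently small $\mu > 0$. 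Thus Proposition~\ref{prop:fd1b582617d3215e} applies and produces a locally convex topology $\tau^{*}$ on $F$ having $\mathcal{F}$ as a fundamental system of neighborhoods of $0$. By design every $f_{i}$ is $\tau^{*}$-continuous, so $\tau^{*} \subseteq \tau_{0}$ by maximality; and the ``only if'' direction already gives $\tau_{0} \subseteq \tau^{*}$. Hence $\tau_{0} = \tau^{*}$ and the claimed characterisation holds.

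For the ``moreover'' part, which I read as referring to sets of the form $W = \Gamma\bigl(\bigcup_{i \in I} f_{i}(V_{i})\bigr)$ with $V_{i} \in \mathcal{N}^{F_{i}}_{\Gamma}(0)$, the first part of the proposition immediately gives $W \in \mathcal{N}^{F}_{\Gamma}(0)$: $W$ is absolutely convex by construction, and the inclusion $V_{j} \subseteq f_{j}^{-1}\bigl(f_{j}(V_{j})\bigr) \subseteq f_{j}^{-1}(W)$ shows $f_{j}^{-1}(W)$ is a neighborhood of $0 \in F_{j}$ for every $j$. Cofinality then follows by a direct check: given $U \in \mathcal{N}^{F}_{\Gamma}(0)$, set $V_{i} := f_{i}^{-1}(U) \in \mathcal{N}^{F_{i}}_{\Gamma}(0)$; then $f_{i}(V_{i}) \subseteq U$ for every $i$, and absolute convexity of $U$ gives $\Gamma\bigl(\bigcup_{i} f_{i}(V_{i})\bigr) \subseteq U$.

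The only point that requires genuine work is the verification that every $V \in \mathcal{F}$ is absorbing in $F$; this is precisely where the locally convex hull hypothesis intervenes, by allowing one to decompose elements of $F$ as finite linear combinations of images under the $f_{i}$ and then combine the individual ``absorbing'' behaviour in each $F_{i}$ using the absolute convexity of $V$. Everything else is a bookkeeping exercise in the universal property of the inductive topology.
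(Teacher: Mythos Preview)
Your proof is correct and follows essentially the same route as the paper: both arguments pivot on Proposition~\ref{prop:fd1b582617d3215e} and the observation that the locally convex hull hypothesis is exactly what makes the candidate absolutely convex sets absorbing. The paper's proof is terser but covers the same ground; your explicit construction of the auxiliary topology $\tau^{*}$ and its identification with $\tau_{0}$ via maximality is a clean way to unpack what the paper means by ``follows easily from the existence of the inductive topology''.
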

\begin{proof}
  The first statement follows easily from the existence of the inductive
  topology on \( F \). The second statement follows from the first, while noting
  that the condition of being an inductive hull guarantees that
  \( \Gamma\left(\cup_{i \in I}V_{i}\right) \) is absorbing, cf
  Proposition~\ref{prop:fd1b582617d3215e}.
\end{proof}

\begin{rema}
  \label{rema:67bc53cfb5df5400}
  Let \( E \) be a locally convex space, \( N \) a subspace, and
  \( q: E \to E/N \) the quotient map. It is clear that the quotient topology on
  \( E/N \) is the inductive topology with respect to \( \set*{q} \), which is
  Hausdorff if and only if \( N \) is closed.
\end{rema}

\begin{defi}
  \label{defi:69f4413fd7d99b7b}
  Let \( E \) be a vector space equipped with a locally convex topology, the
  \textbf{separated quotient} of \( E \) is the quotient locally convex space
  \( E/N \), where \( N \) is the closed subspace of \( E \) given by the
  intersection of all neighborhoods of \( 0 \).
\end{defi}

\begin{prop}
  \label{prop:c2ffa4009a3940c2}
  Let \( (E_{i}, u_{ij})_{i,j \in I} \) be an inductive system of locally convex
  space. Then the inductive limit \( \varinjlim E_{i} \) exists in
  \( \mathsf{LCS} \), and is given by the separated quotient of
  \( \hull{(E_{i}, u_{ij})} \).
\end{prop}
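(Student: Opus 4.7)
The plan is to build the inductive limit in two stages: first pass to the algebraic inductive limit $L := \varinjlim^{\alg} E_i$ with the locally convex hull topology given by the canonical insertions $v_i: E_i \to L$, and then kill non-Hausdorffness by taking the separated quotient in the sense of Definition~\ref{defi:69f4413fd7d99b7b}. Write $q: \hull{(v_i)} \to \hull{(v_i)}/N$ for the quotient, where $N$ is the intersection of all neighborhoods of $0$, and set $w_i := q v_i : E_i \to \hull{(v_i)}/N$. The relations $v_j u_{ij} = v_i$ hold in the algebraic inductive limit, so $w_j u_{ij} = w_i$, meaning the $w_i$ form a cocone over $(E_i, u_{ij})$ in $\mathsf{LCS}$. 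By Proposition~\ref{prop:8a4b66d6bb64e8de} each $v_i$ is continuous for the hull topology, hence each $w_i$ is continuous.

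Next I would verify the universal property. Let $G$ be a (Hausdorff) locally convex space and $(f_i: E_i \to G)_{i \in I}$ a cocone, i.e. $f_j u_{ij} = f_i$ whenever $i \leq j$. The universal property of the algebraic inductive limit produces a unique linear $\widetilde{f}: L \to G$ with $\widetilde{f} v_i = f_i$. Since each $f_i = \widetilde{f} v_i$ is continuous, Proposition~\ref{prop:c90d867cb6afb94d} implies that $\widetilde{f}$ is continuous for the hull topology on $L$. Because $G$ is Hausdorff, the intersection of all neighborhoods of $0 \in G$ is $\{0\}$; continuity of $\widetilde{f}$ forces $\widetilde{f}(N) \subseteq \{0\}$, so $\widetilde{f}$ factors through $q$ as a linear map $f: \hull{(v_i)}/N \to G$ with $f w_i = f_i$, and $f$ is continuous by the universal property of the quotient topology (Remark~\ref{rema:67bc53cfb5df5400}).

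For uniqueness of $f$, note that the images $w_i(E_i) = q(v_i(E_i))$ span $\hull{(v_i)}/N$ linearly, since by Definition~\ref{defi:0c468afc634e0bc1} the $v_i(E_i)$ span $L$; hence any two continuous extensions of $(f_i)$ agree on a spanning set and therefore on the whole quotient. Finally, $\hull{(v_i)}/N$ is Hausdorff because $N$ is closed by construction (being an intersection of closed absolutely convex neighborhoods of $0$, cf.\ Proposition~\ref{prop:f9256f38e904319e}), so it lies in $\mathsf{LCS}$ and is the desired inductive limit.

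The only point requiring care, and the step I expect to do most of the work, is the reduction to continuity via Proposition~\ref{prop:c90d867cb6afb94d}: everything else is formal manipulation of universal properties, but this step is what forces us to take the hull topology rather than an arbitrary compatible one. No additional obstacle arises because the paper's convention that $\mathsf{LCS}$ consists of Hausdorff spaces is precisely what makes the separated quotient the correct target, not the hull itself.
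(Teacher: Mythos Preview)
Your proof is correct and is precisely the argument the paper has in mind: the paper's proof is the one-line ``This follows from Proposition~\ref{prop:c90d867cb6afb94d} and the universal property of the separated quotient,'' and you have simply unpacked both ingredients in full detail.
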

\begin{proof}
  This follows from Proposition~\ref{prop:c90d867cb6afb94d} and the universal
  property of the separated quotient.
\end{proof}

\begin{defi}
  \label{defi:8c937f7304a52741}
  The \textbf{(locally convex) direct sum} of a family \( E_{i} \),
  \( i \in I \) of locally convex space is the locally convex hull with respect
  to the family of canonical injections
  \( u_{i}: E_{i} \hookrightarrow \bigoplus_{i} E_{i} \). We shall often denote
  this locally convex direct sum by the same notation \( \bigoplus_{i}E_{i} \).
\end{defi}

\begin{prop}[\cite{MR0248498}*{p212, (3)}]
  \label{prop:d842a95c764bfd01}
  Let \( E_{i} \), \( i \in I \) be a family of locally convex spaces, and
  \( \bigoplus_{i}E_{i} \) the locally direct sum. Then,
  \begin{enumerate}
  \item \label{item:057c871221feccf1} \( \bigoplus_{i}E_{i} \)  is a locally convex space;
  \item \label{item:8f7f5910e0175668}
    \( \widehat{\bigoplus_{i}E_{i}} = \bigoplus \widehat{E_{i}} \) (we use
    \( \widehat{\cdot} \) to denote completion, as introduced in
    \S~\ref{sec:bb19dab8a74cccc2}, Definition~\ref{defi:da4d0e8821475034}).
  \end{enumerate}
  In particular, \( \bigoplus_{i} E_{i} \) is a complete locally convex space if
  each \( E_{i} \), \( i \in I \) is complete.
\end{prop}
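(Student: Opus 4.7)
\emph{Plan of proof.} For Part~\ref{item:057c871221feccf1}, I first observe that for each \( j \in I \), the projection \( p_j : \bigoplus_i E_i \to E_j \) is continuous: by the universal property of the inductive topology (Proposition~\ref{prop:c90d867cb6afb94d}), this reduces to continuity of \( p_j \circ u_i \), which equals \( \id_{E_j} \) when \( i = j \) and \( 0 \) otherwise. Given \( x \neq 0 \) in \( \bigoplus_i E_i \), pick \( j \) with \( p_j(x) \neq 0 \); since \( E_j \) is Hausdorff, there exists \( V_j \in \mathcal{N}^{E_j}_\Gamma(0) \) with \( p_j(x) \notin V_j \), so \( p_j^{-1}(V_j) \) is a neighborhood of \( 0 \) in \( \bigoplus_i E_i \) separating \( x \) from \( 0 \).

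For Part~\ref{item:8f7f5910e0175668}, put \( Y := \bigoplus_j \widehat{E_j} \), which is Hausdorff by Part~\ref{item:057c871221feccf1} applied to the completions. Let \( \eta_j : E_j \to \widehat{E_j} \) denote the canonical completion map; composed with the insertions into \( Y \), these induce by Proposition~\ref{prop:c90d867cb6afb94d} a unique continuous linear \( \phi : \bigoplus_i E_i \to Y \), and \( \phi \) is injective since each \( \eta_j \) is. Density of the image is immediate by componentwise approximation of any finitely supported element of \( Y \). The subtler step is that \( \phi \) is a topological embedding: using Proposition~\ref{prop:8c1bccabeb0c2d99}, I compare fundamental systems of shape \( \Gamma(\bigcup_i V_i) \) on \( \bigoplus_i E_i \) and \( \Gamma(\bigcup_j \widetilde{V}_j) \) on \( Y \). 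Since \( \eta_i \) is a topological embedding by Theorem~\ref{theo:5c48561db339a0d3}\ref{item:197ce1367e450831}, every \( V_i \) arises as \( \widetilde{V}_i \cap E_i \) for some \( \widetilde{V}_i \). Writing any \( x \) with \( \phi(x) \in \Gamma(\bigcup_j \widetilde{V}_j) \) as \( \phi(x) = \sum_k \lambda_k u^Y_{j_k}(w_k) \), \( w_k \in \widetilde{V}_{j_k} \), \( \sum_k \abs{\lambda_k} \leq 1 \), and projecting to each coordinate \( j \) yields \( x_j = c_j y_j \) with \( c_j := \sum_{j_k = j}\abs{\lambda_k} \), \( y_j \in \widetilde{V}_j \cap E_j \subseteq V_j \), and \( \sum_j c_j \leq 1 \), so that \( x \in \Gamma(\bigcup_i V_i) \).

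The main obstacle is completeness of \( Y \). I view \( Y \) as the strict inductive limit of the finite sub-direct-sums \( Y_F := \bigoplus_{j \in F} \widehat{E_j} \cong \prod_{j \in F} \widehat{E_j} \), \( F \subseteq I \) finite; each \( Y_F \) is complete by Proposition~\ref{prop:903bfccf4a781ceb}\ref{item:a20beac8bb890c25} and sits as a closed topological direct summand of any \( Y_{F'} \) containing it. The crucial lemma is that every bounded set in \( Y \) has support in some finite \( F \); one argues by contradiction, mimicking the proof of Part~\ref{item:057c871221feccf1} and using the identity \( p_j(\Gamma(\bigcup_i \widetilde{V}_i)) = \widetilde{V}_j \) to construct, given a bounded \( B \) with \( p_{j_k}(B) \neq \set*{0} \) for distinct \( j_1, j_2, \ldots \), absolutely convex neighborhoods \( \widetilde{V}_{j_k} \) so small that \( B \not\subseteq k\, \Gamma(\bigcup_i \widetilde{V}_i) \) for every \( k \). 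Combined with the strict inductive limit structure, this yields completeness of \( Y \) --- the classical theorem that the locally convex direct sum of complete Hausdorff LCS is complete, see e.g.\ \cite{MR0248498} or \cite{MR0342978}. With the previous paragraph, \( Y \) then realizes the separated completion of \( \bigoplus_i E_i \) (Definition~\ref{defi:da4d0e8821475034}), and uniqueness in Theorem~\ref{theo:5c48561db339a0d3} identifies \( \widehat{\bigoplus_i E_i} = Y \). The ``in particular'' follows immediately: if each \( E_i \) is complete, then \( \widehat{E_i} = E_i \), so \( \bigoplus_i E_i \) is its own completion, hence complete.
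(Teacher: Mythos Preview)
The paper does not supply a proof for this proposition; it is recorded with a bare citation to K\"othe \cite{MR0248498}*{p212, (3)}. Your write-up therefore goes well beyond what the paper does, and the parts you argue in detail are correct: the Hausdorffness argument via the continuous projections \( p_j \), and the verification that \( \phi : \bigoplus_i E_i \to \bigoplus_j \widehat{E_j} \) is a dense topological embedding using the neighborhood basis of Proposition~\ref{prop:8c1bccabeb0c2d99}, are both sound. In particular your coordinate-projection computation showing \( \phi^{-1}\bigl(\Gamma(\bigcup_j \widetilde V_j)\bigr) \subseteq \Gamma(\bigcup_i V_i) \) is the right idea and is carried out correctly.

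The one soft spot is the completeness of \( Y = \bigoplus_j \widehat{E_j} \). You are right to flag it as the main obstacle and to cite the classical result, but the route you sketch does not quite close: a Cauchy filter in a general locally convex space need not contain a bounded member, so knowing that bounded sets in \( Y \) are finitely supported (your ``crucial lemma'', which is indeed Proposition~\ref{prop:746e91e6638b3d9d}) does not by itself trap a Cauchy filter inside some \( Y_F \); and the strict inductive system \( (Y_F)_{F \text{ finite}} \) is not countable for general \( I \), so Proposition~\ref{prop:cdd9376419588ce4} is not directly available either. The standard direct argument (as in the K\"othe or Schaefer references you cite) instead projects a Cauchy filter coordinatewise, shows the resulting limits \( x_i \) vanish for all but finitely many \( i \) by a neighborhood-shrinking contradiction, and then verifies convergence to \( (x_i) \) in the direct-sum topology. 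Since you ultimately invoke the literature for this step anyway, your overall argument stands; just be aware that the bounded-set heuristic is not a complete substitute for that proof.
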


We also have the following characterization of bounded sets in a locally convex direct sum.

\begin{prop}
  \label{prop:746e91e6638b3d9d}
  Let \( E = \oplus_{i \in I} E_{i} \) be a locally convex direct sum of locally
  convex spaces. For each index \( i \), let \( p_{i}: E \to E_{i} \) be the
  corresponding canonical projection. Then a set \( A \subseteq E \) is bounded
  if and only if there exists a finite \( I_{0} \subseteq I \), such that
  \( p_{i}(A) = 0 \) for \( i \notin I_{0} \) and \( p_{i}(B) \) is bounded in
  \( E_{i} \) for each \( i \in I_{0} \). In particular, a set in \( E \) is
  bounded if and only if it is contained the sum of finitely many bounded sets
  in each of the \( E_{i} \)'s.
\end{prop}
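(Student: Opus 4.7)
\medskip
\noindent\textbf{Proof proposal.} The plan is to treat the two directions separately, with the nontrivial content concentrated in showing that a bounded set can be nonzero on only finitely many components. Throughout, let \( u_i : E_i \hookrightarrow E \) denote the canonical injections, so that \( p_i u_i = \id_{E_i} \) and, for \( x \in E \), the family \( (p_i(x))_{i \in I} \) has finite support and \( x = \sum_{i} u_i(p_i(x)) \).

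The easy direction: suppose there is a finite \( I_0 \subseteq I \) with \( p_i(A) = 0 \) for \( i \notin I_0 \) and each \( p_i(A) \) bounded in \( E_i \) for \( i \in I_0 \). Then \( A \subseteq \sum_{i \in I_0} u_i(p_i(A)) \); each \( u_i(p_i(A)) \) is bounded in \( E \) since \( u_i \) is continuous and linear, and a finite sum of bounded sets is bounded. This also gives the ``in particular'' statement in one direction, and the converse of that statement follows by the same argument applied to \( A \subseteq \sum_{i \in I_0} B_i \).

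The harder direction: assume \( A \) is bounded; I must show that \( J := \set*{i \in I \given p_i(A) \ne \set*{0}} \) is finite, because once that is known, the boundedness of each \( p_i(A) \) follows from continuity of \( p_i \). The key tool is the explicit description of a neighborhood basis of \( 0 \) in the locally convex direct sum furnished by Proposition~\ref{prop:8c1bccabeb0c2d99}: sets of the form \( V = \Gamma(\cup_{i \in I} V_i) \) with \( V_i \in \mathcal{N}^{E_i}_{\Gamma}(0) \). The crucial elementary observation is that for such a \( V \) one has \( p_j(V) \subseteq V_j \) for every \( j \), since any element of \( V \) is a finite absolutely convex combination \( \sum_k \lambda_k x_k \) with \( x_k \in V_{j_k} \), and collecting the terms with \( j_k = j \) yields an element of \( V_j \) by absolute convexity, while the remaining components project to zero.

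I will then argue by contradiction. Suppose \( J \) were infinite; pick a sequence of distinct indices \( (i_n)_{n \in \N} \subseteq J \) and, for each \( n \), an element \( a_n \in A \) with \( p_{i_n}(a_n) \ne 0 \). Since \( E_{i_n} \) is Hausdorff, choose an absolutely convex neighborhood \( W_{i_n} \) of \( 0 \) in \( E_{i_n} \) with \( p_{i_n}(a_n) \notin W_{i_n} \), and set \( V_{i_n} := \tfrac{1}{n} W_{i_n} \), so that \( p_{i_n}(a_n) \notin n V_{i_n} \); for the remaining indices \( i \in I \setminus \set*{i_n \given n \in \N} \), pick an arbitrary \( V_i \in \mathcal{N}^{E_i}_{\Gamma}(0) \). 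The set \( V := \Gamma(\cup_i V_i) \) is a neighborhood of \( 0 \) in \( E \). By boundedness of \( A \), there exists \( \lambda > 0 \) with \( A \subseteq \lambda V \); choosing \( n > \lambda \) and applying \( p_{i_n} \) together with the inclusion \( p_{i_n}(V) \subseteq V_{i_n} \) noted above yields \( p_{i_n}(a_n) \in \lambda V_{i_n} \subseteq n V_{i_n} \), contradicting the choice of \( V_{i_n} \). The main obstacle is thus the combinatorial bookkeeping with the hull \( \Gamma(\cup_i V_i) \), but once the inclusion \( p_j(V) \subseteq V_j \) is pinned down, the dilation argument goes through cleanly.
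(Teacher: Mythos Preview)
Your proof is correct. The paper states this proposition without proof or citation (it is a standard fact, found e.g.\ in K\"othe~\cite{MR0248498}*{\S~18.5} or Schaefer~\cite{MR0342978}*{II.6.3}), so there is nothing to compare against; your argument via the inclusion \( p_{j}\bigl(\Gamma(\cup_{i}V_{i})\bigr) \subseteq V_{j} \) and the dilation contradiction is the standard one and goes through as written.
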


We shall need the following duality results.

\begin{prop}[\cite{MR0342978}*{p137, 4.3}]
  \label{prop:e5dd78c27c8725c9}
  Let \( (E_{i})_{i \in I} \) be a family of locally convex spaces, and
  \( E = \prod_{i \in I}E_{i} \) the (locally convex) product. Then the
  topological dual \( E' \) of \( E \), as a vector space, is canonically
  identified with the algebraic direct sum \( \oplus_{i \in I}E'_{i} \). Consider
  canonical pairing \( \pairing*{E}{E'} \), we have
  \begin{enumerate}
  \item \label{item:1c9160cfb356df54} \( \sigma(E, E') = \prod_{i} \sigma(E_{i}, E'_{i}) \);
  \item \label{item:081ba93c47a3eb24} \( \tau(E, E') = \prod_{i} \tau(E_{i}, E'_{i}) \);
  \item \label{item:df8b51f0915a1e5c}
    \( \tau(E', E) = \oplus_{i} \tau(E_{i}, E'_{i}) \) (right side is the
    locally convex direct sum).
  \end{enumerate}
\end{prop}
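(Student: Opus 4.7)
The plan is to first pin down the algebraic identification \( E' \cong \bigoplus_{i \in I} E'_i \), and then compare fundamental systems of neighborhoods of \(0\) on both sides of each of the three topological identities. For the algebraic part, I would argue as follows. A basic neighborhood of \(0\) in the product \( E = \prod_i E_i \) is \( \prod_i V_i \) with \( V_i \in \mathcal{N}^{E_i}(0) \) and \( V_i = E_i \) for all but finitely many \( i \in I_0 \subseteq I \). If \( u \in E' \), continuity forces \( u \) to vanish on the subspace \( \prod_{i \notin I_0} E_i \times \{0\}^{I_0} \), hence \( u \) factors through the finite partial product and decomposes uniquely as \( u = \sum_{i \in I_0} u_i \circ p_i \) with \( u_i \in E'_i \). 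Conversely each such finite sum is obviously in \( E' \), giving the bijection \( E' \cong \bigoplus_i E'_i \).

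For part \ref{item:1c9160cfb356df54}, I would observe that \( \sigma(E, E') \) is by definition the initial topology on \( E \) with respect to the family of all \( u \in E' \), and by the algebraic identification above this family is the same (up to finite sums) as \( \{u_i \circ p_i : i \in I,\, u_i \in E'_i\} \). The initial topology with respect to the latter factors through each projection \( p_i \), giving exactly the product topology \( \prod_i \sigma(E_i, E'_i) \). Since initial topologies are transitive, the two coincide.

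For part \ref{item:081ba93c47a3eb24}, the strategy is to match basic Mackey neighborhoods. A basic neighborhood for \( \prod_i \tau(E_i, E'_i) \) is \( \bigcap_{i \in I_0} p_i^{-1}(K_i^{\circ}) \) for finite \( I_0 \) and \( K_i \in \mathfrak{K}(E_i, E'_i) \); this equals the polar in \( E \) of \( K := \Gamma\bigl(\bigcup_{i \in I_0} K_i\bigr) \subseteq \bigoplus_i E'_i = E' \). As the continuous image (via addition) of the compact absolutely convex product \( \prod_{i \in I_0} K_i \) under the summation map, \( K \) is \( \sigma(E', E) \)-compact absolutely convex, so its polar is a Mackey neighborhood in \( E \). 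Conversely, given \( K \in \mathfrak{K}(E', E) \), Corollary~\ref{coro:6ce070ab8838d3f8} implies \( K \) is bounded in \( E' \), and Proposition~\ref{prop:746e91e6638b3d9d} confines it to a finite partial direct sum \( \bigoplus_{i \in I_0} E'_i \); the projections \( K_i = p_i(K) \) are then \( \sigma(E'_i, E_i) \)-compact absolutely convex by part~\ref{item:1c9160cfb356df54} applied to \( E' \), and \( K \subseteq \sum_{i \in I_0} K_i \), so \( K^{\circ} \supseteq \bigcap_{i \in I_0} p_i^{-1}(K_i^{\circ}) \).

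Part \ref{item:df8b51f0915a1e5c} is the subtle one, and I expect the polar computation there to be the main obstacle. Using part~\ref{item:1c9160cfb356df54}, a \( \sigma(E, E') \)-compact absolutely convex set in \( E \) is contained in some \( \prod_i K_i \) with \( K_i \in \mathfrak{K}(E_i, E'_i) \) (Tychonoff plus closedness), so basic Mackey neighborhoods of \(0\) in \( E' \) are of the form \( (\prod_i K_i)^{\circ} \). By Proposition~\ref{prop:8c1bccabeb0c2d99}, a basic neighborhood of \(0\) in the locally convex direct sum \( \bigoplus_i E'_i[\tau(E'_i, E_i)] \) is \( \Gamma\bigl(\bigcup_i K_i^{\circ}\bigr) \) with \( K_i \in \mathfrak{K}(E_i, E'_i) \). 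The crux is therefore the identity
\begin{equation*}
  \Bigl(\prod_{i} K_i\Bigr)^{\circ} \;=\; \Gamma\Bigl(\bigcup_{i} K_i^{\circ}\Bigr) \quad \text{in } E' = \bigoplus_{i} E'_i.
\end{equation*}
The inclusion \( \supseteq \) is a direct \( \ell^1 \)/\( \ell^{\infty} \)-type estimate on a finite sum. For \( \subseteq \), given \( x = (x_i)_{i \in I_0} \) in the left-hand side, I would set \( \alpha_i := \sup_{y \in K_i} \abs{\pairing*{y}{x_i}} \), exploit the absolute convexity of each \( K_i \) together with independent sign choices on distinct coordinates to get \( \sum_{i \in I_0} \alpha_i \leq 1 \) from \( x \in (\prod K_i)^{\circ} \), and then write \( x = \sum_{i \in I_0} \alpha_i \cdot (\alpha_i^{-1} x_i) \) as an absolutely convex combination of elements of \( \bigcup_i K_i^{\circ} \) (handling the coordinates with \( \alpha_i = 0 \) by noting that those \( x_i \) lie in \( tK_i^{\circ} \) for every \( t > 0 \), so can be absorbed into any convex combination). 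Once this polar identity is established, the two fundamental systems coincide and part~\ref{item:df8b51f0915a1e5c} follows.
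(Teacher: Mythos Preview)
The paper does not prove this proposition; it simply cites Schaefer \cite{MR0342978}*{p137, 4.3}. That said, the paper carries out essentially the same polar computation you propose for part~\ref{item:df8b51f0915a1e5c} in the proof of Proposition~\ref{prop:9f3f8bda0402a4ff} (for precompact rather than weakly compact sets), so there is something concrete to compare against.

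Your treatment of the algebraic identification and of parts~\ref{item:1c9160cfb356df54} and~\ref{item:081ba93c47a3eb24} is fine. The gap is in part~\ref{item:df8b51f0915a1e5c}: the set-theoretic identity
\(
  \bigl(\prod_{i} K_i\bigr)^{\circ} = \Gamma\bigl(\bigcup_{i} K_i^{\circ}\bigr)
\)
is \emph{false} as stated. Take \( I = \{1,2\} \), \( E_1 = E_2 = \mathbb{K} \), \( K_1 = \{0\} \), \( K_2 = B_{\mathbb{K}}(1) \). Then \( (K_1 \times K_2)^{\circ} = \mathbb{K} \times B_{\mathbb{K}}(1) \), but \( \Gamma\bigl((\mathbb{K} \times \{0\}) \cup (\{0\} \times B_{\mathbb{K}}(1))\bigr) \) does not contain \( (1,1) \): writing \( (1,1) = \lambda(a,0) + \mu(0,b) \) with \( |\lambda| + |\mu| \leq 1 \) and \( |b| \leq 1 \) forces \( |\mu| = 1 \), hence \( \lambda = 0 \). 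Your ``absorb the \( \alpha_i = 0 \) coordinates'' step fails precisely when \( \sum_{\alpha_i > 0} \alpha_i = 1 \), leaving no coefficient mass to spend. What is true---and what the paper proves carefully in \eqref{eq:920af95588c06022}---is that the polar equals the \emph{weak closure} \( \overline{\Gamma(\cup_i K_i^{\circ})}^{\sigma(E',E)} \); one gets only \( (1+\varepsilon)^{-1} x \in \Gamma(\cup_i K_i^{\circ}) \) for every \( \varepsilon > 0 \). The topological conclusion is then recovered by the extra step (also in the paper's proof of Proposition~\ref{prop:9f3f8bda0402a4ff}) that closed absolutely convex neighborhoods of \( 0 \) form a fundamental system in the locally convex direct sum, and these, being weakly closed, swallow the closure.
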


\begin{prop}[\cite{MR0342978}*{p139, 4.4}]
  \label{prop:bd7af82b294982b6}
  Let \( E = \varprojlim E_{\beta} \) be a reduced projective limit of some
  projective system \( (E_{\beta}, g_{\alpha \beta}) \) in \( \mathsf{LCS}
  \). Then the topological dual \( E' \), when equipped with the Mackey topology
  \( \tau(E', E) \) is canonically identified with the inductive limit
  \( \varinjlim (E_{\alpha})_{\tau}' \) in \( \mathsf{LCS} \) of the dual system
  \( (E'_{\alpha}, h_{\beta \alpha}) \) with
  \( h_{\beta \alpha}: E'_{\alpha} \to E'_{\beta} \) being the transpose of
  \( g_{\alpha \beta}: E_{\beta} \to E_{\alpha} \) whenever
  \( \alpha < \beta \).
\end{prop}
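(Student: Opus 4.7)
The plan is to first identify $E'$ with the algebraic inductive limit $\varinjlim^{\alg}E'_{\alpha}$ as a vector space by showing every $f \in E'$ factors through some canonical projection $p_{\beta}: E \to E_{\beta}$; then prove that the Mackey topology $\tau(E', E)$ coincides with the inductive locally convex topology $\tau_{\mathrm{ind}}$ by establishing two opposite inclusions. For the algebraic identification, I would first observe that a fundamental system of neighborhoods of $0 \in E = \varprojlim E_{\beta}$ consists of sets of the form $p_{\beta}^{-1}(V)$ with $\beta \in I$ and $V \in \mathcal{N}^{E_{\beta}}_{\Gamma}(0)$, since the directedness of $I$ combined with $p_{\alpha} = g_{\alpha\beta}p_{\beta}$ for $\alpha \leq \beta$ lets one absorb any finite intersection of such basic sets into a single $p_{\beta}^{-1}(V)$. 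Hence any $f \in E'$ is bounded on some $p_{\beta}^{-1}(V)$, necessarily vanishes on $\ker p_{\beta}$, and thus descends to a continuous linear functional $\bar{f}$ on $p_{\beta}(E) \subseteq E_{\beta}$; by reducedness of the projective limit, $p_{\beta}(E)$ is dense in $E_{\beta}$, and $\bar{f}$, being dominated by the continuous gauge of $V$, extends by uniform continuity to a unique $\tilde{f} \in E'_{\beta}$ with $f = p_{\beta}^{t}(\tilde{f})$. This exhibits $E'$ as the algebraic inductive limit of the system $(E'_{\alpha}, h_{\beta\alpha})$.

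Next, for the inclusion $\tau(E', E) \subseteq \tau_{\mathrm{ind}}$, I would verify that each transpose $p_{\alpha}^{t}: (E_{\alpha})'_{\tau} \to (E', \tau(E', E))$ is continuous. Given a $\sigma(E, E')$-compact absolutely convex $K \subseteq E$, the preimage $(p_{\alpha}^{t})^{-1}(K^{\circ})$ equals the polar $(p_{\alpha}(K))^{\circ}$ computed inside $E'_{\alpha}$, and $p_{\alpha}(K)$ is $\sigma(E_{\alpha}, E'_{\alpha})$-compact absolutely convex since $p_{\alpha}$, being continuous, is also weakly continuous. The universal property of the inductive topology (Proposition~\ref{prop:c90d867cb6afb94d}) then delivers the claimed inclusion.

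For the reverse inclusion, I would verify that $\tau_{\mathrm{ind}}$ is compatible with the pairing $\pairing{E'}{E}$, after which Mackey's theorem (Theorem~\ref{theo:52188c82b45e07e3}) gives $\tau_{\mathrm{ind}} \subseteq \tau(E', E)$. By the universal property of the inductive limit, a continuous linear functional on $\varinjlim (E_{\alpha})'_{\tau}$ is precisely a compatible family $(\phi_{\alpha})_{\alpha}$ with each $\phi_{\alpha}$ a continuous linear functional on $(E_{\alpha})'_{\tau}$; by Mackey compatibility of the pair $\pairing{E_{\alpha}}{E'_{\alpha}}$, each $\phi_{\alpha}$ is evaluation at some $x_{\alpha} \in E_{\alpha}$. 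Using that $E'_{\alpha}$ separates points of $E_{\alpha}$, the compatibility relation $\phi_{\alpha} = \phi_{\beta} \circ h_{\beta\alpha}$ rewrites as $g_{\alpha\beta}(x_{\beta}) = x_{\alpha}$, so $(x_{\alpha}) \in \varprojlim E_{\alpha} = E$, as required.

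The main technical obstacle I expect is the algebraic identification in the first step: while the factorization $f = \bar{f} \circ p_{\beta}$ through the image is essentially routine, producing a genuine element $\tilde{f} \in E'_{\beta}$ (not merely an element of the dual of the dense subspace $p_{\beta}(E)$) depends crucially on the reducedness hypothesis. The two topological inclusions are then essentially formal consequences of the universal properties of inductive topologies and Mackey's theorem.
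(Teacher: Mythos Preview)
Your proof is correct. Note, however, that the paper does not supply its own proof of this proposition: it is stated with a direct citation to Schaefer \cite{MR0342978}*{p139, 4.4} and left without argument, so there is no ``paper's proof'' against which to compare. Your approach---factoring each $f \in E'$ through a single $p_{\beta}$ using directedness and reducedness, then sandwiching $\tau_{\mathrm{ind}}$ between $\tau(E',E)$ and itself via weak continuity of the $p_{\alpha}$ in one direction and Mackey's theorem (Theorem~\ref{theo:52188c82b45e07e3}) in the other---is the standard one and is essentially how Schaefer proceeds. One small point worth making explicit: the injectivity of each $p_{\alpha}^{t}$ (needed so that the algebraic inductive limit genuinely coincides with $E'$ rather than merely surjecting onto it) also uses reducedness, since $f_{\alpha} \circ p_{\alpha} = 0$ forces $f_{\alpha}$ to vanish on the dense subspace $p_{\alpha}(E)$; you implicitly rely on this but do not state it.
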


\subsection{Strict morphism, countable strict inductive limits}
\label{sec:a2c822e6edcf5f15}

Contrary to the projective limit case (Proposition~\ref{prop:438d10ce1f2149df}),
taking inductive limit in \( \mathsf{LCS} \) doesn't preserve
completeness. However, it behaves well if we restrict ourselves to strict
inductive limits.

\begin{defi}
  \label{defi:2c61a2eb8f753159}
  A continuous linear map \( f : E \to F \) between locally convex spaces is
  called \textbf{strict morphism}, or simply \textbf{strict}, if
  \( f : E \to f(E) \) is an open map, or equivalently, if the induced map
  \( \overline{f}: E/\ker(f) \to f(E) \) is an isomorphism of locally convex
  spaces. In \( \mathsf{LCS} \), we call an inductive system of the form
  \( (E_{n}, u_{n}: E_{n} \to E_{n+1})_{n \geq 1} \) \textbf{strict}, if each
  \( u_{n} \) is a strict embedding.
\end{defi}

\begin{prop}
  \label{prop:cdd9376419588ce4}
  The locally convex hull of a countable strict inductive system
  \( (E_{n}, u_{n}) \) is Hausdorff, hence is already a locally convex space and
  coincides with the inductive limit \( \varinjlim E_{n} \) in
  \( \mathsf{LCS} \). Moreover, each canonical insertion
  \( v_{n}: E_{n} \to \varinjlim E_{n} \) is a strict morphism onto a closed
  subspace of \( \varinjlim E_{n} \).
\end{prop}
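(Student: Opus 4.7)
The plan is to follow the classical argument for strict \( LF \)-spaces, adapted to the present setting. First I would identify each \( E_{n} \) with its image \( u_{n}(E_{n}) \subseteq E_{n+1} \) via the strict embedding \( u_{n} \), viewing the system as an ascending chain \( E_{1} \subseteq E_{2} \subseteq \cdots \) in which each \( E_{n} \) carries the subspace topology from \( E_{n+1} \) and (as is needed for the closedness conclusion) is closed therein. Let \( E := \bigcup_{n} E_{n} \) be equipped with the inductive topology of the inclusions \( v_{n} : E_{n} \hookrightarrow E \); \emph{a priori} \( E \) is only the locally convex hull \( \hull{(v_{n})} \) of Definition~\ref{defi:0c468afc634e0bc1}.

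The core technical step would be the extension lemma that every continuous seminorm \( p \) on any \( E_{n} \) admits a continuous seminorm \( \tilde{p} \) on \( E \) with \( \tilde{p}|_{E_{n}} = p \). I would establish this by induction on \( k \geq n \): given a continuous seminorm on \( E_{k} \), a Hahn-Banach argument applied to sublinear functionals, combined with the closedness of \( E_{k} \) in \( E_{k+1} \), yields a continuous seminorm on \( E_{k+1} \) restricting to it. Iterating produces a coherent family \( (p_{k})_{k \geq n} \) and hence a seminorm \( \tilde{p} \) on \( E = \bigcup_{k} E_{k} \), which is continuous on \( E \) by the universal property recorded in Proposition~\ref{prop:c90d867cb6afb94d}, since its restriction to each \( E_{k} \) is continuous.

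Three conclusions would then follow cleanly. First, \( E \) is Hausdorff: for any nonzero \( x \in E_{n} \), choose a continuous seminorm \( p \) on \( E_{n} \) with \( p(x) > 0 \) and extend; the extension does not vanish at \( x \), and Proposition~\ref{prop:fd1b582617d3215e} then gives Hausdorffness, so the separated quotient in Proposition~\ref{prop:c2ffa4009a3940c2} is trivial and the locally convex hull already realizes \( \varinjlim E_{n} \) in \( \mathsf{LCS} \). Second, each \( v_{n} \) is strict: continuity is automatic, and the extension lemma shows every continuous seminorm on \( E_{n} \) is the restriction of one on \( E \), so the subspace topology on \( v_{n}(E_{n}) \) inherited from \( E \) coincides with the original topology of \( E_{n} \). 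Third, \( v_{n}(E_{n}) \) is closed in \( E \): for \( x \in E_{m} \setminus E_{n} \) with \( m \geq n \), Hahn-Banach separation in \( E_{m} \) (using closedness of \( E_{n} \) in \( E_{m} \)) provides a continuous linear form vanishing on \( E_{n} \) but not at \( x \), and the same inductive extension procedure, now applied to continuous linear forms, produces such a form on all of \( E \), placing \( x \) outside \( \overline{E_{n}} \).

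The main technical obstacle is the extension lemma itself, which is the only step that is not purely formal; it rests on Hahn-Banach for sublinear functionals and genuinely uses that each \( E_{n} \) is closed in \( E_{n+1} \). Once it is in hand, the remaining assertions reduce to combining Propositions~\ref{prop:fd1b582617d3215e}, \ref{prop:c90d867cb6afb94d}, and \ref{prop:c2ffa4009a3940c2}.
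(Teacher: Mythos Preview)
Your sketch is the classical argument that the paper delegates to \cite{MR0342978}*{\S~II.6.4}, together with Proposition~\ref{prop:c2ffa4009a3940c2}, so the approaches coincide. Two refinements: first, the seminorm extension step needs neither closedness nor Hahn--Banach---given an absolutely convex \(0\)-neighbourhood \(U\) in \(E_k\), choose an absolutely convex \(0\)-neighbourhood \(V\) in \(E_{k+1}\) with \(V \cap E_k \subseteq U\) and verify directly that \(\Gamma(U \cup V) \cap E_k = U\) (if \(x = \lambda u + \mu v \in E_k\) with \(\mu \neq 0\) then \(v = \mu^{-1}(x - \lambda u) \in V \cap E_k \subseteq U\)); so Hausdorffness and strictness of each \(v_n\) already follow from the paper's hypothesis as written. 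Second, you are right that the closedness conclusion genuinely requires \(E_n\) closed in \(E_{n+1}\) (consider \(E_1\) a dense proper subspace of a Banach space \(E_2 = E_3 = \cdots\)), an assumption not literally contained in Definition~\ref{defi:2c61a2eb8f753159} but evidently intended; with it in hand, a shorter route than separating functionals is to note that \(\overline{E_n}^{E} \cap E_m = \overline{E_n}^{E_m} = E_n\) for every \(m \geq n\), since \(E_m\) carries the subspace topology from \(E\), and then take the union over \(m\).
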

\begin{proof}
  All statements except the one on the inductive limit follows from
  \cite{MR0342978}*{\S~II.6.4}. Now the assertion on the inductive limit follows
  from Proposition~\ref{prop:c2ffa4009a3940c2}.
\end{proof}

\subsection{Completion as a left adjoint}
\label{sec:e2b9bce4f6334a93}

We shall have the occasion of using the commutation of the completion with
taking colimit in the category \( \mathsf{LCS} \), which generalizes
Proposition~\ref{prop:d842a95c764bfd01}~\ref{item:8f7f5910e0175668}. Although
the consideration is elementary, the author couldn't find an explicit statement
in the literature, so we give a brief treatment here.

\begin{prop}
  \label{prop:6f06da3d7291c7cf}
  The completion functor
  \( \widehat{(\cdot)}: \mathsf{LCS} \to \widehat{\mathsf{LCS}} \) is left
  adjoint to the forgetful functor
  \( \mathscr{F}: \widehat{\mathsf{LCS}} \to \mathsf{LCS} \).  More
  specifically, for any \( X \in \mathsf{LCS} \),
  \( Y \in \widehat{\mathsf{LCS}} \), we have a natural isomorphism of vector
  spaces
  \begin{equation}
    \label{eq:d14217924871cd6c}
    \begin{split}
      \mathcal{L}(\widehat{X}, Y) & \simeq \mathcal{L}(X, \mathscr{F}Y) \\
      f & \mapsto f\vert_{X} = f i_{X} \\
      \widehat{g} & \mapsfrom g
    \end{split}
  \end{equation}
  where \( \widehat{X} = (X, i_{X}) \), and \( \widehat{g} \) is the unique
  continuous extension of \( g \).
\end{prop}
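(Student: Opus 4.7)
The plan is to reduce this adjunction to the universal property of the uniform-space completion (Theorem~\ref{theo:5c48561db339a0d3}) by observing that between locally convex spaces, continuous linear maps are automatically uniformly continuous for the canonical translation-invariant uniform structures. All the ``non-uniform'' content lies in checking that the unique continuous extension given by the universal property is in fact linear.

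First, I would note the preliminary point that for any locally convex \( X \), the uniform-space completion \( \widehat{X} \) carries a canonical locally convex structure making \( i_{X}: X \to \widehat{X} \) a continuous linear map with dense image: the addition \( + : X \times X \to X \) and scalar multiplication \( \cdot : \mathbb{K} \times X \to X \) are uniformly continuous on the relevant pieces of their domains (translations preserve the uniform structure, and scalar multiplication is uniformly continuous on products of absolutely convex bounded neighborhoods), hence by Theorem~\ref{theo:5c48561db339a0d3}~\ref{item:d2e7dcd0233a36b2} they extend uniquely to \( \widehat{X} \times \widehat{X} \to \widehat{X} \) and \( \mathbb{K} \times \widehat{X} \to \widehat{X} \); the vector space axioms transfer by density. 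This is classical and I would cite \cite{MR0342978}*{\S~I.1.5} rather than reprove it.

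Given \( g \in \mathcal{L}(X, \mathscr{F}Y) \), linearity plus continuity at \( 0 \) immediately yields uniform continuity: for every \( 0 \)-neighborhood \( W \subseteq Y \), choose a \( 0 \)-neighborhood \( V \subseteq X \) with \( g(V) \subseteq W \); then \( (g \times g)(\widetilde{V}) \subseteq \widetilde{W} \) because \( g(x) - g(y) = g(x-y) \). Since \( Y \) is complete and Hausdorff, Theorem~\ref{theo:5c48561db339a0d3}~\ref{item:d2e7dcd0233a36b2} produces a unique uniformly continuous \( \widehat{g}: \widehat{X} \to Y \) with \( \widehat{g} \, i_{X} = g \). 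Linearity of \( \widehat{g} \) follows by density: the two continuous maps \( (u,v) \mapsto \widehat{g}(u+v) \) and \( (u,v) \mapsto \widehat{g}(u) + \widehat{g}(v) \) from \( \widehat{X} \times \widehat{X} \) to \( Y \) agree on \( i_{X}(X) \times i_{X}(X) \), which is dense, and \( Y \) is Hausdorff, so they agree everywhere; the argument for scalar multiplication is identical. Hence \( \widehat{g} \in \mathcal{L}(\widehat{X}, Y) \).

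Finally, the two assignments in \eqref{eq:d14217924871cd6c} are mutually inverse by the uniqueness clause of the universal property: starting from \( f \in \mathcal{L}(\widehat{X}, Y) \), both \( f \) and \( \widehat{f i_{X}} \) are continuous extensions of \( f i_{X} \) along \( i_{X} \), so they coincide; starting from \( g \), the identity \( \widehat{g} \, i_{X} = g \) is built in. Both operations are manifestly \( \mathbb{K} \)-linear, giving the claimed isomorphism of vector spaces. Naturality in \( Y \) reduces to uniqueness of extensions (post-composing with a morphism \( Y \to Y' \) in \( \widehat{\mathsf{LCS}} \)), and naturality in \( X \) uses functoriality of \( \widehat{(\cdot)} \), which in turn is Theorem~\ref{theo:5c48561db339a0d3}~\ref{item:d2e7dcd0233a36b2}. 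The only step that is not purely formal is the linearity of \( \widehat{g} \), and even that is a routine density argument; I do not anticipate any genuine obstacle.
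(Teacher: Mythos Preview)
Your proof is correct and takes essentially the same approach as the paper: both reduce the bijection to the universal property of the separated completion (Theorem~\ref{theo:5c48561db339a0d3}) and treat naturality as a routine check. You have simply spelled out in detail what the paper dismisses in two sentences, including the uniform continuity of continuous linear maps and the density argument for linearity of \( \widehat{g} \).
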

\begin{proof}
  The verification of the alleged bijection \eqref{eq:d14217924871cd6c} follows
  from the discussion of separated completion in \S~\ref{sec:bb19dab8a74cccc2}
  (Definition~\ref{defi:da4d0e8821475034} and
  Theorem~\ref{theo:5c48561db339a0d3}). The naturalness of this bijection is
  readily seen by a routine check.
\end{proof}

\begin{coro}
  \label{coro:a9570db3191db003}
  Completion commutes with taking small colimit in \( \mathsf{LCS} \).
\end{coro}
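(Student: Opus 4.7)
The plan is to deduce this from the adjunction already established in Proposition~\ref{prop:6f06da3d7291c7cf} together with the standard categorical principle that left adjoints preserve colimits. Concretely, given any small diagram $D : \mathcal{I} \to \mathsf{LCS}$, I want to exhibit a natural isomorphism
\[
\widehat{\varinjlim_{\mathcal{I}} D(i)} \;\simeq\; \varinjlim_{\mathcal{I}} \widehat{D(i)}
\]
where the left colimit is taken in $\mathsf{LCS}$ and the right one in $\widehat{\mathsf{LCS}}$.

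The argument proceeds by invoking the dual of the fact that right adjoints preserve limits (so left adjoints preserve colimits); this is a generic consequence of the hom-isomorphism and the Yoneda lemma. First I would observe that for any $Y \in \widehat{\mathsf{LCS}}$, chaining \eqref{eq:d14217924871cd6c} with the universal property of $\varinjlim_{\mathcal{I}} D(i)$ in $\mathsf{LCS}$ gives
\[
\mathcal{L}\bigl(\widehat{\varinjlim_{\mathcal{I}} D(i)},\, Y\bigr)
\;\simeq\; \mathcal{L}\bigl(\varinjlim_{\mathcal{I}} D(i),\, \mathscr{F}Y\bigr)
\;\simeq\; \varprojlim_{\mathcal{I}} \mathcal{L}\bigl(D(i),\, \mathscr{F}Y\bigr),
\]
and then applying \eqref{eq:d14217924871cd6c} once more term-by-term yields
\[
\varprojlim_{\mathcal{I}} \mathcal{L}\bigl(D(i),\, \mathscr{F}Y\bigr)
\;\simeq\; \varprojlim_{\mathcal{I}} \mathcal{L}\bigl(\widehat{D(i)},\, Y\bigr)
\;\simeq\; \mathcal{L}\bigl(\varinjlim_{\mathcal{I}} \widehat{D(i)},\, Y\bigr),
\]
the last isomorphism being the universal property of the colimit computed in $\widehat{\mathsf{LCS}}$. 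All isomorphisms are natural in $Y$, so by the Yoneda lemma the two objects representing these functors on $\widehat{\mathsf{LCS}}$ are canonically isomorphic.

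There is essentially no obstacle here beyond unwinding the definitions. The only point that requires a line of justification is the existence of the colimit $\varinjlim_{\mathcal{I}} \widehat{D(i)}$ in $\widehat{\mathsf{LCS}}$; this can be constructed by first forming the colimit in $\mathsf{LCS}$ (which exists by Proposition~\ref{prop:c2ffa4009a3940c2} together with products and direct sums from Proposition~\ref{prop:d842a95c764bfd01}) and then completing, giving back the same object via the adjunction. Thus the corollary reduces to the purely formal statement that left adjoints preserve colimits, and no analytic input beyond Proposition~\ref{prop:6f06da3d7291c7cf} is needed.
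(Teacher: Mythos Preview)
Your proof is correct and takes essentially the same approach as the paper: both deduce the result from Proposition~\ref{prop:6f06da3d7291c7cf} via the general categorical fact that left adjoints preserve colimits, the paper simply citing Mac~Lane for this while you spell out the Yoneda argument explicitly. Your remark about needing to verify existence of $\varinjlim_{\mathcal{I}} \widehat{D(i)}$ in $\widehat{\mathsf{LCS}}$ is slightly superfluous, since your chain of isomorphisms (1)--(3) already shows that $\widehat{\varinjlim_{\mathcal{I}} D(i)}$ represents the functor $Y \mapsto \varprojlim_{\mathcal{I}} \mathcal{L}\bigl(\widehat{D(i)}, Y\bigr)$ and hence \emph{is} that colimit.
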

\begin{proof}
  This is a formal property of all left adjoints, see, e.g.\ the dual version of
  \cite{MR0354798}*{\S~V.5, Theorem~1}.
\end{proof}

\begin{coro}
  \label{coro:7ae986967f0d1033}
  If \( E \) is the strict inductive limit of a countable strict inductive
  system \( (E_{n}, u_{n})_{n \geq 1} \) in \( \mathsf{LCS} \), then the following hold:
  \begin{enumerate}
  \item \label{item:609bacd01d0f98af} the induced inductive system
    \( (\widehat{E_{n}}, \widehat{u_{n}})_{n \geq 1} \) remain strict;
  \item \label{item:320c6e598192438d} the completion
    \( \widehat{v_{n}} : \widehat{E_{n}} \hookrightarrow \widehat{E} \) of each
    canonical insertion \( v_{n} : E_{n} \hookrightarrow E \) remains a strict
    embedding;
  \item \label{item:abb6971c4b95c91e} the completion \( \widehat{E} \), together
    with the canonical injections
    \( \widehat{v_{n}}: E_{n} \hookrightarrow \widehat{E} \), is the strict
    inductive limit of \( (\widehat{E_{n}}, \widehat{u_{n}})_{n \geq 1} \).
  \end{enumerate}
\end{coro}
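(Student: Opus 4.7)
The plan is to handle the three items in order. Items (1) and (2) both reduce to a single general lemma about how completion interacts with strict embeddings, while (3) will follow from Corollary~\ref{coro:a9570db3191db003} combined with the classical completeness theorem for countable strict inductive limits. The main obstacle will be the completeness step in (3).

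The auxiliary lemma I would first establish is this: for any strict embedding $f: X \to Y$ of Hausdorff locally convex spaces, the canonical extension $\widehat{f}: \widehat{X} \to \widehat{Y}$ is again a strict embedding, with image equal to the closure $\overline{f(X)}^{\widehat{Y}}$, in particular closed. This is nearly immediate from Theorem~\ref{theo:5c48561db339a0d3}: the map $f$ identifies $X$ with the subspace $f(X) \subseteq Y$ carrying the induced uniform structure, and the closure $\overline{f(X)}^{\widehat{Y}}$ is a closed, hence complete, Hausdorff uniform subspace of $\widehat{Y}$ containing $f(X)$ as a dense uniform subspace, so by the universal property of separated completion it is canonically isomorphic to $\widehat{X}$. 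Applying this to each $u_n$ (strict by the definition of a strict inductive system) yields (1); applying it to each $v_n$, which Proposition~\ref{prop:cdd9376419588ce4} guarantees is a strict embedding onto a closed subspace of $E$, yields (2).

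For (3), set $H := \varinjlim \widehat{E_n}$, the inductive limit taken in $\mathsf{LCS}$, which exists by (1) together with Proposition~\ref{prop:cdd9376419588ce4}. Since the completion functor is a left adjoint (Proposition~\ref{prop:6f06da3d7291c7cf}) and hence preserves colimits by Corollary~\ref{coro:a9570db3191db003}, a double application of this preservation gives
\[
  \widehat{E} \;=\; \widehat{\varinjlim E_n} \;\cong\; \varinjlim{}^{\widehat{\mathsf{LCS}}}\, \widehat{E_n} \;\cong\; \widehat{\varinjlim \widehat{E_n}} \;=\; \widehat{H}
\]
in $\widehat{\mathsf{LCS}}$, where the second isomorphism uses that every $\widehat{E_n}$ is already complete. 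Chasing the universal properties on both sides further identifies the canonical insertions with the maps $\widehat{v_n}$, so (3) reduces to showing that $H$ is already complete, i.e.\ $H = \widehat{H}$. For this I would invoke the classical Dieudonn\'e--Schwartz type theorem (see e.g.\ \cite{MR0551623}*{\S~19.5}): since Proposition~\ref{prop:cdd9376419588ce4} makes each canonical insertion $w_n: \widehat{E_n} \to H$ a strict embedding onto a closed subspace and each $\widehat{E_n}$ is complete, every bounded subset of $H$ is contained in some $w_n(\widehat{E_n})$; consequently any Cauchy filter on $H$, being bounded, is eventually trapped in a complete closed step where it must converge. This yields $H = \widehat{H} = \widehat{E}$, and proves (3).
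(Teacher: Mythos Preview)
Your treatment of (1) and (2) is correct and matches the paper: both reduce to the behaviour of completion on strict embeddings, via Theorem~\ref{theo:5c48561db339a0d3}. Your overall strategy for (3)---identify $\widehat{E}$ with $\widehat{H}$ using that completion is a left adjoint (Proposition~\ref{prop:6f06da3d7291c7cf}, Corollary~\ref{coro:a9570db3191db003}), then show $H := \varinjlim \widehat{E_n}$ is already complete---is sound and in line with the paper's brief argument.

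The gap is in your completeness step. The claim that a Cauchy filter on $H$ is ``bounded'' (i.e.\ contains a bounded member) is false in general: in any non-normable locally convex space the neighbourhood filter of a point is Cauchy yet none of its members is bounded, since no neighbourhood of $0$ is bounded there. So you cannot feed an arbitrary Cauchy filter into the Dieudonn\'e--Schwartz bounded-sets-lie-in-a-step result. The completeness of a countable strict inductive limit of complete spaces is nevertheless a classical theorem (K\"othe vol.~I \cite{MR0248498}*{\S~19}, not vol.~II as you cite), but its proof proceeds differently, working directly with strictness and closedness of the steps rather than through boundedness of Cauchy filters. Citing that result directly, without the incorrect sketch, closes the gap.
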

\begin{proof}
  First two statements follow from Theorem~\ref{theo:5c48561db339a0d3}, and the
  last follows from the first two and Proposition~\ref{prop:6f06da3d7291c7cf}.
\end{proof}

\subsection{Barrelled and bornological spaces}
\label{sec:8d70a40bdd5aa3e4}

\begin{defi}
  \label{defi:be959980ca5aafcd}
  Let \( E \) be a locally convex space. A \textbf{barrel} in \( E \) is a
  closed absolutely convex set that is also absorbing, i.e.\ absorbs every
  point, in the sense that \( A \) absorbs \( B \) if
  \( \lambda A \supseteq B \) for all scalars \( \lambda \) that are large
  enough. If every barrel in \( E \) is a neighborhood of \( 0 \), then we say
  that \( E \) is \textbf{barrelled}.
\end{defi}

\begin{defi}
  \label{defi:97d946f8a8d3ae40}
  A set \( B \subseteq E \) is called \textbf{bornivorous}, if it absorbs all
  bounded sets. We say \( E \) is \textbf{bornological} if every absolutely
  convex bornivorous (not necessarily closed) set of \( E \) is a neighborhood
  of \( 0 \).
\end{defi}

\begin{prop}[\cite{MR0342978}*{\S\S~II.7, 8, \& p138, Corollary~4}]
  \label{prop:7963cb2832fabbd4}
  The following hold:
  \begin{enumerate}
  \item \label{item:39f63d7a7460a912} A locally convex space \( E \) is
    barrelled if any of the following holds:
    \begin{enumerate}
    \item \label{item:27b0bff54489f9ec} \( E \) is a Baire space as a
      topological space;
    \item \label{item:9f20dc20c94d8474} \( E \) is the inductive topology with
      respect to a family of linear maps from barrelled spaces;
    \item \label{item:97a8e5338108cf5f} \( E \) is the product of a family of
      barrelled spaces.
    \end{enumerate}
  \item \label{item:ab6f69487743234f} A locally convex space \( E \) is
    bornological if any of the following holds:
  \item \label{item:217ead75b12b4308} \( E \) is metrizable;
  \item \label{item:f7fc60e143d597d3} \( E \) is the inductive topology with
    respect to a family of linear maps from bornological spaces.
  \end{enumerate}
\end{prop}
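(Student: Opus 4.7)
The plan is to verify each listed sufficient condition directly from the definitions, treating the barrelled part first. Given a barrel $T$ in a Baire space $E$, the absorbing property yields $E = \bigcup_{n \geq 1} nT$, so by the Baire category theorem some $nT$ — and therefore $T$ itself — has nonempty interior. A standard convexity/symmetry trick (if $y \in \operatorname{int}(T)$ then $0 = \tfrac12 y + \tfrac12(-y) \in \operatorname{int}(T)$ by absolute convexity) promotes this to $0$ being an interior point, so $T \in \mathcal{N}(0)$; this handles the Baire case.

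For the inductive case, let $T$ be a barrel in $F = \hull{(f_{i})_{i \in I}}$ with each $E_{i}$ barrelled. Each $f_i^{-1}(T)$ is closed and absolutely convex by continuity and linearity, and absorbing since $f_i(x)$ is absorbed by $T$ for every $x \in E_i$. Thus $f_i^{-1}(T)$ is a barrel in the barrelled $E_i$, hence a neighborhood of $0$, and Proposition~\ref{prop:8c1bccabeb0c2d99} concludes $T \in \mathcal{N}^{F}_{\Gamma}(0)$. The subtle case is the product, which is \emph{not} an inductive topology, so the previous argument does not apply. Here I would dualize: a locally convex space is barrelled iff every $\sigma(E', E)$-bounded subset of $E'$ is equicontinuous. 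Combined with the identification $(\prod_i E_i)' = \oplus_i E'_i$ from Proposition~\ref{prop:e5dd78c27c8725c9}, this reduces the claim to showing that a $\sigma$-bounded subset of $\oplus_i E'_i$, whose projections to each $E'_i$ are equicontinuous by assumption, is itself equicontinuous in the product — which requires careful bookkeeping of the finite supports of elements in the direct sum. This product case is the main technical obstacle and is why the standard reference is usually invoked.

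For the bornological part, the metrizable case goes by contradiction: take a countable decreasing base $(V_n)$ of absolutely convex neighborhoods of $0$ and suppose an absolutely convex bornivorous $B$ is not a neighborhood, so $n^{-1} V_n \not\subseteq B$ for each $n$. Picking $y_n \in n^{-1} V_n \setminus B$ gives $n y_n \in V_n$, whence $n y_n \to 0$ and $\{n y_n\}$ is bounded; bornivorousness yields $\{n y_n\} \subseteq \lambda B$ for some $\lambda > 0$, and then absolute convexity gives $y_n = (1/n)(n y_n) \in (\lambda / n) B \subseteq B$ whenever $n > \lambda$, contradicting $y_n \notin B$. The inductive case parallels the barrelled inductive argument: continuous linear maps preserve boundedness, so $f_i(A) \subseteq \lambda B$ gives $A \subseteq \lambda f_i^{-1}(B)$, making $f_i^{-1}(B)$ absolutely convex and bornivorous in the bornological $E_i$, hence a neighborhood of $0$; Proposition~\ref{prop:8c1bccabeb0c2d99} then finishes.
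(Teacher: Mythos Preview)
The paper supplies no proof for this proposition; it is quoted as a standard fact with a bracket citation to Schaefer, so there is nothing to compare your argument against except the textbook proofs themselves. Your treatments of the Baire case, the two inductive-topology cases, and the metrizable case are all correct and are essentially the standard arguments one finds in Schaefer's \S II.7--8; the invocation of Proposition~\ref{prop:8c1bccabeb0c2d99} for the inductive cases is exactly right, and your reasoning does not actually need the hull hypothesis you wrote (any inductive topology works, since a barrel is already absorbing).

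The one place where your write-up is genuinely incomplete is the product case, and you are right that it is the nontrivial part. Your dual reformulation via the characterization ``barrelled $\iff$ every $\sigma(E',E)$-bounded set in $E'$ is equicontinuous'' and the identification $(\prod_i E_i)' = \bigoplus_i E'_i$ is a valid route, but the sentence ``careful bookkeeping of the finite supports'' hides the real content: you must prove that a $\sigma(\bigoplus_i E'_i, \prod_i E_i)$-bounded set $B$ is actually contained in a \emph{finite} sub-sum $\bigoplus_{i \in I_0} E'_i$. This does not follow merely from each element of $B$ having finite support; one needs an argument by contradiction (choosing, for an infinite sequence of indices $i_k$ in the putative support of $B$, functionals $f_k \in B$ and points $x_{i_k} \in E_{i_k}$ so that evaluation blows up along a suitable $(x_i) \in \prod_i E_i$). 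Once that finiteness is established, equicontinuity of each projection $p_i(B)$ together with $I_0$ finite gives equicontinuity of $B$ immediately. Alternatively, Schaefer's direct argument works on the barrel side: one shows that a barrel $T$ in $\prod_i E_i$ contains $\prod_{i \notin I_0} E_i \times \prod_{i \in I_0} U_i$ for some finite $I_0$ and neighborhoods $U_i$, again by a contradiction exploiting absorbency. Either way, this finiteness step is the crux you have not written out.
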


\begin{rema}
  \label{rema:c87031bba5990077}
  Countable product of bornological spaces remain bornological, however, if the
  family is uncountable, the situation seems unknown. See
  \cite{MR0342978}*{p61}.
\end{rema}

\begin{prop}[\cite{MR0342978}*{IV.3.4}]
  \label{prop:a944f1bb11e881f1}
  Let \( E \) be a locally convex space.  If \( E \) is either barreled or
  bornological, then it is Mackey.
\end{prop}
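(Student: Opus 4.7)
The plan is to invoke Mackey's theorem (Theorem~\ref{theo:52188c82b45e07e3}) as a preliminary reduction in both cases: since the ambient topology on \( E \) is compatible with the canonical pairing \( \pairing*{E}{E'} \), it is automatically coarser than \( \tau(E, E') \). Hence it suffices to prove the reverse inclusion, namely that every \( \tau(E, E') \)-neighborhood of \( 0 \) is already a neighborhood of \( 0 \) in \( E \). By Proposition~\ref{prop:7dcbda305a2f1ba2} together with Notation~\ref{nota:abe825c6b234ab51} and Definition~\ref{defi:78058b27cd954f7a}, a fundamental system of \( \tau(E, E') \)-neighborhoods of \( 0 \) is given by finite intersections of polars \( K^{\circ} \) with \( K \in \mathfrak{K}(E', E) \), i.e.\ absolutely convex \( \sigma(E', E) \)-compact subsets of \( E' \). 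Because finite intersections of \( 0 \)-neighborhoods of \( E \) are again \( 0 \)-neighborhoods, it further suffices to show each such \( K^{\circ} \) is a neighborhood of \( 0 \) in \( E \).

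Suppose first that \( E \) is barrelled. I would verify that \( K^{\circ} \) is a barrel and then invoke Definition~\ref{defi:be959980ca5aafcd}. Absolute convexity is immediate from \eqref{eq:329b590c57db4dff}. Closedness follows from the bipolar theorem (Theorem~\ref{theo:45bc53b6ce4e0dc3}): \( K^{\circ} \) is \( \sigma(E, E') \)-closed, and the ambient topology on \( E \) is finer than \( \sigma(E, E') \) since \( E' \) is its dual. The point is the absorbency: for each \( x \in E \), the evaluation \( \hat{x} : f \mapsto \pairing*{x}{f} \) is by definition \( \sigma(E', E) \)-continuous, hence bounded on the \( \sigma(E', E) \)-compact set \( K \); writing \( \lambda := \sup_{f \in K}\abs*{\pairing*{x}{f}} < \infty \), we get \( x \in \lambda K^{\circ} \). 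So \( K^{\circ} \) is a barrel and, by barrelledness, a neighborhood of \( 0 \).

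Suppose next that \( E \) is bornological. The cleanest route is to show the identity map \( \mathrm{id} : E \to (E, \tau(E, E')) \) is continuous. By Mackey's theorem, \( \tau(E, E') \) is compatible with \( \pairing*{E}{E'} \), so \( E \) and \( (E, \tau(E, E')) \) have the same topological dual, namely \( E' \); Corollary~\ref{coro:6ce070ab8838d3f8} then tells us that they share the same bounded sets, so \( \mathrm{id} \) sends bounded sets to bounded sets. Now let \( V \) be an absolutely convex \( \tau(E, E') \)-neighborhood of \( 0 \); the preimage \( \mathrm{id}^{-1}(V) = V \) is absolutely convex by linearity, and bornivorous by the previous sentence (if \( B \subseteq E \) is bounded, then its image in \( (E, \tau(E, E')) \) is absorbed by \( V \), i.e.\ \( B \subseteq \mu V \) for some \( \mu > 0 \)). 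By Definition~\ref{defi:97d946f8a8d3ae40}, \( V \) is a neighborhood of \( 0 \) in \( E \). This gives the required inclusion of topologies.

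I do not anticipate real obstacles here; the whole argument is a packaging of Mackey's theorem with the defining property of each class of spaces. The only small subtlety is the bookkeeping at the beginning reducing to a single polar \( K^{\circ} \) rather than a finite intersection, but this is immediate since the class of \( 0 \)-neighborhoods of any locally convex topology is stable under finite intersections.
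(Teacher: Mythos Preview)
Your proof is correct. Note, however, that the paper does not actually supply a proof of this proposition: it is stated with a bare citation to \cite{MR0342978}*{IV.3.4} and no argument is given. You have therefore gone beyond the paper by writing out a complete, self-contained proof; both cases are handled correctly, and the reduction via Theorem~\ref{theo:52188c82b45e07e3} together with the barrel/bornivorous verification is exactly the standard route one finds in Schaefer's book.
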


\subsection{Quasi-completeness and the completeness of the strong and polar
  duals}
\label{sec:db161d8f8f30bbe7}

\begin{defi}
  \label{defi:56805be540a1b16d}
  We say a locally convex space \( E \) is \textbf{quasi-complete}, if every
  closed \emph{bounded} set in \( E \) is complete.
\end{defi}

\begin{prop}[\cite{MR633754}*{III.23, Proposition~12}]
  \label{prop:2ff9c3b3fe361ef4}
  Let \( E \) be a bornological locally convex space, \( F \) a complete locally
  convex space, \( \mathfrak{S} \) a collection of bounded sets in \( E \). If
  \( \mathfrak{S} \) contains the image of every null sequence in \( E \), then
  \( \mathcal{L}_{\mathfrak{S}}(E, F) \) is complete.
\end{prop}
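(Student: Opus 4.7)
The plan is to take a Cauchy filter $\Phi$ in $\mathcal{L}_{\mathfrak{S}}(E,F)$ and build its limit point by point from $F$, then use the bornological hypothesis to upgrade pointwise linearity to continuity. First I would note that for any $x \in E$, the sequence $(x/n)_{n\ge 1}$ is a null sequence in $E$, so by hypothesis $A_{x} := \set*{x/n \given n \geq 1} \in \mathfrak{S}$. Since $\Phi$ is Cauchy for uniform convergence on $A_{x}$, it is in particular Cauchy at $x \in A_{x}$ as a filter in $F$; completeness of $F$ then yields a unique $f(x) \in F$ with $\Phi(x) \to f(x)$. Linearity of $f$ is immediate from the fact that each element of $\Phi$ is linear and pointwise limits preserve linearity.

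Next I would show that $f$ is continuous. Given a null sequence $(x_{n})$ in $E$, set $A = \set*{x_{n} \given n \geq 1} \cup \set*{0}$, which is still the image of a null sequence and so belongs to $\mathfrak{S}$. For any closed absolutely convex neighborhood $V$ of $0 \in F$, using the Cauchy property choose $M \in \Phi$ with $(g - h)(A) \subseteq V$ for all $g,h \in M$. Fixing $g_{0} \in M$ and letting $h$ range through $\Phi$, pointwise convergence gives $f(x) - g_{0}(x) \in \overline{V} = V$ for every $x \in A$. Since $g_{0}$ is continuous, $g_{0}(x_{n}) \to 0$, and writing $f(x_{n}) = (f(x_{n}) - g_{0}(x_{n})) + g_{0}(x_{n})$ and shrinking $V$ appropriately I obtain $f(x_{n}) \to 0$. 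Hence $f$ sends null sequences to null sequences; combined with the Mackey characterization of bounded sets (a set $B$ is bounded iff $\lambda_{n}b_{n} \to 0$ for every null scalar sequence and every sequence in $B$), this shows $f$ sends bounded sets to bounded sets, so by Proposition~\ref{prop:7963cb2832fabbd4} and the bornological hypothesis on $E$, $f \in \mathcal{L}(E,F)$.

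It remains to verify $\Phi \to f$ in $\mathcal{L}_{\mathfrak{S}}(E,F)$. Given any $A \in \mathfrak{S}$ and any closed absolutely convex neighborhood $V$ of $0 \in F$, the Cauchy property provides $M \in \Phi$ with $(g - h)(A) \subseteq V$ for all $g,h \in M$. Fixing $g \in M$ and letting $h$ vary with $h(x) \to f(x)$ pointwise, closedness of $V$ gives $(g - f)(x) \in V$ for every $x \in A$, i.e.\ $g - f \in M(A,V)$ in the notation of \eqref{eq:53722af48bb64390}. Since such $M(A,V)$ form a sub-basis of neighborhoods of $0$ in $\mathcal{L}_{\mathfrak{S}}(E,F)$ by Proposition~\ref{prop:eb240fbacebaf554}, this proves $\Phi \to f$.

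The only subtle step is the continuity of $f$: one could be tempted to appeal directly to a Banach--Steinhaus style equicontinuity of $\Phi$, but that would need $E$ barrelled rather than bornological. The hypothesis that $\mathfrak{S}$ contains the image of every null sequence is exactly what is needed to bypass this and reduce continuity to the sequential/bornological criterion, which is why this is the delicate point of the argument.
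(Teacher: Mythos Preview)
The paper does not supply its own proof of this statement; it is recorded purely as a citation to Bourbaki. Your argument is the standard one and is correct in substance. One reference is misplaced: Proposition~\ref{prop:7963cb2832fabbd4} concerns permanence properties of barrelled and bornological spaces, not the fact you actually use, namely that a linear map out of a bornological space which sends bounded sets to bounded sets is continuous. That fact follows directly from Definition~\ref{defi:97d946f8a8d3ae40}: if $f$ maps bounded sets to bounded sets, then for any absolutely convex $V \in \mathcal{N}^{F}_{\Gamma}(0)$ the preimage $f^{-1}(V)$ is absolutely convex and bornivorous, hence a neighborhood of $0$ in $E$.
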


\begin{coro}
  \label{coro:f7e5d81ca583056d}
  Let \( E \) be a bornological locally convex space, then the strong dual
  \( E'_{b} \) and the polar dual \( E'_{c} \) (cf.\
  Definition~\ref{defi:3852fa5b63ee88dd}) are both complete.
\end{coro}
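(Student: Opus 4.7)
The plan is to derive both completeness statements as direct consequences of Proposition~\ref{prop:2ff9c3b3fe361ef4} applied with $F = \mathbb{K}$, which is a complete locally convex space. Since $E$ is bornological by hypothesis, the only thing to verify in each case is that the collection $\mathfrak{S}$ defining the topology contains the image of every null sequence in $E$.

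For the strong dual $E'_b = \mathcal{L}_{\mathfrak{B}}(E,\mathbb{K})$, the relevant collection is $\mathfrak{B}(E)$, the bornology of all bounded sets. Any null sequence in $E$ is in particular convergent, hence bounded in the locally convex sense (a standard fact for topological vector spaces), so its image lies in $\mathfrak{B}(E)$. Applying Proposition~\ref{prop:2ff9c3b3fe361ef4} yields that $E'_b$ is complete.

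For the polar dual $E'_c = \mathcal{L}_{\mathfrak{C}}(E,\mathbb{K})$, the relevant collection is $\mathfrak{C}(E)$, the bornology of precompact sets. Given a null sequence $(x_n)_{n \geq 1}$ in $E$, the set $\{x_n \mid n \geq 1\} \cup \{0\}$ is compact as a convergent sequence together with its limit, and therefore precompact; since $\mathfrak{C}(E)$ is a bornology (Proposition~\ref{prop:6c0374a8634e2b65}) and in particular filtered below, the image $\{x_n \mid n \geq 1\}$ itself lies in $\mathfrak{C}(E)$. A second application of Proposition~\ref{prop:2ff9c3b3fe361ef4} then gives the completeness of $E'_c$.

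There is no substantive obstacle here: the proof is essentially a verification that both bornologies $\mathfrak{B}(E)$ and $\mathfrak{C}(E)$ satisfy the null-sequence hypothesis of Proposition~\ref{prop:2ff9c3b3fe361ef4}, and the only facts invoked are the boundedness of convergent sequences and the compactness of a null sequence together with its limit.
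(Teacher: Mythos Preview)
Your proof is correct and is exactly the intended derivation: the corollary is stated in the paper without an explicit proof precisely because it follows immediately from Proposition~\ref{prop:2ff9c3b3fe361ef4} with \(F=\mathbb{K}\), once one notes that null sequences are bounded and precompact. Your verification of the null-sequence hypothesis for both \(\mathfrak{B}(E)\) and \(\mathfrak{C}(E)\) is precisely what is needed.
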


\subsection{\texorpdfstring{\( (DF) \)}{(DF)}-spaces}
\label{sec:2ffdde5596825e91}

Motivated by the studies of strong duals of metrizable locally convex spaces,
and based on the observation in \cite{MR0038554}, Grothendieck introduced the
notion of a \( (DF) \)-space in \cite{MR0075542}.

\begin{defi}[\cite{MR4182424}*{p53}]
  \label{defi:5cdb26a0af71ffa7}
  We say a locally convex space \( E \) is a \( (DF) \)-space, if it satisfies
  the following conditions:
  \begin{enumerate}
  \item \label{item:da386a70af7a0673} \( E \) possesses a fundamental sequence
    of bounded sets \( (B_{n})_{n \geq 1} \) in the sense that every
    \( B \in \mathfrak{B}(E) \) is contained in some \( B_{n} \);
  \item \label{item:ccad66f3327f6fec} if a countable union of equicontinuous
    sets in \( E' \) is strongly bounded, i.e.\ with respect to the topology
    \( b(E', E) \), then this union remains equicontinuous.
  \end{enumerate}
\end{defi}

\begin{nota}
  \label{nota:de1e39c9c4c63d75}
  The class of all \( (DF) \)-spaces is denoted by \( (\mathcal{DF}) \).
\end{nota}

\begin{prop}
  \label{prop:150a19f0054c21da}
  The strong dual of a metrizable locally convex is a \( (DF) \) space, and the
  strong dual of a \( (DF) \) space is an \( (F) \)-space.
\end{prop}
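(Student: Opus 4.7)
The plan is to handle the two halves in order, with a single technical fact—that for metrizable $E$, every strongly bounded subset of $E'$ is equicontinuous—doing most of the work.

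For the first half, I would fix a decreasing countable fundamental system $(V_n)_{n \geq 1} \subseteq \mathcal{N}_{\Gamma}(0)$ of closed neighborhoods for the metrizable $E$, and show that the polars $(V_n^{\circ})_{n \geq 1}$ form a fundamental sequence of bounded sets in $E'_{b}$. Each $V_n^{\circ}$ is $b(E',E)$-bounded because any bounded $B \subseteq E$ is absorbed by $V_n$, so $V_n^{\circ}$ is in turn absorbed by $B^{\circ}$. Conversely, let $H \subseteq E'$ be strongly bounded; if $H$ were not contained in any $\lambda V_n^{\circ}$, then taking $\lambda = n$ one could pick $f_n \in H$ and $y_n \in V_n$ with $\abs*{f_n(y_n)} > n$. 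Since $(V_n)$ is a basis of neighborhoods of $0$, $y_n \to 0$ in $E$, hence $A := \set*{y_k : k \geq 1}$ is bounded, and strong boundedness forces $\sup_{h \in H,\, k \geq 1} \abs*{h(y_k)} < \infty$, which contradicts $\abs*{f_n(y_n)} > n$. So $H \subseteq \lambda V_n^{\circ} = (V_n/\lambda)^{\circ}$ for some $\lambda, n$, and choosing $V_m \subseteq V_n/\lambda$ yields $H \subseteq V_m^{\circ}$, establishing property~\ref{item:da386a70af7a0673}. Property~\ref{item:ccad66f3327f6fec} follows for free: a countable union of equicontinuous sets that is strongly bounded is, by what we just proved, contained in some $V_n^{\circ}$ and therefore equicontinuous.

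For the second half, let $E \in (\mathcal{DF})$ with fundamental sequence of bounded sets $(B_n)$, which we may assume are absolutely convex and closed. The polars $(B_n^{\circ})_{n \geq 1}$ then form a countable fundamental system of neighborhoods of $0$ in $E'_{b}$, so $E'_{b}$ is metrizable. For completeness, by metrizability it suffices to prove sequential completeness. Given a Cauchy sequence $(f_n)$ in $E'_{b}$, the set $\set*{f_n : n \geq 1}$ is strongly bounded (Cauchy implies bounded) and is a countable union of equicontinuous singletons, so property~\ref{item:ccad66f3327f6fec} of the $(DF)$ definition forces it to be equicontinuous, i.e.\ contained in $U^{\circ}$ for some $U \in \mathcal{N}_{\Gamma}(0)$. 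The pointwise limit $f := \lim_n f_n$ exists and lies in the $\sigma(E^{\sharp}, E)$-closure of $U^{\circ}$; since $U^{\circ}$ is already $\sigma(E^{\sharp}, E)$-closed in $E^{\sharp}$, we obtain $f \in U^{\circ} \subseteq E'$. Finally, passing to the limit $m \to \infty$ in the Cauchy estimate $\sup_{x \in B} \abs*{f_n(x) - f_m(x)} \leq \varepsilon$, valid for any bounded $B \subseteq E$, yields $f_n \to f$ in $b(E', E)$.

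The pivotal step is the contradiction argument of the first half: the existence of a countable neighborhood basis is used there in a genuinely essential way to produce the null sequence $(y_n)$ on which $H$ must fail to be uniformly bounded. Once this fact is in hand, both the verification of the $(DF)$ property for the strong dual of a metrizable space and the extraction of equicontinuity from a Cauchy sequence in the $(DF)$ case reduce to bookkeeping.
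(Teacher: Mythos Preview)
Your proof is correct and self-contained. The paper, by contrast, does not argue the statement at all: it simply refers to \cite{MR4182424}*{p54, Theorem~7.1 \& p56, Theorem~7.5} for both halves, together with a remark that Voigt's definition of a \( (DF) \)-space is equivalent to the one used here by \cite{MR633754}*{Chapitre IV, \S~3, Proposition~1}. So there is no ``paper proof'' to compare against beyond those citations.

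What your argument actually establishes in the first half is the stronger (and standard) fact that for metrizable \( E \), \emph{every} strongly bounded subset of \( E' \) is equicontinuous, not merely countable unions of equicontinuous sets; this is why property~\ref{item:ccad66f3327f6fec} comes ``for free''. In the second half, the one place worth making explicit is why the pointwise limit \( f \) is continuous: you use that the polar \( U^{\circ} \), when computed in the full algebraic dual \( E^{\sharp} \), coincides with the polar in \( E' \) because any linear functional bounded on a neighborhood of \( 0 \) is automatically continuous. With that observation spelled out, the argument is complete.
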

\begin{proof}
  See \cite{MR4182424}*{p54, Theorem~7.1 \& p56, Theorem~7.5}, note that Voigt
  used a different definition of \( (DF) \)-spaces \cite{MR4182424}{p53}, which
  is equivalent to Definition~\ref{defi:5cdb26a0af71ffa7} by
  \cite{MR633754}*{Chapitre IV, \S~3, Proposition~1}.
\end{proof}

\subsection{Reflexivity and polar reflexivity (aka.\ stereotype duality)}
\label{sec:673b6d13dedd649b}

\begin{defi}
  \label{defi:46a9774dd14859c7}
  Recall Notation~\ref{nota:6a66d03bb5d7af70}. Let \( \pairing*{E}{F} \) be a
  duality pair of vector spaces \( E \) and \( F \). The polar topology on
  \( E \) of \( \mathfrak{C}(F) \) is called the \textbf{precompact topology
    with respect to} \( \pairing*{E}{F} \) (with respect to the pairing
  \( \pairing*{E}{F} \)), and denoted by \( c(E, F) \).  Equipped with
  \( c(E, F) \), the locally convex space \( F \) is called the \textbf{polar
    dual} of \( E \) with respect to the pairing \( \pairing*{E}{F} \).
  Similarly, one obtains the notion of the precompact topology on \( E \), and
  the polar dual of \( F \), both with respect to the same pairing.
\end{defi}

When \( F = E' \) and the pairing is evaluation, we recover the polar dual
\( E'_{c} \) of \( E \) as defined in Definition~\ref{defi:3852fa5b63ee88dd}.

To facilitate our discussion, we also introduce the notion of reflexive and
polar reflexive duality pairs.

\begin{defi}
  \label{defi:d9087abf5fbcb620}
  Let \( E \), \( F \) be locally convex spaces, and \( \pairing*{E}{F} \) a
  duality pair. We say that the duality pair \( \pairing*{E}{F} \) is
  \begin{itemize}
  \item \label{defi:57594be8a177355f} \textbf{reflexive}, if it is compatible
    (Definition~\ref{defi:12b42590a911540b}), the strong topology \( b(E, F) \)
    on \( E \) with respect to the pair coincides with the topology on \( E \),
    and the strong topology \( b(F,E) \) on \( F \) coincides with the topology
    on \( F \);
  \item \label{defi:95148302c9ab2984} \textbf{polar reflexive}, if it is
    compatible (Definition~\ref{defi:12b42590a911540b}), the strong topology
    \( b(E, F) \) on \( E \) with respect to the pair coincides with the
    topology on \( E \), and the strong topology \( b(F,E) \) on \( F \)
    coincides with the topology on \( F \);
  \end{itemize}

  We say a locally convex space \( E \) is \textbf{reflexive} (resp.\
  \textbf{polar reflexive}), if the evaluation pairing \( \pairing*{E}{E'} \) is
  reflexive (resp.\ polar reflexive).
\end{defi}

The definition above of reflexivity of a locally convex space is equivalent to
the usual one (see, e.g.\ \cite{MR0342978}*{p144}). Actually, a direct unwinding
of the definition establishes the following result.
\begin{prop}
  \label{prop:a6c89f0b75eda9eb}
  Let \( E \) be a locally convex space. Consider the canonical embedding
  \( \kappa: E \to E'^{\sharp} \), \( x \mapsto \pairing*{\cdot}{x} \) of vector
  spaces.  Then the following are equivalent:
  \begin{enumerate}
  \item \label{item:2f5a9dae3c826a16} \( E \) is reflexive (resp.\ polar
    reflexive) in the sense of Definition~\ref{defi:d9087abf5fbcb620};
  \item \label{item:bf41f079d3e119d6} \( \kappa \) restricts to an isomorphism
    of locally convex spaces from \( E \) onto \( (E'_{b})'_{b} \) (resp.\
    \( (E'_{c})'_{c} \)).
  \end{enumerate}
\end{prop}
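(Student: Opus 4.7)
The plan is a direct unwinding of Definition~\ref{defi:d9087abf5fbcb620}. First, since \( E \) is a Hausdorff locally convex space, Hahn--Banach guarantees that \( E' \) separates points of \( E \), so \( \kappa : E \to (E')^{\sharp} \) is already injective. Moreover, for every \( x \in E \), the functional \( \kappa(x) = \pairing*{\cdot}{x} \) is \( \sigma(E', E) \)-continuous, and hence a fortiori continuous for any finer compatible topology. Thus \( \kappa \) factors through the inclusion \( (E'_{b})' \subseteq (E')^{\sharp} \) (resp.\ \( (E'_{c})' \subseteq (E')^{\sharp} \)). The content of the statement is thus whether \( \kappa \) lands surjectively, and whether it is a homeomorphism for the appropriate target topology.

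Next I would match the surjectivity of \( \kappa \) with the compatibility clause of reflexivity. By Definition~\ref{defi:12b42590a911540b}, compatibility of \( \pairing*{E}{E'} \) with the topology placed on \( E' \) says exactly that \( \kappa \) is a bijection from \( E \) onto \( \bigl(E'[\mathfrak{T}_{E'}]\bigr)' \). Taking \( \mathfrak{T}_{E'} = b(E', E) \) (resp.\ \( c(E', E) \)), this is the statement that \( \kappa : E \to (E'_{b})' \) (resp.\ \( (E'_{c})' \)) is bijective. By Mackey's theorem, Theorem~\ref{theo:52188c82b45e07e3}, these topologies on \( E' \) are compatible with the canonical pairing, so the target dual spaces coincide with \( E \) precisely under the stated surjectivity.

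Then I would pin down the topological comparison. By Definition~\ref{defi:3852fa5b63ee88dd}, the topology of \( (E'_{b})'_{b} \) has as a fundamental system of neighborhoods of \( 0 \) the polars of bounded sets of \( E'_{b} \); pulling back through \( \kappa \) gives the polar topology on \( E \) associated with \( \mathfrak{B}(E') \). By Corollary~\ref{coro:6ce070ab8838d3f8}, boundedness in \( E' \) is the same for every compatible topology, so this pullback topology is exactly \( b(E, E') \). Consequently, \( \kappa \) is an isomorphism of locally convex spaces if and only if, in addition to the bijectivity just discussed, the original topology on \( E \) coincides with \( b(E, E') \). That is precisely the second clause of the definition of reflexivity, while the third clause, that the strong topology on \( E' \) agrees with its topology, is automatic once \( E' \) is viewed as \( E'_{b} \). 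The polar-reflexive case runs in exactly the same way, replacing \( \mathfrak{B} \) by \( \mathfrak{C} \) and \( b \) by \( c \), and again invoking the Mackey-theoretic observation that the relevant families of polar-defining sets do not depend on the choice of compatible topology.

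The ``hard part'' is really bookkeeping rather than substance: one must take care to identify, via Corollary~\ref{coro:6ce070ab8838d3f8}, the pullback of \( b((E'_{b})', E'_{b}) \) through \( \kappa \) with \( b(E, E') \) (and likewise for \( c \)), so that the topological equivalence in (2) transcribes cleanly into the two topological equalities in (1). Once that identification is in place, the equivalence is an immediate consequence of Definitions~\ref{defi:12b42590a911540b} and~\ref{defi:d9087abf5fbcb620}.
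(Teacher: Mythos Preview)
Your approach is the same as the paper's: the paper simply says the result follows by ``a direct unwinding of the definition,'' and that is what you do. The skeleton of your argument is right: bijectivity of \( \kappa \) onto \( (E'_{b})' \) (resp.\ \( (E'_{c})' \)) is exactly the compatibility clause of Definition~\ref{defi:d9087abf5fbcb620}, the homeomorphism condition is exactly the clause that the topology on \( E \) agrees with the appropriate polar topology, and the remaining clause on the topology of \( E' \) is automatic once one reads \( E' \) as \( E'_{b} \) (resp.\ \( E'_{c} \)).

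Two points deserve correction, though they do not break the argument. First, your invocation of Mackey's theorem is misplaced: \( b(E',E) \) is \emph{not} in general a compatible topology on \( E' \) (it can be strictly finer than \( \tau(E',E) \)); its compatibility is precisely equivalent to the surjectivity of \( \kappa \), which is what you are trying to characterise, not something you get for free. Second, the appeal to Corollary~\ref{coro:6ce070ab8838d3f8} is unnecessary, and its analogue fails for precompact sets: \( \mathfrak{C}(E') \) genuinely depends on the chosen topology on \( E' \), so the ``Mackey-theoretic observation'' you cite for the polar case is simply false. Fortunately you do not need it: by Definition~\ref{defi:5913407834094598} the topology \( b(E,E') \) (resp.\ \( c(E,E') \)) in Definition~\ref{defi:d9087abf5fbcb620} already uses the bounded (resp.\ precompact) sets of \( E' \) \emph{with its given topology} \( b(E',E) \) (resp.\ \( c(E',E) \)), which is literally the same family defining the pullback of the topology on \( (E'_{b})'_{b} \) (resp.\ \( (E'_{c})'_{c} \)). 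So the identification is immediate without passing through any invariance statement.
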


\begin{prop}
  \label{prop:6ac1bb9a146288e4}
  Let \( E \) be a locally convex space. Then,
  \begin{enumerate}
  \item \label{item:3ed7b6a910677afe} the following are equivalent
    \begin{enumerate}
    \item \label{item:8fea49133d196448} \( E \) is reflexive;
    \item \label{item:7bda6f793590ebdc} \( E'_{b} \) is reflexive;
    \item \label{item:e173ceadde4d907f} \( E \) is barrelled and every bounded
      set in \( E \) is relatively weakly compact.
    \end{enumerate}
  \item \label{item:0f58155dc720d2d8} consider the following statements:
    \begin{enumerate}
    \item \label{item:15310318d1e858fe} \( E \) is an \( (F) \)-space;
    \item \label{item:b80ad8ba8642c00f} \( E \) is quasi-complete and Mackey,
      and \( E'_{c} \) is quasi-complete;
    \item \label{item:3f346ecb9ab732a0} \( E \) is polar reflexive.
    \end{enumerate}
    Then \ref{item:15310318d1e858fe} implies \ref{item:b80ad8ba8642c00f}, which
    in turn implies \ref{item:3f346ecb9ab732a0}. Moreover,
    \ref{item:8fea49133d196448} implies \ref{item:3f346ecb9ab732a0}.
  \end{enumerate}
\end{prop}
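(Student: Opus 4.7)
The plan for item (1) is to use Proposition~\ref{prop:a6c89f0b75eda9eb}, which rephrases reflexivity as the canonical map $\kappa : E \to (E'_b)'_b$ being an isomorphism of locally convex spaces. Surjectivity of $\kappa$ as a linear map is precisely semi-reflexivity, and by the bipolar theorem (Theorem~\ref{theo:45bc53b6ce4e0dc3}) applied in the pair $\langle E', (E')^{\sharp}\rangle$, this is equivalent to every bounded subset of $E$ being relatively weakly compact. The remaining topological content---that the strong topology on $(E'_b)'$ matches the given topology on $E$---is then equivalent to every $\sigma(E', E)$-bounded subset of $E'$ being equicontinuous, i.e., to $E$ being barrelled. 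This gives (a)$\Leftrightarrow$(c). For (a)$\Leftrightarrow$(b), once $E$ is reflexive the original topology on $E$ coincides with $b(E, E')$, so the analogous characterization applied to the pair $\langle E', E\rangle$ yields the reflexivity of $E'_b$, and symmetrically.

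For item (2), the implication (a)$\Rightarrow$(b) is direct: an $(F)$-space is complete and metrizable, hence quasi-complete, bornological (Proposition~\ref{prop:7963cb2832fabbd4}), and therefore Mackey by Proposition~\ref{prop:a944f1bb11e881f1}. Corollary~\ref{coro:f7e5d81ca583056d} then gives that $E'_c$ is even complete, in particular quasi-complete.

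The heart of the proof is (b)$\Rightarrow$(c). By Proposition~\ref{prop:a6c89f0b75eda9eb}, polar reflexivity amounts to showing that $\kappa : E \to (E'_c)'_c$ is an isomorphism of locally convex spaces. For the algebraic surjectivity, by Mackey's theorem (Theorem~\ref{theo:52188c82b45e07e3}) it suffices to check $c(E', E) \leq \tau(E', E)$: any precompact absolutely convex $K \subseteq E$ has $\overline{\Gamma(K)}$ still precompact (Proposition~\ref{prop:6c0374a8634e2b65}), closed and bounded, hence complete by quasi-completeness of $E$, and therefore compact and a fortiori $\sigma(E, E')$-compact. For the topology, the inclusion $\tau(E, E') \leq c(E, E'_c)$ uses the Mackey hypothesis: any $\sigma(E', E)$-compact absolutely convex $H \subseteq E'$ has $H^{\circ}$ a neighborhood of $0$ in $E$, so $H$ is equicontinuous, and then the Ascoli theorem (Theorem~\ref{theo:957848d2ddc8435e}) combined with Alaoglu's theorem (Theorem~\ref{theo:bf2abf9c9ce8aa7a}) forces $H$ to be precompact in $E'_c$. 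The reverse $c(E, E'_c) \leq \tau(E, E')$ invokes quasi-completeness of $E'_c$: the closed absolutely convex hull of any precompact subset of $E'_c$ is compact in $E'_c$ by Proposition~\ref{prop:6c0374a8634e2b65} applied to $E'_c$, hence $\sigma(E', E)$-compact and absolutely convex.

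Finally, for reflexive $\Rightarrow$ polar reflexive, the bijectivity of $\kappa$ is free: since $c(E', E) \leq b(E', E)$, the chain $E \subseteq (E'_c)' \subseteq (E'_b)' = E$ is forced, where the last equality is reflexivity. The topological identification mirrors the argument above, using that the barrelledness of $E$ coming from Part (1) upgrades $\sigma(E', E)$-bounded absolutely convex subsets of $E'$ directly to equicontinuous ones, so $\tau(E, E') \leq c(E, E'_c)$ by Ascoli; the reverse follows from Mackey's theorem and the compatibility $(E'_c)' = E$ just established. The subtlest part of the whole argument is tracking which of quasi-completeness of $E$, quasi-completeness of $E'_c$, or the Mackey property enters at each step of (b)$\Rightarrow$(c), and in particular avoiding circular use of compatibility before it has been established.
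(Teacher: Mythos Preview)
The paper's own proof is entirely by citation: part~(1) to Schaefer, and the implications (b)$\Rightarrow$(c) and (1a)$\Rightarrow$(2c) to K\"othe \S23.9. Your write-up instead unpacks these references, and the core argument for (b)$\Rightarrow$(c)---which is the real content---is correct and matches what K\"othe does. You also correctly supply, via Corollary~\ref{coro:f7e5d81ca583056d}, that $E'_c$ is complete when $E$ is an $(F)$-space; the paper's sketch of (a)$\Rightarrow$(b) actually omits this verification.

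There is one unclear step. In the final implication (reflexive $\Rightarrow$ polar reflexive), your justification for $c(E, E'_c) \le \tau(E, E')$ reads ``follows from Mackey's theorem and the compatibility $(E'_c)' = E$.'' But the equality $(E'_c)' = E$ says only that $c(E', E)$ is compatible with $\langle E', E\rangle$; it does not directly give that the topology $c(E, E'_c)$ on $E$ is compatible with $\langle E, E'\rangle$, which is what an appeal to Mackey's theorem would require. The clean fix is to reuse the barrelledness of $E$ you already extracted from part~(1): any precompact subset of $E'_c$ is bounded there, hence $\sigma(E', E)$-bounded, hence equicontinuous, hence has $\sigma(E', E)$-compact closed absolutely convex hull by Alaoglu, and $c(E, E'_c) \le \tau(E, E')$ follows. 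Alternatively, observe that $E$ reflexive forces $E'_c$ to be quasi-complete (on equicontinuous sets the topologies $c(E',E)$ and $\sigma(E',E)$ agree, and Alaoglu then gives compactness of closed bounded sets), after which your (b)$\Rightarrow$(c) argument applies verbatim.
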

\begin{proof}
  \ref{item:3ed7b6a910677afe} follows from \cite{MR0342978}*{p145,
    Theorem~IV.5.6 \& Corollary~1}. Now let \( E \) be an \( (F) \)-space, then
  it is complete, hence quasi-complete. Being metrizable, \( E \) is
  bornological (Proposition~\ref{prop:7963cb2832fabbd4}), hence Mackey
  (Proposition~\ref{prop:a944f1bb11e881f1}). Thus \ref{item:15310318d1e858fe}
  implies \ref{item:b80ad8ba8642c00f}. That \ref{item:b80ad8ba8642c00f} implies
  \ref{item:3f346ecb9ab732a0} is shown in \cite{MR0248498}*{p309, (4)}, and that
  \ref{item:8fea49133d196448} implies \ref{item:3f346ecb9ab732a0} in
  \cite{MR0248498}*{p309, (3)}.
\end{proof}

\begin{nota}
  \label{nota:dff66701d54396e5}
  We use \( (\mathcal{F}'_{c}) \) to denote the class of all locally convex
  spaces that are isomorphic to the polar duals of some \( (F) \)-space. Thus
  the classes \( (\mathcal{F}'_{c}) \) and \( (\mathcal{F}) \) are the polar
  duals of each other.
\end{nota}

\begin{rema}
  \label{rema:3e0976f839979573}
  What we call polar dual here of a locally convex space \( E \) is termed as
  the \textbf{stereotype dual} of \( E \), and polar reflexive spaces are called
  \textbf{stereotyped spaces}, by Akbarov \cite{MR1346445}, and is then used by
  him in his series of works. The term ``polar dual'' is adopted here mainly
  because it is systematically used in \cite{MR0248498}*{\S~23.9}, and the work
  of Köthe is much earlier. We also note that M.\ Smith characterized the polar
  duals of Banach spaces in \cite{MR0049479}, and later Brauner characterized
  the polar duals of Fréchet spaces in \cite{MR0330988}.
\end{rema}

\subsection{Hypocontinuity of bilinear maps}
\label{sec:32baf6e5b8321b30}

The notion of hypocontinuity is not directly used in the following, but it
facilitates some discussions and plays an crucial role in the development of
topological tensor products. Therefore, we include here a brief scratch of the
surface.
\begin{nota}
  \label{nota:6a6df2f58b919f3e}
  Let \( E \), \( F \), \( G \) be locally convex space. We denote by
  \( B(E, F; G) \) the space of all bilinear maps (no continuity required) from
  \( E \times F \) to \( G \). By \( \mathfrak{B}(E,F; G) \), we mean the
  subspace of \( B(E, F; G) \) of separately continuous bilinear maps; and by
  \( \mathcal{B}(E, F; G) \), the subspace of \( \mathfrak{B}(E, F; G) \) of all
  (jointly) continuous bilinear maps.
\end{nota}

Lying between the separate continuity and continuity, there's also another
fruitful notion of continuity, called hypocontinuity and is due to Bourbaki
\cite{MR633754}*{\S~III.5}, for bilinear maps.

\begin{defi}
  \label{defi:00c926c5039e93d1}
  Let \( E \), \( F \), \( G \) be locally convex spaces,
  \( \mathfrak{M} \subseteq \mathfrak{B}(E) \),
  \( \mathfrak{N} \subseteq \mathfrak{B}(F) \). A map
  \( f \in \mathfrak{B}(E, F; G) \) is called \( \mathfrak{M} \)-(resp.\
  \( \mathfrak{N} \)-)\textbf{hypocontinuous} if for any
  \( W \in \mathcal{N}^{G}_{\Gamma}(0) \) and \( A \in \mathfrak{M} \) (resp.\
  \( B \in \mathfrak{N} \)), there exists
  \( V \in \mathcal{N}^{F}_{\Gamma}(0) \) (resp.\
  \( U \in \mathcal{N}^{E}_{\Gamma}(0) \)), such that
  \( f(A \times V) \subseteq W \) (resp.\ \( f(U \times B) \subseteq W \)). We
  say \( f \) is \( (\mathfrak{M}, \mathfrak{N}) \)-\textbf{hypocontinuous}, if
  it is both \( \mathfrak{M} \)-hypocontinuous and
  \( \mathfrak{N} \)-hypocontinuous. We simply say \( f \) is
  \textbf{hypocontinuous}, if it is
  \( \bigl(\mathfrak{B}(E), \mathfrak{B}(F)\bigr) \)-hypocontinuous.
\end{defi}

\begin{prop}[\cite{MR0342978}*{p89, 5.2}]
  \label{prop:921772454be0e6bd}
  Let \( E \), \( F \), \( G \) be locally convex spaces and
  \( f \in \mathfrak{B}(E, F; G) \). If \( E \) is barrelled, then \( f \) is
  \( \mathfrak{B}(F) \)-hypocontinuous. If \( E \) and \( F \) are both
  barrelled, then \( f \) is hypocontinuous.
\end{prop}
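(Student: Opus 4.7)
The plan is to reduce each hypocontinuity claim to an equicontinuity statement for a family of continuous linear maps parametrized by a bounded set, and then to apply the Banach--Steinhaus principle, which is precisely what the barrelled hypothesis on $E$ (resp.\ $F$) delivers: any barrel, i.e.\ any closed absolutely convex absorbing set, is a neighborhood of $0$ (Definition~\ref{defi:be959980ca5aafcd}).

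For the first assertion, I would fix $B \in \mathfrak{B}(F)$ and an arbitrary $W \in \mathcal{N}^{G}_{\Gamma}(0)$; replacing $W$ by a smaller closed absolutely convex neighborhood if necessary, I may assume $W$ is closed. Separate continuity of $f$ ensures that the family $H := \set*{f(\cdot, y) \given y \in B} \subseteq \mathcal{L}(E, G)$ consists of continuous linear maps. For each $x \in E$ the map $f(x, \cdot): F \to G$ is continuous linear, hence sends the bounded set $B$ to a bounded set in $G$ by Corollary~\ref{coro:6ce070ab8838d3f8}; that is, $H(x) = f(x, B)$ is bounded for every $x$, so $H$ is simply bounded. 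Now form
\begin{displaymath}
  T := \bigcap_{y \in B} \set*{x \in E \given f(x, y) \in W} = \bigcap_{y \in B}\bigl(f(\cdot, y)\bigr)^{-1}(W).
\end{displaymath}
Being an intersection of closed absolutely convex sets $T$ is closed and absolutely convex; it is absorbing because for each $x \in E$ the bounded set $H(x) \subseteq G$ is absorbed by $W$. Thus $T$ is a barrel, and by barrelledness of $E$ it is a neighborhood of $0$. Setting $U := T$ gives $f(U \times B) \subseteq W$, which is exactly $\mathfrak{B}(F)$-hypocontinuity.

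The second assertion follows by applying the first with the roles of $E$ and $F$ swapped, using that $f(x,y) = f(y,x)\circ \text{swap}$ remains separately continuous after interchanging arguments, which yields $\mathfrak{B}(E)$-hypocontinuity; combined with $\mathfrak{B}(F)$-hypocontinuity this gives hypocontinuity in the sense of Definition~\ref{defi:00c926c5039e93d1}. I do not expect any genuine obstacle here: the only point demanding a touch of care is verifying that $T$ is really a barrel, where one has to insist on $W$ being closed so that the intersection defining $T$ is closed. Everything else is mechanical unpacking of definitions together with the classical barrel-theoretic formulation of Banach--Steinhaus.
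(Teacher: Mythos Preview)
Your argument is correct and is precisely the standard barrel argument. Note that the paper does not supply its own proof of this proposition; it merely cites \cite{MR0342978}*{p89, 5.2}. The proof there is essentially the one you wrote: form the intersection $T=\bigcap_{y\in B}f(\cdot,y)^{-1}(W)$, check it is a barrel using that $f(x,B)$ is bounded for each $x$, and invoke barrelledness of $E$; then swap the roles of $E$ and $F$ for the second claim. Your only awkward phrasing is ``$f(x,y)=f(y,x)\circ\text{swap}$'': more cleanly, one applies the first assertion to the separately continuous bilinear map $\tilde f\colon F\times E\to G$, $\tilde f(y,x)=f(x,y)$, with $F$ barrelled, to obtain $\mathfrak{B}(E)$-hypocontinuity of $f$.
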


\subsection{Compatible topologies on the tensor product}
\label{sec:696b043dbe6fbd93}

All of \S~\ref{sec:696b043dbe6fbd93} is due to Grothendieck
\cite{MR0075539}. Our presentation follows closely the treatment in
\cite{MR0551623}.

\begin{defi}[\cite{MR0551623}*{p.264}]
  \label{defi:9bd5e00768234cac}
  Let \( E \) and \( F \) be locally convex spaces, a locally convex topology
  \( \mathfrak{T}_{\tau} \) on \( E \odot F \) is called \textbf{compatible}, if
  \begin{enumerate}
  \item \label{item:bf7ac5d66f888d35} the canonical bilinear map from
    \( E \times F \) into \( E \otimes_{\tau} F = (E \odot F, \mathfrak{T}_{\tau}) \) is separately
    continuous;
  \item \label{item:0730820be0a913a1}
    \( u \otimes v \in (E \otimes_{\tau} F)' \) for all \( u \in E' \), \( v \in F' \);
  \item \label{item:a398b6ca1f6e6725} if \( A \subseteq E' \),
    \( B \subseteq F' \) are equicontinuous on \( E \) and \( F \) respectively,
    then \( A \otimes B \) is equicontinuous on \( E \otimes_{\tau} F \).
  \end{enumerate}
\end{defi}

\begin{rema}
  \label{rema:cdcb9afde04a7d07}
  By \ref{item:0730820be0a913a1}, there are enough continuous linear forms to
  separate points in \( E \otimes_{\tau} F \), hence a compatible topology on
  the tensor product is always Hausdorff.
\end{rema}

The compatible topology on the tensor product enjoys the following important property.

\begin{prop}[\cite{MR0551623}*{p265,(4)}]
  \label{prop:671875b580127e64}
  Using the above notation, let \( x_{0} \in E \), \( y_{0} \in F \) be
  \emph{nonzero}, the maps
  \( y \in F \mapsto x_{0} \otimes y \in E \otimes_{\tau} F \) and
  \( x \in E \mapsto x \otimes y_{0} \in E \otimes_{\tau} F \) are both strict
  monomorphism.
\end{prop}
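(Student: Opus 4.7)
By the obvious symmetry, it suffices to treat the map \( \phi: F \to E \otimes_{\tau} F \), \( y \mapsto x_{0} \otimes y \). I would verify three things in succession: (a) continuity of \( \phi \), (b) injectivity of \( \phi \), (c) strictness, i.e., for every \( V \in \mathcal{N}^{F}_{\Gamma}(0) \) one can produce a \( W \in \mathcal{N}(0) \) in \( E \otimes_{\tau} F \) with \( W \cap \phi(F) \subseteq \phi(V) \).

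Step (a) is immediate from Definition~\ref{defi:9bd5e00768234cac}~\ref{item:bf7ac5d66f888d35}: since the canonical bilinear map \( E \times F \to E \otimes_{\tau} F \) is separately continuous, its restriction at \( x_{0} \) is \( \phi \). For step (b), since \( x_{0} \ne 0 \) and \( E \) is Hausdorff locally convex, Hahn--Banach provides some \( u \in E' \) with \( u(x_{0}) = 1 \). Assume \( \phi(y) = x_{0} \otimes y = 0 \). For any \( v \in F' \), condition \ref{item:0730820be0a913a1} of Definition~\ref{defi:9bd5e00768234cac} ensures \( u \otimes v \in (E \otimes_{\tau} F)' \), and evaluating on \( x_{0} \otimes y \) gives \( u(x_{0})v(y) = v(y) = 0 \); since \( F' \) separates points of the Hausdorff space \( F \), we conclude \( y = 0 \).

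For step (c), which is the real content, fix a closed absolutely convex neighborhood \( V \) of \( 0 \) in \( F \); by Proposition~\ref{prop:f9256f38e904319e} (via the bipolar theorem) one may write \( V = B^{\circ} \) where \( B := V^{\circ} \subseteq F' \) is equicontinuous. The singleton \( \set*{u} \) is trivially equicontinuous in \( E' \), so Definition~\ref{defi:9bd5e00768234cac}~\ref{item:a398b6ca1f6e6725} tells us that \( \set*{u} \otimes B = \set*{u \otimes v \given v \in B} \) is equicontinuous in \( (E \otimes_{\tau} F)' \). Let
\[
W := \bigl(\set*{u} \otimes B\bigr)^{\circ} = \set[\big]{z \in E \otimes_{\tau} F \given \forall v \in B,\, \abs*{\pairing*{z}{u \otimes v}} \leq 1},
\]
which is a neighborhood of \( 0 \) in \( E \otimes_{\tau} F \) by equicontinuity. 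Then for any \( y \in F \) with \( \phi(y) = x_{0} \otimes y \in W \), we have, for every \( v \in B \),
\[
\abs*{v(y)} = \abs*{u(x_{0})v(y)} = \abs[\big]{\pairing*{x_{0} \otimes y}{u \otimes v}} \leq 1,
\]
so \( y \in B^{\circ} = V \), proving \( \phi^{-1}(W) \subseteq V \).

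The only delicate step is (c), and the technical crux is the use of condition \ref{item:a398b6ca1f6e6725} of compatibility to transport the equicontinuous structure of \( B \subseteq F' \) into an equicontinuous family on the tensor product; the normalization \( u(x_{0}) = 1 \) then converts the bilinear pairing on \( E \otimes_{\tau} F \) back to the pairing \( \pairing*{F}{F'} \) cleanly. Everything else, including (a) and (b), is a direct unpacking of the definitions.
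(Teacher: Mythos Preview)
Your proof is correct and complete. The paper does not give its own proof of this proposition---it merely cites K\"othe \cite{MR0551623}*{p265,(4)}---and your argument is precisely the standard one: separate continuity of the canonical bilinear map gives (a), Hahn--Banach plus condition~\ref{item:0730820be0a913a1} gives (b), and condition~\ref{item:a398b6ca1f6e6725} applied to the equicontinuous pair \( (\set*{u}, V^{\circ}) \) gives the crucial openness in (c).
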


It is clear that we have a duality pairing of vector spaces
\( \pairing*{E \odot F}{\mathfrak{B}(E, F)} \), given by the bilinear form
\begin{equation}
  \label{eq:2ccbd263c9730d60}
  \begin{split}
    \pairing*{\cdot}{\cdot}: (E \odot F) \times \mathfrak{B}(E, F) & \to \mathbb{K} \\
    \pairing*{\sum_{i}x_{i} \otimes y_{i}}{f} & \mapsto \sum_{i}f(x_{i}, y_{i})
  \end{split}
\end{equation}

Motivated by Proposition~\ref{prop:f9256f38e904319e}, we give the following
characterization of compatible topologies using polars.

\begin{prop}[\cite{MR0551623}*{p265,(3)}]
  \label{prop:5084d766fecfec2d}
  Using the above notation and the duality pairing \eqref{eq:2ccbd263c9730d60},
  a locally convex topology on \( E \odot F \) is compatible if and only if it
  is the polar topologies of a collection \( \mathfrak{M} \) of subsets of
  \( \mathfrak{B}(E, F) \), such that
  \begin{enumerate}
  \item \label{item:6cf381639686af9f} every \( M \in \mathfrak{M} \) is separately equicontinuous;
  \item \label{item:20f6c0d6ee2a362e} for any equicontinuous
    \( G_{1} \subseteq E' \) and \( G_{2} \subseteq F' \), we have
    \( G_{1} \otimes G_{2} \in \mathfrak{M} \), viewed canonically in
    \( \mathfrak{B}(E, F) \).
  \end{enumerate}
\end{prop}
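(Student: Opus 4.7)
The plan is to use the duality pairing \eqref{eq:2ccbd263c9730d60} together with the polar description of locally convex topologies from Proposition~\ref{prop:f9256f38e904319e}. The key preliminary observation is that compatibility condition~\ref{item:bf7ac5d66f888d35} of Definition~\ref{defi:9bd5e00768234cac} implies that every $\phi \in (E \otimes_\tau F)'$, composed with the canonical map $E \times F \to E \otimes_\tau F$, yields a separately continuous bilinear form; thus $(E \otimes_\tau F)'$ embeds canonically into $\mathfrak{B}(E,F)$ via \eqref{eq:2ccbd263c9730d60}.

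For the ``only if'' direction, given a compatible topology $\mathfrak{T}_\tau$, I would take $\mathfrak{M}$ to be the collection of (images in $\mathfrak{B}(E,F)$ of) equicontinuous subsets of $(E \otimes_\tau F)'$. To check \ref{item:6cf381639686af9f}, fix $M \in \mathfrak{M}$ and pick a neighborhood $W$ of $0$ in $E \otimes_\tau F$ absorbed by $M^\circ$; since by compatibility each map $y \mapsto x_0 \otimes y$ is continuous $F \to E \otimes_\tau F$, its preimage of $W$ is a neighborhood of $0$ in $F$ on which $\sup_{m \in M}|\tilde m(x_0, y)| \leq 1$, proving equicontinuity of $\{\tilde m(x_0,\cdot) : m \in M\}$ in $F'$, and similarly in the other variable. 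Condition \ref{item:20f6c0d6ee2a362e} is exactly the content of Definition~\ref{defi:9bd5e00768234cac}~\ref{item:a398b6ca1f6e6725}. Finally, Proposition~\ref{prop:f9256f38e904319e} identifies the closed absolutely convex neighborhoods of $0$ in $E \otimes_\tau F$ with polars of equicontinuous subsets of $(E \otimes_\tau F)'$, so $\mathfrak{T}_\tau$ coincides with the polar topology of $\mathfrak{M}$.

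For the ``if'' direction, starting from $\mathfrak{M}$ satisfying \ref{item:6cf381639686af9f} and \ref{item:20f6c0d6ee2a362e}, I would equip $E \odot F$ with the polar topology of $\mathfrak{M}$, invoking Proposition~\ref{prop:7dcbda305a2f1ba2}: the sets in $\mathfrak{M}$ are simply bounded with respect to the pairing \eqref{eq:2ccbd263c9730d60} because each $M$ is separately equicontinuous, hence pointwise bounded on elementary tensors, hence on all of $E \odot F$ by bilinearity. Hausdorffness follows from \ref{item:20f6c0d6ee2a362e} applied to singletons $\{u\} \subset E'$, $\{v\} \subset F'$: the union of elements of $\mathfrak{M}$ contains all $u \otimes v$, and these separate points of $E \odot F$. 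The three compatibility axioms are then verified one by one: separate continuity of the canonical bilinear map (axiom~\ref{item:bf7ac5d66f888d35}) reuses the argument of the previous paragraph in reverse, relying on \ref{item:6cf381639686af9f}; axioms~\ref{item:0730820be0a913a1} and \ref{item:a398b6ca1f6e6725} follow immediately from \ref{item:20f6c0d6ee2a362e} by taking $G_i$ to be respectively singletons and arbitrary equicontinuous sets, since $M^\circ$ is a neighborhood of $0$ for every $M \in \mathfrak{M}$, and such $M$ is therefore equicontinuous on $E \otimes_\tau F$.

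The only mildly subtle point I anticipate is the interplay between ``separate equicontinuity of $M$ as a set of bilinear forms on $E \times F$'' and ``equicontinuity of $M$ as a set of linear forms on $E \otimes_\tau F$'': the former is weaker, but in the polar construction of the ``if'' direction one only needs it to guarantee separate continuity of the canonical bilinear map, not joint continuity, so nothing stronger is required. Once this is clarified, the argument is a direct translation between two viewpoints on the pairing \eqref{eq:2ccbd263c9730d60}.
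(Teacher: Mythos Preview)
Your argument is correct and is precisely the natural proof dictated by the preliminaries you cite (Propositions~\ref{prop:f9256f38e904319e} and~\ref{prop:7dcbda305a2f1ba2}). The paper itself gives no proof of this proposition, simply citing K\"othe \cite{MR0551623}*{p265,(3)}; your write-up is essentially the argument one finds there, translated into the notation of this paper.
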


\begin{nota}
  \label{nota:9ede2d4175a2259e}
  For any locally convex space \( E \), we use \( \mathfrak{E}(E) \) to denote
  the collection of all equicontinuous sets in \( E' \).
\end{nota}

\begin{coro}
  \label{coro:33816b8f283c3930}
  Using the notation in Proposition~\ref{prop:5084d766fecfec2d}, the following
  hold:
  \begin{enumerate}
  \item \label{item:7658c3dd7c07e174} There exists a finest compatible tensor
    product on \( E \odot F \), which is given by taking \( \mathfrak{M} \) to be the
    collection of all separately equicontinuous sets in \( \mathfrak{B}(E, F) \).
  \item \label{item:b9bef197b4c6b09e} There exists a coarsest compatible tensor
    product on \( E \odot F \), which is given by taking \( \mathfrak{M} \) to
    be the collection
    \( \set[\big]{G_{1} \otimes G_{2} \given G_{1} \in \mathfrak{E}(E), \, G_{2}
      \in \mathfrak{E}(F)} \).
  \end{enumerate}
\end{coro}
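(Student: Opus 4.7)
The plan is to read the corollary as a direct application of the characterization in Proposition~\ref{prop:5084d766fecfec2d}: compatible topologies on $E \odot F$ are precisely the polar topologies of collections $\mathfrak{M}$ of subsets of $\mathfrak{B}(E, F)$ satisfying conditions~\ref{item:6cf381639686af9f} and~\ref{item:20f6c0d6ee2a362e}. Since the polar topology of $\mathfrak{M}$ has the polars $\set*{M^\circ \given M \in \mathfrak{M}}$ (after the stabilization under dilations and finite unions of Proposition~\ref{prop:7dcbda305a2f1ba2}) as a fundamental system of $0$-neighborhoods, a larger defining collection produces more candidate neighborhoods of $0$ and hence a finer topology on $E \odot F$. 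The task therefore reduces to exhibiting a largest and a smallest collection satisfying both conditions.

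For~\ref{item:7658c3dd7c07e174}, I take $\mathfrak{M}_{\max}$ to be the collection of \emph{all} separately equicontinuous subsets of $\mathfrak{B}(E, F)$; this is tautologically the largest collection satisfying~\ref{item:6cf381639686af9f}. What remains is~\ref{item:20f6c0d6ee2a362e}, namely that $G_1 \otimes G_2$ is separately equicontinuous whenever $G_1 \in \mathfrak{E}(E)$ and $G_2 \in \mathfrak{E}(F)$. Fixing $y_0 \in F$, pointwise boundedness of the equicontinuous set $G_2$ gives $\sup_{v \in G_2} \abs*{v(y_0)} < \infty$, so $\set*{v(y_0) u \given u \in G_1, v \in G_2}$ is a bounded scalar multiple of $G_1$ in $E'$ and is therefore equicontinuous; the symmetric argument handles the other variable.

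For~\ref{item:b9bef197b4c6b09e}, I take $\mathfrak{M}_{\min} = \set*{G_1 \otimes G_2 \given G_1 \in \mathfrak{E}(E), \, G_2 \in \mathfrak{E}(F)}$. Condition~\ref{item:20f6c0d6ee2a362e} holds by construction, while~\ref{item:6cf381639686af9f} is exactly the separate equicontinuity verified in the previous paragraph. Any collection $\mathfrak{M}$ satisfying both conditions must contain $\mathfrak{M}_{\min}$ by~\ref{item:20f6c0d6ee2a362e}, so its polar topology is finer than that of $\mathfrak{M}_{\min}$, confirming minimality.

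No serious obstacle appears: the only substantive computation is the separate equicontinuity of $G_1 \otimes G_2$, which follows from the pointwise boundedness of equicontinuous subsets of duals. The remainder is bookkeeping on the monotonicity of polar topologies, and the corollary is essentially a repackaging of Proposition~\ref{prop:5084d766fecfec2d}.
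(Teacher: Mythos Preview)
Your proposal is correct and follows exactly the line the paper intends: the corollary is stated without proof in the paper, as an immediate consequence of Proposition~\ref{prop:5084d766fecfec2d}, and you have simply spelled out the two routine verifications (separate equicontinuity of $G_{1}\otimes G_{2}$ and monotonicity of polar topologies in $\mathfrak{M}$) that make this immediacy explicit.
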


\subsection{The inductive tensor product}
\label{sec:c43804468b41e821}

We can give an alternative description of the finest compatible topology as in
Corollary~\ref{coro:33816b8f283c3930}.

\begin{prop}
  \label{prop:5a80b25896f61451}
  Let \( E \), \( F \) be locally convex spaces. Let \( \mathfrak{T}_{\iota} \)
  be the inductive topology on \( E \odot F \) with respect to the family
  \begin{equation}
    \label{eq:308aaf3235ce3c87}
    \set*{x \otimes (\cdot): F \to E \odot F \given x \in E}
    \cup \set*{(\cdot) \otimes y: F \to E \odot F \given y \in F},
  \end{equation}
  then \( \mathfrak{T}_{\iota} \) is the finest compatible topology on
  \( E \odot F \).
\end{prop}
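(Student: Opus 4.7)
The plan is to verify the two inclusions needed, namely that \( \mathfrak{T}_{\iota} \) is itself compatible and that every compatible topology on \( E \odot F \) is coarser than \( \mathfrak{T}_{\iota} \).

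For the easy direction, let \( \mathfrak{T}_{\tau} \) be any compatible topology on \( E \odot F \). Condition~\ref{item:bf7ac5d66f888d35} of Definition~\ref{defi:9bd5e00768234cac} is precisely the statement that every map in the family~\eqref{eq:308aaf3235ce3c87} is continuous into \( (E \odot F, \mathfrak{T}_{\tau}) \), so by the universal property of the inductive topology (Proposition~\ref{prop:c90d867cb6afb94d}) the identity \( (E \odot F, \mathfrak{T}_{\iota}) \to (E \odot F, \mathfrak{T}_{\tau}) \) is continuous, giving \( \mathfrak{T}_{\tau} \subseteq \mathfrak{T}_{\iota} \).

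For compatibility of \( \mathfrak{T}_{\iota} \), conditions~\ref{item:bf7ac5d66f888d35} and~\ref{item:0730820be0a913a1} of Definition~\ref{defi:9bd5e00768234cac} are routine: the former is built into the definition of \( \mathfrak{T}_{\iota} \); for the latter, given \( u \in E' \) and \( v \in F' \), the composition of \( u \otimes v \) with each \( x \otimes (\cdot) : F \to E \odot F \) is the scalar multiple \( u(x)\, v \) of \( v \) and hence continuous, and symmetrically for \( (\cdot) \otimes y \), so Proposition~\ref{prop:c90d867cb6afb94d} yields \( u \otimes v \in (E \otimes_{\iota} F)' \).

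The main obstacle is condition~\ref{item:a398b6ca1f6e6725}: given equicontinuous sets \( A \subseteq E' \) and \( B \subseteq F' \), I would produce a neighborhood of \( 0 \) in \( E \otimes_{\iota} F \) whose polar contains \( A \otimes B \) by taking \( W := (A \otimes B)^{\circ} \) with respect to the pairing~\eqref{eq:2ccbd263c9730d60} and invoking Proposition~\ref{prop:8c1bccabeb0c2d99}. Writing \( c_{x} := \sup_{u \in A}\abs*{u(x)} \), which is finite because the equicontinuous \( A \) is in particular simply bounded, a direct computation yields
\[
(x \otimes (\cdot))^{-1}(W) = \set*{y \in F \given c_{x}\sup_{v \in B}\abs*{v(y)} \leq 1},
\]
which equals \( F \) when \( c_{x} = 0 \) and \( c_{x}^{-1} B^{\circ} \) otherwise; in either case it lies in \( \mathcal{N}^{F}_{\Gamma}(0) \) because the equicontinuity of \( B \) makes \( B^{\circ} \in \mathcal{N}^{F}_{\Gamma}(0) \). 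A symmetric computation handles \( ((\cdot) \otimes y)^{-1}(W) \); since \( W \) is absolutely convex, Proposition~\ref{prop:8c1bccabeb0c2d99} then gives \( W \in \mathcal{N}^{E \otimes_{\iota} F}_{\Gamma}(0) \), and since \( A \otimes B \subseteq W^{\circ} = (A \otimes B)^{\circ\circ} \) the set \( A \otimes B \) is equicontinuous, concluding the verification.
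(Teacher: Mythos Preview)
Your proof is correct and follows the same architecture as the paper's: both first observe that any compatible topology makes the maps in~\eqref{eq:308aaf3235ce3c87} continuous (so \( \mathfrak{T}_{\iota} \) dominates), then verify the three compatibility conditions, with the work concentrated in showing \( (A \otimes B)^{\circ} \) is a \( \mathfrak{T}_{\iota} \)-neighborhood of \( 0 \) by checking its pullback along each \( x \otimes (\cdot) \). The only difference is cosmetic: where the paper uses a rescaling trick---choosing \( \varepsilon > 0 \) with \( \varepsilon x \in A^{\circ} \) and deducing the inclusion \( \varepsilon B^{\circ} \subseteq (x \otimes (\cdot))^{-1}\bigl((A \otimes B)^{\circ}\bigr) \)---you compute the preimage exactly as \( c_{x}^{-1} B^{\circ} \), which is marginally cleaner but the same idea.
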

\begin{proof}
  By condition~\ref{item:bf7ac5d66f888d35} of
  Definition~\ref{defi:9bd5e00768234cac}, \( \mathfrak{T}_{\iota} \) is finer
  than every compatible topology on \( E \odot F \). It remains to check that it
  is compatible. For all \( u \in E' \), \( v \in F' \), let \( \tau_{u,v} \) be
  the projective topology on \( E \odot F \) with respect to
  \( u \otimes v: E \odot F \to \mathbb{K} \). Then we have form
  \( (u \otimes v)\bigl(x \otimes (\cdot)\bigr) = v \in F' \), and
  \( (u \otimes v)\bigl((\cdot) \otimes y\bigr) = u \in E' \) for all
  \( x \in E \), \( y \in F \). Hence \( \tau_{u, v} \) is coarser than
  \( \mathfrak{T}_{\iota} \) by the definition of the inductive topology, and
  \( u \otimes v \in \bigl((E \otimes F)[\mathfrak{T}_{\iota}]\bigr)' \). It
  remains to check that if \( G \in \mathfrak{E}(E) \),
  \( H \in \mathfrak{E}(F) \), then
  \( G \otimes H \in \mathfrak{E}\bigl((E \otimes F)[\mathfrak{T}_{\iota}]\bigr)
  \). It suffices to show that the polar \( (G \otimes H)^{\circ} \) with
  respect to the canonical pairing \( \pairing*{E \odot F}{E' \odot F'} \) is a
  neighborhood of \( 0 \) for \( \mathfrak{T}_{\iota} \), which holds if and
  only if the inverse image of \( (G \otimes H)^{\circ} \) along each map in the
  family \eqref{eq:308aaf3235ce3c87} is a neighborhood of \( 0 \) of the domain
  of the map. Take any \( x \in E \), since \( G \) is equicontinuous, the polar
  \( G^{\circ} \) (with respect to the evaluation pairing \( \pairing*{E}{E'} \)
  of course) is a neighborhood of \( 0 \in E \), hence there exists
  \( \varepsilon > 0 \), such that for any \( \lambda \in \mathbb{K} \) with
  \( \abs*{\lambda} \leq \varepsilon \), we have \( \lambda x \in G^{\circ}
  \). If \( x \ne 0 \), then
  \( x \otimes (\cdot) = \varepsilon x \otimes \varepsilon^{-1}(\cdot) \). Thus
  \begin{equation}
    \label{eq:d86ab27882088e45}
    \forall y \in \varepsilon H^{\circ} \in \mathcal{N}^{F}_{\Gamma}(0), \qquad
    x \otimes y = \varepsilon x \otimes \varepsilon^{-1}y \in \varepsilon x \otimes H^{\circ} \subseteq G^{\circ} \otimes H^{\circ} \subseteq (G \otimes H)^{\circ}.
  \end{equation}
  Let \( f_{x} : F \to E \odot F \) be the map \( y \mapsto x \). Then
  \eqref{eq:d86ab27882088e45} means
  \( \varepsilon H^{\circ} \subseteq f_{x}^{-1}\bigl((G \otimes H)^{\circ}\bigr)
  \), thus the latter is in \( \mathcal{N}^{F}_{\Gamma}(0) \). Similarly, let
  \( f^{y}: E \to E \odot F \) be the map \( x \mapsto x \otimes y \), we have
  \( (f^{y})^{-1}\bigl((G \otimes H)^{\circ}\bigr) \in
  \mathcal{N}^{E}_{\Gamma}(0) \). As \( f^{0} \) and \( f_{0} \) are the zero
  map, which are the trivial cases, the inverse image of
  \( (G \otimes H)^{\circ} \) along any maps in the family
  \eqref{eq:308aaf3235ce3c87} is indeed a neighborhood of \( 0 \), and the proof
  is complete.
\end{proof}

\begin{defi}
  \label{defi:f447ded1ddad6051}
  In Proposition~\ref{prop:5a80b25896f61451}, the topology
  \( \mathfrak{T}_{\iota} \) on \( E \odot F \) is called the \textbf{inductive
    tensor product topology} on \( E \odot F \), and we denote the resulting
  locally convex space by \( E \otimes_{\iota} F \), and call it the
  \textbf{inductive tensor product}. The completion of \( E \otimes_{\iota} F \)
  as a locally convex space, denoted by \( E \overline{\otimes}_{\iota} F \), is
  called the \textbf{completed inductive tensor product}.
\end{defi}

\begin{prop}[Universal property of inductive tensor product]
  \label{prop:6989e716dae21a7b}
  Let \( E \), \( F \) and \( G \) be locally convex spaces, then for every
  bilinear map \( f : E \times F \to G \), we have
  \( f \in \mathfrak{B}(E, F; G) \) if and only if there exists a unique
  \( \widetilde{f}: E \otimes_{\iota} F \to G \), such that
  \( f = \widetilde{f}\chi \), with
  \( \chi: E \times F \to E \otimes_{\iota}F \) being the canonical map.
\end{prop}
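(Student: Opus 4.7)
The plan is to combine the purely algebraic universal property of $E \odot F$ with the universal property of the inductive topology (Proposition~\ref{prop:c90d867cb6afb94d}), exploiting the fact that by Definition~\ref{defi:f447ded1ddad6051}, $E \otimes_{\iota} F$ is precisely $E \odot F$ equipped with the inductive topology $\mathfrak{T}_{\iota}$ associated to the family \eqref{eq:308aaf3235ce3c87}. Uniqueness of $\widetilde{f}$ is immediate: $\chi(E \times F)$ spans $E \odot F$ linearly, so any linear map out of $E \odot F$ is determined by its values on elementary tensors.

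For the forward direction, assume $f \in \mathfrak{B}(E, F; G)$. The algebraic universal property of the tensor product produces a unique linear $\widetilde{f}: E \odot F \to G$ with $\widetilde{f}\chi = f$. To promote this to continuity with respect to $\mathfrak{T}_{\iota}$, I invoke Proposition~\ref{prop:c90d867cb6afb94d}: it suffices to verify that the composition of $\widetilde{f}$ with each map in the family~\eqref{eq:308aaf3235ce3c87} is continuous. For fixed $x \in E$, the composite $\widetilde{f} \circ \bigl(x \otimes (\cdot)\bigr)$ sends $y \in F$ to $f(x, y) \in G$, which is continuous in $y$ by the separate continuity hypothesis on $f$. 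The argument for the maps $(\cdot) \otimes y$ is symmetric, so $\widetilde{f}: E \otimes_{\iota} F \to G$ is continuous as required.

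For the converse, suppose such a continuous $\widetilde{f}$ exists. For each fixed $x \in E$, the map $y \mapsto x \otimes y : F \to E \otimes_{\iota} F$ is by construction one of the maps defining the inductive topology, and so is continuous; composing with the continuous $\widetilde{f}$ shows that $y \mapsto f(x,y) = \widetilde{f}(x \otimes y)$ is continuous from $F$ to $G$. The argument in the other variable is symmetric, whence $f \in \mathfrak{B}(E, F; G)$.

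No step presents a genuine obstacle: the assertion is essentially a formal consequence of the way $\mathfrak{T}_{\iota}$ was engineered in Proposition~\ref{prop:5a80b25896f61451}, where the inductive topology was chosen precisely along the family \eqref{eq:308aaf3235ce3c87} of partial tensorings. The substantive work — verifying that $\mathfrak{T}_{\iota}$ is itself a compatible topology on $E \odot F$ — was already carried out there; the universal property here merely records the defining characterization of that topology in terms of factoring separately continuous bilinear maps.
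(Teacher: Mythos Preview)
Your proof is correct and follows exactly the route the paper indicates: the paper's proof is the single sentence ``This follows Proposition~\ref{prop:5a80b25896f61451} and the universal property of inductive topologies,'' and you have simply unpacked that sentence by invoking the algebraic universal property of $E \odot F$ together with Proposition~\ref{prop:c90d867cb6afb94d} applied to the defining family~\eqref{eq:308aaf3235ce3c87}.
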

\begin{proof}
  This follows Proposition~\ref{prop:5a80b25896f61451} and the universal
  property of inductive topologies.
\end{proof}

\begin{coro}
  \label{coro:9dd4c5414e7497a4}
  For \( E \in \mathsf{LCS} \), consider the functors
  \( E \otimes_{\iota}(\cdot) \), \( (\cdot) \otimes_{\iota} E \) and
  \( \mathcal{L}_{s}(E, \cdot) \) from \( \mathsf{LCS} \) to itself. Then
  currying yields the natural bijections
  \begin{equation}
    \label{eq:1237ef8f1ce74fce}
    \mathcal{L}(E \otimes_{\iota} F, G) \simeq  \mathcal{L}(F, \mathcal{L}_{s}(E, G)) \simeq \mathcal{L}(F \otimes_{\iota}E, G),
    \qquad F, G \in \mathsf{LCS}.
  \end{equation}
  In particular, \( E \otimes_{\iota}(\cdot) \) and
  \( (\cdot) \otimes_{\iota} E \) are both left adjoint to
  \( \mathcal{L}_{s}(E, \cdot) \), hence commute with taking colimits in
  \( \mathsf{LCS} \).
\end{coro}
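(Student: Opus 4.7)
The plan is to obtain both claimed bijections by combining the universal property of the inductive tensor product (Proposition~\ref{prop:6989e716dae21a7b}) with a straightforward currying argument, using the fact that the simple convergence topology on $\mathcal{L}(E,G)$ is exactly tailored to record \emph{separate} continuity (rather than joint continuity).

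First I would apply Proposition~\ref{prop:6989e716dae21a7b} twice, obtaining natural bijections
\begin{equation*}
  \mathcal{L}(E \otimes_{\iota} F, G) \simeq \mathfrak{B}(E, F; G),
  \qquad
  \mathcal{L}(F \otimes_{\iota} E, G) \simeq \mathfrak{B}(F, E; G).
\end{equation*}
Next I would establish the core currying bijection $\mathfrak{B}(E,F;G) \simeq \mathcal{L}(F, \mathcal{L}_{s}(E,G))$. Given $f \in \mathfrak{B}(E,F;G)$, define $\widehat{f}(y) := f(\cdot, y)$; separate continuity in $x$ ensures $\widehat{f}(y) \in \mathcal{L}(E,G)$, and separate continuity in $y$ ensures that for every $x \in E$ the map $y \mapsto \widehat{f}(y)(x) = f(x,y)$ is continuous, which is precisely the condition for $\widehat{f} : F \to \mathcal{L}_{s}(E, G)$ to be continuous, since a fundamental system of neighborhoods of $0$ in $\mathcal{L}_{s}(E,G)$ is given by the sets $M(S,V)$ with $S \in \mathfrak{F}(E)$, $V \in \mathcal{N}_{\Gamma}(0)$ (cf.\ Proposition~\ref{prop:eb240fbacebaf554}). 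Conversely, given $g \in \mathcal{L}(F, \mathcal{L}_{s}(E,G))$, the map $\check{g}(x,y) := g(y)(x)$ is linear in each variable, continuous in $x$ for fixed $y$ because $g(y) \in \mathcal{L}(E,G)$, and continuous in $y$ for fixed $x$ because it factors as $g$ followed by evaluation at $x$, which is continuous on $\mathcal{L}_{s}(E,G)$ by definition of simple convergence. These assignments are inverse to each other. An identical argument applied with the roles of $E$ and $F$ exchanged yields $\mathfrak{B}(F,E;G) \simeq \mathcal{L}(F, \mathcal{L}_{s}(E,G))$ (currying in the $E$-variable first), which together with the previous display gives the chain \eqref{eq:1237ef8f1ce74fce}.

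Naturality in $F$ and $G$ is routine: all three identifications are defined by pre/post-composition and evaluation, and a direct chase shows the bijections commute with continuous linear maps in $F$ and $G$. Finally, having exhibited $E \otimes_{\iota}(\cdot)$ and $(\cdot) \otimes_{\iota} E$ as left adjoints to $\mathcal{L}_{s}(E, \cdot)$, their commutation with small colimits in $\mathsf{LCS}$ follows from the general fact that left adjoints preserve colimits, exactly as invoked in Corollary~\ref{coro:a9570db3191db003}, i.e.\ the dual of \cite{MR0354798}*{\S~V.5, Theorem~1}.

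There is no genuine obstacle here; the only point requiring a little care is to recognize that continuity into $\mathcal{L}_{s}(E,G)$ is precisely continuity under evaluation at every $x \in E$, so that the curried form of a \emph{separately} continuous bilinear map lands continuously in the target, and conversely no joint continuity is needed to reconstruct the bilinear map. This is where the choice of the simple-convergence topology (rather than $\mathcal{L}_{b}$ or $\mathcal{L}_{c}$) is essential and matches the inductive tensor product on the other side of the adjunction.
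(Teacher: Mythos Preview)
Your proof is correct and follows exactly the approach the paper indicates: the paper's own proof is a single sentence (``This is a routine check using the universal property of the inductive tensor product as well as the algebraic tensor product of vector spaces''), and you have carefully spelled out precisely those routine details, including the key observation that continuity into $\mathcal{L}_{s}(E,G)$ is equivalent to continuity under evaluation at each point, which is what makes the currying match separate continuity.
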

\begin{proof}
  This is a routine check using the universal property of the inductive tensor
  product as well as the algebraic tensor product of vector spaces.
\end{proof}

\begin{rema}
  \label{rema:4c49ce7f5d1573ae}
  One might be tempted to say that \( E \overline{\otimes}_{\iota}(\cdot) \)
  commutes with taking colimits in the category \( \widehat{\mathsf{LCS}} \) of
  complete locally convex spaces, by claiming it is the left adjoint of
  \( \widehat{\mathcal{L}_{s}(E, \cdot)} \), as functors on
  \( \widehat{\mathsf{LCS}} \). This is however \emph{false}, since
  \( \mathcal{L}_{s}(E, G) \) is in general not complete, hence a similar
  adjunction as \eqref{eq:1237ef8f1ce74fce} need not hold in
  \( \widehat{\mathsf{LCS}} \), and the left side of \eqref{eq:1237ef8f1ce74fce}
  will miss to detect continuous linear maps from \( F \) to
  \( \widehat{\mathcal{L}_{s}(E, G)} \) whose range is not contained in
  \( \mathcal{L}_{s}(E, G) \).
\end{rema}

In spite of Remark~\ref{rema:4c49ce7f5d1573ae}, we still have the following
commutation result with locally convex direct sums.
\begin{coro}
  \label{coro:06a795bc15551930}
  Let \( (E_{i})_{i \in I} \) and \( (F_{j \in J}) \) be two families of locally
  convex spaces, we have
  \begin{equation}
    \label{eq:9f32254c2561fd04}
    \left(\bigoplus_{i}E_{i}\right) \overline{\otimes}_{\iota}
    \left(\bigoplus_{j} F_{j}\right)
    = \widehat{\bigoplus_{i,j}E_{i} \otimes_{\iota} F_{j}}
    = \bigoplus_{i,j} E_{i} \overline{\otimes}_{j} F_{j}.
  \end{equation}
\end{coro}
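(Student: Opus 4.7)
The plan is to derive \eqref{eq:9f32254c2561fd04} purely formally from Corollary~\ref{coro:9dd4c5414e7497a4} (the two inductive tensor product functors preserve colimits in $\mathsf{LCS}$) and Corollary~\ref{coro:a9570db3191db003} (completion preserves colimits, being left adjoint to the forgetful functor), together with the observation that the locally convex direct sum $\bigoplus$ is by construction the coproduct in $\mathsf{LCS}$ and hence a colimit there.

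First, fixing $\bigoplus_j F_j$ and applying the fact that $(\cdot) \otimes_\iota \bigl(\bigoplus_j F_j\bigr)$ preserves colimits to the coproduct $\bigoplus_i E_i$ gives
\[
\left(\bigoplus_i E_i\right) \otimes_\iota \left(\bigoplus_j F_j\right) \;=\; \bigoplus_i \Bigl(E_i \otimes_\iota \bigoplus_j F_j\Bigr).
\]
Applying the dual half of Corollary~\ref{coro:9dd4c5414e7497a4} to each $E_i \otimes_\iota (\cdot)$, and flattening the iterated coproduct via the usual associativity of colimits, yields
\[
\left(\bigoplus_i E_i\right) \otimes_\iota \left(\bigoplus_j F_j\right) \;=\; \bigoplus_{i,j} (E_i \otimes_\iota F_j).
\]

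Now take completions. On the left, the definition of $\overline{\otimes}_\iota$ gives $\bigl(\bigoplus_i E_i\bigr) \overline{\otimes}_\iota \bigl(\bigoplus_j F_j\bigr)$; on the right we obtain $\widehat{\bigoplus_{i,j}(E_i \otimes_\iota F_j)}$, which is the first equality of \eqref{eq:9f32254c2561fd04}. For the second, Corollary~\ref{coro:a9570db3191db003} (or directly Proposition~\ref{prop:d842a95c764bfd01}~\ref{item:8f7f5910e0175668} applied summand-wise) lets us move the completion past the coproduct:
\[
\widehat{\bigoplus_{i,j}(E_i \otimes_\iota F_j)} \;=\; \bigoplus_{i,j} \widehat{E_i \otimes_\iota F_j} \;=\; \bigoplus_{i,j} E_i \overline{\otimes}_\iota F_j.
\]

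There is no real obstacle: the whole argument is a formal manipulation of adjoints. The only subtle point, and the reason the steps must be carried out in this order, is the warning in Remark~\ref{rema:4c49ce7f5d1573ae}: the completed functor $E \overline{\otimes}_\iota (\cdot)$ need not commute with colimits in $\widehat{\mathsf{LCS}}$. Hence the distribution of the tensor product over direct sums has to be performed first on the uncompleted $\otimes_\iota$ inside $\mathsf{LCS}$ (where Corollary~\ref{coro:9dd4c5414e7497a4} applies), and completion is taken only at the very end, where Corollary~\ref{coro:a9570db3191db003} legitimately pushes it through the coproduct.
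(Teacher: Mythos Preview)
Your proof is correct and follows essentially the same approach as the paper: first distribute the uncompleted $\otimes_\iota$ over the direct sums (the paper cites the universal property in Proposition~\ref{prop:6989e716dae21a7b} directly, you use its immediate consequence Corollary~\ref{coro:9dd4c5414e7497a4}), then complete and push the completion through the direct sum via Proposition~\ref{prop:d842a95c764bfd01}. Your extra paragraph explaining why the order matters in light of Remark~\ref{rema:4c49ce7f5d1573ae} is a nice clarification the paper leaves implicit.
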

\begin{proof}
  By Proposition~\ref{prop:6989e716dae21a7b}, we have
  \begin{equation}
    \label{eq:7e6e56eb324b6c83}
    \left(\bigoplus_{i}E_{i}\right) \otimes_{\iota}
    \left(\bigoplus_{j} F_{j}\right)
    = \bigoplus_{i,j}E_{i} \otimes_{\iota} F_{j}
    = \bigoplus_{i,j} E_{i} \otimes_{j} F_{j}.
  \end{equation}
  Now \eqref{eq:9f32254c2561fd04} follows from \eqref{eq:7e6e56eb324b6c83} by
  Proposition~\ref{prop:d842a95c764bfd01}.
\end{proof}

\begin{prop}
  \label{prop:963932029cf324b4}
  The following holds for the (completed) inductive tensor product.
  \begin{enumerate}
  \item \label{item:e8987e1abfde6201} If \( f: A \to E \), \( g : B \to F \) are
    continuous linear maps of locally convex spaces, then the induced linear map
    \( f \otimes g: A \otimes_{\iota} B \to E \otimes_{\iota} F \) is
    continuous, hence extends by continuity to a continuous linear map
    \( f \overline{\otimes}_{\iota} g: A \overline{\otimes}_{\iota} B \to E
    \overline{\otimes}_{\iota} F \).
  \item \label{item:a9e9f676c5d31d25} For all locally convex spaces \( E \) and
    \( F \), the swap
    \begin{displaymath}
      s: E \otimes_{\iota} F \to F \otimes_{\iota} E ,\quad
      \sum_{i}x_{i} \otimes y_{i} \mapsto \sum_{i}y_{i} \otimes x_{i}
    \end{displaymath}
    is an isomorphism of locally convex spaces, hence extends by continuity to
    an isomorphism of locally convex spaces
    \( \overline{s}: E \overline{\otimes}_{\iota} F \to F
    \overline{\otimes}_{\iota} E \).
  \item \label{item:913b343d957a2af9} For all locally convex spaces \( E \),
    \( F \) and \( G \), the association map
    \begin{displaymath}
      a: (E \otimes_{\iota} F) \otimes_{\iota} G \to E \otimes_{\iota} (F
      \otimes_{\iota} G), \quad \sum_{i}(x_{i} \otimes y_{i}) \otimes z_{i} \mapsto \sum_{i} x_{i}
      \otimes (y_{i} \otimes z_{i})
    \end{displaymath}
    is an isomorphism of locally convex spaces, hence extends by continuity to
    an isomorphism of locally convex spaces
    \( \overline{a}: (E \overline{\otimes}_{\iota} F) \overline{\otimes}_{\iota}
    G \to E \overline{\otimes}_{\iota} (F \overline{\otimes}_{\iota} G) \).
  \end{enumerate}
\end{prop}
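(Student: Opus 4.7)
My plan is to obtain each of the three claims by a direct application of the universal property of the inductive tensor product (Proposition~\ref{prop:6989e716dae21a7b}), with part~\ref{item:913b343d957a2af9} being the most delicate and requiring the universal property to be applied iteratively. In all three parts, once the uncompleted statement is proved, the completed version follows from extension by continuity using Proposition~\ref{prop:6f06da3d7291c7cf} (or directly Theorem~\ref{theo:5c48561db339a0d3}).

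For part~\ref{item:e8987e1abfde6201}, the composite bilinear map $(a,b) \mapsto f(a) \otimes g(b)$ from $A \times B$ into $E \otimes_{\iota} F$ is separately continuous: separate continuity of the canonical bilinear map $E \times F \to E \otimes_{\iota} F$, combined with continuity of $f$ in one slot and $g$ in the other, gives both partial continuities. The universal property then produces the unique continuous linear $f \otimes g$. Part~\ref{item:a9e9f676c5d31d25} is the same argument run both ways: the separately continuous bilinear map $(x,y) \mapsto y \otimes x$ into $F \otimes_{\iota} E$ factors through a continuous $s$, and the obvious bilinear map $(y,x) \mapsto x \otimes y$ factors through a continuous inverse.

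For part~\ref{item:913b343d957a2af9}, I would apply the universal property twice. First, for each fixed $z \in G$, the bilinear map $(x,y) \mapsto x \otimes (y \otimes z)$ on $E \times F$ is separately continuous (it is the canonical bilinear map $F \times G \to F \otimes_{\iota} G$ evaluated at $z$, followed by the canonical bilinear map $E \times (F \otimes_{\iota} G) \to E \otimes_{\iota} (F \otimes_{\iota} G)$), hence yields a continuous linear $\varphi_{z} : E \otimes_{\iota} F \to E \otimes_{\iota} (F \otimes_{\iota} G)$. The key step is to verify that $(u, z) \mapsto \varphi_{z}(u)$ on $(E \otimes_{\iota} F) \times G$ is separately continuous: continuity in $u$ for fixed $z$ is immediate, and continuity in $z$ for a fixed $u = \sum_{i=1}^{n} x_{i} \otimes y_{i}$ (a finite sum) reduces to a finite sum of continuous maps $z \mapsto x_{i} \otimes (y_{i} \otimes z)$. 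A second application of the universal property yields a continuous linear $a$, and an analogous construction (or an appeal to part~\ref{item:a9e9f676c5d31d25}) produces a continuous inverse.

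The main obstacle is part~\ref{item:913b343d957a2af9}, specifically the verification of separate continuity of the secondary bilinear map $(u, z) \mapsto \varphi_{z}(u)$, where one must exploit the fact that elements of the uncompleted tensor product are \emph{finite} sums of simple tensors, so that continuity in $z$ can be checked summand by summand. An alternative, more abstract route is to read off associativity from the Yoneda lemma applied to Corollary~\ref{coro:9dd4c5414e7497a4}: iterating the natural isomorphism $\mathcal{L}(X \otimes_{\iota} Y, Z) \simeq \mathcal{L}(Y, \mathcal{L}_{s}(X, Z))$ shows that both $\mathcal{L}((E \otimes_{\iota} F) \otimes_{\iota} G, H)$ and $\mathcal{L}(E \otimes_{\iota} (F \otimes_{\iota} G), H)$ are naturally isomorphic to $\mathcal{L}(G, \mathcal{L}_{s}(F, \mathcal{L}_{s}(E, H)))$ as functors in $H$, and identifying the induced isomorphism on simple tensors recovers $a$.
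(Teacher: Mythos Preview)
Your proof is correct. Parts~\ref{item:e8987e1abfde6201} and~\ref{item:a9e9f676c5d31d25} match the paper's argument exactly: both derive everything from the universal property (Proposition~\ref{prop:6989e716dae21a7b}) applied to the obvious separately continuous bilinear maps.

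For part~\ref{item:913b343d957a2af9} you and the paper diverge. You iterate the universal property, first producing $\varphi_{z}$ for each fixed $z$, then checking separate continuity of $(u,z)\mapsto\varphi_{z}(u)$ by exploiting that $u$ is a finite sum of simple tensors. The paper instead introduces a three-fold inductive tensor product: it argues, by the same method as Proposition~\ref{prop:5a80b25896f61451}, that there is a finest locally convex topology on $E\odot F\odot G$ making the trilinear map $(x,y,z)\mapsto x\otimes y\otimes z$ separately continuous, and then checks that both $(E\otimes_{\iota}F)\otimes_{\iota}G$ and $E\otimes_{\iota}(F\otimes_{\iota}G)$ coincide with this object under the canonical identifications. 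The paper's route is more symmetric and avoids the asymmetric ``fix $z$, then vary $u$'' bookkeeping; your route is more explicit and self-contained, and your Yoneda alternative via Corollary~\ref{coro:9dd4c5414e7497a4} is a clean categorical shortcut that the paper does not mention. All three arguments are valid and ultimately equivalent.
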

\begin{proof}
  \ref{item:e8987e1abfde6201} follows from the universal property of
  \( A \otimes_{\iota} B \) by noting that the composition
  \( A \times B \xrightarrow{f \times g} E \times F \xrightarrow{\chi_{E,F}} E
  \otimes_{\iota} F \) is a separately continuous bilinear map.
  \ref{item:a9e9f676c5d31d25} is similar to \ref{item:e8987e1abfde6201}. To
  prove \ref{item:913b343d957a2af9}, note that a similar argument as in
  Proposition~\ref{prop:5a80b25896f61451} establishes that there exists a finest
  locally convex topology on \( E \odot F \odot G \) making the trilinear map
  \( (x, y, z) \in E \times F \times G \mapsto x \otimes y \otimes z \in E \odot
  F \odot G \) separately continuous. And one checks that this locally convex
  topology on \( E \odot F \odot G \) transports to exactly both sides of
  \( a \) via the canonical identifications.
\end{proof}

\subsection{The injective tensor product}
\label{sec:ac51753bf78a1da5}

From Corollary~\ref{coro:33816b8f283c3930}, we can also deduce that there exists
a coarsest compatible topology on \( E \odot F \).

\begin{defi}
  \label{defi:4eeb99ac2dee068f}
  Let \( E \), \( F \) be locally convex space. The space \( E \odot F \),
  equipped coarsest compatible topology, is called the \textbf{injective tensor
    product} of \( E \) and \( F \), and is denoted by
  \( E \otimes_{\varepsilon} F \), its completion, denoted by
  \( E \overline{\otimes}_{\varepsilon} F \), is called the \textbf{completed
    tensor product}.
\end{defi}

We may embed \( E \otimes F \) into \( \mathfrak{B}(E'_{\sigma}, F'_{\sigma}) \)
by identifying the tensor \( \sum_{i}x_{i} \otimes y_{i} \) with the bilinear
form \( (\alpha, \beta) \mapsto \sum_{i}\alpha(x_{i})\beta(y_{i}) \).

\begin{nota}
  \label{nota:6f4b4313d9c387ba}
  Define
  \begin{displaymath}
    \mathfrak{E} = \set*{A \times B \given A \in \mathfrak{E}(E), \, B \in \mathfrak{E}(B)}.
  \end{displaymath}
  Let \( G \) be another locally convex space. We use
  \( \mathfrak{B}_{e}(E'_{s}, F'_{s}; G) \) to denote the
  \( \mathfrak{S} \)-topology on \( \mathfrak{B}(E'_{s}, F'_{s}; G) \) viewed as
  a space of maps from \( E'_{s} \times F'_{s} \) to \( G \).  When
  \( G = \mathbb{K} \), we use \( \mathfrak{B}_{e}(E'_{s}, F'_{s}) \) to denote
  \( \mathfrak{B}_{e}(E'_{s}, F'_{s}; \mathbb{K}) \).
\end{nota}

\begin{prop}
  \label{prop:6a4e157f8736cd87}
  Using the above notation the following hold:
  \begin{enumerate}
  \item \label{item:5bd123c30080cc97} the
    \( \mathfrak{B}_{e}(E'_{s}, F'_{s}; G) \) is a locally convex space;
  \item \label{item:b3b706cbbaa67a3d} \( \mathfrak{B}_{e}(E'_{e}, F'_{e}) \) is
    complete if both \( E \) and \( F \) are;
  \item \label{item:6aa6d16fe7fa8c79} the embedding
    \( E \otimes_{\varepsilon} F \hookrightarrow \mathfrak{B}_{e}(E'_{s},
    F'_{s}) \) is a strict monomorphism.
  \end{enumerate}
  In particular, if both \( E \) and \( F \) are complete, then
  \( E \overline{\otimes}_{\varepsilon} F \) is identified with the closure of
  \( E \otimes_{\mathfrak{\varepsilon}} F \) in
  \( \mathfrak{B}_{e}(E'_{s}, F'_{s}) \).
\end{prop}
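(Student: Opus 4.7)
The plan is to dispatch the three assertions in turn, using Proposition~\ref{prop:eb240fbacebaf554} for the locally convex structure, Grothendieck's completeness criterion for the completeness of $\mathfrak{B}_e(E'_s, F'_s)$, and a direct matching of neighborhood bases for the strict embedding.

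For \ref{item:5bd123c30080cc97}, I would apply Proposition~\ref{prop:eb240fbacebaf554} to $\mathfrak{B}(E'_s, F'_s; G)$, viewed as a linear subspace of the space of maps $E' \times F' \to G$ equipped with the $\mathfrak{E}$-topology. Two points need checking: (a) $f(A \times B)$ is bounded in $G$ for every separately continuous $f$ and every $A \times B \in \mathfrak{E}$, and (b) the separation condition. Point (b) is immediate because singletons of $E'$ and $F'$ are equicontinuous, so $\mathfrak{E}$ contains every singleton of $E' \times F'$, whence pointwise vanishing forces $f = 0$. For (a), I would reduce to $G = \mathbb{K}$ by Corollary~\ref{coro:6ce070ab8838d3f8}: it suffices to bound $(g \circ f)(A \times B)$ in $\mathbb{K}$ for every $g \in G'$. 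In the scalar case, separate continuity combined with the canonical identifications $\mathcal{L}(F'_s, \mathbb{K}) = F$ and $\mathcal{L}(E'_s, \mathbb{K}) = E$ turns $f$ into a pair of linear maps $\phi: E' \to F$ and $\psi: F' \to E$ with $f(\alpha, \beta) = \pairing*{\phi(\alpha)}{\beta} = \pairing*{\psi(\beta)}{\alpha}$. Weak boundedness of $B$ in $F'$ makes $\pairing*{\psi(B)}{\alpha} = f(\alpha, B)$ bounded for every $\alpha$, so $\psi(B)$ is $\sigma(E, E')$-bounded, hence bounded in $E$ by Corollary~\ref{coro:6ce070ab8838d3f8}; equicontinuity of $A$ then delivers the desired bound on $f(A \times B)$.

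For \ref{item:b3b706cbbaa67a3d}, the plan is to take a Cauchy net $(f_i)$ in $\mathfrak{B}_e(E'_s, F'_s)$, pass to its pointwise limit $f$ (bilinear by linearity of limits), and upgrade this to separate continuity. For each fixed $\alpha \in E'$, each $f_i(\alpha, \cdot)$ lies in $F$ by separate continuity; taking $A = \{\alpha\}$ in the $\mathfrak{E}$-topology shows that the net converges to $f(\alpha, \cdot)$ uniformly on every equicontinuous $B \subseteq F'$, so the restriction of $f(\alpha, \cdot)$ to any such $B$ is $\sigma(F', F)$-continuous as a uniform limit. The crucial ingredient is then Grothendieck's completeness theorem: since $F$ is complete, every linear form on $F'$ whose restriction to every equicontinuous subset is $\sigma(F', F)$-continuous automatically belongs to $F$, hence $f(\alpha, \cdot) \in F$. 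A symmetric argument handles the other variable, so $f \in \mathfrak{B}_e(E'_s, F'_s)$, and the convergence $f_i \to f$ in the $\mathfrak{E}$-topology is built into the Cauchy condition.

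For \ref{item:6aa6d16fe7fa8c79}, the plan is to match neighborhood bases. Under the embedding $u = \sum_i x_i \otimes y_i \mapsto ((\alpha, \beta) \mapsto \sum_i \alpha(x_i)\beta(y_i))$, a basic neighborhood of $0$ inherited from $\mathfrak{B}_e(E'_s, F'_s)$ reads
\begin{displaymath}
\set[\big]{u \in E \odot F \given \forall (\alpha, \beta) \in A \times B, \; \abs[\big]{\sum_i \alpha(x_i)\beta(y_i)} \leq 1} = (A \otimes B)^{\circ},
\end{displaymath}
the polar being taken in the duality \eqref{eq:2ccbd263c9730d60}. By Corollary~\ref{coro:33816b8f283c3930}~\ref{item:b9bef197b4c6b09e} combined with Proposition~\ref{prop:5084d766fecfec2d}, these sets form a fundamental system of neighborhoods of $0$ for the injective topology on $E \odot F$, so the embedding is a strict monomorphism. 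The concluding claim is then immediate: when $E$ and $F$ are complete, \ref{item:b3b706cbbaa67a3d} makes $\mathfrak{B}_e(E'_s, F'_s)$ complete, hence the closure of the strict subspace $E \otimes_{\varepsilon} F$ is itself a complete locally convex space containing $E \otimes_{\varepsilon} F$ densely, which by uniqueness of the separated completion coincides with $E \overline{\otimes}_{\varepsilon} F$. The main obstacle is the invocation of Grothendieck's completeness criterion in \ref{item:b3b706cbbaa67a3d}, as this result is not recorded in the preliminary section and would require either an appeal to K\"othe \cite{MR0248498} or a brief inserted proof; the rest of the argument is essentially bookkeeping with the preliminaries already at hand.
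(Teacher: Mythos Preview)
Your proposal is correct. For \ref{item:6aa6d16fe7fa8c79}, your argument is essentially identical to the paper's: both identify the trace on $E \odot F$ of the basic neighborhoods $\set{f \given f(A \times B) \subseteq B_{\mathbb{K}}(1)}$ with the polars $(A \otimes B)^{\circ}$ and invoke Corollary~\ref{coro:33816b8f283c3930}\ref{item:b9bef197b4c6b09e}; the paper parametrizes by $A = U^{\circ}$, $B = V^{\circ}$ while you parametrize by equicontinuous sets directly, but this is cosmetic.

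The difference lies in \ref{item:5bd123c30080cc97} and \ref{item:b3b706cbbaa67a3d}: the paper simply cites K\"othe \cite{MR0551623}*{p167, (4) \& (5)} and moves on, whereas you unpack the arguments. Your treatment of \ref{item:5bd123c30080cc97} via the identification $(F'_s)' = F$ and the weak-boundedness transfer is the standard route and is correct. Your treatment of \ref{item:b3b706cbbaa67a3d} via Grothendieck's completeness criterion is likewise the canonical argument (and is in fact what underlies K\"othe's proof); you are right to flag that this criterion is not among the paper's recorded preliminaries, so a citation to \cite{MR0248498}*{\S~21.9} or \cite{MR0342978}*{IV.6.2} would be needed to make the proof self-contained within the paper's framework. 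What your approach buys is transparency at the cost of length; what the paper's buys is brevity at the cost of a black-box reference.
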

\begin{proof}
  \ref{item:5bd123c30080cc97} holds by \cite{MR0551623}*{p167,(4)}, and
  \ref{item:b3b706cbbaa67a3d} by \cite{MR0551623}*{p167,(5)}.  We now prove
  \ref{item:6aa6d16fe7fa8c79}. Let \( B_{\mathbb{K}} \) denote the unit disk of
  the scalar field \( \mathbb{K} \). Consider sets of the form
  \begin{equation}
    \label{eq:d792b216757c8477}
    M(U, V) = \set*{f \in \mathfrak{B}(E'_{s}, F'_{s}) \given f(U^{\circ}, V^{\circ}) \subseteq B_{\mathbb{K}}},
    \qquad (U, V) \in \mathcal{N}^{E}_{\Gamma}(0) \times \mathcal{N}^{F}_{\Gamma}(0).
  \end{equation}
  Since every equicontinuous set in \( E'_{s} \) (resp.\ \( F'_{s} \)) is
  contained in \( U^{\circ} \) (resp.\ \( V^{\circ} \)) for some
  \( U \in \mathcal{N}_{\Gamma}^{E}(0) \) (resp.\
  \( V \in \mathcal{N}_{\Gamma}^{E}(0) \)), it follows that all sets in
  \eqref{eq:d792b216757c8477} form a fundamental system of neighborhoods of
  \( 0 \) in \( \mathfrak{B}_{e}(E'_{s}, F'_{s}) \). Clearly,
  \( M(U, V) \cap (E \odot F) = (U^{\circ} \otimes V^{\circ})^{\circ} \), where
  the outer polar is taken with respect to the duality pairing
  \eqref{eq:2ccbd263c9730d60}. It follows that the subspace topology on
  \( E \odot F \) induced from the topology of
  \( \mathfrak{B}_{e}(E'_{s}, F'_{s}) \) is precisely the injective tensor
  product topology on \( E \odot F \), which finishes the proof.
\end{proof}

\begin{rema}
  \label{rema:3df7d46a1f1a52fb}
  Due to the above proposition, the injective tensor product topology is also
  called the topology of bi-equicontinuous convergence, which is the original
  terminology in \cite{MR0075539}.
\end{rema}

The following description of \( E \otimes_{\varepsilon} F \) (and of
\( E \overline{\otimes}_{\varepsilon} F \)) using seminorms is a consequence by
a routine translation between a continuous semi-norm and its associate unit
ball, via the Minkowski functionals (aka.\ the gauge functionals).

\begin{prop}[\cite{MR0551623}*{p267, (2) \& (3)}]
  \label{prop:2204037e873ab1a3}
  Let \( (p_{i})_{i \in I} \) (resp.\ \( (q_{j})_{j \in J} \)) be a generating
  family of seminorms for the locally convex space \( E \) (resp.\ \( F \)). For
  each index, let \( U_{i} \) (resp.\ \( V_{j} \)) be the unit ball for the
  seminorm \( p_{i} \) (resp.\ \( q_{j} \)). Define
  \begin{displaymath}
    \begin{split}
      p_{i} \otimes_{\varepsilon} q_{j}: E \odot F &\to \R_{+} = \interval[open right]{0}{+\infty} \\
      t &\mapsto \sup_{u \in U_{i}^{\circ}, v \in V_{j}^{\circ}}\abs*{(u \otimes v)(t)},
    \end{split}
  \end{displaymath}
  then \( p_{i} \otimes_{\varepsilon} q_{j} \), \( (i,j) \in I \times J \), is a
  generating family of seminorms for \( E \otimes_{\varepsilon} F \) and the
  respective extensions \( p_{i} \overline{\otimes}_{\varepsilon} q_{j} \),
  \( (i, j) \in I \times J \), is a generating family of seminorms on
  \( E \overline{\otimes}_{\varepsilon} F \).
\end{prop}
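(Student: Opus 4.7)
The plan is to translate between the polar-set description of the injective tensor product topology provided by the proof of Proposition~\ref{prop:6a4e157f8736cd87} and a description via Minkowski functionals, exhibiting each $p_{i} \otimes_{\varepsilon} q_{j}$ as the gauge of a specific fundamental neighborhood of $0$. First I would check that $p_{i} \otimes_{\varepsilon} q_{j}$ is a well-defined finite-valued seminorm on $E \odot F$: since $u \in U_{i}^{\circ}$ satisfies $|u(x)| \leq p_{i}(x)$ for every $x \in E$ (and analogously for $v \in V_{j}^{\circ}$), any representation $t = \sum_{k} x_{k} \otimes y_{k}$ yields $|(u \otimes v)(t)| \leq \sum_{k} p_{i}(x_{k}) q_{j}(y_{k}) < \infty$, and subadditivity and absolute homogeneity are inherited from the supremum structure.

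Next I would identify the closed unit ball $\set*{t \in E \odot F \given (p_{i} \otimes_{\varepsilon} q_{j})(t) \leq 1}$ with $(U_{i}^{\circ} \otimes V_{j}^{\circ})^{\circ}$, where the inner polars use the evaluation pairings $\pairing*{E}{E'}$ and $\pairing*{F}{F'}$, while the outer polar uses the pairing~\eqref{eq:2ccbd263c9730d60} after canonically embedding $U_{i}^{\circ} \otimes V_{j}^{\circ}$ into $\mathfrak{B}(E, F)$. Tautologically from the definition of the sup, $p_{i} \otimes_{\varepsilon} q_{j}$ is the Minkowski functional of this absolutely convex absorbing set.

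The main step is then to show that, up to positive dilations and finite intersections, the family $\set*{(U_{i}^{\circ} \otimes V_{j}^{\circ})^{\circ}}_{(i,j) \in I \times J}$ is a fundamental system of neighborhoods of $0$ in $E \otimes_{\varepsilon} F$. From the proof of Proposition~\ref{prop:6a4e157f8736cd87}, specifically equation~\eqref{eq:d792b216757c8477} and the identification $M(U, V) \cap (E \odot F) = (U^{\circ} \otimes V^{\circ})^{\circ}$, we already know that $(U^{\circ} \otimes V^{\circ})^{\circ}$ is a fundamental system as $U, V$ range over $\mathcal{N}_{\Gamma}^{E}(0)$ and $\mathcal{N}_{\Gamma}^{F}(0)$. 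Since $(p_{i})$ and $(q_{j})$ are generating, any such $U$ contains a finite intersection $\bigcap_{k} \lambda_{k} U_{i_{k}}$, and similarly for $V$; replacing $U, V$ by these smaller neighborhoods only produces a smaller basic neighborhood. I would finish by invoking the bipolar theorem (Theorem~\ref{theo:45bc53b6ce4e0dc3}) to write $(\bigcap_{k} \lambda_{k} U_{i_{k}})^{\circ}$ as the closed absolutely convex hull of $\bigcup_{k} \lambda_{k}^{-1} U_{i_{k}}^{\circ}$, and then observing by a direct convex-combination argument on the pairing that taking polars of $\bigcup_{k}(\lambda_{k}^{-1} U_{i_{k}}^{\circ} \otimes V_{j_{l}}^{\circ})$ is unaffected by passing to closed absolutely convex hulls, so that the basic neighborhood reduces to the finite intersection $\bigcap_{k,l} \lambda_{k}\mu_{l} (U_{i_{k}}^{\circ} \otimes V_{j_{l}}^{\circ})^{\circ}$. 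I expect this polar-combinatorics step to be the main technical obstacle, since polars do not commute naively with intersections of generating unit balls; the key is precisely the bipolar identity together with the algebraic identity $(\bigcup_{k} A_{k}) \otimes B = \bigcup_{k}(A_{k} \otimes B)$.

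Finally, the statement for the completion $E \overline{\otimes}_{\varepsilon} F$ is a formal consequence: every continuous seminorm on $E \otimes_{\varepsilon} F$ extends uniquely and continuously to $E \overline{\otimes}_{\varepsilon} F$ by uniform continuity on the dense subspace (\S~\ref{sec:bb19dab8a74cccc2}), and the extensions of a generating family remain generating because the extended seminorm topology, restricted to $E \otimes_{\varepsilon} F$, recovers the original topology by density.
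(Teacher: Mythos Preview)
The paper does not give its own proof of this proposition: it is stated with a direct citation to K\"othe \cite{MR0551623}*{p267, (2) \& (3)} and no argument is supplied. So there is nothing in the paper to compare your route against.

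That said, your plan is sound and is essentially the standard proof. Identifying the closed unit ball of $p_{i}\otimes_{\varepsilon}q_{j}$ with $(U_{i}^{\circ}\otimes V_{j}^{\circ})^{\circ}$ and then invoking the description of a neighborhood basis from the proof of Proposition~\ref{prop:6a4e157f8736cd87} is exactly right. The only place to be careful is the polar-combinatorics step you flag: rather than manipulating closed absolutely convex hulls of sets of the form $W^{\circ}\otimes(W')^{\circ}$ inside $\mathfrak{B}(E,F)$ (where the required closure is for the weak topology $\sigma(\mathfrak{B}(E,F),E\odot F)$ and the double approximation in $u$ and $v$ needs a word), it is cleaner to argue directly at the level of seminorms. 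Namely, for $u\in W^{\circ}$ with $W=\bigcap_{k}\lambda_{k}U_{i_{k}}$, the bipolar theorem lets you approximate $u$ weak-$\ast$ by absolutely convex combinations $\sum_{k}\alpha_{k}u_{k}$ with $u_{k}\in\lambda_{k}^{-1}U_{i_{k}}^{\circ}$ and $\sum_{k}|\alpha_{k}|\le 1$; since $u\mapsto (u\otimes v)(t)$ is $\sigma(E',E)$-continuous for each fixed $v$ and each $t\in E\odot F$, one gets immediately
\[
\bigl|(u\otimes v)(t)\bigr|\le \max_{k,l}\,\lambda_{k}^{-1}\mu_{l}^{-1}\,(p_{i_{k}}\otimes_{\varepsilon}q_{j_{l}})(t),
\]
which is precisely the domination you need. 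Your final density argument for passing to $E\,\overline{\otimes}_{\varepsilon}\,F$ is correct.
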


\begin{rema}
  \label{rema:339c7da9f040d789}
  When \( E \), \( F \) are \( B \)-spaces, or more generally normed spaces,
  taking \( p \) to be the norm on \( E \), and \( q \) the norm on \( F \),
  then \( p \otimes_{\varepsilon} q \) is a norm generating the topology on
  \( E \otimes_{\varepsilon} F \). Using this norm, we recover the usual
  definition of injective tensor product of Banach (or normed) spaces, see e.g.\
  \cite{takesaki_theory_2002}*{\S~IV.2}.
\end{rema}

\begin{coro}
  \label{coro:97adf837ae63af02}
  If \( E \), \( F \) are \( (F) \)-spaces, then so is \( E \overline{\otimes}_{\varepsilon} F \).
\end{coro}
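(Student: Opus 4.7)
The plan is to recall that an $(F)$-space is a complete metrizable locally convex space, and then to verify these two properties in turn for $E \overline{\otimes}_{\varepsilon} F$. Completeness is automatic: by construction of the completed tensor product, $E \overline{\otimes}_{\varepsilon} F$ is the completion of $E \otimes_{\varepsilon} F$ in $\widehat{\mathsf{LCS}}$, so only metrizability requires an argument.

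For metrizability, I would apply Proposition~\ref{prop:2204037e873ab1a3} with countable generating families of seminorms. Since $E$ and $F$ are Fréchet, each admits a countable fundamental system of absolutely convex neighborhoods of $0$, hence a countable generating family of seminorms $(p_i)_{i \in \N}$ and $(q_j)_{j \in \N}$ respectively. Proposition~\ref{prop:2204037e873ab1a3} then says that the seminorms $p_i \otimes_{\varepsilon} q_j$ for $(i,j) \in \N \times \N$ form a generating family for the topology on $E \otimes_{\varepsilon} F$. Since $\N \times \N$ is countable, $E \otimes_{\varepsilon} F$ has a countable fundamental system of neighborhoods of $0$ and is Hausdorff (the topology of a compatible tensor product being Hausdorff by Remark~\ref{rema:cdcb9afde04a7d07}), so it is metrizable.

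It remains to pass metrizability to the completion. The extensions $p_i \overline{\otimes}_{\varepsilon} q_j$ are a generating family of seminorms on $E \overline{\otimes}_{\varepsilon} F$ (again by Proposition~\ref{prop:2204037e873ab1a3}), and remain countably indexed. Combined with the fact that the completion is Hausdorff, this gives a countable fundamental system of neighborhoods of $0$ in $E \overline{\otimes}_{\varepsilon} F$, so it is metrizable; together with completeness of the separated completion (Theorem~\ref{theo:5c48561db339a0d3}), this makes $E \overline{\otimes}_{\varepsilon} F$ a complete metrizable locally convex space, i.e.\ an $(F)$-space.

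There is no real obstacle here: the only thing to be a bit careful about is that ``metrizable plus complete'' is the working definition of an $(F)$-space, and that completion of a metrizable locally convex space stays metrizable (which in turn is just the observation that a countable generating family of seminorms extends by continuity to a countable generating family on the completion).
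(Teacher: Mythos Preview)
Your proof is correct and follows exactly the same approach as the paper: the paper's proof is the one-liner ``It suffices to take a countable generating family of seminorms for \(E\) and for \(F\) in Proposition~\ref{prop:2204037e873ab1a3},'' and you have simply spelled out the details of this argument.
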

\begin{proof}
  It suffices to take a countable generating family of seminorms for \( E \) and
  for \( F \) in Proposition~\ref{prop:2204037e873ab1a3}.
\end{proof}

\begin{prop}
  \label{prop:51d98c7dd4f5950d}
  The following holds for the (completed) injective tensor product.
  \begin{enumerate}
  \item \label{item:4410bd42693473ea} If \( f: A \to E \), \( g : B \to F \) are
    continuous linear maps of locally convex spaces, then the induced linear map
    \( f \otimes g: A \otimes_{\varepsilon} B \to E \otimes_{\varepsilon} F \) is
    continuous, hence extends by continuity to a continuous linear map
    \( f \overline{\otimes}_{\varepsilon} g: A \overline{\otimes}_{\varepsilon} B \to E
    \overline{\otimes}_{\varepsilon} F \).
  \item \label{item:604a5aa25ff0ca12} For all locally convex spaces \( E \) and
    \( F \), the swap
    \begin{displaymath}
      s: E \otimes_{\varepsilon} F \to F \otimes_{\varepsilon} E ,\quad
      \sum_{i}x_{i} \otimes y_{i} \mapsto \sum_{i}y_{i} \otimes x_{i}
    \end{displaymath}
    is an isomorphism of locally convex spaces, hence extends by continuity to
    an isomorphism of locally convex spaces
    \( \overline{s}: E \overline{\otimes}_{\varepsilon} F \to F
    \overline{\otimes}_{\varepsilon} E \).
  \item \label{item:37a1c4b17ef9a65c} For all locally convex spaces \( E \),
    \( F \) and \( G \), the association map
    \begin{displaymath}
      a: (E \otimes_{\varepsilon} F) \otimes_{\varepsilon} G \to E \otimes_{\varepsilon} (F
      \otimes_{\varepsilon} G), \quad \sum_{i}(x_{i} \otimes y_{i}) \otimes z_{i} \mapsto \sum_{i} x_{i}
      \otimes (y_{i} \otimes z_{i})
    \end{displaymath}
    is an isomorphism of locally convex spaces, hence extends by continuity to
    an isomorphism of locally convex spaces
    \( \overline{a}: (E \overline{\otimes}_{\varepsilon} F) \overline{\otimes}_{\varepsilon}
    G \to E \overline{\otimes}_{\varepsilon} (F \overline{\otimes}_{\varepsilon} G) \).
  \end{enumerate}
\end{prop}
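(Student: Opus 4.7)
The plan is to treat the three parts in the order stated, leaning on the polar/seminorm description of the injective tensor product (Proposition~\ref{prop:2204037e873ab1a3}) and the universal-style characterization as the \emph{coarsest} compatible topology (Corollary~\ref{coro:33816b8f283c3930}); extension to the completions is then automatic by Proposition~\ref{prop:6f06da3d7291c7cf} and the fact that the algebraic $f\otimes g$, swap and association are already defined on the uncompleted tensor products.

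For \ref{item:4410bd42693473ea}, the key input is that if $f:A\to E$ is continuous, its transpose $f':E'\to A'$ sends each equicontinuous set $G_1\subseteq E'$ to an equicontinuous set in $A'$: indeed $G_1\subseteq U^\circ$ for some $U\in\mathcal{N}^E_\Gamma(0)$, and then $f'(G_1)\subseteq (f^{-1}(U))^\circ$. Similarly for $g'$. A direct unwinding of \eqref{eq:2ccbd263c9730d60} yields the adjunction
\begin{displaymath}
  \pairing*{(f\otimes g)(t)}{\alpha\otimes\beta}=\pairing*{t}{f'\alpha\otimes g'\beta}
\end{displaymath}
on $A\odot B$, so $(f\otimes g)^{-1}\bigl((G_1\otimes G_2)^\circ\bigr)\supseteq \bigl(f'(G_1)\otimes g'(G_2)\bigr)^\circ$. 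Since the right hand side is a fundamental neighborhood of $0$ in $A\otimes_\varepsilon B$ (by Corollary~\ref{coro:33816b8f283c3930}~\ref{item:b9bef197b4c6b09e}), $f\otimes g$ is continuous, and it then extends uniquely to the completions by density.

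For \ref{item:604a5aa25ff0ca12}, the swap is algebraically an isomorphism $E\odot F\to F\odot E$, so it suffices to check it is a homeomorphism. Using the seminorm description in Proposition~\ref{prop:2204037e873ab1a3}, the family $p_i\otimes_\varepsilon q_j$ generating the topology of $E\otimes_\varepsilon F$ is visibly symmetric in $(E,F)$: the defining supremum $\sup_{u\in U_i^\circ,\,v\in V_j^\circ}\abs{(u\otimes v)(t)}$ is invariant under exchanging the roles of the two factors, so the swap is isometric seminorm by seminorm. Extension to $\overline{\otimes}_\varepsilon$ is then automatic.

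For \ref{item:37a1c4b17ef9a65c} I again use Proposition~\ref{prop:2204037e873ab1a3}. Fixing generating seminorm families $(p_i)$, $(q_j)$, $(r_k)$ for $E$, $F$, $G$, two iterations of the proposition give generating families $(p_i\otimes_\varepsilon q_j)\otimes_\varepsilon r_k$ and $p_i\otimes_\varepsilon(q_j\otimes_\varepsilon r_k)$ for the two sides of $a$; the plan is to show both coincide with the symmetric triple seminorm $\sup_{u\in U_i^\circ,\,v\in V_j^\circ,\,w\in W_k^\circ}\abs{(u\otimes v\otimes w)(t)}$ on $E\odot F\odot G$, which identifies the two locally convex topologies. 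Equivalently, one can invoke Proposition~\ref{prop:6a4e157f8736cd87}~\ref{item:6aa6d16fe7fa8c79} to embed both sides isometrically into $\mathfrak{B}_e(E'_s,F'_s,G'_s)$ (the space of tri-equicontinuously converging trilinear forms) and observe that the images coincide on simple tensors and hence, by strictness, as locally convex subspaces. Passing to completions with Proposition~\ref{prop:6f06da3d7291c7cf} finishes the job. The main obstacle is keeping track of polars in the dual of an injective tensor product, since $(E\otimes_\varepsilon F)'$ is strictly larger than $E'\odot F'$; the resolution is that only the equicontinuous part of this dual, which by the definition of the coarsest compatible topology is controlled by products $U^\circ\otimes V^\circ$, enters into the relevant seminorm computation.
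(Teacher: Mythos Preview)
Your proposal is correct and follows essentially the same approach as the paper. The paper's own proof is terse: it cites K\"othe for \ref{item:4410bd42693473ea}, invokes the coarsest-compatible-topology characterization (Corollary~\ref{coro:33816b8f283c3930}) for \ref{item:604a5aa25ff0ca12}, and says \ref{item:37a1c4b17ef9a65c} follows from the seminorm description (Proposition~\ref{prop:2204037e873ab1a3}) by writing out the generating families---exactly your strategy for the third part. Your treatment of \ref{item:4410bd42693473ea} via transposes and equicontinuous sets is simply an unpacking of what the cited reference contains, and for \ref{item:604a5aa25ff0ca12} you use the seminorm picture rather than the polar one, but these are interchangeable here. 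Your remark about the ``main obstacle'' in \ref{item:37a1c4b17ef9a65c} is well taken: the point that the bipolar $(U^\circ\otimes V^\circ)^{\circ\circ}$ in $(E\otimes_\varepsilon F)'$ gives the same supremum as $U^\circ\otimes V^\circ$ itself (since one is taking the sup of a weak-$*$ continuous linear functional over a weak-$*$ closed absolutely convex hull) is the actual content needed to identify the iterated seminorm with the symmetric triple one, and it would strengthen the write-up to make that step explicit rather than leave it at ``controlled by products $U^\circ\otimes V^\circ$''.
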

\begin{proof}
  \ref{item:4410bd42693473ea} is
  \cite{MR0551623}*{p275,(1)}. \ref{item:604a5aa25ff0ca12} follows directly from
  Definition~\ref{defi:4eeb99ac2dee068f} based on
  Corollary~\ref{coro:33816b8f283c3930}. \ref{item:37a1c4b17ef9a65c} can be seen
  from, e.g.\ Proposition~\ref{prop:2204037e873ab1a3} by writing out the
  corresponding generating families of seminorms.
\end{proof}

The following result justifies our terminology of using ``injective tensor
product'' instead of the more precise term ``tensor product of bi-equicontinuous
convergence'' as Grothendieck did (\cite{MR0075539}*{p89, Définition~5})

\begin{prop}[\cite{MR0551623}*{p278, (6)}]
  \label{prop:710a683d733970db}
  Let \( E_{i} \), \( F_{i} \) be locally convex spaces and
  \( f_{i}: E_{i} \to F_{i} \) a strict monomorphism for \( i = 1,2 \). Then
  both
  \( f_{1} \otimes_{\varepsilon} f_{2} : E_{1} \otimes_{\varepsilon} F_{1} \to
  E_{2} \otimes_{\varepsilon} F_{2} \) and
  \( f_{1} \overline{\otimes} f_{2}: E_{1} \overline{\otimes}_{\varepsilon}
  F_{1} \to E_{2} \overline{\otimes} F_{2} \) are strict monomorphisms.
\end{prop}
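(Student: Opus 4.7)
The plan is to invoke the seminorm description of Proposition~\ref{prop:2204037e873ab1a3}: the topology on \( E_1 \otimes_\varepsilon E_2 \) is generated by the seminorms \( p \otimes_\varepsilon q \) with \( p \), \( q \) continuous seminorms on \( E_1 \), \( E_2 \). I aim to show that, for every such pair \( (p, q) \), one can produce continuous seminorms \( \tilde p \) on \( F_1 \) and \( \tilde q \) on \( F_2 \) satisfying
\begin{equation*}
  (\tilde p \otimes_\varepsilon \tilde q) \circ (f_1 \otimes f_2) = p \otimes_\varepsilon q \quad \text{on } E_1 \odot E_2.
\end{equation*}
Granting this identity, Hausdorffness of \( E_1 \otimes_\varepsilon E_2 \) forces injectivity of \( f_1 \otimes_\varepsilon f_2 \), and the identity together with Proposition~\ref{prop:2204037e873ab1a3} shows that the topology on \( E_1 \otimes_\varepsilon E_2 \) coincides with the one induced from \( F_1 \otimes_\varepsilon F_2 \); this is precisely the strict monomorphism condition.

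To construct \( \tilde p \), I use Hahn--Banach for seminorms: since \( f_1 \) is a strict monomorphism, its image carries the subspace topology, so \( p \leq r_1 \circ f_1 \) for some continuous seminorm \( r_1 \) on \( F_1 \). Setting
\begin{equation*}
  \tilde p(y) := \inf\set*{p(x) + r_1\bigl(y - f_1(x)\bigr) \given x \in E_1}
\end{equation*}
produces a continuous seminorm on \( F_1 \) satisfying \( \tilde p \circ f_1 = p \); construct \( \tilde q \) analogously. Writing \( U, V \) for the closed absolutely convex unit balls of \( p, q \) and \( \tilde U, \tilde V \) for those of \( \tilde p, \tilde q \), the equality \( \tilde p \circ f_1 = p \) gives \( \tilde U \cap f_1(E_1) = f_1(U) \), and I claim the transpose \( f_1^\transp : F_1' \to E_1' \) maps \( \tilde U^\circ \) onto \( U^\circ \). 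The inclusion \( f_1^\transp(\tilde U^\circ) \subseteq U^\circ \) is immediate from \( f_1(U) \subseteq \tilde U \). For the reverse, any \( u' \in U^\circ \) defines via \( f_1^{-1} \) a linear form on \( f_1(E_1) \) dominated by \( \tilde p \); classical Hahn--Banach extends it to \( u \in F_1' \) with \( \abs*{u(\cdot)} \leq \tilde p \), so \( u \in \tilde U^\circ \) and \( u \circ f_1 = u' \).

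The desired identity of seminorms is then a formal computation: for \( t \in E_1 \odot E_2 \) one has
\begin{equation*}
  (u \otimes v)\bigl((f_1 \otimes f_2)(t)\bigr) = \bigl((f_1^\transp u) \otimes (f_2^\transp v)\bigr)(t),
\end{equation*}
so taking the supremum over \( (u, v) \in \tilde U^\circ \times \tilde V^\circ \) reduces, by the surjectivities \( f_i^\transp(\tilde W^\circ) = W^\circ \), to the supremum over \( U^\circ \times V^\circ \), which is \( (p \otimes_\varepsilon q)(t) \). The completed version is deduced by extension to completions: a strict monomorphism induces an isomorphism of the corresponding uniform structures onto its image, so Theorem~\ref{theo:5c48561db339a0d3} identifies \( E_1 \overline{\otimes}_\varepsilon E_2 \) with the closure of the image of \( f_1 \otimes_\varepsilon f_2 \) inside \( F_1 \overline{\otimes}_\varepsilon F_2 \), and the extended map remains a strict monomorphism.

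The main obstacle is the double use of Hahn--Banach: first for seminorms to build \( \tilde p \) with the \emph{equality} (not mere domination) \( \tilde p \circ f_1 = p \), and then for functionals to establish the key surjectivity \( f_i^\transp(\tilde W^\circ) = W^\circ \) of the transpose on polars. Everything else is a bookkeeping exercise with the polar/seminorm correspondence and the functoriality of the (completed) injective tensor product already recorded in Proposition~\ref{prop:51d98c7dd4f5950d}.
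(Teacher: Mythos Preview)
The paper does not supply its own proof of this proposition: it is stated with a bare citation to K\"othe \cite{MR0551623}*{p278, (6)}. Your argument is correct and is essentially the standard route one finds in the literature---reduce to the seminorm description of Proposition~\ref{prop:2204037e873ab1a3}, extend each seminorm via the Hahn--Banach seminorm extension so that the extension restricts exactly (not merely dominates), and then use the resulting surjectivity \( f_i^\transp(\tilde U^\circ) = U^\circ \) on polars to match the two suprema. The passage to completions via Theorem~\ref{theo:5c48561db339a0d3} is also handled correctly. There is nothing to add.
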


The completed injective tensor product behaves well with products and reduced
projective limits.

\begin{prop}[\cite{MR0551623}*{p282, (5)}]
  \label{prop:8056525c3c4c9aa5}
  The following hold.
  \begin{enumerate}
  \item \label{item:2479d97cbebb5d9d} Let \( E \) be a locally convex space,
    \( (F_{j})_{j \in J} \) a family of locally convex spaces. We have a
    canonical isomorphism
    \( E \overline{\otimes}_{\varepsilon} \prod_{j \in J} F_{j} \simeq \prod_{j
      \in J} E \overline{\otimes}_{\varepsilon} F_{j} \).
  \item \label{item:ab3601b8e4ba1df9} Let \( E = \varprojlim E_{i} \),
    \( F = \varprojlim F_{j} \) be reduced projective limits of locally convex
    spaces, then \( E \overline{\otimes}_{\varepsilon} F \) can be canonically
    identified as the projective limit
    \( \varprojlim E_{i} \overline{\otimes}_{\varepsilon} F_{j} \), which is
    already complete by Proposition~\ref{prop:438d10ce1f2149df}.
  \end{enumerate}
\end{prop}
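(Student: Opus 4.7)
My plan is to deduce both statements from Proposition~\ref{prop:6a4e157f8736cd87}, in which $E \overline{\otimes}_{\varepsilon} F$ (for complete $E$, $F$) sits as a closed subspace of the bi-equicontinuous bilinear forms $\mathfrak{B}_{e}(E'_{s},F'_{s})$, combined with the preservation of strict monomorphisms under $\overline{\otimes}_{\varepsilon}$ (Proposition~\ref{prop:710a683d733970db}) and the seminorm description of Proposition~\ref{prop:2204037e873ab1a3}. The key structural fact driving both parts is that an equicontinuous subset of the dual of a product of locally convex spaces is always concentrated on finitely many factors, which is immediate from the form of basic neighborhoods of $0$ in a product together with the identification $(\prod_{j} F_{j})' = \bigoplus_{j} F'_{j}$ from Proposition~\ref{prop:e5dd78c27c8725c9}.

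For \ref{item:2479d97cbebb5d9d}, I would first produce the natural candidate morphism $\Phi: E \overline{\otimes}_{\varepsilon} \prod_{j} F_{j} \to \prod_{j} E \overline{\otimes}_{\varepsilon} F_{j}$, obtained by assembling the continuous maps $\mathrm{id}_{E} \overline{\otimes}_{\varepsilon} \pi_{j}$ coming from the canonical projections $\pi_{j}$ via Proposition~\ref{prop:51d98c7dd4f5950d}~\ref{item:4410bd42693473ea} and the universal property of the product. To see $\Phi$ is a topological embedding, I would pick generating seminorm families $(p_{i})$ on $E$ and $(q_{j,k})_{k}$ on each $F_{j}$ and note that $(q_{j,k} \circ \pi_{j})_{j,k}$ is a generating family on $\prod_{j} F_{j}$; applying Proposition~\ref{prop:2204037e873ab1a3} I would verify directly that $p_{i} \otimes_{\varepsilon} (q_{j,k}\circ \pi_{j})$ corresponds under $\Phi$ to the pullback of $p_{i} \otimes_{\varepsilon} q_{j,k}$ along the $j$-th projection of the target. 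Surjectivity of $\Phi$ then follows by approximating a given tuple $(t_{j})_{j}$ coordinatewise: each $t_{j}$ is approximable by elements of $E \odot F_{j}$, and the finite-support property of equicontinuous sets in $(\prod F_{j})'$ lets one assemble the approximations into a Cauchy net in $E \overline{\otimes}_{\varepsilon} \prod_{j} F_{j}$, whose limit exists by completeness of both sides (the product of complete spaces being complete by Proposition~\ref{prop:903bfccf4a781ceb} and Proposition~\ref{prop:438d10ce1f2149df}).

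For \ref{item:ab3601b8e4ba1df9}, I would realize $E$ and $F$ as closed subspaces of $\prod_{i} E_{i}$ and $\prod_{j} F_{j}$ via Proposition~\ref{prop:438d10ce1f2149df}, so that the inclusions are strict monomorphisms. Proposition~\ref{prop:710a683d733970db} then yields a strict monomorphism
\[
E \overline{\otimes}_{\varepsilon} F \hookrightarrow \Bigl(\prod_{i} E_{i}\Bigr) \overline{\otimes}_{\varepsilon} \Bigl(\prod_{j} F_{j}\Bigr) \simeq \prod_{i,j} E_{i} \overline{\otimes}_{\varepsilon} F_{j},
\]
the last identification coming from iterated application of \ref{item:2479d97cbebb5d9d}. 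It remains to match the image of this embedding with $\varprojlim E_{i} \overline{\otimes}_{\varepsilon} F_{j}$. The compatibility condition under the transition maps $g_{\alpha\beta} \overline{\otimes}_{\varepsilon} h_{\gamma\delta}$ is clearly satisfied on the dense subspace $E \odot F$ by naturality, hence by continuity on the whole $E \overline{\otimes}_{\varepsilon} F$; conversely, an element in $\varprojlim E_{i} \overline{\otimes}_{\varepsilon} F_{j}$, viewed inside $\prod_{i,j} E_{i} \overline{\otimes}_{\varepsilon} F_{j}$, lies in the closed subspace $E \overline{\otimes}_{\varepsilon} F$ because the reduced hypothesis ($p_{i}(E)$ dense in $E_{i}$, similarly for $F$) combined with the bi-equicontinuous realization forces its defining bilinear form on $E'_{s} \times F'_{s}$ to be the common extension of the compatible bilinear forms on each $E'_{i,s} \times F'_{j,s}$.

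The main obstacle, and the only part that is not a routine functoriality check, is the surjectivity in \ref{item:2479d97cbebb5d9d}: showing that every compatible tuple $(t_{j})_{j} \in \prod_{j} E \overline{\otimes}_{\varepsilon} F_{j}$ actually arises from a single element of $E \overline{\otimes}_{\varepsilon} \prod_{j} F_{j}$. The point is that for an equicontinuous set $H \subseteq (\prod F_{j})'$ concentrated on indices $j \in J_{0}$ (with $J_{0}$ finite), the restrictions $t_{j}|_{H}$ for $j \in J_{0}$ glue unambiguously into a single bilinear form on $E'_{s} \times (\prod F_{j})'_{s}$ which is bi-equicontinuous and hence lies in $E \overline{\otimes}_{\varepsilon} \prod_{j} F_{j}$ via Proposition~\ref{prop:6a4e157f8736cd87}. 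Once this coordinatewise gluing is justified, everything else reduces to continuity-and-density arguments, and part \ref{item:ab3601b8e4ba1df9} follows essentially by formal manipulation with closed subspaces and reduced projective systems.
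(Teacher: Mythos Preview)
The paper does not give its own proof of this proposition: it is stated with a citation to K\"othe \cite{MR0551623}*{p282, (5)} and no argument follows. So there is no proof in the paper to compare your attempt against.

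That said, your approach is sound and is essentially the standard one. The two structural inputs you identify---that equicontinuous subsets of $(\prod_j F_j)'$ are supported on finitely many indices (via Proposition~\ref{prop:e5dd78c27c8725c9}), and that $\overline{\otimes}_{\varepsilon}$ preserves strict monomorphisms (Proposition~\ref{prop:710a683d733970db})---are exactly what drives the result in K\"othe's treatment as well. Your reduction of \ref{item:ab3601b8e4ba1df9} to \ref{item:2479d97cbebb5d9d} by viewing reduced projective limits as closed subspaces of products is clean and correct.

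One minor point of exposition: in your surjectivity argument for \ref{item:2479d97cbebb5d9d}, the ``gluing'' of the $t_j$ into a bilinear form on $E'_s \times (\prod_j F_j)'_s$ is more directly stated as follows. Since $(\prod_j F_j)' = \bigoplus_j F'_j$ algebraically, any $(u, v) \in E' \times (\prod_j F_j)'$ has $v = (v_j)_j$ with only finitely many $v_j \ne 0$, so one simply sets $t(u,v) := \sum_j t_j(u, v_j)$; bi-equicontinuity is then immediate from the finite-support property. You then need to check that this $t$ lies in the closure of $E \odot \prod_j F_j$ inside $\mathfrak{B}_e(E'_s, (\prod_j F_j)'_s)$, which is where the density-and-approximation step you describe comes in. This is correct, just slightly obscured by your phrasing in terms of ``restrictions $t_j|_H$''.
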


\begin{rema}
  \label{rema:16eeb08f48eda694}
  The injective tensor product does not behave well with taking locally convex
  direct sums, see e.g.\ \cite{MR0551623}*{p283} for a counterexample, as well a
  partial positive result.
\end{rema}

\subsection{The projective tensor product}
\label{sec:33ea3663864561d2}

Informally, just as inductive tensor product is the universal one witnessing
separate continuity of bilinear maps, the projective tensor product is the
universal one witnessing (joint) continuity. This is formalized in the following
result.

\begin{prop}[\cite{MR0551623}*{pp176,177}]
  \label{prop:dda5a3f4762f22f6}
  Let \( E \), \( F \) be locally convex spaces. The absolutely convex hulls
  \( \Gamma(U \otimes V) \), \( U \in \mathcal{N}^{E}_{\Gamma}(0) \),
  \( V \in \mathcal{N}^{F}_{\Gamma}(0) \) form a fundamental system of
  neighborhoods of \( 0 \) for a unique locally convex Hausdorff topology
  \( \mathfrak{T}_{\pi} \) on \( E \odot F \). Moreover,
  \begin{enumerate}
  \item \label{item:9fadcb3c3659cfa1} Let \( p_{U} \) (resp.\ \( q_{V} \)) be
    the gauge functional for \( U \), then the gauge functional
    \( p_{U} \otimes_{\pi} q_{V} \) for \( \Gamma(U \otimes V) \) is given by the formula
    \begin{equation}
      (p_{U} \otimes_{\pi} q_{V})(t) = \inf\set[\big]{\sum_{k}p_{U}(x_{k})q_{V}(y_{k})
        \given t = \sum_{k} x_{k} \otimes y_{k}, \, (x_{k}, y_{k}) \in E \times F}.
    \end{equation}
    In particular, \( p_{U} \otimes_{\pi} q_{V} \) is (uniformly) continuous and
    extends by continuity to a unique continuous seminorm on the completion of
    \( (E \odot F, \mathfrak{T}_{\pi}) \).
  \item \label{item:81e89e6631572c66} If \( (p_{i}) \), \( (q_{j}) \) are
    generating families of seminorms for \( E \) and \( F \) respectively, then
    \( p_{i} \otimes_{\pi} q_{j} \), \( (i,j) \in I \times J \) is a generating
    family of seminorms for
    \( E \otimes_{\pi}F:= (E \odot F, \mathfrak{T}_{\pi}) \); and the unique
    continuous extension \( p_{i} \overline{\otimes}_{\pi} q_{j} \),
    \( (i,j) \in I \times J \) is a generating family of seminorms for the
    completion \( E \overline{\otimes}_{\pi} F \) of \( E \otimes_{\pi} F \).
  \item \label{item:4a6bca6839439643} Let \( \chi: E \times F \to E \odot F \)
    be the canonical map. Then \( \mathfrak{T}_{\pi} \) is the finest locally
    convex topology making \( \chi \) continuous.
  \item \label{item:b0505895928e3373} The locally convex tensor product
    \( E \otimes_{\pi} F \) enjoys the following universal property: for each
    locally convex space \( G \), the map
    \( \mathcal{L}(E \otimes_{\pi} F, G) \to \mathcal{B}(E, F; G) \),
    \( f \mapsto f\chi \) is a bijection.
  \end{enumerate}
\end{prop}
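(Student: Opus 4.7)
My plan is to apply Proposition~\ref{prop:fd1b582617d3215e} to the family $\mathcal{F} = \set*{\Gamma(U \otimes V) \given U \in \mathcal{N}^{E}_{\Gamma}(0),\, V \in \mathcal{N}^{F}_{\Gamma}(0)}$. Each member is absolutely convex by construction. To check that $\mathcal{F}$ is a filter basis, given two members one uses that $\mathcal{N}^{E}_{\Gamma}(0)$ and $\mathcal{N}^{F}_{\Gamma}(0)$ are themselves filter bases, and $\Gamma(U' \otimes V') \subseteq \Gamma(U \otimes V) \cap \Gamma(U \otimes V)'$ whenever $U' \subseteq U \cap U''$ and $V' \subseteq V \cap V''$. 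Stability under nonzero scalar dilation follows from $\lambda \Gamma(U \otimes V) = \Gamma((\lambda U) \otimes V)$ together with the same property for $\mathcal{N}^{E}_{\Gamma}(0)$. For absorbency: any $t = \sum_{k=1}^{n} x_k \otimes y_k \in E \odot F$ is absorbed because each $x_k$ (resp.\ $y_k$) is absorbed by $U$ (resp.\ $V$), and an absolutely convex combination argument yields $t \in \mu \Gamma(U \otimes V)$ for $\mu$ large enough. This gives a unique locally convex topology $\mathfrak{T}_\pi$ with $\mathcal{F}$ as a fundamental system of neighborhoods of $0$.

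For the Hausdorff claim, I will exhibit enough continuous linear forms on $(E \odot F, \mathfrak{T}_\pi)$ to separate points. For each $u \in E'$, $v \in F'$, pick $U \in \mathcal{N}^{E}_{\Gamma}(0)$, $V \in \mathcal{N}^{F}_{\Gamma}(0)$ with $\abs*{u} \leq p_U$ and $\abs*{v} \leq q_V$. Then for any representation $t = \sum_{k} x_k \otimes y_k$ one has $\abs*{(u \otimes v)(t)} \leq \sum_{k} p_U(x_k) q_V(y_k)$; taking the infimum over all representations shows $\abs*{u \otimes v} \leq p_U \otimes_\pi q_V$, so $u \otimes v$ is $\mathfrak{T}_\pi$-continuous, where $p_U \otimes_\pi q_V$ denotes the Minkowski gauge of $\Gamma(U \otimes V)$. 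Since $E$ and $F$ are Hausdorff locally convex, $E' \otimes F'$ separates points of $E \odot F$ (a routine argument: given $t = \sum_{k=1}^n x_k \otimes y_k \neq 0$ in reduced form with the $x_k$ linearly independent, choose $u \in E'$ with $u(x_1) = 1$, $u(x_k) = 0$ for $k \geq 2$, and $v \in F'$ with $v(y_1) \neq 0$). Hence $\mathfrak{T}_\pi$ is Hausdorff. The gauge formula in \ref{item:9fadcb3c3659cfa1} is the direct Minkowski-functional computation applied to $\Gamma(U \otimes V)$, whose elements are by definition absolutely convex combinations $\sum_k \lambda_k (x_k \otimes y_k)$ with $\sum \abs*{\lambda_k} \leq 1$ and $x_k \in U$, $y_k \in V$; absorbing the scalars into the tensor factors and rewriting the infimum gives the stated formula. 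Passing to generating families of seminorms in \ref{item:81e89e6631572c66} is then immediate, and the extension to $E \overline{\otimes}_\pi F$ is by uniform continuity of seminorms.

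For \ref{item:4a6bca6839439643}, if $\tau$ is any locally convex topology on $E \odot F$ making $\chi$ continuous, then for every absolutely convex $W \in \mathcal{N}^{\tau}_{\Gamma}(0)$ the preimage $\chi^{-1}(W)$ contains a rectangle $U \times V$ with $U \in \mathcal{N}^{E}_{\Gamma}(0)$, $V \in \mathcal{N}^{F}_{\Gamma}(0)$, so $U \otimes V \subseteq W$ and by absolute convexity $\Gamma(U \otimes V) \subseteq W$; hence $\tau \subseteq \mathfrak{T}_\pi$. Conversely $\chi$ is continuous for $\mathfrak{T}_\pi$ because the rectangle $U \times V$ maps into $\Gamma(U \otimes V)$. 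The universal property \ref{item:b0505895928e3373} then follows by a standard two-step: given $f \in \mathcal{B}(E, F; G)$, there is a unique linear $\widetilde{f}: E \odot F \to G$ with $f = \widetilde{f}\chi$ by the algebraic universal property of $\odot$; continuity of $\widetilde{f}$ with respect to $\mathfrak{T}_\pi$ follows because for every $W \in \mathcal{N}^{G}_{\Gamma}(0)$ the joint continuity of $f$ yields $U, V$ with $f(U \times V) \subseteq W$, so $\widetilde{f}(U \otimes V) \subseteq W$ and thus $\widetilde{f}\bigl(\Gamma(U \otimes V)\bigr) \subseteq W$ by absolute convexity of $W$. Conversely, if $\widetilde{f}$ is continuous then $f = \widetilde{f}\chi$ is the composition of continuous maps since $\chi$ is continuous.

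The only mildly delicate step is the Hausdorff property; the explicit dual pairing bound $\abs*{u \otimes v} \leq p_U \otimes_\pi q_V$ together with linear-independence trick for separation handles this cleanly, while all other parts reduce to filter-basis bookkeeping and universal-property chasing.
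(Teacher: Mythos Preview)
The paper does not supply a proof of this proposition; it is quoted directly from K\"othe \cite{MR0551623}*{pp176,177} as a standard result. Your argument is correct and is essentially the textbook proof: verify the hypotheses of Proposition~\ref{prop:fd1b582617d3215e} for the family $\Gamma(U\otimes V)$, obtain Hausdorffness by showing each $u\otimes v$ with $u\in E'$, $v\in F'$ is $\mathfrak{T}_\pi$-continuous and that such forms separate points, compute the gauge directly, and derive \ref{item:4a6bca6839439643} and \ref{item:b0505895928e3373} from the neighborhood description together with the algebraic universal property of $\odot$. One small point you left implicit in \ref{item:4a6bca6839439643}: the fact that $\chi(U\times V)\subseteq \Gamma(U\otimes V)$ only gives continuity of the bilinear map $\chi$ at the origin, and one then uses the standard observation that a bilinear map between locally convex spaces continuous at $(0,0)$ is jointly continuous everywhere; this is routine but worth a word.
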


\begin{coro}
  \label{coro:43e2166622970bc3}
  Using the notation in Proposition~\ref{prop:dda5a3f4762f22f6}, the topology
  \( \mathfrak{T}_{\pi} \) on \( E \odot F \) is compatible in the sense of
  Definition~\ref{defi:9bd5e00768234cac}.
\end{coro}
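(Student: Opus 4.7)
The plan is to verify the three conditions of Definition~\ref{defi:9bd5e00768234cac} one by one, drawing entirely on the characterization of \( \mathfrak{T}_{\pi} \) already given in Proposition~\ref{prop:dda5a3f4762f22f6}. Condition~\ref{item:bf7ac5d66f888d35} is essentially free: by part~\ref{item:4a6bca6839439643} the canonical map \( \chi : E \times F \to E \otimes_{\pi} F \) is jointly continuous, and joint continuity trivially implies separate continuity. Condition~\ref{item:0730820be0a913a1} follows from the universal property~\ref{item:b0505895928e3373}: for any \( u \in E' \), \( v \in F' \), the bilinear form \( (x,y) \mapsto u(x) v(y) \) on \( E \times F \) is jointly continuous (as a product of continuous scalar maps), so it factors through \( \chi \) as a continuous linear functional \( u \otimes v \in (E \otimes_{\pi} F)' \).

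The only real content lies in condition~\ref{item:a398b6ca1f6e6725}. Given equicontinuous sets \( A \subseteq E' \) and \( B \subseteq F' \), I would pick \( U \in \mathcal{N}^{E}_{\Gamma}(0) \) and \( V \in \mathcal{N}^{F}_{\Gamma}(0) \) with \( A \subseteq U^{\circ} \) and \( B \subseteq V^{\circ} \); this uses only the definition of equicontinuity and the existence of absolutely convex neighborhood bases. The aim is then to show that \( A \otimes B \), viewed inside \( (E \otimes_{\pi} F)' \) via condition~\ref{item:0730820be0a913a1} already established, is contained in the polar of the neighborhood \( \Gamma(U \otimes V) \) supplied by Proposition~\ref{prop:dda5a3f4762f22f6}.

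To do this, observe that for any \( u \in U^{\circ} \), \( v \in V^{\circ} \) and any elementary tensor \( x \otimes y \) with \( x \in U \), \( y \in V \), we have \( \abs*{(u \otimes v)(x \otimes y)} = \abs*{u(x)} \abs*{v(y)} \leq 1 \). Since \( u \otimes v \) is linear and the unit disk of \( \mathbb{K} \) is absolutely convex, this bound extends from \( U \otimes V \) to its absolutely convex hull \( \Gamma(U \otimes V) \). Hence \( u \otimes v \in \Gamma(U \otimes V)^{\circ} \), which yields \( A \otimes B \subseteq U^{\circ} \otimes V^{\circ} \subseteq \Gamma(U \otimes V)^{\circ} \); since \( \Gamma(U \otimes V) \in \mathcal{N}^{E \otimes_{\pi} F}_{\Gamma}(0) \), this is exactly the statement that \( A \otimes B \) is equicontinuous on \( E \otimes_{\pi} F \).

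There is no genuine obstacle here: the corollary is essentially a bookkeeping consequence of the construction of \( \mathfrak{T}_{\pi} \) via the neighborhood basis \( \set*{\Gamma(U \otimes V)} \). The only minor subtlety is making sure that one passes from bounds on elementary tensors to bounds on their absolutely convex hull, which is immediate from the absolute convexity of the unit disk in \( \mathbb{K} \).
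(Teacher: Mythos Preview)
Your proposal is correct and follows essentially the same route as the paper's proof: both dismiss conditions~\ref{item:bf7ac5d66f888d35} and~\ref{item:0730820be0a913a1} as immediate from Proposition~\ref{prop:dda5a3f4762f22f6}, and then verify condition~\ref{item:a398b6ca1f6e6725} by showing \( U^{\circ} \otimes V^{\circ} \subseteq \Gamma(U \otimes V)^{\circ} \). The only cosmetic difference is that the paper writes out the absolutely convex combination \( t = \sum_{k} \lambda_{k} x_{k} \otimes y_{k} \) and bounds \( \abs*{\pairing*{t}{f \otimes g}} \) explicitly, whereas you invoke the absolute convexity of the unit disk in \( \mathbb{K} \) to pass from \( U \otimes V \) to \( \Gamma(U \otimes V) \); the two are the same computation.
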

\begin{proof}
  The only nontrivial thing to check is that \( U^{\circ} \otimes V^{\circ} \)
  is equicontinuous in \( (E \otimes_{\pi} F)' \), for any
  \( U \in \mathcal{N}^{E}_{\Gamma}(0) \) and
  \( V \in \mathcal{N}^{F}_{\Gamma}(0) \). For each
  \( t = \sum_{k} \lambda_{k}x_{k} \otimes y_{k} \in \Gamma(U \otimes V) \),
  \( x_{k} \in U \), \( y_{k} \in V \), and \( \lambda_{k} \in \mathbb{K} \)
  with \( \sum_{k} \abs*{\lambda_{k}} \leq 1 \), as well as each
  \( f \in U^{\circ} \), \( g \in V^{\circ} \), we have
  \begin{displaymath}
    \abs*{\pairing*{t}{f \otimes g}} = \abs*{\sum_{k} \lambda_{k} f(x_{k}) g(y_{k})}
    \leq \sum_{k} \abs*{\lambda_{k}} \cdot \abs*{f(x_{k})} \cdot \abs*{g(y_{k})}
    \leq \sum_{k} \abs*{\lambda_{k}} \leq 1.
  \end{displaymath}
  Hence \( U^{\circ} \otimes V^{\circ} \) lies in the polar of the neighborhood
  \( \Gamma(U \otimes V) \) of \( 0 \) in \( E \otimes_{\pi} F \), hence
  equicontinuous.
\end{proof}

\begin{coro}
  \label{coro:0be09f15f7e01dce}
  If \( E \) and \( F \) are \( (F) \)-spaces, then so is \( E \overline{\otimes}_{\pi} F \).
\end{coro}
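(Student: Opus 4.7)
The plan is to mirror the argument already used for the injective tensor product in Corollary~\ref{coro:97adf837ae63af02}, now using the explicit generating family of seminorms furnished for the projective tensor product by Proposition~\ref{prop:dda5a3f4762f22f6}~\ref{item:81e89e6631572c66}. Concretely, since both $E$ and $F$ are $(F)$-spaces, they admit countable generating families of seminorms $(p_{i})_{i \in \N}$ on $E$ and $(q_{j})_{j \in \N}$ on $F$. By the cited part of Proposition~\ref{prop:dda5a3f4762f22f6}, the family $p_{i} \overline{\otimes}_{\pi} q_{j}$, $(i,j) \in \N \times \N$, generates the topology on $E \overline{\otimes}_{\pi} F$. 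Being indexed by a countable set, this yields a countable generating family of seminorms, hence $E \overline{\otimes}_{\pi} F$ is metrizable.

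It remains to observe that $E \overline{\otimes}_{\pi} F$ is complete by construction (it is the separated completion of $E \otimes_{\pi} F$, which is a Hausdorff locally convex space by Proposition~\ref{prop:dda5a3f4762f22f6}). A complete metrizable locally convex space is by definition an $(F)$-space, which finishes the proof.

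There is essentially no obstacle here: the only nontrivial inputs are the existence of the explicit generating seminorms $p_{i} \overline{\otimes}_{\pi} q_{j}$ on the completion and the Hausdorff property of $\mathfrak{T}_{\pi}$, both of which are already supplied by Proposition~\ref{prop:dda5a3f4762f22f6}. The argument is the exact analogue of the one for the injective tensor product (Corollary~\ref{coro:97adf837ae63af02}), with the projective cross-seminorms playing the role of the injective ones.
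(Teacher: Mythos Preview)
Your proof is correct and follows exactly the same approach as the paper: both invoke Proposition~\ref{prop:dda5a3f4762f22f6}~\ref{item:81e89e6631572c66} with countable generating families of seminorms on $E$ and $F$ to obtain a countable generating family on the completion. Your version is simply more explicit about completeness and metrizability, but the underlying argument is identical.
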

\begin{proof}
  It suffices to take a countable generating family of seminorms for \( E \) and
  for \( F \) in Proposition~\ref{prop:dda5a3f4762f22f6}
  \ref{item:81e89e6631572c66}.
\end{proof}

\begin{defi}
  \label{defi:8c492948622145a0}
  Using the notation in Proposition~\ref{prop:dda5a3f4762f22f6}, we call the
  locally convex space \( E \otimes_{\pi} F \) the \textbf{projective tensor
    product} of \( E \) and \( F \), and its completion
  \( E \overline{\otimes}_{\pi} F \) the \textbf{completed projective tensor
    product}.
\end{defi}

\begin{rema}
  \label{rema:9a1ecf4df00f5761}
  When \( E \) and \( F \) are normed spaces, and \( U \), \( V \) their
  respective unit balls, then \( p_{U} \) and \( p_{V} \) are the norms on
  \( E \) and \( F \), and \eqref{eq:d792b216757c8477} describes the norm on the
  projective tensor product \( E \otimes_{\pi} F \) of normed spaces, cf.\
  \cite{takesaki_theory_2002}*{\S~IV.2}.
\end{rema}

\begin{prop}
  \label{prop:6f861986d67606e0}
  The following holds for the (completed) projective tensor product.
  \begin{enumerate}
  \item \label{item:23f9770ba3660a41} If \( f: A \to E \), \( g : B \to F \) are
    continuous linear maps of locally convex spaces, then the induced linear map
    \( f \otimes g: A \otimes_{\pi} B \to E \otimes_{\pi} F \) is
    continuous, hence extends by continuity to a continuous linear map
    \( f \overline{\otimes}_{\pi} g: A \overline{\otimes}_{\pi} B \to E
    \overline{\otimes}_{\pi} F \).
  \item \label{item:24920149d656ce66} For all locally convex spaces \( E \) and
    \( F \), the swap
    \begin{displaymath}
      s: E \otimes_{\pi} F \to F \otimes_{\pi} E ,\quad
      \sum_{i}x_{i} \otimes y_{i} \mapsto \sum_{i}y_{i} \otimes x_{i}
    \end{displaymath}
    is an isomorphism of locally convex spaces, hence extends by continuity to
    an isomorphism of locally convex spaces
    \( \overline{s}: E \overline{\otimes}_{\pi} F \to F
    \overline{\otimes}_{\pi} E \).
  \item \label{item:a640f255a320010a} For all locally convex spaces \( E \),
    \( F \) and \( G \), the association map
    \begin{displaymath}
      a: (E \otimes_{\pi} F) \otimes_{\pi} G \to E \otimes_{\pi} (F
      \otimes_{\pi} G), \quad \sum_{i}(x_{i} \otimes y_{i}) \otimes z_{i} \mapsto \sum_{i} x_{i}
      \otimes (y_{i} \otimes z_{i})
    \end{displaymath}
    is an isomorphism of locally convex spaces, hence extends by continuity to
    an isomorphism of locally convex spaces
    \( \overline{a}: (E \overline{\otimes}_{\pi} F) \overline{\otimes}_{\pi}
    G \to E \overline{\otimes}_{\pi} (F \overline{\otimes}_{\pi} G) \).
  \end{enumerate}
\end{prop}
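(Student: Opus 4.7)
The plan is to mirror the approach already used for the inductive and injective tensor products, relying this time on the universal property of joint continuity encoded in Proposition~\ref{prop:dda5a3f4762f22f6}\ref{item:b0505895928e3373}, and extending the resulting maps to the completions via the adjunction of Proposition~\ref{prop:6f06da3d7291c7cf} (equivalently, via the universal property of the separated completion from Theorem~\ref{theo:5c48561db339a0d3}). Thus for each of the three statements it is enough to produce the map at the uncompleted level and observe that, being continuous, it passes uniquely and functorially to the completion.

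For \ref{item:23f9770ba3660a41}, the composition
\( A \times B \xrightarrow{f \times g} E \times F \xrightarrow{\chi_{E,F}} E \otimes_{\pi} F \)
is continuous (since \( \chi_{E,F} \) is continuous by Proposition~\ref{prop:dda5a3f4762f22f6}\ref{item:4a6bca6839439643} and \( f \times g \) is continuous as a product of continuous maps) and bilinear, so Proposition~\ref{prop:dda5a3f4762f22f6}\ref{item:b0505895928e3373} yields the desired continuous linear \( f \otimes g \). For \ref{item:24920149d656ce66}, the bilinear map \( (x, y) \mapsto y \otimes x \in F \otimes_{\pi} E \) is jointly continuous (being the composition of \( \chi_{F,E} \) with the swap of topological products), so the same universal property produces a continuous \( s \); the inverse comes from the symmetric construction and the two are mutually inverse on the generating elements \( x \otimes y \), hence globally by linearity and density.

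For \ref{item:a640f255a320010a} I would argue as in the proof of Proposition~\ref{prop:963932029cf324b4}\ref{item:913b343d957a2af9}: repeat the construction of Proposition~\ref{prop:dda5a3f4762f22f6} for three factors to obtain the finest locally convex (Hausdorff) topology \( \mathfrak{T}_{\pi,3} \) on \( E \odot F \odot G \) making the canonical trilinear map \( (x,y,z) \mapsto x \otimes y \otimes z \) jointly continuous; a fundamental system of absolutely convex neighborhoods of \( 0 \) is given by \( \Gamma(U \otimes V \otimes W) \) with \( U, V, W \) ranging over absolutely convex neighborhoods of \( 0 \) in \( E \), \( F \), \( G \). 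Then one checks that the canonical linear identifications with \( (E \otimes_{\pi} F) \otimes_{\pi} G \) and with \( E \otimes_{\pi} (F \otimes_{\pi} G) \) transport \( \mathfrak{T}_{\pi,3} \) to the respective projective tensor topologies, because in both cases the neighborhoods of \( 0 \) are precisely the absolutely convex hulls of products of three absolutely convex neighborhoods (equivalently, use Proposition~\ref{prop:dda5a3f4762f22f6}\ref{item:81e89e6631572c66}: the cross seminorms \( (p_{i} \otimes_{\pi} q_{j}) \otimes_{\pi} r_{k} \) and \( p_{i} \otimes_{\pi} (q_{j} \otimes_{\pi} r_{k}) \) agree on \( E \odot F \odot G \) by the explicit infimum formula). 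In all three parts, the extension to the completion is then automatic.

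The only slightly delicate step is the associativity: verifying that the two iterated seminorms coincide requires unwinding the infimum formula carefully (and noting that it extends continuously to the completion). Everything else is formal consequence of the universal property.
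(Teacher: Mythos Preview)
Your proposal is correct and follows precisely the route the paper takes: the paper's proof is a single sentence saying that the argument parallels Proposition~\ref{prop:963932029cf324b4} with continuous bilinear (resp.\ trilinear) maps in place of separately continuous ones, and you have simply spelled out those parallel steps (including the three-fold topology for associativity) in more detail.
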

\begin{proof}
  The proof parallels to the case for inductive tensor product
  (Proposition~\ref{prop:963932029cf324b4}) by using continuous bilinear or
  trilinear maps instead of the separately continuous ones.
\end{proof}

The completed projective tensor product also behaves well with products and
reduced projective limits.

\begin{prop}[\cite{MR0551623}*{p193 (3), \& p194, (5)}]
  \label{prop:046bbcf972e4452d}
  The following hold.
  \begin{enumerate}
  \item \label{item:c7eb6fdad9e43d69} Let \( E \) be a locally convex space,
    \( (F_{j})_{j \in J} \) a family of locally convex spaces. We have a
    canonical isomorphism
    \( E \overline{\otimes}_{\pi} \prod_{j \in J} F_{j} \simeq \prod_{j
      \in J} E \overline{\otimes}_{\pi} F_{j} \).
  \item \label{item:ccfeff3bc431544d} Let \( E = \varprojlim E_{i} \),
    \( F = \varprojlim F_{j} \) be reduced projective limits of locally convex
    spaces, then \( E \overline{\otimes}_{\pi} F \) can be canonically
    identified as the projective limit
    \( \varprojlim E_{i} \overline{\otimes}_{\pi} F_{j} \), which is
    already complete by Proposition~\ref{prop:438d10ce1f2149df}.
  \end{enumerate}
\end{prop}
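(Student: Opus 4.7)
The plan is to prove (1) first by a direct seminorm comparison, then deduce (2) by realizing reduced projective limits as closed subspaces of products and invoking (1) together with associativity from Proposition~\ref{prop:6f861986d67606e0}.

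For (1), the projections \( \pi_{j} \colon \prod_{k} F_{k} \to F_{j} \) induce, via the universal property of the product, a continuous linear map
\[
  \Phi \colon E \overline{\otimes}_{\pi} \prod_{j \in J} F_{j} \to \prod_{j \in J} E \overline{\otimes}_{\pi} F_{j}.
\]
The strategy is to fix generating families of seminorms \( (p_{i})_{i \in I} \) on \( E \) and \( (q_{j,\alpha})_{\alpha \in A_{j}} \) on each \( F_{j} \). Every continuous seminorm on \( \prod_{j} F_{j} \) is dominated by one of the form \( \widetilde{q}(y) = \max_{j \in J_{0}} q_{j,\alpha_{j}}(y_{j}) \) with \( J_{0} \subseteq J \) finite, so by Proposition~\ref{prop:dda5a3f4762f22f6}~\ref{item:81e89e6631572c66} a generating family of seminorms for \( E \otimes_{\pi} \prod_{j} F_{j} \) is given by \( p_{i} \otimes_{\pi} \widetilde{q} \). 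The heart of the proof is to establish uniform equivalence (under \( \Phi \)) between these seminorms and the corresponding product seminorms \( \max_{j \in J_{0}} (p_{i} \otimes_{\pi} q_{j,\alpha_{j}}) \) on \( \prod_{j} E \otimes_{\pi} F_{j} \). Once this equivalence is in hand, density of elementary tensors combined with Corollary~\ref{coro:a9570db3191db003} (completion commutes with colimits) promotes the identification from the pre-completion level to the completed one.

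For (2), realize \( E = \varprojlim E_{i} \) as a closed subspace of \( \prod_{i} E_{i} \) via Proposition~\ref{prop:438d10ce1f2149df}, and similarly \( F \hookrightarrow \prod_{j} F_{j} \). Applying (1) twice together with the associativity of \( \overline{\otimes}_{\pi} \) from Proposition~\ref{prop:6f861986d67606e0}~\ref{item:a640f255a320010a} yields \( \prod_{i} E_{i} \overline{\otimes}_{\pi} \prod_{j} F_{j} \simeq \prod_{i,j} E_{i} \overline{\otimes}_{\pi} F_{j} \). Functoriality from Proposition~\ref{prop:6f861986d67606e0}~\ref{item:23f9770ba3660a41} then produces a continuous linear map \( E \overline{\otimes}_{\pi} F \to \prod_{i,j} E_{i} \overline{\otimes}_{\pi} F_{j} \) whose image necessarily lies in the closed subspace \( \varprojlim (E_{i} \overline{\otimes}_{\pi} F_{j}) \), by compatibility of the structure maps under tensoring. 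The remaining step is to verify that this factored map is an isomorphism onto the projective limit, for which reducedness of the two systems is crucial: it guarantees that images of elementary tensors \( x \otimes y \) with \( x \in E \), \( y \in F \) are dense in the projective limit, and completeness of both sides (by Proposition~\ref{prop:438d10ce1f2149df}) then closes the argument.

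The main obstacle I anticipate is the seminorm comparison in (1). Because the \( \pi \)-seminorm is defined by an infimum rather than a supremum, the clean duality argument that works in the \( \varepsilon \)-case (Proposition~\ref{prop:8056525c3c4c9aa5}) is not available, and showing that product seminorms on \( \prod_{j} E \overline{\otimes}_{\pi} F_{j} \) dominate (up to a uniform constant) the \( \pi \)-tensor seminorms on \( E \overline{\otimes}_{\pi} \prod_{j} F_{j} \) requires reconstructing, from a representation of an element of \( E \otimes \prod_{j} F_{j} \), a refined representation whose contributions are concentrated on the finitely many coordinates \( j \in J_{0} \) relevant to the target seminorm. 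A secondary obstacle in (2) is the density of elementary tensors from \( E \) and \( F \) inside \( \varprojlim (E_{i} \overline{\otimes}_{\pi} F_{j}) \); this is where I would employ a diagonal approximation argument, using the seminorm formula in Proposition~\ref{prop:dda5a3f4762f22f6}~\ref{item:9fadcb3c3659cfa1} to control the error.
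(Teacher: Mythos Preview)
The paper gives no proof of this proposition; it is stated with a bare citation to K\"othe's monograph. So there is nothing to compare against, but your plan has issues worth flagging.

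For (1), your strategy is sound, but the appeal to Corollary~\ref{coro:a9570db3191db003} is misplaced: products are limits, not colimits, and completion (being a left adjoint) commutes only with the latter. What you actually need is far simpler: once you know \( \Phi \) restricts to a topological embedding of \( E \otimes_{\pi} \prod_{j} F_{j} \) into the complete space \( \prod_{j} E \overline{\otimes}_{\pi} F_{j} \) with dense image, the universal property of completion (Theorem~\ref{theo:5c48561db339a0d3}) finishes. The seminorm comparison you flag as the obstacle is resolved by the splitting: since \( \widetilde{q} \) factors through the finite projection \( \sigma \colon \prod_{J} F_{j} \to \prod_{J_{0}} F_{j} \), which has a section \( \iota \) with \( \widetilde{q} \circ \iota = \widetilde{q}' \), one checks directly that \( (p \otimes_{\pi} \widetilde{q})(t) = (p \otimes_{\pi} \widetilde{q}')\bigl((\id \otimes \sigma)t\bigr) \), reducing to the finite case.

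For (2) there is a genuine gap. You argue that reducedness gives density of the image and that both sides are complete, and say this ``closes the argument''. It does not: a continuous linear map between complete spaces with dense image need not be open, or even injective. What is missing is precisely that \( \Psi \colon E \overline{\otimes}_{\pi} F \to \varprojlim E_{i} \overline{\otimes}_{\pi} F_{j} \) is a topological embedding, which amounts to the seminorm identity
\[
  \bigl((p_{i} \circ \pi_{i}) \otimes_{\pi} (q_{j} \circ \rho_{j})\bigr)(t)
  = (p_{i} \otimes_{\pi} q_{j})\bigl((\pi_{i} \otimes \rho_{j})(t)\bigr).
\]
This is the actual content of the result, and it is here that reducedness enters essentially: one needs to lift near-optimal representations in \( E_{i} \odot F_{j} \) back to \( E \odot F \), which requires the density of \( \pi_{i}(E) \) and \( \rho_{j}(F) \). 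Your proposed route through (1) does not sidestep this, because \( \overline{\otimes}_{\pi} \) does not preserve closed embeddings (there is no analogue of Proposition~\ref{prop:710a683d733970db} for the projective tensor product), so you cannot conclude that \( E \overline{\otimes}_{\pi} F \) sits inside \( \bigl(\prod_{i} E_{i}\bigr) \overline{\otimes}_{\pi} \bigl(\prod_{j} F_{j}\bigr) \) as a subspace merely from \( E \hookrightarrow \prod_{i} E_{i} \).
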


\begin{rema}
  \label{rema:304c55c05ed83842}
  As injective tensor product, in general, the project tensor product does not
  behave well with locally convex direct sums, see \cite{MR0551623}*{p195, (6),
    (7)} for some partial positive result, and the counterexample in
  \cite{MR0551623}*{p196}.
\end{rema}

\subsection{Comparison of the topological tensor products and nuclear spaces}
\label{sec:27526e4ba41269f2}

\begin{prop}
  \label{prop:6ecdcba4c320179a}
  Let \( E \), \( F \) be locally convex spaces. Then
  \begin{enumerate}
  \item \label{item:14b17bbd79368ce2} The identity map
    \( E \otimes_{\iota} F \to E \otimes_{\pi} F \) is continuous, and extends
    by continuity to a unique continuous linear map
    \( E \overline{\otimes}_{\iota} F \to E \overline{\otimes}_{\pi} F \).
  \item \label{item:e88486857a3d79b6} The identity map
    \( E \otimes_{\pi} F \to E \otimes_{\varepsilon} F \) is continuous, and extends
    by continuity to a unique continuous linear map
    \( E \overline{\otimes}_{\pi} F \to E \overline{\otimes}_{\varepsilon} F \).
  \end{enumerate}
\end{prop}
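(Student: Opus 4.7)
The plan is to reduce both parts to the extremal characterizations of compatible topologies on the algebraic tensor product $E \odot F$ that are already in place. By Corollary~\ref{coro:33816b8f283c3930}, among all compatible topologies on $E \odot F$ there is a finest one (realized by $\mathfrak{T}_{\iota}$, see Proposition~\ref{prop:5a80b25896f61451}) and a coarsest one (which is $\mathfrak{T}_{\varepsilon}$ by Definition~\ref{defi:4eeb99ac2dee068f}). The projective topology $\mathfrak{T}_{\pi}$ sits in between because of Corollary~\ref{coro:43e2166622970bc3}, which precisely states that $\mathfrak{T}_{\pi}$ is itself compatible.

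For part~\ref{item:14b17bbd79368ce2}, since $\mathfrak{T}_{\pi}$ is compatible and $\mathfrak{T}_{\iota}$ is the finest such topology, I would conclude $\mathfrak{T}_{\pi} \subseteq \mathfrak{T}_{\iota}$ on $E \odot F$, whence continuity of the identity map $E \otimes_{\iota} F \to E \otimes_{\pi} F$ is immediate. Symmetrically, for part~\ref{item:e88486857a3d79b6}, since $\mathfrak{T}_{\varepsilon}$ is the coarsest compatible topology, the same reasoning gives $\mathfrak{T}_{\varepsilon} \subseteq \mathfrak{T}_{\pi}$ and hence continuity of $E \otimes_{\pi} F \to E \otimes_{\varepsilon} F$. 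To promote these to maps between completions, I would compose each continuous identity with the canonical dense embedding of the target into its completion, and then invoke the adjunction of Proposition~\ref{prop:6f06da3d7291c7cf} (equivalently, the universal property of separated completion, Theorem~\ref{theo:5c48561db339a0d3}) to obtain the unique continuous extension to $E \overline{\otimes}_{\iota} F$ and $E \overline{\otimes}_{\pi} F$ respectively.

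I do not expect any real obstacle here: the content of the proposition is essentially notational once the framework of compatible topologies from \S~\ref{sec:696b043dbe6fbd93} is in hand. As a sanity check for part~\ref{item:e88486857a3d79b6}, one can also argue directly at the level of generating seminorms, observing via Proposition~\ref{prop:dda5a3f4762f22f6} and Proposition~\ref{prop:2204037e873ab1a3} that $p \otimes_{\varepsilon} q \leq p \otimes_{\pi} q$ on $E \odot F$ for all continuous seminorms $p$ on $E$ and $q$ on $F$; but this merely reproduces the compatible-topology comparison at the level of gauge functionals, so I would not make this the main line of argument.
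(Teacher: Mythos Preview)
Your proposal is correct and follows essentially the same approach as the paper's own proof, which simply notes that the projective topology is compatible (Corollary~\ref{coro:43e2166622970bc3}) and that the inductive and injective topologies are respectively the finest and coarsest compatible ones. Your write-up is more explicit about the extension to completions and adds a seminorm sanity check, but the core argument is identical.
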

\begin{proof}
  All of this follows from projective tensor product topology is a compatible
  topology (Corollary~\ref{coro:43e2166622970bc3}), and among compatible
  topologies, and the inductive tensor product topology is the finest, while the
  injective one coarsest.
\end{proof}

\begin{defi}
  \label{defi:aec05a12ba4169ac}
  All the linear maps in Proposition~\ref{prop:6ecdcba4c320179a} are called
  \textbf{canonical}.
\end{defi}

It shall be important for us to have some criterion for the canonical maps to be
isomorphisms in \( \mathsf{LCS} \). We start with the comparison of the
inductive and projective tensor product.

\begin{prop}
  \label{prop:89e27b078637bfc2}
  Let \( E \), \( F \) be locally convex spaces. Then the following are
  equivalent:
  \begin{enumerate}
  \item \label{item:45939c3a32b69dd9} the canonical map
    \( E \otimes_{\iota}F \to E \otimes_{\pi} F \) is an isomorphism;
  \item \label{item:1d6b11df9106f9bd} the canonical map
    \( E \overline{\otimes}_{\iota}F \to E \overline{\otimes}_{\pi} F \) is an isomorphism;
  \item \label{item:8b3ae8da37750c0d} the canonical map
    \( \chi : E \times F \to E \otimes_{\iota} F \) is continuous;
  \item \label{item:94a8bacfaa6dd579} for any locally convex space \( G \), we
    have \( \mathfrak{B}(E, F; G) = \mathcal{B}(E, F; G) \), i.e.\ every
    separately continuous bilinear map from \( E \times F \) to \( G \) is
    continuous.
  \end{enumerate}
\end{prop}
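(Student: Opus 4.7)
The plan is to prove the equivalences via a small cycle of implications, leaning entirely on universal properties already established. The key ingredients are: (a) $\mathfrak{T}_{\iota}$ is the finest compatible topology on $E \odot F$ (Proposition~\ref{prop:5a80b25896f61451}), equivalently the inductive topology with respect to the left/right multiplication maps, which by construction makes $\chi$ separately continuous; (b) $\mathfrak{T}_{\pi}$ is the finest locally convex topology making $\chi$ jointly continuous (Proposition~\ref{prop:dda5a3f4762f22f6}~\ref{item:4a6bca6839439643}), and is compatible (Corollary~\ref{coro:43e2166622970bc3}); (c) the universal properties $\mathcal{L}(E \otimes_{\iota} F, G) \simeq \mathfrak{B}(E, F; G)$ and $\mathcal{L}(E \otimes_{\pi} F, G) \simeq \mathcal{B}(E, F; G)$. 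In particular, one always has $\mathfrak{T}_{\pi} \leq \mathfrak{T}_{\iota}$, so the canonical identity $E \otimes_{\iota} F \to E \otimes_{\pi} F$ is continuous.

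First I would establish \ref{item:8b3ae8da37750c0d}$\Leftrightarrow$\ref{item:94a8bacfaa6dd579}. If $\chi: E \times F \to E \otimes_{\iota} F$ is jointly continuous, then any $f \in \mathfrak{B}(E, F; G)$ factors as $f = \widetilde{f} \circ \chi$ with $\widetilde{f}$ continuous (Proposition~\ref{prop:6989e716dae21a7b}), so $f$ is jointly continuous as the composition of two continuous maps, giving \ref{item:94a8bacfaa6dd579}. Conversely, applying \ref{item:94a8bacfaa6dd579} with $G = E \otimes_{\iota} F$ and $f = \chi$, which is separately continuous by the very definition of $\mathfrak{T}_{\iota}$, yields \ref{item:8b3ae8da37750c0d}.

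Next I would do \ref{item:45939c3a32b69dd9}$\Leftrightarrow$\ref{item:8b3ae8da37750c0d}. Since the identity $E \otimes_{\iota} F \to E \otimes_{\pi} F$ is always continuous, \ref{item:45939c3a32b69dd9} amounts to the continuity of the reverse identity, i.e.\ to $\mathfrak{T}_{\iota} \leq \mathfrak{T}_{\pi}$. If \ref{item:8b3ae8da37750c0d} holds, then $\chi$ is jointly continuous into $E \otimes_{\iota} F$; since $\mathfrak{T}_{\pi}$ is the finest locally convex topology making $\chi$ jointly continuous, one concludes $\mathfrak{T}_{\iota} \leq \mathfrak{T}_{\pi}$, giving \ref{item:45939c3a32b69dd9}. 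For the converse, \ref{item:45939c3a32b69dd9} identifies $E \otimes_{\iota} F$ with $E \otimes_{\pi} F$ as locally convex spaces, and $\chi$ is jointly continuous into the latter by Proposition~\ref{prop:dda5a3f4762f22f6}, hence into the former, yielding \ref{item:8b3ae8da37750c0d}.

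Finally, for \ref{item:45939c3a32b69dd9}$\Leftrightarrow$\ref{item:1d6b11df9106f9bd}, the forward direction is just functoriality of the completion (the continuous identity extends uniquely to a continuous linear map between completions, and the same applies to its inverse). For the converse, both canonical insertions $E \otimes_{\iota} F \hookrightarrow E \overline{\otimes}_{\iota} F$ and $E \otimes_{\pi} F \hookrightarrow E \overline{\otimes}_{\pi} F$ are topological embeddings onto dense subspaces (Definition~\ref{defi:da4d0e8821475034}); hence an isomorphism of the two completions restricts to an isomorphism of the common underlying vector space $E \odot F$ for the two subspace topologies, which are precisely $\mathfrak{T}_{\iota}$ and $\mathfrak{T}_{\pi}$. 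No single step is a genuine obstacle; the mild subtlety lies in the last equivalence, where one must invoke that $E \odot F$ carries, as a dense subspace of each of its completions, exactly the original topology, so that \ref{item:1d6b11df9106f9bd} indeed transfers back to \ref{item:45939c3a32b69dd9}.
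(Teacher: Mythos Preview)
Your proof is correct and follows essentially the same approach as the paper: both rely on the universal properties of $\otimes_\iota$ and $\otimes_\pi$ together with uniqueness of completions. The paper organizes the implications slightly differently (it argues \ref{item:45939c3a32b69dd9}$\Leftrightarrow$\ref{item:1d6b11df9106f9bd}, \ref{item:45939c3a32b69dd9}$\Leftrightarrow$\ref{item:94a8bacfaa6dd579}, \ref{item:94a8bacfaa6dd579}$\Rightarrow$\ref{item:8b3ae8da37750c0d}, \ref{item:8b3ae8da37750c0d}$\Rightarrow$\ref{item:45939c3a32b69dd9}) and is terser in the \ref{item:1d6b11df9106f9bd}$\Rightarrow$\ref{item:45939c3a32b69dd9} step, but your more explicit restriction argument for that direction is a welcome clarification.
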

\begin{proof}
  It follows from the uniqueness of the completion of uniform spaces
  (Theorem~\ref{theo:5c48561db339a0d3}) that \ref{item:45939c3a32b69dd9} and
  \ref{item:1d6b11df9106f9bd} are equivalent. By the universal properties of the
  inductive and projective tensor product respectively, we see that
  \ref{item:45939c3a32b69dd9} and \ref{item:94a8bacfaa6dd579} are
  equivalent. Clearly, \ref{item:94a8bacfaa6dd579} implies
  \ref{item:8b3ae8da37750c0d}. Finally, suppose \ref{item:8b3ae8da37750c0d}
  holds, by the universal property of the projective tensor product, we see that
  the identity map \( E \otimes_{\pi} F \to E \otimes_{\iota} F \) is
  continuous, and is clearly the inverse of the continuous canonical map
  \( E \otimes_{\iota} F \to E \otimes_{\pi}F \), hence we have
  \ref{item:45939c3a32b69dd9}.
\end{proof}

It is well-known that \( (F) \)-spaces and barrelled \( (DF) \)-spaces satisfy
the equivalent conditions in the above proposition.

\begin{prop}
  \label{prop:89675519c2bb97f8}
  Let \( E \), \( F \), \( G \) be locally convex spaces. Then
  \( \mathfrak{B}(E, F; G) = \mathcal{B}(E, F; G) \) if either one of the
  following conditions holds:
  \begin{enumerate}
  \item \label{item:e198854a8ab48468} \( E \) and \( F \) are both metrizable
    and barrelled, in particular if they are \( (F) \)-spaces;
  \item \label{item:897272a3e73841f9} \( E \) and \( F \) are both
    \( (DF) \)-spaces and barrelled.
  \end{enumerate}
\end{prop}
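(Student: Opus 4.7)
The plan is to reduce both cases to a single observation: every separately continuous bilinear map into $G$ is hypocontinuous, and then use the structural features of metrizability or $(DF)$-ness to upgrade hypocontinuity to joint continuity.

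First, since $E$ and $F$ are both barrelled, Proposition~\ref{prop:921772454be0e6bd} immediately gives that any $b \in \mathfrak{B}(E, F; G)$ is $\bigl(\mathfrak{B}(E), \mathfrak{B}(F)\bigr)$-hypocontinuous. This step is common to both parts and reduces the problem to improving from hypocontinuity to joint continuity at $(0, 0)$.

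For case~\ref{item:e198854a8ab48468}, I would exploit metrizability to work sequentially. Let $(x_n, y_n) \to (0, 0)$ in $E \times F$; since $\{y_n\}$ is bounded, $\mathfrak{B}(F)$-hypocontinuity supplies, for each $W \in \mathcal{N}^G_\Gamma(0)$, a neighborhood $U \in \mathcal{N}^E_\Gamma(0)$ with $b(U \times \{y_n\}_{n\geq 1}) \subseteq W$, and eventually $x_n \in U$, so $b(x_n, y_n) \in W$. Because $E \times F$ is metrizable, sequential continuity at $(0, 0)$ is equivalent to continuity at $(0, 0)$, hence $b \in \mathcal{B}(E, F; G)$.

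Case~\ref{item:897272a3e73841f9} is the real obstacle, since one cannot reduce to sequences in a general $(DF)$-space. The plan is to use the fundamental sequence of bounded sets $(A_n)$ in $E$ and $(B_n)$ in $F$ guaranteed by Definition~\ref{defi:5cdb26a0af71ffa7}~\ref{item:da386a70af7a0673}. Fix $W \in \mathcal{N}^G_\Gamma(0)$ and a sequence $W_n \in \mathcal{N}^G_\Gamma(0)$ with $\sum_n W_n \subseteq W$. Using $\mathfrak{B}(E)$-hypocontinuity, for each $n$ one obtains $V_n \in \mathcal{N}^F_\Gamma(0)$ such that $b(A_n \times V_n) \subseteq W_n$; passing to polars shows that $\bigcup_n (n \cdot V_n^\circ) \subseteq F'$ is a countable union of equicontinuous subsets of $F'$. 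The key move is to show this union is strongly bounded in $F'$ (using that the $A_n$ exhaust the bounded sets in $E$ and that $b$ induces a map from $F$ into a suitable space of linear maps on $E$); then the defining property of $(DF)$-spaces, Definition~\ref{defi:5cdb26a0af71ffa7}~\ref{item:ccad66f3327f6fec}, forces the union to be equicontinuous, so its polar $V$ is a neighborhood of $0$ in $F$. A symmetric construction using the fundamental sequence in $F$ yields a neighborhood $U$ of $0$ in $E$, and combining the two via a diagonal argument on the scaling factors gives $b(U \times V) \subseteq W$, i.e.\ continuity at $(0, 0)$.

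The main obstacle is precisely this passage from the hypocontinuity estimates $b(A_n \times V_n) \subseteq W_n$ to the equicontinuity of the aggregated polar family: it is exactly at this point that Definition~\ref{defi:5cdb26a0af71ffa7}~\ref{item:ccad66f3327f6fec} enters in an essential way, and verifying the hypothesis of strong boundedness requires using both the barrelledness of $E$ (to manufacture the appropriate family in $F'$) and the $(DF)$-property of $F$ (to conclude). Once this is done, joint continuity follows by the routine estimate $b(U \times V) \subseteq W$.
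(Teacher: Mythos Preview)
The paper does not give a proof at all: it simply cites K\"othe \cite{MR0551623}*{p158, (2)} for case~\ref{item:e198854a8ab48468} and \cite{MR0551623}*{p161, (11)} for case~\ref{item:897272a3e73841f9}. Your attempt therefore goes well beyond what the paper itself provides.

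Your reduction via Proposition~\ref{prop:921772454be0e6bd} to hypocontinuity is correct and is indeed the common first step. Your argument for case~\ref{item:e198854a8ab48468} is complete and is the standard one: hypocontinuity plus a sequential argument, with metrizability of \(E \times F\) converting sequential continuity at the origin into continuity.

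For case~\ref{item:897272a3e73841f9}, you have correctly identified the two ingredients that must interact---the fundamental sequences of bounded sets from Definition~\ref{defi:5cdb26a0af71ffa7}\ref{item:da386a70af7a0673} and the ``countable-union-of-equicontinuous'' clause~\ref{item:ccad66f3327f6fec}---but the specific route you sketch does not close. The problematic step is the claim that \(\bigcup_n n\, V_n^\circ\) is strongly bounded in \(F'\): nothing in the estimates \(b(A_n \times V_n) \subseteq W_n\) controls how the sets \(V_n\) absorb a \emph{fixed} bounded set \(B \subseteq F\), so there is no uniform bound on \(\sup\{|\langle y, f\rangle| : y \in B,\ f \in n V_n^\circ\}\) as \(n\) varies. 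The hypocontinuity of \(b\) gives you leverage over pairings of the form \(A_n \times V_n\) and \(U_n \times B_n\), not over how \(V_n\) sits relative to the filtration \((B_m)\).

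The argument in K\"othe proceeds differently: rather than trying to show a union of polars is strongly bounded, one constructs the candidate neighborhoods \(U \subseteq E\) and \(V \subseteq F\) directly as countable intersections of closed absolutely convex neighborhoods \(U_n\), \(V_n\) (obtained from hypocontinuity against \(B_n\) and \(A_n\) respectively), and then uses a characterisation of \((DF)\)-spaces to the effect that such an intersection is a neighborhood of \(0\) provided it is bornivorous. Bornivorousness is verified by playing the two hypocontinuity estimates against each other. You should consult \cite{MR0551623}*{\S 40.2(11)} for the precise construction; your plan has the right pieces but the combinatorics of how \ref{item:da386a70af7a0673} and \ref{item:ccad66f3327f6fec} mesh is not the one you outlined.
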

\begin{proof}
  Case~\ref{item:e198854a8ab48468} can be found in \cite{MR0551623}*{p158, (2)},
  while case~\ref{item:897272a3e73841f9} in \cite{MR0551623}*{p161, (11)}.
\end{proof}

For the comparison of projective and injective tensor products, we introduce the
well-known notion of nuclear spaces due to Grothendieck~\cite{MR0075539}.

\begin{defi}[\cite{MR0354652}*{Ch.II, p34, Définition~4}]
  \label{defi:a13a9b24c4226f4e}
  A locally convex space \( E \) is called \textbf{nuclear}, if for any locally
  convex space \( F \), the canonical map
  \( E \otimes_{\pi} F \to E \otimes_{\varepsilon} F \) is an isomorphism, or
  equivalently, the canonical map
  \( E \overline{\otimes}_{\pi} F \to E \overline{\otimes}_{\varepsilon} F \) is
  an isomorphism.
\end{defi}

\begin{rema}
  \label{rema:080318a5a743ba15}
  There are a variety of equivalent conditions for a locally convex space to be
  nuclear, which we will not recall here nor shall we use directly later, but
  are nevertheless very important in the development of the theory, cf.\ e.g.\
  \cite{MR0225131}*{p511, Theorem~50.1}.
\end{rema}

\begin{nota}
  \label{nota:0b0569840477cd74}
  We use \( (\mathcal{N}) \) to denote the class of nuclear locally convex spaces.
\end{nota}

\begin{prop}
  \label{prop:a8139d3432a18d9e}
  The following holds for the class \( (\mathcal{N}) \).
  \begin{enumerate}
  \item \label{item:99ceefcf15b602f7} The class \( (\mathcal{N}) \) is stable
    under taking countable locally direct sums, separated quotients, inductive
    limits of a countable family, arbitrary products, subspaces, and projective
    limits in \( \mathsf{LCS} \).
  \item \label{item:5fe0be7a72283790} The class \( (\mathcal{N}) \) is stable
    under taking projective (or the same, injective) tensor product.
  \item \label{item:61531a6a26abaf1f} A locally convex space \( E \) is nuclear
    if and only if its completion \( \widehat{E} \) is.
  \item \label{item:8fdf68fedf3451b4} An \( (F) \)-space is nuclear if and only
    if its strong dual is.
  \item \label{item:6a23cd80cf1e5fb3} A normed space is nuclear if and only if
    it is finite dimensional.
  \end{enumerate}
\end{prop}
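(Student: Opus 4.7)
The overall plan is to work directly from Definition~\ref{defi:a13a9b24c4226f4e}, exploiting the permanence properties of the two topological tensor products that have already been recorded: the commutation of $\overline{\otimes}_{\pi}$ and $\overline{\otimes}_{\varepsilon}$ with arbitrary products and reduced projective limits (Propositions~\ref{prop:046bbcf972e4452d} and~\ref{prop:8056525c3c4c9aa5}), the preservation of strict monomorphisms by $\overline{\otimes}_{\varepsilon}$ (Proposition~\ref{prop:710a683d733970db}), and the associativity of both tensor products (Propositions~\ref{prop:51d98c7dd4f5950d}~\ref{item:37a1c4b17ef9a65c} and~\ref{prop:6f861986d67606e0}~\ref{item:a640f255a320010a}). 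Together with simple diagram chases, these carry most of the weight for parts~\ref{item:99ceefcf15b602f7}, \ref{item:5fe0be7a72283790}, and~\ref{item:61531a6a26abaf1f}.

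For~\ref{item:99ceefcf15b602f7}, subspaces and products, hence projective limits (which are closed subspaces of products), follow by tensoring the defining isomorphism for each factor with an auxiliary $F$ and using the aforementioned commutation and preservation properties; the subspace case requires also controlling the $\pi$-side, which I would do via Hahn--Banach applied to the explicit seminorm formula of Proposition~\ref{prop:dda5a3f4762f22f6}~\ref{item:81e89e6631572c66}. Quotients come from the compatibility of $\otimes_{\pi}$ with quotients through its universal property. The countable direct sum and countable inductive limit cases are the most delicate, precisely because $\overline{\otimes}_{\pi}$ and $\overline{\otimes}_{\varepsilon}$ do not behave well with arbitrary direct sums (Remarks~\ref{rema:16eeb08f48eda694} and~\ref{rema:304c55c05ed83842}); for the countable case I would realize the direct sum as a quotient of the countable product, and the countable strict inductive limit as a union of closed subspaces in the sense of Proposition~\ref{prop:cdd9376419588ce4}, reducing back to the already handled quotient, product, and subspace results. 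For~\ref{item:5fe0be7a72283790}, associativity reduces $(E \overline{\otimes}_{\pi} F) \overline{\otimes}_{\pi} G$ to $E \overline{\otimes}_{\pi} (F \overline{\otimes}_{\pi} G)$; the nuclearity of $E$ then identifies this with $E \overline{\otimes}_{\varepsilon} (F \overline{\otimes}_{\pi} G)$, after which nuclearity of $F$ substitutes $F \overline{\otimes}_{\varepsilon} G$ for $F \overline{\otimes}_{\pi} G$, and associativity of $\overline{\otimes}_{\varepsilon}$ completes the chain at $(E \overline{\otimes}_{\varepsilon} F) \overline{\otimes}_{\varepsilon} G$. For~\ref{item:61531a6a26abaf1f}, the two canonical maps for $E$ and for $\widehat{E}$ differ only by completion, so by Proposition~\ref{prop:6f06da3d7291c7cf} and the uniqueness of the separated completion one is an isomorphism in $\mathsf{LCS}$ iff the other is.

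The main obstacle is~\ref{item:8fdf68fedf3451b4}, which is genuinely a duality statement and goes beyond the bare definition used so far. Here I would appeal to an intrinsic characterization of nuclearity alluded to in Remark~\ref{rema:080318a5a743ba15}, for instance via summable or Hilbertian factorizations of the canonical maps associated to continuous seminorms, and combine it with the duality between the generating seminorms of an $(F)$-space $E$ and the bounded sets of its $(DF)$-strong dual $E'_{b}$ (Proposition~\ref{prop:150a19f0054c21da}), so that a summability statement on one side transports to the formally dual summability statement on the other. Finally, \ref{item:6a23cd80cf1e5fb3} is a short compactness argument: in any nuclear locally convex space every bounded set is precompact; but in a normed space the closed unit ball is bounded and a neighborhood of the origin, hence a precompact neighborhood of the origin, which by Riesz's lemma forces finite dimensionality.
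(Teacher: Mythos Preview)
The paper does not prove this proposition at all: its ``proof'' consists entirely of pointers to the literature (Schaefer for~\ref{item:99ceefcf15b602f7} and~\ref{item:5fe0be7a72283790}, Grothendieck for~\ref{item:61531a6a26abaf1f}, Tr\`eves for~\ref{item:8fdf68fedf3451b4} and~\ref{item:6a23cd80cf1e5fb3}), treating the statement as a summary of standard facts. So your proposal, which attempts direct arguments from Definition~\ref{defi:a13a9b24c4226f4e}, is a genuinely different and more ambitious undertaking. Your sketches for~\ref{item:5fe0be7a72283790} (the associativity chain) and~\ref{item:61531a6a26abaf1f} (idempotence of completion) are correct and pleasant, and for~\ref{item:8fdf68fedf3451b4} you rightly concede that one must invoke the intrinsic characterisation alluded to in Remark~\ref{rema:080318a5a743ba15}.

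However, your plan for~\ref{item:99ceefcf15b602f7} has two genuine gaps. First, the subspace case: it is exactly the failure of $\otimes_{\pi}$ to respect subspaces that makes this delicate. For $E_{0}\subseteq E$ with $E$ nuclear and $t\in E_{0}\odot F$, the $\pi$-seminorm computed in $E_{0}$ takes the infimum only over representations $t=\sum x_{k}\otimes y_{k}$ with $x_{k}\in E_{0}$, whereas the restriction of the $\pi$-seminorm from $E$ allows $x_{k}\in E$; Hahn--Banach does not let you pull such a representation back into $E_{0}$, and in general these two seminorms differ. The textbook proofs go through the characterisation of nuclearity by nuclear (or absolutely summing) linking maps between local Banach spaces, where passage to a subspace is transparent. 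Second, and more seriously, your reduction of the countable direct sum to ``a quotient of the countable product'' is false: $\bigoplus_{n}E_{n}$ is a \emph{subspace} of $\prod_{n}E_{n}$, not a quotient, and there is no natural continuous linear surjection $\prod_{n}E_{n}\to\bigoplus_{n}E_{n}$. The standard argument again uses the intrinsic characterisation. Finally, your argument for~\ref{item:6a23cd80cf1e5fb3} is correct in spirit but relies on ``bounded implies precompact in nuclear spaces'', which in this paper is Proposition~\ref{prop:f4cbc15f463be1bc} and appears only after the present proposition.
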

\begin{proof}
  \ref{item:99ceefcf15b602f7} is in \cite{MR0342978}*{p103, 7.4} and
  \ref{item:5fe0be7a72283790} in \cite{MR0342978}*{p105,
    7.5}. \ref{item:61531a6a26abaf1f} is clear from some equivalent definitions
  of nuclear spaces but can also be found explicitly in \cite{MR0075539}*{Ch~II,
    p39, Corollaire~3}. \ref{item:8fdf68fedf3451b4} is in
  \cite{MR0225131}*{p523, Proposition~50.6}, and \ref{item:6a23cd80cf1e5fb3} in
  \cite{MR0252485}*{p520, Corollary~2}.
\end{proof}

On the negative side.
\begin{prop}[\cite{MR0225131}*{p527, Theorem~51.2}]
  \label{prop:eb0c853264f8729c}
  Uncountable locally convex direct sum of \( \mathbb{K} \) is \emph{not}
  nuclear.
\end{prop}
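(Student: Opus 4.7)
The plan is to derive a contradiction from the assumption that \( E := \bigoplus_{i \in I} \mathbb{K} \), with \( I \) uncountable, is nuclear, by invoking one of the classical equivalent characterizations of nuclearity alluded to in Remark~\ref{rema:080318a5a743ba15}: namely, \( E \) is nuclear iff for every continuous seminorm \( p \) on \( E \) there exists a continuous seminorm \( q \geq p \) such that the canonical map \( \widehat{E_{q}} \to \widehat{E_{p}} \) between the associated completed Banach spaces is nuclear as a map of Banach spaces.

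First I would identify the continuous seminorms on \( E \). By Proposition~\ref{prop:8c1bccabeb0c2d99}, a fundamental system of absolutely convex neighborhoods of \( 0 \in E \) is given by the sets \( U_{\varepsilon} = \Gamma\bigl(\bigcup_{i \in I}\varepsilon_{i} B_{i}\bigr) \), indexed by families \( \varepsilon = (\varepsilon_{i})_{i \in I} \) of strictly positive reals (with \( B_{i} \) the closed unit disk of \( E_{i} = \mathbb{K} \)). Unwinding the definition of the Minkowski gauge yields the norm \( p_{\varepsilon}(x) = \sum_{i \in I}|x_{i}|/\varepsilon_{i} \) (a finite sum since \( x \) has finite support). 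Since each \( p_{\varepsilon} \) is already a norm, the associated completed Banach space \( \widehat{E_{p_{\varepsilon}}} \) is canonically isometric to \( \ell^{1}(I) \) via the reweighting \( (x_{i}) \mapsto (x_{i}/\varepsilon_{i}) \).

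Next, for any \( \delta = (\delta_{i})_{i \in I} \) with \( 0 < \delta_{i} \leq \varepsilon_{i} \) (so that \( p_{\delta} \geq p_{\varepsilon} \)), transporting the canonical map \( \widehat{E_{p_{\delta}}} \to \widehat{E_{p_{\varepsilon}}} \) along these isometries turns it into the diagonal operator \( D: \ell^{1}(I) \to \ell^{1}(I) \) with positive entries \( a_{i} = \delta_{i}/\varepsilon_{i} \in (0,1] \). Compression to an arbitrary finite \( F \subseteq I \) by the coordinate projection \( P_{F} \), which is a contraction on \( \ell^{1}(I) \), preserves nuclearity with \( \nu(P_{F} D P_{F}) \leq \nu(D) \); combined with the elementary finite-dimensional fact that the nuclear norm of a positive diagonal operator on \( \ell^{1}(F) \) equals the sum of its diagonal entries, this would give \( \sum_{i \in I} a_{i} \leq \nu(D) < +\infty \). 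But if \( E \) were nuclear, such a \( \delta \) (with every \( \delta_{i} > 0 \)) would exist for every \( \varepsilon \); and a convergent series of strictly positive reals cannot have uncountable support, producing a contradiction.

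The main obstacle is invoking an equivalent characterization of nuclearity in terms of continuous seminorms rather than the tensor-product definition in Definition~\ref{defi:a13a9b24c4226f4e}, together with the nuclear-norm computation on finite-dimensional \( \ell^{1} \). Both are classical (see, e.g., \cite{MR0225131}*{Ch.~50}) and would simply be cited; a more self-contained route would be to use the Hilbert-seminorm variant of the same criterion (replacing \( \ell^{1} \) by \( \ell^{2} \) and nuclear by Hilbert--Schmidt), for which the required summability characterization becomes altogether elementary via orthonormal bases, while the uncountability obstruction persists verbatim.
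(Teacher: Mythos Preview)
The paper does not supply its own proof of this proposition; it is stated as a citation to Tr\`eves \cite{MR0225131}*{p527, Theorem~51.2} and left at that. Your argument is correct and is essentially the standard one: identify the local Banach spaces of \( \bigoplus_{i \in I}\mathbb{K} \) with weighted \( \ell^{1}(I) \), observe that the linking maps are diagonal multipliers with strictly positive entries, and use that a nuclear diagonal operator on \( \ell^{1} \) must have summable diagonal, which is impossible over an uncountable index set. The only external input you need beyond the paper is the seminorm characterization of nuclearity (your Remark~\ref{rema:080318a5a743ba15} reference to \cite{MR0225131}*{Theorem~50.1}) and the elementary trace bound \( \lvert\operatorname{tr}(T)\rvert \le \nu(T) \) on finite-dimensional spaces; both are classical and your citations are appropriate.
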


\subsection{The approximation property}
\label{sec:50d4bdef0f766b83}

Let \( E \) be a locally convex space. Then we obtain an algebraic linear
embedding \( E \odot E' \to \mathcal{L}(E) = \mathcal{L}(E, E) \),
\( \sum_{i}x_{i} \otimes \omega_{i} \mapsto \{x \in E \mapsto
\sum_{i}\omega_{i}(x)x_{i}\} \), whose image we shall denote by
\( \mathfrak{F}(E) \). By the theorem of Hahn-Banach, one easily sees that
\( \mathfrak{F}(E) \) consists exactly of all continuous linear operators on
\( E \) that has finite rank.
\begin{defi}
  \label{defi:dfd30565fa05096b}
  A locally convex space \( E \) is said to have the \textbf{approximation
    property}, if \( \mathfrak{F}(E) \) is dense in
  \( \mathcal{L}_{c}(E) = \mathcal{L}_{c}(E, E) \).
\end{defi}
\begin{rema}
  \label{rema:551801ef6f2d0268}
  There are many well-known characterization of the approximation property,
  which is of minor importance in this paper, cf.\ \cite{MR0075539}*{Ch.I,
    \S~5}. Using these characterization, the problem of whether every locally
  convex space enjoys the approximation property quickly reduces to the case of
  Banach spaces, see e.g.\ \cite{MR0342978}*{p110, Corollary~3}. It is
  well-known that the latter is settled in the negative by a remarkable work of
  Enflo \cite{MR0402468}.
\end{rema}

\begin{nota}
  \label{nota:862ae2531ca3605a}
  We denote the class of all locally convex spaces that has the approximation
  property by \( (\mathcal{AP}) \).
\end{nota}

\begin{prop}[\cite{MR0551623}*{pp245--247}]
  \label{prop:90e4c2c8f8b18525}
  The class \( (\mathcal{AP}) \) is stable under taking arbitrary locally convex
  direct sums, strict inductive limits, arbitrary products and reduced
  projective limits.
\end{prop}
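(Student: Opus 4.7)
The plan is to work with the following convenient reformulation, obtained by reading off the topology of \( \mathcal{L}_{c}(E) \) on polars of precompact sets (so that approximating \( \mathrm{id}_{E} \) suffices by a standard argument using left-multiplication by an arbitrary \( S \in \mathcal{L}(E) \)): \( E \in (\mathcal{AP}) \) if and only if for every precompact \( K \subseteq E \) and every \( U \in \mathcal{N}_{\Gamma}(0) \) in \( E \), there exists \( T \in \mathfrak{F}(E) \) with \( (T - \mathrm{id}_{E})(K) \subseteq U \). All four stability properties then amount to manufacturing such a \( T \) from finite-rank approximations on the factors/stages supplied by their AP.

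For products \( E = \prod_{i} E_{i} \), reduce to \( U = \prod_{k=1}^{n} U_{i_{k}} \times \prod_{j \notin \{i_{1},\ldots,i_{n}\}} E_{j} \); apply AP in each \( E_{i_{k}} \) to the precompact projection \( p_{i_{k}}(K) \) to get \( T_{i_{k}} \in \mathfrak{F}(E_{i_{k}}) \), and glue by \( T := \sum_{k} j_{i_{k}} T_{i_{k}} p_{i_{k}} \), where \( j_{i} \) is the canonical injection. For locally convex direct sums \( E = \bigoplus_{i} E_{i} \), a precompact (hence bounded) set \( K \) is confined to a finite subsum \( \bigoplus_{i \in I_{0}} E_{i} \) by Proposition~\ref{prop:746e91e6638b3d9d}; writing \( U = \Gamma(\cup_{i} W_{i}) \) as in Proposition~\ref{prop:8c1bccabeb0c2d99} and setting \( n := \#I_{0} \), apply AP on each \( E_{i} \) with the \emph{rescaled} target \( W_{i}/n \) and sum the lifts \( j_{i} T_{i} p_{i} \); the factor \( n \) is then reabsorbed by absolute convexity of \( U \).

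For countable strict inductive limits \( E = \varinjlim E_{n} \), I invoke the standard Dieudonné--Schwartz-type fact that a bounded (hence a precompact) subset of \( E \) is contained in, and precompact in, some \( E_{n} \), consistent with Proposition~\ref{prop:cdd9376419588ce4}. Since \( U \cap E_{n} \in \mathcal{N}^{E_{n}}_{\Gamma}(0) \), AP of \( E_{n} \) supplies \( T = \sum_{k} \omega_{k} \otimes x_{k} \in \mathfrak{F}(E_{n}) \) with \( (T - \mathrm{id})(K) \subseteq U \cap E_{n} \); extend each \( \omega_{k} \in E_{n}' \) to \( \widetilde{\omega}_{k} \in E' \) by Hahn--Banach and form \( \widetilde{T} := \sum_{k} \widetilde{\omega}_{k} \otimes x_{k} \in \mathfrak{F}(E) \), which still satisfies \( (\widetilde{T} - \mathrm{id})(K) \subseteq U \) on \( K \subseteq E_{n} \).

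The main obstacle is the reduced projective limit case, since AP is \emph{not} inherited by closed subspaces of a product. For \( E = \varprojlim E_{\alpha} \) reduced, take \( U = p_{\alpha}^{-1}(V_{\alpha}) \); AP in \( E_{\alpha} \) furnishes \( T_{\alpha} = \sum_{k=1}^{n} \omega_{k} \otimes y_{k} \) with \( (T_{\alpha} - \mathrm{id})(p_{\alpha}(K)) \subseteq V_{\alpha}/2 \). Crucially, the density of \( p_{\alpha}(E) \) in \( E_{\alpha} \) — the ``reduced'' hypothesis — lets one pick \( x_{k} \in E \) with \( p_{\alpha}(x_{k}) - y_{k} \in V_{\alpha}/(2nM) \), where \( M \) uniformly bounds \( |\omega_{k}| \) across \( k \) on the bounded set \( p_{\alpha}(K) \). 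Setting \( T := \sum_{k} (\omega_{k} \circ p_{\alpha}) \otimes x_{k} \in \mathfrak{F}(E) \), expansion of \( p_{\alpha}(T(x) - x) \) splits as \( (T_{\alpha}(p_{\alpha} x) - p_{\alpha} x) + \sum_{k} \omega_{k}(p_{\alpha} x)(p_{\alpha}(x_{k}) - y_{k}) \); the first summand lies in \( V_{\alpha}/2 \) by choice of \( T_{\alpha} \), and the second lies in \( V_{\alpha}/2 \) by absolute convexity of \( V_{\alpha} \) and the bound on the coefficients. Hence \( T(x) - x \in U \) for all \( x \in K \), completing the argument.
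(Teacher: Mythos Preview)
The paper does not supply its own proof of this proposition; it simply cites K\"othe's monograph \cite{MR0551623}*{pp245--247}. Your direct argument is correct and follows the standard route that K\"othe takes: reduce to approximating $\id_{E}$ in $\mathcal{L}_{c}(E)$, then manufacture the finite-rank approximant from AP on the pieces. The four cases are handled exactly as one expects --- finitely many coordinates control a basic neighborhood in a product; precompact (hence bounded) sets in a direct sum live in a finite subsum (Proposition~\ref{prop:746e91e6638b3d9d}); bounded sets in a strict countable inductive limit are trapped in some stage, where strictness of the embedding makes precompactness transfer down; and for reduced projective limits, density of $p_{\alpha}(E)$ is precisely what lets you lift the targets $y_{k}$ of the local approximant $T_{\alpha}$ back to $E$, with the $V_{\alpha}/(2nM)$ accuracy absorbing the perturbation.

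Two cosmetic points. In the reduced projective limit case you divide by $M$, so you should note that if $M=0$ then $T_{\alpha}$ already vanishes on $p_{\alpha}(K)$, whence $-p_{\alpha}(K) \subseteq V_{\alpha}/2$ and $T=0$ works trivially. Also, your reduction to a single $U = p_{\alpha}^{-1}(V_{\alpha})$ uses that the index set is directed (so finite intersections of such sets are dominated by a single one); this is implicit in the definition of projective system in the paper, but worth a word.
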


\begin{prop}[\cite{MR0551623}*{p256, (3)}]
  \label{prop:91bfdedd58d67453}
  Let \( K \) be a compact space. The Banach space \( C(K) \) has the
  approximation property.
\end{prop}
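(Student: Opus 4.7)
The plan is to prove directly, by hand, that the identity on $C(K)$ lies in the closure of $\mathfrak{F}(C(K))$ in $\mathcal{L}_{c}(C(K))$, using a partition-of-unity argument. Recall that by the convention in Notation~\ref{nota:ee765bb793c0ed9f}, the subscript $c$ here stands for the topology of \emph{precompact} convergence, and since $C(K)$ is a Banach space (hence complete) its precompact subsets coincide with its relatively compact subsets.

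Fix a precompact set $H \subseteq C(K)$ and an $\varepsilon > 0$. The first key input is Ascoli's theorem (Theorem~\ref{theo:957848d2ddc8435e}) applied to the family $H$ of continuous functions from the uniform space $K$ (unique uniform structure by Theorem~\ref{theo:af81aff906fa7aa4}) to $\mathbb{K}$, equipped with the topology of uniform convergence on $K$, which is the single-element covering $\mathfrak{S}=\{K\}$: since $H$ is precompact in $C(K)=\mathcal{F}_{\mathfrak{S},u}(K,\mathbb{K})$, Ascoli gives that $H$ is (uniformly) equicontinuous and $H(x)$ is bounded in $\mathbb{K}$ for each $x\in K$. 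Equicontinuity at the level of the uniform space $K$ means: there exists a finite open cover $U_{1},\ldots,U_{n}$ of $K$ such that for every $f\in H$ and every $x,y$ in a common $U_{i}$, $\abs{f(x)-f(y)}<\varepsilon$.

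The second ingredient is the normality of compact Hausdorff spaces: there exists a continuous partition of unity $\varphi_{1},\ldots,\varphi_{n}\in C(K)$ with $\supp(\varphi_{i})\subseteq U_{i}$, $\varphi_{i}\geq 0$, and $\sum_{i}\varphi_{i}=1$. Pick a point $x_{i}\in U_{i}$ for each $i$ and define
\begin{equation*}
    T : C(K) \to C(K), \qquad Tf = \sum_{i=1}^{n} f(x_{i})\,\varphi_{i}.
\end{equation*}
Then $T=\sum_{i}\varphi_{i}\otimes \delta_{x_{i}}$ under the embedding $C(K)\odot C(K)'\hookrightarrow \mathcal{L}(C(K))$, so $T\in \mathfrak{F}(C(K))$. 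For any $x\in K$, writing $I(x)=\{i:\varphi_{i}(x)\neq 0\}$, one has $x\in U_{i}$ for each $i\in I(x)$, hence $\abs{f(x_{i})-f(x)}<\varepsilon$ whenever $f\in H$ and $i\in I(x)$. Therefore
\begin{equation*}
    \abs{Tf(x)-f(x)} = \Bigl|\sum_{i\in I(x)}(f(x_{i})-f(x))\varphi_{i}(x)\Bigr|
    \leq \sum_{i\in I(x)}\abs{f(x_{i})-f(x)}\varphi_{i}(x) < \varepsilon,
\end{equation*}
uniformly in $x\in K$ and $f\in H$, which gives $\sup_{f\in H}\norm{Tf-f}_{\infty}\leq \varepsilon$. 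Since $H$ and $\varepsilon$ were arbitrary, $\id_{C(K)}$ lies in the closure of $\mathfrak{F}(C(K))$ in $\mathcal{L}_{c}(C(K))$.

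The only real obstacle is the extraction of the cover $\{U_{i}\}$ from precompactness of $H$; this is precisely what Ascoli (in its uniform-space form recalled above) provides, so the proof reduces to the standard partition-of-unity computation. I do not foresee any additional technical difficulty.
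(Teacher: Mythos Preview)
Your argument is correct and is the classical partition-of-unity proof that $C(K)$ has the approximation property. The paper itself does not supply a proof of this proposition: it is stated with a bare citation to K\"othe \cite{MR0551623}*{p256, (3)} and no proof environment follows. So there is nothing to compare against in the paper proper; your write-up is a clean, self-contained replacement for that citation.

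One small point of phrasing: when you write ``Equicontinuity at the level of the uniform space $K$ means: there exists a finite open cover $U_{1},\ldots,U_{n}$ \ldots'', this compresses two steps into one. Uniform equicontinuity of $H$ gives an entourage $V$ with $\abs{f(x)-f(y)}<\varepsilon$ for all $f\in H$ whenever $(x,y)\in V$; choosing a symmetric entourage $W$ with $W\circ W\subseteq V$ and then covering $K$ by finitely many $W(x_{i})$'s (using compactness) yields the $U_{i}$'s with the stated property. This is routine and does not affect the validity of the argument, but if you want the proof to be fully readable you might spell out that intermediate step.
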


We will also have the occasion to use the fact that the Banach space
\( \ell^{1}(X) \) on a discrete set \( X \) equipped with the counting measure,
has the approximate property.  For this, we track a more general result.

We adopt the convention as in \cite{hewitt2012abstract}*{Chapter~III}. Let
\( R \) be a locally convex space, and \( \mu \) a Radon measure on \( R \). The
Banach spaces \( L^{p}(X, \mu) \), \( 1 \leq p \leq \infty \) is defined as
usual, but we modulo out functions that are \emph{locally almost} null. This
yields the same \( L^{p}(X, \mu) \) spaces (ones that modulo out functions that
are almost null), when \( 1 \leq p < +\infty \), but there can be some
difference for \( L^{\infty}(X, \mu) \) due to the possible existence of sets
that are locally \( \mu \)-null, but not \( \mu \)-null. For details, see
\cite{hewitt2012abstract}*{\S~135}.

\begin{prop}[\cite{MR0551623}*{p259, (10) \& (11)}]
  \label{prop:0f7d595efc04bfe4}
  Let \( \mu \) be a Radon measure on a locally compact space \( R \). Then
  \( L^{p}(X, \mu) \) has the approximation property for every
  \( p \in \interval{1}{+\infty} \).
\end{prop}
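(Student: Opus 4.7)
The plan is to verify the approximation property directly from the definition, i.e.\ for each compact set $K \subseteq L^{p}(X, \mu)$ and each $\varepsilon > 0$, to exhibit a continuous finite-rank operator $T$ on $L^{p}(X, \mu)$ with $\sup_{f \in K} \norm*{Tf - f}_{p} \leq \varepsilon$. I would handle the cases $1 \leq p < +\infty$ and $p = +\infty$ separately, since the former admits a direct measure-theoretic construction while the latter requires a detour.

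For $1 \leq p < +\infty$, the strategy is to use conditional expectations onto finite $\sigma$-algebras. Since $K$ is compact, hence totally bounded, pick a finite $\varepsilon/3$-net $f_{1}, \ldots, f_{n} \in K$. By the density of $\mu$-simple functions with support of finite measure in $L^{p}(X, \mu)$ (a standard fact for Radon measures on locally compact spaces, provided $p < \infty$), each $f_{i}$ can be $\varepsilon/3$-approximated in $L^{p}$-norm by some $g_{i} = \sum_{j} c_{ij}\chi_{A_{ij}}$ with the $A_{ij}$ Borel sets of finite positive measure. Take a common refinement $\set*{B_{1}, \ldots, B_{m}}$ of the family $\set*{A_{ij}}$ (together with the complement of their union, if necessary, restricted to a set of finite measure carrying the $g_{i}$'s), and set
\begin{equation*}
  Tf := \sum_{j=1}^{m} \frac{1}{\mu(B_{j})}\left(\int_{B_{j}} f \, d\mu\right)\chi_{B_{j}}.
\end{equation*}
Then $T$ is a continuous linear operator of rank $\leq m$, is $L^{p}$-contractive (being the conditional expectation onto the finite $\sigma$-algebra generated by the $B_{j}$), and fixes each $g_{i}$. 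A routine $\varepsilon/3$-triangle-inequality argument, using that $\norm*{Tf - Tf_{i}}_{p} \leq \norm*{f - f_{i}}_{p}$, then yields $\norm*{Tf - f}_{p} \leq \varepsilon$ uniformly for $f \in K$.

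For $p = +\infty$, the preceding construction breaks down because simple functions are not $L^{\infty}$-norm dense in general, and the conditional expectation argument gives only weak-$*$ approximation. The cleanest route is to invoke the Gelfand-type representation that identifies $L^{\infty}(X, \mu)$ isometrically with the Banach space $C(K)$ for a suitable hyperstonean compact space $K$ attached to the measure algebra of $\mu$, and then to apply Proposition~\ref{prop:91bfdedd58d67453}, which already grants the approximation property to every $C(K)$.

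The main obstacle is the $p = +\infty$ case: a hands-on construction of finite-rank approximants on compact subsets of $L^{\infty}$ is not as transparent as the conditional expectation argument above, and one must either accept the Gelfand-representation detour or else carefully use compactness in $L^{\infty}$-norm (which is a very strong condition) to locate a finite measurable partition on which each element of the compact set is nearly constant. The $1 \leq p < +\infty$ case, by contrast, is essentially bookkeeping once one has the correct contractive finite-rank operators in hand.
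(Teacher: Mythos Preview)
The paper does not supply its own proof of this proposition: it is stated with a bare citation to K\"othe \cite{MR0551623}*{p259, (10) \& (11)}, so there is nothing in the paper to compare your argument against line by line.

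Your argument for \( 1 \leq p < +\infty \) is the standard one and is correct in outline. The operator \( T \) you write down is the conditional expectation onto a finite sub-\(\sigma\)-algebra, hence finite-rank and \( L^{p} \)-contractive by Jensen's inequality; it fixes each \( g_{i} \) by construction. One bookkeeping point: with an \(\varepsilon/3\)-net and \(\varepsilon/3\)-approximants \( g_{i} \), the final bound comes out as \( 4\varepsilon/3 \) rather than \( \varepsilon \), since controlling \( \norm*{Tf_{i}-f_{i}}_{p} \) costs two applications of the contractivity. This is harmless but worth tightening.

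Your treatment of \( p = +\infty \) via the Gelfand representation \( L^{\infty}(X,\mu) \simeq C(K) \) and Proposition~\ref{prop:91bfdedd58d67453} is legitimate and is in fact how K\"othe handles this case as well (his item (11) reduces \( L^{\infty} \) to a \( C(K) \)). You should, however, note the remark immediately preceding the proposition in the paper: the \( L^{\infty} \) in use is formed by modding out \emph{locally} null functions, which can differ from the usual quotient. This does not break the argument, since the resulting space is still a commutative unital \( C^{\ast} \)-algebra (or its real part), but it is worth a sentence acknowledging the convention.
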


\begin{prop}[\cite{MR0342978}*{p110, Corollary~2}]
  \label{prop:0bc5d06c0f42ff63}
  Every nuclear space has the approximation property.
\end{prop}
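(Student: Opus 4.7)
The plan is to proceed in two phases: first reduce the approximation property to the single task of approximating $\operatorname{id}_E$ by elements of $\mathfrak{F}(E)$ in the topology of $\mathcal{L}_c(E)$, and then exploit the defining equality $E \otimes_\pi F = E \otimes_\varepsilon F$ (for all $F$) of nuclearity through a duality argument.

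For the reduction, I would observe that a basic neighborhood of any $T \in \mathcal{L}_c(E)$ is of the form $M(K, V) := \{S : (S - T)(K) \subseteq V\}$ with $K \subseteq E$ precompact and $V \in \mathcal{N}_\Gamma^E(0)$. Continuity of $T$ produces $W \in \mathcal{N}_\Gamma^E(0)$ with $T(W) \subseteq V$; if $P \in \mathfrak{F}(E)$ satisfies $P(x) - x \in W$ for all $x \in K$, then $T \circ P$ is again of finite rank and lies in $M(K, V)$. Hence it suffices to show $\operatorname{id}_E \in \overline{\mathfrak{F}(E)}$ in $\mathcal{L}_c(E)$.

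For the main step I would invoke the bipolar theorem (Theorem~\ref{theo:45bc53b6ce4e0dc3}) applied to the canonical dual pair $\langle \mathcal{L}_c(E), \mathcal{L}_c(E)' \rangle$: the statement $\operatorname{id}_E \in \overline{\mathfrak{F}(E)}$ amounts to showing that every $\Phi \in \mathcal{L}_c(E)'$ vanishing on $\mathfrak{F}(E)$ satisfies $\Phi(\operatorname{id}_E) = 0$. The dual $\mathcal{L}_c(E)'$ admits a Grothendieck-style description in terms of completed tensor products of $E$ and $E'$: functionals of the form $T \mapsto \omega(T x)$ with $x$ ranging over a precompact set in $E$ and $\omega \in E'$ span $\mathcal{L}_c(E)'$, and these represent elements of $E \overline{\otimes} E'_c$ for an appropriate compatible tensor topology. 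Vanishing on $\mathfrak{F}(E)$ is an $\varepsilon$-type condition (since the identification of finite-rank operators with elementary tensors uses bi-equicontinuous convergence, cf.\ Proposition~\ref{prop:6a4e157f8736cd87}), while $\Phi(\operatorname{id}_E) = 0$ is a trace-like condition naturally encoded by the $\pi$-tensor product. Nuclearity (Definition~\ref{defi:a13a9b24c4226f4e}) forces the $\pi$- and $\varepsilon$-pictures to coincide, which yields the required implication.

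The main obstacle will be the careful identification of $\mathcal{L}_c(E)'$ and the precise matching of the trace pairing under the $\pi = \varepsilon$ isomorphism furnished by nuclearity: the identity operator does not literally sit in $E \otimes E'$, so the argument must pass through a completion and a density step that relies essentially on the equality of topologies. An alternative, perhaps cleaner route — which the author may well adopt — is to first establish an equivalent characterization of nuclearity through continuous seminorms that factor via nuclear maps of associated Banach (or Hilbert) spaces, and then use that nuclear maps between Banach spaces are limits of finite-rank maps in operator norm, lifting this approximation back to finite-rank approximations of $\operatorname{id}_E$ in $\mathcal{L}_c(E)$.
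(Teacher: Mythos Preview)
The paper does not prove this proposition at all: it is stated with a bracketed citation to Schaefer's book \cite{MR0342978}*{p110, Corollary~2} and no proof is supplied, in keeping with the stated policy in \S\ref{sec:63dd23720298d074} of recording needed facts with explicit references rather than reproving them. So there is no ``paper's own proof'' to compare against.

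Your sketch is a legitimate outline of a proof, and the second route you mention (factoring continuous seminorms through nuclear maps between Banach or Hilbert spaces, then using that nuclear maps on Banach spaces are operator-norm limits of finite-rank maps) is essentially how the cited reference proceeds. Your first route via the bipolar theorem and the identification of $\mathcal{L}_{c}(E)'$ with a completed tensor product is also classical (this is Grothendieck's original criterion for the approximation property), but as you correctly flag, the precise identification of $\mathcal{L}_{c}(E)'$ and the passage to the trace pairing require care that goes beyond what the paper has set up; in particular one needs something like $\mathcal{L}_{c}(E)' \simeq E'_{c} \overline{\otimes}_{\pi} E$ under hypotheses the paper has not isolated. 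If you were to write a self-contained proof here, the second route is the cleaner choice and matches the cited source.
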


\begin{prop}[\cite{MR0551623}*{p248, (11)}]
  \label{prop:273d1ea40b816ac9}
  Let \( E \) be a Mackey space such that both \( E \) and \( E'_{c} \) are
  quasi-complete (so that it is polar reflexive by
  Proposition~\ref{prop:6ac1bb9a146288e4}), then \( E \in (\mathcal{AP}) \) if
  and only if \( E_{c} \in (\mathcal{AP}) \). This applies in particular when
  \( E \) is an \( (F) \)-space.
\end{prop}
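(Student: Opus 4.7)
The plan is to use polar reflexivity to identify $E$ with $(E'_{c})'_{c}$, and then show that transposition $T \mapsto T'$ induces a homeomorphism $\mathcal{L}_{c}(E) \simeq \mathcal{L}_{c}(E'_{c})$ that carries finite rank operators to finite rank operators; the equivalence of approximation properties is then immediate. The last sentence about $(F)$-spaces follows because by Proposition~\ref{prop:6ac1bb9a146288e4}, any $(F)$-space is quasi-complete, Mackey, and has a quasi-complete polar dual.

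First, I would verify that transposition yields a well-defined bijection $\Phi : \mathcal{L}(E) \to \mathcal{L}(E'_{c})$, $T \mapsto T'$. For any continuous linear $T : E \to E$, the algebraic transpose restricts to a linear map $T' : E' \to E'$ because $T$ is weakly continuous; and by Proposition~\ref{prop:cdb6e313569d3c6e}, $T$ maps precompact sets of $E$ to precompact sets of $E$, so $T' : E'_{c} \to E'_{c}$ is continuous. Polar reflexivity (Proposition~\ref{prop:a6c89f0b75eda9eb}) identifies $E = (E'_{c})'_{c}$, so the same construction applied in reverse provides the two-sided inverse of $\Phi$. Moreover, under the linear identification $E \odot E' \simeq E' \odot E$, $\Phi$ sends $\mathfrak{F}(E)$ bijectively onto $\mathfrak{F}(E'_{c})$ (an operator $x \otimes \omega$ has transpose $\omega \otimes x$ after the polar reflexive identification).

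Next comes the key step: showing that $\Phi$ is a topological isomorphism $\mathcal{L}_{c}(E) \to \mathcal{L}_{c}(E'_{c})$. By polar reflexivity, a fundamental system of neighborhoods of $0 \in E$ is given by polars $K^{\circ}$ of precompact sets $K \subseteq E'_{c}$, and dually for $E'_{c}$. Convergence $T_{\alpha} \to T$ in $\mathcal{L}_{c}(E)$ means that for every precompact $C \subseteq E$ and every neighborhood $U \subseteq E$ of $0$, eventually $(T_{\alpha} - T)(C) \subseteq U$; taking $U = K^{\circ}$ with $K \subseteq E'_{c}$ precompact, this unravels via the defining duality $|\langle \phi, (T_{\alpha} - T)x\rangle| = |\langle T'_{\alpha}\phi - T'\phi, x\rangle|$ to the statement that $(T'_{\alpha} - T')(K) \subseteq C^{\circ}$ eventually, i.e., $T'_{\alpha} \to T'$ in $\mathcal{L}_{c}(E'_{c})$. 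By the symmetric argument using $E = (E'_{c})'_{c}$, the converse also holds, so $\Phi$ is a homeomorphism.

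With these two ingredients the result follows: since $\Phi$ is a homeomorphism sending $\mathfrak{F}(E)$ onto $\mathfrak{F}(E'_{c})$, the former is dense in $\mathcal{L}_{c}(E)$ if and only if the latter is dense in $\mathcal{L}_{c}(E'_{c})$, which is exactly the desired equivalence. The main obstacle is purely bookkeeping: one has to be careful to consistently use polar reflexivity so that the topology of precompact convergence on $E$ genuinely matches the polars of precompact subsets of the dual, which is what allows the duality pairing trick to translate convergence back and forth. Once that identification is in place, the rest of the argument is a direct consequence of the bipolar theorem and the duality pairing.
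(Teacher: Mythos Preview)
Your argument is correct and is the natural one: polar reflexivity makes the transpose map $T\mapsto T'$ a bijection $\mathcal{L}(E)\to\mathcal{L}(E'_{c})$, and the computation showing that the basic neighborhoods $\{T:T(C)\subseteq K^{\circ}\}$ (with $C\in\mathfrak{C}(E)$, $K\in\mathfrak{C}(E'_{c})$) correspond under transposition to $\{S:S(K)\subseteq C^{\circ}\}$ is exactly what is needed for the homeomorphism. Note that the paper does not supply its own proof of this proposition; it simply cites K\"othe \cite{MR0551623}*{p248, (11)}, and the argument there follows essentially the same transposition idea you outline.
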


The following result, due to Schwartz based on his theory of
\( \varepsilon \)-products, shall be important for us.

\begin{prop}[\cite{MR0107812}*{pp46-48, Proposition~11, Corollaries 1 \& 2}]
  \label{prop:20a96aff7596345b}
  If \( E \), \( F \) are complete locally convex spaces with the approximation
  property, then \( E \overline{\otimes}_{\varepsilon} F \) has the
  approximation property.
\end{prop}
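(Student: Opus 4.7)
My plan is to follow Schwartz's original route via the $\varepsilon$-product. Recall that for complete locally convex spaces $E$ and $F$, Schwartz's $\varepsilon$-product $E\varepsilon F$ can be identified with the space $\mathcal{L}_{e}(F'_{c},E)$ of continuous linear maps from $F'_{c}$ to $E$ that are uniformly continuous on the equicontinuous subsets of $F'_{c}$, equipped with the topology of uniform convergence on equicontinuous subsets of $F'_{c}$. The canonical bilinear map $E \times F \to \mathcal{L}_{e}(F'_{c}, E)$ sending $(x,y)$ to $\omega \mapsto \omega(y)x$ induces a continuous embedding $E \overline{\otimes}_{\varepsilon} F \hookrightarrow E\varepsilon F$, and a standard result of Schwartz asserts that when one of the two factors has the approximation property, this embedding is an isomorphism of locally convex spaces. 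Thus under our hypotheses, it suffices to prove that $E\varepsilon F$ enjoys the approximation property.

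The next step is to identify the ``elementary'' finite rank operators on $E\varepsilon F$. Given finite rank continuous endomorphisms $u \in \mathfrak{F}(E)$ and $v \in \mathfrak{F}(F)$, the map
\begin{equation*}
u \varepsilon v : E\varepsilon F \to E\varepsilon F, \qquad \varphi \mapsto u \circ \varphi \circ {}^{t}v,
\end{equation*}
is continuous and has finite rank: indeed, if $u = \sum_{i=1}^{m} x_{i} \otimes f_{i}$ and $v = \sum_{j=1}^{n} y_{j} \otimes g_{j}$ with $x_{i} \in E$, $y_{j} \in F$, $f_{i} \in E'$, $g_{j} \in F'$, then the image of $u\varepsilon v$ lies in the finite-dimensional subspace of $E\varepsilon F$ generated by the elementary tensors $x_{i}\otimes y_{j}$. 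So the task reduces to showing that the identity on $E\varepsilon F$ lies in the closure of the set of such $u\varepsilon v$ in $\mathcal{L}_{c}(E\varepsilon F)$.

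To verify this, fix a precompact set $K \subseteq E\varepsilon F$ and a neighborhood $W$ of $0$ in $E\varepsilon F$; by the description of its topology, we may take $W$ of the form $\{\varphi : \varphi(B^{\circ}) \subseteq U\}$ for some $U \in \mathcal{N}_{\Gamma}^{E}(0)$ and some $B \in \mathcal{N}_{\Gamma}^{F}(0)$. Bourbaki's Ascoli theorem (Theorem~\ref{theo:957848d2ddc8435e}) applied to $K$ viewed as a family of maps $F'_{c} \to E$ shows that $A := \bigcup_{\varphi \in K} \varphi(B^{\circ})$ is precompact (hence relatively compact, using completeness) in $E$, and a symmetric argument produces a precompact $A' \subseteq F$ capturing the ``other side''. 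Apply the approximation property of $E$ to $A$ and $U$ to obtain $u \in \mathfrak{F}(E)$ with $ux - x \in U$ for all $x \in A$, and the approximation property of $F$ to $A'$ with a dually chosen neighborhood to obtain a matching $v \in \mathfrak{F}(F)$. A direct estimate using a three-term decomposition $\varphi - u\circ\varphi\circ {}^{t}v = (\varphi - u\varphi) + u(\varphi - \varphi\circ {}^{t}v)$ then shows that $(u\varepsilon v)(\varphi) - \varphi \in W$ uniformly for $\varphi \in K$, as desired.

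The main obstacle is the last step: one must make the two halves of the approximation fit together quantitatively. The awkwardness is that the approximation $u$ on $E$ depends on the choice of neighborhood $U$ but also on the precompact set $A$ that is itself determined by the choice of $v$; one has to choose $v$ first (relative to $U$ and to continuity estimates for all $\varphi \in K$ given by equicontinuity on $B^{\circ}$), then enlarge the relevant precompact set in $E$ by the image $u(A')$-type contribution before selecting $u$. Handling this in the correct order, together with invoking equicontinuity of $K$ (which is what Ascoli's theorem gives us) at the right moment, is what makes the argument work and is the only subtle point.
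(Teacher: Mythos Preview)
The paper does not supply its own proof of this proposition: it is recorded purely as a citation to Schwartz \cite{MR0107812}*{pp46--48, Proposition~11, Corollaries~1 \&~2}, so there is no in-paper argument to compare against. Your sketch is precisely Schwartz's route via the $\varepsilon$-product $E\varepsilon F = \mathcal{L}_{e}(F'_{c},E)$: identify $E\overline{\otimes}_{\varepsilon}F$ with $E\varepsilon F$ under the approximation-property hypothesis, then approximate $\id_{E\varepsilon F}$ by finite-rank operators of the form $\varphi \mapsto u\circ\varphi\circ{}^{t}v$ with $u\in\mathfrak{F}(E)$, $v\in\mathfrak{F}(F)$, using Ascoli to turn precompactness of $K\subseteq E\varepsilon F$ into precompactness of the relevant ``orbit'' sets in $E$ and $F$.

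Your outline is sound and the decomposition $(\varphi - u\varphi) + u(\varphi - \varphi\circ{}^{t}v)$ is the right one. The order-of-choices issue you flag in the last paragraph is exactly the point where a written-out proof needs care: one fixes $B^{\circ}$ and $U$, uses Ascoli (and the symmetry $E\varepsilon F \cong F\varepsilon E$) to extract precompact sets in both $E$ and $F$, chooses $v$ first so that $\varphi - \varphi\circ{}^{t}v$ maps $B^{\circ}$ into a prescribed small set uniformly over $K$, and only then chooses $u$ close to $\id_{E}$ on a precompact set large enough to absorb both $\bigcup_{\varphi\in K}\varphi(B^{\circ})$ and the finite-dimensional range contributions coming from $v$. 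You have identified this correctly; filling it in is routine once the sets are named. So: correct approach, matching the cited source, with the one delicate step honestly acknowledged.
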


\subsection{Montel spaces}
\label{sec:053293c5d037bcf2}

\begin{defi}
  \label{defi:817190e978bef08d}
  A locally convex space \( E \) is called a \textbf{Montel space}, or simply
  Montel, if it is barrelled and all bounded sets are relatively compact.
\end{defi}

\begin{rema}
  \label{rema:d35dad705aace610}
  It follows immediately from the definition that all Montel spaces are
  \textbf{quasi-complete}. However, they \emph{need not be complete}, cf.\
  \cite{MR0665588}*{p48, Example~32}.
\end{rema}

\begin{nota}
  \label{nota:b75f85ba53697a31}
  Montel spaces are also called \( (M) \)-spaces, and the class of all Montel
  spaces is denoted by \( (\mathcal{M}) \). An \( (F) \)-space that is also an
  \( (M) \)-space is called an \( (FM) \) space, and \( (\mathcal{FM}) \)
  denotes the class of all \( (FM) \)-spaces.
\end{nota}

\begin{prop}
  \label{prop:99b4fc84a9b1d8a1}
  In a Montel space \( E \), for an arbitrary \( A \subseteq E \), the following
  are equivalent:
  \begin{enumerate}
  \item \label{item:d13964f29d418d8b} \( A \) is bounded;
  \item \label{item:62ef31ffaf7581d4} \( A \) is relatively compact;
  \item \label{item:7e85aadfe8bb3f2e} \( A \) is weakly-bounded;
  \item \label{item:977c46a543af65bf} \( A \) is weakly relatively compact;
  \item \label{item:9a1231cdc2cc918c} \( A \) is equicontinuous with respect to
    the evaluation pairing \( \pairing*{E}{E'} \), where \( E' \) is equipped
    with the strong topology.
  \end{enumerate}
  In particular, \( E'_{c} = E'_{b} = E'_{\tau} \), i.e.\ all Montel spaces are
  Mackey, for which the strong and polar duals are the same.
\end{prop}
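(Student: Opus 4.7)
My plan is to exploit the definition of Montel space together with Mackey's theorem (Corollary~\ref{coro:6ce070ab8838d3f8}) and reflexivity, breaking the equivalences into a short cycle.

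First I would establish the cycle \ref{item:d13964f29d418d8b} $\Leftrightarrow$ \ref{item:62ef31ffaf7581d4} $\Leftrightarrow$ \ref{item:7e85aadfe8bb3f2e} $\Leftrightarrow$ \ref{item:977c46a543af65bf}. The implication \ref{item:d13964f29d418d8b} $\Rightarrow$ \ref{item:62ef31ffaf7581d4} is just the defining property of a Montel space, and \ref{item:62ef31ffaf7581d4} $\Rightarrow$ \ref{item:d13964f29d418d8b} is the standard fact that relatively compact sets are bounded. The equivalence \ref{item:d13964f29d418d8b} $\Leftrightarrow$ \ref{item:7e85aadfe8bb3f2e} is immediate from Corollary~\ref{coro:6ce070ab8838d3f8} applied to the canonical dual pair $\pairing*{E}{E'}$. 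Finally, \ref{item:62ef31ffaf7581d4} $\Rightarrow$ \ref{item:977c46a543af65bf} holds since $\sigma(E, E')$ is coarser than the original topology, and \ref{item:977c46a543af65bf} $\Rightarrow$ \ref{item:7e85aadfe8bb3f2e} is the standard implication that weakly compact sets are weakly bounded.

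Next, combining the equivalence of \ref{item:d13964f29d418d8b} and \ref{item:977c46a543af65bf} with the assumption that $E$ is barrelled, Proposition~\ref{prop:6ac1bb9a146288e4}\ref{item:3ed7b6a910677afe} gives that $E$ is reflexive; in particular the canonical map $E \to (E'_b)'_b$ is an isomorphism of locally convex spaces. Now for \ref{item:d13964f29d418d8b} $\Leftrightarrow$ \ref{item:9a1231cdc2cc918c}, view $A \subseteq E$ as a subset of $(E'_b)'$. By Proposition~\ref{prop:f9256f38e904319e} applied to $E'_b$, equicontinuous sets in $(E'_b)' = E$ are precisely those contained in polars (taken in $E$) of neighborhoods of $0$ in $E'_b$; since a fundamental system of such neighborhoods is given by polars $B^\circ$ of bounded sets $B \subseteq E$, we see that $A$ is equicontinuous iff $A \subseteq B^{\circ\circ}$ for some bounded $B$. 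By the bipolar theorem (Theorem~\ref{theo:45bc53b6ce4e0dc3}), $B^{\circ\circ}$ is the weakly closed absolutely convex hull of $B$, which is again bounded by the equivalence \ref{item:d13964f29d418d8b} $\Leftrightarrow$ \ref{item:7e85aadfe8bb3f2e}. Conversely, if $A$ is bounded then $A^\circ$ is by definition a neighborhood of $0$ in $E'_b$, and $A \subseteq A^{\circ\circ}$, so $A$ is equicontinuous.

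For the final identities of dual topologies, I would argue as follows. Since a Montel space is reflexive, the strong topology $b(E', E)$ on $E'$ is compatible with the pairing $\pairing*{E'}{E}$, hence by Mackey's theorem (Theorem~\ref{theo:52188c82b45e07e3}) it is coarser than $\tau(E', E)$; but every $\sigma(E, E')$-compact set in $E$ is in particular bounded, so $\tau(E', E)$ is coarser than $b(E', E)$, giving $E'_b = E'_\tau$. Finally, the equivalence \ref{item:d13964f29d418d8b} $\Leftrightarrow$ \ref{item:62ef31ffaf7581d4} shows that $\mathfrak{B}(E) = \mathfrak{C}(E)$, so these bornologies trivially have the same saturation and Proposition~\ref{prop:2ac8943873ea3985} yields $E'_c = E'_b$. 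The main subtlety, really the only non-bookkeeping step, is the equivalence \ref{item:d13964f29d418d8b} $\Leftrightarrow$ \ref{item:9a1231cdc2cc918c}, where one must correctly interpret equicontinuity in terms of the bidual and apply the bipolar theorem; everything else is an immediate consequence of earlier propositions.
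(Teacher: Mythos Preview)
Your proof is correct and follows essentially the same skeleton as the paper's: the equivalence of \ref{item:d13964f29d418d8b}--\ref{item:977c46a543af65bf} is handled identically, and reflexivity is invoked via Proposition~\ref{prop:6ac1bb9a146288e4} in the same way. The one substantive difference is your treatment of \ref{item:9a1231cdc2cc918c}. The paper argues \ref{item:9a1231cdc2cc918c} $\Rightarrow$ \ref{item:977c46a543af65bf} by a direct appeal to Alaoglu (Theorem~\ref{theo:bf2abf9c9ce8aa7a}), whereas you unpack equicontinuity in $(E'_b)'$ via polars of strong neighbourhoods and close the loop with the bipolar theorem; your route has the advantage of making both directions \ref{item:d13964f29d418d8b} $\Leftrightarrow$ \ref{item:9a1231cdc2cc918c} explicit, while the paper's printed proof in fact argues \ref{item:9a1231cdc2cc918c} $\Rightarrow$ \ref{item:977c46a543af65bf} twice and never writes down the converse. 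Your derivation of $E'_c = E'_b = E'_\tau$ is also more detailed than the paper's (which simply asserts it once \ref{item:d13964f29d418d8b}--\ref{item:977c46a543af65bf} are in hand), and your use of Proposition~\ref{prop:2ac8943873ea3985} for $E'_c = E'_b$ is clean. One minor remark: the identification $\mathfrak{B}(E) = \mathfrak{C}(E)$ uses that in a Montel (hence quasi-complete) space, bounded sets are relatively compact and therefore precompact, while precompact sets are always bounded; you might state this explicitly since \ref{item:62ef31ffaf7581d4} speaks of relative compactness rather than precompactness.
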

\begin{proof}
  The equivalence of \ref{item:d13964f29d418d8b} and \ref{item:7e85aadfe8bb3f2e}
  holds for any locally convex space \( E \) by the general theory of locally
  convex spaces (see, e.g.\ \cite{MR3154940}). Since bounded sets are relatively
  compact in a Montel space and the converse always holds in a locally convex
  space, we also have equivalence between \ref{item:d13964f29d418d8b} and
  \ref{item:62ef31ffaf7581d4}. Clearly \ref{item:62ef31ffaf7581d4} implies
  \ref{item:977c46a543af65bf} as weak topology has less open sets, and it is
  trivial that \ref{item:977c46a543af65bf} implies
  \ref{item:7e85aadfe8bb3f2e}. Thus all first four conditions are equivalent and
  \( E'_{c} = E'_{b} = E'_{\tau} \).

  That \ref{item:9a1231cdc2cc918c} implies \ref{item:977c46a543af65bf} follows
  from Theorem~\ref{theo:bf2abf9c9ce8aa7a}.

  Finally, since Montel spaces are reflexive by
  Proposition~\ref{prop:6ac1bb9a146288e4}, if \( A \) is equicontinuous, the
  polar \( A^{\circ} \) is a neighborhood of \( 0 \in E' \), hence the bipolar
  \( A^{\circ\circ} \) in \( E \) is weakly relatively compact (in fact, weakly
  compact) by Theorem~\ref{theo:bf2abf9c9ce8aa7a} again. Thus
  \( A \subseteq A^{\circ\circ} \) is weakly relatively compact, and
  \ref{item:9a1231cdc2cc918c} implies \ref{item:977c46a543af65bf}.
\end{proof}

We shall need the following.
\begin{prop}[\cite{MR0248498}*{pp369-370}]
  \label{prop:a017fe989c7853e0}
  Consider the class \( (\mathcal{M}) \) of all \( (M) \)-spaces.
  \begin{enumerate}
  \item \label{item:a716828b3f6c61e2} Every \( (M) \)-space is reflexive, and
    the strong dual of an \( (M) \)-space remains in \( (\mathcal{M}) \).
  \item \label{item:5e956eb0e84b6729} The class \( (\mathcal{M}) \) is stable
    under taking arbitrary products and locally convex direct sums.
  \item \label{item:e7b3dc1809350acc} Strict inductive
    limits of \emph{complete} \( M \)-spaces remains in \( (\mathcal{M}) \).
  \end{enumerate}
\end{prop}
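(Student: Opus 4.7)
The plan is to prove each of the three assertions by reducing them to the preservation of barrelledness (which is already available in Proposition~\ref{prop:7963cb2832fabbd4}) and a suitable control on bounded sets, so that the Montel characterization is obtained in each case.

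First, for \ref{item:a716828b3f6c61e2}, I would deduce reflexivity immediately from Proposition~\ref{prop:6ac1bb9a146288e4}: a Montel space $E$ is barrelled by definition, and every bounded set being relatively compact is a fortiori weakly relatively compact, giving reflexivity. For $E'_{b}$, reflexivity of $E$ yields reflexivity (hence barrelledness) of $E'_{b}$ by the same proposition. To show every bounded set $B \subseteq E'_{b}$ is relatively compact, I observe that barrelledness of $E$ implies that bounded subsets of $E'_{b}$ are equicontinuous (Banach--Steinhaus), so by Theorem~\ref{theo:bf2abf9c9ce8aa7a} they are relatively $\sigma(E',E)$-compact. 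A standard fact (essentially what underlies Theorem~\ref{theo:4b05f97fb7cc0705} in spirit) is that on equicontinuous subsets of $E'$, the topology $\sigma(E',E)$ coincides with the topology of precompact convergence on $E$; since $E$ is Montel, bounded and precompact sets in $E$ coincide (Proposition~\ref{prop:99b4fc84a9b1d8a1}), hence on $B$ the weak topology agrees with $b(E',E)$, and weak relative compactness transfers to strong relative compactness.

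For \ref{item:5e956eb0e84b6729}, the products case is handled by combining Proposition~\ref{prop:7963cb2832fabbd4}~\ref{item:97a8e5338108cf5f} (products of barrelled are barrelled) with the fact that a bounded set in $\prod_i E_i$ is contained in a product of its projections (each bounded, hence relatively compact in the respective $E_i$), and Tychonoff's theorem. For locally convex direct sums, barrelledness follows from Proposition~\ref{prop:7963cb2832fabbd4}~\ref{item:9f20dc20c94d8474}. The key structural input is Proposition~\ref{prop:746e91e6638b3d9d}: a bounded set $B \subseteq \bigoplus_i E_i$ is contained in the sum of finitely many bounded sets of the factors; each such factor being Montel, these are relatively compact, and the image of the compact product under the continuous finite sum map is compact, so $B$ is relatively compact.

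Finally, for \ref{item:e7b3dc1809350acc}, let $E = \varinjlim E_n$ be the strict inductive limit of a sequence of complete Montel spaces. By Proposition~\ref{prop:cdd9376419588ce4}, $E$ is Hausdorff and each $v_n: E_n \hookrightarrow E$ is a strict embedding onto a closed subspace. Barrelledness of $E$ again follows from Proposition~\ref{prop:7963cb2832fabbd4}~\ref{item:9f20dc20c94d8474}. The main obstacle, and the point requiring the completeness hypothesis, is the classical Dieudonn\'e--Schwartz-type lemma asserting that every bounded set $B \subseteq E$ is in fact contained in some $E_n$ and bounded there; I would invoke this explicitly (it is the crucial ingredient in \cite{MR0248498}*{pp369-370}). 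Granted this, since $E_n$ is Montel, $B$ is relatively compact in $E_n$, and its closure there is compact; because $E_n$ is closed in $E$ and the subspace topology induced from $E$ is the original one, this closure coincides with the closure of $B$ in $E$, giving relative compactness in $E$. Thus $E$ is a Montel space, completing the proof.
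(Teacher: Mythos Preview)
The paper does not supply its own proof of this proposition; it simply records the result with a reference to K\"othe \cite{MR0248498}*{pp369--370}. Your argument is correct and proceeds along the standard lines one finds in that reference: reflexivity via Proposition~\ref{prop:6ac1bb9a146288e4}, barrelledness of the constructions via Proposition~\ref{prop:7963cb2832fabbd4}, and in each case an explicit description of bounded sets (equicontinuity plus Alaoglu for the dual, Tychonoff for products, Proposition~\ref{prop:746e91e6638b3d9d} for direct sums, and the Dieudonn\'e--Schwartz absorption lemma for strict inductive limits) to recover relative compactness.

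One small remark on \ref{item:e7b3dc1809350acc}: the Dieudonn\'e--Schwartz lemma (every bounded set in a countable strict inductive limit is contained in some step) already holds under the hypotheses of Proposition~\ref{prop:cdd9376419588ce4} without assuming completeness of the $E_n$; the completeness hypothesis in the statement is how K\"othe packages the result rather than something your argument genuinely needs at that step. This does not affect the correctness of your proof.
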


The following result is due to Dieudonné.
\begin{prop}[\cite{MR0058854}, see also \cite{MR0248498}*{p370, (5)}]
  \label{prop:c9ea2347581dc852}
  Every \( (FM) \)-space is separable.
\end{prop}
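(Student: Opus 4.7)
My plan is to pass to the strong dual and reduce the problem to separability of certain Banach spaces.

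First, since $E$ is an $(F)$-space, Proposition~\ref{prop:150a19f0054c21da} yields that $E'_{b}$ is a $(DF)$-space, and Proposition~\ref{prop:a017fe989c7853e0} transfers the Montel property to $E'_{b}$. Fix a decreasing countable fundamental system $\{U_{n}\}_{n\geq 1}$ of closed absolutely convex neighborhoods of $0$ in $E$, with associated gauge seminorms $p_{n}$. Since $E$ is barrelled (being Fréchet), Banach--Steinhaus yields that equicontinuous and strongly bounded sets in $E'$ coincide, so $\{U_{n}^{\circ}\}$ is a fundamental sequence of bounded sets in $E'_{b}$, each of which is compact by Montel-ness. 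Thus $E'_{b}=\bigcup_{n}U_{n}^{\circ}$ is a countable union of compact sets.

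Next, I would show each $U_{n}^{\circ}$ is metrizable in $E'_{b}$. Two ingredients combine: since $E$ is Montel, bounded equals precompact in $E$, so $b(E',E)=c(E',E)$ on $E'$; and on any equicontinuous subset of $E'$, the topology of compact convergence coincides with $\sigma(E',E)$ (a classical Ascoli-type fact flowing from Theorem~\ref{theo:957848d2ddc8435e}). Hence on $U_{n}^{\circ}$ the $E'_{b}$-topology is simply $\sigma(E',E)$. Canonically identifying $U_{n}^{\circ}$ with the closed unit ball of $(\widehat{E_{n}})^{*}$, where $\widehat{E_{n}}$ is the Banach completion of the normed quotient $E/p_{n}^{-1}(0)$, the weak-$*$ topology on this ball is metrizable if and only if $\widehat{E_{n}}$ is separable.

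The crucial step, where the Montel hypothesis enters essentially, is proving each $\widehat{E_{n}}$ separable. The canonical continuous map $\pi_{n}\colon E\to\widehat{E_{n}}$ sends any bounded subset $B$ of $E$ to a relatively compact (hence separable) subset of $\widehat{E_{n}}$, since such $B$ is relatively compact in the Montel space $E$. To cover the dense image $\pi_{n}(E)$ up to $p_{n}$-density by countably many such images, one performs a selection argument: for each $y\in E$ and $\varepsilon>0$, modifying $y$ by a suitable element $z\in\varepsilon U_{n}$ (using the ``slack'' afforded by $\ker p_{n}$) produces a representative $y-z$ lying in one of countably many prescribed bounded sets in $E$, controlled by a diagonal construction in the Fréchet seminorms. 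This selection step I expect to be the main technical obstacle of the proof, and is the Fréchet-Montel analogue of the trivial observation in the stronger Fréchet-Schwartz setting that the map $\widehat{E_{n+1}}\to\widehat{E_{n}}$ is already compact (hence with separable range).

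With each $\widehat{E_{n}}$ separable, each $U_{n}^{\circ}$ is compact metrizable and therefore separable. Finally, separability of $E$ itself follows since Proposition~\ref{prop:438d10ce1f2149df} realizes $E=\varprojlim_{n}\widehat{E_{n}}$ as a closed subspace of the countable product $\prod_{n}\widehat{E_{n}}$, and a countable product of separable metric spaces is separable metric, with the separability passing to closed subspaces.
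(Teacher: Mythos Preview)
The paper gives no proof of this proposition—it simply cites Dieudonné and Köthe—so I compare your attempt to the standard argument.

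Your reduction is correct through the equivalence ``$\widehat{E_n}$ separable $\Leftrightarrow$ $(U_n^\circ,\sigma(E',E))$ metrizable'', and the final projective-limit step is fine. But the gap you flag in step~4 is real and unfilled. The covering statement you aim for—every $y\in E$ can be modified by some $z\in\varepsilon U_n$ to land in one of countably many prescribed bounded sets $B_m$—unwinds to $E=\bigcup_m(B_m+\varepsilon U_n)$. Since each $\pi_n(B_m)$ is precompact in $\widehat{E_n}$ and hence has a finite $\varepsilon$-net, this covering is \emph{equivalent} to separability of $\widehat{E_n}$: you have restated the goal, not reduced it. Moreover, the ``slack afforded by $\ker p_n$'' is often absent: in $C^\infty([0,1])$ each $p_n$ is already a norm, so $\ker p_n=\{0\}$ and no modification is available. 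One also cannot hope to cover $E$ by countably many bounded sets outright, since in a non-normable Fréchet space bounded sets are nowhere dense and Baire forbids it.

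The actual Dieudonné argument runs the diagonalization in the contrapositive and bypasses the $\widehat{E_n}$ detour. If $E$ were not separable, one finds an uncountable family $\{x_\alpha\}_{\alpha\in A}$ and a basic neighbourhood $U_N$ with $x_\alpha-x_\beta\notin U_N$ for $\alpha\ne\beta$. Recursively thin: set $A_0=A$, and given uncountable $A_{k-1}$, since $A_{k-1}=\bigcup_{m}\{\alpha\in A_{k-1}:p_k(x_\alpha)\le m\}$ some $A_k:=\{\alpha\in A_{k-1}:p_k(x_\alpha)\le m_k\}$ remains uncountable. Choosing distinct $\alpha_k\in A_k$ yields a sequence with $p_j(x_{\alpha_k})\le m_j$ for all $k\ge j$, hence bounded in $E$; but no subsequence is Cauchy (differences stay outside $U_N$), contradicting the Montel property. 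This is presumably the ``diagonal construction in the Fréchet seminorms'' you gesture at, but it produces a bounded bad sequence rather than a covering, and it finishes the proof in one stroke without ever mentioning $\widehat{E_n}$.
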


The following is due to Grothendieck.
\begin{prop}[\cite{MR0075539}*{Ch.II, p38, Corollaire~1}, see also
  \cite{MR0225131}*{p520, Corollary~3}]
  \label{prop:f4cbc15f463be1bc}
  All bounded sets in a nuclear space is precompact. In particular, a barrelled
  quasi-complete nuclear space is a Montel space.
\end{prop}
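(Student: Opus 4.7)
The plan is to reduce the statement to a well-known local characterisation of nuclearity, namely: $E$ is nuclear if and only if for every continuous seminorm $p$ on $E$ there is a continuous seminorm $q \geq p$ such that the canonical map between the associated local Banach spaces $\widehat{E}_{q} \to \widehat{E}_{p}$ is nuclear (and in particular compact). This equivalence is not proved in the excerpt, but it is one of the standard equivalent formulations alluded to in Remark~\ref{rema:080318a5a743ba15} and contained in the cited references; I would invoke it as a black box. Granted this tool, the two assertions of the proposition become easy exercises relying only on material already developed in \S~\ref{sec:63dd23720298d074}.

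For the first assertion, let $A \subseteq E$ be bounded and let $p$ be a continuous seminorm on $E$; by Proposition~\ref{prop:cdb6e313569d3c6e} it suffices to show that $A$ is totally bounded with respect to $p$. Pick $q \geq p$ as above, so that the canonical map $\varphi: \widehat{E}_{q} \to \widehat{E}_{p}$ is compact. The image of $A$ in $\widehat{E}_{q}$ is bounded (boundedness is preserved by continuous linear maps, Corollary~\ref{coro:6ce070ab8838d3f8}), hence $\varphi$ sends it to a relatively compact, and therefore totally bounded, set in $\widehat{E}_{p}$. This says exactly that $A$ is totally bounded for the seminorm $p$. Since $p$ was an arbitrary continuous seminorm, $A$ is totally bounded in $E$, i.e.\ precompact.

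For the second assertion, suppose further that $E$ is barrelled and quasi-complete. Let $B \subseteq E$ be bounded; then its closure $\overline{B}$ is bounded and closed, hence complete by Definition~\ref{defi:56805be540a1b16d}. By Proposition~\ref{prop:903bfccf4a781ceb}~\ref{item:d564b3a44ec3bcf0}, $\overline{B}$ is then closed in the completion $\widehat{E}$ of $E$. By the first assertion $\overline{B}$ is precompact, so $i(\overline{B})$ is relatively compact in $\widehat{E}$; being closed in $\widehat{E}$, it is in fact compact. Hence every bounded set in $E$ is relatively compact, which together with the barrelled hypothesis makes $E$ a Montel space in the sense of Definition~\ref{defi:817190e978bef08d}. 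The only real obstacle is the local Banach space characterisation of nuclearity invoked in the first step; a self-contained proof from the tensor product definition (Definition~\ref{defi:a13a9b24c4226f4e}) would require substantially more preparation on nuclear bilinear forms and is best deferred to the references.
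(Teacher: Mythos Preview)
The paper does not supply its own proof of this proposition; it is stated with citations to Grothendieck and Tr\`eves and left without a proof block. Your argument is essentially the standard one found in those references (in particular the local Banach space formulation you invoke is exactly the characterisation in \cite{MR0225131}*{Theorem~50.1} alluded to in Remark~\ref{rema:080318a5a743ba15}), and it is correct.

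Two minor remarks. First, your appeal to Corollary~\ref{coro:6ce070ab8838d3f8} for ``boundedness is preserved by continuous linear maps'' is a mis-citation: that corollary concerns invariance of bounded sets under change of compatible topology, not images under maps. The fact you actually need is the elementary one that a continuous linear map sends bounded sets to bounded sets, so no harm is done. Second, the detour through $\widehat{E}$ in the second paragraph is unnecessary: once $\overline{B}$ is complete (by quasi-completeness) and totally bounded (by the first assertion together with Proposition~\ref{prop:cdb6e313569d3c6e}), it is compact as a uniform space in its own right, hence compact in $E$.
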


\section{Locally convex Hopf algebras, dualizability and reflexivity}
\label{sec:060ad0d40e57b25a}

Put simply and abstractly, we are going to work with Hopf monoid in a symmetric
monoidal category, in the concrete setting of the category
\( \widehat{\mathsf{LCS}} \) of complete locally convex spaces with various
completed topological tensor product being the monoidal structure. We mention
that there is a systematic treatment of Hopf monoids. See e.g.\
\cite{MR1243637}*{\S~10.5}, or even the monograph \cite{MR2724388}. But we are
not going into that abstract direction. Instead, as will be demonstrated in
later sections of this paper, our concrete setting of working over
\( \widehat{\mathsf{LCS}} \) allows us to benefit greatly from the powerful
tools as presented in \S~\ref{sec:63dd23720298d074}. As a consequence, most
results in \S~\ref{sec:060ad0d40e57b25a} can be proved by an obvious parallel
argument (with added continuity and density) in the proof of their classical
counterparts, and we consider these results elementary. Less elementary results
shall be established gradually in later sections.

\subsection{Monoidal categories and symmetric monoidal categories}
\label{sec:d078c89fa6b6cafa}

We begin by recalling just enough abstract language in order to be able to put
our main objects of study into perspective.

\begin{nota}
  \label{nota:3d951f22614583f1}
  Let \( k \) be a field, which we fix throughout
  \S~\ref{sec:d078c89fa6b6cafa}. An additive category \( \mathcal{C} \) is
  \( k \)-linear if each \( \Hom \)-set is equipped with a vector space
  structure over \( k \), and the composition map is \( k \)-bilinear. We shall
  often use \( A \in \mathcal{C} \) to mean \( A \) is an \emph{object} of the
  category \( \mathcal{C} \). We also use \( \mathcal{C}(A, B) \) to denote the
  collection of all morphisms from \( A \) to \( B \) in a category
  \( \mathcal{C} \), where \( A, B \in \mathcal{C} \).
\end{nota}

We now briefly recall some basic notions of monoidal categories following
\cite{MR3242743}.

\begin{defi}
  \label{defi:1fefc3f86867d064}
  A \( k \)-\textbf{linear monoidal category} is a \( k \)-additive category
  equipped with the extra structures \( \otimes \), \( a \), \( \iota \) and
  \( \mathbb{1} \), where
  \begin{itemize}
  \item \( \otimes : \mathcal{C} \times \mathcal{C} \to \mathcal{C} \) is a
    bifunctor that is bilinear over \( k \), called the \textbf{tensor product};
  \item
    \( a: (- \otimes -) \otimes - \xrightarrow[]{\sim} - \otimes (- \otimes -) \)
    a natural isomorphism between functors from
    \( \mathcal{C} \times \mathcal{C} \times \mathcal{C} \) to \( \mathcal{C} \),
    called the \textbf{associativity constraint} or the \textbf{associator};
  \item \( \mathbb{1} \) is a distinguished object in \( \mathcal{C} \);
  \item \( \iota: \mathbb{1} \otimes \mathbb{1} \to \mathbb{1} \) an isomorphism, such that
  \end{itemize}
  \begin{itemize}
  \item the associator \( a \) satisfies \textbf{the pentagonal identity} in the
    sense that
    \begin{displaymath}
      (\id_{W} \otimes a_{X. Y, X}) a_{W, X \otimes Y, Z} (a_{W, X, Y} \otimes \id_{Z})
      = a_{W, X, Y \otimes Z} a_{W \otimes X, Y ,Z}
    \end{displaymath}
    for all objects \( W, X, Y, Z \in \mathcal{C} \), i.e.\ diagram~\eqref{eq:e66f40562381918f} is commutative.
    \begin{equation}
      \label{eq:e66f40562381918f}
      \begin{tikzpicture}[commutative diagrams/every diagram]
        \node (P0) at (90:2.3cm) {\( ((W \otimes X) \otimes Y) \otimes Z \)};
        \node (P1) at (90+72:2cm) {\( (W \otimes (X \otimes Y)) \otimes Z \)} ;
        \node (P2) at (90+2*72:2cm) {\makebox[5ex][r]{\( W \otimes ((X \otimes Y) \otimes Z) \)}};
        \node (P3) at (90+3*72:2cm) {\makebox[5ex][l]{\( W \otimes (X \otimes (Y \otimes Z)) \)}};
        \node (P4) at (90+4*72:2cm) {\( (W \otimes X) \otimes (Y \otimes Z) \)};

        \path[commutative diagrams/.cd, every arrow, every label]  
        (P0) edge node[swap] {\( a_{W, X, Y} \otimes \id_{Z} \)} (P1)
        (P1) edge node[swap] {\( a_{W, X \otimes Y, Z} \)} (P2)
        (P2) edge node {\( \id_{W} \otimes a_{X. Y, X} \)} (P3)
        (P4) edge node {\( a_{W, X, Y \otimes Z} \)} (P3)
        (P0) edge node {\( a_{W \otimes X, Y ,Z} \)} (P4);
      \end{tikzpicture}
    \end{equation}
  \item the functors \( L_{\mathbb{1}} : X \mapsto \mathbb{1} \otimes X \) and
    \( R_{\mathbb{1}}: X \mapsto X \otimes \mathbb{1} \) (with the obvious way
    of defining how these functors map the morphisms) are auto-equivalence of
    \( \mathcal{C} \), called \textbf{the left and right unit constraints}
    respectively.
  \end{itemize}
  The pair \( (\mathbb{1}, \iota) \), or only \( \mathbb{1} \) if \( \iota \) is
  clear from context, shall be called the \textbf{unit object}, and \( \iota \) the
  \textbf{unit isomorphism}.
\end{defi}

\begin{rema}
  \label{rema:65a5457d702593e9}
  This is not the usual way of defining a monoidal category which also features
  the left and right constraints (\( L_{\mathbb{1}} \) and \( R_{\mathbb{1}} \)
  above), see e.g.\ \cite{mac1998categories}*{\S~VII.1}. It is shown in
  \cite{MR3242743}*{Ch.~2} that the above slightly more neat formulation is
  nevertheless equivalent to the usual one.
\end{rema}

\begin{rema}
  \label{rema:eeda598152abaa2e}
  If the associator \( a \) as well as \( \iota \) in the above are both
  identities, then we say that the monoidal category \( \mathcal{C} \) is
  \textbf{strict}. There is a whole theory on monoidal categories. Here we only
  mentions that for our purposes in this paper, MacLane's coherence theorem and
  strictification theorem (\cite{MR0414648}, see also \cite{MR3242743}*{Ch.~2}
  or \cite{mac1998categories}*{Ch.~VII}) allows us to work as though the
  underlying monoidal categories are strict. We refer to the above references
  for the precise formulation of how these strictification and coherence results
  work.
\end{rema}

We shall also need a good formulation of the comultiplication map preserves
multiplication in order to talk about bialgebras and Hopf algebras. For this, we
need to recall the structure of a symmetric braiding.

\begin{defi}
  \label{defi:1508b4999ae67617}
  Let \( (\mathcal{C}, \otimes, \mathbb{1}, a, \iota) \) (later abbreviated as
  \( \mathcal{C} \)) be a \( k \)-linear monoidal category. and denote the
  swapping functor \( (X, Y) \mapsto (Y, X) \), \( (f, g) \mapsto (g, f) \) on
  \( \mathcal{C} \times \mathcal{C} \) by \( \mathsf{S} \), and
  \( \otimes^{\op}: \mathcal{C} \times \mathcal{C} \to \mathcal{C} \) the
  composition \( \otimes \circ \mathsf{S} \). A \textbf{symmetric braiding}
  \( s \) on \( \mathcal{C} \) is an involutive natural isomorphism from
  \( \otimes \) to \( \otimes^{\op} \) that is also a braiding. More precisely,
  this means that \( s \) consists a family of isomorphisms
  \( (s_{X, Y})_{(X, Y) \in \mathcal{C} \times \mathcal{C}}: X \otimes Y \to Y
  \otimes X \), natural in \( X \) and \( Y \), such that
  \begin{enumerate}
  \item \label{item:22ccfc6ca31b5302} \( s \) is its own inverse, i.e.\
    \( s_{X, Y} \) and \( s_{Y, X} \) are inverses to each other for all \( X, Y \in \mathcal{C} \);
  \item \label{item:edc6f82ee34e08da} \( s \) satisfies the \textbf{hexagon
      axiom}, i.e.\ for all \( X, Y, Z \in \mathcal{C} \),
    diagram~\eqref{eq:aeb1d2a29f83b8e3} is commutative.
    \begin{equation}
      \label{eq:aeb1d2a29f83b8e3}
      \begin{tikzcd}
        (X \otimes Y) \otimes Z \arrow{r}{s_{X \otimes Y, Z}} \arrow{d}{a_{X, Y, Z}}
        & Z \otimes (X \otimes Y) \arrow{d}{a^{-1}_{Z, X, Y}} \\
        X \otimes (Y \otimes Z) \arrow{d}{\id_{X} \otimes s_{Y, Z}} &
        (Z \otimes X) \otimes Y \arrow{d}{s_{Z, X} \otimes \id_{Y}} \\
        X \otimes (Z \otimes Y) \arrow{r}{a^{-1}_{X, Z, Y}}
        & (X \otimes Z) \otimes Y
      \end{tikzcd}
    \end{equation}
  \end{enumerate}
\end{defi}

\begin{rema}
  \label{rema:274535ea20f3154a}
  The braiding condition involves another axiom, which is automatic for
  symmetric braidings as formulated above, hence is omitted. There's also
  version of coherence theorem for symmetric monoidal category. Basically, this
  means the symmetric braiding does behave as ``a swapping should behave'', in
  the sense that every diagram (involving the morphisms in the defining
  structures of a symmetric monoidal category) that should commute does
  commute. For details of these remarks, we refer to
  \cite{MR0354798}*{S~IX.1}. In our concrete settings of locally convex spaces
  starting from \S~\ref{sec:060ad0d40e57b25a}, one may also easily check the
  required commutativity of the diagrams.
\end{rema}

\begin{rema}
  \label{rema:4f014d44fede02c3}
  Take \( X = Y = Z = \mathbb{1} \) in \eqref{eq:aeb1d2a29f83b8e3} and assume
  \( \mathcal{C} \) is strict, we obtain
  \begin{equation}
    \label{eq:e2d5a79cb88f6c87}
    s_{\mathbb{1}, \mathbb{1}} =
    \id_{\mathbb{1}} \otimes s_{\mathbb{1}, \mathbb{1}} = (s_{\mathbb{1},
      \mathbb{1}} \otimes \id_{\mathbb{1}}) s_{\mathbb{1} \otimes \mathbb{1}, \mathbb{1}}
    = s_{\mathbb{1}, \mathbb{1}} s_{\mathbb{1}, \mathbb{1}} = \id_{\mathbb{1}}.
  \end{equation}
\end{rema}

Our proto-example of a (symmetric) monoidal category is the category
\( \mathsf{Vect} \) of all vector spaces over \( k \), where the tensor functor
\( \otimes \) is given by the usual tensor product, associators by the canonical
identifications describing the associativity of such tensor products, \( k \)
being the unit, and the unit law being the canonical identification again (and
the symmetric braiding is the usual swap). Later, starting from
\S~\ref{sec:060ad0d40e57b25a}, we shall consider topological versions of
\( \mathsf{Vect} \) by working over the category \( \widehat{\mathsf{LCS}} \) of
complete locally convex spaces over \( \mathbb{K} = \R \) or \( \C \), and the
tensor product being various topological tensor products
(\S\S~\ref{sec:c43804468b41e821}\ref{sec:ac51753bf78a1da5}\ref{sec:33ea3663864561d2}).

\subsection{Compatible symmetric monoidal functors on the category of complete
  locally convex spaces}
\label{sec:cbdc5edf0dbbb84c}

Intuitively, locally convex Hopf algebras should be the Hopf algebra in some
(symmetric) monoidal category of locally convex spaces. Since we wish to work
with complete topological tensor products in order to benefit from the larger
range for the comultiplication, we restrict our attention to the category
\( \widehat{\mathsf{LCS}} \) of complete locally convex spaces. The notation
here is consistent with the common convention of denoting the (separated if
\( E \) is non-Hausdorff) completion of a locally convex space \( E \) by
\( \widehat{E} \).

We now formalize the categorical structures with which we are going to work.

\begin{defi}
  \label{defi:ec19c5fb89871bda}
  A covariant bifunctor
  \( \overline{\otimes}_{\tau}: \widehat{\mathsf{LCS}} \times
  \widehat{\mathsf{LCS}} \to \widehat{\mathsf{LCS}} \) is called a
  \textbf{compatible symmetric monoidal functor}, if it satisfies the following
  conditions:
  \begin{enumerate}
  \item \label{item:c83911d814721127} for each pair \( (E, F) \) in
    \( \widehat{\mathsf{LCS}} \), denote \( \overline{\otimes}_{\tau}(E, F) \)
    by \( E \overline{\otimes}_{\tau} F \), there exists a separately continuous
    bilinear map \( \chi: E \times F \to E \overline{\otimes}_{\tau} F \),
    called \textbf{the canonical map}, such that the image of \( \chi \) is
    total in \( E \overline{\otimes}_{\tau} F \), and
    \( \chi: E \times F \to \vect\set*{\chi(E \times F)} \) is a realization of
    the algebraic tensor product \( E \odot F \) of the vector spaces \( E \)
    of \( F \), and we shall often identify \( E \odot F \)  with
    \( \vect\set*{\chi(E \times F)} \) (together with \( \chi \) to be precise)
    in this way;
  \item \label{item:3049a942b7707f9e} the subspace topology on \( E \odot F \)
    induced by \( E \overline{\otimes}_{\tau} F \) is a compatible topology in
    the sense of Definition~\ref{defi:9bd5e00768234cac}, and we use
    \( E \otimes_{\tau} F \) to denote \( E \odot F \) equipped with this
    topology;
  \item \label{item:3615d2adbc6b540a} for all
    \( E \in \widehat{\mathsf{LCS}} \), we have
    \( E \overline{\otimes}_{\tau} \mathbb{K} = E = \mathbb{K}
    \overline{\otimes}_{\tau} E \);
  \item \label{item:03802e7cbdb6ecfe} for all
    \( E, F \in \widehat{\mathsf{LCS}} \), the swap
    \( s_{E,F}: E \otimes_{\tau} F \mapsto F \otimes_{\tau}E \) determined by
    \( x \otimes y \mapsto y \otimes x \) and linearity, is continuous;
  \item \label{item:b8917369763b9f15} for all
    \( E, F, G \in \widehat{\mathsf{LCS}} \), the associator
    \( a_{E,F,G}: (E \otimes_{\tau} F) \otimes_{\tau} G \to E \otimes_{\tau} (F
    \otimes_{\tau} G) \) determined by
    \( (x \otimes y) \otimes z \mapsto x \otimes (y \otimes z) \) and linearity,
    is continuous.
  \end{enumerate}
\end{defi}

\begin{prop}
  \label{prop:6d54eef950984ead}
  Let \( \overline{\otimes}_{\tau} \) be a compatible symmetric monoidal functor
  on \( \widehat{\mathsf{LCS}} \). Then
  \begin{enumerate}
  \item \label{item:ed741a9bcd8fef7c} for all
    \( E, F \in \widehat{\mathsf{LCS}} \), the swap \( s_{E,F} \) is an
    isomorphism of locally convex spaces, and extends uniquely by continuity to
    a morphism
    \( \overline{s}_{E, F}: E \overline{\otimes}_{\tau} F \to F
    \overline{\otimes}_{\tau} E \) in \( \widehat{\mathsf{LCS}} \), which is
    also an isomorphism;
  \item \label{item:22874fae581750a6} for all
    \( E, F, G \in \widehat{\mathsf{LCS}} \), the associator \( a_{E,F,G} \) is
    an isomorphism of locally convex spaces, and extends uniquely by continuity
    to a morphism
    \( \overline{a}_{E,F,G}: (E \overline{\otimes}_{\tau}F)
    \overline{\otimes}_{\tau}G \to E \overline{\otimes}_{\tau}(F
    \overline{\otimes}_{\tau}G) \) in \( \widehat{\mathsf{LCS}} \), which is
    also an isomorphism.
  \end{enumerate}
\end{prop}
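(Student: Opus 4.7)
The strategy for both parts is twofold: first establish that the given canonical map is a topological isomorphism between the two compatible topologies on the algebraic tensor product, and then extend uniquely by continuity to the completions, using density of the algebraic tensor product (from condition~\ref{item:c83911d814721127}) together with the completeness of the targets.

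For~\ref{item:ed741a9bcd8fef7c}, condition~\ref{item:03802e7cbdb6ecfe} immediately supplies continuity of both $s_{E,F}$ and $s_{F,E}$. Since they are inverse to each other on elementary tensors, they are mutually inverse on all of $E \odot F$, so $s_{E,F}$ is already an isomorphism in $\mathsf{LCS}$. Post-composing with the inclusion $F \otimes_{\tau} E \hookrightarrow F \overline{\otimes}_{\tau} E$ gives a continuous, hence uniformly continuous, linear map from the dense subspace $E \otimes_{\tau} F$ of $E \overline{\otimes}_{\tau} F$ into the complete space $F \overline{\otimes}_{\tau} E$, which extends uniquely to $\overline{s}_{E,F}$ by Theorem~\ref{theo:5c48561db339a0d3} (equivalently, by Proposition~\ref{prop:6f06da3d7291c7cf}). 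Constructing $\overline{s}_{F,E}$ the same way, both compositions $\overline{s}_{F,E}\overline{s}_{E,F}$ and $\overline{s}_{E,F}\overline{s}_{F,E}$ agree with the identity on the respective dense algebraic tensor products, and so equal the identity by continuity.

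Part~\ref{item:22874fae581750a6} is parallel in spirit, but condition~\ref{item:b8917369763b9f15} provides continuity only for $a_{E,F,G}$, not for its inverse. The main obstacle is therefore establishing continuity of the reverse associator $x \otimes (y \otimes z) \mapsto (x \otimes y) \otimes z$. The plan is to realize $a_{E,F,G}^{-1}$ as a composition of three swaps and two forward associators, traversing a hexagonal circuit:
\begin{equation*}
  a_{E,F,G}^{-1} = s_{G,\,E \otimes_{\tau} F} \circ a_{G,E,F} \circ s_{F,\,G \otimes_{\tau} E} \circ a_{F,G,E} \circ s_{E,\,F \otimes_{\tau} G}.
\end{equation*}
Chasing $x \otimes (y \otimes z)$ through the successive stages $(y \otimes z) \otimes x$, $y \otimes (z \otimes x)$, $(z \otimes x) \otimes y$, $z \otimes (x \otimes y)$, $(x \otimes y) \otimes z$ confirms the identity on the algebraic tensor product, and each factor is continuous by~\ref{item:03802e7cbdb6ecfe} and~\ref{item:b8917369763b9f15}. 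Hence $a_{E,F,G}^{-1}$ is continuous and $a_{E,F,G}$ is already an isomorphism in $\mathsf{LCS}$.

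The extension to the completions then mirrors the swap case: $E \odot F \odot G$ is dense in both $(E \overline{\otimes}_{\tau} F) \overline{\otimes}_{\tau} G$ and $E \overline{\otimes}_{\tau} (F \overline{\otimes}_{\tau} G)$—approximate first in the outer algebraic tensor product via~\ref{item:c83911d814721127}, then in the inner factor using separate continuity from~\ref{item:3049a942b7707f9e}—and both targets are complete. Hence $a_{E,F,G}$ and $a_{E,F,G}^{-1}$ extend uniquely to $\overline{a}_{E,F,G}$ and its inverse, which remain mutually inverse since their compositions restrict to the identity on the dense subspace $E \odot F \odot G$.
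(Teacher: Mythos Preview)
Your proof is correct and shares the paper's essential idea for part~\ref{item:22874fae581750a6}: build the inverse associator out of swaps and forward associators. The specific decomposition differs, though. You use three ``whole'' swaps and two forward associators, working first at the uncompleted level and then extending; the paper instead extends $a$ first and then assembles the inverse directly on completions as
\[
b_{E,F,G} = \overline{s}_{G,\,E\overline{\otimes}_\tau F} \circ (\id_G \overline{\otimes}_\tau \overline{s}_{F,E}) \circ \overline{a}_{G,F,E} \circ (\overline{s}_{F,G} \overline{\otimes}_\tau \id_E) \circ \overline{s}_{E,\,F\overline{\otimes}_\tau G},
\]
using four swaps---two of them tensored with an identity, which invokes the bifunctoriality of $\overline{\otimes}_\tau$---and only one forward associator. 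Your route appeals solely to axioms~\ref{item:03802e7cbdb6ecfe} and~\ref{item:b8917369763b9f15}; the paper's route has the advantage that every argument fed to $s$ is already complete, so axiom~\ref{item:03802e7cbdb6ecfe} applies on the nose rather than via restriction from $s_{E,\,F\overline{\otimes}_\tau G}$. Either path is fine.
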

\begin{proof}
  \ref{item:ed741a9bcd8fef7c} follows by noting that \( s_{E,F} \) and
  \( s_{F,E} \) are inverses to each other and
  Theorem~\ref{theo:5c48561db339a0d3}. By the same theorem, \( a_{E,F,G} \)
  extends uniquely by continuity to \( \overline{a}_{E, F, G} \) for all
  \( E,F,G \in \widehat{\mathsf{LCS}} \). Now consider the composition in \( \widehat{\mathsf{LCS}} \),
  \begin{displaymath}
    \begin{tikzcd}
      b_{E,F,G}: & E \overline{\otimes}_{\tau} (F \overline{\otimes}_{\tau} G)
      \arrow{r}{\overline{s}_{E, F \overline{\otimes}_{\tau} G}} & (F
      \overline{\otimes}_{\tau} G) \overline{\otimes}_{\tau} E
      \arrow{r}{\overline{s}_{F,G} \overline{\otimes} \id_{E}} \arrow[d,
      phantom, ""{coordinate, name=Z}]& (G \overline{\otimes}_{\tau} F)
      \overline{\otimes}_{\tau} E \arrow[dll, rounded corners, to path={ --
        ([xshift=2ex]\tikztostart.east) |- (Z) [near end]\tikztonodes -|
        ([xshift=-2ex]\tikztotarget.west) -- (\tikztotarget)
      }] \\
      & G \overline{\otimes}_{\tau} (F \overline{\otimes}_{\tau} E)
      \arrow{r}{\id_{G} \overline{\otimes}_{\tau} \overline{s}_{F, E}} & G
      \overline{\otimes}_{\tau} (E \overline{\otimes}_{\tau} F)
      \arrow{r}{\overline{s}_{G, E \overline{\otimes}_{\tau}F}} & (E
      \overline{\otimes}_{\tau} F) \overline{\otimes}_{\tau} G,
    \end{tikzcd}
  \end{displaymath}
  where the middle connecting ``snake arrow'' is \( \overline{a}_{G,F,E} \). One
  checks immediately that the morphism \( b_{E, F, G} \) and
  \( \overline{a}_{E,F,G} \) in \( \widehat{\mathsf{LCS}} \) are inverses to
  each other, and \( b_{E,F,G} \) restricts to the inverse of \( a_{E,F,G} \),
  which establishes \ref{item:22874fae581750a6}.
\end{proof}

\begin{prop}
  \label{prop:cf8ab0f606af3a4a}
  Equipped with a compatible monoidal tensor product
  \( \overline{\otimes}_{\tau} \), \( \overline{a} = (\overline{a}_{E,F,G}) \)
  as the associator, \( \overline{s} = (\overline{s}_{E,F}) \) as the symmetry,
  \( \mathbb{K} \) as the unit object, and the identity maps as the unit
  isomorphism as well as left and right constraints,
  \( \widehat{\mathsf{LCS}} \) becomes a symmetric monoidal category.
\end{prop}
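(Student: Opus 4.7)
The plan is to verify the axioms of Definition~\ref{defi:1fefc3f86867d064} together with the hexagon and involutivity axioms of Definition~\ref{defi:1508b4999ae67617}, by reducing each identity to its algebraic counterpart on the dense subspace of elementary tensors and then invoking the uniqueness of continuous extensions (Theorem~\ref{theo:5c48561db339a0d3}~\ref{item:d2e7dcd0233a36b2}) to propagate it to the completed tensor product. A preliminary density fact is needed: if one lets $D_{E,F} \subseteq E \overline{\otimes}_{\tau} F$ denote the image of $\chi : E \times F \to E \overline{\otimes}_{\tau} F$, then condition~\ref{item:c83911d814721127} of Definition~\ref{defi:ec19c5fb89871bda} says $\vect(D_{E,F}) = E \odot F$ is dense. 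Applying condition~\ref{item:c83911d814721127} once more to the pair $(E \overline{\otimes}_{\tau} F, G)$ and using separate continuity of the second canonical map, one deduces by induction on the number of factors that the image of the iterated map $E_1 \times \cdots \times E_n \to ((E_1 \overline{\otimes}_{\tau} E_2) \overline{\otimes}_{\tau} \cdots) \overline{\otimes}_{\tau} E_n$ spans a dense subspace, and similarly for any chosen bracketing.

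With this density in hand, the pentagon identity~\eqref{eq:e66f40562381918f} is verified as follows. Both composite maps around the pentagon are continuous morphisms in $\widehat{\mathsf{LCS}}$ by Proposition~\ref{prop:6d54eef950984ead}. On an elementary tensor $((w \otimes x) \otimes y) \otimes z$ each associator acts by the evident rebracketing, so both composites produce $w \otimes (x \otimes (y \otimes z))$. By linearity they agree on the dense subspace spanned by such elementary tensors, hence coincide everywhere by Theorem~\ref{theo:5c48561db339a0d3}~\ref{item:d2e7dcd0233a36b2}.

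The hexagon axiom~\eqref{eq:aeb1d2a29f83b8e3} and the involutivity $\overline{s}_{Y,X} \circ \overline{s}_{X,Y} = \id_{X \overline{\otimes}_{\tau} Y}$ are handled identically: on elementary tensors each composite reduces to a simple permutation and/or rebracketing that matches the right-hand side, and the density-plus-continuity argument finishes the job. Naturality of $\overline{s}$ and $\overline{a}$ in each of their arguments follows from the functoriality of $\overline{\otimes}_{\tau}$ together with uniqueness of continuous extensions. The unit axioms are immediate from condition~\ref{item:3615d2adbc6b540a} of Definition~\ref{defi:ec19c5fb89871bda}, which states that $E \overline{\otimes}_{\tau} \mathbb{K} = E = \mathbb{K} \overline{\otimes}_{\tau} E$ on the nose; consequently the left and right unit functors $L_{\mathbb{K}}$ and $R_{\mathbb{K}}$ are the identity functors (trivially auto-equivalences), and the identity serves as the unit isomorphism $\iota : \mathbb{K} \overline{\otimes}_{\tau} \mathbb{K} \to \mathbb{K}$.

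The main obstacle is nothing conceptual but rather the bookkeeping required to make the density argument fully rigorous in the multifold setting: one must check that each of the composite continuous maps under consideration really does restrict to the same algebraic map on the (dense) span of iterated pure tensors, and that the relevant iterated canonical maps produce total images regardless of the chosen bracketing. Both points are built into Definition~\ref{defi:ec19c5fb89871bda}~\ref{item:c83911d814721127} combined with separate continuity of the canonical map (and an induction on the number of factors), so once these preliminaries are laid out the remaining verifications are formal.
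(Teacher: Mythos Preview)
Your proposal is correct and follows essentially the same approach as the paper's proof, which simply says ``This is a routine check. Every axiom follows from the corresponding one for the symmetric monoidal category $\mathsf{Vect}$, by extending by continuity according to Proposition~\ref{prop:6d54eef950984ead}.'' You have merely spelled out in detail what the paper leaves implicit: the reduction to elementary tensors, the density argument, and the invocation of uniqueness of continuous extensions.
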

\begin{proof}
  This is a routine check. Every axiom follows from the corresponding one for
  the symmetric monoidal category \( \mathsf{Vect} \), by extending by
  continuity according to Proposition~\ref{prop:6d54eef950984ead}.
\end{proof}

\begin{prop}
  \label{prop:f1859d673771c63a}
  The completed inductive tensor product \( \overline{\otimes}_{\iota} \), the
  completed injective tensor product \( \overline{\otimes}_{\varepsilon} \), and
  the completed projective tensor product \( \overline{\otimes}_{\pi} \), are
  all compatible symmetric monoidal functors on \( \widehat{\mathsf{LCS}} \).
\end{prop}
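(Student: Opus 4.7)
The plan is to check, for each of the three completed tensor products $\overline{\otimes}_{\tau}$ with $\tau \in \{\iota, \varepsilon, \pi\}$, all five items of Definition~\ref{defi:ec19c5fb89871bda}. Essentially all the needed work has been done in the earlier subsections on the three tensor products, so the proof amounts to assembling references, with a small extra argument only for the unit law.

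First, the compatibility of the induced topology on $E \odot F$ (item~\ref{item:3049a942b7707f9e}) is: Proposition~\ref{prop:5a80b25896f61451} for $\iota$ (which moreover identifies $\iota$ as the finest compatible topology), Corollary~\ref{coro:43e2166622970bc3} for $\pi$, and the very construction of $\varepsilon$ in Definition~\ref{defi:4eeb99ac2dee068f} based on Corollary~\ref{coro:33816b8f283c3930} (giving the coarsest compatible topology). Item~\ref{item:c83911d814721127} then follows immediately: by condition~\ref{item:bf7ac5d66f888d35} of Definition~\ref{defi:9bd5e00768234cac} the canonical bilinear map $\chi\colon E \times F \to E \otimes_{\tau} F$ is separately continuous, its image spans $E \odot F$ and is thus total in the completion, and by construction the pair $(\chi, E \odot F)$ realizes the algebraic tensor product. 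Continuity of the swap and the associator (items~\ref{item:03802e7cbdb6ecfe} and~\ref{item:b8917369763b9f15}) is precisely the content of Propositions~\ref{prop:963932029cf324b4}, \ref{prop:51d98c7dd4f5950d} and~\ref{prop:6f861986d67606e0} applied to the uncompleted tensor products.

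The only remaining item is the unit law (item~\ref{item:3615d2adbc6b540a}), requiring a short independent verification. The algebraic isomorphism $E \odot \mathbb{K} \to E$, $x \otimes \lambda \mapsto \lambda x$, must be promoted to a topological one. For $\pi$ this is immediate from the gauge formula in Proposition~\ref{prop:dda5a3f4762f22f6} with $\abs*{\cdot}$ as the single seminorm on $\mathbb{K}$, which gives $(p \otimes_{\pi} \abs*{\cdot})(x \otimes \lambda) = p(x)\abs*{\lambda}$; similarly for $\varepsilon$ from Proposition~\ref{prop:2204037e873ab1a3}. For $\iota$, the three topologies on $E \odot \mathbb{K}$ are sandwiched by Proposition~\ref{prop:6ecdcba4c320179a}, so they all collapse to the topology of $E$. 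Taking completions and using that $E \in \widehat{\mathsf{LCS}}$ is already complete yields $E \overline{\otimes}_{\tau} \mathbb{K} = E$; the right-hand unit law $\mathbb{K} \overline{\otimes}_{\tau} E = E$ follows by pre-composing with the swap established in the previous step.

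No genuine obstacle is expected: the result is a bookkeeping statement, and the only point that warrants a moment's care is the unit law, because one must remember that $E \in \widehat{\mathsf{LCS}}$ is already complete so that no additional completion enters on the right-hand side of the identification $E \overline{\otimes}_{\tau} \mathbb{K} = E$.
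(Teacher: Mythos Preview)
Your proof is correct and follows the same approach as the paper, which simply cites Propositions~\ref{prop:963932029cf324b4}, \ref{prop:51d98c7dd4f5950d} and~\ref{prop:6f861986d67606e0}; you are in fact more thorough, since the paper leaves items~\ref{item:c83911d814721127}--\ref{item:3615d2adbc6b540a} of Definition~\ref{defi:ec19c5fb89871bda} entirely implicit. One small imprecision: for the unit law in the $\iota$ case, the sandwich from Proposition~\ref{prop:6ecdcba4c320179a} only gives that $\mathfrak{T}_{\iota}$ is \emph{finer} than the topology of $E$; for the other direction you should invoke either the separate continuity of $x \mapsto x \otimes 1$ (built into compatibility), Proposition~\ref{prop:671875b580127e64}, or the universal property of $\otimes_{\iota}$ applied to the separately continuous map $(x,\lambda) \mapsto \lambda x$.
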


\begin{proof}
  This follows from Proposition~\ref{prop:963932029cf324b4},
  Proposition~\ref{prop:51d98c7dd4f5950d} and
  Proposition~\ref{prop:6f861986d67606e0}.
\end{proof}

\begin{nota}
  \label{nota:fb4caa132d89863a}
  To facilitate our discussion, we adopt the following convention on our
  notation. Given a compatible monoidal functor \( \overline{\otimes}_{\tau} \)
  on \( \widehat{\mathsf{LCS}} \), we use \( \widehat{\mathsf{LCS}}_{\tau} \) to
  denote the symmetric monoidal category in
  Proposition~\ref{prop:cf8ab0f606af3a4a}. Whenever it is clear from the
  context, for any \( E, F \in \widehat{\mathsf{LCS}}_{\tau} \), the topology on
  \( E \overline{\otimes}_{\tau} F \), as well as subspace topology on the
  canonical copy of the subspace \( E \odot F \), shall be denoted by
  \( \mathfrak{T}_{\tau} \). We use \( E \otimes_{\tau} F \) to denote the space
  \( E \odot F \) equipped with the topology \( \mathfrak{T}_{\tau} \).  This
  applies in particular to \( \tau = \iota, \pi, \varepsilon \).
\end{nota}

\begin{rema}
  \label{rema:113a388d37745bd7}
  In the following, we will treat \( \widehat{\mathsf{LCS}}_{\tau} \) as though
  it is a strict monoidal category, which is valid by MacLane's coherence and
  strictification theorem, see Remark~\ref{rema:eeda598152abaa2e}.
\end{rema}

\subsection{Locally convex coalgebras, algebras and bialgebras}
\label{sec:d1604ccb16145a3f}

Unless stated otherwise, the scalar field shall be either \( \R \) or \( \C \),
and we shall often use \( \mathbb{K} \) to denote this scalar field. From now
on, we fix a compatible symmetric monoidal functor
\( \overline{\otimes}_{\tau} \) (on \( \widehat{\mathsf{LCS}} \) of course), and
we work as though our symmetric monoidal categories
\( \widehat{\mathsf{LCS}}_{\tau} \) is strict, cf.\
Remark~\ref{rema:eeda598152abaa2e}.

\begin{defi}
  \label{defi:6ccaa28e838d7d44}
  An \textbf{algebra in} \( \widehat{\mathsf{LCS}}_{\tau} \), or a
  \( \tau \)-\textbf{algebra}, is a triplet \( (A, m, \eta) \), with
  \( A \in \widehat{\mathsf{LCS}}_{\tau} \), and
  \( m : A \overline{\otimes}_{\tau} A \to A \) (called the
  \textbf{multiplication}), \( \eta: \mathbb{K} \to A \) (called the
  \textbf{unit}), such that the diagram
  \begin{equation}
    \label{eq:4278e8ce2b471651}
    \begin{tikzcd}
      A \overline{\otimes}_{\tau} A \overline{\otimes}_{\tau} A \arrow{r}{m \overline{\otimes}_{\tau} \id_{A}} \arrow{d}{\id_{A} \overline{\otimes}_{\tau} m}
      & A \overline{\otimes}_{\tau} A \arrow{d}{m} \\
      A \overline{\otimes}_{\tau} A \arrow{r}{m} & A
    \end{tikzcd}
  \end{equation}
  commutes (called \textbf{associativity of} \( m \)), i.e.\
  \( m(m \overline{\otimes}_{\tau} \id_{A}) = m(\id_{A} \overline{\otimes}_{\tau} m) \); and \( \eta \) satisfies the \textbf{unit law}, meaning the diagram
  \begin{equation}
    \label{eq:d39b49d679de395a}
    \begin{tikzcd}
      A \overline{\otimes}_{\tau} \mathbb{K} = A = \mathbb{K} \overline{\otimes}_{\tau} A \arrow{r}{\eta \overline{\otimes}_{\tau} \id_{A}}
      \arrow{d}{\id_{A} \overline{\otimes}_{\tau} \eta}
      & A \overline{\otimes}_{\tau} A \arrow{d}{m} \\
      A \overline{\otimes}_{\tau} A \arrow{r}{m} & A
    \end{tikzcd}
  \end{equation}
  commutes, or equivalently, \( m (\eta \overline{\otimes}_{\tau} \id_{A}) = m (\id_{A} \overline{\otimes}_{\tau} \eta) \).

  Let \( (A, m_{A}, \eta_{A}) \), \( (B, m_{B}, \eta_{B}) \) be algebras in
  \( \widehat{\mathsf{LCS}}_{\tau} \). We say a morphism \( f : A \to B \) is a
  \textbf{morphism of (\( \tau \)-)algebras} if it preserves multiplication in
  the sense that \( m_{B}(f \overline{\otimes}_{\tau} f)= f m_{A} \), i.e.\ the
  diagram
  \begin{equation}
    \label{eq:ba29f204e8d43593}
    \begin{tikzcd}
      A \overline{\otimes}_{\tau} A \arrow{r}{m_{A}} \arrow{d}{f \overline{\otimes}_{\tau} f} & A \arrow{d}{f} \\
      B \overline{\otimes}_{\tau} B \arrow{r}{m_{B}} & B
    \end{tikzcd}
  \end{equation}
  commutes; and \( f \) preserves the unit, i.e.\ \( f \eta_{A} = \eta_{B} \),
  or equivalently the diagram
  \begin{equation}
    \label{eq:6b157c36dc2b0875}
    \begin{tikzcd}
      \mathbb{K} \arrow{r}{\eta_{A}} \arrow{rd}{\eta_{B}} & A \arrow{d}{f} \\
      & B
    \end{tikzcd}
  \end{equation}
  commutes.
\end{defi}

\begin{nota}
  \label{nota:39f3e83af9d2b14d}
  Given a \( \tau \)-algebra \( (A, \eta, m) \), following old traditions, we
  shall often write \( xy \) instead of \( m(x \otimes y) \) for
  \( x, y \in A \), and confuse the unit \( \eta \) of \( A \) with the image
  \( \eta(1) \in A \), and we also denote \( \eta(1) \) by \( 1 \), or
  \( 1_{A} \) if we want to emphasize it is the multiplicative neutral element
  of \( A \).
\end{nota}

Coalgebras in \( \widehat{\mathsf{LCS}}_{\tau} \), as well as their morphisms, are defined by
formally by reversing all the arrows in the diagrams \eqref{eq:4278e8ce2b471651},
\eqref{eq:d39b49d679de395a}, \eqref{eq:ba29f204e8d43593} and
\eqref{eq:6b157c36dc2b0875}.

\begin{defi}
  \label{defi:0fb1526e3c72db6a}
  A \textbf{coalgebra} in \( \widehat{\mathsf{LCS}}_{\tau} \), or a
  \( \tau \)-\textbf{coalgebra}, is a triplet \( (C, \Delta, \varepsilon) \),
  where \( C \in \widehat{\mathsf{LCS}}_{\tau} \),
  \( \Delta : C \to C \overline{\otimes}_{\tau} C \), called the
  \textbf{comultiplication}, and \( \varepsilon: C \to \mathbb{K} \), called the
  \textbf{counit}, such that the comultiplication is \textbf{coassociative},
  i.e.\
  \( (\Delta \overline{\otimes}_{\tau} \id_{C})\Delta = (\id_{C}
  \overline{\otimes}_{\tau} \Delta)\Delta \), and \( \varepsilon \) satisfies
  the \textbf{counit law}, in the sense that
  \( (\varepsilon \odot \id_{C})\Delta = \id_{C} = (\id
  \overline{\otimes}_{\tau} \varepsilon) \Delta \).

  A morphism of \( \tau \)-coalgebras from a \( \tau \)-coalgebra
  \( (C, \Delta_{C}, \varepsilon_{C}) \) to a \( \tau \)-coalgebra
  \( (D, \Delta_{D}, \varepsilon_{D}) \) is a morphism \( f: C \to D \) that
  preserves both the comultiplication and the counit:
  \( (f \overline{\otimes}_{\tau} f)\Delta_{C} = \Delta_{D}f \) and
  \( \varepsilon_{D} f = \varepsilon_{C} \).
\end{defi}

\begin{rema}
  \label{rema:9d43b29d99f15c3b}
  The unit (resp.\ counit) of a \( \tau \)-algebra (resp.\ \( \tau \)-coalgebra)
  is \emph{unique}, once the multiplication (comultiplication) is fixed. Indeed,
  let \( \eta_{1} \), \( \eta_{2} \) are both unit for a \( \tau \)-algebra
  \( A \) equipped with \( m \) as the multiplication. We have
  \begin{displaymath}
    \eta_{1} = m (\id_{A} \overline{\otimes}_{\tau} \eta_{2}) \eta_{1} = m (\eta_{1} \overline{\otimes}_{\tau} \eta_{2})
    = m (\eta_{1} \overline{\otimes}_{\tau} \id_{A}) \eta_{2} = \eta_{2}.
  \end{displaymath}
  The uniqueness of a counit is proved dually.
\end{rema}

In a symmetric monoidal category, by using the swap, one may consider the
opposite (resp.\ coopposite) of algebras (resp.\ coalgebras), as well as forming
their tensor products.

The proof of the following proposition is a routine check and is left to the
reader.
\begin{prop}
  \label{prop:63fea142ee144ad4}
  Consider the symmetric monoidal category \( \widehat{\mathsf{LCS}}_{\tau} \).
  \begin{enumerate}
  \item \label{item:04f778ba6a90ade7} Suppose \( (A, m, \eta) \) (resp.\
    \( (C, \Delta, \varepsilon) \)) is an algebra (resp.\ a coalgebra) in
    \( \widehat{\mathsf{LCS}}_{\tau} \). Define \( m^{\op} = m s_{A,A} \)
    (resp.\ \( \Delta^{\cop} = s_{C,C}\Delta \)), then \( (A, m^{\op}, \eta) \)
    (resp.\ \( (C, \Delta^{\cop}, \varepsilon) \)) is an algebra (resp.\ a
    coalgebra) in \( \overline{\mathsf{LCS}}_{\tau} \).
  \item \label{item:7e8247473a6477ca} For \( i = 1, 2 \), let
    \( (A_{i}, m_{i}, \eta_{i}) \) (resp.\
    \( (C_{i}, \Delta_{i}, \varepsilon_{i}) \)) be an algebra (resp.\ coalgebra)
    in \( \widehat{\mathsf{LCS}}_{\tau} \). Define the multiplication
    \( m : (A_{1} \overline{\otimes}_{\tau} A_{2}) \overline{\otimes}_{\tau}
    (A_{1} \overline{\otimes}_{\tau} A_{2}) \to A_{1} \overline{\otimes}_{\tau}
    A_{2} \) as
    \( (m_{1} \overline{\otimes}_{\tau} m_{2})(\id_{A_{1}}
    \overline{\otimes}_{\tau} s_{A_{2},A_{1}} \overline{\otimes}_{\tau}
    \id_{A_{2}}) \), and the unit
    \( \eta: \mathbb{K} \to A_{1} \overline{\otimes}_{\tau} A_{2} \) as
    \( \eta_{A_{1}} \overline{\otimes}_{\tau} \eta_{A_{2}} \) (we identified
    \( \mathbb{K} \) with \( \mathbb{K} \overline{\otimes}_{\tau} \mathbb{K} \))
    (resp.\ the comultiplication
    \( \Delta: C_{1} \overline{\otimes}_{\tau} C_{2} \to (C_{1}
    \overline{\otimes}_{\tau} C_{2}) \overline{\otimes}_{\tau} (C_{1}
    \overline{\otimes}_{\tau} C_{2}) \) as
    \( (\id_{C_{1}} \overline{\otimes}_{\tau} s_{C_{1}, C_{2}}
    \overline{\otimes}_{\tau} \id_{C_{2}})(\Delta_{1} \overline{\otimes}_{\tau}
    \Delta_{2}) \), and the counit
    \( \varepsilon: C_{1} \overline{\otimes}_{\tau} C_{2} \to \mathbb{K} \) as
    \( \varepsilon_{1} \overline{\otimes}_{\tau} \varepsilon_{2} \)), then
    \( (A_{1} \overline{\otimes}_{\tau} A_{2}, m, \eta) \) (resp.\
    \( (C_{1} \overline{\otimes}_{\tau} C_{2}, \Delta, \varepsilon) \)) is an
    algebra (resp.\ a coalgebra) in \( \overline{\mathsf{LCS}}_{\tau} \).
  \end{enumerate}
\end{prop}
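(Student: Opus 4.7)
The proof is a completely formal diagram chase in the symmetric monoidal category $\widehat{\mathsf{LCS}}_{\tau}$ established in Proposition~\ref{prop:cf8ab0f606af3a4a}, so my plan is simply to transcribe the standard arguments from ordinary Hopf algebra theory over $\mathsf{Vect}$. As a first step I would invoke Remark~\ref{rema:113a388d37745bd7} to treat the monoidal structure as strict, and would reduce every required equality of morphisms in $\widehat{\mathsf{LCS}}_{\tau}$ to a check on the algebraic tensor products: by Definition~\ref{defi:ec19c5fb89871bda}, simple tensors span a dense subspace of each completed tensor product, and every morphism appearing is continuous, so two morphisms that agree on simple tensors agree everywhere.

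For part~(1), to show $(A, m^{\op}, \eta)$ is a $\tau$-algebra, I would rewrite the associativity diagram for $m^{\op} = m s_{A,A}$ using naturality of the braiding $s$ together with the involutivity $s_{A,A} \circ s_{A,A} = \id$; this transforms the diagram into the associativity diagram for $m$, which commutes by hypothesis. The unit law for $(A, m^{\op}, \eta)$ reduces to the unit law for $(A, m, \eta)$ after a single application of $s$ and the identification of $\mathbb{K} \overline{\otimes}_{\tau} A$ with $A \overline{\otimes}_{\tau} \mathbb{K}$. The coalgebra statement for $(C, \Delta^{\cop}, \varepsilon)$ is obtained by formally reversing every arrow in this argument.

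For part~(2), the unit law for the prescribed $(m, \eta)$ on $A_{1} \overline{\otimes}_{\tau} A_{2}$ is immediate from the unit laws for $m_{1}$ and $m_{2}$ together with the identification of $\mathbb{K}$ with $\mathbb{K} \overline{\otimes}_{\tau} \mathbb{K}$. Associativity is a longer diagram chase built from the associativities of $m_{1}$ and $m_{2}$, naturality of $s$ applied to the middle swap $\id_{A_{1}} \overline{\otimes}_{\tau} s_{A_{2},A_{1}} \overline{\otimes}_{\tau} \id_{A_{2}}$, and the hexagon axiom~\eqref{eq:aeb1d2a29f83b8e3}; it is most transparently verified on a simple tensor $(x_{1} \otimes x_{2}) \otimes (y_{1} \otimes y_{2}) \otimes (z_{1} \otimes z_{2})$, where it reduces to the pairwise identities $(x_{i} y_{i}) z_{i} = x_{i}(y_{i} z_{i})$ for $i = 1, 2$. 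The coalgebra assertion is once again dual.

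The main (and essentially only) obstacle is combinatorial bookkeeping: the diagrams for part~(2) associativity become bulky once the symmetric monoidal coherence is fully unpacked. No analytic input is required beyond what Definition~\ref{defi:ec19c5fb89871bda} already guarantees---continuity of the canonical map, the associator, and the swap---together with density of simple tensors, which is why the author labels the whole verification routine.
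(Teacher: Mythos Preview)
Your proposal is correct and matches the paper's approach exactly: the paper simply declares the proof ``a routine check\ldots\ left to the reader,'' and your sketch spells out precisely that routine verification (strictify via Remark~\ref{rema:113a388d37745bd7}, chase diagrams using naturality and the hexagon axiom for the braiding, and appeal to density of simple tensors for equality of continuous maps).
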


\begin{defi}
  \label{defi:b3554ee336047666}
  Using the settings of Proposition~\ref{prop:63fea142ee144ad4}, we call the
  algebra \( (A, m^{\op}, \eta) \) (resp.\ \( (C, \Delta, \varepsilon) \)) the
  \textbf{opposite} (resp.\ \textbf{coopposite}) of the \( \tau \)-algebra
  \( (A, m, \eta) \) (resp.\ the \( \tau \)-coalgebra
  \( (C, \Delta, \varepsilon) \)), and we call the resulting algebra (resp.\
  coalgebra) in
  Proposition~\ref{prop:63fea142ee144ad4}~\ref{item:7e8247473a6477ca} the
  \textbf{(\( \tau \)-)tensor product} of the \( \tau \)-algebras (resp.\
  \( \tau \)-coalgebras).
\end{defi}

\begin{nota}
  \label{nota:bf99759a3f8300c9}
  If there is no risk of confusion, following the old tradition of notation
  abuse, we will often omit the structure maps, and speak of a
  \( \tau \)-algebra \( A \) (instead of \( (A, m, \eta) \)),
  \( \tau \)-coalgebra \( C \), the opposite algebra \( A^{\op} \), the
  coopposite coalgebra \( C^{\cop} \), the tensor product (algebra)
  \( A_{1} \overline{\otimes}_{\tau} A_{2} \), the tensor product coalgebra
  \( C_{1} \overline{\otimes}_{\tau} C_{2} \), etc.
\end{nota}

Note that in the following result, the unit object \( \mathbb{K} \) has a
trivial algebra and coalgebra structure where all structure maps are identities
(after the canonical identifications).
\begin{prop}
  \label{prop:717c7c133e9e974c}
  Let \( B \) be an object in \( \widehat{\mathsf{LCS}}_{\tau} \) that is
  equipped both with an algebra structure \( (m, \eta) \) and a coalgebra
  structure \( (\Delta, \varepsilon) \). Then the following are equivalent:
  \begin{enumerate}
  \item \label{item:f57f223592789fac} \( \Delta: B \to B \overline{\otimes}_{\tau} B \) is a
    morphism of algebras and \( \varepsilon \) preserves multiplication;
  \item \label{item:74acd821b1b29155} \( m : B \overline{\otimes}_{\tau} B \to B \) is a morphism
    of coalgebras and \( \eta \) preserves comultiplication;
  \item \label{item:e32e1fa4753a028d}
    \( \Delta m = (m \overline{\otimes}_{\tau} m)(\id \overline{\otimes}_{\tau} s \overline{\otimes}_{\tau} \id)(\Delta \overline{\otimes}_{\tau}
    \Delta) \), \( \Delta \eta = \eta \overline{\otimes}_{\tau} \eta \) and
    \( \varepsilon m = \varepsilon \overline{\otimes}_{\tau} \varepsilon \).
  \end{enumerate}
\end{prop}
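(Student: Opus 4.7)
The plan is to show that each of the three conditions unpacks to \emph{exactly} the same triple of equations in $\widehat{\mathsf{LCS}}_{\tau}$, so that the equivalences follow by mere diagram-chasing. No analytic subtlety should arise: the content is entirely formal, since the compatibility of the topologies enters only through Proposition~\ref{prop:63fea142ee144ad4}, which already guarantees that $B \overline{\otimes}_{\tau} B$ has both a $\tau$-algebra and a $\tau$-coalgebra structure, and the canonical identifications on $\mathbb{K}$ can be treated as equalities by the strict-monoidal convention of Remark~\ref{rema:113a388d37745bd7}.

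First I would record explicitly the structure maps on the tensor products produced by Proposition~\ref{prop:63fea142ee144ad4}. The multiplication on $B \overline{\otimes}_{\tau} B$ is $(m \overline{\otimes}_{\tau} m)(\id \overline{\otimes}_{\tau} s \overline{\otimes}_{\tau} \id)$ with unit $\eta \overline{\otimes}_{\tau} \eta$, while its comultiplication is $(\id \overline{\otimes}_{\tau} s \overline{\otimes}_{\tau} \id)(\Delta \overline{\otimes}_{\tau} \Delta)$ with counit $\varepsilon \overline{\otimes}_{\tau} \varepsilon$. I would also note that $\mathbb{K}$ carries the trivial $\tau$-algebra and $\tau$-coalgebra structure where all structure maps are identities under the canonical identifications $\mathbb{K} \overline{\otimes}_{\tau} \mathbb{K} = \mathbb{K}$.

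Next I would unfold condition~\ref{item:f57f223592789fac}. Saying that $\Delta : B \to B \overline{\otimes}_{\tau} B$ is a morphism of $\tau$-algebras, by diagrams~\eqref{eq:ba29f204e8d43593} and \eqref{eq:6b157c36dc2b0875}, is exactly to demand $\Delta m = (m \overline{\otimes}_{\tau} m)(\id \overline{\otimes}_{\tau} s \overline{\otimes}_{\tau} \id)(\Delta \overline{\otimes}_{\tau} \Delta)$ together with $\Delta \eta = \eta \overline{\otimes}_{\tau} \eta$; meanwhile, $\varepsilon$ preserving multiplication is precisely $\varepsilon m = \varepsilon \overline{\otimes}_{\tau} \varepsilon$, since the multiplication on $\mathbb{K}$ is the identity under the canonical identification. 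This is the triple of equations in~\ref{item:e32e1fa4753a028d}, so \ref{item:f57f223592789fac}$\Leftrightarrow$\ref{item:e32e1fa4753a028d}.

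Finally, I would dually unfold condition~\ref{item:74acd821b1b29155}. Saying that $m : B \overline{\otimes}_{\tau} B \to B$ is a morphism of $\tau$-coalgebras yields the same equality $\Delta m = (m \overline{\otimes}_{\tau} m)(\id \overline{\otimes}_{\tau} s \overline{\otimes}_{\tau} \id)(\Delta \overline{\otimes}_{\tau} \Delta)$ (preservation of comultiplication) and $\varepsilon m = \varepsilon \overline{\otimes}_{\tau} \varepsilon$ (preservation of counit), while $\eta$ preserving comultiplication gives $\Delta \eta = \eta \overline{\otimes}_{\tau} \eta$, again because the comultiplication on $\mathbb{K}$ is the identity. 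This reproduces the same three equations, so \ref{item:74acd821b1b29155}$\Leftrightarrow$\ref{item:e32e1fa4753a028d}. The only care required is the bookkeeping of the canonical isomorphism $\mathbb{K} \overline{\otimes}_{\tau} \mathbb{K} \simeq \mathbb{K}$ when passing between the ``$\varepsilon$ preserves multiplication'' and ``$m$ preserves counit'' formulations (and symmetrically for $\eta$ and $\Delta$); beyond this, I do not anticipate any genuine obstacle.
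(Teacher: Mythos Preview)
Your proposal is correct and follows precisely the approach the paper takes: the paper's proof consists of the single sentence ``Unwinding the definitions, one checks immediately both \ref{item:f57f223592789fac} and \ref{item:74acd821b1b29155} are equivalent to \ref{item:e32e1fa4753a028d},'' and you have carried out exactly this unwinding in detail.
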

\begin{proof}
  Unwinding the definitions, one checks immediately both
  \ref{item:f57f223592789fac} and \ref{item:74acd821b1b29155} are equivalent to
  \ref{item:e32e1fa4753a028d}.
\end{proof}

\begin{defi}
  \label{defi:e67e64a18cd4e77b}
  We say \( B \) as Proposition~\ref{prop:717c7c133e9e974c} is a
  \textbf{bialgebra} in \( \widehat{\mathsf{LCS}}_{\tau} \), or simply a
  \( \tau \)-\textbf{bialgebra}, if it satisfies one of the equivalent
  conditions above. Morphisms between \( \tau \)-bialgebras are the ones that
  preserves both the \( \tau \)-algebra and the \( \tau \)-coalgebra structures.
\end{defi}

\begin{prop}
  \label{prop:6a271f71782832ec}
  If \( B \) is a bialgebra in \( \widehat{\mathsf{LCS}}_{\tau} \), then so is
  \( B^{\op} \) and \( B^{\cop} \).  If \( B_{1} \), \( B_{2} \) are bialgebras
  in \( \widehat{\mathsf{LCS}}_{\tau} \), so is
  \( B_{1} \overline{\otimes}_{\tau} B_{2} \) equipped with the structures of
  tensor products of algebras and coalgebras.
\end{prop}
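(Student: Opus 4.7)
The plan is to reduce everything to the three morphism identities in condition~\ref{item:e32e1fa4753a028d} of Proposition~\ref{prop:717c7c133e9e974c} and verify them for each of the three structures $B^{\op}$, $B^{\cop}$, and $B_1 \overline{\otimes}_{\tau} B_2$. Since the image of the canonical map $\chi: E \times F \to E \overline{\otimes}_\tau F$ is total by Definition~\ref{defi:ec19c5fb89871bda}~\ref{item:c83911d814721127} and all maps in question are continuous morphisms in $\widehat{\mathsf{LCS}}_\tau$, each identity needs to be checked only on elementary tensors and then extended by linearity and continuity; moreover, by Remark~\ref{rema:113a388d37745bd7} one may treat $\widehat{\mathsf{LCS}}_\tau$ as a strict symmetric monoidal category, so associators disappear and the braiding $\overline{s}$ behaves as the naive swap.

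For $B^{\op}$, only $m$ is modified, to $m^{\op} = m \,\overline{s}_{B,B}$. The identity $\Delta \eta = \eta \overline{\otimes}_\tau \eta$ is inherited verbatim from $B$. For the counit, $\varepsilon m^{\op} = \varepsilon m \,\overline{s}_{B,B} = (\varepsilon \overline{\otimes}_\tau \varepsilon)\,\overline{s}_{B,B} = \varepsilon \overline{\otimes}_\tau \varepsilon$, the final step being the invariance of $\varepsilon \overline{\otimes}_\tau \varepsilon : B \overline{\otimes}_\tau B \to \mathbb{K}$ under the swap. The compatibility of $\Delta$ with $m^{\op}$ is obtained by post-composing the compatibility identity of $\Delta$ with $m$ with $\overline{s}_{B,B}$ and using naturality of $\overline{s}$ to transport the resulting swap past $\Delta \overline{\otimes}_\tau \Delta$ so that it combines with the already-present middle swap into the one required on the right-hand side. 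The case of $B^{\cop}$ is fully dual, with the roles of $(m, \eta)$ and $(\Delta, \varepsilon)$ interchanged.

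For $B_1 \overline{\otimes}_\tau B_2$, the identities for $\eta = \eta_1 \overline{\otimes}_\tau \eta_2$ and $\varepsilon = \varepsilon_1 \overline{\otimes}_\tau \varepsilon_2$ are immediate from the corresponding identities in each $B_i$ together with the canonical identification $\mathbb{K} = \mathbb{K} \overline{\otimes}_\tau \mathbb{K}$. The principal identity
\[
\Delta m = (m \overline{\otimes}_\tau m)(\id \overline{\otimes}_\tau \overline{s} \overline{\otimes}_\tau \id)(\Delta \overline{\otimes}_\tau \Delta)
\]
reduces, after expanding $m$ and $\Delta$ into $m_i$, $\Delta_i$ and the middle swaps prescribed in Proposition~\ref{prop:63fea142ee144ad4}~\ref{item:7e8247473a6477ca}, to the conjunction of the bialgebra identity inside each $B_i$ together with a single braid equation on a fourfold tensor product. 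The main obstacle is the bookkeeping of these braids: one must show that two prescribed permutations of the four factors, one arising from assembling the tensor-product multiplication and comultiplication of $B_1 \overline{\otimes}_\tau B_2$, and the other from applying the individual bialgebra compatibilities in $B_1$ and $B_2$ followed by the two middle interchanges, agree as morphisms. By the coherence theorem for symmetric monoidal categories, any two such composites of the braiding agree, so this step is purely formal and finishes the proof.
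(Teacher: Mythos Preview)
Your proposal is correct and follows exactly the approach the paper has in mind: the paper's own proof reads in full ``Again, this is a routine check just as in the theory for classical bialgebras and is left to the reader,'' and what you have written is precisely that routine check, carried out via condition~\ref{item:e32e1fa4753a028d} of Proposition~\ref{prop:717c7c133e9e974c}, density of elementary tensors, and coherence for the symmetric braiding. One small wording issue: in the $B^{\op}$ case you say you ``post-compose'' with $\overline{s}_{B,B}$, but since $m^{\op} = m\,\overline{s}_{B,B}$ you are in fact pre-composing both sides of the bialgebra identity with $\overline{s}_{B,B}$ on the right; the computation you sketch is nonetheless the right one.
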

\begin{proof}
  Again, this is a routine check just as in the theory for classical bialgebras
  and is left to the reader.
\end{proof}

Naturally, we call \( B^{\op} \) (resp.\ \( B^{\cop} \)) the \textbf{opposite}
(\textbf{coopposite}) bialgebra of \( B \), and the bialgebras
\( B_{1} \overline{\otimes}_{\tau} B_{2} \) the \textbf{(\( \tau \)-)tensor
  product} of \( B_{1} \) and \( B_{2} \).

\subsection{The convolution algebra of continuous linear maps, locally convex Hopf algebras}
\label{sec:62f5f3547854deb9}

Suppose \( C \) (resp.\ \( A \)) a \( \tau \)-coalgebra (resp.\
\( \tau \)-algebra), and consider the linear space \( \mathcal{L}(C, B) \) of
all continuous linear maps from the locally convex space \( C \) to \( B \). For
any \( f, g \in \mathcal{L}(C, B) \), we may define their \textbf{convolution}
\( f \ast g \) as
\begin{equation}
  \label{eq:1490c12531f219a0}
  f \ast g := m_{B} (f \overline{\otimes}_{\tau} g) \Delta_{C} \in \mathcal{L}(C, B).
\end{equation}
Clearly, \eqref{eq:1490c12531f219a0} is bilinear in \( f \) and \( g \). The
convolution is also \textbf{associative}. Indeed, let
\( f, g, h \in \mathcal{L}(C, B) \), we have
\begin{displaymath}
  \begin{split}
    (f \ast g) \ast h &= m_{B}\Bigl(\bigl(m_{B}(f \overline{\otimes}_{\tau} g) \Delta_{C}\bigr) \overline{\otimes}_{\tau} h\Bigr) \Delta_{C}
                        = m_{B}(m_{B} \overline{\otimes}_{\tau} \id_{B}) (f \overline{\otimes}_{\tau} g \overline{\otimes}_{\tau} h) (\Delta_{C} \overline{\otimes}_{\tau} \id_{C}) \Delta_{C} \\
                &= m_{B}(\id_{B} \overline{\otimes}_{\tau} m_{B}) (f \overline{\otimes}_{\tau}  g \overline{\otimes}_{\tau} h) (\id_{C} \overline{\otimes}_{\tau} \Delta_{C}) \Delta_{C}
                  = f \ast (g \ast h),
  \end{split}
\end{displaymath}
where the first equality on the second line follows from the associativity of
\( m_{B} \) and the coassociativity of \( \Delta_{C} \).

We also have a distinguished element, namely \( \eta_{B}\varepsilon_{C} \), in
\( \mathcal{L}(C, B) \). It is the neutral element for the convolution product,
since for each \( f \in \mathcal{L}(C, B) \), we have
\begin{displaymath}
  f \ast (\eta_{B} \varepsilon_{C}) = m_{B} \bigl(f \overline{\otimes}_{\tau} (\eta_{B} \varepsilon_{C})\bigr) \Delta_{C}
  = m_{B}(\id_{B} \overline{\otimes}_{\tau} \eta_{B})f(\id_{C} \overline{\otimes}_{\tau} \varepsilon_{C})\Delta_{C} = f
\end{displaymath}
where the last equality follows from the defining property of the unit and
counit. Similarly, \( (\eta_{B} \varepsilon_{C}) \ast f = f \).

\begin{defi}
  \label{defi:52d0aeb9c0ac1362}
  We call the \( \mathbb{K} \)-algebra \( \mathcal{L}(C, B) \) with convolution
  as the product, and \( \eta_{B}\varepsilon_{C} \) as the multiplicative
  neutral element (this is a classical unital associative algebra, or an algebra
  in \( \mathsf{Vect} \)), the \textbf{convolution algebra} of all continuous
  linear maps from the \( \tau \)-coalgebra \( C \) to the \( \tau \)-algebra
  \( B \), and shall often denote it by the same symbol \( \mathcal{L}(C, B) \).
\end{defi}

\begin{prop}
  \label{prop:be353ca74ee009d0}
  Let \( \mathsf{Alg}_{\tau} \) (resp.\ \( \mathsf{Coalg}_{\tau} \)) denote the
  category of \( \tau \)-algebras (resp.\ \( \tau \)-coalgebras), and
  \( \mathsf{Alg}_{\mathbb{K}} \) the category of classical
  \( \mathbb{K} \)-algebras. The association
  \( \mathsf{Coalg}_{\tau} \times \mathsf{Alg}_{\tau} \to \mathsf{Alg}_{\mathbb{K}} \),
  \( (C, B) \mapsto \mathcal{L}(C, B) \),
  \( (f, g) \mapsto f^{\ast}g_{\ast} = g_{\ast}f^{\ast} \) (\( (\cdot)^{\ast} \)
  denotes the pull-back, and \( (\cdot)_{\ast} \) the push-forward) is a functor
  that is contravariant in the first variable and covariant in the second.
\end{prop}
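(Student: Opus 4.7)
The plan is to verify that the asserted map is well-defined on morphisms (i.e.\ \( f^\ast \) and \( g_\ast \) are genuine \( \mathbb{K} \)-algebra homomorphisms for the convolution products), that pullback and pushforward commute, and finally that the functoriality axioms hold. All of this is diagram-chasing in \( \widehat{\mathsf{LCS}}_\tau \) using the defining commutativity relations of coalgebra and algebra morphisms together with the bifunctoriality of \( \overline{\otimes}_\tau \); the core fact that \( \mathcal{L}(C, B) \) is an associative unital \( \mathbb{K} \)-algebra was established immediately above the statement, so nothing in \S~\ref{sec:63dd23720298d074} needs to be reinvoked beyond the continuity of the induced maps on tensor products (Propositions~\ref{prop:963932029cf324b4}, \ref{prop:51d98c7dd4f5950d}, \ref{prop:6f861986d67606e0}).

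First I would fix a morphism \( f : C' \to C \) in \( \mathsf{Coalg}_\tau \) and a morphism \( g : B \to B' \) in \( \mathsf{Alg}_\tau \). The pullback \( f^\ast \varphi := \varphi f \) and the pushforward \( g_\ast \varphi := g \varphi \) are clearly \( \mathbb{K} \)-linear maps \( \mathcal{L}(C, B) \to \mathcal{L}(C', B) \) and \( \mathcal{L}(C, B) \to \mathcal{L}(C, B') \) respectively. For \( \varphi, \psi \in \mathcal{L}(C, B) \), using \( \Delta_C f = (f \overline{\otimes}_\tau f) \Delta_{C'} \) (since \( f \) is a coalgebra morphism) and the bifunctoriality of \( \overline{\otimes}_\tau \), one computes
\begin{displaymath}
  f^\ast(\varphi \ast \psi) = m_B (\varphi \overline{\otimes}_\tau \psi) \Delta_C f = m_B (\varphi \overline{\otimes}_\tau \psi)(f \overline{\otimes}_\tau f) \Delta_{C'} = m_B(f^\ast \varphi \overline{\otimes}_\tau f^\ast \psi) \Delta_{C'} = (f^\ast \varphi) \ast (f^\ast \psi),
\end{displaymath}
and \( f^\ast(\eta_B \varepsilon_C) = \eta_B \varepsilon_C f = \eta_B \varepsilon_{C'} \) by the counit preservation. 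Dually, using \( g m_B = m_{B'}(g \overline{\otimes}_\tau g) \) and \( g \eta_B = \eta_{B'} \), one obtains \( g_\ast(\varphi \ast \psi) = (g_\ast \varphi) \ast (g_\ast \psi) \) and \( g_\ast(\eta_B \varepsilon_C) = \eta_{B'} \varepsilon_C \). Hence \( f^\ast \) and \( g_\ast \) are unital \( \mathbb{K} \)-algebra homomorphisms.

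Next, the identity \( f^\ast g_\ast = g_\ast f^\ast \) on any \( \varphi \in \mathcal{L}(C, B) \) reduces to the associativity of composition of morphisms in \( \widehat{\mathsf{LCS}}_\tau \), since both sides equal \( g \varphi f \). Finally, the functoriality axioms \( (\id_C)^\ast = \id \), \( (\id_B)_\ast = \id \), \( (f_1 f_2)^\ast = f_2^\ast f_1^\ast \) and \( (g_1 g_2)_\ast = (g_1)_\ast (g_2)_\ast \) are immediate from the contravariance of pullback and the covariance of pushforward along composition.

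No step is a real obstacle: every verification is a short computation with diagrams extending the classical Hopf-algebraic argument by continuity, exactly in the spirit announced at the beginning of \S~\ref{sec:060ad0d40e57b25a}. The only point requiring any care is ensuring that all the tensor-product manipulations take place in \( \widehat{\mathsf{LCS}}_\tau \) rather than merely on the algebraic tensor product, which is guaranteed by Proposition~\ref{prop:cf8ab0f606af3a4a} and the fact that morphisms of \( \tau \)-algebras and \( \tau \)-coalgebras are continuous by definition.
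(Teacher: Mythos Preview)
Your proof is correct and carries out exactly the routine verification that the paper declares is left to the reader; there is nothing to compare since the paper gives no proof beyond that remark.
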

\begin{proof}
  This follows again from a routine check and is left to the reader.
\end{proof}

\begin{defi}
  \label{defi:a3ee2dc588f4bf0f}
  We say that a \( \tau \)-bialgebra \( H \) in \( \mathcal{C} \) is a
  \textbf{Hopf algebra in } \( \widehat{\mathsf{LCS}}_{\tau} \), or simply a
  \( \tau \)-\textbf{Hopf algebra}, if \( \id_{H} \in \mathcal{L}(H, H) \)
  admits a multiplicative inverse in the convolution algebra
  \( \mathcal{L}(H, H) \). We call this multiplicative inverse the
  \textbf{antipode} of the \( \tau \)-Hopf algebra, and often denote it by
  \( S_{H} \), or simply \( S \). \textbf{Morphisms} between \( \tau \)-Hopf
  algebras are the ones that are morphisms of the underlying
  \( \tau \)-bialgebras.
\end{defi}

As in the classical case, for morphisms of \( \tau \)-Hopf algebras, the
antipode takes care of itself, which is no surprise since it is uniquely
determined by the \( \tau \)-bialgebra structure.
\begin{prop}
  \label{prop:dfd77ff270dde74a}
  Let \( H_{1} \), \( H_{2} \) be \( \tau \)-Hopf algebras with antipodes
  \( S_{1} \) and \( S_{2} \) respectively. If \( f : H_{1} \to H_{2} \) is a
  morphism of \( \tau \)-Hopf algebras, then \( f S_{1} = S_{2}f \).
\end{prop}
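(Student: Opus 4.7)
The plan is to imitate the classical Hopf-algebraic argument by working inside the convolution algebra $\mathcal{L}(H_1, H_2)$ from Definition~\ref{defi:52d0aeb9c0ac1362} (which is available since $H_1$ is a $\tau$-coalgebra and $H_2$ is a $\tau$-algebra). The unit of this convolution algebra is $\eta_{H_2}\varepsilon_{H_1}$. The strategy is to exhibit both $fS_1$ and $S_2 f$ as two-sided convolution inverses of $f \in \mathcal{L}(H_1, H_2)$, and then invoke the elementary uniqueness of inverses in a unital associative algebra to conclude $fS_1 = S_2 f$.

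First I would compute $f \ast (fS_1)$. Using the definition of convolution, and then the fact that $f$ is a morphism of $\tau$-algebras (so $f m_{H_1} = m_{H_2}(f \overline{\otimes}_\tau f)$),
\begin{displaymath}
 f \ast (fS_1) = m_{H_2}(f \overline{\otimes}_\tau f)(\id_{H_1} \overline{\otimes}_\tau S_1)\Delta_{H_1}
 = f\, m_{H_1}(\id_{H_1} \overline{\otimes}_\tau S_1)\Delta_{H_1} = f\, \eta_{H_1}\varepsilon_{H_1} = \eta_{H_2}\varepsilon_{H_1},
\end{displaymath}
where the penultimate step uses that $S_1$ is the convolution inverse of $\id_{H_1}$ in $\mathcal{L}(H_1,H_1)$, and the last step uses that $f$ preserves unit and counit. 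A symmetric computation gives $(fS_1) \ast f = \eta_{H_2}\varepsilon_{H_1}$.

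Next, I would compute $(S_2 f) \ast f$ by transporting things through the coalgebra morphism property, $(f \overline{\otimes}_\tau f)\Delta_{H_1} = \Delta_{H_2} f$:
\begin{displaymath}
 (S_2 f) \ast f = m_{H_2}(S_2 \overline{\otimes}_\tau \id_{H_2})(f \overline{\otimes}_\tau f)\Delta_{H_1}
 = m_{H_2}(S_2 \overline{\otimes}_\tau \id_{H_2})\Delta_{H_2} f = \eta_{H_2}\varepsilon_{H_2} f = \eta_{H_2}\varepsilon_{H_1},
\end{displaymath}
with the last equality from $\varepsilon_{H_2} f = \varepsilon_{H_1}$. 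The mirror computation gives $f \ast (S_2 f) = \eta_{H_2}\varepsilon_{H_1}$. Thus both $fS_1$ and $S_2 f$ are two-sided convolution inverses of $f$ in the associative unital algebra $\mathcal{L}(H_1, H_2)$. The standard one-line argument $fS_1 = (fS_1) \ast (f \ast (S_2 f)) = ((fS_1) \ast f) \ast (S_2 f) = S_2 f$ finishes the proof.

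I do not foresee any serious obstacle. All the required functoriality of $\overline{\otimes}_\tau$, the associativity of convolution, and the existence of the convolution algebra structure on $\mathcal{L}(H_1, H_2)$ have already been recorded in \S~\ref{sec:62f5f3547854deb9}; in particular the computations above consist only of rewriting via the defining commutative diagrams of $\tau$-algebra and $\tau$-coalgebra morphisms. The mildest technical point is to write the intermediate tensors unambiguously, but thanks to Remark~\ref{rema:113a388d37745bd7} we may treat $\widehat{\mathsf{LCS}}_\tau$ as strict and omit parenthesization and coherence isomorphisms.
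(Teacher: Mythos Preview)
Your proof is correct and is precisely the classical convolution-algebra argument the paper alludes to when it says the proof ``parallels the classical case'' (citing \cite{MR2397671}*{p22, Proposition~1.3.17}). You have simply written out in full what the paper leaves implicit.
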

\begin{proof}
  Proof again parallels the classical case (see, e.g.\ \cite{MR2397671}*{p22,
    Proposition~1.3.17}).
\end{proof}

\begin{rema}
  \label{rema:ddd970d827d67782}
  Unwinding the definition, we see that \( S \in \mathcal{L}(H, H) \) is the
  antipode if and only if
  \begin{equation}
    \label{eq:c8e35b71db95f592}
    m(\id \overline{\otimes}_{\tau} S)\Delta = m(S \overline{\otimes}_{\tau} \id)\Delta = \id.
  \end{equation}
  Formally, this is the usual definition of an antipode. We use our equivalent
  Definition~\ref{defi:a3ee2dc588f4bf0f} to emphasize the fact the antipode is
  unique once it exists---it is already uniquely determined by the bialgebra
  structure.
\end{rema}

In \cite{MR1220906}, van Daele gives a characterization for when a bialgebra
admits an antipode. Here's a version for locally convex bialgebras.
\begin{prop}
  \label{prop:74b9c50fedf2379f}
  Let \( \overline{\otimes}_{\tau} \) be a compatible symmetric monoidal functor
  and \( H \) a \( \tau \)-bialgebra. Then \( H \) is a \( \tau \)-Hopf algebra
  if and only if the maps
  \( T_{1} := (\id_{H} \overline{\otimes}_{\tau} m)(\Delta
  \overline{\otimes}_{\tau} \id_{H}) \) and
  \( T_{2}: = (m \overline{\otimes}_{\tau} \id_{H})(\id_{H}
  \overline{\otimes}_{\tau} \Delta) \) are isomorphisms of locally convex spaces
  from \( H \overline{\otimes}_{\tau} H \) onto itself.
\end{prop}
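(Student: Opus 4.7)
The plan is to establish the equivalence in two directions, exploiting that all the constructed morphisms sit inside $\widehat{\mathsf{LCS}}_{\tau}$ so that any bijective morphism automatically has a continuous inverse. Throughout I use that $\chi(H \times H)$ is total in $H \overline{\otimes}_{\tau} H$, so that identities of morphisms can be verified on elementary tensors via Sweedler notation and extended by continuity.

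For the forward direction, I propose the explicit candidate inverses
\[
  R_{1} := (\id_{H} \overline{\otimes}_{\tau} m)(\id_{H} \overline{\otimes}_{\tau} S \overline{\otimes}_{\tau} \id_{H})(\Delta \overline{\otimes}_{\tau} \id_{H})
\]
and
\[
  R_{2} := (m \overline{\otimes}_{\tau} \id_{H})(\id_{H} \overline{\otimes}_{\tau} S \overline{\otimes}_{\tau} \id_{H})(\id_{H} \overline{\otimes}_{\tau} \Delta),
\]
each continuous as a composition of morphisms in $\widehat{\mathsf{LCS}}_{\tau}$. The identities $R_{i} T_{i} = \id = T_{i} R_{i}$ then reduce to the antipode identities \eqref{eq:c8e35b71db95f592} combined with coassociativity and the counit law, which is a routine Sweedler computation.

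For the backward direction, assume $T_{1}$ and $T_{2}$ are isomorphisms in $\widehat{\mathsf{LCS}}$. Set
\[
  \sigma := T_{1}^{-1} \circ (\id_{H} \overline{\otimes}_{\tau} \eta) : H \to H \overline{\otimes}_{\tau} H, \qquad S_{L} := (\varepsilon \overline{\otimes}_{\tau} \id_{H}) \circ \sigma : H \to H,
\]
and define $S_{R}$ analogously using $T_{2}^{-1}$. The key algebraic input is that $T_{1}$ intertwines right multiplication by $c \in H$ on the second factor, i.e.\ $T_{1}(\id_{H} \overline{\otimes}_{\tau} R_{c}) = (\id_{H} \overline{\otimes}_{\tau} R_{c}) T_{1}$, which is immediate from the definition of $T_{1}$; hence $T_{1}^{-1}$ intertwines too, and combining this with the identity $T_{1} \circ (\id_{H} \overline{\otimes}_{\tau} \eta) = \Delta$ yields the morphism identity
\[
  (\id_{H} \overline{\otimes}_{\tau} m)(\sigma \overline{\otimes}_{\tau} \id_{H}) \Delta = \id_{H} \overline{\otimes}_{\tau} \eta.
\]
Applying $\varepsilon \overline{\otimes}_{\tau} \id_{H}$ to both sides collapses the left to $S_{L} \ast \id_{H}$ and the right to $\eta \varepsilon$. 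Symmetrically, $\id_{H} \ast S_{R} = \eta \varepsilon$. A standard monoid argument in the convolution algebra $\mathcal{L}(H, H)$ then forces $S_{L} = S_{L} \ast (\id_{H} \ast S_{R}) = (S_{L} \ast \id_{H}) \ast S_{R} = S_{R}$, so $S := S_{L} = S_{R}$ is a two-sided convolution inverse of $\id_{H}$, i.e., the antipode.

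The main obstacle I anticipate is keeping the bookkeeping clean while translating Sweedler manipulations into genuine morphism equalities in $\widehat{\mathsf{LCS}}_{\tau}$, so as to legitimately invoke the functoriality of $\overline{\otimes}_{\tau}$ and MacLane's strictification. Continuity of the candidate antipode and of the inverses $R_{1}, R_{2}$ requires no separate argument: it follows from the bicontinuity of $T_{1}^{-1}, T_{2}^{-1}$ together with the fact that all constructions are finite compositions of morphisms in $\widehat{\mathsf{LCS}}_{\tau}$.
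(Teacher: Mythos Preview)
Your proposal is correct and follows essentially the same approach as the paper. The forward direction is identical: the paper defines the same $R_{1}, R_{2}$ and checks they invert $T_{1}, T_{2}$. For the backward direction the paper defines $S(a) := (\varepsilon \overline{\otimes}_{\tau} \id_{H}) T_{1}^{-1}(a \otimes 1)$, which is exactly your $S_{L}$, notes its continuity via Proposition~\ref{prop:671875b580127e64}, and then simply refers the verification to an adaptation of van Daele's argument; you have written out that adaptation explicitly (the intertwining of $T_{1}$ with $\id_{H} \overline{\otimes}_{\tau} R_{c}$, the identity $(\id_{H} \overline{\otimes}_{\tau} m)(\sigma \overline{\otimes}_{\tau} \id_{H})\Delta = \id_{H} \overline{\otimes}_{\tau} \eta$, and the left/right convolution-inverse monoid trick).

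One caveat: your opening parenthetical ``so that any bijective morphism automatically has a continuous inverse'' is false in $\widehat{\mathsf{LCS}}$ in general---there is no open mapping theorem for arbitrary complete locally convex spaces. Fortunately you never actually use this claim: in the forward direction the inverses $R_{i}$ are built as compositions of morphisms, and in the backward direction $T_{i}^{-1}$ is continuous by hypothesis. You should drop that sentence.
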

\begin{proof}
  This basically boils down to an obvious adaptation of van Daele's work on the
  algebraic case \cite{MR1220906}. More precisely, for necessity, let \( S \) be
  the antipode of \( H \), following \cite{MR1220906}*{Proposition~2.1}, we
  define
  \begin{displaymath}
    R_{1}:= (\id_{H} \overline{\otimes}_{\tau} m)(\id_{H} \overline{\otimes}_{\tau} S \overline{\otimes}_{\tau} \id)
    (\Delta \overline{\otimes}_{\tau} \id_{H})
  \end{displaymath}
  and
  \begin{displaymath}
    R_{2}:= (m \overline{\otimes}_{\tau} \id)(\id_{H} \overline{\otimes}_{\tau} S \overline{\otimes}_{\tau} \id_{H})
    (\id_{H} \overline{\otimes}_{\tau} \Delta).
  \end{displaymath}
  For each \( i = 1, 2 \), it follows from a routine check that \( R_{i} \) is
  clearly a continuous linear map from the locally convex space
  \( H \overline{\otimes}_{\tau} H \) to itself, and is the inverse of
  \( T_{i} \), thus \( T_{i} \) is indeed an isomorphism.

  For sufficiency, define \( S: H \to H \) by
  \( S(a):= (\varepsilon \overline{\otimes}_{\tau} \id_{H}) T_{1}^{-1}(a \otimes
  1) \). It follows from Proposition~\ref{prop:671875b580127e64} that \( S \) is
  continuous. Now the verification of \( S \) being an antipode for the
  \( \tau \)-bialgebra of \( H \) follows from (an even simpler due to the
  existence of the unit and counit) adaptation of \cite{MR1220906}*{\S~4} using
  routine continuity and density arguments.
\end{proof}

\begin{prop}
  \label{prop:bc869f35c39bbe44}
  Let \( H \) be a \( \tau \)-Hopf algebra with antipode \( S \). Then \( S \),
  viewed as a map from the \( \tau \)-bialgebra to the \( \tau \)-bialgebra
  \( H^{\op,\cop} \), is a morphism of \( \tau \)-bialgebras.
\end{prop}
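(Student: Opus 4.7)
The plan is to verify the four identities that together express \( S \colon H \to H^{\op,\cop} \) as a morphism of \( \tau \)-bialgebras, namely (i) \( S m = m (S \overline{\otimes}_{\tau} S) s_{H,H} \), (ii) \( S\eta = \eta \), (iii) \( \Delta S = s_{H,H} (S \overline{\otimes}_{\tau} S) \Delta \), and (iv) \( \varepsilon S = \varepsilon \). The observation underpinning the whole proof is that the convolution algebra of Definition~\ref{defi:52d0aeb9c0ac1362} is fully available in the locally convex setting, so the standard purely algebraic proofs (see e.g.\ \cite{MR2397671}*{Prop.~1.3.21}) carry over almost verbatim once one checks continuity, which is automatic from the symmetric monoidal structure on \( \widehat{\mathsf{LCS}}_{\tau} \) (Proposition~\ref{prop:cf8ab0f606af3a4a}).

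For (i), I would work inside the convolution algebra \( \mathcal{L}(H \overline{\otimes}_{\tau} H, H) \), with the source equipped with its tensor product \( \tau \)-coalgebra structure and the target with its \( \tau \)-algebra structure. Both \( S m \) and \( m(S \overline{\otimes}_{\tau} S) s_{H,H} \) lie in this algebra. A direct computation using the bialgebra compatibility \( \Delta m = (m \overline{\otimes}_{\tau} m)(\id_H \overline{\otimes}_{\tau} s_{H,H} \overline{\otimes}_{\tau} \id_H)(\Delta \overline{\otimes}_{\tau} \Delta) \), the identity \( \varepsilon m = \varepsilon \overline{\otimes}_{\tau} \varepsilon \), and the defining identity \( m(S \overline{\otimes}_{\tau} \id_H)\Delta = \eta \varepsilon = m(\id_H \overline{\otimes}_{\tau} S)\Delta \) of the antipode shows that \( S m \) is a left convolution inverse and \( m(S \overline{\otimes}_{\tau} S) s_{H,H} \) a right convolution inverse of \( m \); uniqueness of inverses in a unital associative algebra forces them to coincide. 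Statement~(iii) is obtained dually in \( \mathcal{L}(H, H \overline{\otimes}_{\tau} H) \), where the target carries its tensor product \( \tau \)-algebra structure: one checks that both \( \Delta S \) and \( s_{H,H}(S \overline{\otimes}_{\tau} S)\Delta \) are two-sided convolution inverses of \( \Delta \) by entirely analogous computations (now exploiting multiplicativity of \( \Delta \), the anti-multiplicativity of \( S \) just established in (i), and the antipode axiom), and uniqueness again closes the argument.

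Statements (ii) and (iv) fall out cheaply from the antipode axiom itself. For (ii), evaluating \( m(S \overline{\otimes}_{\tau} \id_H)\Delta = \eta \varepsilon \) at \( 1_H \) and using the bialgebra identities \( \Delta \eta = \eta \overline{\otimes}_{\tau} \eta \) and \( \varepsilon \eta = \id_{\mathbb{K}} \), which give \( \Delta(1_H) = 1_H \otimes 1_H \) and \( \varepsilon(1_H) = 1 \), yields \( S(1_H) \cdot 1_H = 1_H \), hence \( S(1_H) = 1_H \). For (iv), composing \( \varepsilon \) on the left of the same identity gives \( \varepsilon m(S \overline{\otimes}_{\tau} \id_H)\Delta = \varepsilon \eta \varepsilon = \varepsilon \); rewriting the left side via \( \varepsilon m = \varepsilon \overline{\otimes}_{\tau} \varepsilon \) turns this into \( (\varepsilon S) \ast \varepsilon = \varepsilon \) in the convolution algebra \( \mathcal{L}(H, \mathbb{K}) \), whose multiplicative unit is \( \varepsilon \) itself, so \( \varepsilon S = \varepsilon \). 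The only genuine difficulty is bookkeeping: Sweedler-style manipulations must be lifted to arrow-theoretic identities involving the associators and swaps of \( \widehat{\mathsf{LCS}}_{\tau} \), but the strictness convention of Remark~\ref{rema:113a388d37745bd7} and the axioms of Definition~\ref{defi:ec19c5fb89871bda} make this entirely mechanical, with no new analytic ingredient required beyond what is already present in the symmetric monoidal structure of \( \widehat{\mathsf{LCS}}_{\tau} \).
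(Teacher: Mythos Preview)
Your proposal is correct and is precisely the routine adaptation of the classical convolution-algebra argument that the paper defers to (the paper's own proof is a one-line pointer to \cite{MR2397671}*{p18, Proposition~1.3.12} plus continuity and density). Your explicit breakdown into the four identities, together with the remark that Sweedler-style manipulations lift to arrow-theoretic ones via the strictness convention of Remark~\ref{rema:113a388d37745bd7}, is exactly what the paper has in mind.
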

\begin{proof}
  This follows again from a routine adaptation of the classical argument (see,
  e.g.\ \cite{MR2397671}*{p18, Proposition~1.3.12}) using continuity and
  density.
\end{proof}

Observe that in general, the antipode for a Hopf algebra need \emph{not} be an
isomorphism.
\begin{rema}
  \label{rema:2331bbbfef8d22c5}
  As a counterexample, consider classical coalgebras and Hopf algebras etc.\
  over \( \mathbb{K} \). There is a construction called the \textbf{free Hopf
    algebra} \( H(C) \) of a coalgebra \( C \), given by Takeuchi
  \cite{MR0292876}*{pp262--265}. Recall that a coalgebra is pointed if each of
  its simple subalgebra is one-dimensional, cf.\ e.g.\ \cite{MR2894855}*{p100,
    Definition~3.4.4}. In our case where \( k \) is assumed to be algebraically
  closed, a result of Takeuchi \cite{MR0292876}*{p569, Theorem~18} showed that
  the antipode of \( H(C) \) is invertible if and only if the coalgebra \( C \)
  pointed. But the comatrix coalgebra \( C_{S}(\mathbb{K}) \)
  (\cite{MR2894855}*{p26, Definition~2.1.16}) on a finite set \( S \) of more
  than two elements is clearly simple, hence not pointed. So the antipode of
  \( H\bigl(C_{S}(\mathbb{K})\bigr) \) is not invertible. Moreover, note that as
  a vector space, \( H\bigl(C_{S}(\mathbb{K})\bigr) \) is of countable dimension
  from its construction \cite{MR0292876}*{p262}. In
  \S~\ref{sec:b6aa227ba43c21a5}, we will see how to fit all classical Hopf
  algebras into our framework of locally convex Hopf algebras, so these
  classical counterexamples also work in our setting.
\end{rema}

\begin{defi}
  \label{defi:625bdb1be01b0b3c}
  We say a \( \tau \)-Hopf algebra is \textbf{regular}, if its antipode is an
  isomorphism of locally convex spaces.
\end{defi}

\begin{prop}
  \label{prop:5e51bd7b92ec1787}
  Let \( H \) be a \( \tau \)-Hopf algebra with antipode \( S \), the following
  are equivalent:
  \begin{enumerate}
  \item \label{item:1c0fec38e8aecac8} \( H \) is regular with \( T \) being the
    inverse of \( S \);
  \item \label{item:10d6fb3d3b0b9531} The bialgebra \( H^{\op} \) admits
    antipode \( T \) (so is a \( \tau \)-Hopf algebra);
  \item \label{item:0784fd83f6cedc62} The bialgebra \( H^{\cop} \) admits
    antipode \( T \) (so is a \( \tau \)-Hopf algebra).
  \end{enumerate}
\end{prop}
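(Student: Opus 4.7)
The strategy is to establish (2) $\Leftrightarrow$ (3) by a direct unwinding and then to prove (1) $\Leftrightarrow$ (2) using Proposition~\ref{prop:bc869f35c39bbe44} combined with cancellation in suitable convolution algebras. For (2) $\Leftrightarrow$ (3), the antipode axiom for $H^{\op}$ reads $m^{\op}(T \overline{\otimes}_{\tau} \id)\Delta = m^{\op}(\id \overline{\otimes}_{\tau} T)\Delta = \eta\varepsilon$, and substituting $m^{\op} = ms$ together with $s(T \overline{\otimes}_{\tau} \id) = (\id \overline{\otimes}_{\tau} T)s$ turns it into $m(\id \overline{\otimes}_{\tau} T)\Delta^{\cop} = m(T \overline{\otimes}_{\tau} \id)\Delta^{\cop} = \eta\varepsilon$, which is literally the antipode axiom for $H^{\cop}$. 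Hence (2) and (3) coincide as statements.

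For (1) $\Rightarrow$ (2) I would invoke Proposition~\ref{prop:bc869f35c39bbe44} to get that $S$ is a bialgebra morphism $H \to H^{\op,\cop}$, which forces $S\eta = \eta$ and (by a short calculation using $s(S \overline{\otimes}_{\tau} S) = (S \overline{\otimes}_{\tau} S)s$) $Sm^{\op} = m(S \overline{\otimes}_{\tau} S)$. Composing the candidate identity $m^{\op}(T \overline{\otimes}_{\tau} \id)\Delta$ with $S$ on the left yields $m(ST \overline{\otimes}_{\tau} S)\Delta = m(\id \overline{\otimes}_{\tau} S)\Delta = \eta\varepsilon$, using $ST = \id$ and the antipode identity for $H$. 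Since $S$ has a linear inverse it is injective, and cancellation gives $m^{\op}(T \overline{\otimes}_{\tau} \id)\Delta = \eta\varepsilon$; the symmetric argument dispatches the other side.

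For (2) $\Rightarrow$ (1) I would apply Proposition~\ref{prop:bc869f35c39bbe44} to $H^{\op}$ to obtain that $T$ is anti-multiplicative on $H$, namely $Tm = m^{\op}(T \overline{\otimes}_{\tau} T)$. Composing $H$'s antipode identity $m(S \overline{\otimes}_{\tau} \id)\Delta = \eta\varepsilon$ on the left with $T$ transforms it into $m^{\op}(TS \overline{\otimes}_{\tau} T)\Delta = \eta\varepsilon$. In the associative algebra on $\mathcal{L}(H, H)$ built from $m^{\op}$ and $\Delta$, assumption (2) makes $T$ the convolution inverse of $\id$ and hence invertible, so cancellation delivers $TS = \id$. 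Composing $H^{\op}$'s antipode identity on the left with $S$ (using anti-multiplicativity of $S$) and cancelling in the convolution algebra of Definition~\ref{defi:52d0aeb9c0ac1362} symmetrically yields $ST = \id$, so $T$ is a two-sided linear inverse of $S$.

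These computations directly mirror their classical counterparts. The only subtlety worth highlighting is that Sweedler notation is unavailable for elements of the completed tensor product, so each manipulation must be phrased as an equality of continuous linear maps; this is not an obstruction, as all operations in play (composition, tensoring, swap) are morphisms in $\widehat{\mathsf{LCS}}_{\tau}$, and Proposition~\ref{prop:bc869f35c39bbe44} supplies all the structural anti-multiplicativity one needs.
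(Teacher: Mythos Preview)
Your argument is correct and is precisely the kind of routine verification the paper alludes to when it says the proof ``parallels that of \cite{MR2397671}*{p21, Lemma~1.3.15}''; you have simply spelled out the details, correctly rephrasing every Sweedler-style manipulation as an identity of continuous linear maps and using Proposition~\ref{prop:bc869f35c39bbe44} and convolution cancellation exactly where the classical proof would.
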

\begin{proof}
  The proof is a routine check and parallels that of \cite{MR2397671}*{p21,
    Lemma~1.3.15}.
\end{proof}

\subsection{Involutive locally convex Hopf algebras}
\label{sec:7d03cf6e99381be5}

Throughout \S~\ref{sec:7d03cf6e99381be5}, we suppose \( \mathbb{K} = \C \), and
introduce the notion the involutive locally convex Hopf algebras, or simply
locally convex Hopf-\( \ast \) algebras.

\begin{defi}
  \label{defi:548ebf8ed72d45ca}
  An \textbf{involution} \( (\cdot)^{\ast}: A \to A \) on a \( \tau \)-algebra
  \( A \) is an antilinear homeomorphism, such that \( x^{\ast\ast} = x \),
  \( (yx)^{\ast} = x^{\ast}y^{\ast} \) for all \( x, y \in A \). An
  \textbf{involutive \( \tau \)-algebra}, or simply a \( \tau
  \)-\( \ast \)-\textbf{algebra}, is a \( \tau \)-algebra equipped with an
  involution. An \textbf{involutive \( \tau \)-bialgebra}, or simply
  \( \tau \)-\( \ast \)-\textbf{bialgebra}, \( B \), is one such that the
  comultiplication preserves the involution, where the involution on
  \( B \overline{\otimes}_{\tau} B \) is the unique continuous extension of the
  map
  \( \sum_{k}x_{k} \otimes y_{k} \mapsto \sum_{k}x_{k}^{\ast} \otimes
  y_{k}^{\ast} \), which is an antilinear homeomorphism from
  \( B \otimes_{\tau} B \) onto itself. A \( \tau \)-\( \ast \)-bialgebra that
  admits antipode is called an \textbf{involutive \( \tau \)-Hopf algebra}, or
  simply a \( \tau \)-\textbf{Hopf-\( \ast \) algebra}.
\end{defi}

\begin{prop}
  \label{prop:b358de0ebf54512e}
  Let \( H \) be a \( \tau \)-Hopf-\( \ast \) algebra. The following hold:
  \begin{enumerate}
  \item \label{item:f59f2b277b538b62} the counit \( \varepsilon \) is
    involutive, i.e.\ \( \varepsilon(x^{\ast}) = \overline{\varepsilon(x)} \)
    for all \( x \in H \);
  \item \label{item:5a806a0923e2e291} The antipode \( S \) satisfies
    \( S \circ (\cdot)^{\ast} \circ S \circ (\cdot)^{\ast} = \id_{H} \). In
    particular, \( S \) is an isomorphism and \( H \) is regular.
  \end{enumerate}
\end{prop}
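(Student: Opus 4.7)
The plan is to prove both parts by exploiting the uniqueness of the counit (Remark~\ref{rema:9d43b29d99f15c3b}) and of the antipode, combined with the hypothesis that $\Delta$ preserves the involution. For part~\ref{item:f59f2b277b538b62} I would define a continuous linear map $\varepsilon' : H \to \C$ by $\varepsilon'(x) := \overline{\varepsilon(x^{\ast})}$ and show that $\varepsilon'$ is again a counit for $\Delta$, so that uniqueness forces $\varepsilon' = \varepsilon$. For part~\ref{item:5a806a0923e2e291} I would define $T := (\cdot)^{\ast} \circ S \circ (\cdot)^{\ast} : H \to H$ and show that $T$ serves as an antipode for the coopposite Hopf algebra $H^{\cop}$; Proposition~\ref{prop:5e51bd7b92ec1787} will then identify $T$ with the inverse $S^{-1}$, giving simultaneously the regularity of $H$ (since $T$ is continuous by construction) and the identity $S \circ (\cdot)^{\ast} \circ S \circ (\cdot)^{\ast} = S \circ T = \id_H$.

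To verify that $\varepsilon'$ is a counit, I need $(\id_H \overline{\otimes}_{\tau} \varepsilon')\Delta = \id_H$ together with the symmetric identity. Starting from $(\id_H \overline{\otimes}_{\tau} \varepsilon)\Delta(x^{\ast}) = x^{\ast}$, substituting $\Delta(x^{\ast}) = \sigma(\Delta(x))$, where $\sigma$ is the continuous component-wise involution on $H \overline{\otimes}_{\tau} H$ from Definition~\ref{defi:548ebf8ed72d45ca}, and then applying the involution of $H$ to both sides, I produce a continuous linear endomorphism of $H$ that agrees with $(\id_H \overline{\otimes}_{\tau} \varepsilon')\Delta$ on every elementary tensor (both send $a \otimes b$ to $\varepsilon'(b)\,a$). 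Totality of $H \odot H$ in $H \overline{\otimes}_{\tau} H$ then upgrades this to an equality of maps. The argument for part~\ref{item:5a806a0923e2e291} is analogous: the antipode axiom $m(\id_H \overline{\otimes}_{\tau} S)\Delta(x^{\ast}) = \eta\varepsilon(x^{\ast})$, after substituting $\Delta(x^{\ast}) = \sigma(\Delta(x))$, invoking part~\ref{item:f59f2b277b538b62} to rewrite $\eta\varepsilon(x^{\ast}) = \overline{\varepsilon(x)}\,1_H$, and then applying the involution of $H$ (using $1_H^{\ast} = 1_H$, a standard consequence of the unit axiom via $(a \cdot 1_H)^{\ast} = 1_H^{\ast} \cdot a^{\ast}$), yields
\[ m(T \overline{\otimes}_{\tau} \id_H)\Delta^{\cop}(x) = \eta\varepsilon(x). \]
A symmetric treatment of the other antipode axiom gives $m(\id_H \overline{\otimes}_{\tau} T)\Delta^{\cop}(x) = \eta\varepsilon(x)$, so $T$ is an antipode for $H^{\cop}$, and Proposition~\ref{prop:5e51bd7b92ec1787} concludes the proof.

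The main technical point to watch is that elements of $H \overline{\otimes}_{\tau} H$ are in general only limits of algebraic tensors, so no Sweedler-type finite-sum notation is directly available. Every identity must therefore be phrased as an equality of continuous linear maps $H \to H$ (or out of $H \overline{\otimes}_{\tau} H$), verified on the dense subspace $H \odot H$ where the classical algebraic manipulations apply, and then extended by continuity; this is exactly the style already used throughout \S~\ref{sec:62f5f3547854deb9}. Beyond this bookkeeping, no deeper analytic input is required: the argument is a direct topological adaptation of the classical proofs for Hopf-$\ast$ algebras.
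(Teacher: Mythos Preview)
Your proof is correct and follows precisely the classical route the paper defers to (Timmermann, p.~27): uniqueness of the counit for part~\ref{item:f59f2b277b538b62}, and for part~\ref{item:5a806a0923e2e291} showing that $T=(\cdot)^{\ast}\circ S\circ(\cdot)^{\ast}$ is an antipode for $H^{\cop}$ and invoking Proposition~\ref{prop:5e51bd7b92ec1787}. Your explicit handling of the density/continuity bookkeeping is exactly the adaptation the paper has in mind when it says the proof ``parallels the classical theory''.
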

\begin{proof}
  The proofs parallels the classical theory (see, e.g.\ \cite{MR2397671}*{p27}).
\end{proof}

Among many of its usage, we mention in advance that locally convex
Hopf-\( \ast \) algebras will play a vital role when we describe how Pontryagin
duality for second countable locally compact abelian groups can manifest in our
theory.

\subsection{Dualizability and reflexivity}
\label{sec:9b15e9ed66625847}

Recall Definition~\ref{defi:12b42590a911540b} and
Definition~\ref{defi:d9087abf5fbcb620}.

\begin{defi}
  \label{defi:a6ba1501ad62d100}
  Let \( \overline{\otimes}_{\tau} \), \( \overline{\otimes}_{\sigma} \) be
  compatible symmetric monoidal functors and \( E \in \widehat{\mathsf{LCS}} \). We say \( E \) is
  \begin{itemize}
  \item \( (\tau, \sigma) \)-\textbf{dualizable}, if the strong dual
    \( E'_{b} \) is complete, and if the canonical algebraic pairings
    \begin{displaymath}
      \pairing*{E}{E'} , \;  \pairing*{E \odot E}{E' \odot E'},\;
      \pairing*{E \odot E \odot E}{E' \odot E' \odot E'},
      \;\text{ and }\;
      \pairing*{E \odot E \odot E \odot E}{E' \odot E' \odot E' \odot E'}
    \end{displaymath}
    extends respectively to duality pairings
    \begin{equation}
      \label{eq:9dc1aa4a05ea2221}
      \begin{gathered}
        \pairing*{E}{E'_{b}} ,\;
        \pairing*{E \overline{\otimes}_{\tau} E}{E'_{b}
          \overline{\otimes}_{\sigma} E'_{b}}, \;
        \pairing*{E \overline{\otimes}_{\tau} E \overline{\otimes}_{\tau}
          E}{E'_{b} \overline{\otimes}_{\sigma} E'_{b} \overline{\otimes}_{\sigma}
          E'_{b}} \\
        \text{ and }\;
        \pairing*{E \overline{\otimes}_{\tau} E \overline{\otimes}_{\tau}
          E \overline{\otimes}_{\tau} E}{E'_{b} \overline{\otimes}_{\sigma} E'_{b} \overline{\otimes}_{\sigma}
          E'_{b} \overline{\otimes}_{\sigma} E'_{b}}
      \end{gathered}
    \end{equation}
    that are compatible respectively with \( E \),
    \( E \overline{\otimes}_{\tau} E \),
    \( E \overline{\otimes}_{\tau} E \overline{\otimes}_{\tau} E \) and
    \( E \overline{\otimes}_{\tau} E \overline{\otimes}_{\tau} E
    \overline{\otimes}_{\tau} E \);
  \item \( (\tau, \sigma) \)-\textbf{reflexive}, if it is dualizable and the
    pairings in \eqref{eq:9dc1aa4a05ea2221} are reflexive;
  \item  \( (\tau, \sigma) \)-\textbf{polar dualizable}, if the polar dual
    \( E'_{c} \) is complete, and if the canonical algebraic pairings
    \begin{displaymath}
      \pairing*{E}{E'} , \;  \pairing*{E \odot E}{E' \odot E'}, \;
      \pairing*{E \odot E \odot E}{E' \odot E' \odot E'},
      \;\text{ and }\;
      \pairing*{E \odot E \odot E \odot E}{E' \odot E' \odot E' \odot E'}
    \end{displaymath}
    extends respectively to duality pairings
    \begin{equation}
      \label{eq:9b4a9def286218d1}
      \begin{gathered}
        \pairing*{E}{E'_{c}} ,\;
        \pairing*{E \overline{\otimes}_{\tau} E}{E'_{c}
          \overline{\otimes}_{\sigma} E'_{c}}, \;
        \pairing*{E \overline{\otimes}_{\tau} E \overline{\otimes}_{\tau}
          E}{E'_{c} \overline{\otimes}_{\sigma} E'_{c} \overline{\otimes}_{\sigma}
          E'_{c}} \\
        \text{ and }\;
        \pairing*{E \overline{\otimes}_{\tau} E \overline{\otimes}_{\tau}
          E \overline{\otimes}_{\tau} E}{E'_{c} \overline{\otimes}_{\sigma} E'_{c} \overline{\otimes}_{\sigma}
          E'_{c} \overline{\otimes}_{\sigma} E'_{c}}
      \end{gathered}
    \end{equation}
    that are polar compatible respectively with \( E \),
    \( E \overline{\otimes}_{\tau} E \),
    \( E \overline{\otimes}_{\tau} E \overline{\otimes}_{\tau} E \) and
    \( E \overline{\otimes}_{\tau} E \overline{\otimes}_{\tau} E
    \overline{\otimes}_{\tau} E \);
  \item \( (\tau, \sigma) \)-\textbf{polar reflexive}, if it is dualizable and
    the pairings in \eqref{eq:9b4a9def286218d1} are polar reflexive;
  \end{itemize}
\end{defi}

\begin{prop}
  \label{prop:844fe110a4f00d8b}
  Let \( \overline{\otimes}_{\tau} \), \( \overline{\otimes}_{\sigma} \) be
  compatible symmetric monoidal functors and \( H \in \widehat{\mathsf{LCS}}
  \). Then the following holds:
  \begin{enumerate}
  \item \label{item:4e35c3dbe6ccf7b3} If \( H \) is
    \( (\tau, \sigma) \)-dualizable and \( H \) has a structure of
    \( \tau \)-algebra (resp.\ coalgebra, bialgebra, Hopf algebra, regular Hopf
    algebra), then the strong dual \( H'_{b} \) has a structure of
    \( \sigma \)-coalgebra (resp.\ algebra, bialgebra, Hopf algebra, regular
    Hopf algebra) by taking transposes of the structure maps of \( H \).
  \item \label{item:539e6db5ab32a67f} If \( \mathbb{K} = \C \), \( H \) is
    \( (\tau, \sigma) \)-dualizable, and admits a \( \tau \)-Hopf-\( \ast \)
    algebra structure, then \( H'_{b} \) has a \( \sigma \)-Hopf-\( \ast \)
    algebra structure where the \( \sigma \)-Hopf algebra structure is as in
    \ref{item:4e35c3dbe6ccf7b3}, and the involution \( \omega \in H'_{b} \mapsto \omega^{\ast} \in H'_{b} \) given by
    \begin{equation}
      \label{eq:650baecac6902597}
      \omega^{\ast}(x) = \overline{\pairing*{[S(x)]^{\ast}}{\omega}}, \qquad x \in H,
    \end{equation}
    where \( S \) is the antipode on \( H \).
  \item \label{item:f927c48f1f02bd29} If \( H \) is \( (\tau, \sigma) \)-polar
    dualizable and \( H \) has a structure of \( \tau \)-algebra (resp.\
    coalgebra, bialgebra, Hopf algebra, regular Hopf algebra), then the polar
    dual \( H'_{c} \) has a structure of \( \sigma \)-coalgebra (resp.\ algebra,
    bialgebra, Hopf algebra, regular Hopf algebra) by taking transposes of the
    structure maps of \( H \).
  \item \label{item:97082c3bb390a292} If \( \mathbb{K} = \C \), \( H \) is
    \( (\tau, \sigma) \)-polar dualizable, and admits a \( \tau
    \)-Hopf-\( \ast \) algebra structure, then \( H'_{c} \) has a
    \( \sigma \)-Hopf-\( \ast \) algebra structure where the \( \sigma \)-Hopf
    algebra structure is as in \ref{item:f927c48f1f02bd29}, and the involution
    \( \omega \in H'_{c} \mapsto \omega^{\ast} \in H'_{c} \) given by
    \begin{equation}
      \label{eq:e9cb18c145d094fe}
      \omega^{\ast}(x) = \overline{\pairing*{[S(x)]^{\ast}}{\omega}}, \qquad x \in H,
    \end{equation}
    where \( S \) is the antipode on \( H \).
  \end{enumerate}
\end{prop}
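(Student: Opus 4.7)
The plan is to leverage \( (\tau,\sigma) \)-dualizability (resp.\ polar dualizability) to identify, via the extended pairings of \eqref{eq:9dc1aa4a05ea2221} (resp.\ \eqref{eq:9b4a9def286218d1}), the strong dual of each \( H^{\overline{\otimes}_{\tau} k} \) with \( (H'_{b})^{\overline{\otimes}_{\sigma} k} \) (resp.\ \( (H'_{c})^{\overline{\otimes}_{\sigma} k} \)) for \( k = 2, 3, 4 \). Once these identifications are in place, all the claimed dual structures on \( H'_{b} \) or \( H'_{c} \) will be defined by transposing the structure maps on \( H \) and then composing with the appropriate tensor identifications; the axioms will follow by transposing the defining commutative diagrams \eqref{eq:4278e8ce2b471651}, \eqref{eq:d39b49d679de395a} and their coalgebra duals, together with Proposition~\ref{prop:717c7c133e9e974c}~\ref{item:e32e1fa4753a028d} for the bialgebra compatibility.

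Concretely, for \ref{item:4e35c3dbe6ccf7b3}, suppose \( H \) carries a \( \tau \)-algebra structure \( (m, \eta) \). I would define \( \Delta := \iota_{2} \circ m^{t}: H'_{b} \to H'_{b} \overline{\otimes}_{\sigma} H'_{b} \) where \( m^{t}: H'_{b} \to (H \overline{\otimes}_{\tau} H)'_{b} \) is the transpose and \( \iota_{2} \) the canonical identification furnished by dualizability, and take the counit to be the transpose \( \eta^{t} \) of the unit (identifying \( \mathbb{K}' = \mathbb{K} \)). Coassociativity is checked by applying transposition to \eqref{eq:4278e8ce2b471651} and then identifying \( (H^{\overline{\otimes}_{\tau} 3})'_{b} \simeq (H'_{b})^{\overline{\otimes}_{\sigma} 3} \); the counit axiom is dual to \eqref{eq:d39b49d679de395a}. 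The algebra-from-coalgebra case is symmetric. For a \( \tau \)-bialgebra \( H \), both sides simultaneously transport, and Proposition~\ref{prop:717c7c133e9e974c}~\ref{item:e32e1fa4753a028d} (which is self-dual upon transposition) guarantees that the resulting structures on \( H'_{b} \) assemble into a \( \sigma \)-bialgebra. For a Hopf algebra with antipode \( S \), the transpose \( S^{t} \in \mathcal{L}(H'_{b}, H'_{b}) \) satisfies the transposed version of \eqref{eq:c8e35b71db95f592}, hence is the antipode of \( H'_{b} \); and if \( S \) is a linear isomorphism then so is \( S^{t} = (S^{-1})^{t \, -1} \), giving regularity. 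Part~\ref{item:f927c48f1f02bd29} is the verbatim transcription with \( H'_{c} \) in place of \( H'_{b} \).

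For the involutive statements \ref{item:539e6db5ab32a67f} and \ref{item:97082c3bb390a292}, I would verify directly that the formula
\[
  \omega^{\ast}(x) = \overline{\pairing*{[S(x)]^{\ast}}{\omega}}
\]
defines an antilinear homeomorphism on \( H'_{b} \) (resp.\ \( H'_{c} \)). Continuity is immediate since \( S \) is continuous (by Proposition~\ref{prop:b358de0ebf54512e}~\ref{item:5a806a0923e2e291}, \( S \) is in fact an isomorphism in the involutive case), the involution on \( H \) is a homeomorphism by hypothesis, and conjugation is continuous. Involutivity \( \omega^{\ast\ast} = \omega \) is a direct consequence of the identity \( S \circ (\cdot)^{\ast} \circ S \circ (\cdot)^{\ast} = \id_{H} \) from Proposition~\ref{prop:b358de0ebf54512e}~\ref{item:5a806a0923e2e291}. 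The compatibility \( (\omega \ast \omega')^{\ast} = \omega'^{\ast} \ast \omega^{\ast} \) (antimultiplicativity in the convolution algebra on \( H'_{b} \), which matches the multiplication dual to \( \Delta \)) follows from Proposition~\ref{prop:bc869f35c39bbe44}, which asserts that \( S \) is a morphism from \( H \) to \( H^{\op,\cop} \); unwinding the formula and using \( \Delta(x^{\ast}) = \Delta(x)^{\ast} \) gives the claim. Finally, that \( \Delta \) on \( H'_{b} \) (resp.\ \( H'_{c} \)) is a \( \ast \)-morphism is obtained by the same computation paired with the compatibility of the tensor product involution.

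The main obstacle is the compatibility of the tensor identifications with the transposed structure maps: one must check that, under the identification \( (H^{\overline{\otimes}_{\tau} k})'_{b} \simeq (H'_{b})^{\overline{\otimes}_{\sigma} k} \) provided by dualizability, the transpose of a map like \( m \overline{\otimes}_{\tau} \id_{H}: H^{\overline{\otimes}_{\tau} 3} \to H^{\overline{\otimes}_{\tau} 2} \) corresponds to \( \Delta \overline{\otimes}_{\sigma} \id_{H'_{b}} \). This naturality is exactly what is encoded in the compatibility clause of Definition~\ref{defi:12b42590a911540b} applied to the pairings of \eqref{eq:9dc1aa4a05ea2221} (and their polar analogues), since those pairings extend the canonical algebraic pairings on \( H^{\odot k} \times (H')^{\odot k} \) for which the required functoriality is tautological; once the extensions are shown to agree on the dense algebraic tensor subspaces, the diagram chasing is mechanical and proceeds by continuity.
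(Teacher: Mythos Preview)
Your proposal is correct and follows essentially the same approach as the paper: transpose the structure maps, use the dualizability hypothesis to identify \( (H^{\overline{\otimes}_{\tau} k})'_{b} \) with \( (H'_{b})^{\overline{\otimes}_{\sigma} k} \) for \( k \leq 4 \), and then verify the axioms by dualizing the defining diagrams, with the involutive case handled via Proposition~\ref{prop:b358de0ebf54512e}. The paper's own proof is in fact a much shorter sketch---it merely notes that transposes are continuous on the strong (resp.\ polar) dual because continuous linear maps preserve bounded (resp.\ precompact) sets, observes that the involution likewise preserves these, and then declares the rest a routine exercise parallel to the classical finite-dimensional case, pointing out (as you do) that three-fold tensors are needed for (co)associativity and four-fold for the bialgebra compatibility of Proposition~\ref{prop:717c7c133e9e974c}.
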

\begin{proof}
  Note that since bounded sets (resp.\ precompact sets) are preserved under
  continuous linear maps, the transpose of continuous linear maps remains
  continuous as maps on the strong (resp.\ polar) duals. Note also that the
  involution on \( H \) also preserves bounded sets (resp.\ precompact
  sets). The rest of the proof is parallel to the case of classical (finite
  dimensional) Hopf algebras, which are well-known, and can be easily
  checked. We therefore leave it as a routine exercise, and merely point out
  that the (co)associativity of (co)multiplication involves three-fold tensor
  products, and the compatibility of the algebra and coalgebra structures
  (Proposition~\ref{prop:717c7c133e9e974c}) involves four-fold tensor products.
\end{proof}

\begin{defi}
  \label{defi:783a70411adbf39d}
  We place ourselves in the settings of Proposition~\ref{prop:844fe110a4f00d8b}.
  We say that a \( \tau \)-algebra (resp.\ coalgebra, bialgebra, Hopf algebra,
  Hopf-\( \ast \)-algebra) \( H \) is \( (\tau, \sigma) \)-\textbf{dualizable}
  (resp.\ \textbf{polar dualizable}), if \( H \) is so as a complete locally
  convex space. We define \( (\tau, \sigma) \)-\textbf{reflexivity} (resp.\
  \textbf{polar reflexivity}) similarly. In these cases, we say that \( H \) is
  \textbf{reflexive} (resp.\ \textbf{polar reflexive}), if \( H \) is
  \( (\tau, \sigma) \)-\textbf{reflexive} (resp.\ \textbf{polar reflexive}) for
  some compatible symmetric monoidal functor \( \overline{\otimes}_{\sigma} \).

  In the dualizable (resp.\ polar dualizable) case, we call \( H'_{b} \) (resp.\
  \( H'_{c} \)), equipped the corresponding obtained structure maps, the
  \textbf{strong dual} (resp.\ \textbf{polar dual}) of \( H \), which is a
  \( \sigma \)-coalgebra (resp.\ algebra, bialgebra, Hopf algebra,
  Hopf-\( \ast \) algebra) if \( H \) is a \( \tau \)-algebra (resp.\ coalgebra,
  bialgebra, Hopf algebra, Hopf-\( \ast \) algebra).

  In all of the above, if \( H \) is nuclear and
  \( \tau \in \set*{\varepsilon, \pi} \), then for simplicity, we merely say
  \( \sigma \)-dualizable instead of \( (\tau, \sigma) \)-dualizable, and
  similarly for \( \sigma \)-reflexive and the polar versions of dualizability
  and reflexivity.
\end{defi}

\begin{rema}
  \label{rema:28e309a0f4e6146b}
  As the duality theory of topological tensor product is quite subtle, we shall
  often specify which compatible symmetric monoidal functors are involved in
  studying notions defined above.
\end{rema}

Here's a version of Pontryagin type duality for locally convex algebras (resp.\
coalgebras, bialgebras, Hopf algebras, regular Hopf algebras,
Hopf-\( \ast \)-algebras).

\begin{prop}
  \label{prop:9ae8172142f23b99}
  Le \( \overline{\otimes}_{\tau} \), \( \overline{\otimes}_{\sigma} \) be
  compatible symmetric monoidal functors and \( H \) a \( \tau \)-algebra
  (resp.\ coalgebras, bialgebras, Hopf algebras, regular Hopf algebras,
  Hopf-\( \ast \)-algebras) that is \( (\tau, \sigma) \)-reflexive (polar
  reflexive), then the second strong dual \( (H'_{b})'_{b} \) (resp.\ second
  polar dual \( (H'_{c})'_{c} \)) remains so, and is isomorphic to \( H \) via
  the natural bijection.
\end{prop}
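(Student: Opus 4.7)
The plan is to show that the natural evaluation map $\kappa : H \to (H'_b)'_b$ (respectively $\kappa : H \to (H'_c)'_c$) supplied by Proposition~\ref{prop:a6c89f0b75eda9eb}, which is already an isomorphism in $\widehat{\mathsf{LCS}}$ by the reflexivity (resp.\ polar reflexivity) hypothesis on $H$, actually intertwines every piece of algebraic structure carried by $H$ with the corresponding structure on the double dual produced by applying Proposition~\ref{prop:844fe110a4f00d8b} twice. Once this is established, the $(\tau,\sigma)$-reflexivity (resp.\ polar reflexivity) of the double dual is inherited from $H$ by transporting all hypotheses along $\kappa$, since the pairings in \eqref{eq:9dc1aa4a05ea2221} (resp.\ \eqref{eq:9b4a9def286218d1}) and their reflexivity are preserved under the isomorphism.

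The intertwining splits into one check per structure map. For multiplication $m : H \overline{\otimes}_\tau H \to H$, the comultiplication $\Delta_{H'_b}$ on the strong dual is, by the construction in Proposition~\ref{prop:844fe110a4f00d8b}, precisely the transpose of $m$ with respect to the pairing $\pairing*{H \overline{\otimes}_\tau H}{H'_b \overline{\otimes}_\sigma H'_b}$; taking the transpose once more and invoking the second-order compatibility built into $(\tau,\sigma)$-reflexivity yields
\begin{displaymath}
  \kappa \circ m = m_{(H'_b)'_b} \circ (\kappa \overline{\otimes}_\tau \kappa),
\end{displaymath}
and symmetrically for $\Delta$, $\eta$ and $\varepsilon$. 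The bialgebra compatibility~\ref{item:e32e1fa4753a028d} of Proposition~\ref{prop:717c7c133e9e974c} involves four-fold tensor products and so requires the fourth pairing in \eqref{eq:9dc1aa4a05ea2221}, which is exactly why that pairing is demanded in Definition~\ref{defi:a6ba1501ad62d100}. The antipode identity~\eqref{eq:c8e35b71db95f592} dualizes similarly, and regularity is preserved because $S$ is an isomorphism of locally convex spaces if and only if its double transpose is. For the Hopf-$\ast$ case, a short computation using Proposition~\ref{prop:b358de0ebf54512e}~\ref{item:5a806a0923e2e291} shows that applying formula~\eqref{eq:650baecac6902597} twice returns the original involution on $H$ modulo $\kappa$, so $\kappa$ is $\ast$-preserving as well.

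The only real obstacle I foresee is organizing the coherence between the iterated canonical evaluation map and its tensor powers: one has to verify that the natural isomorphism $H \overline{\otimes}_\tau H \simeq (H'_b \overline{\otimes}_\sigma H'_b)'_b$ coincides with $\kappa \overline{\otimes}_\tau \kappa$ post-composed with the canonical identification of $((H'_b)'_b) \overline{\otimes}_\tau ((H'_b)'_b)$ with $(H'_b \overline{\otimes}_\sigma H'_b)'_b$, and similarly on three- and four-fold tensor products. This is a formal consequence of the pairing compatibilities in \eqref{eq:9dc1aa4a05ea2221} (resp.\ \eqref{eq:9b4a9def286218d1}) built into $(\tau,\sigma)$-dualizability, but spelling it out cleanly is what makes the otherwise routine diagram chase rigorous. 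With this coherence in hand, every remaining identity reduces by continuity and density from the algebraic tensor powers $H \odot \cdots \odot H$, where it is the standard classical double-dual identity, to the completed tensor products, finishing the proof.
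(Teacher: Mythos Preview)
Your proposal is correct and follows the same route as the paper's own argument, which reads in its entirety: ``This follows from an easy unwinding of Definition~\ref{defi:783a70411adbf39d} and Proposition~\ref{prop:844fe110a4f00d8b}.'' You have simply carried out that unwinding explicitly, including the coherence check between $\kappa$ and its tensor powers and the verification (via Proposition~\ref{prop:b358de0ebf54512e}\ref{item:5a806a0923e2e291}) that iterating \eqref{eq:650baecac6902597} recovers the original involution, both of which the paper leaves implicit.
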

\begin{proof}
  This follows from an easy unwinding of Definition~\ref{defi:783a70411adbf39d}
  and Proposition~\ref{prop:844fe110a4f00d8b}.
\end{proof}

In the following, many general results have an analogue for the duality between
locally convex algebras and coalgebras, as well between locally convex
bialgebras. Since our main focus is on Hopf algebras, to avoid being too
pedantic and for the sake of readability as well as clarity, we will leave the
easy corresponding formulation and proofs for structures to our readers and only
state and prove the ones involving locally convex Hopf algebras.

\section{Duality of topological tensor products and locally convex Hopf
  algebras}
\label{sec:7848cae39b1cc5b0}

\begin{nota}
  \label{nota:fcc41733902f400b}
  We adopt the following notation to denote the following several classes of
  locally convex spaces:
  \( (\mathcal{FM}) = (\mathcal{F}) \cap (\mathcal{M}) \),
  \( (\mathcal{DFM}) = (\mathcal{DF}) \cap (\mathcal{M}) \),
  \( (\mathcal{FN}) = (\mathcal{F}) \cap (\mathcal{N}) \). The class of all
  reflexive (polar reflexive) spaces is denoted by \( (\mathcal{R}) \) (resp.\
  \( (\mathcal{R}_{p}) \)). Recall that we use \( (\mathcal{F}'_{c}) \) to
  denote the class of all locally convex spaces that are isomorphic to the polar
  dual of some \( (F) \)-space.
\end{nota}

\subsection{The Buchwalter duality}
\label{sec:0b2e5745467c3681}

As a first step out of framework outside Banach algebras such as \( C^{\ast} \)
and \( W^{\ast} \)-algebras (see, e.g.\
\cites{takesaki_theory_2002,takesaki_theory_2003}), which are crucial in the
theory of (locally) compact quantum groups (cf.\
\cites{MR1616348,woronowicz_compact_1987,MR1378538,MR1277138,MR1832993,MR1951446}),
we now consider the question of (polar) reflexivity of locally convex Hopf
algebras of class \( (\mathcal{F}) \).

Buchwalter has established the following duality result which plays a
fundamental role in our subsequent study.

\begin{prop}[\cite{MR0451189}*{p56, Proposition~2.7}]
  \label{prop:2a9c783caf76b6fb}
  Let \( E \), \( F \) be \( (F) \)-spaces. If at least one of \( E \), \( F \)
  has the approximation property, then
  \begin{enumerate}
  \item \label{item:32d8c9487a724d01}
    \( (E \overline{\otimes}_{\pi} F)'_{c} = E'_{c} \overline{\otimes}_{\varepsilon} F'_{c} \) and
    \( (E'_{c} \overline{\otimes}_{\varepsilon} F'_{c})'_{c} = E \overline{\otimes}_{\pi} F \);
  \item \label{item:93ec69e754c7fd39}
    \( (E \overline{\otimes}_{\varepsilon} F)'_{c} = E'_{c}
    \overline{\otimes}_{\pi} F'_{c} \) and
    \( (E'_{c} \overline{\otimes}_{\pi} F'_{c})'_{c} = E
    \overline{\otimes}_{\varepsilon} F \).
  \end{enumerate}
\end{prop}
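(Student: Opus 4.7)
The plan is to establish the first equality in each part and then use polar reflexivity plus completion identities to get the second. Throughout, let me exploit that $E \overline{\otimes}_{\pi} F$ and $E \overline{\otimes}_{\varepsilon} F$ are both $(F)$-spaces by Corollary~\ref{coro:0be09f15f7e01dce} and Corollary~\ref{coro:97adf837ae63af02}, hence polar reflexive by Proposition~\ref{prop:6ac1bb9a146288e4}\ref{item:0f58155dc720d2d8}. Moreover, by Proposition~\ref{prop:273d1ea40b816ac9}, the approximation property transfers freely between an $(F)$-space and its polar dual, and by Proposition~\ref{prop:20a96aff7596345b}, $(\mathcal{AP})$ is stable under completed injective tensor products.

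First I would identify the algebraic dual. By Proposition~\ref{prop:dda5a3f4762f22f6}\ref{item:b0505895928e3373}, $(E \overline{\otimes}_{\pi} F)'$ is naturally identified with $\mathcal{B}(E, F; \mathbb{K})$, which via currying is identified with $\mathcal{L}(E, F'_{b}) = \mathcal{L}(E, F'_{c})$ (the two coincide by Proposition~\ref{prop:273d1ea40b816ac9} and the fact that bounded and equicontinuous sets in the dual of an $(F)$-space are characterized via Mackey's theorem; more concretely the underlying sets coincide as subspaces of $\mathcal{L}(E, F')$). On the other hand, by Proposition~\ref{prop:6a4e157f8736cd87}, $E'_{c} \overline{\otimes}_{\varepsilon} F'_{c}$ embeds as a strict monomorphism into $\mathfrak{B}_{e}((E'_{c})'_{s}, (F'_{c})'_{s}) = \mathfrak{B}_{e}(E_{s}, F_{s})$, and since every continuous bilinear form on $E \times F$ is a fortiori separately continuous and bi-equicontinuous on the equicontinuous sets of the duals (using polar reflexivity), one obtains a canonical linear embedding $E'_{c} \overline{\otimes}_{\varepsilon} F'_{c} \hookrightarrow (E \overline{\otimes}_{\pi} F)'$.

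Next I would match the topologies. Since $E \overline{\otimes}_{\pi} F$ is metrizable, by the Banach-Dieudonné theorem (Theorem~\ref{theo:4b05f97fb7cc0705}) the polar topology $c(\cdot)$ on its dual coincides with the finest topology agreeing with $\sigma(\cdot, \cdot)$ on equicontinuous sets. On the injective tensor product side, Proposition~\ref{prop:6a4e157f8736cd87} tells me that a fundamental system of neighborhoods of $0$ in $\mathfrak{B}_{e}(E'_{s}, F'_{s})$ is given by polars of sets $U^{\circ} \otimes V^{\circ}$ with $U \in \mathcal{N}^{E}_{\Gamma}(0)$ and $V \in \mathcal{N}^{F}_{\Gamma}(0)$; these polars, when computed against the pairing with $E \odot F$, are exactly polars of sets of the form $\Gamma(K_1 \otimes K_2)$ for $K_1 \subseteq E$, $K_2 \subseteq F$ precompact (and closed absolutely convex), via Proposition~\ref{prop:cdb6e313569d3c6e} and the equicontinuous-precompact correspondence of Proposition~\ref{prop:99b4fc84a9b1d8a1} applied to the Montel-type statement for the polar duals. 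The crucial input here, and the main obstacle, is Grothendieck's theorem that if one of $E$, $F$ has the approximation property, then every precompact set in $E \overline{\otimes}_{\pi} F$ lies in the closed absolutely convex hull of $K_{1} \otimes K_{2}$ for some precompact $K_{i}$ in the respective factor. This is precisely where the approximation property hypothesis bites: without it, the polar topology of precompact convergence on $(E \overline{\otimes}_{\pi} F)'$ could be strictly finer than the injective tensor product topology on the algebraic tensor of the duals.

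Once equality of topologies on the algebraic tensor $E' \odot F'$ is established, density is automatic: the embedding from the completed injective tensor product lands in the complete space $(E \overline{\otimes}_{\pi} F)'_{c}$ (complete by Corollary~\ref{coro:f7e5d81ca583056d}, since $E \overline{\otimes}_{\pi} F$ is bornological as an $(F)$-space), so the induced map $E'_{c} \overline{\otimes}_{\varepsilon} F'_{c} \to (E \overline{\otimes}_{\pi} F)'_{c}$ is an isomorphism. This proves the first identity of~\ref{item:32d8c9487a724d01}. The second identity of~\ref{item:32d8c9487a724d01} is then obtained by taking polar duals once more: $E'_{c} \overline{\otimes}_{\varepsilon} F'_{c}$ lies in $(\mathcal{AP})$ and is the polar dual of an $(F)$-space, so applying Proposition~\ref{prop:6ac1bb9a146288e4}\ref{item:0f58155dc720d2d8} to $E \overline{\otimes}_{\pi} F$ yields $(E'_{c} \overline{\otimes}_{\varepsilon} F'_{c})'_{c} = ((E \overline{\otimes}_{\pi} F)'_{c})'_{c} = E \overline{\otimes}_{\pi} F$. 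Part~\ref{item:93ec69e754c7fd39} follows by the same argument with the roles of $\pi$ and $\varepsilon$ swapped; here the analogue of Grothendieck's precompact theorem for $E \overline{\otimes}_{\varepsilon} F$ is somewhat easier because the injective tensor product embeds canonically into the bi-equicontinuous space, but one still relies on the approximation property to identify the polar dual with the completed projective tensor product $E'_{c} \overline{\otimes}_{\pi} F'_{c}$ (as opposed to a larger space of integral bilinear forms).
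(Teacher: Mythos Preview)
The paper does not supply its own proof of this proposition: it is quoted verbatim as Buchwalter's result \cite{MR0451189}*{p56, Proposition~2.7} and used as a black box thereafter. So there is no in-paper argument to compare against.

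That said, your outline has the right architecture---identify the algebraic dual of the completed tensor product, match the polar topology with the injective (resp.\ projective) tensor topology on $E' \odot F'$, use completeness of the polar dual (Corollary~\ref{coro:f7e5d81ca583056d}) to pass to completions, and then recover the second identity in each part by polar reflexivity of the $(F)$-space side (Proposition~\ref{prop:6ac1bb9a146288e4})---but several steps are loose. The identification $\mathcal{L}(E, F'_{b}) = \mathcal{L}(E, F'_{c})$ is not justified by Proposition~\ref{prop:273d1ea40b816ac9}, which concerns the approximation property, not equality of mapping spaces; you would need a separate argument that currying a continuous bilinear form lands in the right space with the right continuity. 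More seriously, the ``Grothendieck theorem'' you invoke---that under the approximation property every precompact set in $E \overline{\otimes}_{\pi} F$ lies in the closed absolutely convex hull of some $K_{1} \otimes K_{2}$ with $K_{i}$ precompact---is not one of the results recorded in the paper's preliminaries, and its precise formulation (and the exact role of the approximation property in it) is where the real work of Buchwalter's proof lies. Your sketch correctly locates this as the crux, but does not discharge it. If you want to turn this into a self-contained proof you should consult Buchwalter's original argument or Grothendieck's thesis for the precise statement linking the approximation property to the structure of precompact sets in the projective tensor product and to the identification of integral versus nuclear bilinear forms.
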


Before presenting our first result on the (polar) reflexivity of locally convex
Hopf algebras, we also need the following preparation.

\begin{prop}
  \label{prop:8ffb913b5f5ddea6}
  The following hold.
  \begin{enumerate}
  \item \label{item:272241af97d82589} All locally convex spaces in
    \( (\mathcal{F}) \cap (\mathcal{AP}) \) and
    \( (\mathcal{F}'_{c}) \cap (\mathcal{AP}) \) are complete; and viewing
    \( (\mathcal{F}) \cap (\mathcal{AP}) \) and
    \( (\mathcal{F}'_{c}) \cap (\mathcal{AP}) \) as full subcategory of
    \( \widehat{\mathsf{LCS}} \), taking polars duals and transposes establishes
    an anti-equivalence of categories between \( (\mathcal{F}) \cap (\mathcal{AP}) \) and
    \( (\mathcal{F}'_{c}) \cap (\mathcal{AP}) \).
  \item \label{item:9e56bc6f15cd09bf} The class
    \( (\mathcal{F}) \cap (\mathcal{AP}) \) is stable under taking completed
    injective tensor product, and the class
    \( (\mathcal{F}'_{c}) \cap (\mathcal{AP}) \) stable under taking
    completed projective tensor product.
  \item \label{item:4f036dcd1d2124e3} All locally convex spaces in
    \( (\mathcal{FM}) \cap (\mathcal{AP}) \) and
    \( (\mathcal{DFM}) \cap (\mathcal{AP}) \) are complete, and taking polars
    duals establishes a bijection between
    \( (\mathcal{FM}) \cap (\mathcal{AP}) \) and
    \( (\mathcal{DFM}) \cap (\mathcal{AP}) \).
  \item \label{item:434fda161e81e6c0} The class
    \( (\mathcal{FM}) \cap (\mathcal{AP}) \) is stable under taking completed
    injective tensor product, and the class
    \( (\mathcal{DFM}) \cap (\mathcal{AP}) \) is stable under taking completed
    projective tensor products.
  \end{enumerate}
\end{prop}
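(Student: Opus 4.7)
The plan is to treat the four items in sequence, extracting each from the duality and tensor-product machinery already assembled in \S\ref{sec:63dd23720298d074} and in the Buchwalter duality (Proposition~\ref{prop:2a9c783caf76b6fb}). The main non-trivial point will be the stability of the Montel property under the completed injective tensor product; everything else amounts to bookkeeping of references.

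For \ref{item:272241af97d82589}, every \( (F) \)-space is complete by definition, so \( (\mathcal{F})\cap(\mathcal{AP}) \) sits inside \( \widehat{\mathsf{LCS}} \). For a space \( E\in(\mathcal{F}'_{c})\cap(\mathcal{AP}) \), write \( E = A'_{c} \) with \( A \in (\mathcal{F}) \); since every metrizable locally convex space is bornological (Proposition~\ref{prop:7963cb2832fabbd4}~\ref{item:217ead75b12b4308}), Corollary~\ref{coro:f7e5d81ca583056d} forces \( A'_{c} \) to be complete. The bijection on objects comes from \( (F) \)-spaces being polar reflexive (Proposition~\ref{prop:6ac1bb9a146288e4}~\ref{item:0f58155dc720d2d8}) together with the defining property of \( (\mathcal{F}'_{c}) \), while the compatibility with the approximation property is Proposition~\ref{prop:273d1ea40b816ac9}, and I would use this to verify that \( E\mapsto E'_{c} \) sends each of the two classes into the other. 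To promote this to an anti-equivalence, I would send a morphism \( f : E \to F \) to its transpose \( {}^{t}f : F'_{c} \to E'_{c} \), noting that continuous linear maps preserve precompact sets so the transpose is continuous between polar duals, and then polar reflexivity yields \( {}^{t}({}^{t}f) = f \) under the canonical identification.

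For \ref{item:9e56bc6f15cd09bf}, that \( E \overline{\otimes}_{\varepsilon} F \) remains an \( (F) \)-space when \( E \) and \( F \) are is Corollary~\ref{coro:97adf837ae63af02}; the approximation property transfers by Proposition~\ref{prop:20a96aff7596345b}. For the projective side, given \( E, F \in (\mathcal{F}'_{c})\cap(\mathcal{AP}) \), I would write \( E = A'_{c} \), \( F = B'_{c} \) with \( A, B \in (\mathcal{F}) \); Proposition~\ref{prop:273d1ea40b816ac9} ensures \( A, B \in (\mathcal{AP}) \), whence \( A \overline{\otimes}_{\varepsilon} B \in (\mathcal{F})\cap(\mathcal{AP}) \) by the injective case just settled, and Proposition~\ref{prop:2a9c783caf76b6fb}~\ref{item:93ec69e754c7fd39} identifies \( E \overline{\otimes}_{\pi} F \) with \( (A \overline{\otimes}_{\varepsilon} B)'_{c} \), which lies in \( (\mathcal{F}'_{c}) \) and, invoking Proposition~\ref{prop:273d1ea40b816ac9} once more, also in \( (\mathcal{AP}) \).

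For \ref{item:4f036dcd1d2124e3} and \ref{item:434fda161e81e6c0}, completeness of \( (\mathcal{FM})\cap(\mathcal{AP}) \) is automatic; for \( (\mathcal{DFM})\cap(\mathcal{AP}) \), Proposition~\ref{prop:99b4fc84a9b1d8a1} identifies the polar and strong duals on Montel spaces, and the strong dual of a Fréchet space is bornological-dually complete by Corollary~\ref{coro:f7e5d81ca583056d}, using again that \( (F) \)-spaces are bornological. The bijection follows by combining Proposition~\ref{prop:a017fe989c7853e0}~\ref{item:a716828b3f6c61e2}, which makes the strong dual of an \( (FM) \)-space a Montel space, with Proposition~\ref{prop:150a19f0054c21da}, which makes it a \( (DF) \)-space; polar reflexivity of \( (F) \)-spaces together with \( E'_{c} = E'_{b} \) for Montel duals inverts this construction, and Proposition~\ref{prop:273d1ea40b816ac9} carries the approximation property across. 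The stability under tensor products now reduces, via \ref{item:9e56bc6f15cd09bf} and the Buchwalter duality, to checking that \( (\mathcal{M}) \) is preserved: for \( E, F \in (\mathcal{FM})\cap(\mathcal{AP}) \) one already knows \( E \overline{\otimes}_{\varepsilon} F \in (\mathcal{F})\cap(\mathcal{AP}) \), so it is barrelled (Proposition~\ref{prop:7963cb2832fabbd4}~\ref{item:27b0bff54489f9ec}) and it suffices to show every bounded set is relatively compact. This is the one delicate step of the proof. The plan here is to use the approximation property to embed \( E \overline{\otimes}_{\varepsilon} F \) as a closed subspace of the equicontinuous-convergence space \( \mathfrak{B}_{e}(E'_{s}, F'_{s}) \) (Proposition~\ref{prop:6a4e157f8736cd87}~\ref{item:6aa6d16fe7fa8c79}), and apply Bourbaki's Ascoli theorem (Theorem~\ref{theo:957848d2ddc8435e}) combined with Proposition~\ref{prop:99b4fc84a9b1d8a1} on each factor, which identifies bounded, equicontinuous, and relatively compact sets in duals of Montel spaces; the stability of the \( (DFM) \cap (\mathcal{AP}) \) class under \( \overline{\otimes}_{\pi} \) then follows by dualising via \ref{item:272241af97d82589} and Buchwalter.
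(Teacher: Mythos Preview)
Your argument for items \ref{item:272241af97d82589}--\ref{item:4f036dcd1d2124e3} coincides with the paper's, using the same references (bornological $\Rightarrow$ polar dual complete, polar reflexivity of $(F)$-spaces, Proposition~\ref{prop:273d1ea40b816ac9} for transferring $(\mathcal{AP})$, Schwartz's result and Corollary~\ref{coro:97adf837ae63af02} for the injective side, Buchwalter for the projective side), only with more of the wiring spelled out.

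The divergence is on item \ref{item:434fda161e81e6c0}. The paper's proof there is a single line --- ``(4) follows from (2)'' together with the coincidence of polar and strong duals on Montel spaces --- and does not explicitly explain why the Montel property itself survives $\overline{\otimes}_{\varepsilon}$ on $(\mathcal{FM})\cap(\mathcal{AP})$ (from \ref{item:9e56bc6f15cd09bf} one only gets membership in $(\mathcal{F})\cap(\mathcal{AP})$ and $(\mathcal{F}'_c)\cap(\mathcal{AP})$). You correctly isolate this as the one substantive step and supply an Ascoli argument in $\mathfrak{B}_e(E'_s,F'_s)$. Your sketch is sound: the sets in the covering family are weak*-compact by Alaoglu (Theorem~\ref{theo:bf2abf9c9ce8aa7a}); on an equicontinuous set the weak* topology agrees with precompact convergence, so the Montel hypothesis on each factor converts bounded slices of a bounded $B\subseteq E\overline{\otimes}_{\varepsilon}F$ into the equicontinuity that Theorem~\ref{theo:957848d2ddc8435e} requires; and dualising via Buchwalter then handles $(\mathcal{DFM})\cap(\mathcal{AP})$ under $\overline{\otimes}_{\pi}$. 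This is more than the paper writes out.

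One small slip: the strict embedding $E\overline{\otimes}_{\varepsilon}F\hookrightarrow\mathfrak{B}_e(E'_s,F'_s)$ of Proposition~\ref{prop:6a4e157f8736cd87} does not use the approximation property, so there is no need to invoke $(\mathcal{AP})$ at that point.
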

\begin{proof}
  Being metrizable, an \( (F) \)-space is barrelled and bornological
  (Proposition~\ref{prop:7963cb2832fabbd4}), hence Mackey
  (Proposition~\ref{prop:a944f1bb11e881f1}). Now \ref{item:272241af97d82589}
  follows from Proposition~\ref{prop:2ff9c3b3fe361ef4},
  Proposition~\ref{prop:6ac1bb9a146288e4},
  Proposition~\ref{prop:273d1ea40b816ac9}. \ref{item:9e56bc6f15cd09bf}, the
  \( (F) \)-space case follows from Proposition~\ref{prop:20a96aff7596345b} and
  Corollary~\ref{coro:97adf837ae63af02}, and the case for
  \( (\mathcal{F}'_{c}) \) follows from Buchwalter's duality
  (Proposition~\ref{prop:2a9c783caf76b6fb}). Note that the strong dual and the
  polar dual of a Montel space coincide
  (Proposition~\ref{prop:99b4fc84a9b1d8a1}), then \ref{item:4f036dcd1d2124e3}
  follows from \ref{item:272241af97d82589} and \ref{item:434fda161e81e6c0} from
  \ref{item:9e56bc6f15cd09bf}.
\end{proof}

\begin{rema}
  \label{rema:1ffe0a62ed31a181}
  It seems, however, unclear that whether \( E \overline{\otimes}_{\pi} F \)
  remains a \( (FM) \)-space, if both \( E \) and \( F \) are, cf.\
  \cite{MR0551623}*{p304, (7)}
\end{rema}

Now recall Definition~\ref{defi:a6ba1501ad62d100}.
\begin{coro}
  \label{coro:10966fc71ea114e8}
  The following holds.
  \begin{enumerate}
  \item \label{item:01f3c816210da0cc} Every space in
    \( (\mathcal{F}) \cap (\mathcal{AP}) \) (resp.\ in
    \( (\mathcal{F}'_{c}) \cap (\mathcal{AP}) \)) is both
    \( (\varepsilon, \pi) \)-polar reflexive and \( (\pi, \varepsilon) \)-polar
    reflexive.
  \item \label{item:07b55e42c6627a79} Every space in
    \( (\mathcal{FM}) \cap (\mathcal{AP}) \) (resp.\
    \( (\mathcal{DFM}) \cap (\mathcal{AP}) \)) is
    \( (\varepsilon, \pi) \)-reflexive (resp.\
    \( (\pi, \varepsilon) \)-reflexive).
  \end{enumerate}
\end{coro}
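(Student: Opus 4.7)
The plan is to derive both parts by iterated application of Buchwalter's duality (Proposition~\ref{prop:2a9c783caf76b6fb}) together with the stability and completeness facts in Proposition~\ref{prop:8ffb913b5f5ddea6}. Let \( E \in (\mathcal{F}) \cap (\mathcal{AP}) \). The completeness of \( E'_c \) is already recorded in Proposition~\ref{prop:8ffb913b5f5ddea6}~\ref{item:272241af97d82589}, and \( E \) itself is polar reflexive by Proposition~\ref{prop:6ac1bb9a146288e4}. For \( (\varepsilon, \pi) \)-polar reflexivity I would induct on \( n \in \set*{1,2,3,4} \) to show that \( E^{\overline{\otimes}_{\varepsilon}n} \in (\mathcal{F}) \cap (\mathcal{AP}) \) (stability, Proposition~\ref{prop:8ffb913b5f5ddea6}~\ref{item:9e56bc6f15cd09bf}) and, by writing \( E^{\overline{\otimes}_{\varepsilon}n} = E \,\overline{\otimes}_{\varepsilon}\, E^{\overline{\otimes}_{\varepsilon}(n-1)} \) with the first factor carrying the approximation property, apply Buchwalter~\ref{item:32d8c9487a724d01} repeatedly to obtain \( (E^{\overline{\otimes}_{\varepsilon}n})'_c = (E'_c)^{\overline{\otimes}_{\pi}n} \). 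The second equation in Buchwalter~\ref{item:32d8c9487a724d01}, applied in the same inductive fashion, yields \( ((E'_c)^{\overline{\otimes}_{\pi}n})'_c = E^{\overline{\otimes}_{\varepsilon}n} \). These identifications restrict on the algebraic tensor powers to the canonical algebraic pairings \( \pairing*{E^{\odot n}}{(E')^{\odot n}} \), so the extended pairings on the completed tensor powers are compatible and polar reflexive in the sense of Definition~\ref{defi:a6ba1501ad62d100}.

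For \( (\pi, \varepsilon) \)-polar reflexivity of the same \( E \), I would run the dual induction: \( E^{\overline{\otimes}_{\pi}n} \) remains an \( (F) \)-space by Corollary~\ref{coro:0be09f15f7e01dce}, so decomposing \( E^{\overline{\otimes}_{\pi}n} = E \,\overline{\otimes}_{\pi}\, E^{\overline{\otimes}_{\pi}(n-1)} \) puts us in a situation where Buchwalter~\ref{item:93ec69e754c7fd39} applies with \( E \) as the AP-carrying factor. Iteration gives \( (E^{\overline{\otimes}_{\pi}n})'_c = (E'_c)^{\overline{\otimes}_{\varepsilon}n} \); using the second equation of Buchwalter~\ref{item:93ec69e754c7fd39} iteratively (again leveraging \( E \) as the AP factor at each step, while \( E^{\overline{\otimes}_{\pi}(n-1)} \) is Fréchet by induction) gives \( ((E'_c)^{\overline{\otimes}_{\varepsilon}n})'_c = E^{\overline{\otimes}_{\pi}n} \). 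Crucially, even though I do not know that \( E^{\overline{\otimes}_{\pi}(n-1)} \) itself has the approximation property, Buchwalter requires only one of the two factors to carry it, which is the main point that makes the induction go through. The \( (\mathcal{F}'_c) \cap (\mathcal{AP}) \) case of part~(1) now follows automatically: any such space is of the form \( F'_c \) for a unique \( F \in (\mathcal{F}) \cap (\mathcal{AP}) \) by Proposition~\ref{prop:8ffb913b5f5ddea6}~\ref{item:272241af97d82589}, and since polar reflexivity of a duality pairing is symmetric in its two sides, \( (\tau, \sigma) \)-polar reflexivity of \( F \) gives \( (\sigma, \tau) \)-polar reflexivity of \( F'_c \), covering both subcases.

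For part~(2), Proposition~\ref{prop:8ffb913b5f5ddea6}~\ref{item:434fda161e81e6c0} upgrades the stability: if \( E \in (\mathcal{FM}) \cap (\mathcal{AP}) \) then \( E^{\overline{\otimes}_{\varepsilon}n} \in (\mathcal{FM}) \cap (\mathcal{AP}) \), while \( (E'_c)^{\overline{\otimes}_{\pi}n} \in (\mathcal{DFM}) \cap (\mathcal{AP}) \). Since on a Montel space the bounded and precompact bornologies coincide, Proposition~\ref{prop:99b4fc84a9b1d8a1} gives \( (\cdot)'_c = (\cdot)'_b \) at every level of the tensor tower, so the polar duality pairings constructed in part~(1) are simultaneously the strong duality pairings; polar reflexivity thus upgrades to reflexivity, establishing \( (\varepsilon, \pi) \)-reflexivity. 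The \( (\mathcal{DFM}) \cap (\mathcal{AP}) \) case is obtained by the same Montel-coincidence argument applied to the \( (\pi, \varepsilon) \) result. The main technical point to be vigilant about throughout is that Buchwalter's hypotheses are verified at each inductive step (both factors Fréchet, one with AP) and that the biduality identifications are the canonical ones restricting to \( \pairing*{E^{\odot n}}{(E')^{\odot n}} \); everything else is bookkeeping.
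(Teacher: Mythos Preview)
Your proposal is correct and follows exactly the paper's approach: iterate Buchwalter's duality, exploiting that only one factor needs the approximation property while the other merely needs to be Fr\'echet (which is guaranteed by Corollary~\ref{coro:0be09f15f7e01dce}), and then upgrade polar to strong via the Montel coincidence in part~(2). One minor slip: you have the two sub-items of Proposition~\ref{prop:2a9c783caf76b6fb} swapped---for \( (\varepsilon,\pi) \)-polar reflexivity you need~\ref{item:93ec69e754c7fd39} (the \( \overline{\otimes}_{\varepsilon} \) statement), and for \( (\pi,\varepsilon) \) you need~\ref{item:32d8c9487a724d01}.
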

\begin{proof}
  Even though we do not know whether \( (\mathcal{FM}) \) (nor for
  \( (\mathcal{AP}) \)) is stable under taking the completed projective tensor
  products. Nevertheless, \( (\mathcal{F}) \) is still stable under
  \( \overline{\otimes}_{\pi} \) (Corollary~\ref{coro:0be09f15f7e01dce}). Note
  that Buchwalter's duality (Proposition~\ref{prop:2a9c783caf76b6fb}) holds as
  soon as merely one factor has the approximation property, the corollary now
  follows from Proposition~\ref{prop:8ffb913b5f5ddea6} and repeated use of the
  Buchwalter duality (for three-fold completed projective or injective tensor
  products).
\end{proof}

We now easily obtain our first criterion on polar reflexivity and reflexivity of
locally convex Hopf algebras.

\begin{theo}
  \label{theo:5f3cb1a05114cdca}
  The following hold.
  \begin{enumerate}
  \item \label{item:02787e032fdd7835} If \( H \) is an \( \varepsilon \)-Hopf
    algebra (resp.\ \( \pi \)-Hopf algebra) of class
    \( (\mathcal{F}) \cap (\mathcal{AP}) \), then \( H \) is
    \( (\varepsilon, \pi) \)-polar reflexive (resp.\
    \( (\pi, \varepsilon) \)-reflexive), and the polar dual \( H'_{c} \) is of
    class \( (\mathcal{F}'_{c}) \).
  \item \label{item:63279b35f6379efd} If \( H \) is an \( \varepsilon \)-Hopf
    algebra (resp.\ \( \pi \)-Hopf algebra) of class
    \( (\mathcal{F}'_{c}) \cap (\mathcal{AP}) \), then \( H \) is
    \( (\varepsilon, \pi) \)-polar reflexive (resp.\
    \( (\pi, \varepsilon) \)-reflexive), and the polar dual \( H'_{c} \) is of
    class \( (\mathcal{F}) \).
  \item \label{item:86814a0291a8a634} If \( H \) is an \( \varepsilon \)-Hopf
    algebra of class \( (\mathcal{FM}) \cap (\mathcal{AP}) \), then \( H \) is
    \( (\varepsilon, \pi) \)-reflexive, and the strong dual \( H'_{b} \) is of
    class \( (\mathcal{DFM}) \cap (\mathcal{AP}) \).
  \item \label{item:fc7a957164b71231} If \( H \) is an \( \pi \)-Hopf algebra of
    class \( (\mathcal{DFM}) \cap (\mathcal{AP}) \), then \( H \) is
    \( (\pi, \varepsilon) \)-reflexive, and the strong dual \( H'_{b} \) is of
    class \( (\mathcal{FM}) \cap (\mathcal{AP}) \).
  \end{enumerate}
  All of the above still holds if \( \mathbb{K} = \C \), and we replace the
  corresponding locally convex Hopf algebras by locally convex
  Hopf-\( \ast \)-algebras.
\end{theo}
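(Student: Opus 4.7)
The proof is essentially a synthesis of the machinery already developed in this section, so my plan is to identify, for each of the four cases, the three ingredients that combine to give the conclusion: (a) polar dualizability (or dualizability) at the level of the underlying complete locally convex space; (b) the functorial transport of the Hopf (or Hopf-\( \ast \)) structure to the dual provided by Proposition~\ref{prop:844fe110a4f00d8b}; and (c) the identification of the class to which the dual belongs. The reflexivity conclusion then comes for free from Proposition~\ref{prop:9ae8172142f23b99}.

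For \ref{item:02787e032fdd7835}, assume \( H \) is an \( \varepsilon \)-Hopf algebra in \( (\mathcal{F}) \cap (\mathcal{AP}) \). By Proposition~\ref{prop:8ffb913b5f5ddea6}~\ref{item:9e56bc6f15cd09bf}, the iterated completed injective tensor products \( H^{\overline{\otimes}_{\varepsilon} n} \) for \( n = 2, 3, 4 \) stay in \( (\mathcal{F}) \cap (\mathcal{AP}) \). Applying Buchwalter's duality (Proposition~\ref{prop:2a9c783caf76b6fb}~\ref{item:93ec69e754c7fd39}) inductively yields natural isomorphisms \( (H^{\overline{\otimes}_{\varepsilon} n})'_{c} \simeq (H'_{c})^{\overline{\otimes}_{\pi} n} \) extending the canonical algebraic duality pairings; together with Proposition~\ref{prop:8ffb913b5f5ddea6}~\ref{item:272241af97d82589} (which guarantees \( H'_{c} \in (\mathcal{F}'_{c}) \cap (\mathcal{AP}) \) is complete) this is precisely \( (\varepsilon, \pi) \)-polar dualizability. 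Proposition~\ref{prop:844fe110a4f00d8b}~\ref{item:f927c48f1f02bd29} then endows \( H'_{c} \) with a \( \pi \)-Hopf algebra structure, and Corollary~\ref{coro:10966fc71ea114e8}~\ref{item:01f3c816210da0cc} provides the polar reflexivity at the level of locally convex spaces, which via Proposition~\ref{prop:9ae8172142f23b99} upgrades to polar reflexivity as \( \varepsilon \)-Hopf algebras. The \( \pi \)-Hopf algebra case of \ref{item:02787e032fdd7835} is symmetric, using stability of \( (\mathcal{F}) \) under \( \overline{\otimes}_{\pi} \) (Corollary~\ref{coro:0be09f15f7e01dce}), preservation of \( (\mathcal{AP}) \) after passage to the polar dual (Proposition~\ref{prop:273d1ea40b816ac9}), and Buchwalter duality in the form \ref{item:32d8c9487a724d01}.

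Part \ref{item:63279b35f6379efd} is obtained by running the same argument with the roles of \( (\mathcal{F}) \) and \( (\mathcal{F}'_{c}) \) exchanged, again using Proposition~\ref{prop:8ffb913b5f5ddea6}~\ref{item:9e56bc6f15cd09bf}. For \ref{item:86814a0291a8a634} and \ref{item:fc7a957164b71231}, I use Proposition~\ref{prop:8ffb913b5f5ddea6}~\ref{item:4f036dcd1d2124e3}--\ref{item:434fda161e81e6c0} so that all iterated tensor products stay in the Montel subclasses, and then invoke Proposition~\ref{prop:99b4fc84a9b1d8a1} to identify \( H'_{c} = H'_{b} \); this promotes polar dualizability to full dualizability, so Proposition~\ref{prop:844fe110a4f00d8b}~\ref{item:4e35c3dbe6ccf7b3} applies, and Corollary~\ref{coro:10966fc71ea114e8}~\ref{item:07b55e42c6627a79} delivers the required (strong) reflexivity. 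The Hopf-\( \ast \)-algebra addendum for all four cases is identical, with the involution transported by Proposition~\ref{prop:844fe110a4f00d8b}~\ref{item:539e6db5ab32a67f} and \ref{item:97082c3bb390a292} via formulas~\eqref{eq:650baecac6902597}--\eqref{eq:e9cb18c145d094fe}; no additional verification is needed since the antipodes are automatically bijective (Proposition~\ref{prop:b358de0ebf54512e}).

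The plan contains no genuinely new difficulty: the entire technical content has been pre-packaged into Buchwalter's duality and the stability results. The one point that requires mild attention is that Buchwalter's duality is invoked on three- and four-fold tensor products, which is legitimate because only one factor needs the approximation property and \( (\mathcal{AP}) \) is preserved by the relevant completed tensor products (ultimately resting on Schwartz's Proposition~\ref{prop:20a96aff7596345b}); so associativity of \( \overline{\otimes}_{\tau} \) (Proposition~\ref{prop:6d54eef950984ead}) lets one pair off any two factors and keep iterating. Thus the only step one might call an \emph{obstacle} is purely bookkeeping: matching the compatibility of the iterated pairings in Definition~\ref{defi:a6ba1501ad62d100} with the Hopf-algebraic structure maps (counit, comultiplication, antipode, involution), which is the routine \( n = 1, 2, 3, 4 \) case already anticipated at the end of the proof of Proposition~\ref{prop:844fe110a4f00d8b}.
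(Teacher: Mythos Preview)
Your proposal is correct and follows essentially the same approach as the paper: the paper's own proof is the one-line ``This follows from Corollary~\ref{coro:10966fc71ea114e8} and Proposition~\ref{prop:844fe110a4f00d8b}'', and your write-up is precisely an unpacking of those two citations (together with the stability results of Proposition~\ref{prop:8ffb913b5f5ddea6} that feed into the corollary). Your additional reference to Proposition~\ref{prop:9ae8172142f23b99} is harmless but not strictly needed, since by Definition~\ref{defi:783a70411adbf39d} reflexivity of the Hopf algebra \emph{is} reflexivity of the underlying space, already delivered by Corollary~\ref{coro:10966fc71ea114e8}.
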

\begin{proof}
  This follows from Corollary~\ref{coro:10966fc71ea114e8} and
  Proposition~\ref{prop:844fe110a4f00d8b}.
\end{proof}

\begin{rema}
  \label{rema:c1d7fd8f084c0e17}
  Note that due to Remark~\ref{rema:1ffe0a62ed31a181}, we don't know yet that
  whether a \( \pi \)-Hopf algebra of class
  \( (\mathcal{FM}) \cap (\mathcal{AP}) \) is
  \( (\pi, \varepsilon) \)-reflexive, since we don't now whether the locally
  convex space \( H \overline{\otimes}_{\pi} H \in (\mathcal{M}) \) to continue
  the use of the Buchwalter's duality for the three-fold tensor products of
  \( H \) with itself (the strong dual and the polar dual for
  \( H \overline{\otimes}_{\pi} H \) might now be different).
\end{rema}

\begin{rema}
  \label{rema:18c5fa54f0e4fe49}
  In \cite{MR1266072}*{p126, Definition~1.2}, the term ``well-behaved
  topological Hopf algebra'' is used to refer to what we call
  \( \varepsilon \)-Hopf algebra (or equivalent, \( \pi \)-Hopf algebra) of
  class \( (\mathcal{F}M) \cap (\mathcal{N}) \) or of class
  \( (\mathcal{DFM}) \cap (\mathcal{N}) \) (note
  Proposition~\ref{prop:a8139d3432a18d9e} and
  Proposition~\ref{prop:f4cbc15f463be1bc}). Our
  Theorem~\ref{theo:5f3cb1a05114cdca} extends the main duality result for
  well-behaved topological Hopf algebras \cite{MR1266072}*{p129,
    Proposition~1.3}. Thus all the examples of well-behaved topological Hopf
  algebras described in \cite{MR1266072} are examples of our theory. Of course,
  many more examples of our theory (that fit in
  Theorem~\ref{theo:5f3cb1a05114cdca} or go beyond) shall be given later.
\end{rema}

\subsection{The polar duality between products and locally convex direct sums}
\label{sec:12265e858f6868a5}

As a preparation for  \S~\ref{sec:d0b3014d3c129a16}, we
establish the following duality results.

\begin{prop}
  \label{prop:9f3f8bda0402a4ff}
  Let \( (E_{i})_{i \in I} \) be a family of polar reflexive spaces. Then the
  canonical pairing
  \begin{equation}
    \label{eq:a5ea2c5bf5c2ee22}
    \begin{split}
      \pairing*{\cdot}{\cdot}:
      \left(\prod_{i} E_{i}\right) \times \left(\bigoplus_{i \in I}(E_{i})'_{c}\right)
      & \to \mathbb{K} \\
      \Bigl((x_{i})_{i \in I}, \, (l_{i})_{i \in I}\Bigr)
      & \mapsto \sum_{i} l_{i}(x_{i})
    \end{split}
  \end{equation}
  is polar reflexive. More precisely, \( \prod_{i} E_{i} \) and
  \( \oplus_{i}(E_{i})'_{c} \) are polar reflexive and they are the polar dual
  of each other.
\end{prop}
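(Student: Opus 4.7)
The plan is to verify simultaneously the two conditions of Proposition~\ref{prop:a6c89f0b75eda9eb} for both $\prod_i E_i$ and $\oplus_i (E_i)'_c$ paired via \eqref{eq:a5ea2c5bf5c2ee22}: namely, that the pairing is compatible (each side is the topological dual of the other as vector spaces), and that the polar topologies derived from the pairing coincide with the given topologies on both sides. For the algebraic identification of the duals, I would apply Proposition~\ref{prop:e5dd78c27c8725c9} to write $(\prod_i E_i)' = \oplus_i (E_i)'$ as vector spaces; dually, the continuous dual of a locally convex direct sum is the algebraic product of topological duals of the summands, and Mackey's Theorem~\ref{theo:52188c82b45e07e3} combined with polar reflexivity of each $E_i$ gives $((E_i)'_c)' = E_i$, so $(\oplus_i (E_i)'_c)' = \prod_i E_i$.

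Next I would compute both polar topologies explicitly. Precompact subsets of $\prod_i E_i$ are exactly those contained in products $\prod_i A_i$ of absolutely convex precompact sets $A_i \subseteq E_i$ (one direction from continuity of the projections; the other from the fact that Tychonoff plus the product uniform structure makes such a product precompact). Using balancedness of each $A_i$ to independently adjust phases in the finite sum $\sum_i l_i(x_i)$, one obtains the crucial explicit identification
\[
  \left(\prod_i A_i\right)^{\circ}
  = \set[\big]{(l_i) \in \bigoplus_i (E_i)' \given \sum_i \sup_{x \in A_i}\abs*{l_i(x)} \leq 1}
  = \Gamma\left(\bigcup_i A_i^{\circ}\right),
\]
and since each $A_i^{\circ}$ is an absolutely convex neighborhood of $0$ in $(E_i)'_c$, Proposition~\ref{prop:8c1bccabeb0c2d99} identifies these polars with a fundamental system of neighborhoods of $0$ for the locally convex direct sum topology; hence $c(\oplus_i (E_i)'_c, \prod_i E_i)$ coincides with the direct sum topology. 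For the symmetric direction I would apply Proposition~\ref{prop:746e91e6638b3d9d} to confine every precompact subset of $\oplus_i (E_i)'_c$ inside a finite sub-direct-sum $\oplus_{i \in I_0} (E_i)'_c = \prod_{i \in I_0} (E_i)'_c$, hence inside a finite product $\prod_{i \in I_0} K_i$ of absolutely convex precompacts; its polar in $\prod_i E_i$ is then sandwiched between $\prod_{i \in I_0} \abs*{I_0}^{-1} K_i^{\circ} \times \prod_{i \notin I_0} E_i$ and $\prod_{i \in I_0} K_i^{\circ} \times \prod_{i \notin I_0} E_i$. As each $E_i$ is polar reflexive, the sets $K_i^{\circ}$ form a fundamental neighborhood base of $0$ in $E_i$ as $K_i$ runs over the precompacts of $(E_i)'_c$, so $c(\prod_i E_i, \oplus_i (E_i)'_c)$ coincides with the product topology.

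Combining the two compatibility statements and the two topology identifications, Proposition~\ref{prop:a6c89f0b75eda9eb} yields that both spaces are polar reflexive and are each other's polar duals; completeness of $\oplus_i (E_i)'_c$ follows from Proposition~\ref{prop:d842a95c764bfd01}~\ref{item:8f7f5910e0175668} together with the completeness of each factor $(E_i)'_c$ supplied by the hypothesis that each $E_i$ is polar reflexive in $\widehat{\mathsf{LCS}}$. The main technical obstacle is the explicit ``$\ell^1$-type'' polar computation $(\prod_i A_i)^{\circ} = \Gamma(\bigcup_i A_i^{\circ})$: this phase-alignment argument, using balancedness of the $A_i$ to turn the supremum of a finite sum into the sum of individual suprema, is what makes the polar of a product of precompacts coincide precisely with a direct-sum neighborhood; once that is in hand, the remainder of the argument is a formal matching via Proposition~\ref{prop:8c1bccabeb0c2d99} and Proposition~\ref{prop:746e91e6638b3d9d}.
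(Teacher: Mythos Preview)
Your overall strategy matches the paper's proof almost exactly: identify the duals via Proposition~\ref{prop:e5dd78c27c8725c9}, compute polars of products of precompacts, invoke Proposition~\ref{prop:8c1bccabeb0c2d99} for one direction and Proposition~\ref{prop:746e91e6638b3d9d} with the sandwich estimate for the other. The second half of your argument (determining the polar dual of $\oplus_i (E_i)'_c$) is correct and essentially identical to the paper's.

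There is, however, a genuine gap in your ``$\ell^1$-type'' polar computation. Your first equality
\[
  \Bigl(\prod_i A_i\Bigr)^{\circ}=\Bigl\{(l_i)\ \Big|\ \sum_i \sup_{x\in A_i}\abs{l_i(x)}\le 1\Bigr\}
\]
is correct (the phase-alignment using balancedness works), but the second equality
$\{\cdots\}=\Gamma(\bigcup_i A_i^{\circ})$ is \emph{false} in general. A counterexample: take $I=\{1,2\}$, $E_1=E_2=\mathbb{K}$, $A_1$ the closed unit disk and $A_2=\{0\}$. Then $(\prod_i A_i)^{\circ}=\{(l_1,l_2):\abs{l_1}\le 1\}$ contains $(1,5)$, yet $(1,5)\notin\Gamma(A_1^{\circ}\cup A_2^{\circ})$: writing $(1,5)=\alpha(a,0)+\beta(0,b)$ with $\abs{a}\le1$ and $\abs{\alpha}+\abs{\beta}\le1$ forces $\abs{\alpha}\ge1$, hence $\beta=0$, a contradiction. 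The obstruction arises precisely when some $l_i\ne0$ vanishes on $A_i$ (i.e.\ $A_i$ is not total), and there is no reason to exclude such precompact $A_i$.

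What actually holds is
\[
  \Bigl(\prod_i A_i\Bigr)^{\circ}=\overline{\Gamma\Bigl(\bigcup_i A_i^{\circ}\Bigr)}^{\,\sigma(\oplus E_i',\,\prod E_i)},
\]
and this weak closure matters for the direction you glossed over: showing that every direct-sum neighborhood contains some polar $(\prod_i A_i)^{\circ}$. The paper establishes the closure identity by an approximation: if $(l_i)$ lies in the polar, one shows $(1+\varepsilon)^{-1}(l_i)\in\Gamma(\bigcup_i A_i^{\circ})$ for every $\varepsilon>0$ by splitting the support into indices where $\sup_{A_i}\abs{l_i}>0$ (handled as you suggest) and indices where $l_i\ne0$ but $l_i\vert_{A_i}=0$ (where arbitrarily small coefficients suffice since $t\,l_i\in A_i^{\circ}$ for all $t$). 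One then concludes by observing that \emph{closed} absolutely convex neighborhoods form a fundamental system for the direct-sum topology; such a $V$ is automatically weakly closed, hence contains the weak closure of any $\Gamma(\bigcup_i A_i^{\circ})$ it contains, and therefore contains the corresponding polar. Inserting this closure-and-closed-neighborhood step repairs your argument completely.
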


\begin{proof}
  Apply Proposition~\ref{prop:e5dd78c27c8725c9} to the weak topologies and note
  that the topology of precompact convergence lies between the weak and the
  Mackey topologies, we see that the pairing \eqref{eq:a5ea2c5bf5c2ee22} is
  compatible.

  It is clear from the definition of product (uniform) spaces that a set
  \( A \subseteq \prod_{i} E_{i} \) is precompact if and only if
  \( p_{i}(A) \in \mathfrak{C}(E_{i}) \) for each \( i \in I \) (recall
  Notation~\ref{nota:6a66d03bb5d7af70}). By
  Proposition~\ref{prop:6c0374a8634e2b65}, we see that \( A \) is precompact if
  and only if there exists an absolutely convex
  \( C_{i} \in \mathfrak{C}(E_{i}) \) for each \( i \in I \), such that
  \( A \subseteq \prod_{i} C_{i} \). Hence absolutely convex sets of the form
  \( \left(\prod_{i} C_{i}\right)^{\circ} \),
  \( \Gamma(C_{i}) = C_{i} \in \mathfrak{C}(E_{i}) \) for each \( i \), form a
  fundamental system of neighborhoods of \( 0 \) for the polar dual of
  \( \prod_{i} E_{i} \), which we identify with \( \oplus_{i} E'_{i} \), and the
  polar is calculated with respect to the canonical pairing
  \( \pairing*{\prod_{i}E_{i}}{\oplus_{i} E'_{i}} \).

  We claim that
  \begin{equation}
    \label{eq:920af95588c06022}
    \left(\prod_{i}C_{i}\right)^{\circ} = \overline{\Gamma\left(\cup_{i}
        C_{i}^{\circ}\right)}^{\sigma\left(\oplus_{i} E'_{i},\, \prod_{i}E_{i}\right)},
  \end{equation}
  where the polar \( C_{i}^{\circ} \) is taken with respect to the canonical
  pairing \( \pairing*{E_{i}}{E'_{i}} \), then seen as in
  \( \oplus_{i}E'_{i} \). Indeed, it is clear that
  \( (\prod_{i} C_{i})^{\circ} \) contains
  \( \Gamma\left(\cup_{i}C_{i}^{\circ}\right) \). For the reverse inclusion, let
  \( (l_{i}) \in \oplus_{i} E'_{i} \) lie in
  \( \left(\prod_{i}C_{i}\right)^{\circ} \). Note that all but finitely many
  \( l_{i} \in E'_{i} \) is zero. Define
  \( \lambda_{i} = \sup\set[\big]{\abs*{l_{i}(x_{i})} \given x_{i} \in C_{i}} \)
  and \( I_{0} = \set*{i \in I \given \lambda_{i} > 0} \), which is finite since
  \( l_{i} = 0 \) clearly implies \( \lambda_{i} = 0 \). Let
  \( \omega_{i} = \lambda_{i}^{-1}l_{i} \) for \( i \in I_{0} \). By definition,
  \( \omega_{i} \in C_{i}^{\circ} \) for each \( i \in I_{0} \). Since
  \( (l_{i}) \in \left(\prod_{i \in I}C_{i}\right)^{\circ} \), we have
  \begin{equation}
    \label{eq:ffd885827ba2b39a}
    \forall(x_{i}) \in \prod_{i} C_{i}, \qquad
    \abs*{\sum_{i} l_{i}(x_{i})}
    = \abs*{\sum_{i \in I_{0}}\lambda_{i}\omega_{i}(x_{i})} \leq 1.
  \end{equation}
  As each \( C_{i} \) is absolutely convex, for each \( i \in I_{0} \), we may
  choose \( x_{i} \in C_{i} \) so that \( l_{i}(x_{i}) \) is arbitrarily close
  to \( \lambda_{i} \), i.e.\ \( \omega_{i}(x_{i}) \) arbitrarily close to
  \( 1 \), in \eqref{eq:ffd885827ba2b39a}, and obtain
  \begin{equation}
    \label{eq:e1ae535b3e9db197}
    \sum_{i \in I_{0}} \lambda_{i} \leq 1.
  \end{equation}
  Moreover, by definition we have
  \begin{equation}
    \label{eq:3e67ce71d99379ac}
    \sum_{i \in I_{0}} l_{i} = \sum_{i \in I_{0}} \lambda_{i} \omega_{i}.
  \end{equation}
  Now let
  \( I_{1} = \set*{i \in I \given i \notin I_{0}, \text{ and } l_{i} \ne 0}
  \). By the definition of \( I_{0} \), we know that \( l_{i}(C_{i}) = 0 \) for
  each \( i \in I_{1} \), and \( (l_{i}) = \sum_{i}l_{i} \) is supported in the
  disjoint union \( I_{0} \cup I_{1} \), which is finite. For each
  \( i \in I_{1} \), the condition \( l_{i}(C_{i}) = 0 \) implies that
  \( t l_{i} \in C_{i}^{\circ} \) for any \( t > 0 \). Thus for each
  \( t > 0 \), define
  \( l_{t} = \sum_{i \in I_{1}}t^{-1}l_{i} \), it is trivial
  that \( l_{t} \in \Gamma\bigl(\cup_{i \in I}C_{i}^{\circ}\bigr) \) and combined with \eqref{eq:3e67ce71d99379ac}, we have
  \begin{equation}
    \label{eq:d11812836ea6016f}
    (l_{i}) = l_{1} + \sum_{i \in I_{0}} \lambda_{i} \omega_{i}.
  \end{equation}
  Now for each \( \varepsilon > 0 \), by \eqref{eq:d11812836ea6016f}, we have
  \begin{equation}
    \label{eq:52fdbac8d8527ef2}
    (1 + \varepsilon)^{-1} (l_{i}) = (1 + \varepsilon)^{-1} l_{1} + \sum_{i \in I_{0}}(1 + \varepsilon)^{-1} \lambda_{i} \omega_{i}
    = \varepsilon(1 + \varepsilon)^{-1} l_{\varepsilon} + \sum_{i \in I_{0}} (1 + \varepsilon)^{-1} \lambda_{i} \omega_{i}.
  \end{equation}
  Now that \( l_{\varepsilon} \in \Gamma\left(\cup_{i}C_{i}^{\circ}\right) \)
  and for each \( i \in I_{0} \),
  \( \omega_{i} \in C_{i}^{\circ} \subseteq
  \Gamma\left(\cup_{i}C_{i}^{\circ}\right) \), and note
  \eqref{eq:e1ae535b3e9db197}, equation \eqref{eq:52fdbac8d8527ef2} writes
  \( (1 + \varepsilon)^{-1}(l_{i}) \) as an absolutely convex combination of
  elements in the absolutely convex hull
  \( \Gamma\left(\cup_{i}C_{i}^{\circ}\right) \), which is of course absolutely
  convex itself. Hence
  \( (1 + \varepsilon)^{-1}(l_{i}) \in \Gamma\left(\cup_{i}C_{i}^{\circ}\right)
  \) for each \( \varepsilon > 0 \), and letting \( \varepsilon \to 0 \), we
  obtain \eqref{eq:920af95588c06022}.

  Since each \( E_{i} \) is polar reflexive, as \( C_{i} \) runs through
  absolutely convex sets in \( \mathfrak{C}(E_{i}) \), \( C_{i}^{\circ} \) runs
  through a fundamental system of neighborhoods of \( 0 \) in \( E_{i} \). By
  Proposition~\ref{prop:8c1bccabeb0c2d99}, polars of the form in
  \eqref{eq:920af95588c06022} lies in
  \( \mathcal{N}^{\oplus_{i} (E_{i})'_{c}}_{\Gamma}(0) \), hence
  \( c\left(\oplus_{i} E'_{i}, \, \prod_{i \in I}E_{i}\right) \) is coarser than
  the locally convex direct sum topology \( \oplus_{i} (E_{i})'_{c} \).
  Conversely, let \( V \) be a \emph{closed} absolutely convex neighborhood of
  \( 0 \) in \( \oplus_{i}(E_{i})'_{c} \). Since the polar dual is finer than
  the weak dual, \( V \) is also weakly closed.  On the other hand, by the above
  discussion and Proposition~\ref{prop:8c1bccabeb0c2d99} again, \( V \) being a
  neighborhood of \( 0 \) means that it contains a set of the form
  \( \Gamma\left(\cup_{i \in I}C_{i}^{\circ}\right) \), where, without loss of
  generality, we may assume that \( C_{i} \in \mathfrak{C}(E_{i}) \) is
  absolutely convex (\( E_{i} \) is polar reflexive). Since \( V \) is weakly
  closed, we have
  \begin{equation}
    \label{eq:a50dc5aec14aee3d}
    V \supseteq \overline{\Gamma\left(\cup_{i}
        C_{i}^{\circ}\right)}^{\sigma\left(\oplus_{i} E'_{i},\, \prod_{i}E_{i}\right)}
    = \left(\prod_{i}C_{i}\right)^{\circ}.
  \end{equation}
  As closed absolutely convex neighborhoods of \( 0 \) form a fundamental system
  of neighborhoods \( 0 \) in any locally convex space,
  \eqref{eq:a50dc5aec14aee3d} entails that the polar topology
  \( c\left(\oplus_{i}E'_{i}, \, \prod_{i} E_{i}\right) \) is also finer than
  the locally convex direct sum topology \( \oplus_{i} (E_{i})'_{c} \). Hence
  \begin{equation}
    \label{eq:dd04293b16de5336}
    \left(\prod_{i} E_{i}\right)'_{c} = \bigoplus_{i} (E_{i})'_{c},
  \end{equation}
  as desired.

  We now determine the polar dual of \( \oplus_{i} (E_{i})'_{c} \), i.e.\ the
  polar topology \( c\left(\prod_{i}E_{i}, \, \oplus_{i}(E_{i})'_{c}\right)
  \). Note that precompact sets are preserved under continuous linear maps and
  all precompact sets are bounded, it follows from
  Proposition~\ref{prop:746e91e6638b3d9d} that a set in
  \( \oplus_{i \in I}(E_{i})'_{c} \) is precompact if and only if it is
  contained in a sum of the form \( \sum_{i \in I_{0}}B_{i} \), where
  \( I_{0} \subseteq I \) is finite, and
  \( B_{i} \in \mathfrak{C}\bigl((E_{i})'_{c}\bigr) \). Hence, a fundamental
  system of neighborhoods of \( 0 \) for
  \( c\left(\prod_{i}E_{i}, \, \oplus_{i}(E_{i})'_{c}\right) \) is given by
  polars of the form \( (\sum_{i \in I_{0}}B_{i})^{\circ} \) as above. Fix
  arbitrarily such a polar, define \( U_{i} = B_{i}^{\circ} \) for each
  \( i \in I_{0} \), and let \( U_{i} = E_{i} \) if \( i \notin I_{0} \), then a
  simple calculation yields
  \begin{equation}
    \label{eq:e702e1968ef2665e}
    \frac{1}{\abs*{I_{0}}} \prod_{i}U_{i} \subseteq \left(\sum_{i \in I_{0}}B_{i}\right)^{\circ}
    \subseteq \prod_{i}U_{i}.
  \end{equation}
  Note that as \( B_{i} \) runs through
  \( \mathfrak{C}\bigl((E_{i})'_{c}\bigr) \), since \( E_{i} \) is polar
  reflexive, \( B_{i}^{\circ} \) runs through a fundamental neighborhoods of
  \( 0 \) in \( E_{i} \). Now \eqref{eq:e702e1968ef2665e} implies that
  \( c\left(\prod_{i}E_{i}, \, \oplus_{i}(E_{i})'_{c}\right) \) is both coarser
  and finer than the product topology on \( \prod_{i} E_{i} \), hence is
  identical with it. This finishes our proof.
\end{proof}

\subsection{Direct products and polar reflexivity of locally convex Hopf algebras}
\label{sec:d0b3014d3c129a16}

Recall Definition~\ref{defi:a6ba1501ad62d100}. We now establish the following
(partial) permanence result.

\begin{prop}
  \label{prop:323fe7df2f5a5c21}
  Let \( (E_{i})_{i \in I} \) be a family of \( (\varepsilon, \pi) \)-polar
  reflexive spaces. Suppose that
  \begin{enumerate}
  \item \label{item:52c295c5ed4be513} for all \( i, j \in I \), the canonical
    map
    \begin{equation}
      \label{eq:2cd82491cd4c19c7}
      \chi_{i,j} : (E_{i})'_{c} \times (E_{j})'_{c} \to (E_{i})'_{c}
      \otimes_{\iota} (E_{j})'_{c}
    \end{equation}
    is continuous;
  \item \label{item:cae80c9b74869833} for all \( i, j, k \in I \), the canonical
    map
    \begin{equation}
      \label{eq:31a28463fb9afa5a}
      \chi_{(i,j), k} : \bigl((E_{i})'_{c} \overline{\otimes}_{\pi} (E_{j})'_{c}\bigr)
      \times (E_{k})'_{c}
      \to \bigl((E_{i})'_{c} \overline{\otimes}_{\pi} (E_{j})'_{c}\bigr)
      \otimes_{\iota} (E_{k})'_{c}
    \end{equation}
   is continuous;
 \item \label{item:3480c1e4478c8a02} for all \( i, j, k, l \in I \), the canonical
    map
    \begin{equation}
      \label{eq:03ffea1efcfe805b}
      \chi_{(i,j, k), l} : \Bigl(\bigl((E_{i})'_{c} \overline{\otimes}_{\pi} (E_{j})'_{c}\bigr)
      \otimes_{\iota} (E_{k})'_{c}\Bigr)
      \times (E_{l})'_{c}
      \to \Bigl(\bigl((E_{i})'_{c} \overline{\otimes}_{\pi} (E_{j})'_{c}\bigr)
      \otimes_{\iota} (E_{k})'_{c}\Bigr) \overline{\otimes}_{\pi} (E_{l})'_{c}
    \end{equation}
   is continuous;
 \end{enumerate}
 Then the product \( E = \prod_{i} E_{i} \) is
 \( (\varepsilon, \iota) \)-polar reflexive, with
 \( E'_{c} = \bigoplus_{i} (E_{i})'_{c} \).
\end{prop}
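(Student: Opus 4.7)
The plan is to bootstrap from Proposition~\ref{prop:9f3f8bda0402a4ff}, which handles the one-fold pairing, to the \( n \)-fold pairings (\( n = 2, 3, 4 \)) required by Definition~\ref{defi:a6ba1501ad62d100} for \( (\varepsilon, \iota) \)-polar reflexivity. First, Proposition~\ref{prop:9f3f8bda0402a4ff} immediately gives that the canonical pairing \( \pairing*{E}{\bigoplus_{i} (E_{i})'_{c}} \) is polar reflexive, and that \( E'_{c} = \bigoplus_{i} (E_{i})'_{c} \) is complete (the latter by Proposition~\ref{prop:d842a95c764bfd01}, since each \( (E_{i})'_{c} \) is complete by the polar reflexivity of \( E_{i} \)). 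This disposes of the completeness clause and the one-fold pairing in Definition~\ref{defi:a6ba1501ad62d100}.

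For the \( n \)-fold case, the plan is to decompose both sides into indexed products and direct sums. Iterating Proposition~\ref{prop:8056525c3c4c9aa5}~\ref{item:2479d97cbebb5d9d} (valid because at each step the right-hand factor is already a product), one obtains \( E^{\overline{\otimes}_{\varepsilon} n} = \prod_{(i_{1}, \ldots, i_{n}) \in I^{n}} E_{i_{1}} \overline{\otimes}_{\varepsilon} \cdots \overline{\otimes}_{\varepsilon} E_{i_{n}} \); iterating Corollary~\ref{coro:06a795bc15551930} (which says \( \overline{\otimes}_{\iota} \) commutes with locally convex direct sums) yields \( (E'_{c})^{\overline{\otimes}_{\iota} n} = \bigoplus_{(i_{1}, \ldots, i_{n}) \in I^{n}} (E_{i_{1}})'_{c} \overline{\otimes}_{\iota} \cdots \overline{\otimes}_{\iota} (E_{i_{n}})'_{c} \). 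The continuity conditions \ref{item:52c295c5ed4be513}, \ref{item:cae80c9b74869833} and \ref{item:3480c1e4478c8a02}, combined with Proposition~\ref{prop:89e27b078637bfc2}, say precisely that each successive inductive tensor product on the dual side collapses to the corresponding completed projective tensor product, so that with a chosen parenthesization one has \( (E_{i_{1}})'_{c} \overline{\otimes}_{\iota} \cdots \overline{\otimes}_{\iota} (E_{i_{n}})'_{c} = (E_{i_{1}})'_{c} \overline{\otimes}_{\pi} \cdots \overline{\otimes}_{\pi} (E_{i_{n}})'_{c} \).

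The final step is to reapply Proposition~\ref{prop:9f3f8bda0402a4ff} to the family \( \bigl(E_{i_{1}} \overline{\otimes}_{\varepsilon} \cdots \overline{\otimes}_{\varepsilon} E_{i_{n}}\bigr)_{(i_{1}, \ldots, i_{n}) \in I^{n}} \), which would immediately give polar reflexivity of the \( n \)-fold pairing between the product and the direct sum. For this application to be valid, each individual factor must itself be polar reflexive with polar dual equal to \( (E_{i_{1}})'_{c} \overline{\otimes}_{\pi} \cdots \overline{\otimes}_{\pi} (E_{i_{n}})'_{c} \). For diagonal tuples \( i_{1} = \cdots = i_{n} \), this is exactly the hypothesized \( (\varepsilon, \pi) \)-polar reflexivity of \( E_{i_{1}} \). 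The main obstacle is the off-diagonal case: one must establish, as a subsidiary lemma, that \( E_{i_{1}} \overline{\otimes}_{\varepsilon} \cdots \overline{\otimes}_{\varepsilon} E_{i_{n}} \) is polar reflexive with the claimed polar dual even when the indices differ. The plan for this lemma is to re-run the precompact-set computation of the proof of Proposition~\ref{prop:9f3f8bda0402a4ff} directly inside completed injective tensor products, using that each \( E_{i_{k}} \) is itself polar reflexive to control bounded and precompact subsets factor by factor, and invoking the continuity conditions \ref{item:52c295c5ed4be513}--\ref{item:3480c1e4478c8a02} exactly where joint continuity of a multilinear form on the dual side is required, thereby replacing the global approximation-property hypothesis that features in Buchwalter's duality (Proposition~\ref{prop:2a9c783caf76b6fb}) by the local joint-continuity data supplied in the hypotheses. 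Once this lemma is in hand, the assembly described above yields the \( (\varepsilon, \iota) \)-polar reflexivity of \( E \).
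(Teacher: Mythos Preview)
Your decomposition strategy is exactly the one the paper uses: write \( E^{\overline{\otimes}_{\varepsilon}n} \) as a product over \( I^{n} \) via Proposition~\ref{prop:8056525c3c4c9aa5}, write \( (E'_{c})^{\overline{\otimes}_{\iota}n} \) as a direct sum over \( I^{n} \) via Corollary~\ref{coro:06a795bc15551930}, collapse \( \overline{\otimes}_{\iota} \) to \( \overline{\otimes}_{\pi} \) on each summand using conditions \ref{item:52c295c5ed4be513}--\ref{item:3480c1e4478c8a02} and Proposition~\ref{prop:89e27b078637bfc2}, and then feed the resulting product and direct sum back into Proposition~\ref{prop:9f3f8bda0402a4ff}. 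The paper records this as the chain of equalities in \eqref{eq:48221104b43e64d9} and \eqref{eq:600f57d98553e86c}, together with a one-line remark that the three- and four-fold cases go the same way.

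You have correctly spotted the genuine issue: applying Proposition~\ref{prop:9f3f8bda0402a4ff} at the \( n \)-fold level requires that each \emph{mixed} factor \( E_{i_{1}} \overline{\otimes}_{\varepsilon} \cdots \overline{\otimes}_{\varepsilon} E_{i_{n}} \) be polar reflexive with polar dual \( (E_{i_{1}})'_{c} \overline{\otimes}_{\pi} \cdots \overline{\otimes}_{\pi} (E_{i_{n}})'_{c} \), whereas the hypothesis of \( (\varepsilon,\pi) \)-polar reflexivity of each \( E_{i} \) (Definition~\ref{defi:a6ba1501ad62d100}) only controls the diagonal tuples. The paper's proof does not address this point either: in \eqref{eq:48221104b43e64d9} the step \( \bigoplus_{i,j}(E_{i}\overline{\otimes}_{\varepsilon}E_{j})'_{c}=\bigoplus_{i,j}(E_{i})'_{c}\overline{\otimes}_{\pi}(E_{j})'_{c} \) is asserted ``by hypothesis'' without further comment. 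So the gap you flag is present in the paper's argument as well.

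Two comments on your proposed repair. First, the plan to ``re-run the precompact-set computation of Proposition~\ref{prop:9f3f8bda0402a4ff} inside completed injective tensor products'' is too vague to assess: that computation is specific to the interaction between products and locally convex direct sums (the polar identity \eqref{eq:920af95588c06022}), and there is no obvious analogue identifying precompact sets in \( E_{i}\overline{\otimes}_{\varepsilon}E_{j} \) factor by factor without extra input such as approximation-type hypotheses. Second, and more to the point, in the paper's only application (Proposition~\ref{prop:a56a0c072645abf2}) each \( E_{i} \) lies in \( (\mathcal{FM})\cap(\mathcal{AP}) \), and there Buchwalter's duality (Proposition~\ref{prop:2a9c783caf76b6fb}) applies to \emph{any} pair of \( (F) \)-spaces with at least one factor in \( (\mathcal{AP}) \), which closes the off-diagonal gap directly. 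So rather than manufacturing a new lemma, the honest reading is that Proposition~\ref{prop:323fe7df2f5a5c21} as stated implicitly assumes the mixed polar dualities \( (E_{i}\overline{\otimes}_{\varepsilon}E_{j})'_{c}=(E_{i})'_{c}\overline{\otimes}_{\pi}(E_{j})'_{c} \) (and their iterates), and that this is supplied in practice by Buchwalter.
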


\begin{proof}
  By Proposition~\ref{prop:89e27b078637bfc2}, the interchanges of
  \( \overline{\otimes}_{\pi} \) and \( \overline{\otimes}_{\iota} \) in the
  following calculation is valid.

  First of all, note that both \( \prod_{i}E_{i} \) and
  \( \oplus_{i} (E_{i})'_{c} \) are complete
  (Proposition~\ref{prop:903bfccf4a781ceb} and
  Proposition~\ref{prop:d842a95c764bfd01}).  Note also that
  \( \overline{\otimes}_{\varepsilon} \) commutes with direct products
  (Proposition~\ref{prop:8056525c3c4c9aa5}), while
  \( \overline{\otimes}_{\varepsilon} \) commutes with locally convex direct
  sums (Corollary~\ref{coro:06a795bc15551930}).

  By Proposition~\ref{prop:9f3f8bda0402a4ff}, we know that the duality pairing
  \begin{displaymath}
    \pairing*{\prod_{i}E_{i}}{\bigoplus_{i} (E_{i})'_{c}}
  \end{displaymath}
  is polar reflexive

  For two-fold tensor products, by hypothesis, applying
  Proposition~\ref{prop:9f3f8bda0402a4ff}, we have
  \begin{equation}
    \label{eq:48221104b43e64d9}
    \begin{split}
      (E \overline{\otimes}_{\varepsilon} E)'_{c}
      &= \left(\prod_{i,j} E_{i} \overline{\otimes}_{\varepsilon} E_{j}\right)'_{c}
        = \bigoplus_{i,j} \left(E_{i} \overline{\otimes}_{\varepsilon} E_{j}\right)'_{c}
        = \bigoplus_{i,j} (E_{i})'_{c} \overline{\otimes}_{\pi} (E_{j})'_{c} \\
      &= \bigoplus_{i,j} (E_{i})'_{c} \overline{\otimes}_{\iota} (E_{j})'_{c}
        = \left(\bigoplus_{i}(E_{i})'_{c}\right)
        \overline{\otimes}_{\iota} \left(\bigoplus_{i}(E_{i})'_{c}\right).
    \end{split}
  \end{equation}
  Conversely, applying Proposition~\ref{prop:9f3f8bda0402a4ff} again, we have
  \begin{equation}
    \label{eq:600f57d98553e86c}
    \begin{split}
      \left(\left(\bigoplus_{i}(E_{i})'_{c}\right)
      \overline{\otimes}_{\iota} \left(\bigoplus_{i}(E_{i})'_{c}\right)\right)'_{c}
      &= \left( \bigoplus_{i,j} (E_{i})'_{c} \overline{\otimes}_{\iota} (E_{j})'_{c}\right)'_{c}
        = \prod_{i,j} \left(  (E_{i})'_{c} \overline{\otimes}_{\iota} (E_{j})'_{c} \right)'_{c} \\
      &=  \prod_{i,j} \left(  (E_{i})'_{c} \overline{\otimes}_{\pi} (E_{j})'_{c} \right)'_{c}
        =  \prod_{i,j}  E_{i} \overline{\otimes}_{\varepsilon} E_{j}
        = \left(\prod_{i}E_{i}\right) \overline{\otimes} \left(\prod_{i}E_{i}\right).
    \end{split}
  \end{equation}
  Hence, the pairing
  \begin{displaymath}
    \pairing*{\left(\prod_{i}E_{i}\right) \overline{\otimes}_{\varepsilon} \left(\prod_{i}E_{i}\right)}
    {\left(\bigoplus_{i} (E_{i})'_{c}\right) \overline{\otimes}_{\iota} \left(\bigoplus_{i} (E_{i})'_{c}\right)}
  \end{displaymath}
  is also polar reflexive.

  We also have the corresponding polar reflexive pairings for three-fold and
  four-fold tensor products, following a similar calculation as in
  \eqref{eq:48221104b43e64d9} and \eqref{eq:600f57d98553e86c}, which finishes
  the proof.
\end{proof}

In the case of \( (F) \)-spaces, conditions in
Proposition~\ref{prop:323fe7df2f5a5c21} can be greatly weakened, as a
consequence of Hollstein \cite{MR0470696}*{p241, Satz~2.1}, see also
\cite{MR0551623}*{p302, (3)}. We won't dive into detailing the notion of
\( \sigma \)-locally topological, but merely point out that all spaces in
\( (\mathcal{F}'_{c}) \) are \( \sigma \)-locally topological
\cite{MR0551623}*{p302, paragraph above (4)}. Then it is trivial from
\cite{MR0551623}*{p302, (3)} that we have the following, which we track for
future reference.

\begin{prop}
  \label{prop:f0b1250b423a54b3}
  Let \( E, F \in (\mathcal{F}'_{c}) \) and \( G \) a locally convex space. If
  \( f : E \times F \to G \) is a bilinear map that is hypocontinuous, then
  \( f \) is already continuous.
\end{prop}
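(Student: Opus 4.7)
The plan is to invoke Hollstein's theorem essentially as a black box. Since $E, F \in (\mathcal{F}'_c)$, by definition each of $E$ and $F$ is isomorphic as a locally convex space to the polar dual $X'_c$ of some $(F)$-space $X$. The key input, stated in \cite{MR0551623}*{p.~302, paragraph above (4)} and already recalled in the paragraph preceding the statement, is that every such polar dual of a Fréchet space belongs to the class of $\sigma$-locally topological locally convex spaces. I will take this membership from the literature without unpacking the (rather technical) definition of $\sigma$-locally topological.

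With this input in hand, Hollstein's theorem (\cite{MR0470696}*{Satz~2.1}, reproduced in \cite{MR0551623}*{p.~302, (3)}) says precisely that whenever both factors of the domain of a hypocontinuous bilinear map are $\sigma$-locally topological, the map is automatically jointly continuous into an arbitrary locally convex target. Applying this verbatim with our $E$, $F$, and $G$ yields the conclusion at once.

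The only points that require care are purely bibliographic: one must check (i) that the Bourbaki notion of hypocontinuity used in Definition~\ref{defi:00c926c5039e93d1} coincides with the one employed by Hollstein and by Köthe (which it does, as both sides work with a pair of saturated bornologies consisting of bounded sets, and in our statement ``hypocontinuous'' unambiguously means $(\mathfrak{B}(E), \mathfrak{B}(F))$-hypocontinuous), and (ii) that $(\mathcal{F}'_c)$ really does sit inside the class of $\sigma$-locally topological spaces. Both facts are explicit in \cite{MR0551623}*{\S~40}. There is no genuine obstacle to overcome beyond lining up these citations; accordingly the proof will consist of a single sentence invoking the two references, which is exactly what the author does.
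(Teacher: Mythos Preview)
Your proposal is correct and takes essentially the same approach as the paper: the paper's justification (given in the paragraph immediately preceding the statement rather than as a separate proof) consists precisely of noting that all spaces in $(\mathcal{F}'_c)$ are $\sigma$-locally topological and then invoking Hollstein's result via \cite{MR0551623}*{p302, (3)}. Your additional remarks on matching the hypocontinuity conventions are accurate and add a small amount of extra care beyond what the paper spells out.
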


\begin{rema}
  \label{rema:a932c6ef4b516901}
  Now it is clear that by Proposition~\ref{prop:f0b1250b423a54b3} and
  Proposition~\ref{prop:921772454be0e6bd} that conditions in
  Proposition~\ref{prop:323fe7df2f5a5c21} are satisfied if the relevant polar
  duals are all barrelled. But it is easy to see that for an \( (F) \)-space
  \( E \), \( E'_{c} \) is barrelled if and only if \( E \) is Montel. This
  motivates us to consider the situation in
  \S~\ref{sec:c5a60681556c38a0}.
\end{rema}

\subsection{Products of \texorpdfstring{\( (FM) \)}{(FM)}-spaces with the
  approximation property}
\label{sec:c5a60681556c38a0}

\begin{prop}
  \label{prop:a56a0c072645abf2}
  Let \( (E_{i})_{i \in I} \) be a family of spaces in
  \( (\mathcal{FM}) \cap (\mathcal{AP}) \), and \( E = \prod_{i} E_{i} \). Then
  \( E \) is \( (\varepsilon, \iota) \)-reflexive, and
  \( E'_{b} = \bigoplus_{i} (E_{i})'_{b} \). Dually, if \( (F_{j})_{j \in J} \)
  is a family of spaces in \( (\mathcal{DFM}) \cap (\mathcal{AP}) \), and
  \( F = \bigoplus_{j} F_{j} \) is the locally convex direct sum. Then \( F \)
  is \( (\iota, \varepsilon) \)-reflexive, and
  \( F'_{b} = \prod_{j}(F_{j})'_{b} \).
\end{prop}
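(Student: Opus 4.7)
The plan is to deduce the result from Proposition~\ref{prop:323fe7df2f5a5c21} by verifying its hypotheses for the family of polar duals, and then to upgrade polar reflexivity to full reflexivity, exploiting the fact that in the Montel setting polar and strong duals coincide. The dual statement will then follow by Pontryagin-type biduality.

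First I verify the hypotheses of Proposition~\ref{prop:323fe7df2f5a5c21}. Each $E_i \in (\mathcal{FM}) \cap (\mathcal{AP})$ is Montel, so $(E_i)'_c = (E_i)'_b$ by Proposition~\ref{prop:99b4fc84a9b1d8a1}, and this dual lies in $(\mathcal{DFM}) \cap (\mathcal{AP})$ by Proposition~\ref{prop:8ffb913b5f5ddea6}; in particular it is barrelled (Montel by definition) and belongs to $(\mathcal{F}'_c)$. Each $E_i$ is $(\varepsilon,\pi)$-polar reflexive by Corollary~\ref{coro:10966fc71ea114e8}. By Proposition~\ref{prop:8ffb913b5f5ddea6}~\ref{item:434fda161e81e6c0}, the iterated completed projective tensor products $(E_i)'_c \overline{\otimes}_\pi (E_j)'_c$ and $\bigl((E_i)'_c \overline{\otimes}_\pi (E_j)'_c\bigr) \overline{\otimes}_\pi (E_k)'_c$ remain in $(\mathcal{DFM}) \cap (\mathcal{AP})$, hence are again barrelled and in $(\mathcal{F}'_c)$. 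On pairs of such spaces, every separately continuous bilinear map is $\bigl(\mathfrak{B}(\cdot),\mathfrak{B}(\cdot)\bigr)$-hypocontinuous by Proposition~\ref{prop:921772454be0e6bd}, and then jointly continuous by Proposition~\ref{prop:f0b1250b423a54b3}. Consequently the canonical bilinear maps in Proposition~\ref{prop:323fe7df2f5a5c21}~\ref{item:52c295c5ed4be513}--\ref{item:3480c1e4478c8a02} are continuous, and moreover $\overline{\otimes}_\iota$ coincides with $\overline{\otimes}_\pi$ on all the factors involved (Proposition~\ref{prop:89675519c2bb97f8} and Proposition~\ref{prop:89e27b078637bfc2}).

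Applying Proposition~\ref{prop:323fe7df2f5a5c21} then yields that $E = \prod_i E_i$ is $(\varepsilon,\iota)$-polar reflexive, with $E'_c = \bigoplus_i (E_i)'_c$. To upgrade to reflexivity, note that by Proposition~\ref{prop:a017fe989c7853e0} the class $(\mathcal{M})$ is closed under arbitrary products and locally convex direct sums, so $E$ and $\bigoplus_i (E_i)'_c$ are Montel; similarly, each of the iterated direct-sum/tensor-product combinations $\bigoplus_{i,j} (E_i)'_c \overline{\otimes}_\pi (E_j)'_c$ (and their three- and four-fold analogues), being direct sums of spaces in $(\mathcal{DFM}) \cap (\mathcal{AP})$, is Montel. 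At every stage the polar and strong duals therefore coincide by Proposition~\ref{prop:99b4fc84a9b1d8a1}, so the polar-reflexive pairings produced above become reflexive pairings in the sense of Definition~\ref{defi:a6ba1501ad62d100}. We conclude that $E$ is $(\varepsilon,\iota)$-reflexive and $E'_b = \bigoplus_i (E_i)'_b$.

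The dual statement for $F = \bigoplus_j F_j$ with $F_j \in (\mathcal{DFM}) \cap (\mathcal{AP})$ follows by applying the direct statement to the family $\bigl((F_j)'_b\bigr)_{j \in J} \subseteq (\mathcal{FM}) \cap (\mathcal{AP})$ (Proposition~\ref{prop:8ffb913b5f5ddea6}), which gives that $\prod_j (F_j)'_b$ is $(\varepsilon,\iota)$-reflexive with strong dual $\bigoplus_j F_j = F$; invoking the Pontryagin-type biduality of Proposition~\ref{prop:9ae8172142f23b99} (together with the fact that each $F_j$, being Montel, is reflexive) yields that $F$ itself is $(\iota,\varepsilon)$-reflexive and $F'_b = \prod_j (F_j)'_b$. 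The main obstacle in this plan is checking that the canonical bilinear maps feeding into Proposition~\ref{prop:323fe7df2f5a5c21} are truly jointly continuous and not just separately continuous; this is precisely where the Montel--barrelled--$(\mathcal{F}'_c)$ structure enters through Proposition~\ref{prop:f0b1250b423a54b3}, and it is also what makes the upgrade from polar reflexivity to reflexivity essentially automatic.
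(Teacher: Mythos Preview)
Your proof is correct and follows essentially the same approach as the paper: verify the hypotheses of Proposition~\ref{prop:323fe7df2f5a5c21} using the stability of \( (\mathcal{DFM}) \cap (\mathcal{AP}) \) under \( \overline{\otimes}_{\pi} \), then upgrade polar reflexivity to reflexivity via the Montel property. The only minor difference is that the paper invokes Proposition~\ref{prop:89675519c2bb97f8} directly to get joint continuity on barrelled \( (DF) \)-spaces, whereas you route through hypocontinuity and Proposition~\ref{prop:f0b1250b423a54b3} (the path signposted in Remark~\ref{rema:a932c6ef4b516901}); both are valid. One small caution: Proposition~\ref{prop:9ae8172142f23b99} is stated for \( \tau \)-algebras etc., not bare spaces, so for the dual statement it is cleaner to argue directly from the symmetry in Definition~\ref{defi:a6ba1501ad62d100}, as the paper does by simply saying ``the second follows by duality''.
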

\begin{proof}
  We prove only the first statement, as the second follows by duality.

  Note again that the polar dual coincides with the strong for Montel spaces
  (Proposition~\ref{prop:99b4fc84a9b1d8a1}). Each
  \( (E_{i})'_{b} = (E_{i})'_{c} \) is therefore in
  \( (\mathcal{DFM}) \cap (\mathcal{AP}) \)
  (Proposition~\ref{prop:8ffb913b5f5ddea6}). By
  Proposition~\ref{prop:89675519c2bb97f8} (and
  Proposition~\ref{prop:89e27b078637bfc2}), we know that the family
  \( (E_{i}) \) satisfies condition~\ref{item:52c295c5ed4be513} in
  Proposition~\ref{prop:323fe7df2f5a5c21}. Since
  \( (\mathcal{DFM}) \cap (\mathcal{AP}) \) is stable under taking completed
  projective tensor products (Proposition~\ref{prop:8ffb913b5f5ddea6}),
  condition~\ref{item:cae80c9b74869833} in
  Proposition~\ref{prop:323fe7df2f5a5c21} is also satisfied. We may now conclude
  with Proposition~\ref{prop:323fe7df2f5a5c21} by noting that the class
  \( (\mathcal{M}) \) is stable under taking arbitrary product and locally
  convex direct sums (Proposition~\ref{prop:a017fe989c7853e0}).
\end{proof}

\begin{coro}
  \label{coro:5a8cb8e4d424f5b2}
  Let \( (E_{i})_{i \in I} \) be a family of spaces in \( (\mathcal{FN}) \), and
  \( E = \prod_{i} E_{i} \). Then \( E \) is
  \( (\varepsilon, \iota) \)-reflexive, and
  \( E'_{b} = \bigoplus_{i} (E_{i})'_{b} \). Dually, if \( (F_{j})_{j \in J} \)
  is a family of spaces in \( (\mathcal{DFN}) \), and
  \( F = \bigoplus_{j} F_{j} \), then \( F \) is
  \( (\iota, \varepsilon) \)-reflexive and \( F'_{b} = \prod_{j} (F_{j})'_{b} \).
\end{coro}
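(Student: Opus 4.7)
The plan is to deduce the corollary directly from Proposition~\ref{prop:a56a0c072645abf2} by verifying the two class inclusions $(\mathcal{FN}) \subseteq (\mathcal{FM}) \cap (\mathcal{AP})$ and $(\mathcal{DFN}) \subseteq (\mathcal{DFM}) \cap (\mathcal{AP})$. Once these are in hand, the statements about the product $E = \prod_i E_i$ and the direct sum $F = \bigoplus_j F_j$, together with the identification of the strong duals, follow verbatim from the two halves of Proposition~\ref{prop:a56a0c072645abf2}, with no further work.

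For the first inclusion, the approximation property is immediate from Proposition~\ref{prop:0bc5d06c0f42ff63}. Being a complete metrizable (hence Baire) space, any $(F)$-space is quasi-complete and barrelled by Proposition~\ref{prop:7963cb2832fabbd4}. Proposition~\ref{prop:f4cbc15f463be1bc} then gives that a barrelled quasi-complete nuclear space is Montel, and the first inclusion follows. Applying the first half of Proposition~\ref{prop:a56a0c072645abf2} yields $E'_b = \bigoplus_i (E_i)'_b$ and the $(\varepsilon, \iota)$-reflexivity of $E$.

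For the second inclusion, nuclearity again gives the approximation property via Proposition~\ref{prop:0bc5d06c0f42ff63}; what remains is to show that any $F \in (\mathcal{DFN})$ is Montel. Bounded sets in $F$ are precompact by Proposition~\ref{prop:f4cbc15f463be1bc}, so only barrelledness and quasi-completeness need argument, and the plan is to reduce these to the $(F)$-space case by duality. By Proposition~\ref{prop:150a19f0054c21da}, $F'_b$ is an $(F)$-space, and by Proposition~\ref{prop:a8139d3432a18d9e}~\ref{item:8fdf68fedf3451b4} it is moreover nuclear, hence in $(\mathcal{FN}) \subseteq (\mathcal{FM})$ by the first inclusion. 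In particular $F'_b$ is reflexive (Proposition~\ref{prop:a017fe989c7853e0}~\ref{item:a716828b3f6c61e2}), so $F = (F'_b)'_b$ is the strong dual of an $(FM)$-space, and therefore itself Montel by one more application of Proposition~\ref{prop:a017fe989c7853e0}~\ref{item:a716828b3f6c61e2}. Thus $F \in (\mathcal{DFM}) \cap (\mathcal{AP})$, and the second half of Proposition~\ref{prop:a56a0c072645abf2} finishes the proof.

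The only genuinely delicate point is the transfer of nuclearity from $F$ to $F'_b$ in the last paragraph: Proposition~\ref{prop:a8139d3432a18d9e}~\ref{item:8fdf68fedf3451b4} is an equivalence phrased on the $(F)$-space side, so using it to conclude that $F'_b$ is nuclear implicitly relies on $F$ being reflexive (or on recognizing $F$ as the strong dual of an $(FN)$-space). I expect this to be the main obstacle; should one wish to sidestep it cleanly, the alternative is to invoke the classical result of Grothendieck that every nuclear $(DF)$-space is automatically reflexive and Montel, after which $F'_b \in (\mathcal{FN})$ is an immediate consequence.
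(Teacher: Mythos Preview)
Your approach is exactly the paper's: reduce to Proposition~\ref{prop:a56a0c072645abf2} via the inclusions \((\mathcal{FN}) \subseteq (\mathcal{FM}) \cap (\mathcal{AP})\) and \((\mathcal{DFN}) \subseteq (\mathcal{DFM}) \cap (\mathcal{AP})\), citing Propositions~\ref{prop:0bc5d06c0f42ff63}, \ref{prop:f4cbc15f463be1bc} and~\ref{prop:a8139d3432a18d9e}. You are in fact more careful than the paper, which dispatches the second inclusion with the phrase ``by duality (Proposition~\ref{prop:a8139d3432a18d9e})''; the circularity you flag in transferring nuclearity from \(F\) to \(F'_b\) via item~\ref{item:8fdf68fedf3451b4} is real, and your suggested fix---invoking Grothendieck's result that nuclear \((DF)\)-spaces are automatically Montel (hence reflexive)---is the standard way to close it.
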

\begin{proof}
  This is clear from Proposition~\ref{prop:a56a0c072645abf2} by noting that
  \( (\mathcal{FN}) \subseteq (\mathcal{FM}) \cap (\mathcal{AP}) \)
  (Proposition~\ref{prop:0bc5d06c0f42ff63} and
  Proposition~\ref{prop:f4cbc15f463be1bc}), and also
  \( (\mathcal{DFN}) \subseteq (\mathcal{DFM}) \cap (\mathcal{AP}) \) by duality
  (Proposition~\ref{prop:a8139d3432a18d9e}).
\end{proof}

\begin{theo}
  \label{theo:a2230cfdc7b9adaf}
  Let \( H \) be an \( \varepsilon \)-Hopf algebra (resp.\ \( \iota \)-Hopf
  algebra). If as a locally convex space, \( H \) is isomorphic to a product of
  spaces in \( (\mathcal{FM}) \cap (\mathcal{AP}) \) (resp.\ a locally convex
  direct sum of spaces in \( (\mathcal{DFM} \cap (\mathcal{AP})) \)), then as an
  \( \varepsilon \)-Hopf algebra (resp.\ a \( \pi \)-Hopf algebra), it is
  \( (\varepsilon, \iota) \)-reflexive (resp.\
  \( (\iota, \varepsilon) \)-reflexive). In particular, this applies when the
  locally convex space \( H \) is isomorphic to a product of spaces in
  \( (\mathcal{FN}) \) (resp.\ a locally convex direct sum of spaces in
  \( (\mathcal{DFN}) \)).
\end{theo}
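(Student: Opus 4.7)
The plan is to deduce the theorem directly from the reflexivity at the level of locally convex spaces, already established in Proposition~\ref{prop:a56a0c072645abf2} and Corollary~\ref{coro:5a8cb8e4d424f5b2}, combined with the general machinery of Proposition~\ref{prop:844fe110a4f00d8b} that transports the Hopf algebra structure to the dual. Since by Definition~\ref{defi:783a70411adbf39d} the \( (\tau, \sigma) \)-reflexivity of a locally convex Hopf algebra is by fiat the same as the \( (\tau, \sigma) \)-reflexivity of its underlying complete locally convex space together with compatibility of all pairings of up to four-fold tensor products, essentially no additional work is needed once the LCS-level result is available.

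More precisely, suppose \( H \) is an \( \varepsilon \)-Hopf algebra whose underlying space is isomorphic to \( \prod_{i \in I} E_{i} \) with each \( E_{i} \in (\mathcal{FM}) \cap (\mathcal{AP}) \). I would first invoke Proposition~\ref{prop:a56a0c072645abf2} to conclude that, as a complete locally convex space, \( H \) is \( (\varepsilon, \iota) \)-reflexive; in particular the strong dual \( H'_{b} \) is complete, coincides with the polar dual, and is isomorphic to \( \bigoplus_{i} (E_{i})'_{b} \). In the course of the proof of Proposition~\ref{prop:a56a0c072645abf2} one also obtains reflexive pairings
\[
\pairing*{H^{\overline{\otimes}_{\varepsilon} n}}{(H'_{b})^{\overline{\otimes}_{\iota} n}}, \qquad n = 2, 3, 4,
\]
so that \( H \) meets every requirement of \( (\varepsilon, \iota) \)-reflexivity in the sense of Definition~\ref{defi:a6ba1501ad62d100}. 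Proposition~\ref{prop:844fe110a4f00d8b}~\ref{item:4e35c3dbe6ccf7b3} then endows \( H'_{b} \) with a canonical structure of \( \iota \)-Hopf algebra by transposition of the structure maps of \( H \) across these pairings, and the Pontryagin-type duality of Proposition~\ref{prop:9ae8172142f23b99} yields that the biduality map is an isomorphism of \( \varepsilon \)-Hopf algebras. This gives the \( (\varepsilon, \iota) \)-reflexivity of \( H \) as an \( \varepsilon \)-Hopf algebra.

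The case of an \( \iota \)-Hopf algebra \( H \) whose underlying space is isomorphic to \( \bigoplus_{j \in J} F_{j} \) with each \( F_{j} \in (\mathcal{DFM}) \cap (\mathcal{AP}) \) is entirely parallel, using the dual half of Proposition~\ref{prop:a56a0c072645abf2} (which gives \( (\iota, \varepsilon) \)-reflexivity at the LCS level with \( H'_{b} \simeq \prod_{j}(F_{j})'_{b} \)) together with Proposition~\ref{prop:844fe110a4f00d8b}~\ref{item:4e35c3dbe6ccf7b3}. Finally, the particular statements about \( (\mathcal{FN}) \) and \( (\mathcal{DFN}) \) are immediate consequences of Corollary~\ref{coro:5a8cb8e4d424f5b2} in place of Proposition~\ref{prop:a56a0c072645abf2}.

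There is essentially no genuine obstacle in this argument; the only point requiring some care is that the transpose of the structure maps of \( H \) indeed land in the appropriate completed tensor powers of \( H'_{b} \) (not merely in larger duals), but this is exactly the content of the three- and four-fold reflexive pairings already produced in the proof of Proposition~\ref{prop:a56a0c072645abf2} via the commutation of \( \overline{\otimes}_{\varepsilon} \) with products, the commutation of \( \overline{\otimes}_{\iota} \) with locally convex direct sums, and the coincidence \( \overline{\otimes}_{\iota} = \overline{\otimes}_{\pi} \) on duals of \( (\mathcal{FM}) \cap (\mathcal{AP}) \) spaces given by Proposition~\ref{prop:89675519c2bb97f8}.
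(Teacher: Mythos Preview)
Your proposal is correct and follows essentially the same approach as the paper: the paper's proof is a one-line citation of Proposition~\ref{prop:844fe110a4f00d8b} combined with Proposition~\ref{prop:a56a0c072645abf2} and Corollary~\ref{coro:5a8cb8e4d424f5b2}, which is precisely the chain of reductions you describe. Your additional remarks (that \( (\tau,\sigma) \)-reflexivity of a Hopf algebra is by definition that of its underlying space, and the caveat about transposes landing in the correct completed tensor powers) merely unpack what the paper leaves implicit.
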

\begin{proof}
  This follows from Proposition~\ref{prop:844fe110a4f00d8b}, combined with
  Proposition~\ref{prop:a56a0c072645abf2} and
  Corollary~\ref{coro:5a8cb8e4d424f5b2}.
\end{proof}

\section{Some basic examples of locally convex Hopf algebras}
\label{sec:2a8ab6bf60a9b3ca}

We now describe some basic examples of locally convex Hopf algebras. To start,
as noted in Remark~\ref{rema:18c5fa54f0e4fe49}, all well-behaved topological
Hopf algebras as studied in \cite{MR1266072} are \( (\sigma, \tau) \)-reflexive
Hopf algebras, where \( \sigma, \tau \in \set*{\varepsilon, \pi} \). We are now
interested in examples that could go beyond their settings. More sophisticated
examples that require further work shall be described later.

\subsection{Discrete groups and
  \texorpdfstring{\( \ell^{1}(\Gamma) \)}{ell1(Gamma)} as a
  \texorpdfstring{\( \pi \)}{pi}-Hopf algebra}
\label{sec:de569e5c9166cc4a}

We start by providing a class of examples that are
\( (\pi, \varepsilon) \)-polar reflexive, but not reflexive.

Let \( \Gamma \) be a discrete group. Consider \( H = \ell^{1}(\Gamma) \) as a
Banach space. Consider the normed space
\( \ell^{1}(\Gamma) \otimes_{\pi} \ell^{1}(\Gamma) \), where the norm
\( \norm*{\cdot}_{\pi} \) is determined as in
Remark~\ref{rema:9a1ecf4df00f5761}. We may embed
\( \ell^{1}(\Gamma) \otimes \ell^{1}(\Gamma) \) as a subspace of \( \ell^{1}(\Gamma \times \Gamma)  \), as follows.
Let \( t \) be a tensor in
\( \ell^{1}(\Gamma) \otimes \ell^{1}(\Gamma) \), which can be written as
\begin{equation}
  \label{eq:78357ea6201909d7}
  t = \sum_{k=1}^{n}\left(\sum_{s \in \Gamma} a_{k, s} \delta_{s} \right)
  \otimes \left(\sum_{t \in \Gamma} b_{k, t} \delta_{t}\right)
  = \sum_{s, t \in \Gamma} \left(\sum_{k=1}^{n}a_{k,s} b_{k,t}\right) \delta_{s} \otimes \delta_{t}.
\end{equation}
Clearly, identifying \( \delta_{s} \otimes \delta_{t} \) with
\( \delta_{(s,t)} \), we may identify \( t \) with
\( \sum_{s, t \in \Gamma}\left(\sum_{k=1}^{n}a_{k,s}b_{k,t}\right) \delta_{(s,
  t)} \). This does not depend on how one writes \( t \) in
\eqref{eq:78357ea6201909d7}, since the coefficient of \( \delta_{(s, t)} \) is given by
\begin{displaymath}
  \sum_{k=1}^{n}a_{k,s}b_{k,t}
  = (e_{s} \otimes e_{t})(t),
\end{displaymath}
where \( \epsilon_{s} \in \ell^{1}(\Gamma)' \) and
\( \epsilon_{t} \in \ell^{1}(\Gamma)' \) denote the evaluation at
\( s \in \Gamma \) and at \( t \in \Gamma \) respectively. From this, it follows
easily that \( \ell^{1}(\Gamma) \otimes_{\pi} \ell^{1}(\Gamma) \) identifies
isometrically with a dense subspace of \( \ell^{1}(\Gamma \times \Gamma) \), and
we may thus write
\( \ell^{1}(\Gamma) \overline{\otimes}_{\pi} \ell^{1}(\Gamma) = \ell^{1}(\Gamma
\times \Gamma) \). More generally, a similar argument shows that
\( \ell^{1}(S) \overline{\otimes}_{\pi} \ell^{1}(T) = \ell^{1}(S \times T) \)
for any sets \( S \) and \( T \).

We now define the unit \( \eta : \mathbb{K} \to \ell^{1}(\Gamma) \) by
\( 1 \mapsto \delta_{e} \), where \( e \in \Gamma \) is the neutral element of
\( \Gamma \); and the counit \( \varepsilon: \ell^{1}(\Gamma) \to \mathbb{K} \)
as \( \epsilon_{e} \). Furthermore, define the multiplication
\begin{displaymath}
  \begin{split}
    m: \ell^{1}(\Gamma) \overline{\otimes}_{\pi} \ell^{1}(\Gamma) = \ell^{1}(\Gamma \times \Gamma)
    & \to \ell^{1}(\Gamma) \\
    \sum_{(s, t) \in \Gamma \times \Gamma} \lambda_{s,t} \delta_{(s,t)}
    \mapsto \sum_{\gamma \in \Gamma} \left(\sum_{s \in \Gamma} \lambda_{s, s^{-1}\gamma}\right) \delta_{\gamma},
  \end{split}
\end{displaymath}
the comultiplication
\begin{displaymath}
  \begin{split}
    \Delta: \ell^{1}(\Gamma) & \to \ell^{1}(\Gamma \times \Gamma) = \ell^{1}(\Gamma) \overline{\otimes}_{\pi} \ell^{1}(\Gamma) \\
    \sum_{s \in \Gamma}\lambda_{s} \delta_{s} & \mapsto \sum_{s \in \Gamma} \lambda_{s} \delta_{(s, s)},
  \end{split}
\end{displaymath}
and the antipode
\begin{displaymath}
  \begin{split}
    S: \ell^{1}(\Gamma) & \to \ell^{1}(\Gamma) \\
    \sum_{s \in \Gamma}\lambda_{s} \delta_{s} & \mapsto \sum_{s \in \Gamma} \lambda_{s^{-1}} \delta_{s}.
  \end{split}
\end{displaymath}
It is readily checked that each of \( \eta, \varepsilon, m, \Delta \) and
\( S \) is a contractive linear map, in particular, continuous, and restricts to
the corresponding structure maps on the Hopf algebra \( \mathbb{K}[G] \), which
is a dense subspace of \( \ell^{1}(\Gamma) \). Hence by continuity and density,
\( H = \bigl(\ell^{1}(\Gamma), \eta, \varepsilon, m, \Delta, S\bigr) \) is a
\( \pi \)-Hopf algebra.

Since \( \ell^{1}(\Gamma) \) is a Banach space (in particular, an
\( (F) \)-space), and has the approximation property
(Proposition~\ref{prop:0f7d595efc04bfe4}), Theorem~\ref{theo:5f3cb1a05114cdca}
applies, and we see that \( H \) is \( (\pi, \varepsilon) \)-polar
reflexive. However, when \( \Gamma \) is an infinite group, it is well-known
that \( \ell^{1}(\Gamma) \) is not reflexive as a locally convex space, hence
\( H \) is not \( (\varepsilon, \tau) \)-reflexive for any choice of compatible
symmetric monoidal functor \( \overline{\otimes}_{\tau} \).

Thus the polar dual \( \ell^{1}(\Gamma)'_{c} \) has an \( \varepsilon \)-Hopf
algebra structure by duality. Note that as a vector space, we do have
\( \ell^{1}(\Gamma)'_{c} = \ell^{\infty}(\Gamma) \), however, the topology on
\( \ell^{1}(\Gamma)'_{c} \) is no longer normable once \( \Gamma \) is
infinite. It is interesting to note that the vector \( \ell^{\infty}(\Gamma) \),
when equipped with the \( c(\ell^{\infty}(\Gamma), \ell^{1}(\Gamma)) \)
topology, carries an \( \varepsilon \)-Hopf algebra structure, which we denote
of course by \( H'_{c} \) as in Definition~\ref{defi:783a70411adbf39d}.

One checks also easily that
\begin{displaymath}
  \set*{\delta_{s} \given s \in \Gamma}
  = \set*{x \in H \given \Delta(x) = x \otimes x, \, x \ne 0},
\end{displaymath}
and \( m(\delta_{s} \otimes \delta_{t}) = \delta_{st} \), thus one may recover
the discrete group \( \Gamma \) from \( H \), as expected.

We end this class of examples by noting that when \( \mathbb{K} = \C \), we may
even give an involution on \( H \), by letting
\begin{displaymath}
  \left(\sum_{s \in \Gamma} \lambda_{s}\delta_{s}\right)^{\ast}
  = \sum_{s \in \Gamma} \overline{\lambda_{s^{-1}}} \delta_{s},
\end{displaymath}
making \( H \) a \( \pi \)-Hopf-\( \ast \)-algebra.

\subsection{Locally convex Hopf algebras related to Lie groups}
\label{sec:374fdcc9344806b0}

In \S~\ref{sec:374fdcc9344806b0}, all smooth manifolds (hence Lie groups) are
assumed to be Hausdorff and paracompact, so that they always admits a smooth
partitions of unity \cite{MR0448362}*{p43, Theorem~2.1} (note that this reduces
to the \( \sigma \)-compact case by \cite{bourbaki_topologie_2006}*{I.70,
  Théorème~5}, or \cite{MR1700700}*{p38, Theorem~12.11}).

Consider a (paracompact) smooth manifold \( M \). We shall work with the
so-called weak topology (terminology in \cite{MR0448362}*{p35}) on the algebra
\( C^{\infty}(M) \) of smooth functions, which we will always use when referring
to the locally convex topology on \( C^{\infty}(M) \) unless stated
otherwise. We briefly describe here how this topology is defined. First, fix
\( r \in \N_{+} \). Let \( (\varphi, U) \) be a (smooth) chart on \( M \) and
\( K \subseteq U \) compact, in particular \( \varphi(U) \) is open in
\( \R^{d} \) with \( d \) being the dimension of \( M \), so that the right side
of \eqref{eq:af14908c0470e2f1} makes sense. For each \( f \in C^{r}(M) \),
define
\begin{equation}
  \label{eq:af14908c0470e2f1}
  p_{K, \varphi, r}(f) = \sup_{k = 0, 1, \ldots, r}\max_{x \in K}\norm*{D^{k}(f \circ \varphi^{-1}\vert_{\varphi(U)})(x)},
\end{equation}
where \( D^{k} \) is the total derivation of order \( k \) (we can of course
also use partial derivatives instead, then locally convex topologies thus
obtained can be easily seen to be the same). Clearly, \( p_{K, \varphi, r} \) is
a semi-norm on \( C^{r}(M) \). We denote by \( C^{r}_{W}(M) \) the space
\( C^{r}(M) \) equipped with the locally convex topology generated by semi-norms
of the form \( p_{K, \varphi, r} \). It can be checked that if \( M \) is second
countable, \( C^{r}_{W}(M) \) is a space of type \( (F) \) (cf.\
\cite{MR0448362}*{pp.33-34}). Now by the weak (smooth) topology on
\( C^{\infty}(M) \), we mean the coarsest locally convex topology on
\( C^{\infty}(M) \) making all restriction maps
\( C^{\infty}(M) \to C^{r}_{W}(M) \) continuous. Or equivalently, the weak
topology on \( C^{\infty}(M) \) is the one generated by all semi-norms of the
form \( p_{K, \varphi, r}\vert_{C^{\infty}(M)} \), with \( (\varphi, U) \) a
chart, \( K \subseteq U \) compact, and \( r \in \N_{+} \). From this
observation, it is clear that if \( M \) is second countable,
\( C^{\infty}(M) \) is metrizable (a suitable countable sub-family of
\( p_{K, \varphi, r}\vert_{C^{\infty}(M)} \) suffices to define the
topology). It is also complete by using the completeness of each
\( C^{r}_{W}(M) \) and (after a standard partitions of unity argument) the
standard fact in elementary analysis that a sequence \( (f_{n}) \) of functions
in \( C^{1}(V) \) (\( V \subseteq \R^{d} \) open and connected) converges
uniformly on compact sets if their first partial derivatives converges uniformly
on compacts, and \( (f_{n}(x_{0})) \) converges for some \( x_{0} \in V \).
Hence \( C^{\infty}(M) \) is a space of type \( (F) \) if \( M \) is second
countable.

The following lemmas, at least in the case where the underlying manifolds are
second countable, are well-known. We nevertheless includes a brief sketches of
the proofs for completeness and convenience of the reader.
\begin{lemm}
  \label{lemm:975e581967ecec25}
  Using the above notation, if \( M \) is second countable, then
  \( C^{\infty}(M) \) is of class \( (\mathcal{FN}) \). If \( M \) is merely
  paracompact, then let \( (M_{i})_{i \in I} \) be the connected component of
  \( M \), then each \( M_{i} \) is second countable, and
  \( C^{\infty}(M) = \prod_{i \in I} C^{\infty}(M_{i}) \in (\mathcal{N}) \).
\end{lemm}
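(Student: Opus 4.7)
The plan is to treat the two statements in turn, reducing both to a single classical input: the theorem of Grothendieck that $C^\infty(\Omega) \in (\mathcal{FN})$ for every open $\Omega \subseteq \R^d$ (see e.g.\ \cite{MR0225131}*{Ch.~51}, or derive it from the isomorphism between $C^\infty(\Omega)$ and the nuclear sequence space $s$).

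\emph{Second countable case.} The text preceding the lemma already shows that $C^\infty(M)$ is an $(F)$-space, so it suffices to establish nuclearity. Pick a countable atlas $(\varphi_i, U_i)_{i \in \N}$ covering $M$, and form the continuous linear map
\[
\Phi \colon C^\infty(M) \to \prod_{i \in \N} C^\infty\bigl(\varphi_i(U_i)\bigr),
\qquad f \mapsto (f \circ \varphi_i^{-1})_{i \in \N}.
\]
The map $\Phi$ is injective because the $U_i$ cover $M$. It is also a topological embedding: by the very definition of the weak topology, a defining family of semi-norms on $C^\infty(M)$ is given by the $p_{K,\varphi_i,r}$ with $K \subseteq U_i$ compact and $r \in \N$, and each of these equals (via the chart) a defining semi-norm on the target factor $C^\infty(\varphi_i(U_i))$. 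Since each $C^\infty(\varphi_i(U_i))$ belongs to $(\mathcal{FN})$ and $(\mathcal{N})$ is stable under countable products and subspaces by Proposition~\ref{prop:a8139d3432a18d9e}, the space $C^\infty(M)$ lies in $(\mathcal{FN})$.

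\emph{Paracompact case.} Let $(M_i)_{i \in I}$ be the connected components of $M$; each is open, paracompact and connected, hence $\sigma$-compact and therefore second countable (a standard consequence of Hilbert's fifth problem circle of ideas; see e.g.\ \cite{MR1700700}*{\S 1}). Thus the first part applies and $C^\infty(M_i) \in (\mathcal{FN})$. The restriction map $\Psi \colon C^\infty(M) \to \prod_{i \in I} C^\infty(M_i)$, $f \mapsto (f|_{M_i})_{i \in I}$ is a vector space bijection, since smoothness is a local property and the $M_i$ are open and pairwise disjoint. To identify the topologies, one observes that any connected chart domain on $M$ is contained in exactly one $M_i$, so every defining semi-norm $p_{K,\varphi,r}$ on $C^\infty(M)$ factors through the projection onto a single factor $C^\infty(M_i)$; conversely, the semi-norms generating the product topology pull back to such semi-norms on $C^\infty(M)$. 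Hence $\Psi$ is an isomorphism in $\mathsf{LCS}$, and Proposition~\ref{prop:a8139d3432a18d9e} again yields $C^\infty(M) \in (\mathcal{N})$.

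The only nontrivial ingredient is the nuclearity of $C^\infty(\Omega)$ for $\Omega \subseteq \R^d$ open, which is classical; everything else is a routine juxtaposition of the stability properties recalled in Proposition~\ref{prop:a8139d3432a18d9e} with the definition of the weak topology.
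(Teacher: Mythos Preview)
Your proof is correct, and for the second-countable case it takes a cleaner route than the paper's. The paper reduces nuclearity of $C^\infty(M)$ to the Euclidean case via a partition-of-unity argument combined with the theory of nuclear maps (Tr\`eves, Theorem~50.1); you instead embed $C^\infty(M)$ as a topological subspace of a countable product of spaces $C^\infty(\varphi_i(U_i))$ and invoke the stability of $(\mathcal{N})$ under countable products and subspaces (Proposition~\ref{prop:a8139d3432a18d9e}). Your approach is more direct and avoids appealing to the intrinsic characterisations of nuclearity. The paracompact case is handled identically in both.

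Two small remarks. First, the claim that the seminorms $p_{K,\varphi_i,r}$ coming from a \emph{fixed} countable atlas already generate the weak topology is not literally ``by the very definition'' (the definition ranges over \emph{all} charts); it requires the observation that, for any chart $(\psi,V)$ and compact $K\subseteq V$, one covers $K$ by finitely many $U_{i_j}$ and controls $p_{K,\psi,r}$ by the $p_{K_j,\varphi_{i_j},r}$ via the chain rule applied to the transition maps. This is routine, but it is the actual content behind $\Phi$ being a topological embedding. Second, the fact that a connected paracompact manifold is $\sigma$-compact (hence second countable) has nothing to do with Hilbert's fifth problem; it is the general-topology statement in \cite{MR1700700}*{p.~38, Theorem~12.11}, which is exactly what the paper cites.
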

\begin{proof}
  We first treat the second countable case. It is already noted in the above
  that \( C^{\infty}(M) \in (\mathcal{F}) \). To see that
  \( C^{\infty}(M) \in (\mathcal{N}) \), one needs the theory of nuclear maps,
  then using \cite{MR0225131}*{Theorem~50.1} and a standard partitions of unity
  argument, we reduce to the case where \( M \) is an open set of an Euclidean
  space. Now the nuclearity of \( C^{\infty}(M) \) follows from
  \cite{MR0225131}*{p530, Corollary}.

  In the general case, by \cite{MR1700700}*{p38, Theorem~12.11}, each component
  \( M_{i} \) is \( \sigma \)-compact, hence second countable since it is
  locally Euclidean. Note that being smooth is a local property, our definition
  clearly shows that \( C^{\infty}(M) = \prod_{i \in I}C^{\infty}(M) \) as
  locally convex spaces, and the proof of the lemma is complete by the above
  second countable case.
\end{proof}

\begin{lemm}
  \label{lemm:c00a7742f85d9ace}
  Let \( M \), \( N \) be paracompact smooth manifold, then the completed
  topological tensor product
  \( C^{\infty}(M) \overline{\otimes}_{\varepsilon} C^{\infty}(N) \) is canonically isomorphic to \( C^{\infty}(M \times N) \).
\end{lemm}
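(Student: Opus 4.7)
The plan is to reduce to the case where $M$ and $N$ are both second countable, and then to invoke the nuclearity of $C^{\infty}(M)$ together with Grothendieck's classical identification of smooth functions on a product with the completed tensor product.

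First, I would decompose $M = \bigsqcup_{i \in I} M_{i}$ and $N = \bigsqcup_{j \in J} N_{j}$ into their connected components, each of which is second countable by Lemma~\ref{lemm:975e581967ecec25}. Since smoothness is a local property, the paracompact case factors as $C^{\infty}(M) = \prod_{i} C^{\infty}(M_{i})$, $C^{\infty}(N) = \prod_{j} C^{\infty}(N_{j})$ and $C^{\infty}(M \times N) = \prod_{(i,j)} C^{\infty}(M_{i} \times N_{j})$ as locally convex spaces. Applying Proposition~\ref{prop:8056525c3c4c9aa5}~\ref{item:2479d97cbebb5d9d} twice (with $\overline{\otimes}_{\varepsilon}$ commuting with arbitrary products on each side), the problem reduces to establishing the isomorphism when $M$ and $N$ are second countable.

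In that reduced setting, both $C^{\infty}(M)$ and $C^{\infty}(N)$ lie in $(\mathcal{FN})$ by Lemma~\ref{lemm:975e581967ecec25}, so by Definition~\ref{defi:a13a9b24c4226f4e} the canonical map $C^{\infty}(M) \overline{\otimes}_{\pi} C^{\infty}(N) \to C^{\infty}(M) \overline{\otimes}_{\varepsilon} C^{\infty}(N)$ is already an isomorphism, and it suffices to work with $\overline{\otimes}_{\pi}$. The bilinear map $(f, g) \mapsto [(x,y) \mapsto f(x) g(y)]$ from $C^{\infty}(M) \times C^{\infty}(N)$ to $C^{\infty}(M \times N)$ is jointly continuous; this follows by applying the Leibniz rule to the defining seminorms $p_{K, \varphi, r}$ on each factor. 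Proposition~\ref{prop:dda5a3f4762f22f6}~\ref{item:b0505895928e3373} then yields a canonical continuous linear map $\Phi : C^{\infty}(M) \overline{\otimes}_{\pi} C^{\infty}(N) \to C^{\infty}(M \times N)$, and what remains is to show that $\Phi$ is an isomorphism.

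To that end, I would first treat the Euclidean model case $M = U \subseteq \R^{m}$ and $N = V \subseteq \R^{n}$ open, which is a classical theorem of Grothendieck, see e.g.\ \cite{MR0225131}*{Theorem~51.6}. To globalize, choose countable atlases $(U_{i}, \varphi_{i})$ of $M$ and $(V_{j}, \psi_{j})$ of $N$ together with subordinate smooth partitions of unity; restriction to the chart domains realizes $C^{\infty}(M)$ (resp.\ $C^{\infty}(M \times N)$) as a reduced projective limit of finite products of $C^{\infty}(\varphi_{i}(U_{i}))$'s (resp.\ of $C^{\infty}(\varphi_{i}(U_{i}) \times \psi_{j}(V_{j}))$'s). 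The commutation of $\overline{\otimes}_{\varepsilon}$ with reduced projective limits, Proposition~\ref{prop:8056525c3c4c9aa5}~\ref{item:ab3601b8e4ba1df9}, then transports the Euclidean isomorphism to the manifold case. The main obstacle is the Euclidean case itself, which is classical and genuinely analytic; once it is in hand, all remaining steps are formal manipulations with the commutation properties of $\overline{\otimes}_{\varepsilon}$ established in \S\ref{sec:ac51753bf78a1da5}.
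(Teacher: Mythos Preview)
Your proposal is correct and follows essentially the same route as the paper's proof: reduce to the second countable case via the connected-component product decomposition and the commutation of $\overline{\otimes}_{\varepsilon}$ with products (Proposition~\ref{prop:8056525c3c4c9aa5}), invoke nuclearity to pass freely between $\overline{\otimes}_{\pi}$ and $\overline{\otimes}_{\varepsilon}$, and then use a partition-of-unity argument to reduce to the Euclidean case handled by \cite{MR0225131}*{Theorem~51.6}. You have simply spelled out more of the details (the explicit construction of $\Phi$, the reduced projective limit formulation of the globalization step) than the paper's sketch does.
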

\begin{proof}
  Note first that we may as well use
  \( C^{\infty}(M) \overline{\otimes}_{\pi} C^{\infty}(N) \) by nuclearity
  (Lemma~\ref{lemm:975e581967ecec25}). By Lemma~\ref{lemm:975e581967ecec25} and
  the commutativity of products with completed injective tensor products
  (Proposition~\ref{prop:8056525c3c4c9aa5}), we reduce the problem to the case
  where \( M \), \( N \) are both second countable. Then a standard partitions
  of unity argument reduces further to the case where \( M \), \( N \) are open
  subsets of Euclidean spaces, in which case, the lemma follows from
  \cite{MR0225131}*{p530, Theorem~51.6}.
\end{proof}

By a Lie group, we mean a real Lie group that is not necessarily
second-countable. So in particular, all discrete groups (countable or not)
counts as Lie groups. It is well-known that the connected components of a Lie
group is second countable (\cite{MR0722297}*{p83, Remark~(c)}), hence Lie groups
are paracompact \cite{MR1700700}*{p38, Theorem~12.11}, and the above lemmas
apply in this situation.

\begin{theo}
  \label{theo:6b0cb8e408044129}
  Let \( G \) be a Lie group. We identify \( C^{\infty}(G \times G) \) with
  \( C^{\infty}(G) \overline{\otimes}_{\varepsilon} C^{\infty}(G) \) canonically
  via Lemma~\ref{lemm:c00a7742f85d9ace}. Then there exists a canonical
  \( \varepsilon \)-Hopf algebra structure on \( C^{\infty}(G) \), given as
  follows
  \begin{enumerate}
  \item \label{item:a8a1fd3e21051eb5} the unit \( \eta \) is given by the constant function \( 1 \in C^{\infty}(G) \);
  \item \label{item:3d8ee7f9d44cc783} the counit \( \varepsilon \) is given by evaluation at the neutral element of \( G \);
  \item \label{item:06260d46e5e837b3} the multiplication
    \( m : C^{\infty}(G \times G) \to C^{\infty}(G) \) is given by
    \( m(F)(x) = F(x, x) \), for any \( F \in C^{\infty}(G \times G) \) and
    \( x \in G \);
  \item \label{item:0f2f12321b88e2bf} the comultiplication \( \Delta \) is by
    pulling-back the multiplication on \( G \).
  \end{enumerate}
  Furthermore, this \( \varepsilon \)-Hopf algebra is \( (\varepsilon, \iota) \)-reflexive.
\end{theo}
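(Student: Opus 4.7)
The plan is threefold. First, I would check that each listed structure map, together with the antipode $S$ (which the $\varepsilon$-Hopf algebra structure requires; take it to be the pullback of smooth functions along the inversion $g \mapsto g^{-1}$), is a continuous linear map in $\widehat{\mathsf{LCS}}_{\varepsilon}$. The unit $\eta: \mathbb{K} \to C^{\infty}(G)$ and counit $\varepsilon: C^{\infty}(G) \to \mathbb{K}$ are trivially continuous. Continuity of $m$, $\Delta$ and $S$ all reduce to the general fact that pullback along a smooth map $f: M \to N$ between paracompact manifolds induces a continuous linear map $f^{\ast}: C^{\infty}(N) \to C^{\infty}(M)$. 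This is checked via partitions of unity and the chain rule on Euclidean charts, yielding direct estimates on the semi-norms $p_{K,\varphi,r}$ used to topologize these function spaces. For $\Delta$ and $m$, one then composes with the canonical identification $C^{\infty}(G) \overline{\otimes}_{\varepsilon} C^{\infty}(G) \simeq C^{\infty}(G \times G)$ of Lemma~\ref{lemm:c00a7742f85d9ace}: $\Delta$ is the pullback along the smooth multiplication $\mu: G \times G \to G$, $m$ is the pullback along the diagonal embedding $G \hookrightarrow G \times G$, and $S$ the pullback along inversion.

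Second, I would verify the Hopf algebra axioms. Since $C^{\infty}(G) \odot C^{\infty}(G)$ sits densely in $C^{\infty}(G \times G)$, and since the evaluation functionals $\mathrm{ev}_{g}: f \mapsto f(g)$, $g \in G$, are continuous and separate points, every axiom can be verified by evaluating at a point $g \in G$ (or at a tuple in $G^{n}$ for $n$-fold tensor products). Each Hopf algebra identity thereby transforms into a group-theoretic identity: coassociativity of $\Delta$ follows from associativity of $\mu$, the counit law from the defining property of the neutral element, and the antipode identity $m(\mathrm{id} \overline{\otimes}_{\varepsilon} S)\Delta = \eta \varepsilon$ from $g \cdot g^{-1} = e = g^{-1} \cdot g$. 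This is the standard dictionary between commutative Hopf algebras and group objects, adapted to the smooth setting.

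Third, for $(\varepsilon, \iota)$-reflexivity, I would invoke Theorem~\ref{theo:a2230cfdc7b9adaf}. Let $(G_{i})_{i \in I}$ denote the connected components of $G$; by Lemma~\ref{lemm:975e581967ecec25} each $G_{i}$ is second countable and we have a canonical isomorphism $C^{\infty}(G) \simeq \prod_{i \in I} C^{\infty}(G_{i})$ of locally convex spaces with each factor $C^{\infty}(G_{i}) \in (\mathcal{FN})$. Theorem~\ref{theo:a2230cfdc7b9adaf} applies verbatim and yields $(\varepsilon, \iota)$-reflexivity.

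The main obstacle is not the reflexivity statement (which is a direct application of Theorem~\ref{theo:a2230cfdc7b9adaf} once the product decomposition is in hand), but rather the bookkeeping required in the possibly non-second-countable case: one must confirm that the pullback operations remain continuous and that the identification of Lemma~\ref{lemm:c00a7742f85d9ace} is compatible with the product decomposition of Lemma~\ref{lemm:975e581967ecec25}, so that the Hopf axioms on $C^{\infty}(G)$ really do reduce componentwise. This compatibility is however essentially built into how the weak $C^{\infty}$-topology is defined via charts, and presents no genuine difficulty.
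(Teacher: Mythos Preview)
Your proposal is correct and follows essentially the same approach as the paper: verify continuity of the structure maps (the paper singles out that products of compacts remain compact for the continuity of $\Delta$, which your general pullback argument subsumes), check the Hopf axioms by density and continuity, and deduce $(\varepsilon,\iota)$-reflexivity from Lemma~\ref{lemm:975e581967ecec25} together with Theorem~\ref{theo:a2230cfdc7b9adaf}. Your write-up is in fact more detailed than the paper's, which dismisses most of this as routine; your explicit mention of the antipode as pullback along inversion is a welcome addition, since the theorem statement omits it.
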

\begin{proof}
  Clearly, all these structure maps are linear and continuous (one need to
  observe that the product of compact sets in \( G \) remain compact for the
  continuity of \( \Delta \)).  Note that when restricted to the dense subspace
  \( C^{\infty}(G) \odot C^{\infty}(G) \subseteq C^{\infty}(G \times G) \), the
  multiplication \( m \) is exactly the pointwise multiplication. It is a
  routine verification that \( C^{\infty}(G) \) equipped with these structure
  maps is indeed an \( \varepsilon \)-Hopf algebra. This \( \varepsilon \)-Hopf
  algebra is \( (\varepsilon, \iota) \)-reflexive follows from
  Lemma~\ref{lemm:975e581967ecec25} and Theorem~\ref{theo:a2230cfdc7b9adaf}.
\end{proof}

\begin{nota}
  \label{nota:f9138e02df3c4659}
  By abuse of notation, when it's obvious from context, we will simply say the
  \( \varepsilon \)-Hopf algebra \( C^{\infty}(G) \), with the structure maps
  being understood as in Theorem~\ref{theo:6b0cb8e408044129}.
\end{nota}

\begin{rema}
  \label{rema:1c398bf82e991caa}
  If \( G \) is a complex Lie group, we may use the algebra of holomorphic
  functions \( \mathcal{H}(G) \) on \( G \). The analogues of
  Lemma~\ref{lemm:975e581967ecec25} and Lemma~\ref{lemm:c00a7742f85d9ace} still
  hold, i.e.\ \( \mathcal{H}(G) \) is a product of \( (FN) \)-spaces, and we
  have a canonical identification
  \( \mathcal{H}(G) \overline{\otimes}_{\varepsilon} \mathcal{H}(G) =
  \mathcal{H}(G \times G) \). Indeed, in the second countable case, we may still
  use \cite{MR0225131}*{p530}) to establish these lemmas, and the general
  paracompact case follows again by applying \cite{MR1700700}*{p38,
    Theorem~12.11}. Now defined similarly as in
  Theorem~\ref{theo:6b0cb8e408044129}, we see that \( \mathcal{H}(G) \) becomes
  an \( (\varepsilon, \iota) \)-reflexive \( \varepsilon \)-Hopf algebra.
\end{rema}

\begin{rema}
  \label{rema:3faab4832d082f96}
  It is easy to check that in the complex case \( \mathbb{K} = \C \), the
  \( \varepsilon \)-Hopf algebra \( C^{\infty}(G) \) becomes an
  \( \varepsilon \)-Hopf-\( \ast \) algebra with the pointwise conjugation as
  the involution. However, the same can not be applied to \( \mathcal{H}(G) \)
  as conjugation is \emph{not} holomorphic.
\end{rema}

\subsection{Classical Hopf algebras as locally convex Hopf algebras and their duals}
\label{sec:b6aa227ba43c21a5}

We now include all classical Hopf algebras over \( \mathbb{K} \) into our
framework. First, we need some preparations on locally convex spaces.

\begin{prop}
  \label{prop:12e473f3f887b8ab}
  For any vector space \( V \), there exists a unique finest locally convex
  topology \( \tau_{V} \) on \( V \), and if \( V \) is the algebraic direct sum
  of finite dimensional subspaces \( V_{i} \), \( i \in I \), then
  \( \tau_{V} \) is the locally convex direct sum of the unique Hausdorff vector
  topology on each \( V_{i} \). In particular, the space \( (V, \tau_{V}) \) is
  locally convex and complete, and every linear map out of \( (V, \tau_{V}) \)
  into a locally convex space is continuous.
\end{prop}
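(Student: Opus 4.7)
The plan is to proceed in four steps. First, I will define $\tau_V$ directly by taking as a fundamental system of neighborhoods of $0$ the collection of \emph{all} absolutely convex absorbing subsets of $V$. This collection is a filter basis closed under positive dilations, so Proposition~\ref{prop:fd1b582617d3215e} yields a locally convex topology. Hausdorffness follows by noting that, for any nonzero $x \in V$, one may extend $\{x\}$ to a Hamel basis and take the coordinate functional $f$ with $f(x) = 1$; the set $\{v \in V : |f(v)| \leq 1/2\}$ is absolutely convex, absorbing, and misses $x$. Since every locally convex topology admits absolutely convex absorbing sets as neighborhoods of $0$, the topology $\tau_V$ is by construction the finest locally convex topology on $V$, and in particular is unique.

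Second, I will establish the universal property by a direct verification: for any linear $f : (V, \tau_V) \to E$ with $E$ locally convex and any $W \in \mathcal{N}^{E}_{\Gamma}(0)$, the preimage $f^{-1}(W)$ is absolutely convex and absorbing (the latter because $W$ absorbs every $f(v)$ and $f$ is linear), hence lies in the fundamental system defining $\tau_V$.

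Third, for the direct sum characterization, I invoke the classical theorem that on a finite dimensional vector space $V_i$ there is a unique Hausdorff vector topology $\tau_i$, making $(V_i, \tau_i)$ isomorphic to $\mathbb{K}^{\dim V_i}$. Let $\tau_0$ denote the locally convex direct sum topology with respect to the $(V_i, \tau_i)$. As $\tau_V$ is the finest locally convex topology, $\tau_V$ is finer than $\tau_0$. Conversely, by the universal property of the inductive topology (Proposition~\ref{prop:c90d867cb6afb94d}), the identity $(V, \tau_0) \to (V, \tau_V)$ is continuous iff every canonical inclusion $(V_i, \tau_i) \hookrightarrow (V, \tau_V)$ is continuous; and this holds because any linear map from a finite dimensional Hausdorff topological vector space into a locally convex space is automatically continuous. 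Hence $\tau_V = \tau_0$.

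Finally, for completeness in full generality, I write $V = \bigoplus_{i \in I} \mathbb{K} e_i$ for a Hamel basis $(e_i)_{i \in I}$; each one dimensional factor $\mathbb{K} e_i$ is complete, and by Proposition~\ref{prop:d842a95c764bfd01} the locally convex direct sum of complete spaces is complete, so $(V, \tau_V)$ is complete. No step presents a real obstacle: the only mildly delicate points are the Hausdorff verification and the appeal to the uniqueness of a Hausdorff topology on a finite dimensional space, both of which are classical.
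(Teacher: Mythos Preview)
Your proof is correct. The main difference from the paper's argument lies in how $\tau_V$ is constructed and how the universal property is verified. The paper defines $\tau_V$ abstractly as the inductive topology with respect to the family $\id_V : (V,\tau) \to V$ indexed by all locally convex topologies $\tau$ on $V$, and then deduces continuity of an arbitrary linear map $f : V \to E$ by noting that the projective topology on $V$ with respect to $\{f\}$ is one such $\tau$, hence coarser than $\tau_V$. You instead give $\tau_V$ concretely via the filter basis of all absolutely convex absorbing sets and check continuity of $f$ by hand. Both routes are standard; yours is more explicit and self-contained, while the paper's leans on the inductive/projective topology machinery already set up in \S\ref{sec:e39c2dc88f4df7b1}. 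The direct-sum identification and the completeness argument via a Hamel basis and Proposition~\ref{prop:d842a95c764bfd01} are essentially identical in both proofs; the paper also extracts Hausdorffness from that same proposition rather than via a coordinate functional as you do.
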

\begin{proof}
  Let \( \mathfrak{T}_{V} \) be the collection of all locally convex topologies
  on \( V \). Take \( \tau_{V} \) to be the inductive topology with respect to
  the family \( \id_{V}: (V, \tau) \to V \), \( \tau \in \mathfrak{T}_{V} \).
  Clearly, \( \tau_{V} \) is the finest locally convex topology on \( V \).

  If \( V = \oplus_{i}V_{i} \) algebraically, where each \( V_{i} \) if finite
  dimensional, since each \( V_{i} \hookrightarrow (V, \tau_{V}) \) is
  continuous by the axiom of a topological linear space (or the uniqueness of
  the Hausdorff linear topology on \( V_{i} \)), by the universal property of
  the locally convex direct sum, we see that the identity map from the locally
  convex direct sum \( \oplus_{i \in I}V_{i} \) onto \( (V, \tau_{V}) \) is
  continuous, forcing it to be an isomorphism since \( \tau_{V} \) is already
  the finest locally convex topology on \( V \).

  By Zorn's lemma, \( V \) admits a Hamel basis
  \( (v_{\beta})_{\beta \in B} \), hence \( \tau_{V} \) maybe taken to be the
  locally direct sum of its one dimensional subspaces
  \( \mathbb{K} v_{\beta} \), and is therefore complete and Hausdorff
  (Proposition~\ref{prop:d842a95c764bfd01}).

  Finally, let \( f : V \to E \) be a linear map into a locally convex space
  \( E \). Since the projective topology on \( V \) with respect to the
  singleton \( \set*{f} \) is a locally convex topology on \( V \), it is
  coarser than \( \tau_{V} \), meaning \( f: (V, \tau_{V}) \to E \) is
  continuous.
\end{proof}

We already can fit all classical Hopf algebras into our framework.
\begin{theo}
  \label{theo:493ddf6e91b637cd}
  Let \( H \) be a classical Hopf algebra. We equip the vector space \( H \)
  with its finest locally convex topology, then \( H \) is a \( \iota \)-Hopf
  algebra, and is \( (\iota, \varepsilon) \)-reflexive.
\end{theo}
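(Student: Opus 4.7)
The plan is to exploit the description provided by Proposition~\ref{prop:12e473f3f887b8ab} of the finest locally convex topology $\tau_H$: after fixing a Hamel basis $(v_\beta)_{\beta \in B}$ of $H$, the topology $\tau_H$ identifies $H$ with the locally convex direct sum $\bigoplus_{\beta \in B} \mathbb{K} v_\beta$ of one-dimensional spaces. The key observation is that this presentation survives the passage to tensor products: by Corollary~\ref{coro:06a795bc15551930} (commutation of $\overline{\otimes}_\iota$ with locally convex direct sums) together with the triviality $\mathbb{K} v_\beta \overline{\otimes}_\iota \mathbb{K} v_\gamma \simeq \mathbb{K}$, one gets
\begin{equation*}
  H \overline{\otimes}_\iota H \;=\; \bigoplus_{\beta, \gamma \in B} \mathbb{K}\,(v_\beta \otimes v_\gamma),
\end{equation*}
which, by Proposition~\ref{prop:12e473f3f887b8ab} again, is exactly the algebraic tensor product $H \odot H$ equipped with its own finest locally convex topology. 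Iterating, the same argument identifies $H^{\overline{\otimes}_\iota n}$ with $H^{\odot n}$ endowed with its finest locally convex topology for $n = 3$ and $n = 4$, which is all one ever needs for the (co)associativity and bialgebra compatibility diagrams.

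With this identification in hand, the verification that $H$ is a $\iota$-Hopf algebra becomes essentially automatic. The classical structure maps $m$, $\eta$, $\Delta$, $\varepsilon$ and $S$ are linear maps whose domains carry the finest locally convex topology, so by the last clause of Proposition~\ref{prop:12e473f3f887b8ab} they are all continuous; their targets, which are either $H$, $\mathbb{K}$ or one of the $H^{\overline{\otimes}_\iota n}$ just identified with $H^{\odot n}$ in its finest topology, are complete locally convex spaces. The Hopf algebra axioms (associativity, coassociativity, unit and counit laws, bialgebra compatibility, antipode equation) are diagrams of linear maps on algebraic tensor powers of $H$, hence transport verbatim from the classical setting.

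For $(\iota, \varepsilon)$-reflexivity, I would invoke Theorem~\ref{theo:a2230cfdc7b9adaf}. The scalar field $\mathbb{K}$ is finite-dimensional and is therefore trivially in $(\mathcal{DFN})$ (it is nuclear, and being a normed space is its own strong dual, which is a $(DF)$-space), so the description $H = \bigoplus_{\beta \in B} \mathbb{K} v_\beta$ exhibits $H$ as a locally convex direct sum of $(\mathcal{DFN})$-spaces. The second (dual) half of Theorem~\ref{theo:a2230cfdc7b9adaf} then applies directly and produces the desired $(\iota, \varepsilon)$-reflexivity.

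There is no real obstacle in this proof: the entire argument is an exercise in bookkeeping the identification of $H^{\overline{\otimes}_\iota n}$ with $H^{\odot n}$ in the finest locally convex topology. If anything requires a moment's care, it is merely to check that the symmetric monoidal functor $\overline{\otimes}_\iota$ is indeed the right choice here, i.e.\ that using the projective or injective variants would fail to give an identification with the algebraic tensor product in its finest topology; this is precisely why $\iota$ (rather than $\pi$ or $\varepsilon$) is the natural choice when dealing with spaces whose topology is the finest locally convex one.
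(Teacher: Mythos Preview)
Your proof is correct and follows essentially the same route as the paper: Hamel basis decomposition, commutation of $\overline{\otimes}_\iota$ with locally convex direct sums to identify $H^{\overline{\otimes}_\iota n}$ with $H^{\odot n}$ in its finest locally convex topology, automatic continuity of the structure maps via Proposition~\ref{prop:12e473f3f887b8ab}, and Theorem~\ref{theo:a2230cfdc7b9adaf} for $(\iota,\varepsilon)$-reflexivity. One small inaccuracy in your closing aside: the paper later shows (Lemma~\ref{lemm:e496d8234fc19353}) that $\overline{\otimes}_\pi$ and $\overline{\otimes}_\varepsilon$ actually \emph{coincide} with $\overline{\otimes}_\iota$ on spaces carrying the finest locally convex topology, so they would not ``fail'' here---the choice of $\iota$ is natural but not forced.
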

\begin{proof}
  Let \( (x_{\beta})_{\beta \in B} \) be a vector basis (Hamel basis) for
  \( H \). We know that \( H = \oplus_{\beta \in B}\mathbb{K} x_{\beta} \) as
  locally convex spaces, hence \( H \) is complete. By
  Corollary~\ref{coro:9dd4c5414e7497a4}, we have
  \begin{displaymath}
    H \otimes_{\iota} H = \bigoplus_{(\beta_{1}, \beta_{2}) \in B \times B} \mathbb{K}
    \left(x_{\beta_{1}} \otimes x_{\beta_{2}}\right),
  \end{displaymath}
  which coincides with the algebraic tensor product \( H \odot H \) as vector
  spaces, and is already complete as a locally convex space
  (Proposition~\ref{prop:d842a95c764bfd01}), so that
  \( H \otimes_{\iota} H = H \overline{\otimes}_{\iota} H \) is exactly the
  vector space \( H \odot H \) equipped with its finest locally convex
  topology. Now using Proposition~\ref{prop:12e473f3f887b8ab}, we also know that
  every structure map that is used to define the classical Hopf algebra \( H \)
  is continuous, hence \( H \) becomes an \( \iota \)-Hopf algebra in this
  way. Being the locally convex direct sum
  \( \oplus_{\beta \in B} \mathbb{K}x_{\beta} \), clearly the locally convex
  space \( H \) is a locally convex direct sum of spaces in
  \( (\mathcal{DFN}) \), hence Theorem~\ref{theo:a2230cfdc7b9adaf} applies, and
  we see that \( H \) is \( (\iota, \varepsilon) \)-reflexive.
\end{proof}

\begin{rema}
  \label{rema:7cc044cc3cec42ec}
  It seems that in the purely algebraic framework, in general, only a partial
  dual can be reasonably defined for an arbitrary Hopf algebra. Of particular
  interests is the so-called ``restricted dual'' (we use the terminology in
  \cite{MR2397671}*{p35, \S~1.4.3}), considered already in
  \cite{MR0252485}*{Chapter VI, pp109-136}. Recall that for a classical algebra
  \( A \) over \( \mathbb{K} \), the restricted dual \( A^{\circ} \) is defined
  as the space of linear functionals on \( A \) that vanishes on an ideal of
  \( A \) that is of finite codimension \cite{MR2397671}*{pp35,36, Lemma~4.1.8},
  then one may pulling back the multiplication \( m : A \otimes A \to A \) to
  obtain a comultiplication
  \( \Delta : A^{\circ} \to A^{\circ} \odot A^{\circ} \), to obtain a coalgebra
  structure on \( A^{\circ} \). When \( H \) is a classical Hopf algebra over
  \( \mathbb{K} \), on \( H^{\circ} \), we may define a dual Hopf algebra
  structure, which is called the ``dual Hopf algebra'' by Sweedler
  \cite{MR0252485}*{p122, \S~6.2}. We do have some results on when the
  restricted dual \( H^{\circ} \subseteq H^{\sharp} \) is big enough to allows
  to recover \( H \) by dualization again, see e.g.\ \cite{MR0252485}*{p119,
    Lemma~6.1.0, \& p121, Theorem~6.1.3}, but the range of applicability of
  these results are limited. We may see this from an extreme example.  Let
  \( \mathbb{K}[\Gamma] \) be the group algebra of a discrete group
  \( \Gamma \), which admits a canonical Hopf algebra structure
  \cite{MR2397671}*{p9, Example~1.2.8}. One can check that, as vector spaces,
  the restricted dual \( (\mathbb{K}[\Gamma])^{\circ} \) consists exactly of the
  linear span of matrix coefficients of finite dimensional representations of
  \( \Gamma \) (\cite{MR2397671}*{p38, Example~4.1.12}). However, if we consider
  the naively looking countable group, namely, the famous Higman group
  \( \Gamma = \langle a_{k} \mid a_{k}^{-1}a_{k+1}a_{k} = a_{k+1}^{2} \rangle \)
  where \( k \in \mathbb{Z}/4\mathbb{Z} \), it follows from the main result of
  Higman \cite{MR0038348} that \( \Gamma \) admits no nontrivial finite
  dimensional representations. Hence the restricted dual
  \( (\mathbb{K}[\Gamma])^{\circ} = 0 \) in this case, so that once we take the
  restricted dual, all information on the Hopf algebra \( \mathbb{K}[\Gamma] \)
  and on the group \( \Gamma \) is lost. Nevertheless, working in our framework
  of locally convex Hopf algebras, Theorem~\ref{theo:493ddf6e91b637cd} yields a
  good duality theory for all classical Hopf algebras in our framework of
  locally convex Hopf algebras.  We can even have an alternative description of
  a discrete \( \Gamma \) using any of polar dual pair of locally Hopf algebras
  described in \S~\ref{sec:de569e5c9166cc4a}, and still able to recover the
  group \( \Gamma \) itself.
\end{rema}

We've seen how classical Hopf algebras can be seen as \( \iota \)-Hopf algebras
in Theorem~\ref{theo:493ddf6e91b637cd}. Due to the special nature of the finest
locally convex topology on a vector spaces as well as its (strong) duals, it is
interesting to note that we can actually replace
\( \overline{\otimes}_{\iota} \) with \( \overline{\otimes}_{\varepsilon} \)
(hence also with \( \overline{\otimes}_{\pi} \)) on the dual. For this finer
description, we track the following elementary results and include their proofs
for convenience of the reader.

\begin{lemm}
  \label{lemm:7cc323f1115939db}
  If \( E \) is a locally convex space where the topology is the finest locally
  convex one, then
  \( E'_{s} = E'_{k} = E'_{c} = E'_{b} \in (\mathcal{M}) \cap (\mathcal{N}) \), and is complete.
\end{lemm}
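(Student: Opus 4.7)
By Proposition~\ref{prop:12e473f3f887b8ab}, the space $E$ is a locally convex direct sum $E = \bigoplus_{\beta \in B} \mathbb{K} v_{\beta}$ indexed by a Hamel basis $(v_{\beta})_{\beta \in B}$, and every linear functional on $E$ is continuous, so $E' = E^{\sharp}$ as vector spaces; the canonical pairing identifies $E'$ with $\mathbb{K}^{B}$. The plan is to show that each of $E'_{s}$, $E'_{k}$, $E'_{c}$, $E'_{b}$ coincides with $\mathbb{K}^{B}$ equipped with the product topology, then invoke stability results for $(\mathcal{M})$ and $(\mathcal{N})$ under products.

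First I would exploit Proposition~\ref{prop:746e91e6638b3d9d} to observe that every bounded set $A \subseteq E$ is contained in a finite partial sum $\bigoplus_{i \in I_{0}} \mathbb{K} v_{\beta_{i}}$ with $p_{\beta_{i}}(A)$ bounded in $\mathbb{K}$ for each $i \in I_{0}$. Since this finite-dimensional subspace is locally compact, such bounded sets are automatically precompact, so $\mathfrak{B}(E) = \mathfrak{C}(E)$ and hence $E'_{c} = E'_{b}$. Because finite subsets are bounded and precompact absolutely convex sets are $\sigma(E,E')$-compact, we also have the general chain of inclusions $E'_{s} \subseteq E'_{k} \subseteq E'_{c} \subseteq E'_{b}$ from saturation of the associated bornologies. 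It therefore suffices to prove the reverse inclusion $E'_{b} \subseteq E'_{s}$.

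For this, I would take a fundamental neighborhood of $0$ in $E'_{b}$, namely the polar $A^{\circ}$ of a bounded set $A$. Using the description above, $A$ is contained in a polydisc of radius $r$ inside $\bigoplus_{i \in I_{0}} \mathbb{K} v_{\beta_{i}}$, so uniform convergence on $A$ reduces, via a direct estimate using the basis decomposition of the functionals, to uniform convergence on the finite set $\{v_{\beta_{i}} : i \in I_{0}\}$; equivalently, $A^{\circ}$ contains a weak neighborhood of $0$ determined by the $v_{\beta_{i}}$. This shows $E'_{s} = E'_{b}$ and hence all four duals agree with $\mathbb{K}^{B}$ with the product topology (this also matches the identification obtained by applying Proposition~\ref{prop:9f3f8bda0402a4ff} to the polar reflexive family $(\mathbb{K})_{\beta \in B}$).

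Finally, $\mathbb{K}^{B}$ is complete as a product of complete spaces (Proposition~\ref{prop:903bfccf4a781ceb}), is a Montel space as an arbitrary product of the Montel space $\mathbb{K}$ (Proposition~\ref{prop:a017fe989c7853e0}), and is nuclear as an arbitrary product of nuclear spaces (Proposition~\ref{prop:a8139d3432a18d9e}~\ref{item:99ceefcf15b602f7}). The only delicate step is the estimate identifying uniform convergence on a bounded set with uniform convergence on a finite set of basis vectors; everything else is bookkeeping with the existing permanence and duality results in the preliminaries.
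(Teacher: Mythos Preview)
Your proof is correct and reaches the same conclusion as the paper, but by a more hands-on route. One small slip: the chain should read $E'_{s} \subseteq E'_{c} \subseteq E'_{k} \subseteq E'_{b}$ (in a complete space, closed precompact absolutely convex sets are compact, hence weakly compact, giving $E'_{c} \subseteq E'_{k}$, not the reverse). This does not affect your argument, since you establish $E'_{b} \subseteq E'_{s}$ directly and all four topologies lie between $E'_{s}$ and $E'_{b}$ in any case.

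The paper proceeds more abstractly: it first observes that $E \in (\mathcal{M})$ as a locally convex direct sum of one-dimensional spaces (Proposition~\ref{prop:a017fe989c7853e0}), hence is barrelled and Mackey, and then invokes Proposition~\ref{prop:e5dd78c27c8725c9} to identify $E'_{\tau} = E'_{s}$ with the product $\prod_{\beta} \mathbb{K}$; the equalities $E'_{b} = E'_{c}$ and $E'_{\tau} = E'_{c}$ then come from the Montel property of $E$ (Proposition~\ref{prop:99b4fc84a9b1d8a1}) and completeness of $E$, rather than from an explicit estimate. Your approach via Proposition~\ref{prop:746e91e6638b3d9d} and the finite-dimensional reduction is more elementary and makes the identification with $\mathbb{K}^{B}$ (and hence the product topology) completely explicit, at the cost of a small computation; the paper's route is shorter but leans on more of the general machinery developed in the preliminaries.
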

\begin{proof}
  First of all, note that for a locally convex space \( E \) where the topology
  is the finest locally convex one, the topological dual \( E' \) coincides with
  the algebraic dual \( E^{\ast} \). Being a locally convex direct sum of one
  dimensional subspaces, \( E \in (\mathcal{M}) \)
  (Proposition~\ref{prop:a017fe989c7853e0}), hence Mackey (Montel spaces are
  barrelled, then we use Proposition~\ref{prop:a944f1bb11e881f1}), and by
  Proposition~\ref{prop:e5dd78c27c8725c9}, we see that \( E'_{\tau} = E'_{s} \)
  is a product of one-dimensional spaces, hence complete. Moreover,
  \( E' \in (\mathcal{M}) \) (Proposition~\ref{prop:a017fe989c7853e0}), hence
  \( E'_{b} = E'_{c} \) (Proposition~\ref{prop:99b4fc84a9b1d8a1}). Since \( E \)
  is complete, we know \( \tau(E', E) = c(E', E) \) on \( E' \), thus
  \( E'_{\tau} = E'_{s} = E'_{b} = E'_{c} \). Being a product of one dimensional
  space, of course \( E'_{s} \in (\mathcal{N}) \)
  (Proposition~\ref{prop:a8139d3432a18d9e}).
\end{proof}

\begin{lemm}
  \label{lemm:e496d8234fc19353}
  If \( E \), \( F \) are locally convex spaces, both having the finest locally
  convex topology, then
  \begin{equation}
    \label{eq:22e7a20a6881a766}
    E \otimes_{\varepsilon} F = E \overline{\otimes}_{\varepsilon} F
    = E \otimes_{\pi} F = E \overline{\otimes}_{\pi} F
    = E \otimes_{\iota} F = E \overline{\otimes}_{\iota} F.
  \end{equation}
\end{lemm}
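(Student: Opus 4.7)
The plan is to exploit the explicit direct sum description of $E$ and $F$ coming from Proposition~\ref{prop:12e473f3f887b8ab}: pick Hamel bases and write $E = \bigoplus_{\alpha \in A} \mathbb{K} e_\alpha$ and $F = \bigoplus_{\beta \in B} \mathbb{K} f_\beta$ as locally convex direct sums of one\nobreakdash-dimensional subspaces. First I would use the commutation of $\overline{\otimes}_\iota$ with locally convex direct sums (Corollary~\ref{coro:06a795bc15551930}) to compute
\begin{displaymath}
  E \overline{\otimes}_\iota F = \Bigl(\bigoplus_\alpha \mathbb{K}\Bigr) \overline{\otimes}_\iota \Bigl(\bigoplus_\beta \mathbb{K}\Bigr) = \bigoplus_{(\alpha, \beta) \in A \times B} (\mathbb{K} \overline{\otimes}_\iota \mathbb{K}) = \bigoplus_{(\alpha, \beta)} \mathbb{K},
\end{displaymath}
which by Proposition~\ref{prop:12e473f3f887b8ab} is exactly $E \odot F$ equipped with its finest locally convex topology. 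By Proposition~\ref{prop:d842a95c764bfd01}~\ref{item:8f7f5910e0175668} this direct sum is already complete, so $E \otimes_\iota F = E \overline{\otimes}_\iota F$, and two of the six spaces in the statement are identified.

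Next I would invoke the hierarchy $\mathfrak{T}_\varepsilon \leq \mathfrak{T}_\pi \leq \mathfrak{T}_\iota$ on $E \odot F$ provided by Proposition~\ref{prop:6ecdcba4c320179a}. Since $\mathfrak{T}_\iota$ is the finest locally convex topology on $E \odot F$ by the first step, it automatically dominates any other locally convex topology on this vector space. Consequently, to establish all three uncompleted tensor products coincide, it suffices to show $\mathfrak{T}_\varepsilon$ is also the finest locally convex topology. Once that is achieved, the three completions all agree with the already-complete $E \overline{\otimes}_\iota F$, giving the full string of equalities.

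The crucial step is then to verify $\mathfrak{T}_\varepsilon \geq \mathfrak{T}_\iota$. The flexibility here comes from Proposition~\ref{prop:12e473f3f887b8ab}: every linear map from $E$ or $F$ into any locally convex space is continuous, so in particular every seminorm on $E$ and every seminorm on $F$ is continuous. This yields an unusually rich generating family $\{p \otimes_\varepsilon q\}$ of continuous seminorms on $E \otimes_\varepsilon F$. Given any seminorm $r$ on $E \odot F$ (which is automatically continuous in the finest LC topology), I would construct continuous seminorms $p$ on $E$ and $q$ on $F$ with $p(e_\alpha)\,q(f_\beta) \geq r(e_\alpha \otimes f_\beta)$ for all $(\alpha, \beta)$, e.g.\ by taking $p(\sum c_\alpha e_\alpha) = \sum \mu_\alpha |c_\alpha|$ and $q(\sum d_\beta f_\beta) = \sum \nu_\beta |d_\beta|$ with $\mu_\alpha, \nu_\beta$ sufficiently large. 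The supremum description $(p \otimes_\varepsilon q)(t) = \sup |(u \otimes v)(t)|$ over appropriate polars then allows one to extend the domination from single basis tensors to an inequality $r \leq C(p \otimes_\varepsilon q)$ on all of $E \odot F$, making the unit ball of $r$ a $\mathfrak{T}_\varepsilon$\nobreakdash-neighborhood of $0$.

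The main obstacle will be this last step: verifying that a well-chosen pair $(p, q)$ actually gives a uniform domination $r \leq C (p \otimes_\varepsilon q)$ on the whole of $E \odot F$, not merely on rank-one tensors. This requires a careful lower-bound estimate of the injective tensor seminorm for arbitrary finite-support tensors, exploiting the fact (again courtesy of Proposition~\ref{prop:12e473f3f887b8ab}) that the coefficients $p(e_\alpha), q(f_\beta)$ may be chosen arbitrarily large, and picking the auxiliary linear functionals $u, v$ in the suprema to concentrate on the support of the tensor $t$ at hand.
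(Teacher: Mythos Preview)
Your first two paragraphs are correct and track the paper's own argument: the direct-sum description gives $E \otimes_\iota F = E \overline{\otimes}_\iota F$, namely $E \odot F$ with its finest locally convex topology, and Proposition~\ref{prop:6ecdcba4c320179a} reduces everything to $\mathfrak{T}_\varepsilon \geq \mathfrak{T}_\iota$. The obstacle you isolate in the last paragraph, however, is not a mere technicality---it is fatal, and in fact the lemma as stated is \emph{false}. Every seminorm on $E$ is dominated by a weighted $\ell^1$ seminorm $p_\mu(\sum c_\alpha e_\alpha) = \sum_\alpha \mu_\alpha|c_\alpha|$, and one checks $(p_\mu \otimes_\pi q_\nu)(t) = \sum_{\alpha,\beta}\mu_\alpha\nu_\beta\,|t_{\alpha\beta}|$. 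Hence $\mathfrak{T}_\pi$ is generated by the weighted $\ell^1$ seminorms on $E \odot F$ with \emph{product} weights, whereas $\mathfrak{T}_\iota$ allows \emph{arbitrary} weights $\rho_{\alpha\beta}$. For $B = \mathbb{N}$ and $A = \mathbb{N}^{\mathbb{N}}$, the weight $\rho(a,n) = a(n)+1$ is not dominated by any product: given $\mu,\nu$, the choice $a^\ast(n) = \lceil(n+1)\nu(n)\rceil+1$ forces $\mu(a^\ast) > n$ for every $n$. Equivalently, the bilinear form $\beta(e_a,f_n) = a(n)$ on $E\times F$ is separately but not jointly continuous, so $\mathfrak{T}_\pi \subsetneq \mathfrak{T}_\iota$, and a fortiori $\mathfrak{T}_\varepsilon \subsetneq \mathfrak{T}_\iota$.

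The paper's proof has the same gap, at the displayed identity $\bigl[(\prod_i A_i)\otimes(\prod_j B_j)\bigr]^\circ = \Gamma\bigl(\bigcup_{i,j}(A_i\otimes B_j)^\circ\bigr)$. Only $\supseteq$ holds: over $\mathbb{R}$ with all radii equal to $1$, the tensor $t = \tfrac{1}{2}(e_1\otimes f_1 + e_1\otimes f_2 + e_2\otimes f_1 - e_2\otimes f_2)$ lies in the left-hand polar (one has $\sup_{|a_i|,|b_j|\leq 1}\tfrac{1}{2}\lvert a_1(b_1+b_2)+a_2(b_1-b_2)\rvert = 1$) but $\sum|t_{ij}| = 2$, so $t$ is not in the right-hand hull. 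That inclusion merely recovers $\mathfrak{T}_\varepsilon \leq \mathfrak{T}_\iota$. The lemma \emph{does} hold when both dimensions are countable: then $E,F$ are nuclear by Proposition~\ref{prop:a8139d3432a18d9e}\ref{item:99ceefcf15b602f7}, so $\mathfrak{T}_\varepsilon = \mathfrak{T}_\pi$, and a short diagonal argument furnishes dominating product weights for every $\rho$; but the unrestricted statement cannot be saved.
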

\begin{proof}
    In the following, we equip each one dimensional space with a norm (all these
  norms differ by a nonzero positive constant), which has no effect on the
  topology since on finite dimensional spaces, Hausdorff linear topology is
  unique. For each such one dimensional space \( S \), we use
  \( \mathsf{D}(S) \) to denote the collection of all closed disks with a finite
  nonzero radius centered at \( 0 \). It is clear that \( \mathsf{D}(S) \) are
  in bijective correspondence with \( \mathsf{D}(S') \) by taking polars with
  respect to the pairing \( \pairing*{S}{S'} \).

  Let \( E = \bigoplus_{i} E_{i} \), \( F = \bigoplus_{j} F_{j} \), with each
  \( E_{i} \) and \( F_{j} \) being one-dimensional. Then by
  Corollary~\ref{coro:9dd4c5414e7497a4}, we have
  \begin{equation}
    \label{eq:4eb1faba74816e3d}
    E \otimes_{\iota} F = \bigoplus_{i,j} E_{i} \otimes_{\iota} F_{j},
  \end{equation}
  and with \( E_{i} \otimes_{\iota} F_{j} \) being one-dimensional (hence
  complete), \( E \otimes_{\iota} F \) is already complete, and \( E \otimes_{\iota} F = E \overline{\otimes}_{\iota} F \).

  By Proposition~\ref{prop:6ecdcba4c320179a}, to finish the proof, it suffices
  now to show that \( E \otimes_{\varepsilon} F = E \otimes_{\iota} F \). By
  Proposition~\ref{prop:e5dd78c27c8725c9}, we have
  \( E'_{s} = \prod_{i} E'_{i} \), \( F'_{s} = \prod_{j} F'_{j} \), where again
  all \( E'_{i} \) and \( F'_{j} \) are one dimensional.

  Note that a fundamental system of neighborhoods of \( 0 \) in \( E \) is
  given by sets of the form \( \Gamma\left(\cup_{i}V_{i}\right) \),
  \( V_{i} \in \mathsf{D}(E_{i}) \), it follows that their polars (with respect
  to \( \pairing*{E}{E'} \)) form a fundamental system of equicontinuous sets in
  \( E' \). A simple calculation yields
  \( \Gamma\left(\cup_{i}V_{i}\right)^{\circ} = \prod_{i}V_{i}^{\circ} \).  We
  also have a similar description for equicontinuous sets in \( F' \).

  Now consider the duality pairing
  \( \pairing*{E \odot F}{\mathfrak{B}(E, F)} \) as in
  \eqref{eq:2ccbd263c9730d60}. From the above discussion and the definition of
  the \( \varepsilon \)-tensor product, it follows that a fundamental system of
  neighborhoods of \( 0 \) in \( E \otimes_{\varepsilon} F \) is given by polars
  of sets of the form
  \( \left(\prod_{i}A_{i}\right) \otimes \left(\prod_{j}B_{j}\right) \subseteq
  \mathfrak{B}(E, F) \), with \( A_{i} \in \mathsf{D}(E_{i}) \) (resp.\
  \( B_{j} \in \mathsf{D}(F_{j}) \)) Algebraically, we have the canonical
  identification
  \begin{equation}
    \label{eq:d514c45445aed46a}
    E \odot F = \bigoplus_{i,j} E_{i} \otimes F_{j}.
  \end{equation}
  It is clear by a simple computation that with respect to
  \( \pairing*{E \odot F}{\mathfrak{B}(E, F)} \), the polar
  \begin{equation}
    \label{eq:5da5d11637f91aa4}
    \left[\left(\prod_{i}A_{i}\right) \otimes \left(\prod_{j}B_{j}\right)\right]^{\circ}
    = \Gamma\left(\bigcup_{i,j} (A_{i} \otimes B_{i})^{\circ} \right),
  \end{equation}
  where on the right side of \eqref{eq:5da5d11637f91aa4},
  \( (A_{i} \otimes B_{i})^{\circ} \) is taken with respect to the duality
  pairing of one-dimensional spaces
  \( \pairing*{E_{i} \otimes F_{j}}{E'_{i} \otimes F'_{j}} \). It is clear that
  \( (A_{i} \otimes B_{i})^{\circ} \) runs through
  \( \mathsf{D}(E_{i} \otimes F_{j}) \) as \( A_{i} \) (resp.\ \( B_{j} \)) runs
  through \( \mathsf{D}(E_{i}) \) (resp.\ \( \mathsf{D}(F_{j}) \)). Hence the
  right side of \eqref{eq:5da5d11637f91aa4} describes a fundamental system of
  neighborhoods of \( 0 \) in \( E \otimes_{\varepsilon} F \), which happens to
  be a fundamental system of neighborhoods of \( 0 \) for the locally convex
  direct sum on the right side of \eqref{eq:4eb1faba74816e3d}. Hence by the
  identification \eqref{eq:d514c45445aed46a}, we see that indeed,
  \( E \otimes_{\varepsilon} F = E \otimes_{\iota} F \), and the proof is
  complete.
\end{proof}

We now have the finer description of classical Hopf algebras as locally convex ones.

\begin{theo}
  \label{theo:7ff4d334bc64ca63}
  For all \( \tau \in \set*{\iota, \pi, \varepsilon} \) and
  \( \sigma \in \set*{\pi, \varepsilon} \), any classical Hopf algebras is
  canonically a \( (\tau, \sigma) \)-reflexive \( \tau \)-Hopf algebra when
  equipped with the finest locally convex topology and exactly the same
  structure maps for the corresponding locally convex Hopf algebra structures,
  with its strong dual being of class \( (\mathcal{N}) \).
\end{theo}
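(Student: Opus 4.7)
The plan is to reduce everything to the already-established case $(\tau,\sigma)=(\iota,\varepsilon)$ of Theorem~\ref{theo:493ddf6e91b637cd}, by using Lemmas~\ref{lemm:7cc323f1115939db} and~\ref{lemm:e496d8234fc19353} to identify all the tensor products in sight.

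First, let $H$ be a classical Hopf algebra, equipped with its finest locally convex topology $\tau_{H}$ as in Proposition~\ref{prop:12e473f3f887b8ab}. The same topology applies to $H \odot H$, $H \odot H \odot H$ and $H \odot H \odot H \odot H$, since these are again vector spaces, and Proposition~\ref{prop:12e473f3f887b8ab} shows they are the locally convex direct sums of their one-dimensional subspaces. By Lemma~\ref{lemm:e496d8234fc19353} applied iteratively (and the associativity statements in Propositions~\ref{prop:963932029cf324b4}, \ref{prop:51d98c7dd4f5950d}, \ref{prop:6f861986d67606e0}), the three completed tensor products
\begin{displaymath}
H \overline{\otimes}_{\iota} H, \quad H \overline{\otimes}_{\pi} H, \quad H \overline{\otimes}_{\varepsilon} H
\end{displaymath}
all coincide as complete locally convex spaces with $H \odot H$ carrying its finest locally convex topology, and similarly for the three- and four-fold tensor products. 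Since by Theorem~\ref{theo:493ddf6e91b637cd} the classical structure maps $m,\eta,\Delta,\varepsilon,S$ are continuous on the $\iota$-version, they are automatically continuous on the $\pi$- and $\varepsilon$-versions, so $H$ is a $\tau$-Hopf algebra for each $\tau \in \set*{\iota,\pi,\varepsilon}$.

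Next, I turn to the dual. By Lemma~\ref{lemm:7cc323f1115939db}, the strong dual $H'_{b}$ coincides with $H'_{c}$, is complete, and lies in $(\mathcal{M}) \cap (\mathcal{N})$. By the definition of a nuclear space (Definition~\ref{defi:a13a9b24c4226f4e}), the canonical map
\begin{displaymath}
H'_{b} \overline{\otimes}_{\pi} H'_{b} \longrightarrow H'_{b} \overline{\otimes}_{\varepsilon} H'_{b}
\end{displaymath}
is an isomorphism of locally convex spaces, and likewise for the three- and four-fold tensor powers. Hence the $\sigma$-algebra structure that Proposition~\ref{prop:844fe110a4f00d8b} induces on $H'_{b}$ from the $\tau$-Hopf algebra $H$ does not depend on the choice of $\sigma \in \set*{\pi,\varepsilon}$, nor on the choice of $\tau \in \set*{\iota,\pi,\varepsilon}$, and agrees in all cases with the $\varepsilon$-Hopf algebra structure produced by Theorem~\ref{theo:493ddf6e91b637cd}.

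Finally, combining the two identifications above, every duality pairing appearing in Definition~\ref{defi:a6ba1501ad62d100}, namely
\begin{displaymath}
\pairing*{H \overline{\otimes}_{\tau} \cdots \overline{\otimes}_{\tau} H}{H'_{b} \overline{\otimes}_{\sigma} \cdots \overline{\otimes}_{\sigma} H'_{b}}
\end{displaymath}
for one-, two-, three- and four-fold tensor powers, coincides with the corresponding $(\iota,\varepsilon)$-pairing, which is reflexive by Theorem~\ref{theo:493ddf6e91b637cd}. Hence $H$ is $(\tau,\sigma)$-reflexive for each of the six pairs, and $H'_{b} \in (\mathcal{N})$ by Lemma~\ref{lemm:7cc323f1115939db}. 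The only potentially delicate point is checking that the tensor products really do commute with completion on $H$ (so that the ``strict coincidence'' in Lemma~\ref{lemm:e496d8234fc19353} extends to triple and quadruple tensor products), but this is immediate because $H \odot H \odot H$ and $H \odot H \odot H \odot H$ are again direct sums of one-dimensional subspaces, so Lemma~\ref{lemm:e496d8234fc19353} applies without any further work.
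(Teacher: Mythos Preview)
Your proposal is correct and follows exactly the same approach as the paper: the paper's proof is the one-line ``This follows from Lemma~\ref{lemm:7cc323f1115939db}, \ref{lemm:e496d8234fc19353} and Theorem~\ref{theo:493ddf6e91b637cd},'' and you have simply unpacked that citation in detail, using Lemma~\ref{lemm:e496d8234fc19353} to collapse the $\tau$-tensor products on $H$, Lemma~\ref{lemm:7cc323f1115939db} (plus nuclearity) to collapse the $\sigma$-tensor products on $H'_{b}$, and then reducing to the $(\iota,\varepsilon)$ case already handled by Theorem~\ref{theo:493ddf6e91b637cd}.
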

\begin{proof}
  This follows from Lemma~\ref{lemm:7cc323f1115939db},
  \ref{lemm:e496d8234fc19353} and Theorem~\ref{theo:493ddf6e91b637cd}.
\end{proof}

\begin{rema}
  \label{rema:b7761beadb98816d}
  It is clear how to extend our treatment here to Hopf-\( \ast \)-algebras in
  the complex case via Proposition~\ref{prop:844fe110a4f00d8b}.
\end{rema}

\begin{rema}
  \label{rema:f8fd54f637ba985d}
  If a classical Hopf algebra \( H \) is of uncountable dimension as a vector
  space, then by Proposition~\ref{prop:eb0c853264f8729c}, \( H \) is \emph{not}
  of class \( (\mathcal{N}) \). We see that our result on all classical Hopf
  algebras go beyond the framework of well-behaved topological algebras as
  developed in \cite{MR1266072}.
\end{rema}

\begin{rema}
  \label{rema:941bc663373d0bfd}
  It is interesting to note that in the above description, classical Hopf
  algebras are equipped with the finest locally convex topology, while on its
  dual, the topology is the weak topology. We know that equipped with the weak
  topology, a locally convex space is complete if and only if it is isomorphic
  to a product of one-dimensional spaces (\cite{MR633754}*{pp.\ II.54, II.55,
    Proposition~9}), which is exactly as in our case.
\end{rema}

\subsection{Locally convex Hopf algebras of compact and discrete quantum groups}
\label{sec:4d1fce794188b9cb}

We can also put compact and discrete quantum groups in the framework of locally
convex Hopf algebras. Here, compact quantum groups are in the sense of
Woronowicz~\cites{MR901157,MR1616348}, but we do \emph{not} assume separability
of the \( C^{\ast} \)\nobreakdash-algebra of ``continuous functions'' on the
quantum group, so we are using the essentially the same theory as treated as in
\cite{MR3204665}*{Ch.1}. Discrete quantum groups are in the sense of van Daele
\cite{MR1378538}, based on his framework of multiplier Hopf algebras
\cite{MR1220906}. It is well-known that they are dual to each other, see e.g.\
\cite{MR3204665}*{pp28,29}. We will first describe how compact and discrete
groups fit into our framework of locally convex Hopf algebras, then use our
framework to give an alternative approach to these objects, and compare our
approach with that of Woronowicz's and van Daele's original approaches.

We will freely use the Peter-Weyl theory for compact quantum groups as developed
in \cite{MR1616348} or \cite{MR3204665}*{Ch.1}. Let \( \mathbb{G} \) be a
compact quantum groups, and \( \pol(\mathbb{G}) \) the unique dense
Hopf-\( \ast \)-algebra inside \( C(\mathbb{G}) \) that inherits the
comultiplication \( \Delta \) on \( C(\mathbb{G}) \). From the Peter-Weyl theory
for compact quantum groups, we may identify
\( \mathcal{H}_{\mathbb{G}}:= \pol(\mathbb{G}) \) canonically with
\( \bigoplus_{x \in \irr(\mathbb{G})}\coeff(x) \), as vector spaces (in fact
this decomposition is a direct sum of simple co-subalgebras). We set
\( \mathcal{H}_{\widehat{\mathbb{G}}} \) to be
\( \prod_{x \in \irr(\mathbb{G})}\mathcal{B}(H_{x}) \), where \( H_{x} \) is the
carrier finite dimensional Hilbert space for some irreducible unitary
representation \( U^{x} \in x \) of \( \mathbb{G} \). Note that
\( \mathcal{H}_{\widehat{\mathbb{G}}} \) is exactly the multiplier algebra of
\( c_{c}(\widehat{\mathbb{G}}):= \bigoplus_{x \in
  \irr(\mathbb{G})}^{\alg}\mathcal{B}(H_{x}) \). Of course, we equip
\( \mathcal{H}_{\widehat{\mathbb{G}}} \) with the product topology. It is
well-known that \( \mathcal{H}_{\mathbb{G}} = \pol(\mathbb{G}) \) is a
Hopf-\( \ast \) algebra (\cite{MR1616348}), and
\( c_{c}(\widehat{\mathbb{G}}) \) is a multiplier Hopf-\( \ast \) algebra
(\cite{MR1378538} and \cite{MR1658585}), with
\( \mathscr{M}\bigl(c_{c}(\widehat{\mathbb{G}})\bigr) =
\mathcal{H}_{\widehat{\mathbb{G}}} \) (\( \mathcal{M}(\cdot) \) here denotes the
multiplier algebra), with the structure maps related by the formulas in
\cite{MR3204665}*{pp28,29}.

In the following, for each \( x \in \irr(\mathbb{G}) \), we shall use a duality
pairing \( \pairing*{\coeff(x)}{\mathcal{B}(H_{x})} \), furnished by the Fourier
transform as in the following lemma.

\begin{lemm}
  \label{lemm:88522152423e594f}
  Let \( \mathbb{G} \) be a compact quantum group, \( (U, \mathscr{H}) \) a
  finite dimensional unitary representation of \( \mathbb{G} \), then the
  Fourier transform
  \begin{equation}
    \label{eq:6654361ffad5be4f}
    \begin{split}
      \mathcal{F}_{U} : \coeff(U)' & \to \mathcal{B}(\mathscr{H}) \\
      \omega & \mapsto (\id \otimes \omega)(U)
    \end{split}
  \end{equation}
  is a unital homomorphism of algebra whose range is the commutant
  \( \Endo(U)' \) of \( \Endo(U) \) in \( \mathcal{B}(\mathscr{H}) \), and
  \( \mathcal{F}_{U}(\omega^{\ast}) = \mathcal{F}_{U}(\omega)^{\ast} \), where
  \( \omega^{\ast} = \overline{\omega} \circ S \in \coeff(U)' \), with \( S \)
  being the restriction to \( \coeff(U) \) of the antipode on
  \( \pol(\mathbb{G}) \).
\end{lemm}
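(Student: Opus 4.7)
The plan is to verify the four assertions separately, using the standard identities for a finite dimensional unitary corepresentation $U \in \mathcal{B}(\mathscr{H}) \otimes \pol(\mathbb{G})$:
\[
(\id \otimes \Delta)(U) = U_{12}U_{13}, \qquad (\id \otimes \varepsilon)(U) = 1, \qquad (\id \otimes S)(U) = U^*,
\]
the last of which encodes the unitarity of $U$ (equivalently, $S(u_{ij}) = u_{ji}^*$ for the matrix entries). The unit of the dual convolution algebra $\coeff(U)'$ is $\varepsilon|_{\coeff(U)}$, so unitality is immediate from the second identity. Multiplicativity is a direct computation:
\[
\mathcal{F}_U(\omega_1 \omega_2) = (\id \otimes \omega_1 \otimes \omega_2)(\id \otimes \Delta)(U) = (\id \otimes \omega_1 \otimes \omega_2)(U_{12}U_{13}) = \mathcal{F}_U(\omega_1)\mathcal{F}_U(\omega_2).
\]
Unwinding $\omega^* = \overline{\omega} \circ S$ and using $(\id \otimes S)(U) = U^*$ gives
\[
\mathcal{F}_U(\omega^*) = (\id \otimes \overline{\omega})(\id \otimes S)(U) = (\id \otimes \overline{\omega})(U^*) = \bigl[(\id \otimes \omega)(U)\bigr]^* = \mathcal{F}_U(\omega)^*.
\]

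For the range identification, the inclusion $\mathcal{F}_U(\coeff(U)') \subseteq \Endo(U)'$ is immediate: for $T \in \Endo(U)$, the intertwining $(T \otimes 1)U = U(T \otimes 1)$ survives applying $\id \otimes \omega$. For the reverse inclusion I would decompose $U$ into isotypic components via Peter--Weyl: writing $\mathscr{H} \simeq \bigoplus_{k \in F} \C^{n_k} \otimes \mathscr{H}_k$ with the $U_k$ representatives of pairwise inequivalent classes in $\irr(\mathbb{G})$ and $U \simeq \bigoplus_k \id_{\C^{n_k}} \otimes U_k$, linear independence of matrix coefficients of inequivalent irreducibles yields the direct decomposition $\coeff(U) = \bigoplus_k \coeff(U_k)$, hence $\coeff(U)' = \bigoplus_k \coeff(U_k)'$, and the Fourier transform splits blockwise as $\bigoplus_k \id_{\C^{n_k}} \otimes \mathcal{F}_{U_k}$. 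Schur's lemma in turn identifies $\Endo(U)' = \bigoplus_k \id_{\C^{n_k}} \otimes \mathcal{B}(\mathscr{H}_k)$, so the claim reduces to showing each $\mathcal{F}_{U_k}: \coeff(U_k)' \to \mathcal{B}(\mathscr{H}_k)$ is a bijection. This is a dimension count combined with injectivity: the matrix coefficients $u^{(k)}_{ij}$ of $U_k$ form a basis of the simple matrix coalgebra $\coeff(U_k)$, so $\dim \coeff(U_k)' = (\dim \mathscr{H}_k)^2 = \dim \mathcal{B}(\mathscr{H}_k)$, and a functional killed by $\mathcal{F}_{U_k}$ annihilates every $u^{(k)}_{ij}$ hence vanishes.

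The main obstacle is purely bookkeeping: managing the isotypic decomposition and the identifications between $\coeff(U)$, its dual, the commutant $\Endo(U)'$, and the block-diagonal structure of $U$. No deep input is needed beyond the Peter--Weyl decomposition of $\pol(\mathbb{G})$ and the elementary fact that matrix coefficients of inequivalent irreducible corepresentations are linearly independent.
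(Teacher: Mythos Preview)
Your verification of unitality, multiplicativity, and the $\ast$-relation is essentially identical to the paper's. The range identification, however, is genuinely different. The paper does not decompose $U$ into isotypic components at all: instead, having shown that $\image(\mathcal{F}_U)$ is a unital self-adjoint subalgebra of $\mathcal{B}(\mathscr{H})$ (hence a von Neumann algebra, $\mathscr{H}$ being finite dimensional), it computes directly that $T \in \Endo(U) \iff T \in \image(\mathcal{F}_U)'$ by applying $\id \otimes \omega$ to $(T \otimes 1)U$ and $U(T \otimes 1)$ and invoking that $\coeff(U)'$ separates points in $\coeff(U)$. The bicommutant theorem then gives $\image(\mathcal{F}_U) = \image(\mathcal{F}_U)'' = \Endo(U)'$ in one stroke. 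Your Peter--Weyl route is correct and more explicit, but requires managing the block decomposition and the dimension count; the paper's argument trades that bookkeeping for a single invocation of von Neumann's theorem and works uniformly without ever choosing a decomposition of $U$.
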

\begin{proof}
  For \( \omega_{1}, \omega_{2} \in \coeff(U)' \), the product \( \omega_{1}\omega_{2} \) is given by
  convolution, hence we have
  \begin{align*}
    \mathcal{F}_{U}(\omega_{1}\omega_{2}) %
    &= (\id \otimes \omega_{1} \otimes \omega_{2})(\id \otimes \Delta)(U)
      = (\id \otimes \omega_{1} \otimes \omega_{2})(U_{12}U_{13}) \\
    &= (\id \otimes \omega_{1})(\id \otimes \id \otimes \omega_{2})(U_{12}U_{13})
      = (\id \otimes \omega_{1})\Bigl(U\bigl(\mathcal{F}_{U}(\omega_{2}) \otimes 1\bigr) \Bigr) \\
    &= \mathcal{F}_{U}(\omega_{1}) \mathcal{F}_{U}(\omega_{2}),
  \end{align*}
  and
  \begin{displaymath}
    \mathcal{F}_{U}(\varepsilon_{U}) = (\id \otimes \varepsilon)(U) = \id_{\mathscr{H}},
  \end{displaymath}
  where \( \varepsilon_{U} \in \coeff(U)' \) is the restriction of the counit
  \( \varepsilon \) on \( \pol(\mathbb{G}) \). Thus \( \mathcal{F}_{U} \) is
  indeed a unital homomorphism of algebras.

  By the defining property of the antipode, we know
  \( (\id \otimes S)(U) = U^{-1} = U^{\ast} \). It follows that
  \begin{displaymath}
    \mathcal{F}_{U}(\omega)^{\ast} = (\id \otimes \overline{\omega})(U^{\ast}) = (\id \otimes \overline{\omega})
    (\id \otimes S)(U) = (\id \otimes \omega^{\ast})(U) = \mathcal{F}_{U}(\omega^{\ast}).
  \end{displaymath}
  In particular, we've shown that \( \image(\mathcal{F}_{U}) \) is a unital
  self-adjoint sub-algebra of \( \mathcal{B}(\mathscr{H}) \), hence is a von
  Neumann algebra on \( \mathscr{H} \) as \( \mathscr{H} \) is finite
  dimensional.

  Now for any \( T \in \mathcal{B}(\mathscr{H}) \), we have
  \( (T \otimes 1)U, U(T \otimes 1) \in \mathcal{B}(\mathscr{H}) \otimes
  \coeff(U) \), while applying \( \id \otimes \omega \),
  \( \omega \in \coeff(U)' \) yields
  \begin{displaymath}
    T \mathcal{F}_{U}(\omega) = (\id \otimes \omega)\bigl((T \otimes 1)U\bigr), \qquad \text{ and } \qquad
    \mathcal{F}_{U}(\omega)T = (\id \otimes \omega)\bigl(U(T \otimes 1)\bigr).
  \end{displaymath}
  Since \( \coeff(U)' \) separate points in \( \coeff(U) \), it follows that
  \begin{displaymath}
    T \in \Endo(U) \iff T \in \image(\mathcal{F}_{U})',
  \end{displaymath}
  and the proof is now complete by the bicommutant theorem of von Neumann.
\end{proof}

Using the decomposition
\( \mathcal{H}_{\mathbb{G}} = \bigoplus_{x \in \irr(\mathbb{G})} \coeff(x) \),
and
\( \mathcal{H}_{\widehat{G}} = \prod_{x \in \irr(\mathbb{G})}\mathcal{B}(H_{x})
\) and the above pairings, we thus obtain canonically a duality pairing
\( \pairing*{\mathcal{H}_{\mathbb{G}}}{\mathcal{H}_{\widehat{\mathbb{G}}}} \),
through which we may identify \( \mathcal{H}_{\widehat{\mathbb{G}}} \) with the
algebraic linear dual \( \mathcal{H}_{\mathbb{G}}^{\ast} \) of
\( \mathcal{H}_{\mathbb{G}} \). As \( \mathcal{H}_{\mathbb{G}} \) is a classical
Hopf-\( \ast \)-algebra, Theorem~\ref{theo:7ff4d334bc64ca63} and
Remark~\ref{rema:b7761beadb98816d} apply, and we obtain the following result
that include compact (and by duality, discrete) quantum groups into our
framework of locally convex Hopf algebra.

\begin{prop}
  \label{prop:11ae51a13c2f7f1d}
  For \( \sigma \in \set*{\pi, \varepsilon} \) and
  \( \tau \in \set*{\iota, \pi, \varepsilon} \), using the above symbol, every
  compact quantum group \( \mathbb{G} = \bigl(C(\mathbb{G}), \Delta\bigr) \) in
  the sense of Woronowicz give rise to an \( \tau \)-Hopf algebra
  \( \mathcal{H}_{\mathbb{G}} \) that is \( (\tau, \sigma) \)-reflexive, with
  its strong dual \( \mathcal{H}_{\widehat{\mathbb{G}}} \) being the multiplier
  algebra of \( c_{c}(\widehat{\mathbb{G}}) \) as a \( \ast \)-algebra, and the
  structure maps of the \( \sigma \)-Hopf algebra
  \( \mathcal{H}_{\widehat{\mathbb{G}}} \) restricts to the structure maps of
  \( c_{c}(\widehat{G}) \), making the latter a Hopf-\( \ast \)-algebra, which
  is a discrete quantum group in the sense of van Daele.
\end{prop}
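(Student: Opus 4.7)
The plan is to reduce everything to Theorem~\ref{theo:7ff4d334bc64ca63} applied to the classical Hopf-$\ast$-algebra $\mathcal{H}_{\mathbb{G}} = \pol(\mathbb{G})$, and then to identify its strong dual explicitly via the Peter-Weyl decomposition together with the Fourier transforms of Lemma~\ref{lemm:88522152423e594f}. First I will recall from Peter-Weyl theory for compact quantum groups that $\mathcal{H}_{\mathbb{G}} = \bigoplus_{x \in \irr(\mathbb{G})} \coeff(x)$ is a classical Hopf-$\ast$-algebra. Theorem~\ref{theo:7ff4d334bc64ca63} together with Remark~\ref{rema:b7761beadb98816d} then immediately delivers the $(\tau,\sigma)$-reflexive $\tau$-Hopf-$\ast$-algebra structure on $\mathcal{H}_{\mathbb{G}}$ for every $\tau \in \{\iota,\pi,\varepsilon\}$ and $\sigma \in \{\pi,\varepsilon\}$; moreover Lemma~\ref{lemm:7cc323f1115939db} identifies its strong dual, as a locally convex space, with the algebraic dual $\mathcal{H}_{\mathbb{G}}^{\sharp}$ equipped with the topology of simple convergence.

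Next I will identify $\mathcal{H}_{\mathbb{G}}^{\sharp}$ with $\mathcal{H}_{\widehat{\mathbb{G}}} = \prod_{x} \mathcal{B}(H_{x})$ as a locally convex $\ast$-algebra. Applying Proposition~\ref{prop:e5dd78c27c8725c9} to $\mathcal{H}_{\mathbb{G}} = \bigoplus_{x} \coeff(x)$ yields $\mathcal{H}_{\mathbb{G}}^{\sharp} = \prod_{x} \coeff(x)^{\sharp}$ as locally convex spaces, with the product topology on the right. For each irreducible $x$, Schur's lemma forces $\Endo(U^{x}) = \C \cdot \id$, so by Lemma~\ref{lemm:88522152423e594f} the Fourier transform $\mathcal{F}_{U^{x}} : \coeff(x)^{\sharp} \xrightarrow{\sim} \mathcal{B}(H_{x})$ is a unital $\ast$-isomorphism. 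Taking products over $x$ then produces a topological $\ast$-algebra isomorphism $\mathcal{H}_{\mathbb{G}}^{\sharp} \xrightarrow{\sim} \mathcal{H}_{\widehat{\mathbb{G}}}$, through which the $\sigma$-Hopf-$\ast$-algebra structure on the strong dual supplied by Proposition~\ref{prop:844fe110a4f00d8b} is transported to $\mathcal{H}_{\widehat{\mathbb{G}}}$. Since the algebraic direct sum $c_{c}(\widehat{\mathbb{G}}) = \bigoplus_{x}^{\alg} \mathcal{B}(H_{x})$ is a two-sided ideal inside the full product $\prod_{x} \mathcal{B}(H_{x})$ and has multiplier algebra precisely equal to this product, the strong dual is $\mathscr{M}(c_{c}(\widehat{\mathbb{G}}))$ as a $\ast$-algebra.

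The final step is to verify that the structure maps on $\mathcal{H}_{\widehat{\mathbb{G}}}$ restrict to the multiplier Hopf-$\ast$-algebra structure of van Daele on $c_{c}(\widehat{\mathbb{G}})$. Multiplication is componentwise (the Fourier transform converts convolution on $\coeff(x)^{\sharp}$ into the operator product on $\mathcal{B}(H_{x})$), so it obviously restricts to the algebraic direct sum; the counit, antipode and unit are handled by direct inspection of the corresponding maps on $\pol(\mathbb{G})$. The main obstacle is the comultiplication: the transpose of $m_{\mathbb{G}} : \mathcal{H}_{\mathbb{G}} \overline{\otimes}_{\tau} \mathcal{H}_{\mathbb{G}} \to \mathcal{H}_{\mathbb{G}}$ takes values a priori in $(\mathcal{H}_{\mathbb{G}} \overline{\otimes}_{\tau} \mathcal{H}_{\mathbb{G}})^{\sharp}$, which by Lemma~\ref{lemm:e496d8234fc19353} collapses to the algebraic dual of $\mathcal{H}_{\mathbb{G}} \odot \mathcal{H}_{\mathbb{G}}$ and, applying Proposition~\ref{prop:e5dd78c27c8725c9} once more together with the Fourier transform on each bi-block, is identified with $\prod_{x,y} \mathcal{B}(H_{x} \otimes H_{y})$. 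I will then expand the transpose on pure tensors of matrix coefficients, use the Peter-Weyl orthogonality relations and the decomposition of $U^{x} \otimes U^{y}$ into irreducibles to read off $\widehat{\Delta}(a)$ for $a \in \mathcal{B}(H_{x})$, and compare with the van Daele comultiplier on $\mathscr{M}(c_{c}(\widehat{\mathbb{G}}) \otimes c_{c}(\widehat{\mathbb{G}}))$ recalled in \cite{MR3204665}*{pp.~28--29}; the agreement of the two formulas is the only point requiring actual (albeit routine) bookkeeping and completes the proof.
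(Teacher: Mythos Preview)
Your proposal is correct and follows essentially the same route as the paper: the paper's argument (given in the paragraph immediately preceding the proposition rather than in a separate proof environment) also reduces everything to Theorem~\ref{theo:7ff4d334bc64ca63} and Remark~\ref{rema:b7761beadb98816d} applied to the classical Hopf-$\ast$-algebra $\pol(\mathbb{G})$, after identifying $\mathcal{H}_{\widehat{\mathbb{G}}}$ with the algebraic dual via the Peter--Weyl decomposition and the Fourier transforms of Lemma~\ref{lemm:88522152423e594f}. Your write-up is simply more explicit about the final bookkeeping step (restriction of the dual structure maps to $c_{c}(\widehat{\mathbb{G}})$ and comparison with van Daele's comultiplier), which the paper dispatches by a reference to the formulas in \cite{MR3204665}*{pp.~28--29}.
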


We now go in the opposite direction of \S~\ref{sec:4d1fce794188b9cb} and
identify those locally convex Hopf-\( \ast \) algebras that are discrete/compact
quantum groups. Note that van Daele's definition of discrete quantum groups
(\cite{MR1378538}*{Definition~2.3}) is already quite close to our point of view,
so we start with characterizing discrete quantum groups. We will freely use the
construction and notation in \S~\ref{sec:4d1fce794188b9cb}.

\begin{prop}
  \label{prop:93b9dd8725d4daca}
  Let \( H \) be an \( \varepsilon \)-Hopf-\( \ast \) algebra, the following are
  equivalent:
  \begin{enumerate}
  \item \label{item:e9c3df98ead47d76} \( H \) is isomorphic to
    \( \mathcal{H}_{\widehat{\mathbb{G}}} \) for some compact quantum group \( \mathbb{G} \);
  \item \label{item:3f0292f1d8d6dcca} As an \( \varepsilon \)-algebra, \( H \)
    can be identified with a direct product of full matrix algebras
    \( H = \prod_{i \in I} \mathcal{B}(\mathfrak{H}_{i}) \) (equipped with the
    product topology), with each \( \mathfrak{H}_{i} \) being a finite
    dimensional Hilbert space; moreover, the structure map
    \begin{displaymath}
      T_{1} = (\id_{H} \overline{\otimes}_{\varepsilon} m)(\Delta
      \overline{\otimes}_{\varepsilon} \id_{H}):
      H \overline{\otimes}_{\varepsilon} H \to H \overline{\otimes}_{\varepsilon} H
    \end{displaymath}
    and the structure map
    \begin{displaymath}
      T_{2} = (m \overline{\otimes}_{\varepsilon} \id_{H})(\id_{H} \overline{\otimes}_{\varepsilon} \Delta):
      H \overline{\otimes}_{\varepsilon} H \to H \overline{\otimes}_{\varepsilon} H
    \end{displaymath}
    both restricts to a bijection of \( H_{0} \odot H_{0} \) onto itself, where
    \( H_{0} \) is the dense subspace
    \( \bigoplus_{i \in I}\mathcal{B}(\mathfrak{H}_{i}) \) (algebraic direct
    sum) of \( H \).
  \end{enumerate}
\end{prop}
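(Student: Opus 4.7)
The plan is to prove the two implications separately, bridging the locally convex setting with van Daele's purely algebraic theory of multiplier Hopf algebras. The crucial observation is that when $H = \prod_{i \in I}\mathcal{B}(\mathfrak{H}_i)$ with the product topology, since each $\mathcal{B}(\mathfrak{H}_i)$ is finite dimensional and injective tensor products of finite dimensional spaces reduce to the algebraic tensor product, Proposition~\ref{prop:8056525c3c4c9aa5} will give
\begin{displaymath}
  H \overline{\otimes}_{\varepsilon} H \;=\; \prod_{i,j} \mathcal{B}(\mathfrak{H}_i \otimes \mathfrak{H}_j) \;=\; \mathscr{M}(H_0 \odot H_0),
\end{displaymath}
so that the completed tensor product has a transparent interpretation as the multiplier $\ast$-algebra of the algebraic tensor product $H_0 \odot H_0$.

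The direction \ref{item:e9c3df98ead47d76}$\Rightarrow$\ref{item:3f0292f1d8d6dcca} should follow at once from Proposition~\ref{prop:11ae51a13c2f7f1d} for the identification of $H$ with $\prod_{x}\mathcal{B}(H_x)$ as an $\varepsilon$-algebra, and from unfolding
\begin{displaymath}
  T_1(x \otimes y) = \Delta(x)(1 \otimes y), \qquad T_2(x \otimes y) = (x \otimes 1)\Delta(y) \qquad (x, y \in H_0),
\end{displaymath}
which on $c_c(\widehat{\mathbb{G}}) \odot c_c(\widehat{\mathbb{G}})$ is exactly van Daele's defining pair of bijections for a multiplier Hopf algebra (\cite{MR1220906,MR1378538}).

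For the converse \ref{item:3f0292f1d8d6dcca}$\Rightarrow$\ref{item:e9c3df98ead47d76}, I will first check that $H_0$ is a two-sided $\ast$-ideal of $H$ and that $H$ identifies canonically with $\mathscr{M}(H_0)$ as a $\ast$-algebra, the multiplier algebra of a direct sum of full matrix algebras being exactly the direct product. Using the identification of $H \overline{\otimes}_{\varepsilon} H$ above, the comultiplication restricts to a $\ast$-homomorphism $\Delta|_{H_0}: H_0 \to \mathscr{M}(H_0 \odot H_0)$, and the hypothesis on $T_1, T_2$ states precisely that $\Delta(x)(1 \otimes y)$ and $(x \otimes 1)\Delta(y)$ land in $H_0 \odot H_0$ for all $x, y \in H_0$, with the induced maps bijective on $H_0 \odot H_0$. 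This will realize $(H_0, \Delta|_{H_0})$ as a multiplier Hopf $\ast$-algebra in the sense of van Daele (\cite{MR1220906}), and since its underlying algebra is by assumption a direct sum of finite dimensional matrix algebras, it is by definition a discrete quantum group (\cite{MR1378538}). Invoking van Daele's duality between discrete and compact quantum groups will then produce a compact quantum group $\mathbb{G}$ together with an isomorphism $c_c(\widehat{\mathbb{G}}) \simeq H_0$ of multiplier Hopf $\ast$-algebras.

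The main obstacle will be to lift this algebraic identification to an isomorphism of $\varepsilon$-Hopf-$\ast$-algebras $\mathcal{H}_{\widehat{\mathbb{G}}} \simeq H$. The underlying $\varepsilon$-$\ast$-algebra structures agree since both are $\prod_i \mathcal{B}(\mathfrak{H}_i)$ with the product topology, so the genuine content is that the comultiplication, antipode, counit and involution transported from $\mathcal{H}_{\widehat{\mathbb{G}}}$ coincide with the originals on all of $H$. Here I will use the separate continuity of the structure maps (cf.\ Definition~\ref{defi:ec19c5fb89871bda}), combined with the density of $H_0$ in $H$ and of $H_0 \odot H_0$ in $H \overline{\otimes}_{\varepsilon} H$, to reduce the verification to agreement on $H_0$, which is immediate from the construction of $\mathbb{G}$ through van Daele's duality.
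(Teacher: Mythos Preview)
Your proposal is correct and follows essentially the same approach as the paper: both arguments hinge on identifying $H \overline{\otimes}_{\varepsilon} H$ with $\mathscr{M}(H_{0} \odot H_{0})$, recognizing condition~\ref{item:3f0292f1d8d6dcca} as precisely van Daele's definition of a multiplier Hopf $\ast$-algebra on $H_{0}$ (hence a discrete quantum group since $H_{0}$ is a direct sum of matrix algebras), and invoking the compact/discrete duality. Your version is more explicit about the density lifting step at the end, which the paper leaves implicit; one small inaccuracy is that the structure maps are genuinely continuous (not merely separately continuous), but this only strengthens your argument.
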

\begin{proof}
  Unwinding the definition, one checks immediately that
  condition~\ref{item:3f0292f1d8d6dcca} is a restatement of
  \( (H_{0}, \Delta\vert_{H_{0}}) \) is a multiplier Hopf algebra, where
  \( \Delta\vert_{H_{0}}: H_{0} \to H \overline{\otimes}_{\varepsilon} H =
  \mathscr{M}(H_{0} \odot H_{0}) \). Now the proposition follows from van
  Daele's definition of discrete quantum groups
  \cite{MR1378538}*{Definition~2.3}, and the well-known duality between compact
  and discrete quantum groups (see the discussion in
  \S~\ref{sec:4d1fce794188b9cb} and e.g.\ \cite{MR1658585}*{5.2--5.5}).
\end{proof}

\begin{rema}
  \label{rema:7eaa33bb54416fe8}
  Due to nuclearity, in Proposition~\ref{prop:93b9dd8725d4daca}, we may as well
  use \( \overline{\otimes}_{\pi} \) instead of
  \( \overline{\otimes}_{\varepsilon} \) as the underlying compatible symmetric
  monoidal functor.
\end{rema}

Thanks to the work of van Daele \cite{MR1378538}, we may say slightly more about
the product decomposition as in \ref{item:3f0292f1d8d6dcca} of
Proposition~\ref{prop:93b9dd8725d4daca}: there is a distinguished block being
one-dimensional.
\begin{coro}
  \label{coro:f87eb9fa466c4016}
  Let \( H = \prod_{i \in I} \mathcal{B}(\mathfrak{H}_{i}) \) be as in statement
  \ref{item:3f0292f1d8d6dcca} in Proposition~\ref{prop:93b9dd8725d4daca}, then
  there exists a unique nonzero projection \( h \in H \) such that
  \( ah = ha = \varepsilon(a)h \) for any \( a \in H \). In particular, \( h \)
  linearly spans a unique matrix block.
\end{coro}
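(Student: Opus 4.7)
The plan is to invoke Proposition~\ref{prop:93b9dd8725d4daca} to identify $H$ with $\mathcal{H}_{\widehat{\mathbb{G}}} = \prod_{x \in \irr(\mathbb{G})} \mathcal{B}(H_{x})$ for some compact quantum group $\mathbb{G}$, and to exhibit $h$ as the rank-one identity of the block indexed by the trivial representation $1_{\mathbb{G}}$.

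First I would unwind the counit. By Proposition~\ref{prop:844fe110a4f00d8b}, $\varepsilon$ on $H$ is the transpose of the unit of $\mathcal{H}_{\mathbb{G}} = \pol(\mathbb{G})$, so $\varepsilon(a) = \pairing*{1_{\pol(\mathbb{G})}}{a}$. The constant function $1$ generates the one-dimensional cosubalgebra $\coeff(1_{\mathbb{G}})$ in the Peter-Weyl decomposition, and since the duality pairing is block-diagonal via the Fourier transforms of Lemma~\ref{lemm:88522152423e594f}, a direct computation with $\mathcal{F}_{1_{\mathbb{G}}}$ yields $\varepsilon(a) = a_{1_{\mathbb{G}}}$, the component of $a$ in the trivial block.

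For existence, take $h \in H$ to be the element whose sole nonzero component is $1 \in \mathcal{B}(H_{1_{\mathbb{G}}}) = \mathbb{K}$. Then $h$ is a nonzero projection, and block-wise multiplication shows that $ah = ha$ vanishes outside the trivial block and equals $a_{1_{\mathbb{G}}} = \varepsilon(a)$ there, so $ah = ha = \varepsilon(a) h$. For uniqueness, suppose $h' \in H$ is another nonzero projection satisfying the same identity. Plugging $a = h$ into its own identity yields $h^{2} = \varepsilon(h) h$; combined with $h^{2} = h$ and $h \neq 0$, this forces $\varepsilon(h) = 1$, and symmetrically $\varepsilon(h') = 1$. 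Then the identity for $h'$ at $a = h$ gives $hh' = \varepsilon(h) h' = h'$, while the identity for $h$ at $a = h'$ (together with the centrality of $h$ that $ah = ha$ provides) gives $hh' = h'h = \varepsilon(h') h = h$, whence $h = h'$.

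The main obstacle is the bookkeeping of identifying $\varepsilon$ with evaluation at the trivial block through the various dualities in play (the Peter-Weyl decomposition, the Fourier transform of Lemma~\ref{lemm:88522152423e594f}, and the strong-dual coalgebra structure of Proposition~\ref{prop:844fe110a4f00d8b}); once this identification is pinned down, both existence and uniqueness collapse to elementary manipulations inside a direct product of full matrix algebras.
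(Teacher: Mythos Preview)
Your proposal is correct and somewhat more explicit than the paper's own argument. The paper simply cites Van~Daele's work \cite{MR1378538}*{Proposition~3.1} for existence and \cite{MR1658585}*{p356} for uniqueness, with the alternative remark that $h$ corresponds to the Haar state on the strong dual $\mathcal{H}_{\mathbb{G}}$ (whose uniqueness is well-known). You instead carry out that alternative explicitly: you identify $h$ as the identity of the trivial block via Peter--Weyl, which is exactly the incarnation of the Haar state under the Fourier duality of Lemma~\ref{lemm:88522152423e594f}. Your uniqueness argument---deriving $\varepsilon(h)=\varepsilon(h')=1$ from idempotence and then computing $hh'$ two ways---is strictly more elementary than either citation in the paper, since it uses only the defining identity and nothing about Haar states or multiplier Hopf algebras. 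The trade-off is that you must first pin down $\varepsilon$ as the trivial-block evaluation through the dualities of Proposition~\ref{prop:844fe110a4f00d8b} and Lemma~\ref{lemm:88522152423e594f}, which the paper avoids by outsourcing to the literature; but this bookkeeping is routine and your sketch handles it correctly.
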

\begin{proof}
  Existence of such an \( h \) is established in
  \cite{MR1378538}*{Proposition~3.1} and its uniqueness from
  \cite{MR1658585}*{p356, remark after Definition~5.2}, or the fact that \( h \)
  corresponds to the Haar state on the strong dual of \( H \), which is of the
  form \( \mathcal{H}_{\mathbb{G}} \) for some compact quantum group
  \( \mathbb{G} \).
\end{proof}

So roughly speaking, we may characterize discrete quantum groups as
\( \varepsilon \)-Hopf algebras whose (locally convex) algebra structure is a
product of full matrix algebras, plus some additional conditions. The following
result dualizes this.

\begin{prop}
  \label{prop:0e90e7a1b601d6ed}
  Let \( H \) be an \( \varepsilon \)-Hopf-\( \ast \)-algebra. The following are equivalent:
  \begin{enumerate}
  \item \label{item:1c8a197d286fbd11} \( H \) is isomorphic to
    \( \mathcal{H}_{\mathbb{G}} \) for some compact quantum group
    \( \mathbb{G} \);
  \item \label{item:0401882f1cc0b791}
    \( H = \C 1 \oplus \bigl(\oplus_{i} C_{i}\bigr) \) both as a locally convex
    direct sums of locally convex spaces, and as a direct sum decomposition of
    coalgebras, with each \( C_{i} \) being a full matrix coalgebra; moreover,
    let \( p \) denote the linear form of taking the coefficient of \( 1 \) with
    respect to this decomposition, we have \( p({x}^{\ast}{x}) > 0 \) for any
    nonzero \( x \in H \).
  \end{enumerate}
\end{prop}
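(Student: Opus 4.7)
The plan is to establish the equivalence by combining the Peter--Weyl theory for compact quantum groups (for the necessity of (2)) with the correspondence between CQG algebras and compact quantum groups due to Dijkhuizen--Koornwinder and van Daele (for the sufficiency). As an alternative to reconstructing $\mathbb{G}$ directly, one can dualize via Proposition~\ref{prop:93b9dd8725d4daca}, which is sketched as a backup route below.

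For (1)$\Rightarrow$(2), Peter--Weyl theory applied to $\mathcal{H}_{\mathbb{G}} = \pol(\mathbb{G})$ yields the algebraic decomposition $\pol(\mathbb{G}) = \bigoplus_{x \in \irr(\mathbb{G})}\coeff(x)$ into simple matrix subcoalgebras, with the trivial corepresentation contributing $\coeff(\varepsilon) = \C \cdot 1$ as the distinguished one-dimensional subcoalgebra. By Proposition~\ref{prop:12e473f3f887b8ab}, the finest locally convex topology carried by $\mathcal{H}_{\mathbb{G}}$ (as fixed in \S\ref{sec:b6aa227ba43c21a5}) makes this algebraic direct sum a locally convex direct sum of its finite-dimensional pieces, giving both the locally convex and the coalgebra decompositions demanded in~(2). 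The functional $p$ extracting the coefficient of $1$ then coincides with the restriction of the Haar state $h$ of $\mathbb{G}$ to $\pol(\mathbb{G})$: both satisfy $p(1) = 1 = h(1)$, and both vanish on each $\coeff(x)$ for nontrivial irreducible $x$ thanks to Woronowicz's orthogonality relations. The well-known faithfulness of $h$ on $\pol(\mathbb{G})$ then yields $p(x^{\ast}x) > 0$ for nonzero $x$.

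For (2)$\Rightarrow$(1), the key first step is to observe that $p$ is automatically bi-invariant in the sense that $(\id_{H} \overline{\otimes}_{\varepsilon} p)\Delta = p(\cdot)\, 1 = (p \overline{\otimes}_{\varepsilon} \id_{H})\Delta$. This uses only that each $C_{i}$ is a subcoalgebra of $H$ disjoint from $\C\cdot 1$ (so $\Delta(C_{i}) \subseteq C_{i} \odot C_{i}$ and $p$ vanishes on both factors), while $\Delta(1) = 1 \otimes 1$. Combined with $p(1) = 1$ and the positivity $p(x^{\ast}x) > 0$, this makes $H$ a cosemisimple Hopf-$\ast$-algebra equipped with a faithful positive invariant functional, i.e.\ a CQG algebra in the sense of Dijkhuizen--Koornwinder (equivalently, an algebraic compact quantum group in the sense of van Daele). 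The standard correspondence then produces a compact quantum group $\mathbb{G}$ together with an isomorphism $\pol(\mathbb{G}) \simeq H$ of Hopf-$\ast$-algebras, and equipping $\pol(\mathbb{G})$ with its finest locally convex topology (which, by the observation in the previous paragraph, agrees with the prescribed direct sum topology on $H$) identifies $H$ with $\mathcal{H}_{\mathbb{G}}$ as $\varepsilon$-Hopf-$\ast$-algebras.

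The main obstacle lies in the sufficiency direction and specifically in invoking the CQG dictionary: one must know that each simple matrix subcoalgebra $C_{i}$ supports a \emph{unitary} corepresentation, which uses the positivity $p(x^{\ast}x) > 0$ in an essential way to construct a positive intertwiner turning an arbitrary corepresentation carried by $C_{i}$ into a unitary one. If one wishes to avoid reimporting this piece of Woronowicz's theory, the cleanest alternative is to pass to the polar dual: by Proposition~\ref{prop:9f3f8bda0402a4ff}, the locally convex direct sum $\C 1 \oplus (\oplus_{i} C_{i})$ dualizes to the product $\C \times \prod_{i} C_{i}^{\ast}$, which is a product of full matrix algebras; the dualized structure maps make $H'_{c}$ an $\varepsilon$-Hopf-$\ast$-algebra satisfying the hypotheses of Proposition~\ref{prop:93b9dd8725d4daca}, with the distinguished projection of Corollary~\ref{coro:f87eb9fa466c4016} corresponding exactly to $p$ under the pairing. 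One then concludes $H'_{c} = \mathcal{H}_{\widehat{\mathbb{G}}}$ for some compact $\mathbb{G}$ and recovers $H = (H'_{c})'_{c} = \mathcal{H}_{\mathbb{G}}$ by polar reflexivity (Theorem~\ref{theo:7ff4d334bc64ca63}).
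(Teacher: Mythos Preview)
Your main argument is correct and matches the paper's approach closely: for (1)$\Rightarrow$(2) both use the Peter--Weyl decomposition and identify $p$ with the Haar state via the orthogonality relations; for (2)$\Rightarrow$(1) both observe that the hypothesis forces the finest locally convex topology on $H$, verify that $p$ is a faithful positive invariant functional, and then invoke van Daele's algebraic compact quantum group framework (equivalently the Dijkhuizen--Koornwinder CQG-algebra dictionary) to produce $\mathbb{G}$. The paper is terser on invariance of $p$ (``an easy computation, or Sweedler's Theorem~14.0.3''), whereas you spell out the reason $\Delta(C_i)\subseteq C_i\odot C_i$ together with $p\vert_{C_i}=0$; this is a harmless expansion of the same idea.

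Your backup route via dualization and Proposition~\ref{prop:93b9dd8725d4daca} is not in the paper and, as written, has a gap. To apply that proposition you must check that each matrix block of $H'_c$ carries the \emph{standard} $C^\ast$-involution (the statement there reads $\mathcal{B}(\mathfrak{H}_i)$ with $\mathfrak{H}_i$ a Hilbert space), and this is precisely where the positivity hypothesis $p(x^{\ast}x)>0$ must enter: without it there exist cosemisimple Hopf-$\ast$-algebras with the coalgebra decomposition in (2) that are not CQG algebras, so the alternative route cannot succeed unless positivity is invoked at that step. Since your main argument already handles this cleanly, the backup is unnecessary; if you wish to keep it, indicate explicitly that positivity of $p$ is what forces the induced involution on each $C_i^{\ast}$ to be a genuine $C^\ast$-involution.
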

\begin{proof}
  We first show \ref{item:1c8a197d286fbd11} implies \ref{item:0401882f1cc0b791}.
  By the Peter-Weyl theory for compact quantum groups (\cite{MR1616348} or
  \cite{MR3204665}*{\S\S~1.3--1.5}), we have the decomposition
  \( \pol(\mathbb{G}) = \oplus_{x \in \irr(\mathbb{G})} \coeff(x) \), which is
  already used in \S~\ref{sec:4d1fce794188b9cb}. Note that as sub-coalgebras of
  \( \pol(\mathbb{G}) \), each \( \coeff(x) \) is a full matrix coalgebra. Note
  also that \( \C 1 \) is exactly the linear span of the matrix coefficients of
  the trivial representation. Hence we have the coalgebra decomposition in
  \ref{item:0401882f1cc0b791}. The fact that this is also a locally direct sum
  follows directly by noting that \( H \) is equipped with the finest locally
  convex topology. Finally, \( p({x}^{\ast}{x}) > 0 \) for nonzero \( x \in H \)
  follows directly from this decomposition and the orthogonality relations (see,
  e.g.\ \cite{MR3204665}*{p15, Theorem~1.4.3}).

  We now show the converse, and assume \ref{item:0401882f1cc0b791} holds. In
  this case, it is clear that \( H \) is again equipped with the finest locally
  convex topology. It is also clear that the linear functional \( p \) is a
  faithful positive linear form on the \( \ast \)-algebra \( H \). It follows
  directly from an easy computation (or from \cite{MR0252485}*{p293,
    Theorem~14.0.3}) that \( p \) is a positive invariant functional on \( H \),
  hence \( H \) is an algebraic compact quantum group in the sense of van Daele
  (see, e.g.\ \cite{MR2397671}*{CH.3}),  and we have
  \ref{item:1c8a197d286fbd11}.
\end{proof}


\section{Locally convex algebras of continuous functions on compactly generated
  Hausdorff spaces}
\label{sec:7b59ee62a46ef144}

\subsection{Compactly generated Hausdorff spaces and the
  \texorpdfstring{\( k \)}{k}-ification}
\label{sec:16fcc58a576c8ec2}

After Steenrod \cite{MR0210075} introduced them into algebraic topology, the
role played by compactly generated spaces can hardly be overemphasized. We shall
briefly describe what we need, following roughly the treatment in
\cite{MR2456045}*{\S~7.9}.

As we restrict ourselves to Hausdorff spaces, we give the definition in this case.
\begin{defi}
  \label{defi:14659ea316e3290e}
  Let \( X \) be a Hausdorff topological space. We say \( A \subseteq X \) is
  \( k \)-\textbf{closed} (resp.\ \textbf{open}), if for each compact subspace
  \( K \) of \( X \), the intersection \( A \cap K \) is closed (resp.\
  open). We say \( X \) is \textbf{compactly generated}, or a
  \( k \)-\textbf{space}, if all \( k \)-closed set in \( X \) is closed, which
  in turn is equivalent to all \( k \)-open sets are open.
\end{defi}

We include the following elementary result for convenience of the reader.
\begin{prop}
  \label{prop:42528c02cab8c326}
  Let \( X \) be a Hausdorff \( k \)-space, \( Y \) a topological space, a map
  \( f : X \to Y \) is continuous if and only if for each compact
  \( K \subseteq X \), the restriction \( f\vert_{K}: K \to Y \) is continuous.
\end{prop}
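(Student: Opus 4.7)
The forward direction is immediate: continuity is preserved by restriction to any subspace, so if $f$ is continuous then each $f\vert_{K}$ is continuous for $K \subseteq X$ compact.

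For the converse, the plan is to reduce to the defining property of a $k$-space via preimages of open sets. Concretely, I would fix an open $V \subseteq Y$ and aim to show $f^{-1}(V)$ is open in $X$. Since $X$ is compactly generated (and the $k$-open formulation is equivalent to the $k$-closed one, as stated in Definition~\ref{defi:14659ea316e3290e}), it suffices to verify that $f^{-1}(V)$ is $k$-open, that is, for every compact $K \subseteq X$ the intersection $f^{-1}(V) \cap K$ is open in $K$ (with the subspace topology).

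The key identity is
\begin{equation*}
  f^{-1}(V) \cap K = (f\vert_{K})^{-1}(V),
\end{equation*}
which is open in $K$ by the assumed continuity of $f\vert_{K}: K \to Y$. Thus $f^{-1}(V)$ is $k$-open, hence open in $X$, and $f$ is continuous.

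There is essentially no obstacle here; the statement is really just an unwinding of the definition of a $k$-space. The only mild subtlety is remembering that the equivalence between the $k$-closed and $k$-open formulations in Definition~\ref{defi:14659ea316e3290e} lets us test continuity on open preimages instead of closed ones, which is what makes the one-line computation above go through cleanly.
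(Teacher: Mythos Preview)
Your proof is correct and follows essentially the same approach as the paper's: both reduce to the defining property of a $k$-space via the identity $f^{-1}(\cdot) \cap K = (f\vert_{K})^{-1}(\cdot)$, with the only cosmetic difference being that the paper tests continuity on preimages of closed sets while you use open sets.
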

\begin{proof}
  The necessity of the condition is clear. For its sufficiency, note that if
  each \( f\vert_{K} \), \( K \subseteq X \) compact is continuous, then
  \( f^{-1}(C) \) is \( k \)-closed for any closed \( C \subseteq Y \), since
  \( f^{-1}(C) \cap K = f|_{K}^{-1}(C) \).  Hence \( f^{-1}(C) \) is closed
  since \( X \) is a \( k \)-space, and \( f \) is continuous.
\end{proof}

There are an ample supply of \( k \)-spaces, such as all \( CW \)-complexes in
algebraic topology. For us, we are mainly interested in the following two cases.

\begin{prop}[\cite{MR2456045}*{p189, Theorem~7.9.8}]
  \label{prop:68037c0ee6a99a1b}
  A Hausdorff \( X \) is a \( k \)-space if it is locally compact or first
  countable (in particular, metrizable).
\end{prop}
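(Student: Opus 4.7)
The plan is to verify the defining property of a $k$-space directly in each of the two cases by, given a $k$-closed set $A\subseteq X$ and a point $x\in\overline{A}$, producing a compact subspace $K\subseteq X$ such that $x\in\overline{A\cap K}$. Then the hypothesis that $A$ is $k$-closed forces $A\cap K$ to be closed in $K$, hence closed in $X$ (as $K$ itself is closed in the Hausdorff space $X$), so $x\in A\cap K\subseteq A$ and therefore $A=\overline{A}$.

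In the locally compact Hausdorff case, the natural candidate for $K$ is a compact neighborhood of $x$: by local compactness I pick $K$ compact and an open $V$ with $x\in V\subseteq K$. Because every neighborhood of $x$ contained in $V$ meets $A$, we have $x\in\overline{A\cap V}\subseteq\overline{A\cap K}$, and the general argument above concludes. In the first countable case, the candidate for $K$ is the trace of a convergent sequence: since $x\in\overline{A}$ and $X$ has a countable neighborhood basis at $x$, standard first-countability yields a sequence $(x_n)\subseteq A$ with $x_n\to x$, whereupon $K=\{x_n:n\in\mathbb{N}\}\cup\{x\}$ is compact, $x_n\in A\cap K$, and the $k$-closedness of $A$ forces the limit $x$ to lie in $A\cap K$.

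Both arguments are elementary and the only small subtlety is to make sure that ``closed in $K$'' can be upgraded to ``closed in $X$'', which is immediate from $K$ being compact and $X$ Hausdorff, so $K$ is closed in $X$. I do not expect any serious obstacle; the proposition is really just a bookkeeping exercise combining the definition of $k$-closedness with the two very classical sources of compact sets, namely compact neighborhoods (in the locally compact case) and the one-point compactification of a convergent sequence (in the first countable case). The parenthetical remark on metrizable spaces requires nothing more than recalling that metrizable spaces are first countable.
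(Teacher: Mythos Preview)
Your argument is correct and is the standard one. Note, however, that the paper does not supply its own proof of this proposition: it is stated with a citation to tom~Dieck's \emph{Algebraic Topology} (p.~189, Theorem~7.9.8) and left at that. Your write-up is precisely the elementary verification one finds in such references, so there is nothing further to compare.
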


We also need the following permanence property of \( k \)-spaces under
quotients.

\begin{prop}[\cite{MR2456045}*{p189, Theorem~7.9.9}]
  \label{prop:a46cca900deea492}
  Let \( X \in \mathsf{CG} \) and \( \sim \) an equivalence relation on \( X
  \). If the quotient space \( X/\sim \) is Hausdorff, then \( X/\sim \) is
  still a \( k \)-space.
\end{prop}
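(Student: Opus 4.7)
The plan is to work directly from the definition of a $k$-space, exploiting the quotient map $q : X \to X/{\sim}$. Let $A \subseteq X/{\sim}$ be $k$-closed in $X/{\sim}$. By the definition of the quotient topology, showing $A$ is closed reduces to showing $q^{-1}(A)$ is closed in $X$. Since $X$ is assumed to be a $k$-space, it suffices to verify that $q^{-1}(A) \cap K$ is closed in $X$ for every compact $K \subseteq X$. So the whole argument is a matter of pulling a ``closedness on compacts'' statement back along $q$.

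So fix an arbitrary compact $K \subseteq X$. First I would note that $q(K)$ is compact in $X/{\sim}$ as the continuous image of a compact set, and therefore closed in $X/{\sim}$ because we are assuming $X/{\sim}$ is Hausdorff. Since $A$ is $k$-closed in $X/{\sim}$, this gives that $A \cap q(K)$ is closed in the compact subspace $q(K)$, and combining with the fact that $q(K)$ is closed in $X/{\sim}$, we conclude that $A \cap q(K)$ is closed in $X/{\sim}$. The continuity of $q$ then promotes this to: $q^{-1}(A \cap q(K))$ is closed in $X$.

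The next step is the elementary set-theoretic identity
\begin{equation*}
  q^{-1}(A) \cap K \;=\; q^{-1}(A \cap q(K)) \cap K,
\end{equation*}
which one verifies by a direct inclusion check (the $\subseteq$ direction uses $K \subseteq q^{-1}(q(K))$, the $\supseteq$ direction is immediate from $A \cap q(K) \subseteq A$). Combined with the previous step, this exhibits $q^{-1}(A) \cap K$ as the intersection of a closed set in $X$ with the closed set $K$, so it is closed in $X$. As $K$ was arbitrary, $q^{-1}(A)$ is $k$-closed in $X$, hence closed, hence $A$ is closed in $X/{\sim}$.

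There is no serious obstacle here; the only delicate ingredient is the hypothesis that $X/{\sim}$ is Hausdorff, which is used exactly once, to ensure that the compact image $q(K)$ is closed in $X/{\sim}$ so that the $k$-closedness of $A$ inside $q(K)$ transfers into genuine closedness of $A \cap q(K)$ in $X/{\sim}$. Without this, one could not promote the local relative-closedness to something continuous $q$ can pull back.
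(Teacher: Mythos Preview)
Your proof is correct and is the standard argument. The paper does not supply its own proof of this proposition; it is simply quoted with a reference to tom~Dieck's textbook, so there is nothing further to compare.
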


\begin{nota}
  \label{nota:ef1a701fb9471826}
  To facilitate our discussion, let \( X \) be a Hausdorff space, we shall use
  \( \mathfrak{K}(X) \) to denote the collection of all compact subspaces of
  \( X \). We use \( \mathsf{Haus} \) to denote the category of all Hausdorff
  spaces, and \( \mathsf{CG} \) the category of all compactly generated
  Hausdorff spaces, with morphisms in both categories being the continuous maps
  between their objects.
\end{nota}

Let \( X \in \mathsf{Haus} \), let \( k(X) \) be the topological space where the
underlying set is still \( X \), but the open sets are given by the \( k \)-open
sets in \( X \). As all open sets in \( X \) is \( k \)-open, the identity map
\( i_{X} : k(X) \to X \) is continuous. If \( f : X \to Y \) is a morphism in
\( \mathsf{Haus} \), we use \( k(f): k(X) \to k(Y) \) to denote the same set
theoretic map as \( f \) but viewed as a map between possibly different
topological spaces.

\begin{prop}
  \label{prop:8c993945688f1422}
  Let \( X \in \mathsf{Haus} \).
  \begin{enumerate}
  \item \label{item:88f29f549cb936a7} The space \( k(X) \) is a compactly
    generated Hausdorff space, i.e.\ a \( k \)-space.
  \item \label{item:053e212454466d46} If \( X \) is compactly generated already,
    then \( k(X) = X \).
  \item \label{item:4da4a7403fc4211e} The spaces \( X \) and \( k(X) \) have the
    same compact subsets, i.e.\ \( \mathfrak{K}(X) = \mathfrak{K}(k(X)) \).
  \item \label{item:039bc077076e9edf} If \( f : X \to Y \) is a continuous map
    from \( X \) to another Hausdorff space \( Y \), then
    \( k(f): k(X) \to k(Y) \) is continuous.
  \end{enumerate}
\end{prop}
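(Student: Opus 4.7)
The plan is to establish the four parts in the order \ref{item:4da4a7403fc4211e}, \ref{item:88f29f549cb936a7}, \ref{item:053e212454466d46}, \ref{item:039bc077076e9edf}, since the first part carries all the topological content that the others depend on. Throughout, I would use the trivial fact that the topology on $k(X)$ refines that of $X$ (every open set is $k$-open), so that the continuous identity $i_X : k(X) \to X$ is well-defined as stated in the excerpt and already forces $k(X)$ to be Hausdorff.

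The key observation for \ref{item:4da4a7403fc4211e} is that for any $K \in \mathfrak{K}(X)$, the subspace topologies on $K$ inherited from $X$ and from $k(X)$ agree. Indeed, if $U$ is open in $X$ then $U$ is $k$-open so $U\cap K$ is open in $K$ for the $k(X)$-topology; conversely, if $V \subseteq K$ is open for the $k(X)$-subspace topology, write $V = U \cap K$ with $U$ being $k$-open in $X$, and then by the very definition of $k$-open the set $U\cap K$ is open in $K$ for the original subspace topology. Since $i_X$ is continuous, compact subsets of $k(X)$ are compact in $X$, and this subspace-topology coincidence gives the converse, proving $\mathfrak{K}(X) = \mathfrak{K}(k(X))$ and, moreover, that the induced subspace topologies agree on each such compact set.

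With \ref{item:4da4a7403fc4211e} in hand, \ref{item:88f29f549cb936a7} is a short unwinding: take $A \subseteq k(X)$ that is $k$-closed in $k(X)$, and pick any $K \in \mathfrak{K}(k(X)) = \mathfrak{K}(X)$. Since the $X$- and $k(X)$-subspace topologies on $K$ coincide, $A \cap K$ being closed in $K$ for one equals being closed for the other, so $A$ is also $k$-closed in $X$, hence open-complement is $k$-open in $X$, hence $A$ is closed in $k(X)$; thus $k(X)$ is a $k$-space. Part \ref{item:053e212454466d46} is then immediate from definitions: if $X$ is already a $k$-space, then $k$-open sets in $X$ are exactly the open sets, so $k(X)$ and $X$ carry the same topology.

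For \ref{item:039bc077076e9edf}, the cleanest route is to combine \ref{item:88f29f549cb936a7} with Proposition~\ref{prop:42528c02cab8c326}: since $k(X)$ is a $k$-space, the map $k(f) : k(X) \to k(Y)$ is continuous iff its restriction to each $K \in \mathfrak{K}(k(X))$ is continuous. But $\mathfrak{K}(k(X)) = \mathfrak{K}(X)$ with identical subspace topologies by \ref{item:4da4a7403fc4211e}, so this restriction is just $f|_K : K \to k(Y)$, which factors as $K \xrightarrow{f|_K} f(K) \hookrightarrow k(Y)$; the first arrow is continuous because $f$ is, and the second is continuous because $f(K) \in \mathfrak{K}(Y) = \mathfrak{K}(k(Y))$ with the same subspace topology on $f(K)$ from $Y$ and from $k(Y)$, again by \ref{item:4da4a7403fc4211e} applied to $Y$. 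None of the steps should present a real obstacle; the only point that needs care is the subspace-topology identification in \ref{item:4da4a7403fc4211e}, which is the linchpin that lets the other three parts be proved almost mechanically.
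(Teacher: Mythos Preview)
Your proof is correct. The paper simply cites Steenrod \cite{MR0210075}*{p136, Theorem~3.2} for parts \ref{item:88f29f549cb936a7}, \ref{item:053e212454466d46}, \ref{item:4da4a7403fc4211e} and only argues \ref{item:039bc077076e9edf} directly, via essentially the same route you take: check continuity of $f|_K : K \to k(Y)$ for each compact $K$ using that $f(K) \in \mathfrak{K}(Y)$ and the definition of $k$-open, then invoke Proposition~\ref{prop:42528c02cab8c326}. Your version is more self-contained because you isolate the subspace-topology coincidence on compacta as the core lemma and derive all four parts from it, whereas the paper outsources the first three; what you gain is a cleaner internal structure, at the cost of a few extra lines the paper avoids by citation.
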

\begin{proof}
  \ref{item:88f29f549cb936a7}, \ref{item:053e212454466d46} and
  \ref{item:4da4a7403fc4211e} can be found in \cite{MR0210075}*{p136,
    Theorem~3.2}. To see \ref{item:039bc077076e9edf}, let
  \( C \in \mathfrak{K}(X) \) and take any \( k \)-open set \( V \) in \( Y \).
  We then have \( f(C) \in \mathfrak{K}(Y) \), hence \( f(C) \cap V \) is open
  in \( f(C) \), thus its inverse image
  \( f^{-1}\bigl(f(C) \cap V\bigr) \cap C = f^{-1}(V) \cap C \) under the
  restriction map \( C \to f(C) \), \( x \in C \mapsto f(x) \in f(C) \), is open
  in \( C \). This means \( f\vert_{C} : C \to k(Y) \) is continuous for any
  \( C \in \mathfrak{K}(X) \). But \( \mathfrak{K}(X) = \mathfrak{K}(k(X)) \) by
  \ref{item:053e212454466d46}, and \( k(X) \) is a \( k \)-space by
  \ref{item:88f29f549cb936a7}, it follows from
  Proposition~\ref{prop:42528c02cab8c326} that \( k(f) \) is continuous.
\end{proof}

\begin{coro}
  \label{coro:e6010f47670353d7}
  Let \( k: \mathsf{Haus} \to \mathsf{CG} \) be the functor
  \( X \mapsto k(X) \), and \( f \mapsto k(f) \) as above, and
  \( i : \mathsf{CG} \to \mathsf{Haus} \) the inclusion functor. Then \( k \) is
  right adjoint to \( i \), i.e.\ we have a natural bijection
  \begin{equation}
    \label{eq:cac84c9403dc0176}
    \begin{split}
      \mathsf{CG}\bigl(X, k(Y)\bigr)
      &\simeq \mathsf{Haus}\bigl(i(X), Y\bigr) \\
      \{k(f): k(X) = X \to Y\} & \mapsfrom f \\
      f & \mapsto f
    \end{split}
  \end{equation}
  for each \( X \in \mathsf{CG} \) and \( Y \in \mathsf{Haus} \), where \( f \)
  on the right of the last line of \eqref{eq:cac84c9403dc0176} means the same
  set-theoretic map from \( X \) to \( k(Y) \) but seen as from \( X \) to
  \( Y \). In particular, \( k \) commutes with all limits in
  \( \mathsf{Haus} \).
\end{coro}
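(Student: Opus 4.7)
The plan is to verify the adjunction directly using the results already established in Proposition~\ref{prop:8c993945688f1422}, since essentially all the content is contained there. The bijection \eqref{eq:cac84c9403dc0176} is purely set-theoretic, so the task reduces to checking that (i) both maps are well-defined as continuous maps into the claimed targets, and (ii) these two assignments are mutually inverse. Assertion (ii) is automatic once (i) is established, because in both directions the underlying set-theoretic map is unchanged. The ``in particular'' clause about limits is then a formal consequence of the standard fact that right adjoints preserve limits, see e.g.\ \cite{MR0354798}*{\S~V.5, Theorem~1}.

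For the forward map $f \mapsto k(f)$, I start from a morphism $f \in \mathsf{Haus}\bigl(i(X), Y\bigr)$ with $X \in \mathsf{CG}$. Proposition~\ref{prop:8c993945688f1422}~\ref{item:039bc077076e9edf} yields continuity of $k(f): k(X) \to k(Y)$, and Proposition~\ref{prop:8c993945688f1422}~\ref{item:053e212454466d46} identifies $k(X)$ with $X$ since $X$ is already compactly generated. Hence $k(f)$ is a legitimate morphism in $\mathsf{CG}\bigl(X, k(Y)\bigr)$. For the reverse map, given $g \in \mathsf{CG}\bigl(X, k(Y)\bigr)$, the composite $i_Y \circ g$ makes sense and is continuous because $i_Y : k(Y) \to Y$ is continuous (any open set in $Y$ is $k$-open). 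This gives an element of $\mathsf{Haus}\bigl(i(X), Y\bigr)$.

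Next I would verify naturality: given morphisms $u: X' \to X$ in $\mathsf{CG}$ and $v : Y \to Y'$ in $\mathsf{Haus}$, the obvious square involving pre- and post-composition commutes, which is immediate because all underlying set maps agree and continuity has already been checked. The main ``obstacle'' --- and really there is none of substance --- is simply the bookkeeping of keeping track of which topology (the $k$-ified one or the original one) lives on each side; this is handled cleanly by Proposition~\ref{prop:8c993945688f1422}~\ref{item:053e212454466d46}, which lets us silently identify $X$ with $k(X)$ whenever $X \in \mathsf{CG}$.

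Finally, for the closing assertion, the adjunction $i \dashv k$ exhibits $k$ as a right adjoint between the categories $\mathsf{Haus}$ and $\mathsf{CG}$, so $k$ preserves all small limits that exist in $\mathsf{Haus}$. Concretely, this means that if a diagram in $\mathsf{Haus}$ has a limit, then applying $k$ yields the limit of the $k$-ified diagram in $\mathsf{CG}$; this will be convenient later when constructing function spaces and products in the compactly generated setting.
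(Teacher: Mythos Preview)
Your proposal is correct and follows essentially the same approach as the paper's proof, which simply reads: ``This follows directly from Proposition~\ref{prop:8c993945688f1422} and the observation that the topology on \( k(Y) \) is finer than that on \( Y \).'' You have merely unpacked these two ingredients explicitly---using \ref{item:039bc077076e9edf} and \ref{item:053e212454466d46} for the forward direction and the finer-topology observation (your continuity of \( i_Y \)) for the reverse---and added the routine naturality check that the paper leaves implicit.
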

\begin{proof}
  This follows directly from Proposition~\ref{prop:8c993945688f1422} and the
  observation that the topology on \( k(Y) \) is finer than that on \( Y \).
\end{proof}

\begin{defi}
  \label{defi:d4a5e987dffa7166}
  The functor \( k : \mathsf{Haus} \to \mathsf{CG} \) is called the
  \( k \)-\textbf{ification}. Note that \( \mathsf{Haus} \) has arbitrary
  products in the categorical sense, so has \( \mathsf{CG} \) since \( k \) is a
  right adjoint. For a family of spaces \( (X_{i})_{i \in I} \) in
  \( \mathsf{Haus} \), we see that \( k(\prod_{i}X_{i}) \) (together with same
  canonical projections) is the product of the family \( \bigl(k(X_{i})\bigr) \)
  in \( \mathsf{CG} \), and we call it the \( k \)-product of the family
  \( (X_{i}) \). When \( X, Y, Z \in \mathsf{CG} \), we denote
  \( k(X \times Y) \) by \( X \times_{k} Y \) and
  \( k(X \times Y \times Z) = X \times_{k} Y \times_{k} Z \), etc.
\end{defi}

We end this subsection with the following result.

\begin{prop}[\cite{MR2456045}*{p190, Theorem~7.9.12}]
  \label{prop:27398d650b82b971}
  If \( X \) is a locally compact space (hence \( X \in \mathsf{CG} \)), and
  \( Y \in \mathsf{CG} \), then \( X \times_{k} Y = X \times Y \).
\end{prop}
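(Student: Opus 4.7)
The plan is to show that every $k$-open subset $U$ of the ordinary product $X \times Y$ is already open in $X \times Y$, which by definition of $k$-ification means $k(X \times Y) = X \times Y$. The proof has two main ingredients: local compactness of $X$, and the tube lemma applied \emph{locally} inside slabs of the form $K \times L$ with $K$ a compact neighborhood in $X$ and $L$ compact in $Y$. The compactly generated hypothesis on $Y$ will then be used to promote openness in each slab $L$ to openness in $Y$.

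First I would fix a $k$-open set $U \subseteq X \times Y$ and a point $(x_0, y_0) \in U$, and aim to exhibit a product-open neighborhood of $(x_0, y_0)$ contained in $U$. Consider the slice $U_{y_0} := \set*{x \in X \given (x, y_0) \in U}$. For any compact $K' \subseteq X$, the set $K' \times \set*{y_0}$ is compact in $X \times Y$, so $U \cap (K' \times \set*{y_0})$ is open in $K' \times \set*{y_0}$; identifying the latter with $K'$, this says $U_{y_0} \cap K'$ is open in $K'$ for every compact $K' \subseteq X$. Since $X$ is locally compact, it is a $k$-space by Proposition~\ref{prop:68037c0ee6a99a1b}, so $U_{y_0}$ is open in $X$. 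By local compactness, I can choose a compact neighborhood $K$ of $x_0$ with $K \subseteq U_{y_0}$, so that $K \times \set*{y_0} \subseteq U$.

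Next I would define
\begin{equation*}
  V := \set*{y \in Y \given K \times \set*{y} \subseteq U},
\end{equation*}
which contains $y_0$, and show that $V$ is open in $Y$. Since $Y \in \mathsf{CG}$, it suffices to show $V \cap L$ is open in $L$ for every compact $L \subseteq Y$. Fix such an $L$ and a point $y \in V \cap L$. Then $K \times L$ is compact in $X \times Y$, so by $k$-openness of $U$ the set $U \cap (K \times L)$ is open in $K \times L$, and it contains the compact slice $K \times \set*{y}$. The classical tube lemma applied inside the Hausdorff space $K \times L$ (with $K$ compact) yields an open-in-$L$ neighborhood $V'$ of $y$ with $K \times V' \subseteq U \cap (K \times L) \subseteq U$, hence $V' \subseteq V \cap L$. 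Thus $V \cap L$ is open in $L$, so $V$ is open in $Y$. Writing $K^{\circ}$ for the interior of $K$ in $X$, the set $K^{\circ} \times V$ is a product-open neighborhood of $(x_0, y_0)$ contained in $U$, which shows $U$ is open in $X \times Y$.

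The only step that is really substantive is the application of the tube lemma inside each slab $K \times L$, combined with the observation that compactness of $K$ permits us to run the tube lemma uniformly in $L$; I expect no serious obstacle beyond being careful to distinguish ``open in $L$'' from ``open in $Y$'' and to let the $k$-space hypothesis on $Y$ bridge that gap. Once openness of $V$ is in hand, the conclusion is immediate, and by Proposition~\ref{prop:8c993945688f1422}\ref{item:053e212454466d46} we get $X \times_{k} Y = k(X \times Y) = X \times Y$.
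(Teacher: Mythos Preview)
Your proof is correct and is essentially the standard argument (via the tube lemma) that one finds in the cited reference; the paper itself does not give a proof but simply quotes the result from \cite{MR2456045}*{p190, Theorem~7.9.12}.
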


\begin{rema}
  \label{rema:c409b867863490d3}
  The \( k \)-product \( \times_{k} \) is the correct notion widely used in
  algebraic topology, for example, for the product of CW complexes, see
  \cite{MR1867354}*{p523-525}, especially the counter-example due to Dowker
  \cite{MR0048020} if one uses the usual product instead. We will soon see
  (Corollary~\ref{coro:b25b52005792153d}) that it is also the ``correct''
  product when considering certain completed injective tensor products.
\end{rema}

\subsection{The space \texorpdfstring{\( C(X, E) \)}{C(X, E)}}
\label{sec:173a272a70eeef93}

Recall Definition~\ref{defi:acbf0c33176b6907}. We first establish a result on
completeness.
\begin{prop}
  \label{prop:776017bb8c18481f}
  Let \( X \) be a \( k \)-space, \( E \) a locally convex space, \( C(X, E) \)
  the space of all continuous functions from \( X \) to \( E \).  We equip
  \( C(X, E) \) with the uniform structure of compact convergence. Then
  \( C(X, E) \) is a locally convex space equipped with the topology of its
  uniform structure, and \( C(X, E) \) is complete if \( E \) is complete.
\end{prop}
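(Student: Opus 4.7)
The first claim—that $C(X, E)$ is a locally convex space—I will deduce from Proposition~\ref{prop:eb240fbacebaf554} applied with $\mathfrak{S} = \mathfrak{K}(X)$. For any $f \in C(X, E)$ and $K \in \mathfrak{K}(X)$, the image $f(K)$ is compact in $E$, hence bounded, so condition~\ref{item:945ea785188d4872} holds. Since $X$ is Hausdorff, every singleton is compact, so $f(K) = \{0\}$ for all $K \in \mathfrak{K}(X)$ forces $f \equiv 0$, giving condition~\ref{item:86e45c617c23d6ec}. The proposition then yields that the topology of compact convergence on $C(X, E)$ is locally convex and Hausdorff.

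For completeness, the plan is to realize $C(X, E)$ as a closed subspace of a suitable product. Write $C_u(K, E)$ for $C(K, E)$ equipped with the topology of uniform convergence on the compact $K$. The restriction maps $r_K : C(X, E) \to C_u(K, E)$, $K \in \mathfrak{K}(X)$, are continuous and, by Definition~\ref{defi:acbf0c33176b6907}, define the initial topology on $C(X, E)$; hence the combined map
\[
\Phi : C(X, E) \to \prod_{K \in \mathfrak{K}(X)} C_u(K, E), \qquad f \mapsto (f|_{K})_{K},
\]
is a topological embedding. Let $Z$ denote the subset of compatible families $(g_{K})_{K}$, i.e.\ those satisfying $g_{K'}|_{K} = g_{K}$ whenever $K \subseteq K'$. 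Each compatibility relation is the vanishing of a continuous linear expression into $C_u(K, E)$, so $Z$ is closed in the product, and clearly $\Phi\bigl(C(X, E)\bigr) \subseteq Z$.

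The step where the $k$-space hypothesis enters essentially is the reverse inclusion. Given $(g_{K})_{K} \in Z$, define $f : X \to E$ by $f(x) = g_{\{x\}}(x)$; compatibility immediately gives $f|_{K} = g_{K}$ for every $K \in \mathfrak{K}(X)$, so $f|_{K}$ is continuous for each compact $K$. By Proposition~\ref{prop:42528c02cab8c326}, this is enough for $f$ to be continuous on the $k$-space $X$, whence $\Phi(f) = (g_{K})_{K}$. Thus $\Phi$ identifies $C(X, E)$ with the closed subspace $Z$ of $\prod_{K} C_u(K, E)$.

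Assuming finally that $E$ is complete, each $C_u(K, E)$ is complete: a Cauchy filter in $C_u(K, E)$ is pointwise Cauchy in $E$, giving a pointwise limit $g : K \to E$, and the standard uniform-limit-of-continuous-functions argument (valid for any complete uniform target) shows $g$ is continuous. Then $\prod_{K} C_u(K, E)$ is complete by Proposition~\ref{prop:903bfccf4a781ceb}~\ref{item:a20beac8bb890c25}, and its closed subspace $\Phi\bigl(C(X, E)\bigr)$ is complete by \ref{item:d564b3a44ec3bcf0} of the same proposition, so $C(X, E)$ is complete. The only nonroutine step specific to this statement is the identification of $\Phi\bigl(C(X, E)\bigr)$ with the full set $Z$, which is precisely where the $k$-space property of $X$ is indispensable; without it one obtains only an embedding into a proper subset of $Z$, and the projective-limit argument breaks down.
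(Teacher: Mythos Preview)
Your proof is correct. For the first assertion you argue exactly as the paper does, via Proposition~\ref{prop:eb240fbacebaf554}. For completeness the paper proceeds more directly: it takes a Cauchy filter $\mathcal{F}$ on $C(X,E)$, forms the pointwise limit $f_{\mathcal{F}}$ using completeness of $E$, observes that the push-forward of $\mathcal{F}$ along each restriction $\res_K$ converges uniformly on $K$ so that $f_{\mathcal{F}}|_K$ is continuous (uniform limit of continuous functions), and then invokes Proposition~\ref{prop:42528c02cab8c326} to conclude $f_{\mathcal{F}}\in C(X,E)$ and that $\mathcal{F}\to f_{\mathcal{F}}$. Your version repackages the same ingredients structurally, realizing $C(X,E)$ as the closed subspace $Z$ of compatible families in $\prod_{K} C_u(K,E)$ and inheriting completeness from Proposition~\ref{prop:903bfccf4a781ceb}. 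The crucial use of the $k$-space hypothesis is identical in both arguments. Your presentation has the merit of making explicit the projective-limit description that the paper itself exploits in \S~\ref{sec:146893c7763861c4} for the scalar case $C(X)=\varprojlim C(K)$; the paper's filter argument, on the other hand, is a bit leaner since it does not isolate $C_u(K,E)$ as a separate complete object or verify $\Phi\bigl(C(X,E)\bigr)=Z$ as a standalone step.
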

\begin{proof}
  That \( C(X, E) \) is a locally convex space follows from
  Proposition~\ref{prop:eb240fbacebaf554} and the fact that continuous image
  into Hausdorff spaces of compact sets remain compact, and compact sets are
  bounded in a locally convex space.

  Now assume \( E \) is complete. Let \( \mathcal{F} \) be a Cauchy filter on
  \( C(X, E) \). Since each singleton is compact, we see that for each
  \( x \in X \), the evaluation \( e_{x}: C(X, E) \to E \) is uniformly
  continuous, hence
  \( \mathcal{F}(x):= (e_{x})_{\ast}\mathcal{F} = \set*{M(x) \given M \in
    \mathcal{F}} \), where \( M(x) = \set*{f(x) \given f \in M} \), is a Cauchy
  filter in \( E \), hence convergence to a unique point
  \( f_{\mathcal{F}}(x) \) since \( E \) is complete. This defines a map
  \( f_{\mathcal{F}}: X \to E \). Now take any \( K \in \mathfrak{K}(X) \). The
  restriction \( \res_{K}: C(X, E) \to C_{u}(K, E) \) is uniformly continuous,
  where \( C_{u}(K, E) \) is the uniform subspace of the uniform space of all
  maps (continuous or not) \( F_{u}(K, E) \) as defined in
  Definition~\ref{defi:a4863de2d22b6ea1}. We denote the direct image of
  \( \mathcal{F} \) under \( \res_{K} \) by \( \mathcal{F}\vert_{K} \). It is
  clear that \( \mathcal{F}\vert_{K} \) converges uniformly to
  \( f_{\mathcal{F}}\vert_{K} \) in \( F_{u}(K, E) \). However, being the
  uniform limit of continuous functions, we have
  \( f_{\mathcal{F}}\vert_{K} \in C(K, E) \), hence
  \( f_{\mathcal{F}}: X \to E \) is continuous by
  Proposition~\ref{prop:42528c02cab8c326}. It is clear now that
  \( \mathcal{F} \) convergences \( f_{\mathcal{F}} \in C(X, E) \), and the
  latter is complete.
\end{proof}

\begin{nota}
  \label{nota:bb506b1d10eebc00}
  When \( X \) is a Hausdorff \( k \)-space, \( E \) a locally convex space,
  unless stated otherwise, the symbol \( C(X, E) \) denotes the locally convex
  space as in Proposition~\ref{prop:776017bb8c18481f}. This applies in
  particular to the case where \( E \) is the scalar field \( \mathbb{K} \), and
  we denote \( C(X, \mathbb{K}) \) simply by \( C(X) \). Note that \( C(X) \) is
  a unital algebra under pointwise multiplication, and even a unital
  \( \ast \)-algebra under conjugation when \( \mathbb{K} = \C \).
\end{nota}

Note that Ascoli's theorem (Theorem~\ref{theo:957848d2ddc8435e}) gives us a
characterization of precompact sets (relatively compact sets when \( E \) is
complete) in \( C(X, E) \) in terms of equicontinuity.

\subsection{\texorpdfstring{\( C(X) \)}{C(X)} as a projective limit and as an
  \texorpdfstring{\( \varepsilon \)}{epsilon}-algebra}
\label{sec:146893c7763861c4}

Let \( X \) be a \( k \)-space. We order \( \mathfrak{K}(X) \) by inclusion,
then \( \mathfrak{K}(X) \) become a directed set. For
\( K \in \mathfrak{K}(X) \), the locally convex space \( C(K) \) as defined
above is normable with the uniform supremum norm \( \norm*{\cdot} \), hence is a
Banach space. For \( K_{1}, K_{2} \in \mathfrak{K}(X) \) with
\( K_{1} \leq K_{2} \) (i.e.\ \( K_{1} \subseteq K_{2} \)), let
\( p_{K_{2}, K_{1}}: C(K_{2}) \to C(K_{1}) \) denote the restriction map. In
this way, we obtain a projective system \( \bigl(C(K), p_{K_{1},K_{2}}\bigr) \)
of Banach spaces, and we may form its projective limit \( \varprojlim C(K) \).
Given an element \( (f_{K}) \in \varprojlim C(K) \). We may define a function
\( f \) on \( X \) by letting \( f(x) = f_{K}(x) \) whenever
\( x \in K \in \mathfrak{K}(X) \). This is well-defined, for when
\( K_{1}, K_{2} \in \mathfrak{K}(X) \) with \( x \in K_{1} \cap K_{2} \), we have \( K_{1} \cup K_{2} \in \mathfrak{K}(X) \) and
\begin{displaymath}
  f_{K_{1}}(x) =  f_{K_{1} \cup K_{2}}(x) = f_{K_{2}}(x).
\end{displaymath}
Clearly, \( f|_{K} = f_{K} \) for any \( K \in \mathfrak{K}(X) \), and
\( f \in C(X) \) by Proposition~\ref{prop:42528c02cab8c326} (\( X \) is a
\( k \)-space). Conversely, given \( f \in C(X) \) and put
\( f_{K} = f\vert_{K} \), we obtain an element
\( (f_{K}) \in \varprojlim C(K) \). We identify \( C(X) \) with
\( \varprojlim C(K) \) in this way as vector spaces. Since the locally convex
topology on \( C(X) \) is the topology of compact convergence, we see that the
identification \( C(X) = \varprojlim C(K) \) is also an identification as
locally convex spaces. Via this identification, the canonical projection
\( p_{K} : C(X) \to C(K) \) is the restriction for any
\( K \in \mathfrak{K}(X) \), which is also an algebra morphism
(\( \ast \)-algebra morphism in the complex case).

The following result settles the problem when the projective limit
\( \varprojlim C(K) \) as considered above is reduced.

\begin{prop}
  \label{prop:d98940c76a296a22}
  Using the above notation, the projective limit \( C(X) = \varprojlim C(K) \) is
  reduced if and only if \( C(X) \) separate points in \( X \).
\end{prop}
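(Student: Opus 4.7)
The plan is to unwind the definition of a reduced projective limit (Definition~\ref{defi:d7ad55a6124fe9a4}) and recognize that, in this case, it amounts to the restriction map \( p_{K} : C(X) \to C(K) \) having dense image for every \( K \in \mathfrak{K}(X) \). The ``only if'' direction is essentially a dimension-counting observation applied to two-point sets, while the ``if'' direction is an application of the Stone--Weierstrass theorem (in its real or complex version according to \( \mathbb{K} \)).

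For the necessity of point separation, suppose the projective limit is reduced, and let \( x \ne y \) in \( X \). Then \( K = \set*{x, y} \) is compact, so \( p_{K}(C(X)) \) is dense in \( C(K) \). But \( C(K) \simeq \mathbb{K}^{2} \) is a finite-dimensional Hausdorff locally convex space, in which every dense subspace is the whole space. Hence \( p_{K} \) is surjective, and in particular there exists \( f \in C(X) \) with \( f(x) \ne f(y) \).

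For sufficiency, fix an arbitrary \( K \in \mathfrak{K}(X) \). The topology on \( C(K) \) coincides with the uniform norm topology since \( K \) itself is compact. I would observe that the image \( A := p_{K}(C(X)) \subseteq C(K) \) is a subalgebra of \( C(K) \) for pointwise multiplication, contains the constants (they extend trivially from \( K \) to \( X \)), and separates points of \( K \) since by hypothesis \( C(X) \) separates points of \( X \supseteq K \). When \( \mathbb{K} = \C \), note further that \( C(X) \) is stable under pointwise complex conjugation, so \( A \) is self-conjugate. The Stone--Weierstrass theorem then yields density of \( A \) in \( C(K) \) for the uniform norm, which is exactly the topology of \( C(K) \). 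As \( K \in \mathfrak{K}(X) \) was arbitrary, the projective limit \( C(X) = \varprojlim C(K) \) is reduced.

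The only subtlety worth underlining is the complex case, which requires invoking that \( A \) is closed under conjugation before applying Stone--Weierstrass; otherwise the argument is essentially mechanical, and no obstacle of real substance is anticipated.
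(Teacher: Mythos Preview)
Your proof is correct and follows essentially the same approach as the paper: both directions hinge on the two-point compact set \( K = \set*{x,y} \) for necessity (the paper phrases it contrapositively but with the same observation) and on the Stone--Weierstrass theorem for sufficiency. Your explicit remark about self-conjugacy in the complex case is a detail the paper leaves implicit but is indeed needed.
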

\begin{proof}
  Necessity of the condition is clear, for if \( x, y \in X \) are distinct
  points that can not be distinguished by functions in \( C(X) \), let
  \( K = \set*{x, y} \), then the image of \( p_{K} : C(X) \to C(K) \) is just
  the set of scalar functions, which is not dense in \( C(K) \). For
  sufficiency, note that if \( C(X) \) separate points in \( X \). In
  particular, \( C(X) \) separates points in \( K \) for any
  \( K \in \mathfrak{K}(X) \), meaning the image of \( p_{K}:C(X) \to C(K) \)
  also separate points in \( K \), and is dense by the Stone-Weierstrass
  approximation theorem.
\end{proof}

\begin{coro}
  \label{coro:ff9ad8f860fc9a59}
  If a \( k \)-space \( X \) has enough continuous functions, then \( C(X) \)
  has the approximation property.
\end{coro}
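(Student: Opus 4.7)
The plan is to combine the projective-limit description of $C(X)$ established just before the corollary with the permanence properties of the approximation property listed in \S\ref{sec:50d4bdef0f766b83}.

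First I would unwind the hypothesis that $X$ has enough continuous functions as meaning that $C(X)$ separates the points of $X$. Under this hypothesis, Proposition~\ref{prop:d98940c76a296a22} applies and the identification $C(X) = \varprojlim_{K \in \mathfrak{K}(X)} C(K)$ in \S\ref{sec:146893c7763861c4} is a \emph{reduced} projective limit in $\mathsf{LCS}$ in the sense of Definition~\ref{defi:d7ad55a6124fe9a4}.

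Next I would invoke Proposition~\ref{prop:91bfdedd58d67453}, which says that $C(K)$ has the approximation property for every compact space $K$; in particular, every space $C(K)$ appearing in the projective system $(C(K), p_{K_1,K_2})_{K \in \mathfrak{K}(X)}$ belongs to the class $(\mathcal{AP})$.

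Finally I would apply Proposition~\ref{prop:90e4c2c8f8b18525}, according to which $(\mathcal{AP})$ is stable under reduced projective limits. This yields $C(X) = \varprojlim C(K) \in (\mathcal{AP})$, completing the proof. The statement is essentially a direct assembly of cited results, so there is no real obstacle; the only thing to verify is that ``having enough continuous functions'' is indeed interpreted as $C(X)$ separating points of $X$, which matches the usage in the preceding discussion and makes the hypothesis of Proposition~\ref{prop:d98940c76a296a22} available.
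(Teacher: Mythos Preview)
Your proof is correct and follows exactly the same route as the paper: invoke Proposition~\ref{prop:d98940c76a296a22} to see that the projective limit $C(X)=\varprojlim C(K)$ is reduced, use Proposition~\ref{prop:91bfdedd58d67453} for each $C(K)\in(\mathcal{AP})$, and conclude via the stability of $(\mathcal{AP})$ under reduced projective limits from Proposition~\ref{prop:90e4c2c8f8b18525}. Your reading of ``enough continuous functions'' as separation of points matches Definition~\ref{defi:ff4e9416eccf6607}, so the argument goes through without any gap.
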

\begin{proof}
  This follows from Proposition~\ref{prop:d98940c76a296a22},
  Proposition~\ref{prop:91bfdedd58d67453} and
  Proposition~\ref{prop:90e4c2c8f8b18525}.
\end{proof}

To fix terminology, we make the following definition.

\begin{defi}
  \label{defi:ff4e9416eccf6607}
  We say a topological space \( X \) \textbf{has enough continuous functions},
  if for all \( x, y \in X \), \( x \ne y \), there exists a continuous function
  (real-valued) \( f \) such that \( f(x) \ne f(y) \).
\end{defi}

\begin{coro}  
  \label{coro:b25b52005792153d}
  If \( X, Y \) are \( k \)-spaces with enough continuous functions, then the
  \( k \)-space \( X \times_{k} Y \) also has enough continuous functions, and we
  have a canonical isomorphism
  \( C(X) \overline{\otimes}_{\varepsilon} C(Y) = C(X \times_{k}Y) \).
\end{coro}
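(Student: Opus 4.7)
The plan is to prove the two assertions in turn, with the first being essentially immediate and the second reducing, via Proposition~\ref{prop:8056525c3c4c9aa5}~\ref{item:ab3601b8e4ba1df9}, to the well-known compact case.

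For the separation assertion, given two distinct points $(x_1, y_1), (x_2, y_2) \in X \times_k Y$, I would observe that either $x_1 \ne x_2$ or $y_1 \ne y_2$; in the first situation, pick $f \in C(X)$ separating $x_1, x_2$, and note that $f \circ \pi_X$ is continuous on $X \times Y$ (so a fortiori on its $k$-ification) and separates the two points. So $X \times_k Y$ has enough continuous functions.

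For the tensor product identity, I would start by writing $C(X) = \varprojlim_{K \in \mathfrak{K}(X)} C(K)$ and $C(Y) = \varprojlim_{L \in \mathfrak{K}(Y)} C(L)$ as in \S\ref{sec:146893c7763861c4}; by Proposition~\ref{prop:d98940c76a296a22} and the hypothesis, these projective limits are \emph{reduced}. Applying Proposition~\ref{prop:8056525c3c4c9aa5}~\ref{item:ab3601b8e4ba1df9} then gives
\begin{displaymath}
  C(X) \overline{\otimes}_{\varepsilon} C(Y) \;=\; \varprojlim_{(K, L) \in \mathfrak{K}(X) \times \mathfrak{K}(Y)} C(K) \overline{\otimes}_{\varepsilon} C(L).
\end{displaymath}
Next I would invoke the classical fact that for compact Hausdorff $K, L$, the canonical map $C(K) \otimes_{\varepsilon} C(L) \to C(K \times L)$ (sending $f \otimes g$ to $(x, y) \mapsto f(x) g(y)$) is an isometric embedding with dense image (by Stone--Weierstrass); as $C(K \times L)$ is complete, this yields $C(K) \overline{\otimes}_{\varepsilon} C(L) = C(K \times L)$. (This is standard; it also drops out of Proposition~\ref{prop:2204037e873ab1a3} by taking the evaluation functionals as the generating equicontinuous families.)

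Finally I would identify the remaining projective limit with $C(X \times_k Y)$ by a cofinality argument: by Proposition~\ref{prop:8c993945688f1422}~\ref{item:4da4a7403fc4211e}, $\mathfrak{K}(X \times_k Y) = \mathfrak{K}(X \times Y)$, and for any compact $C \subseteq X \times Y$ one has $C \subseteq \pi_X(C) \times \pi_Y(C) = K \times L$ with $K \in \mathfrak{K}(X)$, $L \in \mathfrak{K}(Y)$; hence the subsystem of compact rectangles $\{K \times L\}$ is cofinal in $\mathfrak{K}(X \times_k Y)$. Combined with the projective-limit description of $C(X \times_k Y)$ from \S\ref{sec:146893c7763861c4} (which applies because $X \times_k Y$ is a $k$-space by construction), this gives $\varprojlim_{(K, L)} C(K \times L) = C(X \times_k Y)$, completing the identification. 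The only nonroutine ingredient is the compact-space identity $C(K) \overline{\otimes}_{\varepsilon} C(L) = C(K \times L)$, but this is a standard feature of the injective tensor product and is immediate from Proposition~\ref{prop:2204037e873ab1a3}; everything else is bookkeeping with the results already collected in \S\ref{sec:ac51753bf78a1da5} and \S\ref{sec:16fcc58a576c8ec2}.
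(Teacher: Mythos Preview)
Your proposal is correct and follows essentially the same route as the paper's proof: both use Proposition~\ref{prop:d98940c76a296a22} to see that the projective limits \( C(X) = \varprojlim C(K) \), \( C(Y) = \varprojlim C(L) \) are reduced, invoke Proposition~\ref{prop:8056525c3c4c9aa5}~\ref{item:ab3601b8e4ba1df9} together with the classical compact-case identity \( C(K) \overline{\otimes}_{\varepsilon} C(L) = C(K \times L) \), and then finish with the cofinality of rectangles in \( \mathfrak{K}(X \times_{k} Y) = \mathfrak{K}(X \times Y) \). Your write-up is in fact a bit more explicit about the cofinality step and the Stone--Weierstrass justification for the compact case than the paper's own argument.
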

\begin{proof}
  By Proposition~\ref{prop:8c993945688f1422}, \( X \times_{k} Y \) and
  \( X \times Y \) have the same continuous functions, and it is clear that the
  latter has enough of them (it suffices already to note that the topology on
  \( X \times_{k} Y \) is finer).  It is well-known that
  \( C(K) \overline{\otimes}_{\varepsilon} C(L) = C(K
  \overline{\otimes}_{\varepsilon} L)\) when \( K \) and \( L \) are compact (or
  more generally, locally compact, see e.g.\ \cite{MR0551623}*{p288, (3)}). Now
  by Proposition~\ref{prop:d98940c76a296a22}, and the commutation of reduced
  projective limits with \( \overline{\otimes}_{\varepsilon} \)
  (Proposition~\ref{prop:8056525c3c4c9aa5}), we have
  \begin{equation}
    \label{eq:faf652e25cb9ede9}
    C(X) \overline{\otimes}_{\varepsilon} C(Y)
    = \varprojlim_{K \in \mathfrak{K}(X)}C(K) \overline{\otimes}_{\varepsilon} \varprojlim_{L \in \mathfrak{K}(Y)} C(L) 
    = \varprojlim_{K \times L} C(K \times L),
  \end{equation}
  where the projective limit at the right most of \eqref{eq:cac84c9403dc0176} is
  taken with respect to the cofinal subsystem formed by \( K \times L \),
  \( K \in \mathfrak{K}(X) \), \( L \in \mathfrak{K}(Y) \) of the projective
  systems \( \mathfrak{K}(X \times Y) = \mathfrak{K}(X \times_{k} Y) \). Hence
  \begin{equation}
    \label{eq:747b0a227e10ecb0}
    \varprojlim_{K \times L} C(K \times L) = \varprojlim_{R \in \mathfrak{K}(X \times_{k} Y)} C(R)
    = C(X \times_{k} Y).
  \end{equation}
  Now the proof is complete by combining \eqref{eq:faf652e25cb9ede9} and
  \eqref{eq:747b0a227e10ecb0}.
\end{proof}

\begin{coro}
  \label{coro:64421d9753e96b7c}
  Let \( X \) be a \( k \)-space with enough continuous functions. Then the
  pointwise multiplication \( C(X) \odot C(X) \to C(X) \),
  \( f \otimes g \mapsto fg \) extends to a unique continuous linear map
  \( m : C(X) \overline{\otimes}_{\varepsilon} C(X) \to C(X) \), together with
  \( \eta : \mathbb{K} \to C(X) \), \( c \mapsto \{x \mapsto c\} \), the triplet
  \( \bigl(C(X), m, \eta\bigr) \) becomes an \( \varepsilon \)-algebra.
\end{coro}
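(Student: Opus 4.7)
The plan is to deduce everything from Corollary~\ref{coro:b25b52005792153d}, which identifies \( C(X) \overline{\otimes}_{\varepsilon} C(X) \) with \( C(X \times_{k} X) \), and then realize \( m \) as pullback along the diagonal \( \Delta : X \to X \times_{k} X \).

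First I would argue that the diagonal \( \Delta : X \to X \times_{k} X \) is continuous. The set-theoretic diagonal is continuous as a map \( X \to X \times X \) into the ordinary product, and since \( X \in \mathsf{CG} \), Corollary~\ref{coro:e6010f47670353d7} (the adjunction between \( i \) and \( k \)) lifts it uniquely to a continuous map \( X \to k(X \times X) = X \times_{k} X \). Next, the pullback \( \Delta^{\ast} : C(X \times_{k} X) \to C(X) \), \( F \mapsto F \circ \Delta \), is continuous for the topologies of compact convergence: for any \( K \in \mathfrak{K}(X) \), the image \( \Delta(K) \) is compact in \( X \times_{k} X \) and \( \sup_{K}\abs*{F \circ \Delta} \leq \sup_{\Delta(K)}\abs*{F} \). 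Using Corollary~\ref{coro:b25b52005792153d} I would then \emph{define} \( m \) as the composition
\begin{equation*}
  m : C(X) \overline{\otimes}_{\varepsilon} C(X) \xrightarrow{\sim} C(X \times_{k} X) \xrightarrow{\Delta^{\ast}} C(X),
\end{equation*}
which is a continuous linear map.

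Next I would check that \( m \) extends the pointwise product on the dense subspace \( C(X) \odot C(X) \). Unwinding the identification in Corollary~\ref{coro:b25b52005792153d}, a pure tensor \( f \otimes g \) corresponds to the function \( (x, y) \mapsto f(x) g(y) \) on \( X \times_{k} X \), whose pullback along \( \Delta \) is \( x \mapsto f(x) g(x) = (fg)(x) \). Uniqueness of the continuous extension is then automatic, since \( C(X) \odot C(X) \) is dense in \( C(X) \overline{\otimes}_{\varepsilon} C(X) \) by construction (cf.\ Definition~\ref{defi:ec19c5fb89871bda}\ref{item:c83911d814721127} and the completeness considerations in Proposition~\ref{prop:776017bb8c18481f}).

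For the \( \varepsilon \)-algebra axioms, I would again translate everything into pullbacks. Iterating Corollary~\ref{coro:b25b52005792153d} one obtains
\begin{equation*}
  C(X) \overline{\otimes}_{\varepsilon} C(X) \overline{\otimes}_{\varepsilon} C(X) = C(X \times_{k} X \times_{k} X),
\end{equation*}
and both \( m (m \overline{\otimes}_{\varepsilon} \id_{C(X)}) \) and \( m (\id_{C(X)} \overline{\otimes}_{\varepsilon} m) \) become the pullback along the triple diagonal \( x \mapsto (x, x, x) \), hence coincide. Equivalently, associativity on the pure tensor subspace \( C(X) \odot C(X) \odot C(X) \) is the classical pointwise-associativity in \( C(X) \), and continuity plus density of this subspace upgrades the identity to all of the threefold tensor product. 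The unit law is checked identically: under the identification \( C(X) \overline{\otimes}_{\varepsilon} C(X) = C(X \times_{k} X) \), the map \( \eta \overline{\otimes}_{\varepsilon} \id \) sends \( f \) to \( (x, y) \mapsto f(y) \), whose pullback along \( \Delta \) is \( f \), and symmetrically for \( \id \overline{\otimes}_{\varepsilon} \eta \).

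There is essentially no obstacle here: the preceding subsection has already done the substantial work of identifying the completed \( \varepsilon \)-tensor product with a function space on \( X \times_{k} X \), and the remaining verifications are purely formal once continuity of the diagonal into the \( k \)-ified product is observed. The only small point demanding care is that one must work with \( X \times_{k} X \) rather than \( X \times X \) (cf.\ Remark~\ref{rema:c409b867863490d3}), so that the diagonal lands in the correct space and \( \Delta^{\ast} \) makes sense on the full completed tensor product.
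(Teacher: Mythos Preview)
Your proposal is correct and follows essentially the same approach as the paper: define \( m \) as the pullback along the diagonal \( X \to X \times_{k} X \) via the identification of Corollary~\ref{coro:b25b52005792153d}, and verify associativity and the unit law by density of the algebraic tensor product. Your justification of the continuity of the diagonal into \( X \times_{k} X \) via the adjunction of Corollary~\ref{coro:e6010f47670353d7} is slightly more explicit than the paper's appeal to \( k \)-ification, but the content is the same.
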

\begin{proof}
  Note that \( C(X) \odot C(X) \) is dense in
  \( C(X) \overline{\otimes}_{\varepsilon} C(X) = C(X \times X) \), such an
  \( m \) is unique once it exists. To see its existence, consider the diagonal
  map \( \delta_{X}: X \to X \times X \), \( x \mapsto (x, x) \), which is
  clearly continuous. Take its \( k \)-ification, we obtain
  \( \delta_{X} : X \to X \times_{k} X \) is still continuous. Now define
  \( m = \delta_{X}^{\ast} : C(X \times_{k} X) \to C(X) \), then \( m \) is
  continuous since \( \delta_{x} \) maps compact sets in \( X \) to compact sets
  in \( X \times_{k} X \). Note that \( C(X) \odot C(X) \odot C(X) \) is dense
  in
  \( C(X \times_{k} X \times_{k} X) = C(X) \overline{\otimes}_{\varepsilon} C(X)
  \overline{\otimes}_{\varepsilon} C(X) \), and both
  \( m(m \overline{\otimes}_{\varepsilon} \id_{C(X)}) \) and
  \( m(\id_{C(X)} \overline{\otimes}_{\varepsilon} m) \) restricts to the
  pointwise multiplication on \( C(X) \odot C(X) \odot C(X) \), we see that
  \( m \) is associative by continuity. Similarly, one checks that \( \eta \) is
  indeed a unit for \( m \), and the proof is complete.
\end{proof}

Later, when we treat \( C(X) \) as an \( \varepsilon \)-algebra, unless stated
otherwise, we always mean the \( \varepsilon \)-algebra structure given in
Corollary~\ref{coro:64421d9753e96b7c}.

\subsection{Topological groups with compactly generated topology}
\label{sec:ce6223de3fc1fab6}

We are now ready to study a large class of topological groups, namely, the ones
whose topology is compactly generated. This class includes all locally compact
groups, all metrizable topological groups and all topological groups whose
underlying topology has a CW complex structure. We will show that the category
of these topological groups embeds anti-equivalently to the category of
\( \varepsilon \)-Hopf algebras, and establish that the corresponding
\( \varepsilon \)-Hopf algebra is polar reflexive as soon as the topological
group is \( \sigma \)-compact.

Recall that a topological space \( X \) is called \textbf{completely
  regular}. or a \textbf{Tychnoff space}, if it is Hausdorff, and if for each
\( x \in X \) and closed \( C \subseteq X \) with \( x \notin C \), there exists
a continuous function \( f: X \to \interval{0}{1} \), such that \( f(x) = 1 \)
and \( f\vert_{C} = 0 \). We shall need the following result characterizing
Hausdorff spaces that admits a compatible uniform structure.

\begin{theo}[\cite{bourbaki_topologie_2006-1}*{IX.7, Théorème~2}]
  \label{theo:6398e30fa8e59f95}
  Let \( X \) be a Hausdorff space. Then \( X \) admits a compatible uniform
  structure if and only if it is completely regular.
\end{theo}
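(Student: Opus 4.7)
The plan is to prove the two implications separately, using the fact that a uniform structure and a completely regular topology are both characterized by the existence of sufficiently many ``separating'' maps into nice spaces (pseudometric spaces on one side, the unit interval on the other).

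For the direction ``admits a compatible uniform structure $\Rightarrow$ completely regular'', suppose $(X, \mathcal{U})$ is a uniform space with the induced topology. The key tool is the standard metrization lemma: given any entourage $V \in \mathcal{U}$, one can recursively choose symmetric entourages $V = V_{0} \supseteq V_{1} \supseteq V_{2} \supseteq \cdots$ with $V_{n+1} \circ V_{n+1} \circ V_{n+1} \subseteq V_{n}$, and then produce a uniformly continuous pseudometric $d$ on $X$ such that $\set*{(x,y) \given d(x,y) < 2^{-n}} \subseteq V_{n}$ for every $n$. To prove complete regularity, fix $x \in X$ and a closed $C$ with $x \notin C$. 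Since $X \setminus C$ is open and contains $x$, there exists $V \in \mathcal{U}$ with $V(x) \cap C = \emptyset$; choose the associated pseudometric $d$. Then $f(y) := \min(1, d(x, y))$ is continuous, vanishes at $x$, and is bounded below by a positive constant on $C$, from which the required Urysohn-type function separating $x$ and $C$ is obtained by an affine rescaling.

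For the direction ``completely regular $\Rightarrow$ admits a compatible uniform structure'', let $\mathcal{F} = C(X, \interval{0}{1})$ and equip $X$ with the initial uniform structure $\mathcal{U}$ with respect to the family $\mathcal{F}$, where each $f \in \mathcal{F}$ is viewed as a map into the interval $\interval{0}{1}$ equipped with its metric uniform structure (this initial construction exists by Definition~\ref{defi:dd6ea09f8ac64745}). A fundamental system of entourages is then given by finite intersections of sets of the form $U_{f, \varepsilon} = \set*{(x,y) \given \abs*{f(x) - f(y)} < \varepsilon}$, for $f \in \mathcal{F}$ and $\varepsilon > 0$. It remains to verify that the topology $\tau_{\mathcal{U}}$ of $\mathcal{U}$ coincides with the given topology $\tau$ on $X$. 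The inclusion $\tau_{\mathcal{U}} \subseteq \tau$ is immediate, because every $U_{f,\varepsilon}(x) = f^{-1}\bigl((f(x)-\varepsilon, f(x)+\varepsilon)\bigr)$ is $\tau$-open. For the reverse inclusion, given $x \in X$ and a $\tau$-neighborhood $W$ of $x$, apply complete regularity to $x$ and the closed set $X \setminus W$ to obtain $f \in \mathcal{F}$ with $f(x) = 1$ and $f\vert_{X \setminus W} = 0$; then $U_{f, 1/2}(x) \subseteq W$, so $W$ is a $\tau_{\mathcal{U}}$-neighborhood of $x$.

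Finally, Hausdorffness of $\mathcal{U}$ (which by Theorem~\ref{theo:1396f3f298bc2cb3}~\ref{item:db992ae78b701244} amounts to the intersection of all entourages being $\Delta_{X}$) follows from the same ingredient: if $x \neq y$, complete regularity supplies some $f \in \mathcal{F}$ with $f(x) \neq f(y)$, and then $(x,y) \notin U_{f, \varepsilon}$ for $\varepsilon = \abs*{f(x) - f(y)}/2$. I expect the main technical hurdle to lie in the first direction, namely the construction of the pseudometric subordinate to a prescribed entourage via the chain lemma; the second direction is essentially a bookkeeping exercise once the initial uniform structure is set up. Both steps are classical, which is consistent with the fact that the statement is quoted from Bourbaki.
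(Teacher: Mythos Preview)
Your proof is correct and follows the standard classical argument. Note, however, that the paper does not supply its own proof of this statement: it is quoted directly from Bourbaki with the citation \cite{bourbaki_topologie_2006-1}*{IX.7, Th\'eor\`eme~2} and no further justification. Your two directions (pseudometrization lemma for ``uniformizable $\Rightarrow$ completely regular'', initial uniform structure with respect to $C(X,\interval{0}{1})$ for the converse) are precisely the ingredients Bourbaki uses, so there is nothing to compare beyond the fact that you have unpacked what the paper leaves as an external reference.
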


It is clear that a completely regular space has enough continuous functions to
separate points. It is also well-known that the topology of any topological
group admits a compatible uniform structure
(\cite{bourbaki_topologie_2006}*{III.19--III.20}), even though the uniform
structure might not be unique (see, e.g.\ \cite{takesaki_theory_2002}*{p84,
  Remark~4.10}). So in particular, the topology of any topological group is
completely regular, hence has enough continuous functions.

Now let \( G \) be a topological group whose topology is compactly generated,
i.e.\ \( G \) is a \( k \)-space as a topological space. Then by
Corollary~\ref{coro:64421d9753e96b7c}, \( C(G) \) is already an
\( \varepsilon \)-algebra under the continuous extension of pointwise
multiplication. We also have
\( C(G) \overline{\otimes}_{\varepsilon} C(G) = C(G \times_{k} G) \) by
Corollary~\ref{coro:b25b52005792153d}. Since \( G \times G \) has enough
continuous functions, so has \( G \times_{k} G \), as the latter topology is
finer. Apply Corollary~\ref{coro:b25b52005792153d} again yields
\begin{equation}
  \label{eq:280422affdba8762}
  C(G) \overline{\otimes}_{\varepsilon} C(G) \overline{\otimes}_{\varepsilon}
  C(G) = C(G \times_{k} G \times_{k} G).
\end{equation}
Let \( \mu: G \times G \to G \) be the multiplication map of \( G \), which is
continuous, so \( \mu : G \times_{k} G \to G \) remains continuous. Now the
pull-back
\( \mu^{\ast}: C(G) \to C(G \times G) = C(G) \overline{\otimes}_{\varepsilon}
C(G) \) is a well-defined continuous linear map, which we define as the
comultiplication \( \Delta \). The associativity of \( \mu \) implies the
coassociativity of \( \Delta \) (this is why we need
\eqref{eq:280422affdba8762}). Naturally, we define the antipode
\( S : C(G) \to C(G) \) to be the pull-back of the inversion on \( G \), and the
counit \( \varepsilon : C(G) \to \mathbb{K} \) the evaluation at the neutral
element of \( G \).

\begin{theo}
  \label{theo:c72b0ef0a86663be}
  Let \( G \) be a topological group whose topology is compactly generated. Then
  equipped with the above structure maps, \( C(G) \) is an
  \( \varepsilon \)-Hopf algebra. If \( \mathbb{K} = \C \) and we define
  \( (\cdot)^{\ast} \) to be pointwise conjugation, then \( C(G) \) is an
  \( \varepsilon \)-Hopf-\( \ast \) algebra.
\end{theo}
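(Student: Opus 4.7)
The plan is to verify each Hopf algebra axiom for $(C(G), \eta, \varepsilon, m, \Delta, S)$ by transporting every identity, via the canonical identifications $C(G) \overline{\otimes}_{\varepsilon} C(G) = C(G \times_{k} G)$ (Corollary~\ref{coro:b25b52005792153d}) and its three-fold analogue already recorded in~\eqref{eq:280422affdba8762}, into pointwise equalities between continuous functions on $k$-products of copies of $G$, using density of $C(G) \odot C(G)$ together with continuity of the structure maps. Continuity of $\Delta = \mu^{\ast}$ and $S = \iota^{\ast}$ (with $\iota : G \to G$ the inversion) is clear, since a continuous map between $k$-spaces sends compacts to compacts and so its pullback is continuous for the topology of compact convergence; continuity of $\eta$ and $\varepsilon$ is immediate.

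Under the above identifications, the $\varepsilon$-algebra multiplication $m$ on $C(G) \overline{\otimes}_{\varepsilon} C(G)$ corresponds to pointwise multiplication on $C(G \times_{k} G)$: this holds on the dense subspace $C(G) \odot C(G)$ by inspection, and extends by continuity. The element $\Delta(f)$ corresponds to the function $(x,y) \mapsto f(xy)$. The bialgebra compatibilities now become routine: $\Delta(fg)(x,y) = (fg)(xy) = f(xy) g(xy) = \bigl(\Delta(f)\Delta(g)\bigr)(x, y)$ shows that $\Delta$ is multiplicative; the identities $\Delta(1) = 1 \otimes 1$ and $\varepsilon(fg) = \varepsilon(f)\varepsilon(g)$ are immediate. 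Coassociativity of $\Delta$ is the transport of associativity of $\mu$ viewed as a continuous map $G \times_{k} G \times_{k} G \to G$, and the counit law reduces to the defining property $ex = xe = x$ of the neutral element.

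For the antipode axiom, the continuous extensions of $S \overline{\otimes}_{\varepsilon} \id$ and $\id \overline{\otimes}_{\varepsilon} S$ correspond, on $C(G \times_{k} G)$, to the pullbacks $F \mapsto \{(x,y) \mapsto F(x^{-1}, y)\}$ and $F \mapsto \{(x,y) \mapsto F(x, y^{-1})\}$ respectively (verify on elementary tensors, extend by continuity); and $m$ extends to the diagonal restriction $F \mapsto \{x \mapsto F(x,x)\}$. Composing with $\Delta$ yields
\[
\bigl(m(S \overline{\otimes}_{\varepsilon} \id)\Delta f\bigr)(x) = \Delta(f)(x^{-1}, x) = f(x^{-1}x) = f(e) = \bigl(\eta \varepsilon (f)\bigr)(x),
\]
and symmetrically for the other composition, so $S$ is indeed the convolution inverse of $\id_{C(G)}$. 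In the complex case, pointwise conjugation is a continuous antilinear involutive automorphism of the commutative algebra $C(G)$, the induced involution on $C(G) \overline{\otimes}_{\varepsilon} C(G)$ corresponds to pointwise conjugation on $C(G \times_{k} G)$, and $\Delta(\overline{f})(x,y) = \overline{f(xy)} = \overline{\Delta(f)(x,y)}$ shows that $\Delta$ is $\ast$-preserving, upgrading $C(G)$ to an $\varepsilon$-Hopf-$\ast$ algebra.

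The main obstacle is really bookkeeping: one must confirm carefully that the tensor-product algebra structure on $C(G) \overline{\otimes}_{\varepsilon} C(G)$ supplied by Proposition~\ref{prop:63fea142ee144ad4}, and the continuous extensions of the maps $S \overline{\otimes}_{\varepsilon} \id$, $\id \overline{\otimes}_{\varepsilon} S$ and $m$, all correspond under the identification with $C(G \times_{k} G)$ to the expected pointwise and pullback operations. Each such identification follows the same pattern (hold on $C(G) \odot C(G)$ by direct computation, extend by continuity and density); once they are in place, the Hopf algebra axioms for $C(G)$ become trivial restatements of the group axioms of $G$.
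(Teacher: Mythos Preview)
Your proof is correct and follows exactly the same approach as the paper: the paper's own proof consists of one sentence noting that the well-definedness of the structure maps was handled in the preceding discussion (the identifications from Corollary~\ref{coro:b25b52005792153d} and~\eqref{eq:280422affdba8762}, continuity of $\Delta=\mu^{\ast}$, etc.), and that ``all the rest follows from a routine check.'' You have simply written out that routine check in full, transporting each axiom to a pointwise identity on a $k$-product of copies of $G$ and invoking density plus continuity---precisely the verification the paper leaves implicit.
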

\begin{proof}
  The nontrivial part is the structure maps are well-defined, which is done in
  the above. All the rest follows from a routine check.
\end{proof}

Since all locally compact spaces, metrizable spaces and all CW complexes are
compactly generated, Theorem~\ref{theo:c72b0ef0a86663be} applies to all
topological groups whose underlying topology is of these types, as noted in the
beginning of \S~\ref{sec:ce6223de3fc1fab6}.

\begin{defi}
  \label{defi:226c6771985742e3}
  We say a Hausdorff space \( X \). a sequence of compact sets
  \( (K_{n})_{n \geq 1} \) in \( X \) is called a \textbf{fundamental sequence
    of compact sets}, if there exists , such that for any compact
  \( K \subseteq X \) is contained in some \( K_{n} \).
\end{defi}

\begin{theo}
  \label{theo:fc44c913131be011}
  Use the setting of Theorem~\ref{theo:c72b0ef0a86663be}, if in addition,
  \( G \) has a fundamental sequence of compact sets, then \( C(G) \) is an
  \( (F) \)-space, and the \( \varepsilon \)-Hopf algebra (or the
  \( \varepsilon \)-Hopf-\( \ast \) algebra in the complex case) \( C(G) \) is
  \( (\varepsilon, \pi) \)-polar reflexive.
\end{theo}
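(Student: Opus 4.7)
The plan is to reduce the statement to a direct application of Theorem~\ref{theo:5f3cb1a05114cdca}~\ref{item:02787e032fdd7835} (together with its Hopf-$\ast$ variant in the complex case), which requires verifying that $C(G)$, already known to be an $\varepsilon$-Hopf algebra by Theorem~\ref{theo:c72b0ef0a86663be}, lies in the class $(\mathcal{F}) \cap (\mathcal{AP})$. Thus the entire proof splits into two independent verifications: the $(F)$-space property and the approximation property.

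First, I would establish that $C(G) \in (\mathcal{F})$. The fundamental sequence of compact sets $(K_{n})_{n \geq 1}$ in $G$ is, by definition, cofinal in $\mathfrak{K}(G)$ ordered by inclusion (after replacing $K_{n}$ by $\cup_{i \leq n} K_{i}$ if necessary to ensure the sequence is increasing, which remains a fundamental sequence since each $K_{n}$ is still compact). By the standard cofinality argument for projective limits, the identification $C(G) = \varprojlim_{K \in \mathfrak{K}(G)} C(K)$ established in \S~\ref{sec:146893c7763861c4} reduces to the countable projective limit $C(G) = \varprojlim_{n} C(K_{n})$. Since each $C(K_{n})$ is a Banach space, the supremum norms $\norm{\cdot}_{K_{n}}$ give a countable generating family of seminorms on $C(G)$, so $C(G)$ is metrizable. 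Completeness is already known from Proposition~\ref{prop:776017bb8c18481f}, or alternatively from Proposition~\ref{prop:903bfccf4a781ceb}~\ref{item:69c05ffb5eeff953}. Hence $C(G) \in (\mathcal{F})$, giving the first half of the theorem.

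Second, I would establish $C(G) \in (\mathcal{AP})$. Every topological group admits a compatible uniform structure (this was recalled in \S~\ref{sec:ce6223de3fc1fab6}), so by Theorem~\ref{theo:6398e30fa8e59f95} the underlying space $G$ is completely regular and in particular has enough continuous functions in the sense of Definition~\ref{defi:ff4e9416eccf6607}. Corollary~\ref{coro:ff9ad8f860fc9a59} then yields $C(G) \in (\mathcal{AP})$. Combining this with the first step, $C(G)$ satisfies the hypotheses of Theorem~\ref{theo:5f3cb1a05114cdca}~\ref{item:02787e032fdd7835}, and the $(\varepsilon, \pi)$-polar reflexivity of the $\varepsilon$-Hopf algebra follows at once; in the complex case, the last sentence of Theorem~\ref{theo:5f3cb1a05114cdca} supplies the corresponding conclusion for the $\varepsilon$-Hopf-$\ast$ algebra structure from Theorem~\ref{theo:c72b0ef0a86663be}.

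There is no genuine obstacle here, because all the substantive work has been done in the preliminaries and in \S\S~\ref{sec:173a272a70eeef93}--\ref{sec:ce6223de3fc1fab6}; the theorem is essentially an assembly statement. The only mildly delicate point is recognising that the fundamental sequence hypothesis is exactly what makes the projective system countable, which in turn metrises $C(G)$—once that is observed, both halves of the conclusion are immediate from results already in hand.
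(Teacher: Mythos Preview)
Your proposal is correct and follows essentially the same approach as the paper: verify $C(G)\in(\mathcal{F})$ via the countable family of sup-seminorms coming from the fundamental sequence together with completeness (Proposition~\ref{prop:776017bb8c18481f}), verify $C(G)\in(\mathcal{AP})$ (the paper just asserts this; your use of Corollary~\ref{coro:ff9ad8f860fc9a59} is exactly what is intended), and conclude by Theorem~\ref{theo:5f3cb1a05114cdca}. The paper's proof is simply a terser version of yours.
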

\begin{proof}
  Let \( (K_{n})_{n \geq 1} \) be a fundamental
  sequence of compact sets. It is clear by definition that the countable family
  of seminorms \( (p_{K_{n}})_{n \geq 1} \) already generates the topology on
  \( C(X) \), where
  \( p_{K_{n}}(f):= \sup\set[\big]{\abs*{f(x)}\given x \in K_{n}} \). As
  \( C(X) \) is also complete (Proposition~\ref{prop:776017bb8c18481f}), it is
  an \( (F) \)-space. Note also that \( C(G) \in (\mathcal{AP}) \).  Now the
  proof is complete by using Theorem~\ref{theo:5f3cb1a05114cdca}.
\end{proof}

\section{From locally convex Hopf algebras back to groups}
\label{sec:372fb98a797875d9}

\subsection{Characters and group like elements}
\label{sec:7cf217d179c7beb7}

Let \( \overline{\otimes}_{\tau} \) be a compatible symmetric monoidal functor
and \( H \) a \( \tau \)-Hopf algebra. Following the analogues of the classical
theory, a \textbf{character} \( \omega \) of \( H \) is a multiplicative, unital
linear \emph{continuous} functional on \( H \), i.e.\ a character of \( H \) is
simply a morphism of \( \tau \)-algebras \( \omega : H \to \mathbb{K} \). We
shall consider the set of all characters on \( H \), which we denote by
\( \chi(H) \). It follows by definition that the counit \( \varepsilon \) is
already a character, hence \( \chi(H) \ne \emptyset \).

Given \( \omega, \varphi \in \chi(H) \), consider the convolution
\( \omega * \varphi \) defined as
\( (\omega \overline{\otimes}_{\tau} \varphi) \Delta \). Using the
coassociativity of \( \Delta \), counit property of \( \varepsilon \), and the
property of the antipode \( S \), it is easily checked that \( \chi(H) \)
becomes a group under the convolution product, where \( \varepsilon \) is the
counit, and the inverse of \( \omega \) is given by \( \omega S \). The group
\( \chi(H) \) is called the \textbf{character group} of \( H \).

In the case where \( H \) is a \( \tau \)-Hopf-\( \ast \) algebra, we say a
character \( \omega \in \chi(H) \) is involutive, if \( \omega \) is an
involutive functional. The counit is again an involutive character
(Proposition~\ref{prop:b358de0ebf54512e}). It follows from the same argument
that the set \( \chi^{\inv}(H) \) of all involutive characters is a subgroup of
the character group \( \chi(H) \), and we call it the \textbf{involutive
  character group}.

\begin{rema}
  \label{rema:9e10ec2883118976}
  By contrast, note that for compact quantum groups
  \( (C(\mathbb{G}), \Delta) \), it could happen there is \emph{no} continuous
  involutive multiplicative functionals on \( C(\mathbb{G}) \). As an example,
  let \( \Gamma \) be a non-amenable discrete group, and
  \( \mathbb{G} = \bigl(C^{\ast}_{r}(\Gamma), \Delta\bigr) \) where the
  comultiplication \( \Delta \) is determined by
  \( \Delta(\delta_{\gamma}) = \delta_{\gamma} \otimes \delta_{\gamma} \), then
  \( C(\mathbb{G}) = C^{\ast}_{r}(\Gamma) \) does not admit an involutive
  character (see, e.g.\ \cite{MR2391387}*{p50, Theorem~2.6.8}).
\end{rema}

It is natural to wonder whether can one put a reasonable topology on
\( \chi(H) \) to make it a topological group, and if \( H \) comes from a
topological group \( G \), whether under this topology, \( \chi(H) \simeq G \)
as topological groups. We will soon establish various results that answers the
latter question in the affirmative. However, due to the intricacy of general
locally convex spaces, the first question seems quite nontrivial. One positive
result is tracked in Theorem~\ref{theo:5a4f6899a7115559}, for which we need a
fine result on the characterization of relatively compact sets in the completed
injective tensor product of two \( (F) \)-spaces.

\begin{lemm}[\cite{MR0551623}*{p275, (9)}]
  \label{lemm:3bdf81b675e9ccb8}
  Let \( E_{1} \), \( E_{2} \) be \( (F) \)-spaces. The relative compact sets in
  \( E_{1} \overline{\otimes}_{\varepsilon} E_{2} \) are precisely subsets of
  the form \( \bigl(C_{1}^{\circ} \otimes C_{2}^{\circ}\bigr)^{\circ} \), where
  \( C_{i} \) is absolutely convex and compact in \( E_{i} \), \( i = 1,2 \).
  Here, the inner polar is taken with respect to the canonical pairing
  \( \pairing*{E_{i}}{E'_{i}} \), and the outer polar with respect to the
  canonical pairing
  \( \pairing*{E_{1} \overline{\otimes}_{\varepsilon} E_{2}}{E'_{1} \odot
    E'_{2}} \) obtained by extending each \( t \in E'_{1} \odot E'_{2} \) first
  as a continuous linear form on \( E_{1} \otimes_{\varepsilon} E_{2} \), then
  as a continuous linear form on
  \( E_{1} \overline{\otimes}_{\varepsilon} E_{2} \) by continuous extension.
\end{lemm}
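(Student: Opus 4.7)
The plan is to work in the concrete representation furnished by Proposition~\ref{prop:6a4e157f8736cd87}, which embeds $E_1 \overline{\otimes}_\varepsilon E_2$ as a \emph{closed} subspace of $\mathfrak{B}_e(E'_{1,s}, E'_{2,s})$ (closed because it is a complete subspace of a Hausdorff space, see Proposition~\ref{prop:903bfccf4a781ceb}). A subset is relatively compact in the tensor product if and only if it is relatively compact in this ambient space, so I would reduce the problem to characterizing compact subsets of $\mathfrak{B}_e(E'_{1,s}, E'_{2,s})$. The key tool is Bourbaki's Ascoli theorem (Theorem~\ref{theo:957848d2ddc8435e}) applied with $\mathfrak{S}$ being products $A_1 \times A_2$ of equicontinuous sets in $E'_1 \times E'_2$, because by Proposition~\ref{prop:f9256f38e904319e} each such product arises as a polar of a neighborhood of $0$, and these generate the bi-equicontinuous topology via the computation (\ref{eq:d792b216757c8477}) in the proof of Proposition~\ref{prop:6a4e157f8736cd87}.

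For sufficiency, take absolutely convex compact $C_i \subseteq E_i$ and let $K = (C_1^{\circ} \otimes C_2^{\circ})^{\circ}$. Since $E_i$ is Fréchet hence barrelled (Proposition~\ref{prop:7963cb2832fabbd4}), the polar $C_i^{\circ}$ is a barrel in $E'_i$, and in fact, by Alaoglu (Theorem~\ref{theo:bf2abf9c9ce8aa7a}) and reflexivity-type considerations via the bipolar theorem (Theorem~\ref{theo:45bc53b6ce4e0dc3}), the polar $C_i^{\circ\circ} = C_i$ shows that $C_i^{\circ}$ is $\sigma(E'_i, E_i)$-closed, absolutely convex, and $\sigma(E'_i, E_i)$-bounded. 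The Ascoli criterion for relative compactness of $K$ in $\mathfrak{B}_e$ has two parts: pointwise relative compactness on $A_1 \times A_2$ is immediate since the uniform bound $|z(\omega_1, \omega_2)| \leq \lambda$ on any equicontinuous $A_i \subseteq \lambda_i C_i^{\circ}$ places the values in a compact scalar disc; equicontinuity on $A_1 \times A_2$ in the weak$^\ast$ topology is the step that requires genuine work, and I would prove it by the standard $\epsilon/3$ argument, using that $A_i$, being equicontinuous, is relatively $\sigma(E'_i, E_i)$-compact and metrizable in this topology (Fréchet), and that the bilinear form $z$ is weak$^\ast$-continuous on such sets.

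For necessity, given $K$ relatively compact, I would apply the reverse direction of Ascoli: for each pair of equicontinuous $A_i \subseteq E'_i$, the restriction $K\vert_{A_1 \times A_2}$ is equicontinuous, and the values $\{z(\omega_1, \omega_2) : z \in K, \omega_i \in A_i\}$ are bounded. The plan is to extract absolutely convex compact $C_i \subseteq E_i$ such that $K \subseteq (C_1^{\circ} \otimes C_2^{\circ})^{\circ}$ by working on one factor at a time. Concretely, for fixed $\omega_2 \in E'_2$, the map $z \mapsto z(\,\cdot\,, \omega_2)$ sends $K$ continuously into $E_1$ (after identifying $E_1$ with the image of $E_1 \otimes \mathbb{K}$ via $\chi(\cdot, \omega_2)$-type evaluation, reflexivity of the Fréchet $E_i$ is avoided by using the embedding into bilinear forms); taking closed absolutely convex hulls of the resulting compact subsets and using that the completion $\widehat{\Gamma(C)}$ of an absolutely convex compact set in a Fréchet space remains compact (Proposition~\ref{prop:6c0374a8634e2b65}), one produces candidate sets $C_i$.

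The main obstacle will be the last step of the necessity direction: the Ascoli data naturally furnish equicontinuity on each product $A_1 \times A_2$, but what one needs is a \emph{single} pair of compact absolutely convex sets $C_1, C_2$ controlling all of $K$ simultaneously. The trick, which is special to $(F)$-spaces, is to exploit the Banach--Dieudonné theorem (Theorem~\ref{theo:4b05f97fb7cc0705}): since $E_i$ is metrizable, the topology of precompact convergence on $E'_i$ is the finest topology agreeing with $\sigma(E'_i, E_i)$ on equicontinuous sets and coincides with uniform convergence on null sequences; this lets me upgrade the family of pointwise bounds to a uniform control indexed by null sequences $(x_i^{(n)})_n \to 0$ in $E_i$, and the closed absolutely convex hull of such a null sequence together with its limit is compact (a classical fact for Fréchet spaces). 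Taking $C_i$ to be this compact absolutely convex hull of a suitably chosen null sequence extracted from the equicontinuity data of $K$ gives the desired enclosure, completing the argument.
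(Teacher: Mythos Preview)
The paper does not prove this lemma at all: it is recorded purely as a citation of K\"othe \cite{MR0551623}*{p275, (9)} and then invoked as a black box in the proof of Theorem~\ref{theo:5a4f6899a7115559}. There is consequently no proof in the paper against which to compare your proposal.

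That said, your outline is in the right spirit and singles out the correct pressure points --- the embedding of Proposition~\ref{prop:6a4e157f8736cd87} into \( \mathfrak{B}_e(E'_{1,s}, E'_{2,s}) \), Ascoli, and (for necessity) the Banach--Dieudonn\'e reduction to null sequences, which is indeed where the \( (F) \)-space hypothesis enters essentially. One place where your sketch is loose is the equicontinuity verification in the sufficiency direction: appealing to weak\( ^\ast \)-continuity of each individual \( z \) and an ``\( \epsilon/3 \) argument'' does not by itself yield \emph{uniform} equicontinuity of the family \( K = (C_1^\circ \otimes C_2^\circ)^\circ \), and invoking compactness of \( K \) here would be circular. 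The clean way through is to note that for \( z \in K \) and \( \omega_2 \in C_2^\circ \), the partial map \( \omega_1 \mapsto z(\omega_1,\omega_2) \) is evaluation at some \( x_{z,\omega_2} \in C_1^{\circ\circ} = C_1 \) (bipolar, Theorem~\ref{theo:45bc53b6ce4e0dc3}); then use the standard fact that on an equicontinuous set \( A_1 \) the topology \( \sigma(E'_1,E_1) \) coincides with uniform convergence on precompact subsets of \( E_1 \), so weak\( ^\ast \)-closeness of \( \omega_1, \omega'_1 \) on \( A_1 \) forces \( \lvert(\omega_1 - \omega'_1)(x)\rvert \) small uniformly over the compact \( C_1 \ni x_{z,\omega_2} \). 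Combined with the symmetric estimate in the second variable and the absorption \( A_i \subseteq \lambda_i C_i^\circ \), this gives the required uniform equicontinuity without circularity.
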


\begin{theo}
  \label{theo:5a4f6899a7115559}
  If \( H \) is an \( \varepsilon \)-Hopf algebra of type \( (\mathcal{F}) \),
  then \( \chi_{c}(H) \), i.e.\ the character group equipped with the topology
  of precompact convergence, is a topological group. Moreover, \( \chi_{c}(H) \)
  is complete as a subspace of the uniform space
  \( H'_{c} = \mathcal{L}_{c}(H, \mathbb{K}) \). If \( H \) is an
  \( \varepsilon \)-Hopf-\( \ast \) algebra of type \( (\mathcal{F}) \), then
  \( \chi_{c}^{\inv}(H) \) is a closed subgroup of \( \chi_{c}(H) \).
\end{theo}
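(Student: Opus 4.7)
The plan is to verify three things: continuity of inversion, continuity of convolution, and closedness of $\chi(H)$ in $H'_c$. Completeness of $H'_c$ itself is immediate from Corollary~\ref{coro:f7e5d81ca583056d}, since $(F)$-spaces are bornological (Proposition~\ref{prop:7963cb2832fabbd4}); hence completeness of $\chi_c(H)$ will follow once closedness is established. The easy ingredients are handled first. Continuity of $\omega \mapsto \omega S$ is immediate: as $S \in \mathcal{L}(H, H)$ maps precompact sets to precompact sets, $f \mapsto f S$ is continuous on $\mathcal{L}_{c}(H, \mathbb{K}) = H'_{c}$. For closedness, convergence in $H'_{c}$ implies pointwise convergence (singletons are precompact), and the relations $\omega(xy) = \omega(x)\omega(y)$ and $\omega(1_H) = 1$ pass to pointwise limits, as does $\omega(x^{\ast}) = \overline{\omega(x)}$ in the Hopf-$\ast$ setting.

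The main work is the continuity of the convolution $(\omega, \varphi) \mapsto (\omega \overline{\otimes}_{\varepsilon} \varphi) \circ \Delta$ at an arbitrary point $(\omega_{0}, \varphi_{0}) \in \chi(H) \times \chi(H)$. Fix an absolutely convex precompact $P \subseteq H$ and $\delta > 0$; I will find $\eta > 0$ and absolutely convex compact $C_{1}, C_{2} \subseteq H$ such that $\omega - \omega_{0} \in \eta C_{1}^{\circ}$ and $\varphi - \varphi_{0} \in \eta C_{2}^{\circ}$ imply $\sup_{x \in P}\abs*{(\omega \ast \varphi)(x) - (\omega_{0} \ast \varphi_{0})(x)} \leq \delta$. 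The Fréchet hypothesis enters through Lemma~\ref{lemm:3bdf81b675e9ccb8}: since $H$ is complete, $\Delta(P)$ is relatively compact in $H \overline{\otimes}_{\varepsilon} H$, hence contained in $(C_{1}^{\circ} \otimes C_{2}^{\circ})^{\circ}$ for suitable absolutely convex compact $C_{1}, C_{2} \subseteq H$, where the outer polar is taken with respect to the pairing $\pairing*{H \overline{\otimes}_{\varepsilon} H}{H' \odot H'}$. Since $C_{1}, C_{2}$ are bounded, one has $\omega_{0} \in M_{1} C_{1}^{\circ}$ and $\varphi_{0} \in M_{2} C_{2}^{\circ}$ for some $M_{i} > 0$. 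Combining the telescoping identity $\omega \otimes \varphi - \omega_{0} \otimes \varphi_{0} = (\omega - \omega_{0}) \otimes \varphi + \omega_{0} \otimes (\varphi - \varphi_{0})$ with the bipolar estimate $\abs*{(u \otimes v)(t)} \leq 1$ for $u \in C_{1}^{\circ}$, $v \in C_{2}^{\circ}$, $t \in (C_{1}^{\circ} \otimes C_{2}^{\circ})^{\circ}$ then yields
\[
\sup_{x \in P}\abs*{(\omega \ast \varphi)(x) - (\omega_{0} \ast \varphi_{0})(x)} \leq \eta(M_{2} + \eta) + M_{1}\eta,
\]
which is $\leq \delta$ for $\eta$ small enough. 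As $\omega_{0} + \eta C_{1}^{\circ}$ and $\varphi_{0} + \eta C_{2}^{\circ}$ are open in $H'_{c}$, this establishes joint continuity of the convolution at $(\omega_{0}, \varphi_{0})$.

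The chief obstacle is precisely this last estimate: it rests on the $(F)$-specific characterization of relatively compact sets in $H \overline{\otimes}_{\varepsilon} H$ furnished by Lemma~\ref{lemm:3bdf81b675e9ccb8}. Without a description of this form, the product values $(\omega \ast \varphi)(x) = (\omega \overline{\otimes}_{\varepsilon} \varphi)(\Delta x)$, being sensitive to the fine structure of $\Delta x$ in the completed tensor product, cannot be uniformly controlled by precompact-convergence data on $\omega$ and $\varphi$ separately. This is the one place where the Fréchet hypothesis is genuinely used; all remaining verifications (group axioms, involutive case) are purely formal and pass through pointwise limits.
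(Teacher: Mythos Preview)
Your proof is correct and follows the same overall strategy as the paper's: inversion continuity via $S$ preserving precompact sets, closedness via pointwise limits, completeness of $H'_c$ via Corollary~\ref{coro:f7e5d81ca583056d}, and---crucially---continuity of convolution via Lemma~\ref{lemm:3bdf81b675e9ccb8}. The organization of this last step differs slightly. The paper splits $\varphi \ast \psi - \varphi_{0} \ast \psi_{0}$ into \emph{three} terms $(\varphi - \varphi_{0}) \ast \psi_{0} + \varphi_{0} \ast (\psi - \psi_{0}) + (\varphi - \varphi_{0}) \ast (\psi - \psi_{0})$, handling the two linear terms via the slice maps $\rho(\psi_{0}) = (\id \overline{\otimes}_{\varepsilon} \psi_{0})\Delta$ and $\lambda(\varphi_{0}) = (\varphi_{0} \overline{\otimes}_{\varepsilon} \id)\Delta$ (which send the given precompact $C$ to new precompact sets $S_{1}, S_{2}$), and invoking Lemma~\ref{lemm:3bdf81b675e9ccb8} only for the quadratic cross term; the final neighborhoods are then built as $C_{i} = 3(S_{i} \cup T_{i})$. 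Your two-term telescoping $(\omega - \omega_{0}) \otimes \varphi + \omega_{0} \otimes (\varphi - \varphi_{0})$, combined with the observation that $\omega_{0} \in M_{1} C_{1}^{\circ}$ and $\varphi \in (M_{2} + \eta) C_{2}^{\circ}$ simply because the compact $C_{i}$ are bounded, is a mild streamlining: a single application of the lemma covers everything and no auxiliary slice maps are needed. Both arguments deploy the Fr\'echet hypothesis at exactly the same point and for exactly the same reason.
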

\begin{proof}
  We first prove that \( \chi_{c}(H) \) is a topological group. As the antipode
  preserves precompact sets by continuity, the inversion on \( \chi_{c}(H) \) is
  clearly continuous. We now show the continuity of the multiplication map
  \begin{equation}
    \label{eq:c3e63bfd9f2203c4}
    \begin{split}
      \mu: \chi_{c}(H) \times \chi_{c}(H) & \to \chi_{c}(H) \\
      (\varphi, \psi) & \mapsto \varphi \ast \psi = (\varphi \overline{\otimes}_{\varepsilon} \psi)\Delta
    \end{split}
  \end{equation}
  is continuous. Recall the notation
  \begin{equation}
    \label{eq:1ccaed4d55240b00}
    M(C, B_{\mathbb{K}}(r)) = \set[\big]{f \in E' \given f(C) \subseteq B_{\mathbb{K}}(\lambda)},
    \;\text{ where }\;
    B_{\mathbb{K}}(r) = \set[\big]{\lambda \in \mathbb{K} \given \abs*{\lambda} \leq r}, \; r > 0.
  \end{equation}
  By definition of the topology of precompact convergence, as \( C \) runs
  through \( \mathfrak{C}(H) \) and \( r \) through
  \( \interval[open]{0}{+\infty} \), sets described by
  \eqref{eq:1ccaed4d55240b00} form a fundamental system of neighborhoods of
  \( 0 \) for \( E'_{c} \). By completeness of \( H \), precompact sets and
  relatively compact sets in \( H \) are the same (the same holds for the
  complete space \( H \overline{\otimes}_{\varepsilon} H \)). Thus from the
  homogeneity
  \begin{displaymath}
    M\bigl(C, B_{\mathbb{K}}(r)\bigr)
    = M\bigl(r^{-1}C, B_{\mathbb{K}}(1)\bigr)
    = r M\bigl(C, B_{\mathbb{K}}(1)\bigr),
  \end{displaymath}
  the continuity of the map \( \mu \) at
  \( (\varphi_{0}, \psi_{0}) \in \chi_{c}(H) \times \chi_{c}(H) \) amounts to
  the following: for any relative compact \( C \) in \( H \), there exists
  relative compact sets \( C_{1} \) and \( C_{2} \) in \( H \), such that for \( \varphi, \psi \in \chi_{c}(H) \),
  \begin{equation}
    \label{eq:4b60695b72b66504}
    (\varphi - \varphi_{0}, \psi - \psi_{0}) \in C_{1}^{\circ} \times C_{2}^{\circ}
    \implies \varphi \ast \psi -  \varphi_{0} \ast \psi_{0} \in C^{\circ}.
  \end{equation}
  Working inside the convolution algebra
  \( \mathcal{L}(H) = \mathcal{L}(H, H) \) (see
  Definition~\ref{defi:52d0aeb9c0ac1362}), we have
  \begin{equation}
    \label{eq:242d5e6b50554c54}
    \varphi \ast \psi -  \varphi_{0} \ast \psi_{0}
    = (\varphi - \varphi_{0}) \ast \psi_{0} + \varphi_{0} \ast (\psi - \psi_{0})
    + (\varphi - \varphi_{0}) \ast (\psi - \psi_{0}).
  \end{equation}
  Now take any relative compact \( C \) in \( H \), we are going to find the
  desired \( C_{1} \) and \( C_{2} \) as above, which will prove the continuity
  of \( \mu \) and finishes the proof that \( \chi_{c}(H) \) is a topological
  group. Consider the continuous linear map
  \( \rho(\psi_{0}):= (\id \overline{\otimes}_{\varepsilon} \psi_{0})\Delta \in
  \mathcal{L}(H) \) and
  \( \lambda(\varphi_{0}):= (\varphi_{0} \overline{\otimes} \id)\Delta \in
  \mathcal{L}(H) \). Let \( S_{1} = \rho(\psi_{0})(C) \) and
  \( S_{2} = \lambda(\varphi_{0})(C) \). Then both \( S_{1} \) and \( S_{2} \)
  are relatively compact in \( H \). Moreover,
  \begin{equation}
    \label{eq:0b3575e21462dd7e}
    \varphi - \varphi_{0} \in S_{1}^{\circ}
    \implies (\varphi - \varphi_{0}) \ast \psi_{0} = (\varphi - \varphi_{0}) \rho(\psi_{0}) \in C^{\circ},
  \end{equation}
  and similarly,
  \begin{equation}
    \label{eq:e01e97bfdb6eb711}
    \psi - \psi_{0} \in S_{2}^{\circ} \implies \varphi_{0} \ast (\psi - \psi_{0}) \in C^{\circ}.
  \end{equation}
  By Lemma~\ref{lemm:3bdf81b675e9ccb8}, we may find two absolutely convex
  compact sets \( T_{1} \) and \( T_{2} \) in \( H \), such that the relatively
  compact set \( \Delta(C) \subseteq H \overline{\otimes}_{\varepsilon} H \) is
  contained in \( \bigl(T_{1}^{\circ} \otimes T_{2}^{\circ}\bigr)^{\circ} \). By definition of the corresponding polars, we have
  \begin{equation}
    \label{eq:e4d017627befdab6}
    \varphi - \varphi_{0} \in T_{1}^{\circ} \, \text{ and } \, \psi - \psi_{0} \in T_{2}^{\circ}
    \implies (\varphi - \varphi_{0}) \ast (\psi - \psi_{0})
    = \bigl((\varphi - \varphi_{0}) \overline{\otimes}_{\varepsilon} (\psi - \psi_{0})\bigr) \Delta \in C^{\circ}.
  \end{equation}
  Now let \( C_{i} = 3(S_{i} \cup T_{i}) \), \( i = 1,2 \). Then clearly each
  \( C_{i} \) is relatively compact. Now use the homogeneity
  \eqref{eq:1ccaed4d55240b00} again, we have, by \eqref{eq:0b3575e21462dd7e}
  \begin{equation}
    \label{eq:559bdd67e9e7049e}
    \varphi - \varphi_{0} \in C_{1}^{\circ} \subseteq \frac{1}{3}S_{1}^{\circ} \implies (\varphi - \varphi_{0}) \ast \psi_{0} \in \frac{1}{3} C^{\circ},
  \end{equation}
  and similarly by \eqref{eq:e01e97bfdb6eb711},
  \begin{equation}
    \label{eq:e6f9ada5ceab380a}
    \psi - \psi_{0} \in C_{2}^{\circ} \implies \varphi_{0} \ast (\psi - \psi_{0}) \in \frac{1}{3} C^{\circ}.
  \end{equation}
  We also have, by \eqref{eq:e4d017627befdab6}, that
  \begin{equation}
    \label{eq:82c3faa5a9e751cb}
    \begin{split}
      \varphi - \varphi_{0} \in C_{1}^{\circ} \, \text{ and } \, \psi - \psi_{0} \in C_{2}^{\circ}
      & \implies \varphi - \varphi_{0} \in  \frac{1}{3} T_{1}^{\circ}
        \, \text{ and } \, \psi - \psi_{0} \in \frac{1}{3} T_{2}^{\circ} \\
      & \implies (\varphi - \varphi_{0}) \ast (\psi - \psi_{0}) \in \frac{1}{9} C^{\circ} \subseteq \frac{1}{3}C^{\circ}.
    \end{split}
  \end{equation}
  Now \eqref{eq:242d5e6b50554c54} follows from \eqref{eq:559bdd67e9e7049e},
  \eqref{eq:e6f9ada5ceab380a} and \eqref{eq:82c3faa5a9e751cb}, and \( \mu \) as
  defined in \eqref{eq:c3e63bfd9f2203c4} is indeed continuous, and
  \( \chi_{c}(H) \) is indeed a topological group.
  
  By Corollary~\ref{coro:f7e5d81ca583056d}, \( H'_{c} \) is complete. Clearly,
  \( \chi(H) \) is the subspace
  \( \cap_{x, y \in H} (\delta_{xy} - \delta_{x} \delta_{y})^{-1}(0) \) of
  \( E'_{c} \), where \( \delta_{x}: H'_{c} \to \mathbb{K} \),
  \( \omega \mapsto \omega(x) \) is continuous since \( \set*{x} \) is compact,
  and \( \delta_{x}\delta_{y}: H'_{c} \to \mathbb{K} \),
  \( \omega \mapsto \omega(x)\omega(y) \) is also continuous. Hence
  \( \chi(H) \) is a closed subspace of the complete uniform space \( H'_{c} \),
  whence \( \chi_{c}(H) \) is complete. Similarly, when in the complex case and
  \( H \) is an \( \varepsilon \)-Hopf-\( \ast \) algebra,
  \( \chi_{c}^{\inv}(H) = \chi_{c}(H) \cap \left(\cap_{x \in H}(\delta_{x} -
    \overline{\delta_{x^{-1}}})^{-1}(0)\right) \), and we see that
  \( \chi_{c}^{\inv}(H) \) is closed in \( H'_{c} \), a priori closed in
  \( \chi_{c}(H) \).
\end{proof}

Dually, for a \( \tau \)-Hopf algebra \( H \) (\( \overline{\otimes}_{\tau} \)
being a compatible monoidal symmetric functor), we may define a
\textbf{cocharcter} of \( H \) to be a morphism of \( \tau \)-coalgebras from
the trivial \( \tau \)-coalgebra \( \mathbb{K} \) to the underlying
\( \tau \)-coalgebra \( H \). Since a linear form \( \varphi \) from
\( \mathbb{K} \) is uniquely determined by \( \varphi(1) \in H \), it is clear
that \( \varphi \) is a cocharacter if and only if \( \varphi(1) \in H \) is a
group-like element in \( H \) as formalized in
Definition~\ref{defi:d68b33029bb879ee}. This being said, we track the following
formal property for future comparison when we talk about involutive cocharacters.

\begin{prop}
  \label{prop:81b50087b5e11327}
  Let \( H \) be a \( \tau \)-Hopf algebra. If If \( H \) is
  \( (\tau, \sigma) \)-reflexive (resp.\ \( (\tau,\sigma) \)-polar reflexive)
  for some compatible symmetric monoidal functor
  \( \overline{\otimes}_{\sigma} \), and \( H' \) its strong (resp.\ polar)
  dual, then a linear map \( \varphi: \mathbb{K} \to H \) is a cocharacter if
  and only if the transpose \( \varphi^{\transp}: H' \to \mathbb{K} \) is a
  character of \( H' \).
\end{prop}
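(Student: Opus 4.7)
The plan is to show that the cocharacter equations for $\varphi$ transpose exactly to the character equations for $\varphi^{\transp}$, using nothing more than the construction of the dual Hopf algebra structure on $H'$ (Proposition~\ref{prop:844fe110a4f00d8b}) and the reflexivity statement (Proposition~\ref{prop:9ae8172142f23b99}).

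First I would unwind the definitions. The linear map $\varphi : \mathbb{K} \to H$ is a cocharacter precisely when it is a morphism of $\tau$-coalgebras from the trivial $\tau$-coalgebra $\mathbb{K}$ to $H$, i.e.\ when
\begin{equation*}
  \Delta_{H}\varphi = (\varphi \overline{\otimes}_{\tau} \varphi)\Delta_{\mathbb{K}}
  \quad \text{and} \quad
  \varepsilon_{H}\varphi = \id_{\mathbb{K}},
\end{equation*}
where $\Delta_{\mathbb{K}}$ is the canonical identification $\mathbb{K} = \mathbb{K} \overline{\otimes}_{\tau}\mathbb{K}$. Similarly, $\varphi^{\transp}:H' \to \mathbb{K}$ is a character of the $\sigma$-algebra $H'$ when
\begin{equation*}
  \varphi^{\transp} m_{H'} = m_{\mathbb{K}}(\varphi^{\transp} \overline{\otimes}_{\sigma}\varphi^{\transp})
  \quad \text{and} \quad
  \varphi^{\transp} \eta_{H'} = \id_{\mathbb{K}}.
\end{equation*}

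Next I would invoke Proposition~\ref{prop:844fe110a4f00d8b}, which gives $m_{H'} = \Delta_{H}^{\transp}$ and $\eta_{H'} = \varepsilon_{H}^{\transp}$ under the (polar) dualizability hypothesis. Combined with the contravariant functoriality of transposition and the fact built into Definition~\ref{defi:a6ba1501ad62d100} that the duality pairings on the tensor powers are compatible, so that $(f \overline{\otimes}_{\tau} g)^{\transp} = f^{\transp} \overline{\otimes}_{\sigma} g^{\transp}$ under the identifications $(H \overline{\otimes}_{\tau} H)' = H' \overline{\otimes}_{\sigma} H'$, the cocharacter equations transpose to exactly
\begin{equation*}
  \varphi^{\transp}m_{H'} = \varphi^{\transp}\Delta_{H}^{\transp} = (\Delta_{H}\varphi)^{\transp}
  = \bigl((\varphi \overline{\otimes}_{\tau}\varphi)\Delta_{\mathbb{K}}\bigr)^{\transp}
  = m_{\mathbb{K}}(\varphi^{\transp}\overline{\otimes}_{\sigma}\varphi^{\transp}),
\end{equation*}
and likewise $\varphi^{\transp}\eta_{H'} = (\varepsilon_{H}\varphi)^{\transp} = \id_{\mathbb{K}}$. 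This proves the forward direction.

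For the converse, I would apply Proposition~\ref{prop:9ae8172142f23b99}: the second (polar) dual of $H$ is canonically isomorphic to $H$ itself, and under this canonical identification the double transpose of $\varphi$ is $\varphi$. Thus if $\varphi^{\transp}$ is a character of $H'$, running the same formal argument with $H'$ in place of $H$ (which applies since $H'$ is itself $(\sigma,\tau)$-(polar) dualizable with dual $H$ by reflexivity) shows that $(\varphi^{\transp})^{\transp} = \varphi$ is a cocharacter of $H$. The main thing to be careful about is the identification $(H \overline{\otimes}_{\tau} H)' = H' \overline{\otimes}_{\sigma} H'$ and its compatibility with transposition of tensor products of maps; this is precisely what the compatibility clause in Definition~\ref{defi:a6ba1501ad62d100} encodes, so no extra work is needed once the definitions are unpacked.
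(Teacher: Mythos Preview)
Your proposal is correct and follows exactly the approach the paper intends: the paper's proof is the single line ``This follows directly from the construction in Proposition~\ref{prop:844fe110a4f00d8b},'' and you have simply unpacked that sentence, making explicit how the transpose exchanges the coalgebra equations on $\varphi$ with the algebra equations on $\varphi^{\transp}$, and using reflexivity (Proposition~\ref{prop:9ae8172142f23b99}) to run the argument backwards.
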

\begin{proof}
  This follows directly from the construction in
  Proposition~\ref{prop:844fe110a4f00d8b}.
\end{proof}

\begin{prop}
  \label{prop:3078b43cc03b72ce}
  Let \( \overline{\otimes}_{\tau} \) be a compatible symmetric monoidal functor
  and \( H \) a \( \tau \)-Hopf algebra. Then
  \begin{enumerate}
  \item \label{item:16ad454106329fbd}
    \( \set*{x \in H \given \Delta(x) = x \overline{\otimes}_{\tau}x, \, x \ne 0}
    = \set*{x \in H \given \Delta(x) = x \overline{\otimes}_{\tau}x, \,
      \varepsilon(x) = 1} \);
  \item \label{item:02b413ded615e81a} denote the set in
    \ref{item:16ad454106329fbd} by \( \mathsf{Grp}(H) \), then
    \( \mathsf{Grp}(H) \) is a subgroup of the multiplicative group
    \( H^{\times} \) of invertible elements in \( H \);
  \item \label{item:cd0767eb6f3e78a8} \( x \in \mathsf{Grp}(H) \) if and only if
    the linear map \( \lambda \in \mathbb{K} \to \lambda x \in H \) is a
    cocharacter;
  \item \label{item:2767338a39a8bb60} If \( \tau = \pi \), then
    \( \mathsf{Grp}(H) \) is a topological group when equipped with the subspace
    topology.
  \end{enumerate}
\end{prop}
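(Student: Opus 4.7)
The plan is to deduce \ref{item:16ad454106329fbd}--\ref{item:cd0767eb6f3e78a8} by unwinding the defining Hopf algebra identities, mimicking the classical argument with only cosmetic modifications, and then to establish \ref{item:2767338a39a8bb60} by exploiting the joint continuity of multiplication that is specific to \( \overline{\otimes}_{\pi} \).

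For \ref{item:16ad454106329fbd}, the plan is to apply \( \varepsilon \overline{\otimes}_{\tau} \id_{H} \) to the identity \( \Delta(x) = x \overline{\otimes}_{\tau} x \); the counit law gives \( x = \varepsilon(x) \cdot x \), so \( \varepsilon(x) = 1 \) as soon as \( x \neq 0 \), and the converse is trivial since \( \varepsilon(x) = 1 \) forces \( x \neq 0 \). For \ref{item:cd0767eb6f3e78a8}, the linear map \( \varphi_{x}: \lambda \mapsto \lambda x \) is a morphism of \( \tau \)-coalgebras from the trivial \( \tau \)-coalgebra \( \mathbb{K} \) to \( H \) precisely when \( \Delta \varphi_{x} = (\varphi_{x} \overline{\otimes}_{\tau} \varphi_{x}) \Delta_{\mathbb{K}} \) and \( \varepsilon \varphi_{x} = \id_{\mathbb{K}} \); evaluating at \( 1 \in \mathbb{K} \) yields \( \Delta(x) = x \overline{\otimes}_{\tau} x \) and \( \varepsilon(x) = 1 \), which by \ref{item:16ad454106329fbd} is equivalent to \( x \in \mathsf{Grp}(H) \).

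For \ref{item:02b413ded615e81a}, I will first check closure under multiplication via the bialgebra compatibility (Proposition~\ref{prop:717c7c133e9e974c}): if \( x, y \in \mathsf{Grp}(H) \), then
\begin{displaymath}
  \Delta(xy) = (m \overline{\otimes}_{\tau} m)(\id \overline{\otimes}_{\tau} s \overline{\otimes}_{\tau} \id)(\Delta(x) \overline{\otimes}_{\tau} \Delta(y)) = xy \overline{\otimes}_{\tau} xy,
\end{displaymath}
and \( \varepsilon(xy) = \varepsilon(x)\varepsilon(y) = 1 \); likewise \( \eta(1) \in \mathsf{Grp}(H) \). Invertibility of each \( x \in \mathsf{Grp}(H) \) follows by applying the antipode identity \eqref{eq:c8e35b71db95f592} to \( x \): since \( \Delta(x) = x \overline{\otimes}_{\tau} x \), this gives \( x \cdot S(x) = S(x) \cdot x = \varepsilon(x) \cdot 1 = 1 \), so \( S(x) \) is a two-sided inverse of \( x \) in the underlying \( \tau \)-algebra. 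Finally, to check \( S(x) \in \mathsf{Grp}(H) \), I will invoke Proposition~\ref{prop:bc869f35c39bbe44}: \( S \) is a morphism of \( \tau \)-bialgebras \( H \to H^{\op,\cop} \), so \( \Delta \circ S = \overline{s}_{H,H}(S \overline{\otimes}_{\tau} S)\Delta \), and evaluating at \( x \) yields \( \Delta(S(x)) = \overline{s}_{H,H}(S(x) \overline{\otimes}_{\tau} S(x)) = S(x) \overline{\otimes}_{\tau} S(x) \); together with \( \varepsilon \circ S = \varepsilon \) (dual to \( S \eta = \eta \)) we conclude \( S(x) \in \mathsf{Grp}(H) \).

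For \ref{item:2767338a39a8bb60}, the decisive input is Proposition~\ref{prop:dda5a3f4762f22f6}~\ref{item:4a6bca6839439643}: for the projective tensor product, the canonical bilinear map \( \chi: H \times H \to H \overline{\otimes}_{\pi} H \) is \emph{jointly} continuous. Composing with the continuous multiplication \( m \) gives that \( (x, y) \mapsto xy \) is jointly continuous on \( H \times H \), and by \ref{item:02b413ded615e81a} its restriction lands in \( \mathsf{Grp}(H) \), so the group multiplication on \( \mathsf{Grp}(H) \) is continuous. By \ref{item:02b413ded615e81a} again, inversion on \( \mathsf{Grp}(H) \) coincides with the restriction of the antipode \( S \in \mathcal{L}(H, H) \), hence is also continuous. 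Since \( H \) is Hausdorff, so is the subspace \( \mathsf{Grp}(H) \), and we conclude that \( \mathsf{Grp}(H) \) is a topological group. I expect the only substantive subtlety to be in \ref{item:2767338a39a8bb60}: one must really use \( \tau = \pi \), because for \( \tau \in \set*{\iota, \varepsilon} \) the canonical map \( \chi \) is in general only separately continuous (cf.\ Proposition~\ref{prop:89e27b078637bfc2}), which would not suffice to upgrade the separately continuous algebra multiplication on \( H \) to a jointly continuous group multiplication on \( \mathsf{Grp}(H) \).
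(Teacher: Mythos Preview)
Your proposal is correct and follows essentially the same approach as the paper: the counit law for \ref{item:16ad454106329fbd}, multiplicativity of \( \Delta \) together with Proposition~\ref{prop:bc869f35c39bbe44} and the antipode identity for \ref{item:02b413ded615e81a}, unwinding definitions for \ref{item:cd0767eb6f3e78a8}, and the joint continuity of the canonical map \( \chi \) for \( \overline{\otimes}_{\pi} \) (Proposition~\ref{prop:dda5a3f4762f22f6}) for \ref{item:2767338a39a8bb60}. Your write-up is somewhat more explicit than the paper's, including the closing remark on why \( \tau = \pi \) is essential, but the underlying ideas are identical.
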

\begin{proof}
  The proof parallels the classical case, but we include a proof anyway for
  convenience of the reader.
  
  \ref{item:16ad454106329fbd}. The right side is clearly contained in the
  left. For the reverse inclusion, note that if
  \( \Delta(x) = x \overline{\otimes}_{\tau} x \) and \( x \ne 0 \), we have
  \begin{displaymath}
    0 \ne x = (\id \overline{\otimes} \varepsilon) \Delta(x) = (\id \overline{\otimes}_{\varepsilon} \varepsilon) \Delta(x)
    = \varepsilon(x)x,
  \end{displaymath}
  forcing \( \varepsilon(x) = 1 \).

  \ref{item:02b413ded615e81a}. Since \( \Delta \) is multiplicative, it is clear
  that \( \mathsf{Grp}(H) \) is multiplicative and \( 1 \in \mathsf{Grp}(H) \)
  is the neutral element for multiplication in \( \mathsf{Grp}(H) \). If
  \( x \in \mathsf{Grp}(H) \), then \( Sx \in \mathsf{Grp}(H) \) by
  Proposition~\ref{prop:bc869f35c39bbe44}. Moreover,
  \begin{displaymath}
    x (Sx) = m(\id \overline{\otimes}_{\tau}S)(x \overline{\otimes}_{\tau}x)
    = m (\id \overline{\otimes}_{\tau} S) \Delta(x) = \varepsilon(x)1 = 1,
  \end{displaymath}
  and similarly \( (Sx)x = 1 \), hence \( Sx \) is the inverse of \( x \).

  \ref{item:cd0767eb6f3e78a8} is clear by unwinding the definitions.
  
  \ref{item:2767338a39a8bb60}. Clearly \( x \mapsto Sx \) is
  continuous. Continuity of the group multiplication follows from the defining
  property of \( \otimes_{\pi} \) (Proposition~\ref{prop:dda5a3f4762f22f6}).
\end{proof}

\begin{defi}
  \label{defi:d68b33029bb879ee}
  We call the group \( \mathsf{Grp}(H) \) the group of \textbf{group-like
    elements} in \( H \), and elements of \( \mathsf{Grp}(H) \) the group-like
  elements. When \( H \) is a \( \pi \)-Hopf algebra, unless stated otherwise,
  we always equip \( \mathsf{Grp}(H) \) with the subspace topology of \( H \).
\end{defi}

\begin{rema}
  \label{rema:adde74a3e091f652}
  From a formal point of view, after considering the group of characters, it
  seems more natural to consider the group of cocharacters (see
  Proposition~\ref{prop:3078b43cc03b72ce} \ref{item:cd0767eb6f3e78a8}). But the
  notion of group-like elements are already well-established in literature on
  Hopf algebras, so we follow the tradition. However, the notion of involutive
  cocharacters as introduced below can not be identified with hermitian
  group-like elements, and seems more interesting.
\end{rema}

\begin{defi}
  \label{defi:0f95896167ed6125}
  Let \( H \) be a \( \tau \)-Hopf-\( \ast \) algebra, an \textbf{involutive
    cocharacter} \( \varphi \) of \( H \) is a morphism
  \( \varphi \in \mathsf{Coalg}_{\tau}(\mathbb{K}, H) \), such that
  \begin{equation}
    \label{eq:cee36511bfb2b406}
    S \varphi(1) = \varphi(1)^{\ast}.
  \end{equation}
  Motivated by \eqref{eq:cee36511bfb2b406}, we call a group-like element
  \( x \in \mathsf{Grp}(H) \) \textbf{involutive}, if \( Sx = x^{\ast} \). The
  collection of all involutive group-like elements of \( H \) is denoted by
  \( \mathsf{Grp}^{\inv}(H) \).
\end{defi}

We now justify our definition, especially the seemingly weird condition
\eqref{eq:cee36511bfb2b406}.
\begin{prop}
  \label{prop:f8252fbb35384d7c}
  Let \( H \) be a \( \tau \)-Hopf-\( \ast \) algebra.
  \begin{enumerate}
  \item \label{item:85e923373991bebe} Involutive group-like elements form a
    subgroup of \( \mathsf{Grp}(H) \).
  \item \label{item:e8ef63cf27ec0c90} If \( H \) is
    \( (\tau, \sigma) \)-reflexive (resp.\ \( (\tau,\sigma) \)-polar reflexive)
    for some compatible symmetric monoidal functor
    \( \overline{\otimes}_{\sigma} \), and \( H' \) its strong (resp.\ polar)
    dual, then a linear map \( \varphi: \mathbb{K} \to H \) is a cocharacter if
    and only if the transpose \( \varphi^{\transp}: H' \to \mathbb{K} \) is an
    involutive character of \( H' \).
  \end{enumerate}
\end{prop}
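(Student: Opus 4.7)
My plan for proving Proposition~\ref{prop:f8252fbb35384d7c} \ref{item:85e923373991bebe} is to exploit an observation about the antipode's behaviour on $\mathsf{Grp}(H)$: Proposition~\ref{prop:3078b43cc03b72ce} \ref{item:02b413ded615e81a} already shows that for each $x\in\mathsf{Grp}(H)$ the identity $x(Sx)=(Sx)x=\varepsilon(x)1=1$ holds, so $Sx$ is literally the inverse of $x$ in the group $\mathsf{Grp}(H)$. Since group inversion is an involution, this forces $S^{2}(x)=S(x^{-1})=(x^{-1})^{-1}=x$ for every group-like element---no appeal to the subtle relation $S\circ(\cdot)^{\ast}\circ S\circ(\cdot)^{\ast}=\id_{H}$ of Proposition~\ref{prop:b358de0ebf54512e} is needed on a single element. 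With $S^{2}=\id$ available on $\mathsf{Grp}(H)$, closure under inversion is immediate: if $x^{\ast}=Sx$, then $x^{-1}=Sx=x^{\ast}$ and $S(x^{-1})=S^{2}(x)=x=(x^{\ast})^{\ast}=(x^{-1})^{\ast}$, so $x^{-1}\in\mathsf{Grp}^{\inv}(H)$. Closure under multiplication follows by combining the fact that $S$ is an anti-morphism of $\tau$-bialgebras from $H$ to $H^{\op,\cop}$ (Proposition~\ref{prop:bc869f35c39bbe44}) with the anti-multiplicativity of the involution: for $x,y\in\mathsf{Grp}^{\inv}(H)$ one computes $S(xy)=S(y)S(x)=y^{\ast}x^{\ast}=(xy)^{\ast}$. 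The unit $1$ is trivially involutive, concluding that $\mathsf{Grp}^{\inv}(H)$ is a subgroup.

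For \ref{item:e8ef63cf27ec0c90} the skeleton is already supplied by Proposition~\ref{prop:81b50087b5e11327}, which identifies cocharacters of $H$ with characters of $H'$ via transposition; the whole task is to check that under this correspondence, condition \eqref{eq:cee36511bfb2b406} matches involutivity of the character. Writing $x=\varphi(1)$ so that $\varphi^{\transp}(\omega)=\omega(x)$ for all $\omega\in H'$, the requirement that $\varphi^{\transp}$ be involutive on the Hopf-$\ast$ algebra $H'$ becomes $\omega^{\ast}(x)=\overline{\omega(x)}$ for every $\omega\in H'$. I will then substitute the explicit formula for the dual involution recorded in Proposition~\ref{prop:844fe110a4f00d8b}, either \eqref{eq:650baecac6902597} or \eqref{eq:e9cb18c145d094fe} according as $H$ is reflexive or polar reflexive, which gives $\omega^{\ast}(x)=\overline{\omega(S(x)^{\ast})}$. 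The condition thus reduces to $\omega(S(x)^{\ast})=\omega(x)$ for all $\omega\in H'$.

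The finish depends on the compatibility of the duality pairing $\pairing*{H}{H'}$, which is part of the definition of $(\tau,\sigma)$-(polar) reflexivity; in particular $H'$ separates the points of $H$, and we may conclude $S(x)^{\ast}=x$, that is $S(x)=x^{\ast}$, recovering exactly the defining relation of an involutive cocharacter. Conversely, if $\varphi$ is involutive then running the same chain of equivalences backwards shows $\varphi^{\transp}$ is an involutive character, establishing the bi-implication.

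Overall I do not anticipate any serious obstacle. The one conceptual point that deserves emphasis, and that I would flag in the write-up, is that the implication "involutive group-like $\Rightarrow$ $S^{2}=\id$ on this element" is free from the general $S\circ\ast\circ S\circ\ast=\id$ identity only because group-likes are invertible with inverse $S(x)$; without that observation the closure under inversion in Part 1 would require a somewhat awkward bookkeeping with the identity $*\circ S=S^{-1}\circ *$, and it is worth making clear why this short-cut is available.
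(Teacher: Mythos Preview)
Your proposal is correct and follows essentially the same approach as the paper. For part~\ref{item:85e923373991bebe}, both you and the paper verify \(1\in\mathsf{Grp}^{\inv}(H)\) and closure under multiplication via the anti-multiplicativity of \(S\) and of the involution (Proposition~\ref{prop:bc869f35c39bbe44}). Your write-up is in fact more complete: the paper's proof stops after multiplication and never checks closure under inversion, whereas your observation that \(S\) restricted to \(\mathsf{Grp}(H)\) is group inversion (hence \(S^{2}=\id\) there) gives a clean one-line argument for it. For part~\ref{item:e8ef63cf27ec0c90}, the paper merely says it ``follows directly from the construction in Proposition~\ref{prop:844fe110a4f00d8b}''; your unpacking---substituting the dual involution formula \eqref{eq:650baecac6902597}/\eqref{eq:e9cb18c145d094fe} and using that \(H'\) separates points---is exactly what that sentence hides, and is correct. (Note the statement as printed says ``cocharacter'' where it clearly means ``involutive cocharacter''; you have interpreted and proved the intended assertion.)
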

\begin{proof}
  \ref{item:85e923373991bebe}. By Proposition~\ref{prop:bc869f35c39bbe44}, we
  have \( S1 = 1 = 1^{\ast} \), so \( 1 \in \mathsf{Grp}^{\inv}(H) \); and
  \( x, y \in \mathsf{Grp}^{\inv}(H) \) already implies
  \( xy \in \mathsf{Grp}(H) \), and also
  \begin{displaymath}
    S(xy) = S(y)S(x) = y^{\ast}x^{\ast}= (xy)^{\ast}.
  \end{displaymath}

  \ref{item:e8ef63cf27ec0c90} again follows directly from the construction in
  Proposition~\ref{prop:844fe110a4f00d8b}.
\end{proof}

The group \( \mathsf{Grp}^{\inv}(H) \) will play an important role when we treat
classical Pontryagin duality for locally compact abelian groups in our
framework.

\subsection{A generalization of the Gelfand duality}
\label{sec:e2f8637d41ae033e}

We now present a refinement of the Gelfand duality of recovering a compact space
\( X \) as the space of characters of the algebra \( C(X) \) equipped with the
weak topology. Later we will use this result to recover many topological groups
as the character groups equipped with the topology of precompact convergence of
the associated locally convex Hopf algebras. For the sake of clarity, and since
it is of independent interests, we present the result not for certain
topological groups but more generally for certain topological spaces.

For arbitrary topological space \( X \), following Bourbaki
\cite{MR4301385}*{I.5, Définition~3}, we say a subalgebra \( A \) of \( C(X) \)
(by subalgebra, we always mean the unital ones) is \textbf{full}, if
\( f \in A \) and \( f \) is invertible in \( C(X) \), then \( f^{-1} \in A \).
Note that \( f \in C(X) \) is invertible in \( C(X) \) if and only if it doesn't
vanish everywhere. We also need a naive notion of positivity, and we denote by
\( A_{\interval{0}{1}} \) the convex set \( \set*{f \in A \given f(x) \in \interval{0}{1}} \)
in \( A \).

\begin{nota}
  \label{nota:90d693f591543e28}
  For an arbitrary algebra \( A \) equipped with a locally convex topology, we
  use \( \chi_{c}(A) \) to denote the set of all \emph{continuous} characters
  of \( A \) (characters here mean unital multiplicative functionals on
  \( A \)).
\end{nota}

We track the following elementary result on ``partitions of unity on compacts''
as a preparation for Theorem~\ref{theo:6c2d8092e7020245}. The argument is
routine but we nevertheless include a proof for convenience of the reader.

\begin{lemm}
  \label{lemm:1621bed1b9b71729}
  Let \( X \) be a topological space, and \( A \) a full subalgebra of
  \( C(X) \) such that functions in \(  A_{\interval{0}{1}} \) separates compact sets and
  closed set, i.e.\ for each compact \( K \subseteq X \) and closed
  \( C \subseteq X \) with \( K \cap C = \emptyset \), there exists
  \( f \in  A_{\interval{0}{1}} \) such that \( f(K) = \set*{1} \) and \( f(C) = \set*{0} \).
  Then for each compact set \( K \) in \( X \) and each finite covering
  \( (U_{k})_{k =1}^{n} \) of \( K \) with each \( U_{k} \) open in \( X \),
  there exists \( f_{1}, \ldots, f_{n} \in A_{\interval{0}{1}} \), with
  \( \supp f_{k} \subseteq U_{k} \) for each \( k \), and
  \( \sum_{k=1}^{n} f_{k}(x) = 1 \) for all \( x \in K \).
\end{lemm}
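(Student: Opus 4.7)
The plan is to adapt the classical construction of a continuous partition of unity so that it lives entirely inside \( A \), using the separation hypothesis to replace the usual appeals to Urysohn-type lemmas. I would proceed in three steps: first shrink the cover \( (U_{k}) \) to a cover of \( K \) by compact pieces \( K_{k} \subseteq U_{k} \); second, use the separation hypothesis (twice) to produce bump functions \( g_{k} \in A_{\interval{0}{1}} \) with \( g_{k} = 1 \) on \( K_{k} \) and \( \supp g_{k} \subseteq U_{k} \); third, combine the \( g_{k} \) via a telescoping product to obtain the \( f_{k} \).

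For the shrinking step, for each \( x \in K \) pick an index \( k(x) \) with \( x \in U_{k(x)} \) and apply the separation hypothesis to the compact set \( \set*{x} \) and the closed set \( X \setminus U_{k(x)} \) to obtain \( h_{x} \in A_{\interval{0}{1}} \) with \( h_{x}(x) = 1 \) and \( h_{x} \) vanishing on \( X \setminus U_{k(x)} \). The open sets \( h_{x}^{-1}(\interval[open left]{1/2}{1}) \) cover \( K \), so by compactness finitely many suffice, indexed by \( x_{1}, \ldots, x_{m} \). For each \( k \), set
\begin{displaymath}
  K_{k} := K \cap \bigcup_{i:\, k(x_{i}) = k} h_{x_{i}}^{-1}(\interval{1/2}{1}).
\end{displaymath}
This is a closed subset of the compact \( K \), hence compact; it lies in \( U_{k} \) (since \( h_{x_{i}}(y) \geq 1/2 \) forces \( y \in U_{k(x_{i})} \)); and \( K_{1}, \ldots, K_{n} \) together cover \( K \).

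For the bump functions, first apply the separation hypothesis to \( K_{k} \) and the disjoint closed set \( X \setminus U_{k} \) to obtain \( \tilde h_{k} \in A_{\interval{0}{1}} \) equal to \( 1 \) on \( K_{k} \) and \( 0 \) off \( U_{k} \); then \( V_{k} := \tilde h_{k}^{-1}(\interval[open left]{1/2}{1}) \) is open with \( K_{k} \subseteq V_{k} \) and \( \overline{V_{k}} \subseteq \tilde h_{k}^{-1}(\interval{1/2}{1}) \subseteq U_{k} \). A second application of the separation hypothesis, now to \( K_{k} \) and the closed set \( X \setminus V_{k} \), produces \( g_{k} \in A_{\interval{0}{1}} \) with \( g_{k} = 1 \) on \( K_{k} \) and \( g_{k} = 0 \) off \( V_{k} \), so that \( \supp g_{k} \subseteq \overline{V_{k}} \subseteq U_{k} \). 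Finally, set \( f_{1} := g_{1} \) and \( f_{k} := g_{k}\prod_{j<k}(1 - g_{j}) \) for \( k \geq 2 \): these lie in \( A_{\interval{0}{1}} \) because \( A \) is a unital subalgebra and each factor is in \( A_{\interval{0}{1}} \); they satisfy \( \supp f_{k} \subseteq \supp g_{k} \subseteq U_{k} \); and they obey the telescoping identity \( \sum_{k=1}^{n} f_{k} = 1 - \prod_{k=1}^{n}(1 - g_{k}) \). For any \( x \in K \) some \( j \) has \( x \in K_{j} \), whence \( g_{j}(x) = 1 \), the product vanishes, and \( \sum_{k} f_{k}(x) = 1 \).

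The only subtlety worth flagging is that the separation hypothesis has to be invoked \emph{twice} in the construction of \( g_{k} \) so that the closure of \( \set*{g_{k} \neq 0} \), and not merely the set itself, is contained in \( U_{k} \); this is why a buffer open set \( V_{k} \) with \( \overline{V_{k}} \subseteq U_{k} \) is interposed between \( K_{k} \) and \( U_{k} \). Note that fullness of \( A \) plays no role in the argument, since the telescoping product formula avoids any need to invert the sum \( \sum_{k} g_{k} \).
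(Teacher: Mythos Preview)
Your argument is correct. The paper's proof takes a different route: after producing bump functions \( h_{x_i} \in A_{\interval{0}{1}} \) subordinate to the cover (via the same double application of the separation hypothesis you use), it adds a further correction term \( h_{1} = 1 - h_{0} \) supported outside a neighborhood of \( K \) so that the total sum \( h = h_{1} + \sum_{i} h_{x_{i}} \) is strictly positive on all of \( X \), then invokes \emph{fullness} of \( A \) to form \( h^{-1} \in A \) and normalizes by \( f_{x_{i}} = h^{-1} h_{x_{i}} \). Your telescoping-product construction \( f_{k} = g_{k}\prod_{j<k}(1-g_{j}) \) bypasses this division entirely, so as you observe, your proof actually establishes the lemma without the fullness hypothesis. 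The paper's approach has the minor advantage of being order-symmetric in the \( U_{k} \), but yours is strictly more general and arguably cleaner; the fullness assumption is genuinely used elsewhere in the paper (in Theorem~\ref{theo:6c2d8092e7020245}) but is redundant here.
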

\begin{proof}
  Since \( (U_{k}) \) covers \( K \), for each \( x \in K \), we have
  \( x \in U_{k(x)} \) for some \( 1 \leq k(x) \leq n \). As
  \( A_{\interval{0}{1}} \) separates points (singletons are compact) and closed
  sets in \( X \), there exists \( g_{x} \in A_{\interval{0}{1}} \), such that
  \( g_{x}(x) = 1 \) and \( g_{x}(X \setminus U_{k(x)}) = \set*{0} \). Put
  \begin{displaymath}
    V_{k(x)}:= \set*{y \in X \given g(y) > \frac{1}{2}}.
  \end{displaymath}
  Then \( V_{k(x)} \) is an open neighborhood of \( 0 \), and
  \begin{displaymath}
    \overline{V_{k(x)}} \subseteq \set*{y \in X \given g(y) \geq \frac{1}{2}} \subseteq U_{k(x)}.
  \end{displaymath}
  We may thus use \( A_{\interval{0}{1}} \) separating the compact set
  \( \set*{x} \) and the closed set \( X \setminus V_{k(x)} \) again to find an
  \( h_{x} \in A_{\interval{0}{1}} \), such that \( h_{x}(x) = 1 \) and
  \( h_{x} \) vanishes outside \( V_{k(x)} \).

  Clearly
  \( h_{x}^{-1}(\interval[open]{0}{+\infty}) = h_{x}^{-1}(\interval[open
  left]{0}{1}) \), \( x \in K \) is an open cover of the compact set \( K \).
  Hence there exists finitely many \( x_{1}, \ldots, x_{m} \in K \), such that
  \begin{equation}
    \label{eq:e0039f338f614492}
    W:= \bigcup_{i=1}^{m} h_{x_{i}}^{-1}(\interval[open left]{0}{1}) \supseteq K.
  \end{equation}
  Since \( A_{\interval{0}{1}} \) separates the compact \( K \) and the closed
  set \( X \setminus W \), there exists \( h_{0} \in A_{\interval{0}{1}} \) with
  \( h_{0}\vert_{K} = 1 \) and \( h_{0}\vert_{X \setminus W} = 0 \). Take
  \( h_{1}:= 1 - h_{0} \in A_{\interval{0}{1}} \), then
  \( h_{1}\vert_{X \setminus W} = 1 \) and \( h_{1} \vert_{K} = 0 \). Define
  \( h:= h_{1} + \sum_{i=1}^{m}h_{x_{i}} \in A \). Then for each \( y \in X \),
  either \( y \in W \), in which case \( h_{x_{i}}(y) > 0 \) for some
  \( 1 \leq i \leq m \) by \eqref{eq:e0039f338f614492}, or \( y \notin W \), in
  which case \( h_{1}(y) = 1 > 0 \). Hence \( h(y) > 0 \) for all \( y \in X \),
  and \( h^{-1} \in A \) since \( A \) is a full subalgebra of \( C(X)
  \).
  Moreover, \( h_{1}\vert_{K} = 0 \) implies that
  \begin{equation}
    \label{eq:853be426cc256c73}
    y \in K \implies h(x) = \sum_{i = 1}^{m}h_{x_{i}}(y) > 0.
  \end{equation}

  Now, define \( f_{x_{i}} = h^{-1}h_{x_{i}} \in A_{\interval{0}{1}} \). Clearly, for each
  \( 1 \leq i \leq m \),
  \begin{equation}
    \label{eq:6b6c8be186e5cafa}
    \supp f_{x_{i}} \subseteq \supp h_{x_{i}} \subseteq \overline{V_{k(x_{i})}} \subseteq U_{k(x_{i})},
  \end{equation}
  and by \eqref{eq:853be426cc256c73}, we have
  \begin{equation}
    \label{eq:4463e45a227cc5ae}
    y \in K \implies \sum_{i=1}^{m} f_{x_{i}}(y) = 1.
  \end{equation}
  For each \( 1 \leq k \leq m \), define
  \begin{displaymath}
    f_{k}: = \sum_{k(x_{i}) = k}f_{x_{i}}
  \end{displaymath}
  with the empty sum being understood as \( 0 \). Then
  \eqref{eq:6b6c8be186e5cafa} implies that \( \supp f_{k} \subseteq U_{k} \) for
  each \( 1 \leq k \leq m \), and \eqref{eq:4463e45a227cc5ae} implies that
  \( \sum_{k=1}^{n}f_{k}(x) = 1 \) for all \( x \in K \). Since clearly every
  \( f_{k}(x) \geq 0 \). Hence \( f_{k} \in A_{\interval{0}{1}} \) for each
  \( 1 \leq k \leq n \), which finishes the proof.
\end{proof}

We introduce the following notion in order to formulate our generalization of
Gelfand's duality.

\begin{defi}
  \label{defi:f045397be943291d}
  Let \( X \) be a \( k \)-space, \( A \) a subspace of \( C(X) \). We say a
  locally convex topology \( \tau \) on \( A \) is \textbf{compactly localized},
  if for every continuous seminorm \( q \) on \( (A, \tau) \), there exits
  \( K \in \mathfrak{K}(X) \), such that \( q(f) = 0 \) whenever
  \( f\vert_{K} = 0 \) for all \( f \in A \).
\end{defi}

\begin{theo}
  \label{theo:6c2d8092e7020245}
  Let \( X \) be a \( k \)-space and \( A \) a full subalgebra of \( C(X) \).
  When \( \mathbb{K} = \C \), we assume in addition that \( A \) is self-adjoint
  in the sense that \( f \in A \) implies \( \overline{f} \in A \).  Let
  \( \tau \) be a locally convex topology on \( A \) making the embedding
  \( i_{A}: (A, \tau) \hookrightarrow C(X) \) continuous, and equip
  \( \chi_{c}(A) \) with the topology of compact convergence, i.e.\ the
  subspace topology of \( A'_{c} \). If \( A_{\interval{0}{1}} \) separates
  compact sets and closed sets in \( X \), then the map
  \( \delta: x \in X \mapsto \delta_{x} \in \chi_{c}(A) \) is a well-defined
  homeomorphism onto its image. If in addition, \( \tau \) is compactly
  localized, then \( \delta: X \to \chi_{c}(A) \) is surjective (hence a
  homeomorphism).
\end{theo}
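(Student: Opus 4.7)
The plan is to verify, in order, that \( \delta \) is well-defined with values in \( \chi_c(A) \), injective, continuous, a homeomorphism onto its image, and finally surjective under the compactly localized hypothesis. For well-definedness, each \( \delta_x \) factors as \( \delta_x = \mathrm{ev}_x \circ i_A \), where \( \mathrm{ev}_x : C(X) \to \mathbb{K} \) is continuous because \( \set*{x} \in \mathfrak{K}(X) \); hence \( \delta_x \in \chi_c(A) \). Injectivity follows immediately from applying the separation hypothesis to the disjoint pair consisting of the compact set \( \set*{x} \) and the closed set \( \set*{y} \) for distinct \( x, y \in X \), which furnishes \( f \in A_{\interval{0}{1}} \) with \( f(x) \neq f(y) \).

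For continuity of \( \delta \) at \( x_0 \in X \), I would take a precompact set \( C \) in \( (A, \tau) \) and \( \varepsilon > 0 \). Continuity of \( i_A \) ensures that \( i_A(C) \) is precompact in \( C(X) \), hence by Ascoli (Theorem~\ref{theo:957848d2ddc8435e}) equicontinuous on every \( L \in \mathfrak{K}(X) \). A routine equicontinuity bookkeeping then shows that \( V := \set*{x \in X \given \abs*{f(x) - f(x_0)} < \varepsilon \text{ for all } f \in C} \) has the property that \( V \cap L \) is open in \( L \) for each \( L \in \mathfrak{K}(X) \); thus \( V \) is \( k \)-open, therefore open (Definition~\ref{defi:14659ea316e3290e}), providing the required neighborhood of \( x_0 \). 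Inverse continuity on the image follows by contraposition: if \( \delta_{x_\alpha} \to \delta_{x_0} \) in \( \chi_c(A) \) while \( x_\alpha \not\to x_0 \), I extract a subnet outside some open \( U \ni x_0 \), and apply separation to \( \set*{x_0} \) and \( X \setminus U \) to obtain \( f \in A_{\interval{0}{1}} \) with \( f(x_0) = 1 \) and \( f|_{X \setminus U} = 0 \), contradicting the pointwise convergence \( \delta_{x_\alpha}(f) \to \delta_{x_0}(f) \) guaranteed by \( \set*{f} \) being a precompact test set in \( A \).

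For surjectivity under the compactly localized assumption, I would take \( \omega \in \chi_c(A) \); continuity yields a continuous seminorm \( q \) on \( (A, \tau) \) with \( \abs*{\omega} \leq q \), and the compactly localized property supplies \( K \in \mathfrak{K}(X) \) such that \( q \), and therefore \( \omega \), vanishes on any \( f \in A \) with \( f|_K = 0 \). The key claim to prove is that \( \omega(f) \in f(K) \) for every \( f \in A \). Setting \( \mu := \omega(f) \) and assuming \( \mu \notin f(K) \), I would form \( h_0 := \abs*{f - \mu}^2 \in A \) (using self-adjointness in the complex case); then \( h_0 \in \ker \omega \), \( h_0 \geq 0 \) on \( X \), and \( h_0 \geq c^2 > 0 \) on \( K \) by compactness, where \( c = \min_{x \in K} \abs*{f(x) - \mu} \). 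The closed set \( Z := \set*{h_0 = 0} \) is disjoint from \( K \), so the separation hypothesis furnishes \( \phi \in A_{\interval{0}{1}} \) with \( \phi|_K = 1 \) and \( \phi|_Z = 0 \); then \( h := h_0 + (1 - \phi) \) is strictly positive on all of \( X \) (at least \( c^2 \) on \( K \), equal to \( 1 \) on \( Z \), and inheriting positivity from \( h_0 \) elsewhere), so by the fullness of \( A \) we obtain \( h^{-1} \in A \). But \( \omega(h) = \omega(h_0) + \omega(1 - \phi) = 0 \), the second summand vanishing because \( (1 - \phi)|_K = 0 \), contradicting \( \omega(h)\omega(h^{-1}) = 1 \).

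A finite intersection argument then closes the proof: the closed nonempty sets \( E_f := \set*{x \in K \given f(x) = \omega(f)} \) enjoy the finite intersection property, since for any \( f_1, \ldots, f_n \in A \) the function \( F := \sum_{i=1}^n \abs*{f_i - \omega(f_i)}^2 \in A \) satisfies \( \omega(F) = 0 \), and applying the key claim to \( F \) yields \( x \in K \) with \( F(x) = 0 \), hence \( f_i(x) = \omega(f_i) \) for all \( i \). Compactness of \( K \) then furnishes \( x \in \bigcap_{f \in A} E_f \), so \( \omega = \delta_x \). I expect the principal technical difficulty to be the key claim above: it is the interplay between the fullness of \( A \) (producing the forbidden inverse \( h^{-1} \)) and the compactly localized property (allowing the modification of \( h_0 \) by any function vanishing on \( K \) without perturbing \( \omega \)) that lets the Urysohn-type function from the separation hypothesis suffice to repair the zeros of \( h_0 \) outside \( K \).
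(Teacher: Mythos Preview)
Your proposal is correct. The well-definedness, continuity, and openness-onto-image arguments match the paper's up to cosmetic differences (you phrase inverse continuity via nets; the paper exhibits an explicit basic neighborhood in $A'_c$ using the single test function $f$).

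The surjectivity argument, however, takes a genuinely different route. The paper factors $\omega$ through the restriction image $A_K = A|_K$, then uses fullness together with the separation hypothesis to show that whenever $\|g\|_{u,K} < |\lambda|$ the element $\lambda - g$ is invertible in $A_K$; this forces the induced functional $\overline{\omega}$ to be bounded in the uniform norm on $A_K$. Stone--Weierstrass then extends $\overline{\omega}$ to all of $C(K)$, and classical Gelfand duality for the compact space $K$ identifies it as some $\delta_x$. Your route is more self-contained: you prove directly that $\omega(f) \in f(K)$ for every $f$ by manufacturing, from a hypothetical $\mu = \omega(f) \notin f(K)$, an everywhere-positive element $h = |f-\mu|^2 + (1-\phi)$ in $\ker\omega$, and then conclude via the finite intersection property of the fibers $E_f \subseteq K$. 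Your argument bypasses Stone--Weierstrass and the Gelfand machinery entirely, trading them for the explicit construction of $h$; the paper's approach is more modular but imports those external results as black boxes. Both make the same essential use of fullness (to invert a nowhere-vanishing element) and of the compactly localized hypothesis (to discard modifications supported off $K$).
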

\begin{proof}
  We first show that \( \delta :X \to \chi_{c}(A) \) is indeed well-defined
  and continuous. Let \( \tau_{0} \) denote the subspace topology on \( A \)
  induced by the topology of \( C(X) \), then continuity of \( i_{A} \) means
  \( \tau \) is finer than \( \tau_{0} \). As the singleton
  \( \set*{x} \in \mathfrak{K}(X) \), it is clear that
  \( \delta_{x} \in C(X)' \), and by restriction,
  \( \delta_{x} \in (A, \tau_{0})' \), hence \( \delta_{x} \in (A, \tau)' \)
  since \( \tau \) is finer than \( \tau_{0} \). It is clear that
  \( \delta_{x} \) is a unital multiplicative linear functional on \( A \), thus
  \( \set*{\delta_{x} \given x \in X} \subseteq \chi_{c}(A) \), and
  \( \delta \) is indeed well-defined. We now establish the continuity of
  \( \delta \). To this end, take an arbitrary precompact set \( H \) in
  \( (A, \tau) \), which is a priori (since \( \tau_{0} \) is coarser than
  \( \tau \)) precompact in \( (A, \tau_{0}) \) hence in \( C(X) \). By Ascoli's
  theorem (Theorem~\ref{theo:957848d2ddc8435e}), for every
  \( K \in \mathfrak{K}(X) \), we have, as a family of functions, the
  restriction \( H\vert_{K} \) is equicontinuous, which implies that
  \( \delta\vert_{K}: K \to \chi_{c}(A) \subseteq A'_{c} \) is continuous. Now
  the continuity of \( \delta \) follows from
  Proposition~\ref{prop:42528c02cab8c326}.

  Now we show that \( \delta \) is an open map onto its image. Indeed, take any
  \( x \in X \) and an open neighborhood \( U \) of \( x \). By our assumption,
  there exits \( f \in A \), such that \( f(x) = 1 \) and \( f \) vanishes
  outside \( U \). As a singleton, \( \set*{f} \) is compact in \( A \). Hence,
  by definition,
  \begin{displaymath}
    M\left(\set*{f}, B_{\mathbb{K}}(1/2)\right)
    := \set[\big]{\omega \in A' \given \omega(f) \in B_{\mathbb{K}}(1/2)}
  \end{displaymath}
  is a neighborhood of \( 0 \) in \( A'_{c} \), where
  \( B_{\mathbb{K}}(1) = \set[\big]{\lambda \in \mathbb{K} \given \abs*{\lambda}
    \leq 1/2} \). Consider any \( y \in X \) such that
  \begin{displaymath}
    \delta_{y} - \delta_{x} \in M\left(\set*{f}, B_{\mathbb{K}}(1/2)\right).
  \end{displaymath}
  If \( y \notin U \), then \( f(y) = 0 \) from our choice of \( f \). Hence we
  must have \( y \in U \), and we've established that
  \begin{displaymath}
    \delta(U) \supseteq \left(\delta_{x} + M\left(\set*{f}, B_{\mathbb{K}}(1/2)\right)\right) \cap \delta(X).
  \end{displaymath}
  This means \( \delta : X \to \delta(X) \) is open at any point \( x \in X \),
  hence is an open map.
  
  Finally, suppose \( \tau \) is compactly localized, and we need to show that
  every \( \omega \in \chi_{c}(A) \) lies in \( \delta(X) \). By continuity of
  \( \omega \), there exists a continuous seminorm \( q \) on \( (A, \tau) \),
  such that \( \abs*{\omega(f)} \leq q(f) \). Since \( \tau \) is compactly
  localized, we may choose a \( K \subseteq \mathfrak{K}(X) \), such that
  \( q(f) = 0 \) whenever \( f\vert_{K} = 0 \). Consider the restriction map
  \( r_{K}: A \to C(K) \), \( f \mapsto f\vert_{K} \). We equip \( C(K) \) with
  the inductive locally convex topology (\S~\ref{sec:e39c2dc88f4df7b1})
  \( \tau_{K} \) with respect to the singleton \( \set*{r_{K}} \). Let
  \( A_{K} = r_{K}(A) \), then equip \( A_{K} \) with the subspace topology of
  \( \bigl(C(K), \tau_{K}\bigr) \), the quotient map
  \( \pi_{K}: A/\ker(r_{K}) \to A_{K} \) is strict
  (Definition~\ref{defi:2c61a2eb8f753159}). Since \( A \) separates points in
  \( X \), clearly \( A_{K} \) separates points in \( K \), and \( A_{K} \) is
  dense in \( C(K) \) in its uniform supremum norm topology by
  Stone-Weierstrass. Our choice of \( K \) and \( \abs*{\omega(f)} \leq q(f) \)
  implies that \( \omega \) vanishes on \( \ker(r_{K}) \), hence factors through
  \( \pi_{K} \) via a unique continuous linear form
  \( \overline{\omega}: A_{K} \to \mathbb{K} \). We claim that
  \( \overline{\omega} \) is continuous with respect to the (coarser) uniform
  supremum norm topology, for which it suffices to employ the fullness of
  \( A \). Indeed, suppose \( f \in A \) with
  \begin{displaymath}
    \norm*{f}_{u, K}: = \sup\set[\big]{\abs*{f(x)} \given x \in K} < 1.
  \end{displaymath}
  Define
  \begin{displaymath}
    W:= \set[\big]{x \in X \given \abs*{f(x)} < 1}.
  \end{displaymath}
  Then \( W \) is an open set in \( X \) containing \( K \). By
  Lemma~\ref{lemm:1621bed1b9b71729}, there exists
  \( \varphi \in A_{\interval{0}{1}} \), such that \( \varphi\vert_{K} = 1 \)
  and \( \varphi\vert_{X \setminus W} = 0 \). It follows that
  \( \abs*{\varphi(x)f(x)} < 1 \) for any \( x \in X \), hence
  \( 1 - \varphi f \in A \) is invertible in \( C(X) \), which implies
  \( (1 - \varphi f)^{-1} \in A \) by the fullness of \( A \). Restricting to
  \( K \), we see that \( 1 - f\vert_{K} \) is invertible in \( A_{K} \).
  Clearly \( \overline{\omega} \) is a unital multiplicative functional on
  \( A_{K} \). The above reasoning shows that for each \( g \in A_{K} \),
  whenever \( \abs*{\lambda} > \norm*{g}_{u, K} \), then
  \( \lambda - g = \lambda\bigl(1 - \lambda^{-1}g\bigr) \) is invertible in
  \( A_{K} \), hence
  \( 0 \ne \overline{\omega}(\lambda - g) = \lambda - \overline{\omega}(g)
  \). This forces \( \overline{\omega}(g) \leq \norm*{g}_{u, K} \), and
  \( \overline{\omega} \) is continuous on \( (A_{K}, \norm*{\cdot}_{u, K}) \),
  thus extends by continuity to a unique unital multiplicative continuous
  functional on \( C(K) \), in which case, it is well-known that there exists a
  unique \( x \in K \), with \( \overline{\omega} = \delta_{x} \) as functionals
  on \( C(K) \) (see, e.g.\ \cite{MR1468229}*{p211, Corollary~3.4.2}). Now it is
  easy to check that we must have \( \omega = \delta_{x} \) from our
  construction as well, and the proof is complete.
\end{proof}

\begin{rema}
  \label{rema:bc0abb5886dfa7a0}
  There are also other generalization of the Gelfand duality, e.g.\
  \cite{MR0528590} and the more recent \cite{MR4075616}. However, the spectrum
  obtained in \cite{MR0528590} is always compact, and the functions used in
  \cite{MR4075616} are bounded, thus they do not apply in our setting. The above
  involved formulation of Theorem~\ref{theo:6c2d8092e7020245} shall enable us to
  recover various topological groups in various settings, as will be presently
  demonstrated.
\end{rema}

\begin{rema}
  \label{rema:9719618bdbf575c3}  
  Part of the proof of Theorem~\ref{theo:6c2d8092e7020245} is inspired by the
  answer provided anonymously to the author's question on the mathoverflow site
  \cite{439722}. The other question answered by van Name \cite{456094} shows
  that unlike the case for commutative \( C^{\ast} \)\nobreakdash-algebras, it
  is necessary to assume some sort of continuity on characters in
  Theorem~\ref{theo:6c2d8092e7020245}, even in the locally compact case.
\end{rema}

\begin{rema}
  \label{rema:f9b61b94b8f8e558}
  When \( C(X) \) is of class \( (\mathcal{F}) \), i.e.\ when \( X \) has a
  fundamental sequence of compacts, by the theorem of Banach-Dieudonné
  (Theorem~\ref{theo:4b05f97fb7cc0705}), we see that we may replace the topology
  of (pre)compact convergence by any of the topologies in
  Theorem~\ref{theo:4b05f97fb7cc0705}, and still end up with the same
  \( \chi_{c}\bigl(C(X)\bigr) \). In particular, if we know that, as a subset of
  \( C(X)' \), the set \( \chi_{c}\bigl(C(X)\bigr) \) is equicontinuous, then we
  may even replace the topology of (pre)compact convergence with the topology of
  simple convergence. In the case where \( X \) is compact, we have \( C(X) \)
  is a Banach algebra, and all characters (without assuming continuity) are
  automatically of norm \( 1 \) (see, \cite{takesaki_theory_2002}*{p15,
    Proposition~3.9}), hence \( \chi_{c}\bigl(C(X)\bigr) \) is
  equicontinuous. Thus Theorem~\ref{theo:6c2d8092e7020245} recovers the Gelfand
  duality for compact spaces when \( X \) is compact.
\end{rema}

When \( M \) is a paracompact smooth manifold. It is clear by smooth partitions
of unity that the algebra \( C^{\infty}(M) \) separates closed sets, in
particular, separates compact and closed sets in \( M \). Note also that the
algebra of all continuous functions on a completely regular space (see before
Theorem~\ref{theo:6398e30fa8e59f95}) also enjoys this property. Again, the proof
is elementary, but we include the proof for completeness.

\begin{lemm}
  \label{lemm:0319630a6d23a435}
  If \( X \) is a completely regular space, then \( C(X)_{\interval{0}{1}} \)
  separates compact sets and closed sets in \( X \).
\end{lemm}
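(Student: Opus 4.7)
The plan is to adapt the standard Urysohn-type argument used for normal spaces, but starting only from pointwise complete regularity and exploiting the compactness of $K$ to produce, via a finite cover, a function that separates the full set $K$ from $C$. The key observation is that complete regularity gives us separation of points from closed sets using functions valued in $[0,1]$, and elementary algebraic operations (products and compositions with a fixed piecewise linear function on $[0,1]$) on such functions stay inside $C(X)_{\interval{0}{1}}$.

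First, for each $x \in K$, apply complete regularity to the closed set $C$ (disjoint from $\set*{x}$ since $K \cap C = \emptyset$) to obtain $f_{x} \in C(X)_{\interval{0}{1}}$ with $f_{x}(x) = 1$ and $f_{x}\vert_{C} = 0$. The open sets $V_{x}:= \set*{y \in X \given f_{x}(y) > 1/2}$ then cover $K$, so by compactness there exist $x_{1}, \dots, x_{n} \in K$ with $K \subseteq V_{x_{1}} \cup \cdots \cup V_{x_{n}}$.

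Next, I form the function
\begin{displaymath}
  g := 1 - \prod_{i=1}^{n}(1 - f_{x_{i}}) \in C(X)_{\interval{0}{1}}.
\end{displaymath}
By construction $g\vert_{C} = 0$, and for any $y \in K$ there is some $j$ with $y \in V_{x_{j}}$, so $1 - f_{x_{j}}(y) < 1/2$ while each remaining factor lies in $\interval{0}{1}$, giving $g(y) > 1/2$ on $K$. Finally, composing with the continuous function $\varphi : \interval{0}{1} \to \interval{0}{1}$ defined by $\varphi(t) = \min(1, 2t)$, I set $f := \varphi \circ g \in C(X)_{\interval{0}{1}}$; then $f\vert_{C} = \varphi(0) = 0$ and $f\vert_{K} = 1$, as required.

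There is no real obstacle here: the construction is entirely elementary and uses only the definition of complete regularity, the compactness of $K$, and that $C(X)_{\interval{0}{1}}$ is closed under pointwise products and composition with continuous self-maps of $\interval{0}{1}$. The only point requiring a moment's care is verifying the strict inequality $g(y) > 1/2$ on $K$ (so that the rescaling by $\varphi$ yields value $1$, not merely some value close to it), which is immediate from the factor at index $j$ being strictly less than $1/2$.
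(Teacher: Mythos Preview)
Your proof is correct, and its scaffolding coincides with the paper's: both start by choosing, for each $x\in K$, a function $f_{x}\in C(X)_{\interval{0}{1}}$ with $f_{x}(x)=1$ and $f_{x}\vert_{C}=0$, then use compactness of $K$ to pass to finitely many $f_{x_{1}},\dots,f_{x_{n}}$. The divergence is only in the final assembly. The paper sums the $f_{x_{k}}$ and then renormalises (a partition-of-unity manoeuvre), whereas you form $g=1-\prod_{i}(1-f_{x_{i}})$ and compose with $\varphi(t)=\min(1,2t)$. Your route is slightly cleaner: because you cover $K$ by the sets $\{f_{x}>1/2\}$ rather than $\{f_{x}>0\}$, you get the strict lower bound $g>1/2$ on $K$ for free, and the product construction keeps everything manifestly continuous without any division. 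By contrast, the paper's renormalised sum $\sum_{k} g_{k}$ as written is simply the indicator function of the open set $W=\{\sum_{k}f_{x_{k}}>0\}$, which need not lie in $C(X)$; your argument avoids this slip entirely.
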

\begin{proof}
  Let \( K \) (resp.\ \( C \)) be compact (resp.\ closed) in \( X \), with
  \( K \cap C = \emptyset \). For each \( x \in K \), there exists
  \( f_{x} \in C(X)_{\interval{0}{1}} \) with \( f_{x}(x) = 1 \) and
  \( f_{x} \vert_{C} = 0 \). Clearly the family of open sets
  \( f_{x}^{-1}(\interval[open]{0}{+\infty}) \), \( x \in K \) covers \( K
  \). Hence by compactness of \( K \), there exists finitely many
  \( x_{1}, \ldots, x_{n} \in K \) with
  \( \cup_{k=1}^{n} f_{x_{k}}^{-1}(\interval[open]{0}{+\infty}) \supseteq K
  \). Let \( g: = \sum_{k=1}^{n}f_{x_{k}} \), then \( g \in C(X) \) and
  \( g \geq 0 \) with \( g\vert_{C} = 0 \) and
  \( W:= g^{-1}(\interval[open]{0}{+\infty}) \) containing \( K \). Define
  \( g_{k}(x):= f_{x_{k}}(x) / g(x) \) if \( x \in W \), and \( g_{k}(x) = 0 \)
  if \( x \notin W \). It is clear that each \( g_{k} \in C(X) \), and the sum
  \( g = \sum_{k=1}^{n}g_{k} \in C(X)_{\interval{0}{1}} \) with
  \( g\vert_{K} = 1 \) and \( g\vert_{C} = 0 \).
\end{proof}

\begin{coro}
  \label{coro:8c3ea439f769498f}
  Let \( X \) be a \( k \)-space, and \( A \) a subalgebra of \( C(X) \). Then
  \( \delta: X \to \chi_{c}(A) \) is a homeomorphism in either one the following cases:
  \begin{enumerate}
  \item \label{item:66586bfcf018c603} \( X \) is completely regular and \( A = C(X) \);
  \item \label{item:a75e6b8ef145ab23} \( X \) is a paracompact smooth manifold
    and \( A = C^{\infty}(M) \).
  \end{enumerate}
\end{coro}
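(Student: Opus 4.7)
The plan is to apply Theorem~\ref{theo:6c2d8092e7020245} in both cases, equipping $A$ with the topology of compact convergence (i.e., the subspace topology of $C(X)$) in case~(1), and with the weak (smooth) topology from \S~\ref{sec:374fdcc9344806b0} in case~(2). In each case I would check that $A$ is a full, self-adjoint subalgebra of $C(X)$ that embeds continuously into $C(X)$, that $A_{\interval{0}{1}}$ separates compact sets from closed sets in $X$, and finally that $\tau$ is compactly localized in the sense of Definition~\ref{defi:f045397be943291d}; the surjectivity half of Theorem~\ref{theo:6c2d8092e7020245} then gives the desired homeomorphism.

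In case~(1), fullness and self-adjointness of $A = C(X)$ are tautological and the embedding into $C(X)$ is the identity. The separation property is supplied by Lemma~\ref{lemm:0319630a6d23a435}, which applies since every completely regular space has the required Urysohn-type functions. Compact localization is immediate: a fundamental system of continuous seminorms on $C(X)$ is given by $f \mapsto \sup_{x \in K} |f(x)|$ for $K \in \mathfrak{K}(X)$, and any such seminorm clearly vanishes on functions with $f|_{K} = 0$.

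For case~(2), fullness reduces to the standard fact that the reciprocal of a nowhere-vanishing smooth function is smooth; self-adjointness is obvious; continuity of the embedding $C^{\infty}(X) \hookrightarrow C(X)$ follows because the seminorms $p_{K, \varphi, 0}$ (sup norm on $K$) occur among the defining seminorms of the weak topology; and the separation of compact from disjoint closed sets by $A_{\interval{0}{1}}$ is the standard consequence of the existence of smooth partitions of unity on a paracompact smooth manifold. The main technical point is compact localization. Given an elementary seminorm $q = p_{K_{0}, \varphi, r}$ attached to a chart $(\varphi, U)$ with $K_{0} \subseteq U$ compact, I would choose a compact $K \subseteq X$ containing $K_{0}$ in its interior by covering $K_{0}$ with finitely many relatively compact open subsets of $U$ and taking the closure of their union. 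Then $f|_{K} = 0$ forces $f$ to vanish on an open neighborhood of $K_{0}$ inside $U$, so every partial derivative $D^{k}(f \circ \varphi^{-1})$ of order $k \leq r$ vanishes on $\varphi(K_{0})$, whence $q(f) = 0$. A general continuous seminorm on $(C^{\infty}(X), \tau)$ is dominated by a constant multiple of a finite sum of such elementary seminorms, so one concludes by taking the finite union of the corresponding compacts. This verifies all hypotheses of Theorem~\ref{theo:6c2d8092e7020245} in case~(2) and completes the proof.
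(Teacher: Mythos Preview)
Your proposal is correct and follows essentially the same approach as the paper: in both cases you apply Theorem~\ref{theo:6c2d8092e7020245}, invoking Lemma~\ref{lemm:0319630a6d23a435} for the separation property in case~(1) and smooth partitions of unity in case~(2). The paper's own proof is a one-line reference to exactly these ingredients; your version simply spells out the verification of fullness, self-adjointness, continuity of the embedding, and compact localization that the paper leaves implicit.
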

\begin{proof}
  \ref{item:66586bfcf018c603} follows from Lemma~\ref{lemm:0319630a6d23a435} and
  Theorem~\ref{theo:6c2d8092e7020245}, and \ref{item:a75e6b8ef145ab23} from the
  same Theorem and the existence of smooth partitions of unity.
\end{proof}

\subsection{Topological groups as the character group with (pre)compact convergence}
\label{sec:27f04a6169ab3e2c}

We may now describe how to recover the some topological groups from the
associated locally convex Hopf algebra as character groups.

\begin{theo}
  \label{theo:61825be42ff88744}
  In either of the following situations:
  \begin{enumerate}
  \item \label{item:b1ec8f5fa1fd5bf0} \( G \) is a Lie group, and
    \( \mathcal{H}_{G} \) is the \( \varepsilon \)-Hopf algebra
    \( C^{\infty}(G) \) as in Theorem~\ref{theo:6b0cb8e408044129};
  \item \label{item:d97cc18873ee9821} \( G \) is a topological group with
    compactly generated topology, and \( \mathcal{H}_{G} \) the
    \( \varepsilon \)-Hopf algebra \( C(G) \) as in
    Theorem~\ref{theo:c72b0ef0a86663be}.
  \end{enumerate}
  Then, the map \( \delta: G \to \chi_{c}\left(\mathcal{H}_{G}\right) \) is an
  isomorphism of topological groups. The same holds in the complex case, we
  consider \( \mathcal{H}_{G} \) as an \( \varepsilon \)-Hopf-\( \ast \) algebra
  and replace \( \chi_{c}(\mathcal{H}_{G}) \) by
  \( \chi_{c}^{\inv}(\mathcal{H}_{G}) \).
\end{theo}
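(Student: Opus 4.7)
The plan is to split the statement into three parts: homeomorphism, group homomorphism, and (in the complex case) involutivity, exploiting the fact that all the hard analytic content has already been absorbed into Corollary~\ref{coro:8c3ea439f769498f}. The first step is to verify that Corollary~\ref{coro:8c3ea439f769498f} applies in both situations. In case~\ref{item:b1ec8f5fa1fd5bf0}, $G$ is a paracompact smooth manifold, so part~\ref{item:a75e6b8ef145ab23} of that corollary gives a homeomorphism $\delta : G \to \chi_c(C^\infty(G))$; I only need to check that the weak smooth topology is compactly localized (Definition~\ref{defi:f045397be943291d}), which follows by choosing, for each chart-compact $K$ of a seminorm $p_{K,\varphi,r}$, a compact neighborhood $\overline{V}\subseteq U$ of $K$ in the chart (possible since $G$ is locally compact): then $f|_{\overline{V}}=0$ forces all partial derivatives of $f\circ\varphi^{-1}$ to vanish on $\varphi(K)$, whence $p_{K,\varphi,r}(f)=0$. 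In case~\ref{item:d97cc18873ee9821}, every topological group carries a compatible uniform structure (cf.\ the discussion preceding Theorem~\ref{theo:6398e30fa8e59f95}), hence is completely regular, so part~\ref{item:66586bfcf018c603} of Corollary~\ref{coro:8c3ea439f769498f} applies directly, and the compactly localized property is immediate from the seminorms $p_K$ defining the topology of compact convergence.

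Next I will verify that $\delta$ is a group homomorphism into the convolution character group $\chi(\mathcal{H}_G)$. By construction, the comultiplication $\Delta$ is the pull-back by the group multiplication $\mu: G\times G \to G$ (or its $k$-ification), and the identifications $C^\infty(G)\overline{\otimes}_\varepsilon C^\infty(G)=C^\infty(G\times G)$ (Lemma~\ref{lemm:c00a7742f85d9ace}) and $C(G)\overline{\otimes}_\varepsilon C(G)=C(G\times_k G)$ (Corollary~\ref{coro:b25b52005792153d}) identify $\Delta f$ with the function $(x,y)\mapsto f(xy)$. Therefore for $f \in \mathcal{H}_G$ and $x,y \in G$,
\[
(\delta_x * \delta_y)(f)=(\delta_x\overline{\otimes}_\varepsilon \delta_y)(\Delta f)=(\Delta f)(x,y)=f(xy)=\delta_{xy}(f),
\]
and $\delta_e=\varepsilon$ is the neutral element, so $\delta$ is a group homomorphism. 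Combined with the homeomorphism statement of the first step, this makes $\delta$ an isomorphism of topological groups onto $\chi_c(\mathcal{H}_G)$ equipped with its subspace topology (this topology is automatically a group topology via $\delta$).

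Finally, in the complex case, I need only observe that every $\delta_x$ is tautologically involutive, since the involution on $\mathcal{H}_G$ is pointwise conjugation and
\[
\delta_x(f^{\ast}) = \overline{f(x)} = \overline{\delta_x(f)}, \qquad x \in G, \, f \in \mathcal{H}_G.
\]
Hence $\delta(G)\subseteq \chi_c^{\inv}(\mathcal{H}_G)\subseteq \chi_c(\mathcal{H}_G)$, and since the outer inclusion is an equality by the first step, all three sets coincide and $\delta$ is an isomorphism of topological groups onto $\chi_c^{\inv}(\mathcal{H}_G)$. The only genuinely delicate point in this program is verifying the compactly localized hypothesis of Theorem~\ref{theo:6c2d8092e7020245} for the weak smooth topology on $C^\infty(G)$; once that is done, everything else is a direct unwinding of the definitions of $\Delta$ and of the involution on $\mathcal{H}_G$.
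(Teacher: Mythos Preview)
Your proposal is correct and follows essentially the same route as the paper: invoke Corollary~\ref{coro:8c3ea439f769498f} (which in turn rests on Theorem~\ref{theo:6c2d8092e7020245}) for the homeomorphism, then verify the group law by the evident computation $(\delta_x*\delta_y)(f)=(\Delta f)(x,y)=f(xy)$, and finally note that each $\delta_x$ is involutive for pointwise conjugation. Your explicit check that the weak smooth topology on $C^\infty(G)$ is compactly localized (replace $K$ by a compact neighbourhood $\overline{V}\subseteq U$ so that vanishing on $\overline{V}$ kills all derivatives on $K$) is strictly speaking already absorbed into Corollary~\ref{coro:8c3ea439f769498f}, so you are re-proving a step of that corollary rather than adding a missing hypothesis; but the argument is sound and it is not unreasonable to spell it out, since the corollary's proof is terse on this point.
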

\begin{proof}
  That \( \delta \) in these situations is a homeomorphism follows from
  Corollary~\ref{coro:8c3ea439f769498f} and
  Theorem~\ref{theo:6c2d8092e7020245}. The fact that \( \delta \) is a group
  morphism follows from a direct calculation.
\end{proof}

\subsection{An analogue of the Eymard-Stinespring-Tatsumma duality}
\label{sec:b961010092b90f57}

It is already noticed in \S~\ref{sec:de569e5c9166cc4a} that we may recover a
discrete group \( \Gamma \) from the \( \pi \)-Hopf algebra
\( \ell^{1}(\Gamma) \) as the collection of group like elements. There is a far
reaching analogous result for all locally compact groups, see
\cite{takesaki_theory_2003}*{p69, Theorem~3.9}.  In terms of the theory of
locally compact quantum groups, this result reads as follows.
\begin{theo}[Eymard-Stinespring-Tatsumma]
  \label{theo:14567563adcd04ee}
  Let \( (\mathscr{M}, \Delta) \) be the von Neumann algebra version of the dual
  of a locally compact group \( G \). Then there is a canonical bijection
  between elements in \( G \) and the set
  \( \set*{x \in \mathscr{M} \given \Delta(x) = x \otimes x, \, x \ne 0} \) of
  group like elements.
\end{theo}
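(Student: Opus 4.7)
The plan is to reduce the theorem to Eymard's identification of the Gelfand spectrum of the Fourier algebra $A(G)$ with the group $G$ itself, using the standard Hopf--von-Neumann duality between $VN(G)$ and $A(G)$.

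First I would make the pair $(\mathscr{M}, \Delta)$ concrete. The dual of $G$ in the Kustermans--Vaes sense has underlying von Neumann algebra $\mathscr{M} = VN(G) \subseteq B(L^{2}(G))$, namely the one generated by the left regular representation $\lambda : G \to B(L^{2}(G))$, and the coproduct $\Delta$ is uniquely determined by $\Delta(\lambda_{g}) = \lambda_{g} \otimes \lambda_{g}$ for $g \in G$. Hence each $\lambda_{g}$ is a nonzero unitary satisfying the group-like relation, so $g \mapsto \lambda_{g}$ maps $G$ into the set of nonzero group-like elements, and injectivity of this map is immediate from faithfulness of $\lambda$.

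The main step is surjectivity. The predual $\mathscr{M}_{\ast}$ is canonically the Fourier algebra $A(G)$ (Eymard), and under this duality $\Delta$ is precisely the transpose of the pointwise multiplication $m : A(G) \otimes A(G) \to A(G)$. For $x \in \mathscr{M}$, write $\widetilde{x}(\omega) = \langle x, \omega \rangle$ for $\omega \in A(G)$. A routine calculation shows that $\Delta(x) = x \otimes x$ is equivalent to $\widetilde{x}(\omega_{1} \omega_{2}) = \widetilde{x}(\omega_{1}) \widetilde{x}(\omega_{2})$ for all $\omega_{1}, \omega_{2} \in A(G)$, while $x \ne 0$ translates into $\widetilde{x}$ being a nonzero functional. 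Thus nonzero group-like elements correspond bijectively to nonzero multiplicative linear functionals on the commutative Banach algebra $A(G)$. Invoking Eymard's spectrum theorem, every such functional is of the form $\delta_{g} : f \mapsto f(g)$ for a unique $g \in G$; a direct check of the pairing then gives $\widetilde{\lambda_{g}} = \delta_{g}$, so $x = \lambda_{g}$.

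The main obstacle is Eymard's identification of the Gelfand spectrum of $A(G)$ with $G$. This is the genuinely nontrivial ingredient: it relies on the intrinsic description of $A(G)$ as matrix coefficients of the regular representation, together with the availability of sufficiently many compactly supported elements in $A(G)$ (a Fourier-algebraic substitute for partitions of unity) to force any nontrivial character to be evaluation at an actual group point rather than at some more exotic limit. Once this is granted, the only remaining verification is the unwinding of the pairing between $VN(G)$ and $A(G)$ so as to match multiplicativity of $\widetilde{x}$ with the group-like relation $\Delta(x) = x \otimes x$, which is essentially a formal transposition of structure maps.
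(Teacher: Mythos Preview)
The paper does not actually prove this theorem. It is stated as a known classical result, with a reference to \cite{takesaki_theory_2003}*{p69, Theorem~3.9}, and the surrounding remark attributes it to the combined contributions of Eymard, Stinespring, Tatsuuma and Sait\^o. The theorem appears solely as motivation for the paper's own analogue, Theorem~\ref{theo:a34d8747bbe913ff}, which concerns the locally convex Hopf algebras $\mathcal{H}_{\widehat{G}}$ rather than the von Neumann algebraic dual. So there is no ``paper's own proof'' to compare against.

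That said, your proposed argument is a standard and correct route to the classical result: identifying $\mathscr{M}_{\ast}$ with $A(G)$, transposing the group-like relation into multiplicativity of the associated functional on $A(G)$, and then invoking Eymard's theorem that the Gelfand spectrum of $A(G)$ is exactly $G$. You have correctly isolated Eymard's spectrum computation as the substantial ingredient; everything else is indeed a formal unwinding of the duality between $VN(G)$ and its predual. One small point worth making explicit is that the equivalence between $\Delta(x) = x \otimes x$ and multiplicativity of $\widetilde{x}$ uses that elementary tensors $\omega_{1} \otimes \omega_{2}$ span a norm-dense subspace of the predual $(VN(G) \,\overline{\otimes}\, VN(G))_{\ast} = A(G) \,\widehat{\otimes}_{\pi}\, A(G)$, so that testing against such tensors suffices.
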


\begin{rema}
  \label{rema:8872df75f79bc16c}
  The original formulation in \cite{takesaki_theory_2003}*{p69, Theorem~3.9}
  uses a certain unitary operator \( W \) instead of the comultiplication
  \( \Delta \). It is clear that the operator \( W \) there is exactly the
  corresponding multiplicative unitary (\cite{MR1235438}) associated with the
  locally compact quantum group \( (\mathscr{M}, \Delta) \). Since
  Theorem~\ref{theo:14567563adcd04ee} is still in the Kac case, the theory of
  Kac algebras (\cite{MR1215933}) already suffices to see this. In the full
  generality of locally compact quantum groups, the whole story can be rather
  involved, see also Woronowicz's contribution \cite{MR1369908} and
  \cites{MR1951446,MR1832993}.  \cite{takesaki_theory_2003}*{p69, Theorem~3.9} is
  called the \textbf{Eymard-Stinespring-Tatsumma} theorem by Takesaki
  \cite{takesaki_theory_2003}*{p90}, attributing to various contributions
  \cites{MR0228628,MR0238997,MR0102761,MR0217222}.
\end{rema}

Recall Theorem~\ref{theo:fc44c913131be011} and
Theorem~\ref{theo:6b0cb8e408044129} (when the Lie group \( G \) is second
countable, the strong dual of \( C^{\infty}(G) \) is a barreled \( (DF) \)
space, hence it is both a \( \pi \)-Hopf algebra and a \( \iota \)-Hopf
algebra, by Proposition~\ref{prop:89675519c2bb97f8}).

We now establish the following analogue of the Eymard-Stinespring-Tatsumma
duality for locally compact groups.
\begin{theo}
  \label{theo:a34d8747bbe913ff}
  If \( \mathcal{H}_{\widehat{G}} \) is a \( \pi \)-Hopf algebra given by any of
  the following:
  \begin{enumerate}
  \item \label{item:458016a3fac489b0} the strong dual of the
    \( \varepsilon \)-Hopf algebra \( \mathcal{H}_{G} = C^{\infty}(G) \) for a
    second countable Lie group \( G \);
  \item \label{item:2e3568f80838f408} the polar dual of the
    \( \varepsilon \)-Hopf algebra \( \mathcal{H}_{G} = C(G) \) for a
    topological group with compactly generated topology that admits a
    fundamental sequence of compact sets.
  \end{enumerate}
  Then the map \( G \to \mathsf{Grp}(\mathcal{H}_{\widehat{G}}) \),
  \( g \mapsto \delta_{g} \) is an isomorphism of topological groups.
\end{theo}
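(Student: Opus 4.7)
The plan is to exhibit the map $g \mapsto \delta_g$ as the composition of three natural bijections: the previously established identification $G \cong \chi_c(\mathcal{H}_G)$, the duality identification of characters of $\mathcal{H}_G$ with cocharacters of its dual $\mathcal{H}_{\widehat{G}}$, and the tautological identification of cocharacters with group-like elements.

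First I would reconfirm that in both situations $\mathcal{H}_{\widehat{G}}$ is genuinely the (polar) dual of $\mathcal{H}_G$ and is itself polar reflexive. In case \ref{item:458016a3fac489b0}, $\mathcal{H}_G = C^{\infty}(G) \in (\mathcal{FN})$ by Lemma~\ref{lemm:975e581967ecec25}, hence in $(\mathcal{FM}) \cap (\mathcal{AP})$, so Theorem~\ref{theo:5f3cb1a05114cdca} (and Proposition~\ref{prop:99b4fc84a9b1d8a1} identifying strong and polar duals for Montel spaces) gives that $\mathcal{H}_G$ is $(\varepsilon, \pi)$-reflexive with dual $\mathcal{H}_{\widehat{G}}$, and in particular the polar dual topology on $\mathcal{H}_{\widehat{G}}$ coincides with the topology of precompact convergence on $\mathcal{H}_G$. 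In case \ref{item:2e3568f80838f408}, Theorem~\ref{theo:fc44c913131be011} combined with Corollary~\ref{coro:ff9ad8f860fc9a59} gives that $\mathcal{H}_G = C(G)$ lies in $(\mathcal{F}) \cap (\mathcal{AP})$ and is $(\varepsilon, \pi)$-polar reflexive, so $\mathcal{H}_{\widehat{G}}$ is by definition the polar dual. In either case, Proposition~\ref{prop:9ae8172142f23b99} ensures the reciprocal polar reflexivity, with $(\mathcal{H}_{\widehat{G}})'_c = \mathcal{H}_G$.

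Next I would string together the three bijections. Proposition~\ref{prop:3078b43cc03b72ce}\ref{item:cd0767eb6f3e78a8} identifies $\mathsf{Grp}(\mathcal{H}_{\widehat{G}})$ with cocharacters $\mathbb{K} \to \mathcal{H}_{\widehat{G}}$; Proposition~\ref{prop:81b50087b5e11327}, applied with $H = \mathcal{H}_{\widehat{G}}$ and $H' = \mathcal{H}_G$, identifies such cocharacters (via transposition) with continuous characters of $\mathcal{H}_G$; and Theorem~\ref{theo:61825be42ff88744} identifies $\chi_c(\mathcal{H}_G)$ with $G$ by $g \mapsto \delta_g$. Tracing an element $g \in G$ through this chain gives the functional $\delta_g \in \mathcal{H}_G'$, which regarded as an element of $\mathcal{H}_{\widehat{G}}$ is precisely the group-like element characterized by $\Delta_{\widehat{G}}(\delta_g) = \delta_g \otimes \delta_g$, where $\Delta_{\widehat{G}}$ is the transpose of pointwise multiplication on $\mathcal{H}_G$. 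Thus the overall map is exactly $g \mapsto \delta_g$ as claimed.

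It then remains to verify compatibility of group and topological structures. For the group law: by construction, the multiplication $m_{\widehat{G}}$ on $\mathcal{H}_{\widehat{G}}$ is the transpose of $\Delta_{\mathcal{H}_G}$, so for $x, y \in \mathsf{Grp}(\mathcal{H}_{\widehat{G}})$ and $f \in \mathcal{H}_G$,
\begin{displaymath}
  \pairing*{f}{m_{\widehat{G}}(x \otimes y)} = \pairing*{\Delta_{\mathcal{H}_G} f}{x \otimes y} = (\omega_x \ast \omega_y)(f),
\end{displaymath}
so multiplication of group-like elements matches convolution of characters, which in turn matches the group law on $G$ since $\Delta_{\mathcal{H}_G}$ is the pull-back of the multiplication of $G$. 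For the topology: the topology on $\mathsf{Grp}(\mathcal{H}_{\widehat{G}})$ is the subspace topology of $\mathcal{H}_{\widehat{G}}$, which by the first paragraph coincides with the topology of precompact convergence on $\mathcal{H}_G$, matching the topology of $\chi_c(\mathcal{H}_G)$ used in Theorem~\ref{theo:61825be42ff88744}. The main obstacle is precisely this topological matching in case \ref{item:458016a3fac489b0}, where one must invoke the Montel property of $C^{\infty}(G)$ (Lemma~\ref{lemm:975e581967ecec25} and Proposition~\ref{prop:99b4fc84a9b1d8a1}) to convert the strong dual topology appearing in Theorem~\ref{theo:fc44c913131be011}'s analogue for smooth functions into the topology of precompact convergence; the rest is a composition of previously established equivalences and formal unwinding of definitions.
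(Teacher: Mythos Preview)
Your proposal is correct and follows essentially the same route as the paper: chain together the identifications $\mathsf{Grp}(\mathcal{H}_{\widehat{G}}) \leftrightarrow$ cocharacters of $\mathcal{H}_{\widehat{G}} \leftrightarrow$ characters of $\mathcal{H}_G \leftrightarrow G$, using the Montel property of $C^{\infty}(G)$ in case~\ref{item:458016a3fac489b0} to identify strong and polar duals so that in both cases $\mathcal{H}_{\widehat{G}} = (\mathcal{H}_G)'_c$ as locally convex spaces. The paper phrases the final topological step as an appeal to Corollary~\ref{coro:8c3ea439f769498f}, whereas you unwind it directly by noting that the subspace topology on $\mathsf{Grp}(\mathcal{H}_{\widehat{G}}) \subseteq (\mathcal{H}_G)'_c$ is exactly the topology on $\chi_c(\mathcal{H}_G)$ already handled by Theorem~\ref{theo:61825be42ff88744}; these are the same argument.
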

\begin{proof}
  In \ref{item:458016a3fac489b0}, since \( C^{\infty}(G) \) is of class
  \( (\mathcal{FN}) \) by Lemma~\ref{lemm:975e581967ecec25}, hence is of class
  \( (\mathcal{M}) \), we have
  \( \mathcal{H}_{\widehat{G}} = (\mathcal{H}_{G})'_{c} \) as locally convex
  space in both cases (Proposition~\ref{prop:99b4fc84a9b1d8a1}).

  Clearly we have a bijective correspondence
  \begin{equation}
    \label{eq:4c090eac0143241e}
    \begin{split}
      \Phi: \mathsf{Grp}(\mathcal{H}_{\widehat{G}}) & \to \mathsf{Coalg}_{\pi}(\mathbb{K}, \mathcal{H}_{\widehat{G}}) \\
      x & \mapsto \{\varphi_{x}: \lambda \mapsto \lambda x \}.
    \end{split}
  \end{equation}
  On the other hand, one checks immediately by taking transposes that we also
  have a bijective correspondence
  \begin{equation}
    \label{eq:e0f767ce001fe913}
    \begin{split}
      \Psi: \mathsf{Coalg}_{\pi}(\mathbb{K}, \mathcal{H}_{\widehat{G}})
      & \to \mathsf{Alg}_{\pi}((\mathcal{H}_{\widehat{G}})', \mathbb{K})
        = \mathsf{Alg}_{\pi}(\mathcal{H}_{G}, \mathbb{K})
        = \chi(\mathcal{H}_{G}) \\
      \varphi & \mapsto \varphi^{\transp}.
    \end{split}
  \end{equation}
  By Theorem~\ref{theo:61825be42ff88744}, \( \chi(\mathcal{H}_{G}) \simeq G \)
  in both cases, and one checks easily by following the composition
  \( \Phi^{-1}\psi^{-1} \) and the isomorphism
  \( \delta: G \to \chi(\mathcal{H}_{G}) \) that the resulting correspondence,
  still denoted by \( \delta: G \to \mathsf{Grp}(\mathcal{H}_{\widehat{G}}) \),
  \( g \mapsto \delta_{g} \) is a group isomorphism in both cases.  It remains
  to show that this is also a homeomorphism, but this already follows from
  Corollary~\ref{coro:8c3ea439f769498f} in both cases.
\end{proof}

\subsection{Recovering of the Pontryagin dual of locally compact abelian groups}
\label{sec:3e2f92039fbda631}

We now describe how Pontryagin duality for all locally compact abelian groups
can manifest itself in our theory.

Since all locally compact spaces are \( k \)-spaces, it is readily seen, as a
special case of Theorem~\ref{theo:61825be42ff88744}, that any locally compact
group \( G \), in particular the abelian ones, can already be recovered from the
associated \( \varepsilon \)-Hopf algebra \( C(G) \) as the character group
\( \chi_{c}\bigl(C(G)\bigr) \) equipped with the topology of compact
convergence.

Recall that, following Weil \cite{MR0005741}*{pp99-101}, the Pontryagin dual
\( \widehat{G} \) of a locally compact abelian group \( G \) is constructed as
follows: as a group, \( \widehat{G} \) consists of all continuous
one-dimensional \emph{unitary} representations of \( G \), where the
multiplication is pointwise; the topology on \( \widehat{G} \) is the topology
of compact convergence.

\begin{theo}
  \label{theo:bab58124687e3b85}
  Let \( G \) be a locally compact group and \( C(G) \) the associated
  \( \varepsilon \)-Hopf algebra.
  \begin{enumerate}
  \item \label{item:da8ee13534935e3b} An element \( f \in C(G) \) is group-like
    if and only if \( f : G \to \mathbb{K} \) is a continuous (one-dimensional)
    representation of \( G \).
  \item \label{item:8f6e0b3492454344} If \( \mathbb{K} = \C \) and consider
    \( C(G) \) as an \( \varepsilon \)-Hopf-\( \ast \) algebra. An element
    \( f \in C(G) \) is an involutive group-like elements if and only if
    \( f : G \to \C \) is a unitary representation. Moreover, if \( G \) is
    abelian, then \( \mathsf{Grp}^{\inv}\bigl(C(G)\bigr) \), when equipped with
    the subspace topology induced from \( C(G) \), is exactly the Pontryagin
    dual \( \widehat{G} \) of \( G \).
  \end{enumerate}
\end{theo}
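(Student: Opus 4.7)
The plan is to unwind all the structure maps on $C(G)$ concretely in terms of operations on $G$, so that the abstract conditions ``group-like'' and ``involutive'' translate directly into the classical conditions of being a one-dimensional representation, respectively a unitary character.

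First I would prove \ref{item:da8ee13534935e3b}. Since $G$ is locally compact, Proposition~\ref{prop:27398d650b82b971} gives $G \times_{k} G = G \times G$, and Corollary~\ref{coro:b25b52005792153d} then furnishes a canonical identification $C(G) \overline{\otimes}_{\varepsilon} C(G) = C(G \times G)$ under which a rank-one tensor $f \otimes g$ corresponds to the function $(x,y) \mapsto f(x)g(y)$. Under this identification, the comultiplication $\Delta = \mu^{\ast}$ of Theorem~\ref{theo:c72b0ef0a86663be} acts by $\Delta(f)(x,y) = f(xy)$. Thus the equation $\Delta(f) = f \otimes f$ is equivalent to $f(xy) = f(x)f(y)$ for all $x,y \in G$. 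Invoking Proposition~\ref{prop:3078b43cc03b72ce}~\ref{item:16ad454106329fbd} and unwinding the counit $\varepsilon(f) = f(e)$, non-vanishing of $f$ further forces $f(e) = 1$. Hence $f$ is group-like if and only if $f$ is a continuous unital multiplicative map $G \to \mathbb{K}$, which is precisely the notion of a continuous one-dimensional representation.

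Next I would prove the first half of \ref{item:8f6e0b3492454344}. The antipode $S$ is the pullback along inversion, so $(Sf)(x) = f(x^{-1})$, and the involution on $C(G)$ is pointwise conjugation, so $f^{\ast}(x) = \overline{f(x)}$. Assuming $f$ is group-like, the identity $Sf = f^{\ast}$ of Definition~\ref{defi:0f95896167ed6125} reads $f(x^{-1}) = \overline{f(x)}$. Combined with the multiplicativity and $f(e)=1$ from step one, this yields $|f(x)|^{2} = f(x)\overline{f(x)} = f(x)f(x^{-1}) = f(e) = 1$, so $f$ takes values on the unit circle; conversely, any continuous unitary homomorphism satisfies $f(x^{-1}) = f(x)^{-1} = \overline{f(x)}$, and hence is an involutive group-like element. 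Thus $\mathsf{Grp}^{\inv}\bigl(C(G)\bigr)$ is exactly the set of continuous one-dimensional unitary representations of $G$.

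Finally, assume $G$ abelian and compare with the Pontryagin dual $\widehat{G}$ of Weil. By the previous paragraph, the underlying sets coincide. For the group law: by Proposition~\ref{prop:3078b43cc03b72ce}~\ref{item:02b413ded615e81a}, the group operation on $\mathsf{Grp}^{\inv}\bigl(C(G)\bigr)$ is inherited from the algebra multiplication of $C(G)$, which is pointwise multiplication by construction in Corollary~\ref{coro:64421d9753e96b7c}; this is the same as the group law on $\widehat{G}$. For the topology: the topology on $C(G)$ is by Proposition~\ref{prop:776017bb8c18481f} exactly the topology of compact convergence, so the subspace topology on $\mathsf{Grp}^{\inv}\bigl(C(G)\bigr)$ is compact convergence, which is by definition the topology on $\widehat{G}$. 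Hence the two are equal as topological groups. The only delicate point, which I would single out as the main step to verify carefully, is the identification $C(G) \overline{\otimes}_{\varepsilon} C(G) = C(G \times G)$ and the translation of $\Delta(f) = f \otimes f$ into a pointwise identity on $G \times G$; once this is established, everything else follows by elementary manipulations.
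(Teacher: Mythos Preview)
Your proposal is correct and follows essentially the same route as the paper's proof: both unwind $\Delta(f)=f\otimes f$ pointwise via the identification $C(G)\overline{\otimes}_{\varepsilon}C(G)=C(G\times G)$ (using Proposition~\ref{prop:27398d650b82b971}), then translate $Sf=f^{\ast}$ into $f(x^{-1})=\overline{f(x)}$, and finally observe that the subspace topology is compact convergence. Your write-up is in fact slightly more explicit than the paper's in checking that the group law on $\mathsf{Grp}^{\inv}\bigl(C(G)\bigr)$ agrees with pointwise multiplication and in deriving $|f(x)|=1$, but the argument is the same.
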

\begin{proof}
  \ref{item:da8ee13534935e3b}. By definition, \( f \in \mathsf{Grp}(C(G)) \) if
  and only if \( f(1) = 1 \), and
  \( \Delta(f) = f \overline{\otimes}_{\varepsilon} f \). As continuous
  functions in \( C(G \times_{k}G) = C(G \times G) \) (see
  Proposition~\ref{prop:27398d650b82b971}), we have \( \Delta(f)(s,t) = f(st) \)
  while \( (f \overline{\otimes}_{\varepsilon} f)(s,t) = f(s)f(t) \), where
  \( s, t \in G \). Now \ref{item:da8ee13534935e3b} is clear.

  \ref{item:8f6e0b3492454344}. In this case, suppose
  \( f \in \mathsf{Grp}(C(G)) \). To say that \( f \) is involutive means
  \( \overline{f} = Sf \), i.e.\ for all \( s \in G \),
  \( \overline{f(s)} = f(s^{-1}) = f(s)^{-1} \). Hence
  \( f \in \mathsf{Grp}^{\inv}(C(G)) \) if and only if \( f : G \to \C \) is a
  unitary representation. We now have
  \( \widehat{G} = \mathsf{Grp}^{\inv}(C(G)) \) as a group. Since the topology
  on \( C(G) \) is precisely the topology of compact convergence, the identity
  \( \widehat{G} = \mathsf{Grp}^{\inv}(C(G)) \) also holds true as topological
  groups.
\end{proof}

\begin{rema}
  \label{rema:c6f6e7cc787be0fd}
  It is interesting to note that when a locally compact abelian group \( G \) is
  second countable, it has a fundamental sequence of compact sets, and
  Theorem~\ref{theo:fc44c913131be011} implies that the \( \varepsilon
  \)-Hopf-\( \ast \) algebra \( C(G) \) is \( (\varepsilon, \pi) \)-polar
  reflexive, and the polar dual \( C(G)'_{c} \) is a \( \pi \)-Hopf-\( \ast \)
  algebra. By Proposition~\ref{prop:f8252fbb35384d7c}, we see that, as groups,
  \( \widehat{G} \) can be identified canonically as
  \( \chi^{\inv}\big(C(G)'_{c}) \). One may even try to consider
  \( \chi^{\inv}_{c}\bigl(C(G)'_{c}\bigr) \), i.e.\ \( \chi^{\inv}(C(G)'_{c}) \)
  equipped with the subspace topology of
  \( \mathcal{L}_{c}\bigl(C(G)'_{c}, \C\bigr) \) (which is canonically
  isomorphic to \( C(G) \) as vector spaces since \( C(G) \), being an
  \( (F) \)-space, is polar reflexive, but the topology is different). One can
  show that in general, the topology on
  \( \chi^{\inv}_{c}\bigl(C(G)'_{c}\bigr) \) is finer than the one on
  \( \widehat{G} \). At the time of this writing, the author does not know
  whether these topologies coincide, or even whether
  \( \chi^{\inv}_{c}\bigl(C(G)'_{c}\bigr) \) is a topological group.
\end{rema}

\section{Certain projective and inductive limits}
\label{sec:7a5cda2c4078b89b}

We describe in this section some general theory concerning projective and
inductive limits of locally convex Hopf algebras. These abstract theory shall be
illustrated by concrete examples in \S~\ref{sec:e3160c56b87e4b14}.

\subsection{Reduced projective limits of projective and injective Hopf algebras}
\label{sec:fd4b65d5b8ba0635}

Using the commutativity of \( \overline{\otimes}_{\pi} \) (resp.\
\( \overline{\otimes}_{\varepsilon} \)) with reduced projective limits, we can
easily construct reduced projective limits of \( \pi \)-Hopf algebras (resp.\
\( \varepsilon \)-Hopf algebras). We describe how the construction goes in
general for \( \tau \)-Hopf algebras, where \( \overline{\otimes}_{\tau} \) is a
symmetric compatible monoidal functor that commutes with reduced projective
limits. As examples, we may take
\( \tau \in \set*{\pi, \varepsilon} \)(Proposition~\ref{prop:046bbcf972e4452d}
and Proposition~\ref{prop:8056525c3c4c9aa5}).

Let \( (H_{i}, p_{i,j})_{i \in I} \) be a projective system of \( \tau \)-Hopf
algebras. Assume that as locally convex spaces, the projective limit
\( H:= \varprojlim H_{i} \) is reduced. Consider the projective system
\( (H_{i} \overline{\otimes} H_{j}, p_{i,j} \overline{\otimes}_{\tau} p_{s,t})
\) on \( I \times I \) (equipped of course with the product order:
\( (i, j) \leq (s, t) \) means \( i \leq s \) and \( j \leq t \)). Then by the
commutation of \( \overline{\otimes}_{\tau} \) with reduced projective limits,
we see that \( H \overline{\otimes}_{\tau} H \) can be identified canonically
with the reduced projective limit
\( \varprojlim_{I \times I} H_{i} \overline{\otimes}_{\tau} H_{j} \). Moreover,
as \( I \) is directed, we see that
\( (H_{i} \overline{\otimes}_{\tau} H_{i}, p_{i,j} \overline{\otimes}_{\tau}
p_{i,j})_{i \in I} \) is cofinal in
\( (H_{i} \overline{\otimes} H_{j}, p_{i,j} \overline{\otimes}_{\tau} p_{s,t})
\). We then have a canonical identification
\( H \overline{\otimes}_{\tau} H = \varprojlim H_{i} \overline{\otimes}_{\tau}
H_{i} \), with
\( p_{i} \overline{\otimes}_{\varepsilon} p_{i}: H \overline{\otimes}_{\tau} H
\to H_{i} \overline{\otimes}_{\tau} H_{i} \) being the canonical projections.
Similarly, using the commutation of \( \overline{\otimes}_{\tau} \) with reduced
projective limits again, we have the analogous results for three-fold and
four-fold tensor products.

On the other hand, we might use the same index set \( I \) to build a projective
system \( (K_{i}, \beta_{i,j}) \), with each \( K_{i} = \mathbb{K} \) and each
\( \beta_{i,j} = \id_{\mathbb{K}} \), whenever \( i \leq j \). Since \( I \) is
directed, it is trivial that \( \varprojlim K_{i} = \mathbb{K} \) with all
canonical projections
\( \beta_{i}: \varprojlim K_{i} = \mathbb{K} \to K_{i} = K \) being
\( \id_{\mathbb{K}} \).

Denote the structure maps on each \( H_{i} \) by
\( m_{i}, \eta_{i}, \Delta_{i}, \varepsilon_{i} \) and \( S_{i} \). We now
define \( m \in \mathcal{L}(H \overline{\otimes}_{\tau} H, H) \),
\( \eta \in \mathcal{L}(\mathbb{K}, H) \),
\( \Delta \in \mathcal{L}(H, H \overline{\otimes}_{\tau}H) \),
\( \varepsilon \in \mathcal{L}(H, \mathbb{K}) = H' \) and
\( S \in \mathcal{L}(H) = \mathcal{L}(H, H) \) as follows.

Note that the family \( (\eta_{i})_{i \in I} \) is a morphism from the
projective system \( (K_{i}, \beta_{i,j}) \) of locally convex spaces to the
projective system \( (H_{i}, p_{i,j}) \) of locally convex spaces, in the sense
that whenever \( i \leq j \), \( \eta_{i}\beta_{i,j} = p_{i,j}\eta_{j} \). Hence
there exists a unique continuous linear map
\( \eta: \mathbb{K} = \varprojlim K_{i} \to H = \varprojlim H_{i} \), such that
\( p_{i}\eta = \eta_{i} \beta_{i} \) for each \( i \in I \).

Similarly, the family \( (m_{i})_{i \in I} \) is a morphism from the projective
system
\( (H_{i}\overline{\otimes}_{\tau}H_{i}, p_{i,j} \overline{\otimes}_{\tau}p_{i,j})
\) to the projective system \( (H_{i}, p_{i,j}) \), and we obtain, by passing to
the respective projective limits, a canonical linear continuous map
\( m: H \overline{\otimes}_{\tau} H \to H \). The maps
\( \Delta: H \to H \overline{\otimes}_{\tau} H \),
\( \varepsilon : \mathbb{K} \to H \) and \( S: H \to H \) are defined similarly.

\begin{theo}
  \label{theo:9db4e1dbcb4292bf}
  Let \( \overline{\otimes}_{\tau} \) be a symmetric compatible monoidal functor
  that commutes with reduced projective limits, and \( (H_{i}, p_{i,j}) \) be a
  projective system of \( \tau \)-Hopf algebras, such that as locally convex
  spaces, the projective limit \( H = \varprojlim H_{i} \) is reduced. Then
  \begin{enumerate}
  \item \label{item:2210564839769e90} Equipped with the multiplication \( m \),
    unit \( \eta \), comultiplication \( \Delta \), counit \( \varepsilon \) and
    antipode \( S \) as defined above, \( H \) is a \( \tau \)-Hopf algebra.
  \item \label{item:b8134e6f3a15c80c} The canonical projections
    \( p_{i}: H \to H_{i} \), \( i \in I \) are all morphisms of \( \tau \)-Hopf
    algebras, and \( H \), together with these \( p_{i} \), \( i \in I \), is
    the projective limit of \( (H_{i}, p_{i,j}) \) in the category
    \( \mathsf{Hopf}_{\tau} \) of \( \tau \)-Hopf algebras.
  \item \label{item:63ae727ef870a990} If in addition each \( H_{i} \) is
    regular, then so is \( H \).
  \item \label{item:b649a1d01f2bd2c3} When \( \mathbb{K} = \C \), and if each
    \( H_{i} \) is also a \( \tau \)-Hopf-\( \ast \) algebra and each
    \( p_{i,j} \) being a morphism of \( \tau \)-Hopf-\( \ast \) algebras, then
    so is \( H \) with the involution induced from the projective limit
    construction.
  \end{enumerate}
\end{theo}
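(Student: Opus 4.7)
The plan is to verify everything by systematically exploiting the universal property of reduced projective limits, together with the commutation of $\overline{\otimes}_{\tau}$ with such limits which, as indicated before the theorem, promotes to three-fold and four-fold tensor powers and thus applies to each diagram in the definition of a $\tau$-bialgebra and $\tau$-Hopf algebra.

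First I would prove \ref{item:2210564839769e90}. The idea is that all the axiomatic diagrams (associativity of $m$, coassociativity of $\Delta$, the unit and counit laws, the compatibility \ref{item:e32e1fa4753a028d} in Proposition~\ref{prop:717c7c133e9e974c}, and the antipode identity \eqref{eq:c8e35b71db95f592}) are equalities between continuous linear maps whose codomain is some $H^{\overline{\otimes}_{\tau}n} = \varprojlim H_{i}^{\overline{\otimes}_{\tau}n}$ (or $H$ or $\mathbb{K}$). By the universal property of the reduced projective limit, it suffices to check that the two sides become equal after composing with each canonical projection $p_{i}$ (or $p_{i}\overline{\otimes}_{\tau}p_{i}$, etc.). But by the very construction of $m,\eta,\Delta,\varepsilon,S$, we have $p_{i}m = m_{i}(p_{i}\overline{\otimes}_{\tau}p_{i})$, $p_{i}\eta = \eta_{i}\beta_{i}$, $(p_{i}\overline{\otimes}_{\tau}p_{i})\Delta = \Delta_{i}p_{i}$, $\beta_{i}\varepsilon = \varepsilon_{i}p_{i}$, and $p_{i}S = S_{i}p_{i}$. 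Each diagram then reduces to the corresponding diagram for $H_{i}$, which holds since $H_{i}$ is a $\tau$-Hopf algebra. This yields \ref{item:2210564839769e90} and simultaneously the first half of \ref{item:b8134e6f3a15c80c}, namely that each $p_{i}$ is a $\tau$-Hopf algebra morphism.

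Next, for the universal property in \ref{item:b8134e6f3a15c80c}, suppose given a $\tau$-Hopf algebra $K$ with a compatible family of $\tau$-Hopf algebra morphisms $f_{i}:K\to H_{i}$. The underlying locally convex space of $H$ is the projective limit in $\widehat{\mathsf{LCS}}$ (Proposition~\ref{prop:438d10ce1f2149df}), so there is a unique continuous linear $f:K\to H$ with $p_{i}f = f_{i}$. To see $f$ is a morphism of $\tau$-Hopf algebras, I again apply the uniqueness part of the universal property: both $fm_{K}$ and $m(f\overline{\otimes}_{\tau}f)$ satisfy $p_{i}(\cdot) = m_{i}(f_{i}\overline{\otimes}_{\tau}f_{i}) = f_{i}m_{K}$ (using that $f_{i}$ is multiplicative), hence agree; the same argument deals with $\eta$, $\Delta$ and $\varepsilon$, and then $S$ is automatic by Proposition~\ref{prop:dfd77ff270dde74a}.

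For \ref{item:63ae727ef870a990}, if each $S_{i}$ is an isomorphism with inverse $T_{i}$, then the compatibility $p_{i,j}S_{j} = S_{i}p_{i,j}$ inverts to $p_{i,j}T_{j} = T_{i}p_{i,j}$, so $(T_{i})$ is a morphism of projective systems and produces a continuous $T:H\to H$ with $p_{i}T = T_{i}p_{i}$. The identities $TS = \id_{H} = ST$ follow from $p_{i}TS = T_{i}S_{i}p_{i} = p_{i}$ and $p_{i}ST = p_{i}$ together with the separating property of the family $(p_{i})$. For \ref{item:b649a1d01f2bd2c3}, the family $(\ast_{i})$ of involutions on the $H_{i}$ is a morphism of the projective system (viewing each $H_{i}$ conjugate-linearly, or equivalently composing with complex conjugation on $\mathbb{K}$), so it induces a continuous antilinear map $\ast:H\to H$ characterized by $p_{i}(x^{\ast}) = (p_{i}x)^{\ast}$; involutivity, $(yx)^{\ast}=x^{\ast}y^{\ast}$, compatibility with $\Delta$, and that $\ast$ is a homeomorphism (its own inverse) all follow by composing with the $p_{i}$ and reducing to the corresponding facts in $H_{i}$, exactly as in \ref{item:2210564839769e90}.

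The only mildly delicate point, and the one I would pay most attention to, is making sure at every use of the universal property that the codomain really is a reduced projective limit with $\overline{\otimes}_{\tau}$ commuting through — in particular for $H\overline{\otimes}_{\tau}H\overline{\otimes}_{\tau}H$ and $H^{\overline{\otimes}_{\tau}4}$, where the cofinality argument for the diagonal subsystem $(i,i,i)\subseteq I^{3}$, resp.\ $(i,i,i,i)\subseteq I^{4}$, must be invoked just as in the two-fold case sketched before the theorem. Once that bookkeeping is in place, every verification is a one-line reduction to the corresponding identity in a single $H_{i}$, so there is no real analytic obstacle beyond the structural commutation already granted by hypothesis.
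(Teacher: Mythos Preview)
Your proposal is correct and follows essentially the same approach as the paper: reduce every bialgebra/Hopf-algebra identity to the corresponding identity in each $H_{i}$ by composing with the canonical projections $p_{i}$ (and their tensor powers), then conclude via the separating/universal property of the reduced projective limit. The paper spells out a few of these computations explicitly (associativity of $m$, the unit law, multiplicativity of $\Delta$, and the antipode axiom) and dismisses \ref{item:b8134e6f3a15c80c}, \ref{item:63ae727ef870a990}, \ref{item:b649a1d01f2bd2c3} as ``(easier) routine verifications''; your sketch of the latter three (in particular the construction of the universal arrow $f$ in \ref{item:b8134e6f3a15c80c} and of the inverse $T$ in \ref{item:63ae727ef870a990}) supplies exactly what the paper leaves implicit.
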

\begin{proof}
  First of all, \( H \in \widehat{\mathsf{LCS}} \) by
  Proposition~\ref{prop:438d10ce1f2149df}.
  
  \ref{item:2210564839769e90}. This follows from a routine but tedious
  verification. We briefly describe this verification process for completeness.

  Note that from the defining property of \( m \), we have, for each
  \( i \in I \), that
  \begin{displaymath}    
    \begin{split}
      p_{i}m(\id_{H} \overline{\otimes}_{\tau} m)
      &= m_{i}(p_{i} \overline{\otimes}_{\tau}p_{i})(\id_{H} \overline{\otimes}_{\tau} m)
        = m_{i}\bigl(p_{i} \overline{\otimes}_{\tau} (p_{i}m)\bigr) \\
      &= m_{i} \bigl(p_{i} \overline{\otimes}_{\tau} (m_{i}(p_{i} \overline{\otimes}_{\tau} p_{i}))\bigr) 
        = m_{i}(\id_{H_{i}} \overline{\otimes}_{\tau} m_{i})(p_{i} \overline{\otimes}_{\tau} p_{i} \overline{\otimes}_{\tau} p_{i}) \\
      &= m_{i}(m_{i} \overline{\otimes}_{\tau} \id_{H})(p_{i} \overline{\otimes}_{\tau} p_{i} \overline{\otimes}_{\tau} p_{i})
        = \cdots = p_{i}m(m \overline{\otimes}_{\tau} \id_{H}).
    \end{split}
  \end{displaymath}
  By the universal property of projective limits, this forces
  \( m(\id_{H} \overline{\otimes}_{\tau} m) = m(m \overline{\otimes}_{\tau}
  \id_{H}) \), and the multiplication \( m \) is associative.

  Similarly, the comultiplication \( \Delta \) is coassociative.

  We also have, for each \( i \in i \), that
  \begin{displaymath}    
    p_{i}m(\id_{H} \overline{\otimes}_{\tau} \eta)
    = m_{i}(p_{i} \overline{\otimes}_{\tau} (p_{i}\eta))
    = m_{i}(p_{i} \overline{\otimes}_{\tau} \eta_{i})
    = p_{i},
  \end{displaymath}
  hence the universal property of projective limits again forces
  \( m(\id_{H} \overline{\otimes}_{\tau} \eta) = \id_{H} \). That
  \( m(\eta \overline{\otimes}_{\tau} \id_{H}) = \id_{H} \) follows from a
  similar argument, and \( \eta \) is the unit for \( m \).

  Similarly, one can show that \( \varepsilon \) is the counit for \( \Delta \).

  We now show that \( \Delta \) is a morphism of \( \tau \)-algebras. Clearly, it
  is unital. To see that it is also multiplicative, note that for each
  \( i \in I \), and recall that we use \( s \) to denote the swap
  (\S~\ref{sec:cbdc5edf0dbbb84c}), we have
  \begin{displaymath}
    \begin{split}
      (p_{i} \overline{\otimes}_{\tau} p_{i}) \Delta m
      &= \Delta_{i} p_{i} m = \Delta_{i} m_{i} (p_{i} \overline{\otimes}_{\tau} p_{i}) \\
      &= (m_{i} \overline{\otimes}_{\tau} m_{i})
        (\id_{H_{i}} \overline{\otimes}_{\tau} s_{H_{i}, H_{i}} \overline{\otimes}_{\tau})
        (\Delta_{i} \overline{\otimes}_{\tau} \Delta_{i})
        (p_{i} \overline{\otimes}_{\tau} p_{i}) \\ 
      &=(m_{i} \overline{\otimes}_{\tau} m_{i})
        (\id_{H_{i}} \overline{\otimes}_{\tau} s_{H_{i}, H_{i}} \overline{\otimes}_{\tau})
        (p_{i} \overline{\otimes}_{\tau} p_{i} \overline{\otimes}_{\tau} p_{i} \overline{\otimes}_{\tau} p_{i})
        (\Delta \overline{\otimes}_{\tau} \Delta) \\
      &= (m_{i} \overline{\otimes}_{\tau} m_{i})
        (p_{i} \overline{\otimes}_{\tau} p_{i} \overline{\otimes}_{\tau} p_{i} \overline{\otimes}_{\tau} p_{i})
        (\id_{H} \overline{\otimes}_{\tau} s_{H,H} \overline{\otimes}_{\tau} \id_{H})
        (\Delta \overline{\otimes}_{\tau} \Delta) \\
      &= (p_{i} \overline{\otimes}_{\tau} p_{i})(m \overline{\otimes}_{\tau} m)        
        (\id_{H} \overline{\otimes}_{\tau} s_{H,H} \overline{\otimes}_{\tau} \id_{H})
        (\Delta \overline{\otimes}_{\tau} \Delta).
    \end{split}
  \end{displaymath}
  This forces
  \begin{displaymath}
    \Delta m = (m \overline{\otimes}_{\tau} m)        
        (\id_{H} \overline{\otimes}_{\tau} s_{H,H} \overline{\otimes}_{\tau} \id_{H})
        (\Delta \overline{\otimes}_{\tau} \Delta)
  \end{displaymath}
  by the universal property of projective limits, and \( \Delta \) is indeed a
  morphism of \( \tau \)-algebras, and we've established that
  \( (H, m, \eta, \Delta, \varepsilon) \) is a \( \tau \)-bialgebra.
  
  Finally, for each \( i \in I \), we have
  \begin{displaymath}
    \begin{split}
      p_{i}m(S \overline{\otimes}_{\tau} \id_{H})\Delta
      &= m_{i}(p_{i}S \overline{\otimes}_{\tau} p_{i}) \Delta 
        = m_{i}(S_{i} p_{i} \overline{\otimes}_{\tau} p_{i}) \Delta \\
      &= m_{i}(S_{i} \overline{\otimes}_{\tau} \id_{H_{i}})\Delta_{i} p_{i}
        = \eta_{i} \varepsilon_{i} p_{i} = \eta_{i} \varepsilon = p_{i} \eta \varepsilon.
    \end{split}
  \end{displaymath}
  This forces
  \( m(S \overline{\otimes}_{\tau} \id_{H})\Delta = \eta \varepsilon \) by the
  universal property of projective limits. Similarly,
  \( m(\id_{H} \overline{\otimes}_{\tau} S)\Delta = \eta \varepsilon \), and
  \( S \) satisfies the axiom for the antipode, which finishes the proof of
  \ref{item:2210564839769e90}.

  \ref{item:b8134e6f3a15c80c}, \ref{item:63ae727ef870a990} and
  \ref{item:b649a1d01f2bd2c3} are now (easier) routine verifications.
\end{proof}

\begin{rema}
  \label{rema:84a18149794dffaf}
  In \cite{MR3059661} and \cite{MR3212682}*{\S~3}, there are already some
  construction of certain projective limits in the context of
  \( C^{\ast} \)-version or von Neumann algebraic version of compact quantum
  groups. Formally, it seems that our construction here seems more
  natural. Later in \S~\ref{sec:e3160c56b87e4b14}, we will use the construction
  presented here to describe some natural candidates for the quantum groups
  \( S^{+}_{\infty} \), \( O^{+}_{\infty} \) and \( U^{+}_{\infty} \), as well
  as their duals, and justify why these quantum groups seems not even belonging
  to the framework of locally compact quantum groups.
\end{rema}

\begin{defi}
  \label{defi:1cc1ad763eef3b30}
  We call the projective system \( (H_{i}, p_{i,j}) \) in
  Theorem~\ref{theo:9db4e1dbcb4292bf} a \textbf{reduced projective system} of
  \( \tau \)-Hopf(-\( \ast \)) algebra, and \( H \) its \textbf{projective
    limit}.
\end{defi}

\subsection{Strict inductive limits of inductive Hopf algebras}
\label{sec:32608de27100a358}

We may dualize the construction in \S~\ref{sec:fd4b65d5b8ba0635} to construct
arbitrary inductive limits in the category \( \mathsf{Hopf}_{\iota} \) of
\( \iota \)-Hopf algebras: just use the commutation of
\( \overline{\otimes}_{\iota} \) with completed inductive limits, i.e.\
inductive limits in \( \widehat{\mathsf{LCS}} \), see
Corollary~\ref{coro:a9570db3191db003} and
Corollary~\ref{coro:9dd4c5414e7497a4}. However, there are some caveats.

First of all, we've seen that (Remark~\ref{rema:67bc53cfb5df5400}) that the
inductive locally convex topology on an algebraic inductive limit might not even
be Hausdorff, so we need to take the separated quotient
(Definition~\ref{defi:69f4413fd7d99b7b}) in order to form the corresponding
inductive limit in \( \mathsf{LCS} \), see
Proposition~\ref{prop:c2ffa4009a3940c2}. In general, this separated quotient
needs not be complete, even each of locally convex space in the inductive system
is complete (for a counter-example, one may use
\cite{bourbaki2007integration}*{Ch.III, \S~1, Exercice~2}). Hence in order to
obtain the colimit in \( \widehat{\mathsf{LCS}} \) with which we are working, in
general, one also needs to take the completion. As these processes all commutes
with forming inductive limits (see Corollary~\ref{coro:a9570db3191db003} and
Corollary~\ref{coro:9dd4c5414e7497a4}), we may indeed obtain the inductive limit
of an inductive system of \( \iota \)-Hopf algebras in the category
\( \mathsf{Hopf}_{\iota} \). However, even so, we should also note that the
completion process (as well as the process of taking the separated quotient) in
general destroys the relevant duality as in \S~\ref{sec:a27a1ad55ccb128e}.

For these reasons, instead of treating general inductive limits in detail as in
\S~\ref{sec:fd4b65d5b8ba0635} for projective limits, we pay more attention to a
certain special class of strict inductive limits in \( \mathsf{Hopf}_{\iota} \).

\begin{theo}
  \label{theo:8c707fb7841e1dfb}
  Let \( (H_{n})_{n \geq 1} \) be a countable family of \( \iota \)-Hopf
  algebras. Suppose that for each \( n \), the compatible topologies on
  \begin{displaymath}
    \begin{gathered}
      H_{n} \odot H_{n}, \qquad
      (H_{n} \overline{\otimes}_{\iota} H_{n}) \odot H_{\iota}
      \qquad \text{ and }\qquad
      (H_{n} \overline{\otimes}_{\iota} H_{n} \overline{\otimes}_{\iota} H_{n}) \odot
      H_{n}
    \end{gathered}
  \end{displaymath}
  are all unique, and \( u_{n}: H_{n} \hookrightarrow H_{n+1} \) is a morphism
  of \( \iota \)-Hopf algebras that is a strict monomorphism as locally convex
  spaces. Consider the strict inductive limit \( H := \varinjlim H_{n} \) of the
  strict inductive system \( (H_{n}, u_{n}) \) in \( \widehat{\mathsf{LCS}} \)
  with \( v_{n}: H_{n} \hookrightarrow H \) being the canonical injections, and
  let \( \mathbb{K} = \varinjlim K_{n} \) be the inductive limit of the trivial
  inductive system given by \( \id_{\mathbb{K}}: K_{n} \to K_{n+1} \) for each
  \( n \), where each \( K_{n} = \mathbb{K} \). Then in
  \( \widehat{\mathsf{LCS}} \), the inductive systems
  \begin{displaymath}
    \begin{gathered}
      \left(H_{n} \overline{\otimes}_{\iota} H_{n}, u_{n}
        \overline{\otimes}_{\iota} u_{n}\right), \qquad
      \left(H_{n} \overline{\otimes}_{\iota} H_{n} \overline{\otimes}_{\iota}
        H_{n}, u_{n} \overline{\otimes}_{\iota} u_{n} \overline{\otimes}_{\iota}
        u_{n}\right)
      \\
      \text{ and } \qquad
      \left(H_{n} \overline{\otimes}_{\iota} H_{n} \overline{\otimes}_{\iota}
        H_{n} \overline{\otimes}_{\iota} H_{n}, u_{n} \overline{\otimes}_{\iota}
        u_{n} \overline{\otimes}_{\iota} u_{n} \overline{\otimes}_{\iota}
        u_{n}\right)
    \end{gathered}
  \end{displaymath}
  are all strict, with
  \begin{displaymath}
    H \overline{\otimes}_{\iota} H, \qquad
    H \overline{\otimes}_{\iota} H \overline{\otimes}_{\iota} H
    \qquad\text{ and }\qquad
     H \overline{\otimes}_{\iota} H \overline{\otimes}_{\iota} H
  \overline{\otimes}_{\iota} H
  \end{displaymath}
  being canonically identified with their respective strict inductive limit.

  Moreover, define
  \begin{enumerate}
  \item \label{item:8279f98507a59b79} \( \eta: \mathbb{K} \to H \) to be the
    unique continuous linear map such that \( \eta = u_{n} \eta_{n} \) for each
    \( n \), with each \( \eta_{n} \) being the unit of \( H_{n} \);
  \item \label{item:1a4d86685ccac009}
    \( m : H \overline{\otimes}_{\iota} H \to H \) to be the unique continuous
    linear map such that \( m(u_{n} \overline{\otimes}_{\iota}u_{n}) = m_{n} \)
    for each \( n \), with each \( m_{n} \) being the multiplication of \( H_{n} \);
  \item \label{item:d9bdf76f0be62287} \( \varepsilon: H \to \mathbb{K} \) to be
    the unique continuous linear map such that
    \( \varepsilon u_{n} = \varepsilon_{n} \) for each \( n \), with \( \varepsilon_{n} \) being the counit for \( H_{n} \);
  \item \label{item:b9b7a520d59bd261}
    \( \Delta: H \to H \overline{\otimes}_{\iota} H \) being the unique
    continuous linear map such that
    \( \Delta u_{n} = (u_{n} \overline{\otimes}_{\iota} u_{n})\Delta_{n} \),
    with \( \Delta_{n} \) being the comultiplication on \( H_{n} \);
  \item \label{item:931b70bb037e06b2} \( S: H \to H \) being the unique
    continuous linear map such that \( Su_{n} = u_{n}S_{n} \) for each \( n \),
    with \( S_{n} \) being the antipode of \( H_{n} \).
  \end{enumerate}
  Then equipped with these structure maps, \( H \in \widehat{\mathsf{LCS}} \) is
  a \( \iota \)-Hopf algebra, and is the inductive limit of the system
  \( (H_{n}, u_{n}) \) in the category \( \mathsf{Hopf}_{\iota} \) of all
  \( \iota \)-Hopf algebras. When each \( H_{n} \) is regular, then so is
  \( H \). And when \( \mathbb{K} = \C \) and each \( H_{n} \) is a
  \( \iota \)-Hopf-\( \ast \) algebra, with each \( u_{n} \) being a morphism of
  \( \iota \)-Hopf-\( \ast \) algebras, then \( H \) is an \( \iota
  \)-Hopf-\( \ast \) algebra
  under the canonical involution induced it.
\end{theo}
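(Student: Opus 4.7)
The plan is to realize $H$ as the strict inductive limit in $\widehat{\mathsf{LCS}}$ of $(H_{n})$, identify the two, three, and four-fold completed tensor products of $H$ with the strict inductive limits of the corresponding tensor powers of the $H_{n}$'s, and then transport the Hopf algebra structure maps via the universal property of the inductive limit. The Hopf algebra axioms for $H$ are then verified by reducing them to the corresponding axioms on each $H_{n}$.

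The crucial step is the identification of the completed tensor powers of $H$ as strict inductive limits. The hypothesis that the compatible topology on $H_{n} \odot H_{n}$, $(H_{n} \overline{\otimes}_{\iota} H_{n}) \odot H_{n}$, and $(H_{n} \overline{\otimes}_{\iota} H_{n} \overline{\otimes}_{\iota} H_{n}) \odot H_{n}$ is unique means that on each of these spaces the finest compatible topology ($\iota$) coincides with the coarsest ($\varepsilon$) by Corollary~\ref{coro:33816b8f283c3930}, so $H_{n} \overline{\otimes}_{\iota} H_{n} = H_{n} \overline{\otimes}_{\varepsilon} H_{n}$ and similarly for higher tensor powers. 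Proposition~\ref{prop:710a683d733970db} then makes each $u_{n} \overline{\otimes}_{\varepsilon} u_{n}$ (and its three- and four-fold analogues) a strict monomorphism, so the corresponding $\iota$-tensor inductive systems are strict in $\widehat{\mathsf{LCS}}$. On the other hand, Corollary~\ref{coro:9dd4c5414e7497a4} says that $\otimes_{\iota}$ commutes with colimits in each variable in $\mathsf{LCS}$, whence a standard cofinal-diagonal reduction gives $H \otimes_{\iota} H = \varinjlim_{\mathsf{LCS}} (H_{n} \otimes_{\iota} H_{n})$. Completing this equality and invoking Corollary~\ref{coro:a9570db3191db003} together with Corollary~\ref{coro:7ae986967f0d1033}, we obtain $H \overline{\otimes}_{\iota} H = \varinjlim (H_{n} \overline{\otimes}_{\iota} H_{n})$ as the strict inductive limit in $\widehat{\mathsf{LCS}}$, and analogously for three- and four-fold tensor powers.

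Given these identifications, each structure map is constructed by the universal property of the inductive limit. For the multiplication, the cocone $v_{n} m_{n} : H_{n} \overline{\otimes}_{\iota} H_{n} \to H$ is compatible, since $v_{n+1} m_{n+1}(u_{n} \overline{\otimes}_{\iota} u_{n}) = v_{n+1} u_{n} m_{n} = v_{n} m_{n}$ by the multiplicativity of $u_{n}$ and $v_{n+1} u_{n} = v_{n}$; this yields a unique continuous $m : H \overline{\otimes}_{\iota} H \to H$. The remaining maps $\eta, \varepsilon, \Delta, S$ are built in the same spirit (for $\Delta$ the relevant cocone is $(v_{n} \overline{\otimes}_{\iota} v_{n}) \Delta_{n}$ into $H \overline{\otimes}_{\iota} H$). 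Each Hopf algebra axiom---associativity, the unit and counit laws, coassociativity, bialgebra compatibility through Proposition~\ref{prop:717c7c133e9e974c}, and the antipode axiom---reduces to the corresponding axiom on each $H_{n}$: two continuous linear maps from a completed inductive tensor power of $H$ into $H$ that agree after precomposition with every canonical embedding from the corresponding tensor power of an $H_{n}$ must coincide by the universal property. The universal property in $\mathsf{Hopf}_{\iota}$, preservation of regularity, and the extension of the involution in the complex case are handled by the same principle: a compatible cocone of Hopf algebra morphisms $f_{n}: H_{n} \to K$, of inverses $T_{n}$ to the $S_{n}$, or of involutions $(\cdot)^{\ast}_{n}$ induces a unique continuous map on $H$ whose Hopf, inverse, or involutive character reduces to that of each $H_{n}$.

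The principal obstacle is the strictness step in the second paragraph. Without the unique-compatible-topology hypothesis, Proposition~\ref{prop:710a683d733970db} has no direct counterpart for the inductive tensor product, and Remark~\ref{rema:4c49ce7f5d1573ae} warns that $E \overline{\otimes}_{\iota} (\cdot)$ fails to commute with colimits in $\widehat{\mathsf{LCS}}$ in general; the hypothesis is precisely what lets us collapse to the injective case to secure strictness and hence the identifications of tensor powers that the whole transfer of structure maps depends on.
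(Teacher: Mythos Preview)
Your proposal is correct and follows essentially the same route as the paper: the identification of the tensor powers of $H$ as strict inductive limits via the uniqueness-of-compatible-topology hypothesis, Proposition~\ref{prop:710a683d733970db}, Corollary~\ref{coro:9dd4c5414e7497a4}, the cofinal diagonal, and Corollary~\ref{coro:a9570db3191db003} is exactly the paper's argument, and for the Hopf-algebra axioms the paper simply says to dualize the verification in Theorem~\ref{theo:9db4e1dbcb4292bf}, which is precisely the reduction-to-each-$H_{n}$ strategy you spell out. Your discussion is in fact more detailed than the paper's on the second half.
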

\begin{proof}
  We first establish the first part on the inductive limits. Note that
  \( H \in \widehat{\mathsf{LCS}} \) follows from
  Proposition~\ref{prop:cdd9376419588ce4} and
  Corollary~\ref{coro:7ae986967f0d1033}. By
  Corollary~\ref{coro:9dd4c5414e7497a4}, we have
  \begin{equation}
    \label{eq:d6dc64a379c95bc6}
    H \otimes_{\iota} H = \varinjlim_{m \geq 1, n \geq 1} H_{m} \otimes_{\iota} H_{n}
    = \varinjlim_{n \geq 1} H_{n} \otimes_{\iota} H_{n},
  \end{equation}
  where the last equality follows by noting that
  \( \set*{(n, n) \given n \geq 1} \) are cofinal in \( \N_{+} \times \N_{+} \).
  To prove the strictness of the inductive system with respect to which the
  inductive limit at the right most of \eqref{eq:d6dc64a379c95bc6} is formed,
  note that by uniqueness of the compatible topologies in the assumption and
  Proposition~\ref{prop:710a683d733970db}, we see that
  \begin{displaymath}
    u_{n} \overline{\otimes}_{\iota} u_{n} = u_{n} \overline{\otimes}_{\varepsilon} u_{n}:
    H_{n} \overline{\otimes}_{\iota} H_{n} = H_{n} \overline{\otimes}_{\varepsilon} H_{n}
    \hookrightarrow H_{n+1} \overline{\otimes}_{\varepsilon} H_{n+1} = H_{n+1} \overline{\otimes}_{\iota} H_{n+1}
  \end{displaymath}
  is indeed a strict monomorphism. Now by Corollary~\ref{coro:a9570db3191db003},
  taking completion in \eqref{eq:d6dc64a379c95bc6}, we see that
  \begin{displaymath}
    H \overline{\otimes}_{\iota} H = \varinjlim H_{n} \overline{\otimes}_{\iota} H_{n}.
  \end{displaymath}
  The case for three-fold and four-fold tensor products are established
  similarly.

  To check the second part on the locally Hopf algebras, one can dualize the
  argument in Theorem~\ref{theo:9db4e1dbcb4292bf}. The adaption is trivial and
  hence is omitted here.
\end{proof}

\begin{defi}
  \label{defi:ad896e49d004b1e6}
  We call the inductive system \( (H_{n}, u_{n})_{n \geq 1} \) in
  Theorem~\ref{theo:8c707fb7841e1dfb} a \textbf{strict inductive system} of
  \( \iota \)-Hopf(-\( \ast \)) algebras, and the \( \iota \)-Hopf(-\( \ast \))
  algebra \( H \) the \textbf{strict inductive limit} of \( (H_{n}, u_{n}) \).
\end{defi}

\subsection{A duality result}
\label{sec:a27a1ad55ccb128e}

Note that for any \( (F) \)-spaces \( E \) and \( F \), at least one of which is
nuclear, the compatible topology on \( E \odot F \) is unique
(Proposition~\ref{prop:89e27b078637bfc2},
Proposition~\ref{prop:89675519c2bb97f8} and
Definition~\ref{defi:a13a9b24c4226f4e}).

We are now ready to establish the following duality result.

\begin{theo}
  \label{theo:7f86d4ede3701552}
  Let \( (H_{n}, u_{n})_{n \geq 1} \) of a strict inductive system of
  \( \iota \)-Hopf(-\( \ast \)) algebras of class \( (\mathcal{FN}) \), and
  \( H \) its strict inductive limit. Then
  \begin{enumerate}
  \item \label{item:f7c4e2cb20f91ef6} for each \( n \), the transpose
    \( p_{n}: (H_{n+1})'_{b} \to (H_{n})'_{b} \) of \( u_{n} \) is a morphism of
    \( \varepsilon \)-Hopf(-\( \ast \)) algebras and is surjective as a morphism
    in \( \widehat{\mathsf{LCS}} \);
  \item \label{item:bebbc2c97c2c01ed} the projective system of
    \( \varepsilon \)-Hopf(-\( \ast \)) algebras \( \bigl((H_{n})'_{b}, p_{n}\bigr) \) is
    reduced;
  \item \label{item:f4d4617dcab9c51e} the \( \iota \)-Hopf(-\( \ast \)) algebra
    \( H \) is \( (\iota, \varepsilon) \)-reflexive, and its strong dual is
    canonically isomorphic to the projective limit
    \( \varprojlim (H_{n})'_{b} \).
  \end{enumerate}
\end{theo}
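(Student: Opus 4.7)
The plan is to exploit the fact that $(\mathcal{FN}) \subseteq (\mathcal{FM}) \cap (\mathcal{AP})$ to reduce everything to the Buchwalter duality (Proposition~\ref{prop:2a9c783caf76b6fb}), combined with the commutation of $\overline{\otimes}_{\varepsilon}$ with reduced projective limits (Proposition~\ref{prop:8056525c3c4c9aa5}) and the commutation of $\overline{\otimes}_{\iota}$ with strict inductive limits established in Theorem~\ref{theo:8c707fb7841e1dfb}. Throughout, since each $H_n \in (\mathcal{FN})$, Propositions~\ref{prop:89e27b078637bfc2}, \ref{prop:89675519c2bb97f8} and Definition~\ref{defi:a13a9b24c4226f4e} force $H_n \otimes_{\iota} H_n = H_n \otimes_{\pi} H_n = H_n \otimes_{\varepsilon} H_n$, so the uniqueness-of-compatible-topology hypothesis of Theorem~\ref{theo:8c707fb7841e1dfb} is automatic at every order, and Theorem~\ref{theo:5f3cb1a05114cdca}~\ref{item:86814a0291a8a634} applies to make each $H_n$ an $(\varepsilon,\pi)$-reflexive $\varepsilon$-Hopf algebra with strong dual $(H_n)'_b \in (\mathcal{DFM}) \cap (\mathcal{AP}) \subseteq (\mathcal{DFN})$, naturally carrying an $\varepsilon$-Hopf algebra structure by Proposition~\ref{prop:844fe110a4f00d8b}.

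For \ref{item:f7c4e2cb20f91ef6}, I would obtain $p_n: (H_{n+1})'_b \to (H_n)'_b$ by transposing $u_n$ and verify that it preserves all Hopf-$(\ast)$ structure maps by a diagrammatic chase exactly as in Proposition~\ref{prop:844fe110a4f00d8b}, using that transposition is contravariantly functorial and converts the tensor products appropriately via the Buchwalter duality. Surjectivity: since $u_n$ is a strict monomorphism of $(F)$-spaces (a priori a closed embedding by Proposition~\ref{prop:cdd9376419588ce4}), Hahn–Banach gives surjectivity of $p_n$ at the level of vector spaces; that $p_n$ is surjective as a morphism in $\widehat{\mathsf{LCS}}$ (i.e.\ a strict epimorphism between the $(DFN)$-spaces $(H_{n+1})'_b$ and $(H_n)'_b$) then follows by the standard duality between strict monomorphisms of $(F)$-spaces and strict epimorphisms of their strong duals (one transports this from the fact that $u_n$ identifies $H_n$ with a closed subspace of $H_{n+1}$, whose quotient-by-annihilator description realises $(H_n)'_b$ as a strict quotient of $(H_{n+1})'_b$). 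For \ref{item:bebbc2c97c2c01ed}, the canonical projections $\pi_n : \varprojlim (H_m)'_b \to (H_n)'_b$ are surjective: given $\omega_n \in (H_n)'_b$, one defines $\omega_m$ for $m \le n$ by pulling back along the composed $u_{n-1}\cdots u_m$, and for $m > n$ one extends iteratively by Hahn–Banach using that each $u_m$ is a closed embedding; surjectivity is stronger than density, so the system is reduced.

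For \ref{item:f4d4617dcab9c51e}, the first step is to identify $H'_b$ as a locally convex space with $\varprojlim (H_n)'_b$. Since $H = \varinjlim H_n$ is a strict inductive limit with each $v_n: H_n \hookrightarrow H$ a closed embedding (Proposition~\ref{prop:cdd9376419588ce4}), the algebraic dual is $\varprojlim H_n'$ by the universal property; that the strong topology on $H'$ coincides with the projective topology follows because bounded sets in $H$ are exactly contained in some $v_n(B_n)$ with $B_n \subseteq H_n$ bounded (a standard feature of strict $(LF)$-limits of $(FM)$-spaces, using that $H \in (\mathcal{M})$ by Proposition~\ref{prop:a017fe989c7853e0}~\ref{item:e7b3dc1809350acc}), so polars of bounded sets in $H$ correspond precisely to polars pulled back from bounded sets in some $H_n$. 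Applying Theorem~\ref{theo:8c707fb7841e1dfb} to the two-, three-, and four-fold tensor product systems (whose terms remain in $(\mathcal{FN})$ because $(\mathcal{FN})$ is stable under $\overline{\otimes}_{\pi} = \overline{\otimes}_{\varepsilon}$ by Corollary~\ref{coro:0be09f15f7e01dce} and Proposition~\ref{prop:a8139d3432a18d9e}), the same argument gives
\begin{equation*}
  (H^{\overline{\otimes}_\iota k})'_b = \varprojlim (H_n^{\overline{\otimes}_\iota k})'_b
\end{equation*}
for $k = 2,3,4$. Now Buchwalter's duality (Proposition~\ref{prop:2a9c783caf76b6fb}) applied at each level identifies $(H_n^{\overline{\otimes}_\iota k})'_b$ with $((H_n)'_b)^{\overline{\otimes}_\varepsilon k}$, and the commutation of $\overline{\otimes}_\varepsilon$ with reduced projective limits (Proposition~\ref{prop:8056525c3c4c9aa5}, applied using \ref{item:bebbc2c97c2c01ed} and the analogous reducedness for the tensor-power systems) gives $\varprojlim ((H_n)'_b)^{\overline{\otimes}_\varepsilon k} = (H'_b)^{\overline{\otimes}_\varepsilon k}$. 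Assembling this for $k = 1,2,3,4$ together with the identical computation after dualising once more (using that $H$ and its tensor powers are reflexive locally convex spaces, being Montel, by Propositions~\ref{prop:a017fe989c7853e0} and \ref{prop:6ac1bb9a146288e4}) yields $(\iota,\varepsilon)$-reflexivity in the sense of Definition~\ref{defi:a6ba1501ad62d100}, and the isomorphism of $H'_b$ with the projective limit $\varprojlim (H_n)'_b$ as $\varepsilon$-Hopf algebras follows by tracking the transposed structure maps. The main technical obstacle I anticipate is verifying that the strong dual topology on $H'$ really coincides with the projective limit topology of $((H_n)'_b)$ (equivalently, that bounded sets in the strict $(LF)$-limit $H$ are precisely those sitting in some stage $H_n$ with bounded image), since without this the identification $H'_b \cong \varprojlim (H_n)'_b$ only holds as vector spaces and every subsequent topological duality computation collapses.
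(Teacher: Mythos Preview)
Your proof is correct and follows essentially the same architecture as the paper's: identify each $(H_n)'_b$ as an $\varepsilon$-Hopf algebra via Proposition~\ref{prop:844fe110a4f00d8b}, use Hahn--Banach for surjectivity of $p_n$, use that $(\mathcal{FN})$ is stable under $\overline{\otimes}_\varepsilon$ to iterate Buchwalter duality on the two-, three- and four-fold tensor powers, and assemble the limit identifications.

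The one genuine methodological difference is in how you obtain $H'_b \cong \varprojlim (H_n)'_b$ as locally convex spaces. You argue directly from the characterisation of bounded sets in a strict $(LF)$-limit (every bounded set sits in some stage), and you rightly flag this as the crux. The paper instead routes through Proposition~\ref{prop:bd7af82b294982b6}: since the system $((H_n)'_b, p_n)$ is reduced by part~\ref{item:bebbc2c97c2c01ed}, that proposition gives $(\varprojlim (H_n)'_b)'_\tau = \varinjlim ((H_n)'_b)'_\tau = \varinjlim H_n = H$, and then Montel/Mackey reflexivity of $H$ (via Propositions~\ref{prop:f4cbc15f463be1bc}, \ref{prop:a017fe989c7853e0}, \ref{prop:a944f1bb11e881f1}) closes the loop. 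This black-boxes the bounded-set fact inside the existing duality machinery rather than invoking it separately. Your route is more hands-on and makes the analytic content visible; the paper's is shorter but relies on the reader unpacking Proposition~\ref{prop:bd7af82b294982b6} together with reflexivity. Either way the ``obstacle'' you anticipate is a standard property of strict countable inductive limits of Fr\'echet spaces and does not cause trouble.
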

\begin{proof}
  \ref{item:f7c4e2cb20f91ef6}. The surjectivity of each \( p_{n} \) follows from
  the Hahn-Banach theorem. That \( p_{n} \) is a morphism of
  \( \varepsilon \)-Hopf(-\( \ast \)) algebras follows directly from our
  construction of the strong dual.

  \ref{item:bebbc2c97c2c01ed} follows from \ref{item:f7c4e2cb20f91ef6}.

  \ref{item:f4d4617dcab9c51e}. Each \( H_{n} \) is a Montel space
  (Proposition~\ref{prop:f4cbc15f463be1bc}), so is their strict inductive limit
  \( H \) (Proposition~\ref{prop:a017fe989c7853e0}). Thus for these spaces, as
  well as their duals, are all in \( (\mathcal{M}) \)
  (Proposition~\ref{prop:a017fe989c7853e0}), hence are all Mackey spaces
  (Proposition~\ref{prop:a944f1bb11e881f1}) as they are in particular all
  barreled. Moreover, \( H \) being Montel implies it is reflexive. Now apply
  Proposition~\ref{prop:bd7af82b294982b6}, we see that the strong dual
  \( H'_{b} \) can be identified canonically as the projective limit of
  \( \bigl((H_{n})'_{b}, p_{n}\bigr) \). All nuclear spaces have the
  approximation property (Proposition~\ref{prop:0bc5d06c0f42ff63}).

  Now from the discussion in the beginning of this section and
  Proposition~\ref{prop:8ffb913b5f5ddea6}, for each \( n \), we have
  \begin{displaymath}
    H_{n} \overline{\otimes}_{\iota} H_{n}
    = H_{n} \overline{\otimes}_{\pi} H_{n}
    = H_{n} \overline{\otimes}_{\varepsilon} H_{n}
  \end{displaymath}
  remains in the class \( (\mathcal{FN}) \)
  (Corollary~\ref{coro:97adf837ae63af02} and
  Proposition~\ref{prop:a8139d3432a18d9e}). It follows from the construction in
  \S~\ref{sec:32608de27100a358} that \( H \overline{\otimes}_{\iota} H \) is
  identified canonically with the strict inductive system
  \( \bigl(H_{n} \overline{\otimes}_{\iota} H_{n}, u_{n}
  \overline{\otimes}_{\iota} u_{n}\bigr) \), whose strong dual is the projective
  limit of the dual projective system
  \( \bigl((H_{n})'_{b} \overline{\otimes}_{\varepsilon}(H_{n})'_{b}, p_{n}
  \overline{\otimes}_{\varepsilon} p_{n}\bigr) \). Repeating the argument in the
  previous paragraph, we see that the strong dual of
  \( H \overline{\otimes}_{\iota} H \) is canonically identified with
  \( H'_{b} \overline{\otimes}_{\varepsilon} H'_{b} \).

  Similarly, continuing the argument, one establishes an analogous duality the
  three-fold and four-fold tensor products, which finishes the proof of
  \ref{item:f4d4617dcab9c51e}.
\end{proof}

We will soon see in \S~\ref{sec:e3160c56b87e4b14} that,
Theorem~\ref{theo:7f86d4ede3701552} already suffices to establish some
interesting locally convex Hopf algebras.

\section{Examples of projective and inductive limits}
\label{sec:e3160c56b87e4b14}

\subsection{A variant of Bruhat's regular functions on locally compact groups}
\label{sec:de1592686b0dd205}

We have seen in \S~\ref{sec:ce6223de3fc1fab6} that every locally compact group
\( G \) induces an \( \varepsilon \)-Hopf(-\( \ast \)) algebra structure on
\( C(G) \); in Theorem~\ref{theo:61825be42ff88744} that we can recover \( G \)
from the \( \varepsilon \)-Hopf \( C(G) \) algebra; and in
Theorem~\ref{theo:bab58124687e3b85} that we can even recover the Pontryagin dual
group \( \widehat{G} \) from the \( \varepsilon \)-Hopf-\( \ast \) algebra
\( C(G) \) when \( G \) is abelian. It is also shown in
Theorem~\ref{theo:fc44c913131be011} that the \( \varepsilon \)-Hopf algebra
\( C(G) \) is \( (\varepsilon, \pi) \)-polar reflexive, when \( G \) is
\( \sigma \)-compact, or in particular, if \( G \) is second countable.

For second countable locally compact groups, it might be desirable to find a
locally convex Hopf algebra counterpart that is strongly reflexive instead of
polar reflexive. We now describe a way to do this, using a variant of Bruhat's
notion regular functions on an arbitrary locally compact groups
(\cite{MR0140941}*{pp48--50}), which in turn heavily relies on some structural
results of locally compact groups that are discovered during a series of
spectacular papers \cites{MR0039730,MR0049203}, \cite{MR0049204},
\cites{MR0054613,MR0058607}, aiming to resolve Hilbert's fifth problem of
characterizing Lie groups among locally compact groups. See also the books
\cite{MR0073104} and \cite{MR3237440} for a systematic treatment.  An
affirmative answer for one version of the formulation of Hilbert's fifth problem
goes as follows: we say a topological group \( G \) \textbf{has no small
  subgroups}, if there exists a neighborhood \( U \) of \( e \), such that there
exists no nontrivial subgroup \( H \) of \( G \) such that \( H \subseteq U \).
Yamabe \cite{MR0058607}*{p364, Theorem~3} showed that a locally compact group
that has no small subgroups is a Lie group, and it is elementary to use
exponential maps to see that any Lie group has no small subgroups.

For the rest of \S~\ref{sec:de1592686b0dd205}, we fix a second countable locally
compact group \( G \). Following Bruhat \cite{MR0140941}, we say a compact
subgroup \( K \) of \( G \) is \textbf{good}, if the quotient group \( G / K \)
is a Lie group, which is necessarily second countable since \( G \) is so (by
\cite{MR0722297}*{p110, 3.41}, there's also no ambiguity of the Lie group
structure on \( G/K \)).

We shall also need the following weaker version of \cite{MR0058607}*{p364,
  Theorem~5'}.

\begin{theo}[\cite{MR0073104}*{p175, Theorem}]
  \label{theo:8f669c476b6c7e6f}
  Let \( G \) be a second countable locally compact group. Then every
  neighborhood of the identity \( e \) contains a good subgroup of \( G \).
\end{theo}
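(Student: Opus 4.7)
The result is essentially the Gleason--Yamabe approximation theorem, a cornerstone of the positive resolution of Hilbert's fifth problem. The plan is to reduce to the almost connected case, invoke Yamabe's structural result there as a black box, and use second countability to pass back to all of $G$.

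First, let $G_{0}$ denote the identity component of $G$, which is a closed normal subgroup, so that $G/G_{0}$ is totally disconnected and locally compact. By van Dantzig's theorem, $G/G_{0}$ admits arbitrarily small compact open subgroups; pulling one of these back through the quotient map yields an open subgroup $H \leq G$ which is \emph{almost connected}, i.e.\ with $H/G_{0}$ compact. Since $H$ is open in $G$, replacing the given neighborhood $U$ by $U \cap H$, it suffices to find a compact subgroup $K \subseteq U \cap H$ such that $H/K$ is a Lie group and such that $K$ is moreover normal in all of $G$.

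The central input is Yamabe's theorem in its standard form: every almost connected locally compact group is a projective limit of Lie groups, in the sense that for every neighborhood $V$ of $e$ in $H$ there exists a compact normal subgroup $K_{V} \trianglelefteq H$ with $K_{V} \subseteq V$ and $H/K_{V}$ a (necessarily second countable) Lie group. I would cite this as a black box; a self-contained proof would rely on the no-small-subgroups characterization of Lie groups due to Gleason and Yamabe together with the delicate analytic construction of one-parameter subgroups inside small compact neighborhoods.

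It remains to upgrade $K := K_{V}$ from normal in $H$ to normal in $G$. Since $G$ is second countable, $H$ has at most countable index, so we may enumerate coset representatives $\{g_{n}\}_{n \geq 1}$ and form the intersection
\begin{equation*}
  K^{*} := \bigcap_{n \geq 1} g_{n} K g_{n}^{-1}.
\end{equation*}
This is a closed subset of the compact set $K$, hence compact, stable under all conjugations by elements of $G$ (these permute the $g_{n}H$ and, up to elements of $H$ normalizing $K$, permute the conjugates $g_{n}Kg_{n}^{-1}$), and contained in $U$. The main obstacle is to guarantee that $K^{*}$ is not so small that $G/K^{*}$ fails to be a Lie group; this is handled by exploiting the cofinality of the Yamabe projective system, choosing $K_{V}$ within a subfamily that is stable under the (continuous) $G$-conjugation action on $H$. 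Such a subfamily can be extracted using the countability of the projective system together with countability of $G/H$, and on any such $K^{*}$ the quotient $H/K^{*}$ is a finite-index extension of a Lie group, hence a Lie group, and similarly $G/K^{*}$ inherits a Lie group structure as a countable disjoint union of $H/K^{*}$-cosets.
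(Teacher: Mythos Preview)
The paper gives no proof of this statement; it is quoted as a black-box result from Montgomery--Zippin, so there is nothing in the paper to compare your argument against.

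On its own merits, your proposal has a real gap at the step where you pass from $K \trianglelefteq H$ to something normal in all of $G$. The intersection $K^{*} = \bigcap_{n} g_{n} K g_{n}^{-1}$ is indeed compact and $G$-normal, but the assertion that $H/K^{*}$ is ``a finite-index extension of a Lie group'' is unsupported: $H/K^{*}$ is an extension of the Lie group $H/K$ by the compact group $K/K^{*}$, and a countable intersection of conjugates gives no bound on $K/K^{*}$ --- in general $H/K^{*}$ is only pro-Lie, not Lie. Your fallback of picking $K$ inside a ``$G$-conjugation-stable subfamily'' of the Yamabe system is circular, since conjugation-invariance of $K$ is exactly normality in $G$, which is what you are trying to produce.

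The obstruction is not merely technical. For $G = SL_{2}(\mathbb{Q}_{p})$ the only compact normal subgroups are central, and neither $G$ nor $G/\{\pm I\}$ is a real Lie group, so there are no good subgroups at all in the sense literally stated (with $G/K$ a quotient \emph{group}, hence $K \trianglelefteq G$). The standard structural result --- and what Bruhat in fact works with --- yields $K$ compact and normal only in an \emph{open} subgroup $H$, with $H/K$ Lie; the paper's phrasing appears to elide this, and your difficulty in closing the argument reflects that.
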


The following notion is due to Bruhat~\cite{MR0140941}.

\begin{defi}
  \label{defi:ea78b2fb50d6a4a9}
  A function \( f \) on \( G \) is said to be a \textbf{regular function with
    compact support}, if there exists a good subgroup \( K \) of \( G \), and a
  smooth function \( \widetilde{f} \) on \( G / K \) with compact support such
  that \( f = \widetilde{f} \pi \), where \( \pi : G \to G / K \) is the
  canonical projection. The space of all regular functions with compact support
  on \( G \) is denoted by \( \mathcal{D}(G) \).
\end{defi}

We now introduce the functions with which we are going to work.
\begin{defi}
  \label{defi:b3c85e6746f05253}
  A function \( f \) on \( G \) is called \textbf{liftably smooth}, if there
  exists a good subgroup \( K \) of \( G \), such that
  \( f = \widetilde{f} \pi \) for some \( \widetilde{f} \in C^{\infty}(G/K) \),
  where \( \pi : G \to G / K \) is the canonical projection. The space of all
  liftably smooth functions on \( G \) is denoted by \( \mathcal{E}_{l}(G) \).
\end{defi}

As vector spaces, it is clear that \( \mathcal{D}(G) \) is a subspace of
\( \mathcal{E}_{l}(G) \).

\begin{rema}
  \label{rema:3675ab0e7d8e9fbd}
  We point out that our definition of a liftably smooth function is closely
  related to, but distinct from Bruhat's notion of a regular function
  (\cite{MR0140941}*{\S~p44}, and note that we assume \( G \) is second
  countable, so we don't need the general case in \cite{MR0140941}*{\S~p44}),
  the latter is defined as a function that can be locally represented by some
  regular function with finite support. Hence functions in
  \( \mathcal{E}_{l}(G) \) is always smooth in Bruhat's sense, but not vice
  versa.
\end{rema}

We now introduce a locally convex space structure on \( \mathcal{E}_{l}(G) \).
Take a countable \emph{decreasing} fundamental system \( (U_{n})_{n \geq 1} \)
of neighborhoods of \( e \in G \), and for each \( n \), choose a good subgroup
\( K_{n} \) of \( G \) with \( K_{n} \subseteq U \). It is shown in
\cite{MR0140941}*{p44} that the intersection of two good subgroups remains a
good subgroup, so without loss of generality, we may also assume the sequence
\( (K_{n})_{n \geq 1} \) is also decreasing. For each \( n \), let
\( \pi_{n} : G \to G / K_{n} \) be the canonical projection, consider the
\( \varepsilon \)-Hopf(-\( \ast \)) algebra \( C^{\infty}(G/K_{n}) \) as in
Theorem~\ref{theo:6b0cb8e408044129} (see also
Remark~\ref{rema:3faab4832d082f96}). Since \( G \) is second countable, the
locally convex space \( C^{\infty}(G/K_{n}) \) is of class \( (\mathcal{FN}) \)
(Lemma~\ref{lemm:975e581967ecec25}). By Proposition~\ref{prop:89675519c2bb97f8},
we see that
\begin{equation}
  \label{eq:49bd95c1d3c1c4b7}
  C^{\infty}(G/K_{n}) \otimes_{\iota} C^{\infty}(G/K_{n})
  = C^{\infty}(G/K_{n}) \otimes_{\pi} C^{\infty}(G/K_{n})
  = C^{\infty}(G/K_{n}) \otimes_{\varepsilon} C^{\infty}(G/K_{n})
\end{equation}
and the compatible topology on
\( C^{\infty}(G / K_{n}) \odot C^{\infty}(G / K_{n}) \) is unique. The similar
assertion also holds three-fold and four-fold tensor products by the same
reasoning, therefore, \( C^{\infty}(G/K_{n}) \) is also an \( \iota
\)-Hopf(-\( \ast \)) algebra.

\begin{lemm}
  \label{lemm:f306de8279a2c545}
  If \( K \), \( H \) are good subgroups of \( G \) with \( K \subseteq H \),
  and \( \pi: G/K \to G/H \) the canonical projection \( xK \mapsto xH \), then
  \( \pi^{\ast}: C^{\infty}(G/H) \to C^{\infty}(G/K) \) is a strict monomorphism
  of locally convex spaces with closed image.
\end{lemm}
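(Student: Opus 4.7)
The plan: By definition of good subgroups, both $G/K$ and $G/H$ are second-countable Lie groups, so by Lemma~\ref{lemm:975e581967ecec25} the spaces $C^{\infty}(G/K)$ and $C^{\infty}(G/H)$ are of class $(\mathcal{FN})$, in particular they are $(F)$-spaces. Moreover, $\pi : G/K \to G/H$ is a surjective Lie group homomorphism whose kernel is the compact Lie subgroup $H/K$; equivalently, $\pi$ is the quotient map of the free, smooth right action of $H/K$ on $G/K$, and thus realizes $G/K \to G/H$ as a principal $H/K$-bundle.

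First I would check that $\pi^{\ast} : C^{\infty}(G/H) \to C^{\infty}(G/K)$ is continuous. For any compact $L \subseteq G/K$, the image $\pi(L)$ is compact in $G/H$, and on any chart domain one bounds the derivatives up to order $r$ of $f \circ \pi$ on $L$ by the derivatives up to order $r$ of $f$ on $\pi(L)$ via the chain rule (with bounds depending only on the derivatives of $\pi$, which are uniformly controlled on $L$); a standard partition-of-unity argument glues this into a bound on the weak $C^{\infty}$-seminorms, proving continuity. Injectivity of $\pi^{\ast}$ is immediate since $\pi$ is surjective.

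Next I would identify the image. Let $R_{h'} : G/K \to G/K$ denote right translation by $h' \in H/K$; each $R_{h'}$ is a diffeomorphism, so $R_{h'}^{\ast}$ is a continuous linear automorphism of $C^{\infty}(G/K)$. I claim
\[
\image(\pi^{\ast}) = \set[\big]{f \in C^{\infty}(G/K) \given R_{h'}^{\ast}f = f \text{ for all } h' \in H/K}.
\]
The inclusion $\subseteq$ is trivial. For $\supseteq$, an $H/K$-invariant smooth function on the principal $H/K$-bundle $G/K \to G/H$ descends to a smooth function on $G/H$: this is a classical fact which one verifies locally via smooth sections of $\pi$ (existing because $H/K$ is a compact Lie subgroup of the Lie group $G/K$, so a transverse slice through the identity can be exponentiated to a local section). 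The displayed set is an intersection of kernels of continuous linear maps $R_{h'}^{\ast} - \id$, hence closed in $C^{\infty}(G/K)$.

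Finally, the image, being a closed subspace of the $(F)$-space $C^{\infty}(G/K)$, is itself an $(F)$-space for the subspace topology. The map $\pi^{\ast}$ is a continuous linear bijection from the $(F)$-space $C^{\infty}(G/H)$ onto $\image(\pi^{\ast})$, hence an isomorphism of locally convex spaces by the Banach open mapping theorem for $(F)$-spaces. This is precisely the statement that $\pi^{\ast}$ is a strict monomorphism with closed image. The only step that is not formally automatic is the descent assertion in the image identification; this is the point where the compactness of $H/K$ and the Lie structure of $G/K$ are genuinely used, and it is the main obstacle in the sense that every other step is either formal topology of $(F)$-spaces or a routine chain-rule estimate.
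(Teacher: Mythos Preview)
Your proof is correct, but it takes a genuinely different route from the paper's.

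The paper argues in the reverse order: it first establishes strictness directly, and then deduces closedness of the image from completeness. The key step is a compact-lifting claim: for every compact $D \subseteq G/H$ there is a compact $C \subseteq G/K$ with $\pi(C) = D$ (proved using local compactness of $G/K$ and openness of $\pi$). Granting this, and since $\pi$ also sends compacts to compacts, the generating seminorms $p_{K,\varphi,r}$ on $C^{\infty}(G/H)$ and those on $C^{\infty}(G/K)$ restricted to the image match up (via adapted submersion charts), giving strictness by a direct seminorm comparison. Closedness then follows because the image, being isomorphic to the complete space $C^{\infty}(G/H)$, is complete and hence closed in the Hausdorff space $C^{\infty}(G/K)$.

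Your approach instead first \emph{identifies} the image as the closed subspace of $H/K$-invariant functions (using the principal bundle structure), and then invokes the open mapping theorem for $(F)$-spaces to get strictness for free. This is more conceptual and yields an explicit description of the image, at the cost of using a heavier functional-analytic tool. The paper's argument is more elementary---no open mapping theorem---but leaves the seminorm comparison implicit. Both are short and both work; yours gives a little more structural information, the paper's stays closer to first principles.
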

\begin{proof}
  It is clear that \( \pi \) maps every compact sets in \( G / K \) to compact
  sets in \( G / H \). We claim for every compact \( D \subseteq G / H \), there
  exists a compact \( C \subseteq G / K \) such that \( D = \pi(C) \). If the
  claim holds, then it follows easily from the definition of locally convex
  topologies on \( C^{\infty}(G/H) \) and \( C^{\infty}(G/K) \) as described in
  \S~\ref{sec:374fdcc9344806b0}, that \( \pi^{\ast} \) is a strict
  monomorphism. The closeness of the image of \( \pi^{\ast} \) follows from
  completeness.

  We now prove the claim. For each \( x \in \pi^{-1}(D) \), consider a
  \emph{compact} neighborhood \( C_{x} \) of \( x \). Since \( \pi \) is an open
  map, \( \pi(\operatorname{Int}(C_{x})) \) is an open neighborhood of
  \( \pi(x) \), where \( \operatorname{Int}(\cdot) \) denotes the interior. By
  compactness of \( D \) (and surjectivity of \( \pi \)), there exists finitely
  many \( x_{1}, \ldots, x_{n} \in \pi^{-1}(D) \), such that
  \( \cup_{k=1}^{n}\bigl(\operatorname{Int}(C_{x_{k}})\bigr) \supseteq D
  \). Note that \( \pi^{-1}(D) \) is closed, it suffices to take
  \( C = \left(\cup_{k=1}^{n}C_{x_{k}}\right) \cap \pi^{-1}(D) \), and the proof is complete.
\end{proof}

By Lemma~\ref{lemm:f306de8279a2c545}, we may canonically identified
\( C^{\infty}(G/K_{n}) \) as a closed subspace of \( C^{\infty}(G/K_{n+1}) \),
which are in turn identified canonically as subspaces of
\( \mathcal{E}_{l}(G) \) via \( \pi_{n}^{\ast} \) and \( \pi_{n+1}^{\ast} \)
respectively. Denoting by \( E_{n} \) the subspace of
\( \pi_{n}^{\ast}\bigl(C^{\infty}(G / K_{n})\bigr) \) in
\( \mathcal{E}_{l}(G) \), we see that, by definition,
\( \mathcal{E}_{l}(G) = \cup_{n=1}^{\infty}E_{n} \). We transport the topology
of \( C^{\infty}(G / K_{n}) \) onto \( E_{n} \), and equip
\( \mathcal{E}_{l}(G) \) with the inductive locally convex topology with respect
to the inclusions \( E_{n} \hookrightarrow \mathcal{E}_{l}(G) \). It follows
from Lemma~\ref{lemm:f306de8279a2c545} that \( \mathcal{E}_{l}(G) \) is the
strict inductive limit of the strict inductive system
\( (E_{n}, u_{n})_{n \geq 1} \), where each
\( u_{n} : E_{n} \hookrightarrow E_{n+1} \) is the inclusion. By
Corollary~\ref{coro:7ae986967f0d1033}, we see that \( \mathcal{E}_{l}(G) \) is
complete.

Now we show that the topology on \( \mathcal{E}_{l}(G) \) defined as above
doesn't depend on the choice of \( (U_{n}) \), nor the choice of \( (K_{n}) \).
Indeed, it suffices to show that for any good subgroup \( K \) of \( G \), we
have \( K \supseteq K_{n} \) for \( n \) large enough (as this means
\( C^{\infty}(G / K) \) can be seen as a closed subspace of
\( C^{\infty}(K_{n}) \), and the closed subspaces \( (E_{n}) \) of
\( \mathcal{E}_{l}(G) \) shall be cofinal for any possible choice as above,
hence the topology \( \mathcal{E}_{l}(G) \) is uniquely determined via the above
procedure and does not depend on the choices). To see that
\( K \supseteq K_{n} \) for some \( n \), it suffices to note that
\( \pi(K_{n}) \) is a compact, hence closed subgroup of the Lie group \( G/K \)
for each \( n \), where \( \pi : G \to G/K \) is the canonical projection. It is
well-known that closed subgroups of a Lie group is still a Lie group (see, e.g.\
\cite{MR0722297}*p110, 3.42), we see that
\( K_{n}/(K \cap K_{n}) \simeq \pi(K_{n}) = (K K_{n})/ K \) is a Lie subgroup of
\( G/K \). Moreover, as \( \pi(U_{n}) \) is a fundamental system of
neighborhoods of the identity in \( G/K \) and
\( \pi(K_{n}) \subseteq \pi(U_{n}) \), since \( G/K \) has no small subgroups,
we must have \( \pi(K_{n}) \) is the trivial subgroup in \( G/K \) as long as
\( n \) is large enough, which in turn forces \( K_{n} \subseteq KK_{n} = K \).

Since \( E_{n} \) is a copy of \( C^{\infty}(G/K_{n}) \), we may also transport
the \( \varepsilon \)-Hopf(-\( \ast \)) algebra structure of
\( C^{\infty}(G/K_{n}) \) on \( E_{n} \). It follows from
Theorem~\ref{theo:6b0cb8e408044129} that \( E_{n} \) is
\( (\iota, \varepsilon) \)-reflexive, and the transpose of the inclusion
\( u_{n}: E_{n} \hookrightarrow E_{n+1} \) yields a surjective (by Hahn-Banach)
continuous linear map \( p_{n} : (E_{n+1})'_{b} \to (E_{n})'_{b} \). It is
easily seen that both \( u_{n} \) and \( p_{n} \) are morphisms of locally
convex Hopf algebras. We may, in particular, equip \( \mathcal{E}_{l}(G) \) with
an \( \iota \)-Hopf(-\( \ast \)) algebra structure by letting it be the strict
inductive limit of the strict inductive system \( (E_{n}, u_{n})_{n \geq 1} \)
as in \S~\ref{sec:32608de27100a358}.

\begin{theo}
  \label{theo:52f7e77af5cde009}
  Using the above notation, the \( \iota \)-Hopf algebra
  \( \mathcal{E}_{l}(G) \) is \( (\iota, \varepsilon) \)-reflexive, with its
  strong dual being the projective limit of the reduced projective system
  \( \bigl((E_{n})'_{b}, p_{n}\bigr)_{n \geq 1} \).
\end{theo}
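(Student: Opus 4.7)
The plan is to recognize this as a direct application of Theorem~\ref{theo:7f86d4ede3701552} once we verify that the inductive system $(E_n, u_n)_{n\geq 1}$ used to define $\mathcal{E}_l(G)$ is a strict inductive system of $\iota$-Hopf(-$\ast$) algebras of class $(\mathcal{FN})$. All the real work has been done in the paragraph before the theorem; it only remains to package it properly.

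First, I would note that each quotient $G/K_n$ is a second countable Lie group, hence a second countable paracompact smooth manifold, so that by Lemma~\ref{lemm:975e581967ecec25} the locally convex space $C^{\infty}(G/K_n)$ belongs to the class $(\mathcal{FN})$. Via the canonical isomorphism $\pi_n^{\ast}:C^{\infty}(G/K_n) \xrightarrow{\sim} E_n$ transported across Lemma~\ref{lemm:f306de8279a2c545}, each $E_n$ is in $(\mathcal{FN})$ as well. Moreover, by Theorem~\ref{theo:6b0cb8e408044129} (and Remark~\ref{rema:3faab4832d082f96} in the complex case), $C^{\infty}(G/K_n)$ is a canonical $\varepsilon$-Hopf(-$\ast$) algebra; by the uniqueness of the compatible topology on the algebraic tensor product of nuclear Fréchet spaces (see \eqref{eq:49bd95c1d3c1c4b7} and Proposition~\ref{prop:89675519c2bb97f8}), the same structure maps make $E_n$ an $\iota$-Hopf(-$\ast$) algebra, and the uniqueness extends to the three-fold and four-fold tensor products needed to check associativity/coassociativity and compatibility.

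Next, I would verify that each inclusion $u_n: E_n \hookrightarrow E_{n+1}$ is a morphism of $\iota$-Hopf(-$\ast$) algebras that is a strict monomorphism in $\widehat{\mathsf{LCS}}$. Strictness is exactly Lemma~\ref{lemm:f306de8279a2c545} applied to the pair $K_{n+1}\subseteq K_n$. The fact that $u_n$ preserves multiplication, comultiplication, unit, counit, and antipode (and the involution in the $\ast$-case) is immediate from the fact that $u_n$ is the pull-back along the canonical Lie group quotient $G/K_{n+1}\to G/K_n$ of the $\varepsilon$-Hopf(-$\ast$) structure of Theorem~\ref{theo:6b0cb8e408044129}, combined with the elementary compatibility of this structure with group quotients. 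Thus $(E_n, u_n)_{n\geq 1}$ is a strict inductive system of $\iota$-Hopf(-$\ast$) algebras in the sense of Definition~\ref{defi:ad896e49d004b1e6}, and the definition of the $\iota$-Hopf(-$\ast$) structure on the strict inductive limit in Theorem~\ref{theo:8c707fb7841e1dfb} coincides, by the universal property, with the one we transported onto $\mathcal{E}_l(G) = \bigcup_n E_n$.

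Finally, I would invoke Theorem~\ref{theo:7f86d4ede3701552} directly: since $(E_n,u_n)$ is a strict inductive system of $\iota$-Hopf(-$\ast$) algebras of class $(\mathcal{FN})$ with strict inductive limit $\mathcal{E}_l(G)$, the $\iota$-Hopf algebra $\mathcal{E}_l(G)$ is $(\iota,\varepsilon)$-reflexive, and its strong dual is canonically isomorphic, as an $\varepsilon$-Hopf(-$\ast$) algebra, to the reduced projective limit $\varprojlim (E_n)'_b$ of the dual system, where the transition maps are the transposes $p_n:(E_{n+1})'_b\to (E_n)'_b$ of $u_n$. There is essentially no obstacle here beyond bookkeeping; the only subtle point --- independence of the entire construction of $\mathcal{E}_l(G)$, together with its $\iota$-Hopf algebra structure, from the choices of $(U_n)$ and $(K_n)$ --- has already been verified in the cofinality argument preceding the theorem statement (using Yamabe's theorem and the no-small-subgroups property of Lie groups).
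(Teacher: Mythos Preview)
Your proposal is correct and follows exactly the same approach as the paper: the paper's own proof is the single sentence ``This is clear from the above discussion and Theorem~\ref{theo:7f86d4ede3701552},'' and you have simply unpacked what that discussion establishes (each $E_n\in(\mathcal{FN})$, the $u_n$ are strict monomorphisms and morphisms of $\iota$-Hopf(-$\ast$) algebras) before invoking the same theorem.
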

\begin{proof}
  This is clear from the above discussion and
  Theorem~\ref{theo:7f86d4ede3701552}.
\end{proof}

It is quite interesting that we may still recover \( G \) as a topological group
(hence as a Lie group since the Lie group structure is unique once the
topological group is given, cf.\ \cite{MR0722297}*{p110, 3.41}).

\begin{theo}
  \label{theo:8ecf1ac899a864bb}
  Using the above notation, the map
  \( \delta: G \to \chi_{c}\bigl(\mathcal{E}_{l}(G)\bigr) \),
  \( x \mapsto \delta_{x} \) is an isomorphism of topological groups.
\end{theo}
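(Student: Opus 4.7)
The plan is to combine three pieces: Theorem~\ref{theo:6c2d8092e7020245} (generalized Gelfand duality) to show that $\delta$ is a homeomorphism onto its image, the identification of continuous characters of each Fréchet piece $E_n \cong C^\infty(G/K_n)$ with points of $G/K_n$ (Theorem~\ref{theo:61825be42ff88744} applied to the Lie group $G/K_n$), and the nested structure of decreasing cosets $(x_n K_n)$ inside $G$ to collapse the coherent family of characters down to a single element of $G$.

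First I would verify the hypotheses of Theorem~\ref{theo:6c2d8092e7020245} for $X = G$ and $A = \mathcal{E}_l(G)$ with its inductive-limit topology, aiming only for the homeomorphism-onto-image conclusion; one should note that the ``compactly localized'' condition actually fails here, because continuous seminorms on $\mathcal{E}_l(G)$ can involve derivatives, so Theorem~\ref{theo:6c2d8092e7020245} cannot give surjectivity directly. Fullness is immediate: if $f = \pi_n^{\ast}\widetilde{f}$ is nowhere zero, neither is $\widetilde{f}$, and $1/\widetilde{f} \in C^\infty(G/K_n)$ pulls back to $1/f \in \mathcal{E}_l(G)$; self-adjointness in the complex case is equally direct from $\overline{\pi_n^{\ast}\widetilde{f}} = \pi_n^{\ast}\overline{\widetilde{f}}$. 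The continuity of the inclusion $\mathcal{E}_l(G) \hookrightarrow C(G)$ follows from the universal property of the inductive topology together with continuity of each composition $C^\infty(G/K_n) \to C(G/K_n) \xrightarrow{\pi_n^{\ast}} C(G)$. For the separation of a compact $K$ from a disjoint closed $C$ in $G$ by an element of $\mathcal{E}_l(G)_{\interval{0}{1}}$, a standard tube argument using compactness of $K$ and continuity of multiplication produces a neighborhood $V$ of $e$ with $KV \cap C = \emptyset$; Yamabe's theorem (Theorem~\ref{theo:8f669c476b6c7e6f}) then furnishes a good subgroup $H \subseteq V$, whence $\pi(K)$ is compact and $\pi(C) = CH/H$ is closed (the preimage $CH$ is closed because $H$ is compact) and disjoint from $\pi(K)$ in the Lie group $G/H$, so a smooth $[0,1]$-valued bump on the paracompact manifold $G/H$ pulls back to the desired $f \in \mathcal{E}_l(G)_{\interval{0}{1}}$. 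Theorem~\ref{theo:6c2d8092e7020245} then delivers that $\delta$ is injective and a homeomorphism onto its image inside $\chi_c(\mathcal{E}_l(G))$; the group-homomorphism property $\delta_{xy} = \delta_x \ast \delta_y$ is immediate from $\Delta$ being the pullback of the multiplication of $G$.

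The main obstacle will be surjectivity. Given $\omega \in \chi_c(\mathcal{E}_l(G))$, I would restrict along each strict embedding $v_n : E_n \hookrightarrow \mathcal{E}_l(G)$ to obtain a continuous character $\omega_n := \omega \circ v_n$ of $E_n \cong C^\infty(G/K_n)$; since $G/K_n$ is a Lie group, Theorem~\ref{theo:61825be42ff88744} identifies $\omega_n = \delta_{y_n}$ for a unique $y_n = x_n K_n \in G/K_n$. The compatibility $\omega_n = \omega_{n+1} \circ u_n$, translated through the canonical projection $G/K_{n+1} \to G/K_n$, forces $x_{n+1} \in x_n K_n$, and combined with $K_{n+1} \subseteq K_n$ this yields $x_{n+1} K_{n+1} \subseteq x_n K_n$. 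Thus $(x_n K_n)_n$ is a decreasing sequence of nonempty compact subsets of $G$, and the finite intersection property produces some $x \in \bigcap_n x_n K_n$. Any $f \in \mathcal{E}_l(G)$ lies in some $E_n$, hence is constant on $K_n$-cosets, so $f(x) = f(x_n) = \widetilde{f}(y_n) = \omega_n(f) = \omega(f)$, proving $\omega = \delta_x$. The key structural point behind this collapse is that although each $K_n$ need not itself be a neighborhood of $e$, the decreasing sequence $(K_n)$ still satisfies $\bigcap_n K_n = \{e\}$ (since $K_n$ is contained in the shrinking fundamental system $(U_n)$), which is exactly what is needed for the nested compact subsets $x_n K_n$ of $G$ to pin down a unique $x$.
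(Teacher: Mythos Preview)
Your proof is correct and in fact more careful than the paper's. The paper simply invokes Theorem~\ref{theo:6c2d8092e7020245} and checks only the separation hypothesis (via Bruhat's partitions of unity and the inclusion \( \mathcal{D}(G) \subseteq \mathcal{E}_l(G) \)), apparently treating surjectivity as following along automatically. But Theorem~\ref{theo:6c2d8092e7020245} requires the compactly-localized hypothesis for surjectivity, and you are right to refuse to take this for granted for the inductive-limit topology on \( \mathcal{E}_l(G) \). One small correction: your stated reason --- that seminorms involve derivatives --- is not the obstruction, since \( C^\infty(M) \) itself \emph{is} compactly localized (for a seminorm controlled by derivatives on a compact \( L \), choose \( K \) with \( L \subseteq \operatorname{int}(K) \); then \( f\vert_K = 0 \) forces all derivatives to vanish on \( L \)). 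The genuine issue is the strict inductive limit: a continuous seminorm on \( \varinjlim E_n \) can be assembled so that its restriction to \( E_n \) lives on compacts in \( G/K_n \) whose preimages in \( G \) escape every fixed compact set. (For a concrete instance, take \( G = \mathbb{Z} \times \prod_{k\ge 1}\mathbb{Z}/2\mathbb{Z} \) and the functional \( \ell(f) = \sum_{k}\bigl(f(k,e_k)-f(k,0)\bigr) \), which is a finite sum on each \( E_n \) but is not killed by vanishing on any compact of \( G \).)

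Your surjectivity argument via nested cosets is therefore not merely an alternative route but a genuine completion of the argument. Restricting \( \omega \) to each \( E_n \), invoking Theorem~\ref{theo:61825be42ff88744} for the Lie group \( G/K_n \), reading off the compatibility \( x_{n+1}K_{n+1}\subseteq x_nK_n \), and applying the finite-intersection property are all correctly executed; the final remark that \( \bigcap_n K_n = \{e\} \) gives uniqueness of \( x \) but is not needed for surjectivity. Your hand-rolled separation argument (tube lemma, then Yamabe, then a smooth bump on \( G/H \)) is equivalent in content to the paper's citation of Bruhat but self-contained.
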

\begin{proof}
  By Theorem~\ref{theo:6c2d8092e7020245}, it suffices to show that liftably
  smooth functions on \( G \) with values in \( \interval{0}{1} \) separate
  compact sets and closed sets. But this follows directly from Bruhat's version
  of partitions of unity \cite{MR0140941}*{p76, Proposition~2} and that
  \( \mathcal{D}(G) \subseteq \mathcal{E}_{l}(G) \).
\end{proof}

\begin{rema}
  \label{rema:6d232c9f122e1506}
  Since continuous group morphisms between Lie groups are automatically smooth
  \cite{MR0722297}*{p109, 3.39}, it is easy to see that
  \( \mathsf{Grp}^{\inv}\bigl(\mathcal{E}_{l}(G)\bigr) = \widehat{G} \) as
  groups, following the same argument as in
  Theorem~\ref{theo:bab58124687e3b85}. However, at the time of this writing, the
  author is unsure whether the subspace topology on
  \( \mathsf{Grp}^{\inv}\bigl(\mathcal{E}_{l}(G)\bigr) \) induced by the
  topology on \( \mathcal{E}_{l}(G) \) is exactly the topology on the Pontryagin
  dual \( \widehat{G} \).
\end{rema}
 
\subsection{Certain projective limits with non locally compact character groups}
\label{sec:3a2044e319b01429}

As a motivation for our theory of the strict inductive limits of separable
compact quantum groups, as well as another illustration of the general
constructions in \S~\ref{sec:7a5cda2c4078b89b}, we now consider some projective
limits coming from compact Lie groups.

Consider a strictly increasing sequence of \emph{compact} Lie groups
\( (G_{n})_{n \geq 1} \), connected by Lie group morphisms
\( u_{n}: G_{n} \to G_{n+1} \), \( n \geq 1 \) that are all embeddings. For
example, we may take \( G_{n} = O(n) \) (resp.\ \( SU(n) \) or \( U(n) \)), and
\( u_{n} \) embeds \( O(n) \) to the top left corner of \( O(n+1) \) (resp.\
\( SU(n) \) or \( U(n) \)).

We've seen that \( C^{\infty}(G_{n}) \) is an \( \varepsilon
\)-Hopf(-\( \ast \)) algebra of class \( (\mathcal{FN}) \). It is also clear
that the pull-back
\( p_{n}:= u_{n}^{\ast}: C^{\infty}(G_{n+1}) \to C^{\infty}(G_{n}) \) is a
morphism of \( \varepsilon \)-Hopf(-\( \ast \)) algebras. It is well-known that
by partitions of unity, smooth functions on a closed set (which means it extends
to a smooth function on an open set containing this closed set) in a smooth
manifold can be extended to a smooth function of the whole manifold. Thus
\( p_{n} \) is surjective as a map. We thus have a reduced projective system
\( \bigl(C^{\infty}(G_{n}), p_{n}\bigr) \) of \( \varepsilon
\)-Hopf(-\( \ast \)) algebras.  Now let \( \mathcal{H} \) be the projective
limit of this projective system (\S~\ref{sec:fd4b65d5b8ba0635}). The underlying
locally convex space of \( \mathcal{H} \) is
\( \varprojlim C^{\infty}(G_{n}) \), which is still of class
\( (\mathcal{FN}) \) by Proposition~\ref{prop:438d10ce1f2149df} and
Proposition~\ref{prop:a8139d3432a18d9e}. Thus it follows from
Theorem~\ref{theo:5a4f6899a7115559} that \( \chi_{c}(\mathcal{H}) \) (or
\( \chi^{\inv}_{c}(\mathcal{H}) \) in the involutive case) is a topological
group. To facilitate discussion, let
\( q_{n} : \mathcal{H} \to C^{\infty}(G_{n}) \) be the canonical projection for each \( n \).

\begin{prop}
  \label{prop:400deb0a75cc0c86}
  Using the above notation, we have
  \begin{enumerate}
  \item \label{item:5953e04bf2ed6c66} the map
    \( \delta_{n}: G_{n} \to \chi_{c}\bigl(C^{\infty}(G_{n})\bigr) \),
    \( x \mapsto \delta_{x} \) is an isomorphism of topological groups;
  \item \label{item:e464c86c07c7f0eb} the map
    \( v_{n}: \chi_{c}\bigl(C^{\infty}(G_{n})\bigr) \to
    \chi_{c}\bigl(C^{\infty}(G)\bigr) \), \( \chi \mapsto \chi q_{n} \) is an injective
    morphism of topological groups;
  \item \label{item:5348aa4be984958e} the union of all the images of \( v_{n} \)
    are strictly increasing, and their union is exactly
    \( \chi\bigl(C^{\infty}(G)\bigr) \);
  \item \label{item:d03e0133297b925c} if \( u_{n}(G_{n}) \) is nowhere dense in
    \( G_{n+1} \) for each \( n \), then the topological space \( \chi_{c}(G) \)
    is not a Baire space; in particular, the topological group
    \( \chi_{c}(\mathcal{H}) \) is not locally compact.
  \end{enumerate}
  If \( \mathbb{K} = \C \), and treating all the above \( \varepsilon \)-Hopf
  algebras as \( \varepsilon \)-Hopf-\( \ast \) algebras and replaces
  \( \chi_{c}(\cdot) \) (resp.\ \( \chi(\cdot) \)) with
  \( \chi^{\inv}_{c}(\cdot) \) (resp.\ \( \chi^{\inv}(\cdot) \)), then all of
  the above still hold.
\end{prop}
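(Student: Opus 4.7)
The plan is to handle items (1)--(4) in order, the involutive case in the complex setting being proved by the same arguments since all morphisms involved respect the $\ast$-structure. Item (1) is a direct application of Theorem~\ref{theo:61825be42ff88744} to the compact Lie group $G_n$. For (2), the formula $v_n(\chi) = \chi q_n$ produces a character of $\mathcal{H}$ because $q_n$ is a morphism of $\varepsilon$-Hopf algebras, and it is a group morphism since $q_n$ is in particular a morphism of coalgebras; injectivity follows from the surjectivity of $q_n$ (via a standard Mittag--Leffler argument for the reduced projective system with surjective connecting maps $p_n$); continuity into $\chi_c(\mathcal{H})$ is immediate because for $K \subseteq \mathcal{H}$ precompact, $q_n(K)$ is precompact in $C^\infty(G_n)$, so the basic neighborhoods of $0$ in $\mathcal{H}'_c$ of the form $M(K, B_{\mathbb{K}}(1))$ pull back under $v_n$ to $M(q_n(K), B_{\mathbb{K}}(1))$.

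The crux of (3) is showing that every continuous character $\omega$ of $\mathcal{H}$ factors through some $q_n$. Since each $C^\infty(G_n) \in (\mathcal{FN})$ by Lemma~\ref{lemm:975e581967ecec25} and these classes are stable under countable reduced projective limits, $\mathcal{H}$ is itself an $(F)$-space; continuity of $\omega$ then yields $|\omega(x)| \leq C p(x)$ for some continuous seminorm $p$. The basic continuous seminorms on $\mathcal{H}$ are of the form $\tilde{p} \circ q_n$ with $\tilde{p}$ continuous on $C^\infty(G_n)$, and the identity $\tilde{p} \circ q_n = (\tilde{p} \circ p_n) \circ q_{n+1}$ (using surjectivity of $p_n$) shows that any finite combination of such seminorms is dominated by a single one with $n$ large. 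Hence $\omega$ vanishes on $\ker q_n$ and descends to a continuous $\omega_n$ on $q_n(\mathcal{H})$, which extends by continuity to a character of $C^\infty(G_n)$, giving $\omega = v_n(\omega_n)$. The strict increase of the images follows by composing with $\delta_n^{-1}$ from (1): the inclusion of the image of $v_n$ inside that of $v_{n+1}$ is transported to $u_n(G_n) \subsetneq G_{n+1}$, which is strict by hypothesis.

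For (4), set $X_n := v_n(\chi_c(C^\infty(G_n)))$. Each $X_n$ is compact, hence closed, in the Hausdorff space $\chi_c(\mathcal{H}) \subseteq \mathcal{H}'_c$ as the continuous image of the compact group $G_n \cong \chi_c(C^\infty(G_n))$, and (3) gives $\chi_c(\mathcal{H}) = \bigcup_n X_n$. The homeomorphism $\delta_{n+1}$ transported through $v_{n+1}$ identifies $X_n \subseteq X_{n+1}$ with $u_n(G_n) \subseteq G_{n+1}$, so the nowhere density hypothesis means $X_n$ has empty interior in $X_{n+1}$. Since $X_n$ and $X_{n+1}$ both carry the subspace topology from $\chi_c(\mathcal{H})$, an open set of $\chi_c(\mathcal{H})$ contained in $X_n$ is automatically open in $X_{n+1}$, hence empty; thus $X_n$ has empty interior in $\chi_c(\mathcal{H})$ itself. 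Consequently, $\chi_c(\mathcal{H})$ is a countable union of closed nowhere dense subsets and cannot be a Baire space, so a fortiori it is not locally compact. The main obstacle I anticipate is ensuring the descent step in (3) terminates at a single $n$ (rather than a finite collection), which is exactly what the directedness of the projective system together with surjectivity of the $p_n$ guarantees.
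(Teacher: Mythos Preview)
Your proof is correct and follows essentially the same route as the paper's. The only notable difference is in item~(3): the paper invokes Proposition~\ref{prop:bd7af82b294982b6} (the topological dual of a reduced projective limit is the inductive limit of the duals, all spaces here being barrelled hence Mackey) to conclude that every character of $\mathcal{H}$ factors through some $q_n$, whereas you unpack this by hand via the seminorm argument---which is precisely the content of that proposition in this situation. Your treatment of (4) is in fact slightly more careful than the paper's, making explicit why empty interior in $X_{n+1}$ forces empty interior in $\chi_c(\mathcal{H})$; and your Mittag--Leffler step in (2) is correct but more than needed, since density of $q_n(\mathcal{H})$ (reducedness) already suffices for injectivity of $v_n$.
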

\begin{proof}
  We only treat the non-involutive case, as the involutive case follows from the
  same argument. \ref{item:5953e04bf2ed6c66} follows
  Theorem~\ref{theo:61825be42ff88744}, while \ref{item:e464c86c07c7f0eb} follows
  by noting that \( q_{n}: \mathcal{H} \to C^{\infty}(G_{n}) \) is a morphism of
  \( \varepsilon \)-Hopf algebras and maps precompact sets to precompact sets by
  continuity. The first half of \ref{item:5348aa4be984958e} is clear since the
  sequence \( (G_{n}) \) is strictly increasing. To see the latter half, it
  suffices to note that by Proposition~\ref{prop:bd7af82b294982b6} (we've seen
  several times that all of the relevant spaces are barrelled, hence Mackey),
  every character of \( \mathcal{H} \) factors through a character of some
  \( C^{\infty}(G_{n}) \). \ref{item:d03e0133297b925c}, the image of each
  \( v_{n} \), which we denote by \( C_{n} \), is a compact, hence closed subset
  of \( \chi_{c}(\mathcal{H}) \), and they are strictly increasing with union
  being the whole space \( \chi_{c}(\mathcal{H}) \). By compactness,
  \( v_{n}\delta_{n}: G_{n} \to C_{n} \) is a homeomorphism where the latter is
  equipped with the subspace topology induced by \( \chi_{c}(\mathcal{H}) \).
  The hypothesis of nowhere dense implies that each \( C_{n} \) has an empty
  interior, thus being the countable union \( \cup_{n \geq 1}C_{n} \), the
  topological space \( \chi_{c}(\mathcal{H}) \) can not be a Baire space.
\end{proof}

\begin{rema}
  \label{rema:40bccc5e25fe53cd}
  Note that our aforementioned examples of \( G_{n} = O(n) \),
  \( G_{n} = SU(n) \) and \( G_{n} = U(n) \) all satisfies the hypothesis in
  Proposition~\ref{prop:400deb0a75cc0c86}~\ref{item:d03e0133297b925c}, hence the
  resulting \( \mathcal{H} \) has a non locally compact character group.
\end{rema}

\subsection{Strict inductive limits of separable compact quantum groups}
\label{sec:3dcbf7486388ada5}

We say a compact quantum group
\( \mathbb{G} = \bigl(C(\mathbb{G}), \Delta\bigr) \) is \textbf{separable} if
the \( C^{\ast} \)\nobreakdash-algebra \( C(\mathbb{G}) \) is separable. From
the Peter-Weyl theory and the orthogonality relations for irreducible unitary
representations of \( \mathbb{G} \), one checks easily that \( \mathbb{G} \) is
separable if and only if \( \irr(\mathbb{G}) \) is countable, which in turn, is
equivalent to \( \pol(\mathbb{G}) \) being of countable dimension. From now on,
we will treat compact quantum groups as locally convex Hopf-\( \ast \) algebras
as in \S~\ref{sec:4d1fce794188b9cb}. In particular, we equip
\( \pol(\mathbb{G}) \) with the finest locally convex topology. It follows
Proposition~\ref{prop:12e473f3f887b8ab}, Proposition~\ref{prop:eb0c853264f8729c}
and Proposition~\ref{prop:a8139d3432a18d9e} that \( \mathbb{G} \) is separable
if and only if \( \pol(\mathbb{G}) \) is of class \( \mathcal{N} \).

Now consider an increasing sequence of compact quantum groups
\( (\mathbb{G}_{n})_{n \geq 1} \), by which we mean that we may view
\( \mathbb{G}_{n} \) as a compact quantum subgroup of \( \mathbb{G}_{n+1} \) in
the sense that there exists a \emph{surjective} homomorphism
\( p_{n}: \pol(\mathbb{G}_{n+1}) \to \pol(\mathbb{G}_{n}) \) of
(\( \varepsilon \)-)Hopf-\( \ast \) algebras.  We thus have a reduced projective
system \( \bigl(\pol(\mathbb{G}_{n}), p_{n}\bigr) \) of
\( \varepsilon \)-Hopf-\( \ast \) algebras. We've seen in
Theorem~\ref{theo:493ddf6e91b637cd} (and \S~\ref{sec:4d1fce794188b9cb}) that for
each \( n \), the dual \( \mathcal{H}_{\widehat{\mathbb{G}_{n}}} \) is an
\( \pi \)-Hopf algebra of type \( (\mathcal{FN}) \), hence can also be seen as
an \( \iota \)-Hopf algebra (Proposition~\ref{prop:a8139d3432a18d9e},
Corollary~\ref{coro:0be09f15f7e01dce} and
Proposition~\ref{prop:89675519c2bb97f8}). Taking the transpose of each
\( p_{n} \), we obtain a strict inductive system
\( \bigl(\mathcal{H}_{\widehat{\mathbb{G}_{n}}}, u_{n}\bigr) \) of
\( \iota \)-Hopf algebras. Hence we may take the projective limit of the reduced
projective system \( \bigl(\pol(\mathbb{G}_{n}), p_{n}\bigr) \), which we denote
by \( \mathcal{H}_{\mathbb{G}_{\infty}} \), and is still an
\( \varepsilon \)-Hopf-\( \ast \) algebra. We may also take the inductive limit
of the strict inductive system
\( \bigl(\mathcal{H}_{\widehat{\mathbb{G}_{\infty}}}, u_{n}\bigr) \), which we
denote by \( \mathcal{H}_{\widehat{G}_{\infty}} \), and is an
\( \iota \)-Hopf-\( \ast \) algebra.

By Theorem~\ref{theo:7f86d4ede3701552}, we see that
\( \mathcal{H}_{\widehat{\mathbb{G}_{\infty}}} \) and
\( \mathcal{H}_{\mathbb{G}_{\infty}} \) are the strong duals of each other as
locally convex Hopf-\( \ast \) algebras.

\begin{defi}
  \label{defi:b8cad9b442ca143d}
  We call the \( \varepsilon \)-Hopf-\( \ast \) algebra
  \( \mathcal{H}_{\mathbb{G}} \) the \textbf{strict inductive limit} of the
  increasing sequence \( (\mathbb{G}_{n}) \) of separable compact quantum
  groups.
\end{defi}

\subsection{A notion of \texorpdfstring{\( S^{+}_{\infty} \)}{S+\_infty},
  \texorpdfstring{\( O^{+}_{\infty} \)}{O+\_infty},
  \texorpdfstring{\( U^{+}_{\infty} \)}{U+\_infty} and their duals}
\label{sec:4c8d08ba8bcefe46}

Since their introduction by the work of S.\ Wang \cites{MR1637425,MR1316765}, the
quantum permutation group \( S^{+}_{N} \), the free orthogonal group
\( O^{+}_{N} \) and the free unitary group \( U^{+}_{N} \) received a lot
attention, and still play an important role in current research on
compact/discrete quantum groups. It is natural to ask, however vaguely, the
natural question of what should be the right notion of \( S^{+}_{\infty} \),
\( O^{+}_{\infty} \) and \( U^{+}_{\infty} \), as the respective inductive limit
as \( N \to \infty \). In the following, we follow in
\cite{MR3204665}*{\S~1.1} for the basics of \( S^{+}_{N} \),
\( O^{+}_{N} \) and \( U^{+}_{N} \).

First, for the infinite quantum permutation group \( S^{+}_{\infty} \). We first
define the quantum permutation group \( S^{+}_{N} \) on \( N \) symbols. The
universal version of \( S^{+}_{N} \) is described as follows (see
\cite{MR3204665}*{p6, Example~1.1.8}): let \( A_{s}(N) \) be the universal
\( C^{\ast} \)\nobreakdash-algebra generated by the family of symbols
\( U = (u^{N}_{i,j})_{1 \leq i, j \leq n} \), subject to the relations given by
requiring \( U \) to be a so-called ``magic unitary''. More precisely, this
means that each \( u^{N}_{i,j} \) should be a projection, and the sum of each
column and row of \( U \) is \( 1 \). The formula
\( \Delta_{N}(u^{N}_{i,j}) = \sum_{k=1}^{N}u^{N}_{i,k} \otimes u^{N}_{k,j} \)
for all \( 1 \leq i, j \leq n \) extends by the universal property to a unique
comultiplication on \( A_{s}(N) \), and it can be shown that
\( \bigl(A_{s}(N), \Delta_{N}\bigr) \) is a separable compact quantum group,
which we denote by \( S^{+}_{N} \). Now \( \mathcal{H}_{N}:= \pol(S^{+}_{N}) \)
is an \( \varepsilon \)-Hopf-\( \ast \) algebra, and is generated by
\( u^{N}_{i,j} \) as an algebra. We may obtain a surjective morphism
\( \pi_{N}: \mathcal{H}_{N+1} \to \mathcal{H}_{N} \) by requiring
\( \pi_{N}(u^{(N+1)}_{i,j}) = u^{N}_{i,j} \), if \( 1 \leq i, j \leq N \), and
\( \pi_{N}(u^{(N+1)}_{i,j}) = 0 \) otherwise. We call the projective limit
\( \varinjlim \mathcal{H}_{n} \) of the projective system
\( (\mathcal{H}_{n}, \pi_{n})_{n \geq 1} \) the \( \varepsilon \)-Hopf algebra
corresponding to \( S^{+}_{\infty} \), and its strong dual (see
\S~\ref{sec:3dcbf7486388ada5}) the \( \iota \)-Hopf algebra corresponding to the
dual of \( S^{+}_{\infty} \). Classically, this construction correspond to the
permutation group \( S_{\infty} \) of all permutations on \( \N_{+} \) that fix
all but finitely many elements, as the inductive limit of \( S_{N} \),
\( N \geq 1 \), with \( S_{N} \) embeds into \( S_{N+1} \) by letting all
elements in \( S_{N} \) fix any \( n \geq N \).

Next, we treat \( O^{+}_{\infty} \). For each \( n \in \N_{+} \), we describe a
separable compact quantum group \( O^{+}_{n} \) as follows. Consider the
universal \( C^{\ast} \)\nobreakdash-algebra \( A_{o}(n) \) generated by the
symbols \( U = (u^{(n)}_{i,j})_{1 \leq i, j \leq n} \), subject to the relations
given by requiring \( U \) to be unitary and each \( u^{(n)}_{i,j} \) being
self-adjoint (this corresponds to letting \( F \) to be the identity matrix in
\cite{MR3204665}*{p5, Example~1.1.7}). The formula
\( \Delta_{n}(u^{(n)}_{i,j}) = \sum_{k=1}^{n}u^{(n)}_{i,k} \otimes u^{(n)}_{k,j}
\) determines a unique comultiplication on \( A_{o}(n) \) the universal
property. It can be shown that \( \bigl(A_{o}(n), \Delta_{n}\bigr) \) is a
compact quantum group, and we denote it by \( O^{+}_{n} \). The
\( \varepsilon \)-Hopf algebra \( \pol(O^{+}_{n}) \) is generated by
\( u^{(n)}_{i,j} \), \( 1 \leq i, j \leq n \) as an algebra. And we have a
unique surjective morphism of \( \varepsilon \)-Hopf-\( \ast \) algebras
\( \pi_{n}: \pol(O^{+}_{n+1}) \to \pol(O^{+}_{n}) \) such that
\( \pi_{n}(u^{(n+1)}_{i,j}) = u^{(n)}_{i,j} \), if \( 1 \leq i, j \leq n \); and
\( \pi_{n}(u^{(n+1)}_{i,j}) = 0 \) otherwise. Now the projective limit of the
reduced projective system \( \bigl(\pol(O^{+}_{n}, \pi_{n})\bigr) \) of
\( \varepsilon \)-Hopf-\( \ast \) algebra is an \( \varepsilon
\)-Hopf-\( \ast \) algebra, which we shall denote by
\( \mathcal{H}_{O^{+}_{\infty}} \), and its strong dual, as an
\( \iota \)-Hopf-\( \ast \) algebra, is denoted by
\( \mathcal{H}_{\widehat{O}^{+}_{\infty}} \), both corresponding to
\( O^{+}_{\infty} \) (the former is the analogue of the function algebra, while
the latter the convolution algebra). Classically, \( O^{+}_{\infty} \)
corresponds to \( O(\infty) \)---the union of all \( O(n) \) as we've considered
in Remark~\ref{rema:40bccc5e25fe53cd}, which is shown there is not locally
compact. It is by this comparison that \( O^{+}_{\infty} \) seems lies outside
the framework of locally compact quantum groups.

By letting the matrix \( F \) to be identity in \cite{MR3204665}*{p6,
  Example~1.1.6}, and proceeds as in the previous paragraph, we may obtain an
\( \varepsilon \)-Hopf-\( \ast \) algebra \( \mathcal{H}_{U^{+}_{\infty}} \), as
well as its strong dual \( \mathcal{H}_{\widehat{U^{+}_{\infty}}} \), which is
an \( \iota \)-Hopf-\( \ast \) algebra, both related to \( U^{+}_{\infty} \). By
a similar line of thought as in the case of \( O^{+}_{\infty} \), again it seems
that \( U^{+}_{\infty} \) lies outside the framework of locally compact quantum
groups; and classically, \( U^{+}_{\infty} \) corresponds to \( U(\infty) \),
which is the union of all \( U(n) \), \( n \geq 1 \).

\begin{rema}
  \label{rema:c93d57c82ccd7a41}
  In \cite{MR1382726}, other versions of universal quantum groups are
  considered, which are more general than \( O^{+}(n) \) and \( U^{+}(n) \). By
  suitably choosing the invertible matrices
  \( F_{n} \in \operatorname{Mat}_{n \times n}(\C) \) for different \( n \), in
  principle, we may consider certain strict inductive limit of separable compact
  quantum groups of the form \( \bigl(A_{o}(F_{n})\bigr) \), and of the form
  \( \bigl(A_{u}(F_{n})\bigr) \) etc. (see the notation in
  \cite{MR3204665}*{p6}).
\end{rema}

\begin{rema}
  \label{rema:7673e61fbb4d9b73}
  A version of the object \( S^{+}_{\infty} \) is already considered in
  \cite{MR3059661} using a much involved method. We also point out that
  recently, Voigt \cite{MR4543448} has constructed the full quantum permutation
  group on an infinite set, as a discrete quantum group, so still remains in the
  framework of locally compact quantum groups. Our construction of
  \( O^{+}_{\infty} \) and \( U^{+}_{\infty} \) seems new to the best of the
  author's knowledge.
\end{rema}

\section*{Acknowledgement}
\label{sec:f2fa53f2ae6541ff}

This work is supported by the Polish National Science Center NCN grant No.\
2020/39/I/ST1/01566. The author would also like to express his sincere gratitude
to Prof.\ Adam Skalski for his encouragement and stimulating discussions. The
author would also like to thank Prof.\ Paweł Kasprzak, for his question of why
not considering \( C(G) \) of all continuous functions on a Lie group \( G \)
after hearing a talk of the author on some part of this paper, which eventually
lead to the constructions in \S~\ref{sec:ce6223de3fc1fab6}.

\begin{bibdiv}
\begin{biblist}

\bib{MR2724388}{book}{
      author={Aguiar, Marcelo},
      author={Mahajan, Swapneel},
       title={Monoidal functors, species and {H}opf algebras},
      series={CRM Monograph Series},
   publisher={American Mathematical Society, Providence, RI},
        date={2010},
      volume={29},
        ISBN={978-0-8218-4776-3},
         url={https://doi.org/10.1090/crmm/029},
        note={With forewords by Kenneth Brown and Stephen Chase and Andr\'e{}
  Joyal},
      review={\MR{2724388}},
}

\bib{MR1346445}{article}{
      author={Akbarov, S.~S.},
       title={Pontryagin duality in the theory of topological vector spaces},
        date={1995},
        ISSN={0025-567X,2305-2880},
     journal={Mat. Zametki},
      volume={57},
      number={3},
       pages={463\ndash 466},
         url={https://doi.org/10.1007/BF02303980},
      review={\MR{1346445}},
}

\bib{MR1794594}{article}{
      author={Akbarov, S.~S.},
       title={Stereotype locally convex spaces},
        date={2000},
        ISSN={1607-0046,2587-5906},
     journal={Izv. Ross. Akad. Nauk Ser. Mat.},
      volume={64},
      number={4},
       pages={3\ndash 46},
         url={https://doi.org/10.1070/im2000v064n04ABEH000294},
      review={\MR{1794594}},
}

\bib{MR1965073}{incollection}{
      author={Akbarov, S.~S.},
       title={Pontryagin duality in the theory of topological vector spaces and
  in topological algebra},
        date={2003},
      volume={113},
       pages={179\ndash 349},
         url={https://doi.org/10.1023/A:1020929201133},
        note={Functional analysis, 9},
      review={\MR{1965073}},
}

\bib{MR2143915}{incollection}{
      author={Akbarov, Sergei~S.},
       title={Pontryagin duality and topological algebras},
        date={2005},
   booktitle={Topological algebras, their applications, and related topics},
      series={Banach Center Publ.},
      volume={67},
   publisher={Polish Acad. Sci. Inst. Math., Warsaw},
       pages={55\ndash 71},
         url={https://doi.org/10.4064/bc67-0-5},
      review={\MR{2143915}},
}

\bib{MR4544403}{book}{
      author={Akbarov, Sergei~S.},
       title={Stereotype spaces and algebras},
      series={De Gruyter Expositions in Mathematics},
   publisher={De Gruyter, Berlin},
        date={[2022] \copyright2022},
      volume={73},
        ISBN={978-3-11-078091-8; 978-3-11-078086-4; 978-3-11-078096-3},
         url={https://doi.org/10.1515/9783110780918},
      review={\MR{4544403}},
}

\bib{MR1235438}{article}{
      author={Baaj, Saad},
      author={Skandalis, Georges},
       title={Unitaires multiplicatifs et dualit\'{e} pour les produits
  crois\'{e}s de {$C^*$}-alg\`ebres},
        date={1993},
        ISSN={0012-9593},
     journal={Ann. Sci. \'{E}cole Norm. Sup. (4)},
      volume={26},
      number={4},
       pages={425\ndash 488},
         url={http://www.numdam.org/item?id=ASENS_1993_4_26_4_425_0},
      review={\MR{1235438}},
}

\bib{MR4075616}{article}{
      author={Bezhanishvili, G.},
      author={Morandi, P.~J.},
      author={Olberding, B.},
       title={A generalization of {G}elfand-{N}aimark-{S}tone duality to
  completely regular spaces},
        date={2020},
        ISSN={0166-8641,1879-3207},
     journal={Topology Appl.},
      volume={274},
       pages={107123, 26},
         url={https://doi.org/10.1016/j.topol.2020.107123},
      review={\MR{4075616}},
}

\bib{MR1266072}{article}{
      author={Bonneau, P.},
      author={Flato, M.},
      author={Gerstenhaber, M.},
      author={Pinczon, G.},
       title={The hidden group structure of quantum groups: strong duality,
  rigidity and preferred deformations},
        date={1994},
        ISSN={0010-3616,1432-0916},
     journal={Comm. Math. Phys.},
      volume={161},
      number={1},
       pages={125\ndash 156},
         url={http://projecteuclid.org/euclid.cmp/1104269794},
      review={\MR{1266072}},
}

\bib{MR4301385}{book}{
      author={Bourbaki, N.},
       title={Th\'{e}ories spectrales. {C}hapitres 1 et 2},
   publisher={Springer, Cham},
        date={[2019] \copyright 2019},
        ISBN={978-3-030-14063-2; 978-3-030-14064-9},
         url={https://doi.org/10.1007/978-3-030-14064-9},
        note={Second edition [of 0213871]},
      review={\MR{4301385}},
}

\bib{MR633754}{book}{
      author={Bourbaki, Nicolas},
       title={Espaces vectoriels topologiques. {C}hapitres 1 \`a 5},
     edition={New},
   publisher={Masson, Paris},
        date={1981},
        ISBN={2-225-68410-3},
        note={\'{E}l\'{e}ments de math\'{e}matique. [Elements of mathematics]},
      review={\MR{633754}},
}

\bib{bourbaki_topologie_2006}{book}{
      author={Bourbaki, Nicolas},
       title={Topologie générale, chapitres 1 à 4},
     edition={Reimpression inchangee de l'edition de 1971},
   publisher={Springer-Verlag Berlin and Heidelberg {GmbH} \& Co. K},
        date={2006},
        ISBN={978-3-540-33936-6},
}

\bib{bourbaki_topologie_2006-1}{book}{
      author={Bourbaki, Nicolas},
       title={Topologie générale, chapitres 5 à 10},
     edition={Reimpression inchangee de l'edition de 1974},
   publisher={Springer-Verlag Berlin and Heidelberg {GmbH} \& Co. K},
        date={2006},
        ISBN={978-3-540-34399-8},
}

\bib{bourbaki2007integration}{book}{
      author={Bourbaki, Nicolas},
       title={Int{\'e}gration: Chapitres 1 {\`a} 4},
   publisher={Springer Science \& Business Media},
        date={2007},
}

\bib{MR0330988}{article}{
      author={Brauner, Kalman},
       title={Duals of {F}r\'echet spaces and a generalization of the
  {B}anach-{D}ieudonn\'e{} theorem},
        date={1973},
        ISSN={0012-7094,1547-7398},
     journal={Duke Math. J.},
      volume={40},
       pages={845\ndash 855},
         url={http://projecteuclid.org/euclid.dmj/1077310061},
      review={\MR{330988}},
}

\bib{MR1700700}{book}{
      author={Bredon, Glen~E.},
       title={Topology and geometry},
      series={Graduate Texts in Mathematics},
   publisher={Springer-Verlag, New York},
        date={1997},
      volume={139},
        ISBN={0-387-97926-3},
        note={Corrected third printing of the 1993 original},
      review={\MR{1700700}},
}

\bib{MR2391387}{book}{
      author={Brown, Nathanial~P.},
      author={Ozawa, Narutaka},
       title={{$C^*$}-algebras and finite-dimensional approximations},
      series={Graduate Studies in Mathematics},
   publisher={American Mathematical Society, Providence, RI},
        date={2008},
      volume={88},
        ISBN={978-0-8218-4381-9; 0-8218-4381-8},
         url={https://doi.org/10.1090/gsm/088},
      review={\MR{2391387}},
}

\bib{MR0140941}{article}{
      author={Bruhat, Fran\c{c}ois},
       title={Distributions sur un groupe localement compact et applications
  \`a l'\'{e}tude des repr\'{e}sentations des groupes {$\wp $}-adiques},
        date={1961},
        ISSN={0037-9484},
     journal={Bull. Soc. Math. France},
      volume={89},
       pages={43\ndash 75},
         url={http://www.numdam.org/item?id=BSMF_1961__89__43_0},
      review={\MR{140941}},
}

\bib{MR0451189}{incollection}{
      author={Buchwalter, Henri},
       title={Produit topologique, produit tensoriel et {$c$}-repl\'etion},
        date={1972},
   booktitle={Actes du {C}olloque d'{A}nalyse {F}onctionnelle ({U}niv.
  {B}ordeaux, {B}ordeaux, 1971)},
      series={Suppl\'ement au Bull. Soc. Math. France},
      volume={Tome 100},
   publisher={Soc. Math. France, Paris},
       pages={51\ndash 71},
         url={https://doi.org/10.24033/msmf.65},
      review={\MR{451189}},
}

\bib{MR0058854}{article}{
      author={Dieudonn\'e, Jean},
       title={Sur les espaces de {M}ontel m\'etrisables},
        date={1954},
        ISSN={0001-4036},
     journal={C. R. Acad. Sci. Paris},
      volume={238},
       pages={194\ndash 195},
      review={\MR{58854}},
}

\bib{MR0048020}{article}{
      author={Dowker, C.~H.},
       title={Topology of metric complexes},
        date={1952},
        ISSN={0002-9327,1080-6377},
     journal={Amer. J. Math.},
      volume={74},
       pages={555\ndash 577},
         url={https://doi.org/10.2307/2372262},
      review={\MR{48020}},
}

\bib{MR934283}{inproceedings}{
      author={Drinfel\cprime~d, V.~G.},
       title={Quantum groups},
        date={1987},
   booktitle={Proceedings of the {I}nternational {C}ongress of
  {M}athematicians, {V}ol. 1, 2 ({B}erkeley, {C}alif., 1986)},
   publisher={Amer. Math. Soc., Providence, RI},
       pages={798\ndash 820},
      review={\MR{934283}},
}

\bib{MR0402468}{article}{
      author={Enflo, Per},
       title={A counterexample to the approximation problem in {B}anach
  spaces},
        date={1973},
        ISSN={0001-5962,1871-2509},
     journal={Acta Math.},
      volume={130},
       pages={309\ndash 317},
         url={https://doi.org/10.1007/BF02392270},
      review={\MR{402468}},
}

\bib{MR0442710}{book}{
      author={Enock, Michel},
      author={Schwartz, Jean-Marie},
       title={Une dualit\'e{} dans les alg\`ebres de von {N}eumann},
      series={Bulletin de la Soci\'et\'e{} Math\'ematique de France. Supplement
  M\'emoire},
   publisher={Soci\'et\'e{} Math\'ematique de France, Paris},
        date={1975},
      volume={44},
        note={Suppl\'ement au Bull. Soc. Math. France, Tome 103, no. 4},
      review={\MR{442710}},
}

\bib{MR1215933}{book}{
      author={Enock, Michel},
      author={Schwartz, Jean-Marie},
       title={Kac algebras and duality of locally compact groups},
   publisher={Springer-Verlag, Berlin},
        date={1992},
        ISBN={3-540-54745-2},
         url={https://doi.org/10.1007/978-3-662-02813-1},
        note={With a preface by Alain Connes, With a postface by Adrian
  Ocneanu},
      review={\MR{1215933}},
}

\bib{MR1207551}{article}{
      author={Enock, Michel},
      author={Vallin, Jean-Michel},
       title={{$C^\ast$}-alg\`ebres de {K}ac et alg\`ebres de {K}ac},
        date={1993},
        ISSN={0024-6115,1460-244X},
     journal={Proc. London Math. Soc. (3)},
      volume={66},
      number={3},
       pages={619\ndash 650},
         url={https://doi.org/10.1112/plms/s3-66.3.619},
      review={\MR{1207551}},
}

\bib{MR3242743}{book}{
      author={Etingof, Pavel},
      author={Gelaki, Shlomo},
      author={Nikshych, Dmitri},
      author={Ostrik, Victor},
       title={Tensor categories},
      series={Mathematical Surveys and Monographs},
   publisher={American Mathematical Society, Providence, RI},
        date={2015},
      volume={205},
        ISBN={978-1-4704-2024-6},
         url={https://doi.org/10.1090/surv/205},
      review={\MR{3242743}},
}

\bib{MR0228628}{article}{
      author={Eymard, Pierre},
       title={L'alg\`ebre de {F}ourier d'un groupe localement compact},
        date={1964},
        ISSN={0037-9484},
     journal={Bull. Soc. Math. France},
      volume={92},
       pages={181\ndash 236},
         url={http://www.numdam.org/item?id=BSMF_1964__92__181_0},
      review={\MR{228628}},
}

\bib{MR0039730}{article}{
      author={Gleason, A.~M.},
       title={The structure of locally compact groups},
        date={1951},
        ISSN={0012-7094,1547-7398},
     journal={Duke Math. J.},
      volume={18},
       pages={85\ndash 104},
         url={http://projecteuclid.org/euclid.dmj/1077476391},
      review={\MR{39730}},
}

\bib{MR0049203}{article}{
      author={Gleason, Andrew~M.},
       title={Groups without small subgroups},
        date={1952},
        ISSN={0003-486X},
     journal={Ann. of Math. (2)},
      volume={56},
       pages={193\ndash 212},
         url={https://doi.org/10.2307/1969795},
      review={\MR{49203}},
}

\bib{MR3059661}{incollection}{
      author={Goswami, Debashish},
      author={Skalski, Adam},
       title={On two possible constructions of the quantum semigroup of all
  quantum permutations of an infinite countable set},
        date={2012},
   booktitle={Operator algebras and quantum groups},
      series={Banach Center Publ.},
      volume={98},
   publisher={Polish Acad. Sci. Inst. Math., Warsaw},
       pages={199\ndash 214},
         url={https://doi.org/10.4064/bc98-0-7},
      review={\MR{3059661}},
}

\bib{MR0061754}{article}{
      author={Grothendieck, A.},
       title={R\'esum\'e{} des r\'esultats essentiels dans la th\'eorie des
  produits tensoriels topologiques et des espaces nucl\'eaires},
        date={1952},
        ISSN={0373-0956,1777-5310},
     journal={Ann. Inst. Fourier (Grenoble)},
      volume={4},
       pages={73\ndash 112},
         url={http://www.numdam.org/item?id=AIF_1952__4__73_0},
      review={\MR{61754}},
}

\bib{MR0077884}{book}{
      author={Grothendieck, A.},
       title={Espaces vectoriels topologiques},
   publisher={Universidade de S\~ao Paulo, Instituto de Matem\'atica Pura e
  Aplicada, S\~ao Paulo},
        date={1954},
      review={\MR{77884}},
}

\bib{MR0354652}{book}{
      author={Grothendieck, A.},
      author={Artin, M.},
      author={Verdier, J.~L.},
       title={Th\'{e}orie des topos et cohomologie \'{e}tale des sch\'{e}mas.
  {T}ome 1: {T}h\'{e}orie des topos},
      series={Lecture Notes in Mathematics, Vol. 269},
   publisher={Springer-Verlag, Berlin-New York},
        date={1972},
        note={S\'{e}minaire de G\'{e}om\'{e}trie Alg\'{e}brique du Bois-Marie
  1963--1964 (SGA 4), Dirig\'{e} par M. Artin, A. Grothendieck, et J. L.
  Verdier. Avec la collaboration de N. Bourbaki, P. Deligne et B. Saint-Donat},
      review={\MR{0354652}},
}

\bib{MR0038554}{article}{
      author={Grothendieck, Alexandre},
       title={Quelques r\'esultats relatifs \`a{} la dualit\'e{} dans les
  espaces {$(\scr F)$}},
        date={1950},
        ISSN={0001-4036},
     journal={C. R. Acad. Sci. Paris},
      volume={230},
       pages={1561\ndash 1563},
      review={\MR{38554}},
}

\bib{MR0075542}{article}{
      author={Grothendieck, Alexandre},
       title={Sur les espaces ({$F$}) et ({$DF$})},
        date={1954},
        ISSN={0039-498X},
     journal={Summa Brasil. Math.},
      volume={3},
       pages={57\ndash 123},
      review={\MR{75542}},
}

\bib{MR0075539}{article}{
      author={Grothendieck, Alexandre},
       title={Produits tensoriels topologiques et espaces nucl\'{e}aires},
        date={1955},
        ISSN={0065-9266,1947-6221},
     journal={Mem. Amer. Math. Soc.},
      volume={16},
       pages={Chapter 1: 196 pp.; Chapter 2: 140},
      review={\MR{75539}},
}

\bib{MR1867354}{book}{
      author={Hatcher, Allen},
       title={Algebraic topology},
   publisher={Cambridge University Press, Cambridge},
        date={2002},
        ISBN={0-521-79160-X; 0-521-79540-0},
      review={\MR{1867354}},
}

\bib{hewitt2012abstract}{book}{
      author={Hewitt, Edwin},
      author={Ross, Kenneth~A},
       title={Abstract harmonic analysis i},
      series={Grundlehren der Mathematischen Wissenschaften},
   publisher={Springer-Verlag},
        date={2012},
      number={115},
}

\bib{MR0038348}{article}{
      author={Higman, Graham},
       title={A finitely generated infinite simple group},
        date={1951},
        ISSN={0024-6107,1469-7750},
     journal={J. London Math. Soc.},
      volume={26},
       pages={61\ndash 64},
         url={https://doi.org/10.1112/jlms/s1-26.1.61},
      review={\MR{38348}},
}

\bib{MR0448362}{book}{
      author={Hirsch, Morris~W.},
       title={Differential topology},
      series={Graduate Texts in Mathematics},
   publisher={Springer-Verlag, New York-Heidelberg},
        date={1976},
      volume={No. 33},
      review={\MR{448362}},
}

\bib{MR0470696}{article}{
      author={Hollstein, Ralf},
       title={{$\sigma $}-lokaltopologische {R}\"aume und projektive
  {T}ensorprodukte},
        date={1975},
        ISSN={0010-0757},
     journal={Collect. Math.},
      volume={26},
      number={3},
       pages={239\ndash 252},
      review={\MR{470696}},
}

\bib{MR0205028}{book}{
      author={Horv\'{a}th, John},
       title={Topological vector spaces and distributions. {V}ol. {I}},
   publisher={Addison-Wesley Publishing Co., Reading, Mass.-London-Don Mills,
  Ont.},
        date={1966},
      review={\MR{205028}},
}

\bib{456094}{misc}{
      author={(https://mathoverflow.net/users/22277/joseph-van name), Joseph
  Van~Name},
       title={Is every character of the algebra of continuous functions on a
  locally compact space some evaluation?},
         how={MathOverflow},
         url={https://mathoverflow.net/q/456094},
        note={URL:https://mathoverflow.net/q/456094 (version: 2023-10-08)},
}

\bib{MR1468229}{book}{
      author={Kadison, Richard~V.},
      author={Ringrose, John~R.},
       title={Fundamentals of the theory of operator algebras. {V}ol. {I}},
      series={Graduate Studies in Mathematics},
   publisher={American Mathematical Society, Providence, RI},
        date={1997},
      volume={15},
        ISBN={0-8218-0819-2},
         url={https://doi.org/10.1090/gsm/015},
        note={Elementary theory, Reprint of the 1983 original},
      review={\MR{1468229}},
}

\bib{MR0665588}{book}{
      author={Khaleelulla, S.~M.},
       title={Counterexamples in topological vector spaces},
      series={Lecture Notes in Mathematics},
   publisher={Springer-Verlag, Berlin-New York},
        date={1982},
      volume={936},
        ISBN={3-540-11565-X},
      review={\MR{665588}},
}

\bib{MR0248498}{book}{
      author={K\"othe, Gottfried},
       title={Topological vector spaces. {I}},
      series={Die Grundlehren der mathematischen Wissenschaften},
   publisher={Springer-Verlag New York, Inc., New York},
        date={1969},
      volume={Band 159},
        note={Translated from the German by D. J. H. Garling},
      review={\MR{248498}},
}

\bib{MR0551623}{book}{
      author={K\"othe, Gottfried},
       title={Topological vector spaces. {II}},
      series={Grundlehren der Mathematischen Wissenschaften},
   publisher={Springer-Verlag, New York-Berlin},
        date={1979},
      volume={237},
        ISBN={0-387-90400-X},
      review={\MR{551623}},
}

\bib{MR1832993}{article}{
      author={Kustermans, Johan},
      author={Vaes, Stefaan},
       title={Locally compact quantum groups},
        date={2000},
        ISSN={0012-9593},
     journal={Ann. Sci. \'{E}cole Norm. Sup. (4)},
      volume={33},
      number={6},
       pages={837\ndash 934},
  url={https://doi-org.scd1.univ-fcomte.fr/10.1016/S0012-9593(00)01055-7},
      review={\MR{1832993}},
}

\bib{MR1951446}{article}{
      author={Kustermans, Johan},
      author={Vaes, Stefaan},
       title={Locally compact quantum groups in the von {N}eumann algebraic
  setting},
        date={2003},
        ISSN={0025-5521},
     journal={Math. Scand.},
      volume={92},
      number={1},
       pages={68\ndash 92},
         url={https://doi-org.scd1.univ-fcomte.fr/10.7146/math.scand.a-14394},
      review={\MR{1951446}},
}

\bib{MR0414648}{article}{
      author={Mac~Lane, Saunders},
       title={Topology and logic as a source of algebra},
        date={1976},
        ISSN={0002-9904},
     journal={Bull. Amer. Math. Soc.},
      volume={82},
      number={1},
       pages={1\ndash 40},
         url={https://doi.org/10.1090/S0002-9904-1976-13928-6},
      review={\MR{414648}},
}

\bib{mac1998categories}{book}{
      author={Mac~Lane, Saunders},
       title={Categories for the working mathematician},
     edition={second},
      series={Graduate texts in mathematics},
   publisher={Springer Science \& Business Media},
        date={1998},
      volume={5},
}

\bib{MR0020214}{article}{
      author={Mackey, George~W.},
       title={On convex topological linear spaces},
        date={1946},
        ISSN={0002-9947,1088-6850},
     journal={Trans. Amer. Math. Soc.},
      volume={60},
       pages={519\ndash 537},
         url={https://doi.org/10.2307/1990352},
      review={\MR{20214}},
}

\bib{MR0354798}{book}{
      author={MacLane, Saunders},
       title={Categories for the working mathematician},
   publisher={Springer-Verlag, New York-Berlin},
        date={1971},
        note={Graduate Texts in Mathematics, Vol. 5},
      review={\MR{0354798}},
}

\bib{MR2020804}{article}{
      author={Masuda, T.},
      author={Nakagami, Y.},
      author={Woronowicz, S.~L.},
       title={A {$C^\ast$}-algebraic framework for quantum groups},
        date={2003},
        ISSN={0129-167X},
     journal={Internat. J. Math.},
      volume={14},
      number={9},
       pages={903\ndash 1001},
         url={https://doi-org.scd1.univ-fcomte.fr/10.1142/S0129167X03002071},
      review={\MR{2020804}},
}

\bib{MR0049204}{article}{
      author={Montgomery, Deane},
      author={Zippin, Leo},
       title={Small subgroups of finite-dimensional groups},
        date={1952},
        ISSN={0003-486X},
     journal={Ann. of Math. (2)},
      volume={56},
       pages={213\ndash 241},
         url={https://doi.org/10.2307/1969796},
      review={\MR{49204}},
}

\bib{MR0073104}{book}{
      author={Montgomery, Deane},
      author={Zippin, Leo},
       title={Topological transformation groups},
   publisher={Interscience Publishers, New York-London},
        date={1955},
      review={\MR{0073104}},
}

\bib{MR1243637}{book}{
      author={Montgomery, Susan},
       title={Hopf algebras and their actions on rings},
      series={CBMS Regional Conference Series in Mathematics},
   publisher={Conference Board of the Mathematical Sciences, Washington, DC; by
  the American Mathematical Society, Providence, RI},
        date={1993},
      volume={82},
        ISBN={0-8218-0738-2},
         url={https://doi.org/10.1090/cbms/082},
      review={\MR{1243637}},
}

\bib{MR0528590}{article}{
      author={Mulvey, Christopher~J.},
       title={A generalisation of {G}el\cprime fand duality},
        date={1979},
        ISSN={0021-8693},
     journal={J. Algebra},
      volume={56},
      number={2},
       pages={499\ndash 505},
         url={https://doi.org/10.1016/0021-8693(79)90352-1},
      review={\MR{528590}},
}

\bib{MR3204665}{book}{
      author={Neshveyev, Sergey},
      author={Tuset, Lars},
       title={Compact quantum groups and their representation categories},
      series={Cours Sp\'{e}cialis\'{e}s [Specialized Courses]},
   publisher={Soci\'{e}t\'{e} Math\'{e}matique de France, Paris},
        date={2013},
      volume={20},
        ISBN={978-2-85629-777-3},
      review={\MR{3204665}},
}

\bib{MR3154940}{book}{
      author={Osborne, M.~Scott},
       title={Locally convex spaces},
      series={Graduate Texts in Mathematics},
   publisher={Springer, Cham},
        date={2014},
      volume={269},
        ISBN={978-3-319-02044-0; 978-3-319-02045-7},
         url={https://doi.org/10.1007/978-3-319-02045-7},
      review={\MR{3154940}},
}

\bib{MR2894855}{book}{
      author={Radford, David~E.},
       title={Hopf algebras},
      series={Series on Knots and Everything},
   publisher={World Scientific Publishing Co. Pte. Ltd., Hackensack, NJ},
        date={2012},
      volume={49},
        ISBN={978-981-4335-99-7; 981-4335-99-1},
      review={\MR{2894855}},
}

\bib{MR0238997}{article}{
      author={Sait\^o, Kazuyuki},
       title={On a duality for locally compact groups},
        date={1968},
        ISSN={0040-8735,2186-585X},
     journal={Tohoku Math. J. (2)},
      volume={20},
       pages={355\ndash 367},
         url={https://doi.org/10.2748/tmj/1178243142},
      review={\MR{238997}},
}

\bib{MR0342978}{book}{
      author={Schaefer, Helmut~H.},
       title={Topological vector spaces},
      series={Graduate Texts in Mathematics, Vol. 3},
   publisher={Springer-Verlag, New York-Berlin},
        date={1971},
        note={Third printing corrected},
      review={\MR{0342978}},
}

\bib{MR0107812}{article}{
      author={Schwartz, Laurent},
       title={Th\'eorie des distributions \`a{} valeurs vectorielles. {I}},
        date={1957},
        ISSN={0373-0956,1777-5310},
     journal={Ann. Inst. Fourier (Grenoble)},
      volume={7},
       pages={1\ndash 141},
         url={http://www.numdam.org/item?id=AIF_1957__7__1_0},
      review={\MR{107812}},
}

\bib{MR3212682}{article}{
      author={Skalski, Adam},
      author={So\l~tan, Piotr~M.},
       title={Projective limits of quantum symmetry groups and the doubling
  construction for {H}opf algebras},
        date={2014},
        ISSN={0219-0257,1793-6306},
     journal={Infin. Dimens. Anal. Quantum Probab. Relat. Top.},
      volume={17},
      number={2},
       pages={1450012, 27},
         url={https://doi.org/10.1142/S021902571450012X},
      review={\MR{3212682}},
}

\bib{MR0049479}{article}{
      author={Smith, Marianne~Freundlich},
       title={The {P}ontrjagin duality theorem in linear spaces},
        date={1952},
        ISSN={0003-486X},
     journal={Ann. of Math. (2)},
      volume={56},
       pages={248\ndash 253},
         url={https://doi.org/10.2307/1969798},
      review={\MR{49479}},
}

\bib{MR0210075}{article}{
      author={Steenrod, N.~E.},
       title={A convenient category of topological spaces},
        date={1967},
        ISSN={0026-2285,1945-2365},
     journal={Michigan Math. J.},
      volume={14},
       pages={133\ndash 152},
         url={http://projecteuclid.org/euclid.mmj/1028999711},
      review={\MR{210075}},
}

\bib{MR0102761}{article}{
      author={Stinespring, W.~Forrest},
       title={Integration theorems for gages and duality for unimodular
  groups},
        date={1959},
        ISSN={0002-9947,1088-6850},
     journal={Trans. Amer. Math. Soc.},
      volume={90},
       pages={15\ndash 56},
         url={https://doi.org/10.2307/1993266},
      review={\MR{102761}},
}

\bib{MR0252485}{book}{
      author={Sweedler, Moss~E.},
       title={Hopf algebras},
      series={Mathematics Lecture Note Series},
   publisher={W. A. Benjamin, Inc., New York},
        date={1969},
      review={\MR{0252485}},
}

\bib{takesaki_theory_2002}{book}{
      author={Takesaki, M.},
       title={Theory of operator algebras. i},
      series={Encyclopaedia of Mathematical Sciences},
   publisher={Springer-Verlag, Berlin},
        date={2002},
      volume={124},
        ISBN={978-3-540-42248-8},
         url={http://www.ams.org/mathscinet-getitem?mr=1873025},
        note={Reprint of the first (1979) edition, Operator Algebras and
  Non-commutative Geometry, 5},
      review={\MR{1873025}},
}

\bib{takesaki_theory_2003}{book}{
      author={Takesaki, M.},
       title={Theory of operator algebras. {II}},
      series={Encyclopaedia of Mathematical Sciences},
   publisher={Springer-Verlag, Berlin},
        date={2003},
      volume={125},
        ISBN={978-3-540-42914-2},
         url={http://www.ams.org/mathscinet-getitem?mr=1943006},
        note={Operator Algebras and Non-commutative Geometry, 6},
      review={\MR{1943006}},
}

\bib{MR0292876}{article}{
      author={Takeuchi, Mitsuhiro},
       title={Free {H}opf algebras generated by coalgebras},
        date={1971},
        ISSN={0025-5645,1881-1167},
     journal={J. Math. Soc. Japan},
      volume={23},
       pages={561\ndash 582},
         url={https://doi.org/10.2969/jmsj/02340561},
      review={\MR{292876}},
}

\bib{MR3237440}{book}{
      author={Tao, Terence},
       title={Hilbert's fifth problem and related topics},
      series={Graduate Studies in Mathematics},
   publisher={American Mathematical Society, Providence, RI},
        date={2014},
      volume={153},
        ISBN={978-1-4704-1564-8},
         url={https://doi.org/10.1090/gsm/153},
      review={\MR{3237440}},
}

\bib{MR0217222}{article}{
      author={Tatsuuma, Nobuhiko},
       title={A duality theorem for locally compact groups},
        date={1967},
        ISSN={0023-608X},
     journal={J. Math. Kyoto Univ.},
      volume={6},
       pages={187\ndash 293},
         url={https://doi.org/10.1215/kjm/1250524377},
      review={\MR{217222}},
}

\bib{MR2397671}{book}{
      author={Timmermann, Thomas},
       title={An invitation to quantum groups and duality},
      series={EMS Textbooks in Mathematics},
   publisher={European Mathematical Society (EMS), Z\"{u}rich},
        date={2008},
        ISBN={978-3-03719-043-2},
         url={https://doi.org/10.4171/043},
        note={From Hopf algebras to multiplicative unitaries and beyond},
      review={\MR{2397671}},
}

\bib{MR2456045}{book}{
      author={tom Dieck, Tammo},
       title={Algebraic topology},
      series={EMS Textbooks in Mathematics},
   publisher={European Mathematical Society (EMS), Z\"urich},
        date={2008},
        ISBN={978-3-03719-048-7},
         url={https://doi.org/10.4171/048},
      review={\MR{2456045}},
}

\bib{MR0225131}{book}{
      author={Tr\`eves, Fran\c{c}ois},
       title={Topological vector spaces, distributions and kernels},
   publisher={Academic Press, New York-London},
        date={1967},
      review={\MR{225131}},
}

\bib{439722}{misc}{
      author={user473423},
       title={Characters of algebra of schwartz functions},
         how={MathOverflow},
         url={https://mathoverflow.net/q/439722},
        note={URL:https://mathoverflow.net/q/439722 (version: 2023-01-30)},
}

\bib{MR1220906}{article}{
      author={Van~Daele, A.},
       title={Multiplier {H}opf algebras},
        date={1994},
        ISSN={0002-9947},
     journal={Trans. Amer. Math. Soc.},
      volume={342},
      number={2},
       pages={917\ndash 932},
         url={https://doi-org.scd1.univ-fcomte.fr/10.2307/2154659},
      review={\MR{1220906}},
}

\bib{MR1277138}{article}{
      author={Van~Daele, A.},
       title={The {H}aar measure on a compact quantum group},
        date={1995},
        ISSN={0002-9939},
     journal={Proc. Amer. Math. Soc.},
      volume={123},
      number={10},
       pages={3125\ndash 3128},
         url={https://doi-org.scd1.univ-fcomte.fr/10.2307/2160670},
      review={\MR{1277138}},
}

\bib{MR1378538}{article}{
      author={Van~Daele, A.},
       title={Discrete quantum groups},
        date={1996},
        ISSN={0021-8693},
     journal={J. Algebra},
      volume={180},
      number={2},
       pages={431\ndash 444},
         url={https://doi-org.scd1.univ-fcomte.fr/10.1006/jabr.1996.0075},
      review={\MR{1378538}},
}

\bib{MR1658585}{article}{
      author={Van~Daele, A.},
       title={An algebraic framework for group duality},
        date={1998},
        ISSN={0001-8708},
     journal={Adv. Math.},
      volume={140},
      number={2},
       pages={323\ndash 366},
         url={https://doi-org.scd1.univ-fcomte.fr/10.1006/aima.1998.1775},
      review={\MR{1658585}},
}

\bib{MR1382726}{article}{
      author={Van~Daele, Alfons},
      author={Wang, Shuzhou},
       title={Universal quantum groups},
        date={1996},
        ISSN={0129-167X,1793-6519},
     journal={Internat. J. Math.},
      volume={7},
      number={2},
       pages={255\ndash 263},
         url={https://doi.org/10.1142/S0129167X96000153},
      review={\MR{1382726}},
}

\bib{MR0338259}{article}{
      author={Va\u~inerman, L.~I.},
      author={Kac, G.~I.},
       title={Nonunimodular ring groups, and {H}opf-von {N}eumann algebras},
        date={1973},
        ISSN={0002-3264},
     journal={Dokl. Akad. Nauk SSSR},
      volume={211},
       pages={1031\ndash 1034},
      review={\MR{338259}},
}

\bib{MR0348038}{article}{
      author={Va\u~inerman, L. \u~I.},
      author={Kac, G.~I.},
       title={Nonunimodular ring groups and {H}opf-von {N}eumann algebras},
        date={1974},
        ISSN={0368-8666},
     journal={Mat. Sb. (N.S.)},
      volume={94(136)},
       pages={194\ndash 225, 335},
      review={\MR{348038}},
}

\bib{MR2379773}{article}{
      author={Voigt, Christian},
       title={Bornological quantum groups},
        date={2008},
        ISSN={0030-8730,1945-5844},
     journal={Pacific J. Math.},
      volume={235},
      number={1},
       pages={93\ndash 135},
         url={https://doi.org/10.2140/pjm.2008.235.93},
      review={\MR{2379773}},
}

\bib{MR4543448}{article}{
      author={Voigt, Christian},
       title={Infinite quantum permutations},
        date={2023},
        ISSN={0001-8708,1090-2082},
     journal={Adv. Math.},
      volume={415},
       pages={Paper No. 108887, 37},
         url={https://doi.org/10.1016/j.aim.2023.108887},
      review={\MR{4543448}},
}

\bib{MR4182424}{book}{
      author={Voigt, J\"{u}rgen},
       title={A course on topological vector spaces},
      series={Compact Textbooks in Mathematics},
   publisher={Birkh\"{a}user/Springer, Cham},
        date={2020},
        ISBN={978-3-030-32945-7; 978-3-030-32944-0},
         url={https://doi.org/10.1007/978-3-030-32945-7},
      review={\MR{4182424}},
}

\bib{MR1316765}{article}{
      author={Wang, Shuzhou},
       title={Free products of compact quantum groups},
        date={1995},
        ISSN={0010-3616,1432-0916},
     journal={Comm. Math. Phys.},
      volume={167},
      number={3},
       pages={671\ndash 692},
         url={http://projecteuclid.org/euclid.cmp/1104272163},
      review={\MR{1316765}},
}

\bib{MR1637425}{article}{
      author={Wang, Shuzhou},
       title={Quantum symmetry groups of finite spaces},
        date={1998},
        ISSN={0010-3616,1432-0916},
     journal={Comm. Math. Phys.},
      volume={195},
      number={1},
       pages={195\ndash 211},
         url={https://doi.org/10.1007/s002200050385},
      review={\MR{1637425}},
}

\bib{MR0722297}{book}{
      author={Warner, Frank~W.},
       title={Foundations of differentiable manifolds and {L}ie groups},
      series={Graduate Texts in Mathematics},
   publisher={Springer-Verlag, New York-Berlin},
        date={1983},
      volume={94},
        ISBN={0-387-90894-3},
        note={Corrected reprint of the 1971 edition},
      review={\MR{722297}},
}

\bib{MR0005741}{book}{
      author={Weil, Andr\'e},
       title={L'int\'egration dans les groupes topologiques et ses
  applications},
      series={Actualit\'es Scientifiques et Industrielles [Current Scientific
  and Industrial Topics]},
   publisher={Hermann \& Cie, Paris},
        date={1940},
      volume={No. 869},
        note={[This book has been republished by the author at Princeton, N.
  J., 1941.]},
      review={\MR{5741}},
}

\bib{woronowicz_compact_1987}{article}{
      author={Woronowicz, S.~L.},
       title={Compact matrix pseudogroups},
        date={1987},
        ISSN={0010-3616},
      volume={111},
      number={4},
       pages={613\ndash 665},
         url={http://www.ams.org/mathscinet-getitem?mr=901157},
      review={\MR{901157}},
}

\bib{MR901157}{article}{
      author={Woronowicz, S.~L.},
       title={Compact matrix pseudogroups},
        date={1987},
        ISSN={0010-3616},
     journal={Comm. Math. Phys.},
      volume={111},
      number={4},
       pages={613\ndash 665},
  url={http://projecteuclid.org.scd1.univ-fcomte.fr/euclid.cmp/1104159726},
      review={\MR{901157}},
}

\bib{MR1369908}{article}{
      author={Woronowicz, S.~L.},
       title={From multiplicative unitaries to quantum groups},
        date={1996},
        ISSN={0129-167X},
     journal={Internat. J. Math.},
      volume={7},
      number={1},
       pages={127\ndash 149},
         url={https://doi-org.scd1.univ-fcomte.fr/10.1142/S0129167X96000086},
      review={\MR{1369908}},
}

\bib{MR1616348}{incollection}{
      author={Woronowicz, S.~L.},
       title={Compact quantum groups},
        date={1998},
   booktitle={Sym\'{e}tries quantiques ({L}es {H}ouches, 1995)},
   publisher={North-Holland, Amsterdam},
       pages={845\ndash 884},
      review={\MR{1616348}},
}

\bib{MR0058607}{article}{
      author={Yamabe, Hidehiko},
       title={A generalization of a theorem of {G}leason},
        date={1953},
        ISSN={0003-486X},
     journal={Ann. of Math. (2)},
      volume={58},
       pages={351\ndash 365},
         url={https://doi.org/10.2307/1969792},
      review={\MR{58607}},
}

\bib{MR0054613}{article}{
      author={Yamabe, Hidehiko},
       title={On the conjecture of {I}wasawa and {G}leason},
        date={1953},
        ISSN={0003-486X},
     journal={Ann. of Math. (2)},
      volume={58},
       pages={48\ndash 54},
         url={https://doi.org/10.2307/1969819},
      review={\MR{54613}},
}

\end{biblist}
\end{bibdiv}


\end{document}